\newtheorem{thm}{Theorem}[section]
 \newtheorem{cor}[thm]{Corollary}
 \newtheorem{lem}[thm]{Lemma}
 \newtheorem{claim}[thm]{Claim}
 \newtheorem{prop}[thm]{Proposition}
 \newtheorem{rem}[thm]{Remark}
\numberwithin{equation}{section}
\def\be#1 {\begin{equation} \label{#1}}
\newcommand{\ee}{\end{equation}}
\def\C{{\mathbb C}}
\def\Z{{\mathbb Z}}
\def\R{{\mathbb R}}
\def\e{{\varepsilon}}
\def\g{{\gamma}}
\def\G{{\Gamma}}
\def\s{{\sigma}}
\def\a{{\alpha}}
\def\b{{\beta}}
\def\d{{\delta}}
\def\l{{\lambda}}
\def\k{{\kappa}}
\def\sign{ \mbox{sign} }
\def\max{\mathrm{max}}
\def\med{\mathrm{med}}
\def\min{\mathrm{min}}
\def\F{{\widetilde{\mathcal{F}}}}
\def\wtF{\widetilde{{\mathcal{F}}}}
\def\whF{\widehat{{\mathcal{F}}}}
\def\W{\mathcal{W}^*}
\def\supp{\,\mbox{supp}\,}
\def\jxi{\langle \xi \rangle}
\def\jeta{\langle \eta \rangle}
\def\jsig{\langle \sigma \rangle}
\def\jrho{\langle \rho \rangle}
\def\jt{\langle t \rangle}
\def\js{\langle s \rangle}
\def\jnab{\langle \partial_x \rangle}
\def\jx{\langle x \rangle}
\def\jy{\langle y \rangle}
\def\what{\widehat}
\def\wt{\widetilde}
\def\bar{\overline}
\def\eps{\epsilon}
\def\pv{\mathrm{p.v.}}
\DeclareMathAlphabet{\mathpzc}{OT1}{pzc}{m}{it}
\numberwithin{equation}{section}
\newcommand{\pushright}[1]{\ifmeasuring@#1\else\omit\hfill$\displaystyle#1$\fi\ignorespaces}
\newcommand{\pushleft}[1]{\ifmeasuring@#1\else\omit$\displaystyle#1$\hfill\fi\ignorespaces}
\definecolor{bluegreen}{rgb}{0.0, 0.3, 0.9}
\begin{document}

\author{Pierre Germain}
\address{Pierre Germain, Courant Institute of Mathematical Sciences, 251 Mercer Street, New York 10012-1185 NY, USA}
\email{pgermain@cims.nyu.edu}

\author{Fabio Pusateri}
\address{Fabio Pusateri, University of Toronto, Department of Mathematics, 40 St George Street, 
  Toronto, ON, M5S 2E4, Canada.}
\email{fabiop@math.toronto.edu}

\title{Quadratic Klein-Gordon equations with \\ a potential in one dimension}

\subjclass[2000]{43A32, 42B37, 35P25, 35Q56}


\keywords{Nonlinear Klein-Gordon Equation, Distorted Fourier Transform, Scattering Theory, Kink solutions,
$\phi^4$ theory, Relativistic Ginzburg-Landau.} 

\begin{abstract}
\normalsize

This paper proposes a fairly general new point of view on the question of asymptotic stability of (topological) solitons. 
Our approach is based on the use of the distorted Fourier transform at the nonlinear level; 
it does not only rely on Strichartz or virial estimates and is therefore able to treat low power nonlinearities 
(hence also non-localized solitons) and capture the global (in space and time) behavior of solutions.

More specifically, we consider quadratic nonlinear Klein-Gordon equations with a regular and 
decaying potential in one space dimension. 
Additional assumptions are made so that the distorted Fourier transform of the solution vanishes at zero frequency.
Assuming also that the associated Schr\"odinger operator has no negative eigenvalues, 
we obtain global-in-time bounds, including sharp pointwise decay and modified asymptotics, for small solutions.

These results have some direct applications to the asymptotic stability of (topological) solitons, 
as well as several other potential applications to a variety of related problems.
For instance, we obtain full asymptotic stability of kinks with respect to odd perturbations 
for the Double Sine-Gordon problem
(in an appropriate range of the deformation parameter).
For the $\phi^4$ problem, we obtain asymptotic stability 
for small odd solutions provided the nonlinearity is projected on the continuous spectrum. 
Our results also go beyond these examples since our framework allows for the presence of 
a fully coherent phenomenon (a space-time resonance) at the level of quadratic interactions,
which creates a degeneracy in distorted Fourier space.
We devise a suitable framework that incorporate this, 
and use multilinear harmonic analysis 
in the distorted setting to control all nonlinear interactions.
\end{abstract}

\maketitle

\markboth{P. GERMAIN AND F. PUSATERI}{QUADRATIC KLEIN-GORDON WITH A POTENTIAL IN $1$D}

\setcounter{tocdepth}{2}

\begin{quote}
\large
\tableofcontents
\end{quote}

\section{Introduction}

This work concerns the global-in-time behavior of small solutions 
of one dimensional quadratic Klein-Gordon equations with an external potential.
The class of equations that we treat in this paper appears when studying the {\it asymptotic stability} 
of special solutions of
nonlinear dispersive and hyperbolic equations, such as solitons, traveling waves, kinks.

\smallskip
\subsection{The model and motivation} 

\subsubsection{The equation}
We consider the equation
\begin{equation}
\tag{KG} \label{KG}
\partial_t^2 u + (- \partial_x^2 + V(x) + m^2) u = a(x) u^2 
\end{equation}
where the unknown $u = u(t,x) \in \R$, the space and time variables $(t,x) \in \R\times\R$, $m>0$ 
is the mass parameter, 
$V$ is a real-valued, decaying and smooth external potential,
and $a$ is a sufficiently smooth function with $a(x) - \ell_{\pm\infty}$ decaying quickly
as $x\rightarrow \pm \infty$, $\ell_{\pm\infty} \in \R$.
The addition of cubic and higher order terms (with constant or non-constant coefficients)
does not bring any further complication, so we omit them for the sake of explanation.\footnote{In fact,
cubic terms such as $u^3$, and more complicated ones,
naturally appear in the analysis of \eqref{KG} performed in this paper.}

The equation \eqref{KG} derives from the Hamiltonian
\begin{align}
\label{KGVHam}
\mathscr{H}(u) = \frac{1}{2}\int_{\R} \left[ (\partial_t u)^2 + (\partial_x u)^2 + m^2 u^2 + Vu^2 \right] \, dx 
  + \frac{1}{3}\int_{\R} a(x) u^3 \, dx.
\end{align}
By rescaling we can set $m=1$ without loss of generality; we will do so in the rest of the paper.
We will be interested in the Cauchy problem with small initial data 
$(u(0,x),u_t(0,x)) = (u_0(x),u_1(x))$ in suitable weighted Sobolev spaces.
In short, under some spectral assumptions on $V$, 
our main result, Theorem \ref{maintheo}, 
gives the existence of global small solutions with sharp pointwise time-decay, and long-range asymptotics.

We will consider a broad class of external potentials in \eqref{KG} both generic and exceptional, %
with some additional assumptions in the latter case.
In all cases we assume that there is no discrete spectrum.
The class of non-generic potentials that we consider arises in applications such as, for example, 
pure power nonlinear Klein-Gordon and the $\phi^4$ model; see Subsection \ref{ssecappl}.



\smallskip
\subsubsection{Motivation}
Nonlinear equations with external potentials arise from the perturbation 
of full nonlinear problems around special solutions, such as solitons.
The quadratic problem \eqref{KG} is inspired 
by the long-standing open question of the full asymptotic stability of 
the kink solution $K=\mathrm{tanh}(x/\sqrt{2})$ for the $\phi^4$ model $\phi_{tt}-\phi_{xx} = \phi - \phi^3$
(see \S\ref{sssecappl} and \cite{KowMarMun}). 
It is also closely related to similar questions about solitons of nonlinear Klein-Gordon,
kinks of other relativistic Ginzburg-Landau theories,
and generalized sine-Gordon theories in $1+1$ dimensions.

One dimensional kinks are the simplest example of topological solitons, 
that is, non-spatially-localized special solutions,
as opposed to the more standard solitons that are localized in space.
While the mathematical theory on the stability (or instability) of solitons is very well-developed in many models,
this is not the case for topological solitons. 
There are in fact major difficulties in dealing with these objects even in the most basic one-dimensional case. 
As we will explain below, our paper aims to address some of these difficulties
by treating the deceptively simple-looking quadratic model \eqref{KG} under fairly general assumptions.
Note that models with quadratic nonlinearities, such as \eqref{KG}, also
arise in the linearization of quadratic equations
(e.g. water waves, Euler-Poisson, Zakharov etc.) around (localized) soliton solutions.

Furthermore, the study of asymptotic stability (or instability) of solitons
- as opposed to orbital or local asymptotic stability -
is motivated by problems in the theory of quasilinear equations, 
where this is often the only relevant type of stability that one can hope to achieve,
since the equations are usually not even locally well-posed in the energy space.

Before describing our result in more details, let us briefly mention some important aspects of our paper:

\setlength{\leftmargini}{1.5em}
\begin{itemize}
 
\smallskip
\item We can treat a large class of equations provided that 
the property $\wt{u}(0,t)=0$ holds; here $\wt{u}$ denotes the distorted Fourier transform of $u$.
Under this sole assumption 
we need to allow for a {\it loss of regularity in Fourier space} of our solutions.  
This loss of regularity was previously observed in some $2$d (unperturbed, i.e. with no potential) 
models \cite{DIP,DIPP};
in the $1$d case under consideration it is caused
by a coherent phenomenon, i.e., a full (space-time) nonlinear resonance,
which appears because of the potential.
See Subsection \ref{introQR} for more on this.

\medskip
\item Loss of regularity in Fourier space is expected to be a crucial phenomenon in dimension one. 
First, it should occur generically due to resonant nonlinear interactions within the continuous spectrum.
Also, singularities can arise through the coupling of internal modes of oscillations (discrete spectrum) 
and the continuous spectrum
through the `Fermi golden rule' \cite{Sigal,SofWein2};
furthermore, they can appear due to zero energy resonances of the linear(ized) operator.\footnote{See 
\S\ref{sssecappl} for more on internal modes, and the discussion after \eqref{introQR2}
for more on the zero energy resonance.}


\medskip
\item Our global stability and decay result for \eqref{KG} has direct applications 
to the stability of stationary states of nonlinear evolution problems, under additional symmetry assumptions,
when restricting the nonlinear interactions to the continuous spectrum; see Subsection \ref{ssecappl}.
We also obtain full asymptotic stability for 
certain families of kinks of the double Sine-Gordon equation (a generalized sine-Gordon theory); see \S\ref{ssecDSG}.
 
 \medskip
 \item We believe that our treatment of \eqref{KG} helps clarifying 
 the interconnected roles of the zero energy resonances, symmetries of the equation, 
 and low frequency behavior (or improved local decay) in the study of global space-time asymptotics; 
 see for example the discussion in \S\ref{ssecSG}. 
%

\medskip 
\item More generally, we believe that the approach laid out in this paper 
enables a precise analysis of the nonlinear interactions of perturbed waves that are localized, 
yielding optimal results, as far as decay is concerned, for instance. 
In this respect, it goes beyond classical methods which rely on dispersive or Strichartz estimates, 
or virial-type identities.
\end{itemize}

\smallskip
\subsection{Previous results} 
\label{introres}

\subsubsection{Methods for solitons and topological solitons}
The literature on soliton stability is extensive and a complete overview is beyond the scope of this paper, 
and our abilities.
We refer to the excellent surveys \cite{Tao09,Sof06,Sch1} and the book \cite{PDbook} and references therein.

One immediately noticeable difference between solitons, 
that are spatially localized, and topological solitons, that are not,
is in the linearized equations.
In fact, since topological solitons do not decay to zero, lower order nonlinear terms are typically
powers of the small perturbation times a non-decaying coefficient; see \eqref{linearizationK} as an example.
This lack of localization prevents the efficient use of improved local decay type estimates, which are often a key tool
when dealing with (standard) solitons.

In general, the treatment of low power nonlinearities (in low dimensions) 
for equations with potentials is a well-known problem.
Linear dispersive tools (e.g. $L^p-L^q$ estimates for the linear group, 
Strichartz estimates, improved local decay etc\dots) 
and energy estimates are typically not enough to treat these equations.
Similar issues arise when $V=0$, but in this case one can resort to well-established methods, such as normal forms, 
vectorfields, the space-time resonance method, and multilinear harmonic analysis tools.

In the perturbed case $V\neq 0$ all these methods are not directly applicable:
the (large) potential de-correlates linear frequencies, ruling out standard normal form analysis 
and multilinear Fourier analysis,
and at the same time destroys the invariance properties of the equation, ruling out vectorfields.
To address these fundamental issues we initiated
a systematic approach based on the distorted Fourier transform,
in our work with F. Rousset \cite{GPR2} on the basic\footnote{In a perturbative and dispersive setting, 
a cubic model is substantially easier to handle than a quadratic one.
The proof of \cite{GPR2} can be adapted to a cubic KG equation with some additional observations, 
but a quadratic KG model presents substantial additional difficulties.
}
$1$d cubic NLS model with a generic potential.
In this paper we advance our theory by treating the much more complex case of \eqref{KG}.

Let us now review some of the existing literature, starting with results on flat/unperturbed $1$d Klein-Gordon equations, 
and then turning to recent advances in the treatment of perturbed equations.

\subsubsection{Klein-Gordon in the flat ($V=0$) case in dimension one}
In this case, Delort \cite{DelortKG1d} obtained small data (modified) scattering for 
quaslinear quadratic nonlinearities. Similar results were obtained in the semilinear
cubic and quadratic case, respectively in \cite{LinSofKG} and \cite{HNKG}.
In the last few years, 
some works have been dedicated to inhomogeneous models of the form 
\begin{align}\label{LSmodel}
u_{tt} - u_{xx} + u = a(x) u^2 + b(x) u^3.
\end{align}
Lindblad-Soffer \cite{LinSof} and Sterbenz \cite{Sterb} treated the case of constant $a$; see also \cite{LLS}
for a recent proof when $a=0$.
Lindblad-Soffer-Luhrman \cite{LLS20} also recently treated \eqref{LSmodel} 
under the assumption that $a$ decays to zero at infinity and either $\what{a}(\pm\sqrt{3}) = 0$,
or $\what{a}(\pm\sqrt{3}) \neq 0$ but $b=0$.
In Subsection \ref{introQR} we will discuss the key role of the frequencies $\xi= \pm\sqrt{3}$
for the evolution of solutions of \eqref{KG}.
%
%
As one of the byproducts of our main result
we also obtain globally decaying solutions with modified asymptotics for \eqref{LSmodel} 
in the case of odd initial data and a general odd $a$ and even $b$; 
see Remark \eqref{RemFlat} after Theorem \ref{maintheo}.

\subsubsection{Equations with potentials in dimension one}
In the analysis of nonlinear equations with potentials, 
the first step is to understand the dispersive properties of the 
perturbed linear operator. 
There is a vast literature on dispersive properties, such as decay estimates and Strichartz estimates;
for brevity we just refer to the classical works \cite{Journe,GolSch} and \cite{Sch1} and references therein.
The literature on linear scattering theory for Schr\"odinger operators is also substantial;
limiting ourselves to the $1$d case, we refer to
Deift-Trubowitz \cite{DeiTru}, Weder \cite{Weder1}, and the books \cite{Yafaev,Wilcox,LLbook1}. 

As discussed above, linear tools are generally not sufficient
to deal with low power nonlinearities, 
which are the ones of interest for the stability of topological solitons.
Recently, a few works have been dedicated to this situation in the one dimensional case,
see the works on cubic NLS 
\cite{DelortNLSV,IPNaumkin,GPR2,ChPu,MMS}, 
and \cite{DoKri,Catenoid}
on wave equations.

Concerning kink solutions, 
Kowalczyk, Martel and Mu\~{n}oz \cite{KowMarMun}
proved asymptotic stability locally in the energy space for odd perturbations 
of the kink of the $\phi^4$ equation \eqref{phi4};
the more classical orbital stability was proven in \cite{HPWkink,GriShaStr}.
See also the related result on KG/wave models 
\cite{KowMarMun2,KowetalNLKG}, 
the proof of local asymptotic stability for a large class of 
$1$d scalar field equations by Kowalczyk, Martel, Mu\~{n}oz and Van Den Bosch \cite{KowMarMunVDB},
and the paper of Jendrej-Kowalczyk-Lawrie \cite{Kowetal2} on kink-antikink interactions.
Full asymptotic stability for kinks of relativistic GL equations \eqref{GLU}
was proven by Komech-Kopylova \cite{KomKop,KomKop2} when $p \geq 13$. 
In a very recent paper, Delort and Masmoudi \cite{DMKink} proved long time stability 
for the kink of the $\phi^4$ model, 
reaching times of order $\epsilon^{-4}$ for data of size $\epsilon$; 
their analysis is based on a semi-classical approach using conjugation by the wave operators.
Concerning this last problem, as a consequence of our general results on \eqref{KG} we 
can obtain a global stability result (in the odd class) provided
the nonlinearity is projected onto the continuous spectrum. This latter is, of course,
an important restriction, and we do not claim any new results 
in the case of a full coupling to the internal mode.
However, we are hopeful that our techniques will be relevant in this case too;
see \S\ref{sssecappl} for more on the $\phi^4$ problem.

Finally, for results on the related problem of asymptotic stability of solitary waves for NLS,
we refer to the classical works \cite{Buslaev-Perelman,Buslaev-Sulem} 
and Krieger-Schlag \cite{KriSchNLS} and references therein. 
For supercritical NLKG see Krieger-Nakanishi-Schlag \cite{KriNakSch}.


\subsubsection{Higher dimensions}
Equations with potentials and questions about the stability of (non-topological) solitons 
in higher dimensions have also been extensively studied.
Without going too much into details, we refer the reader to
the classical results \cite{Sigal,Wein86,SofWein2,TsaiYau,Gustafson}, and the surveys 
\cite{Sof06,Sch1,SchSurvey2} and references therein.
Finally, let us mention some $3$d works that are close in spirit to ours:
\cite{GHW} laid out some basic multilinear harmonic analysis tools 
and treated the nonlinear Schr\"odinger equation in the case of a non-resonant $\bar{u}^2$ nonlinearity,
while \cite{LegerVNLS,Leger2}, respectively, \cite{VNLS3d}, 
considered the case of a small, respectively, large, 
potentials and a $u^2$ nonlinearity.

\smallskip
\subsection{Main result}\label{Ssecmt}

Let us now state our main result. 
In short, for sufficiently small and localized data (as in \eqref{initcond}),
and assuming that the distorted Fourier transform of the solution vanishes at the zero frequency,
we can construct global solutions for quadratic Klein-Gordon equations, 
that decay at the optimal (i.e. linear) rate (see \eqref{mtLinfty});
moreover, we obtain full asymptotics with modified scattering via a logarithmic phase corrections 
(see \eqref{LinfSasyf0} below).

The statement of our main theorem requires some technical definitions, 
for which we give precise references to later parts of the paper.

\begin{thm}\label{maintheo}
Let 
$$
H := -\partial_x^2 + V
$$
denote the Schr\"odinger operator, and assume it has no bound states.
Let $V=V(x)$ and $a=a(x)$ be smooth and such that $V(x)$ and $a(x) - \ell_{\pm \infty}$
and their derivatives decay super-polynomially\footnote{The smoothness and decay assumption can be relaxed.
A more careful inspection of the proof shows that only a finite (possibly large) amount of smoothness 
and polynomial decay would be sufficient.}
as $x \rightarrow \pm\infty$.

\bigskip
Consider either one of the following two equations
\begin{itemize} 
\item Either 
\begin{equation}\tag{KG} \label{mtKG}
 \partial_t^2 u + (H + 1) u = a(x) u^2
\end{equation}
under one of the following three assumptions (see Section \ref{secspth} and \S\ref{SecPot} for definitions)
\begin{itemize}
  
  \smallskip
  \item[(A)]\label{A} $V$ is generic, or
  
  \smallskip
  \item[(B)]\label{B} $V$ is exceptional and even, the zero energy resonance is even, and $a(x)$ is odd, or
  
  \smallskip
  \item[(C)]\label{C} $V$ is exceptional and even, the zero energy resonance is odd, and $a(x)$ is even.
\end{itemize}

\bigskip
\item Or
\begin{equation}\tag{KG2} \label{KG2}
 \partial_t^2 u + (H + 1) u = \sqrt{H} (a(x) u^2)
\end{equation}
under one of the following two assumptions
\begin{itemize}
	\smallskip
	\item[(D)]\label{D} $V$ is generic, or
	
	\smallskip
	\item[(E)]\label{E} $V$ is exceptional, and the distorted Fourier transform associated to $H$ 
	(defined in Subsection \ref{secFT}) of 
	the data $(u,\partial_tu)(0,x)$ is vanishing at frequency zero\footnote{It is implied here
	that the distorted transform should be continuous at zero.}. 
\end{itemize}
\end{itemize}

\medskip
Consider data at the initial time
$$
(u,\partial_t u)(t=0) = (u_0,u_1)
$$
with
\begin{align}
\label{initcond}
{\big\| (\sqrt{H+1}u_0, u_1) \big\|}_{H^4} + {\big\| \jx (\sqrt{H+1}u_0, u_1) \big\|}_{H^1} = \e_0.
\end{align}

\medskip

Then, the following holds
\setlength{\leftmargini}{1.5em}
\begin{itemize}
\medskip
\item ({\it Global existence})
There exists $\bar{\e}>0$ such that for all $\e_0\leq\bar{\e}$
the equation \eqref{KG} with initial data $(u,\partial_tu)(t=0) = (u_0,u_1)$
admits a unique global solution $u\in C(\R,H^5(\R))$.

\medskip

\item ({\it Pointwise decay}) For all $t\in \R$ 
\begin{align}\label{mtLinfty}
{\big\| \big(\sqrt{H+1}u, \partial_tu \big)(t) \big\|}_{L^\infty_x} \lesssim \e_0 (1+|t|)^{-1/2}.
\end{align}

\medskip
\item ({\it Global bounds in $L^2$ spaces})
The solution satisfies the global-in-time bounds
\begin{align}\label{mtbounds}
{\big\| u(t) \big\|}_{H^5} + {\big\| \partial_t u(t) \big\|}_{H^4}  \lesssim \e_0 \jt^{p_0},
\end{align}
for some small $p_0>0$. 
Moreover, if we define the profile
\begin{align}\label{mtprofile}
g = e^{it\sqrt{H+1}}\big(\partial_t - i\sqrt{H+1}) u
\end{align}
we have
\begin{align}\label{mtwbound}
{\big\| \jxi \partial_\xi \wt{g}(t) \big\|}_{L^2_\xi} \lesssim \e_0 \jt^{1/2+\delta},
\end{align}
for some small $\delta>0$, where $\wt{g}$ denotes the distorted Fourier transform of $g$
(as defined in \eqref{distF}; see also Proposition \ref{propFT}). 

\medskip
\item ({\it Asymptotic behavior}) 
There exists a quadratic transformation $B$ (satisfying bilinear H\"older type bounds)
such that, as $|t| \rightarrow \infty$, the ``renormalized'' profile $f:= g-B(g,g)$ scatters
to a time-independent profile up to a logarithmic phase correction.
See Remark \ref{mtremscatt} for more details.


\end{itemize}

\end{thm}

\medskip
Here are a few remarks about the statement and our main assumptions.

\begin{rem}[Vanishing at the zero frequency]\label{mtzerofr0}
Hypotheses (A), (B), (C) for the equation \eqref{mtKG}, and hypothesis (D) and (E) for
\ref{KG2} are ways of ensuring that $\widetilde{f}(0)=0$, where $\widetilde{f}$ is the 
distorted Fourier transform of $f$ associated to the operator $H$; see Section \ref{secspth}
for the definitions and \eqref{RemZerofreq} below for the vanishing property. 
The zero frequency for the distorted Fourier transform is linked to a resonant phenomenon, 
hence the necessity for the cancellation $\widetilde{f}(0)=0$ for our proof to apply;
see the discussion in Subsection \ref{introQR}.
\end{rem}

\begin{rem}\label{mtzerofr}
In the course of our proof we will work (most of the time) just with the assumption that $\wt{f}(0)=0$,
so to be able to treat all cases in a unified way. 
In particular, we will carry out all our main estimates for \eqref{mtKG}, but everything can be 
easily adapted to\eqref{KG2}.
In some instances we will need to distinguish between the different cases, e.g. (A) vs. (B),
and will specify when this is so (see, for example, the proof of Lemma \ref{lemwtf0}).
\end{rem}

\begin{rem}
Theorem \ref{maintheo} remains true if the operator $H$ is allowed to have bound states, but the data and 
the nonlinearities 
in \eqref{mtKG} and \eqref{KG2} are projected on the continuous spectrum of the operator.
\end{rem}

\begin{rem}\label{remparity}
Note that the parity assumptions in (B), respectively (C), imply that the solutions are odd, respectively even.
However, in the case of \eqref{KG2} no parity assumptions are needed.

Moreover, Theorem \ref{maintheo} remains valid if one includes cubic and higher order terms in \eqref{mtKG}
provided this is done by keeping the proper parity. For example, in both cases (B) and (C) 
one can add a term $b(x)u^3$ to \eqref{KG} with an even and sufficiently regular (but not necessarily decaying) $b$.
Similarly, one can add any cubic or higher order terms to \eqref{KG2} inside the parentheses on the right-hand side.
\end{rem}



\medskip
Let us now make some remarks about our results and some of its implications.
More specific applications are discussed in \S\ref{sssecappl}.

\setlength{\leftmargini}{2em}
\begin{enumerate}

\medskip
\item {\it Assumptions on the potential: generic and exceptional}.\label{mtremV} 

\smallskip
\noindent
The assumption that $V$ is generic is the following:
\begin{align}\label{mtr1}
\int_\R V(x) \, m(x) \, dx \neq 0
\end{align}
where $m$ is the unique solution of $(-\partial_{x}^2 + V)m = 0$ 
with $\lim_{x\rightarrow \infty}m(x) = 1$. 
One can see that \eqref{mtr1} is equivalent to the condition that the
transmission coefficient $T$ (see \eqref{TRformula} for the definition) satisfies $T(0) = 0$. 
This is also equivalent to the fact that the $0$ energy level is not a resonance,
that is, there does not exist a bounded solution in the kernel of $H$; See Lemma \ref{lemVgen}.
A non-generic potential is called `exceptional'.



\medskip
\item {\it The zero frequency and symmetries}.\label{RemZerofreq}

\smallskip
\noindent
For generic $V$ one has that $\wt{f}$ is continuous everywhere for $f \in L^1$,
and $\wt{f}(0) = 0$. See the remarks after Proposition \ref{propFT}.
In the case of exceptional potentials one does not have continuity of $\wt{f}$ at $0$ in general.
Continuity holds if $T(0)=1$ or, equivalently, $a:=m(-\infty)=1$, 
since
\begin{align}\label{wtf0+-}
\wt{f}(0+) = \frac{2a}{1+a^2} \frac{1}{\sqrt{2\pi}} \int m(x) f(x) \,dx 
\qquad \mbox{and} \qquad \wt{f}(0-) = \frac{1}{a}\wt{f}(0+),
\end{align}
where $m$ is the zero energy resonance; see 
\eqref{wtf0exc}.
In the context of our nonlinear problem \eqref{KG} we are interested in the low frequency behavior of the 
solution and, in particular, in the vanishing of $\wt{u}(t,\xi)$ 
at $\xi=0$.
While for generic potentials we are guaranteed that indeed $\wt{u}(t,0) = 0$ for all times $t$,
in the case of exceptional $V$ we need to impose some additional (symmetry) conditions for this to hold,
as in (B), (C) or (E) of Theorem \ref{maintheo}.

Since in case (B), resp. (C) we have odd, resp. even, solutions (see Remark \ref{remparity})
then \eqref{wtf0+-} shows that, when the zero energy resonance $m(x)$ is even, resp. odd,
we indeed have $\wt{u}(t,0)=0$.
%
%

The structure of the equation might also guarantee the desired vanishing condition,
which is what we exploit for \eqref{KG2}.
Indeed, in case (E) the initial data is assumed to be such that $(\wt{u},\wt{u_t})(t=0,\xi=0) = 0$,
and this condition is preserved by the flow of \eqref{KG2}, since,
applying the distorted Fourier transform and evaluating at $\xi=0$, 
gives $\wt{u_{tt}}(t,0) + u(t,0) = 0$.

\medskip
\item {\it Improved local decay.}\label{remlocdec}  


\smallskip
\noindent
An important aspect in the study of nonlinear problems with potentials is local decay.
Roughly speaking, the potential, which is localized around the origin, 
typically reflects low energy particles away from it, leading to an improved local decay estimate of the form
\begin{align}\label{locdec}
{\big\| \jx^{-\sigma_1}  P_c e^{it\sqrt{H+1}}f  \big\|}_{L^\infty} \lesssim |t|^{-a} {\| \jx^{\sigma_2} f \|}_{L^1}
\end{align}
for some $\sigma_1,\sigma_2 > 0$, and a rate of decay $a$ larger than $1/2$,
which is the optimal one for general linear waves. 
$P_c$ in \eqref{locdec} denotes the projection to the continuous spectrum of $H$.
While we do not directly make use of estimates like \eqref{locdec} 
we do rely on the dual improved behavior for small frequencies.

For generic potentials it can be shown that \eqref{locdec} holds with 
$a=3/2$ and $\sigma_2 = 1$ \cite{KriSchNLS,Sch1} (the value of $\sigma_1$ is unimportant for this discussion);
such an estimate is essentially equivalent to (and scales like)
\begin{align}\label{locdecgen}
{\big\| \jx^{-\sigma_1}  P_c e^{it\sqrt{H+1}}f  \big\|}_{L^\infty} \lesssim |t|^{-1} {\| \jx f \|}_{L^2}.
\end{align}

To see the difference with the exceptional case, it suffices to consider the flat case $V=0$.
From a stationary phase expansion one sees that linear solutions satisfy, as $t \to \infty$, 
\begin{align}\label{locdec0}
e^{it \langle \partial_x \rangle} f \approx \frac{e^{i \frac{\pi}{4}}}{\sqrt{2t}} 
  \langle \xi_0 \rangle^{3/2} e^{i t \langle \xi_0 \rangle + i x \xi_0} \what{f} \left( \xi_0 \right), 
  \qquad \frac{\xi_0}{\langle \xi_0 \rangle} := - \frac{x}{t},
\end{align}
where $\what{f}$ is the regular Fourier transform.
Thus, there is no improvement to the local decay rate unless  $\what{f}(0)=0$.
However, in general, 
the next term in the expansion is only of the order of 
$|t|^{-3/4}{\| \jx f\|}_{L^2}$.
The difference between this and the faster $|t|^{-1}$ decay in \eqref{locdecgen} 
turns out to be a major issue when dealing with \eqref{KG}, under our very general assumptions.

Local decay is stronger also for exceptional potentials if, 
in addition to $\wt{f}(0) = 0$, further cancellations occur due to symmetries.
%
%
This suggests the possibility of simplifications 
to parts of our arguments if one of the assumptions (A), (B), or (C) in Theorem \ref{maintheo}
holds.
In particular, one may be able to adopt a less refined functional framework than the one we use here 
(see Subsection \ref{ssecff}).

\medskip
\item {\it The functional framework and degenerate norms.} \label{RemDeg}

\smallskip
\noindent
To deal with an example such as \eqref{KG2} 
where only $\widetilde{f}(0) = 0$ 
can be assumed, we need to pay particular attention to a phenomenon of loss of regularity in frequency space. 
As we explain in Subsection \ref{introQR}, when the distorted frequency $\xi$ approaches $\pm\sqrt{3}$ 
the $L^2$ weighted norm of the (renormalized) profile $f$ 
becomes singular. 
We then need to use a norm which captures this degenerate behavior; see \eqref{wnorm}.


It is important to point out that, while some of the complications may be avoided by making less general assumptions,
we expect that degenerate norms like the one we use in this paper
will play a key role when internal modes (positive eigenvalues of $H+1$) are present,
as well as when considering general (non-symmetric) solutions.

\medskip
\item {\it Violating the zero frequency condition}
\label{RemZerofreq'}

\smallskip
\noindent
The above discussion emphasized the technical reasons leading to the requirement 
that the solution of \eqref{KG} vanishes at zero frequency.
The works \cite{LLS20,LLSS} address a set up where the coefficient $a(x)$ is localized, 
but the solution does not have to vanish at zero in (distorted) Fourier space. 
In these papers, it is showed that the decay in time slows down by a logarithmic factor 
compared to the linear case; see also the discussion at the end of Subsection \ref{introQR}. 
Since the linear decay rate was already critical at the level 
of the cubic interaction, this additional logarithm is expected to make the nonlinear 
analysis of the full problem (including cubic terms, or a non-decaying $a$) extremely delicate.
%


\medskip
\item \label{mtremscatt} {\it Modified asymptotics}. 

\smallskip
\noindent
In the last point of Theorem \ref{maintheo} we state that a renormalized profile $f=g-B(g,g)$ 
undergoes modified scattering.
Let us postpone for the moment the exact definition of $f$, and just think of $B(g,g) \approx g^2$. 
For the profile $f$ we prove the following asymptotic formula:
there exists an asymptotic profile $W^{\infty} = (W^{\infty}_+, W^{\infty}_-) 
\in \big(\jxi^{-3/2}L^\infty_\xi\big)^2$ such that,
for $\xi>0$,
\begin{align}\label{LinfSasyf0}
\begin{split}
&  \big(\wt{f}(t,\xi),\wt{f}(t,-\xi)\big)
	\\ & = S^{-1}(\xi) \exp\Big( -\frac{5i}{12}  \mathrm{diag}\big( \ell_{+\infty}^2\big| W^{\infty}_+(\xi) \big|^2, 
	\ell_{-\infty}^2\big| W^{\infty}_-(\xi) \big|^2 \big) \log t  \Big) W^{\infty}(\xi)
	+ O\big(\e_0^2 \jt^{-\delta_0}\big)
\end{split}
\end{align}
as $t \rightarrow \infty$, for some $\delta_0>0$;
here $S(\xi)$ is the scattering matrix associated to the potential $V$ defined in \eqref{scatmat}.
As $t \rightarrow -\infty$, using the time reversal symmetry, 
one obtains a similar (in fact, simpler) formula that resembles the flat case.
While this correction to scattering is most naturally viewed in distorted Fourier space, 
it translates to physical space by standard arguments.
Note that, because of the potential $V$, this logarithmic phase correction
depends on the scattering matrix $S$ (at least in one time direction),
and `mixes' positive and negative frequencies.
We refer the reader to Proposition \ref{propLinfS} and the comments after it 
for more details.

The phenomenon of modified scattering by a logarithmic phase correction is
one of the fundamental types of nonlinear phenomena that one may observe for scattering critical (long-range) equations.
We refer the reader to the papers on NLS \cite{HN,LinSof,KP,IoPu1}
and on KG 
\cite{DelortKG1d,HNKG,LLS20} 
where this type of modified scattering is proved using various approaches for equations without potentials. 
For equation with potentials, see the already cited \cite{DelortNLSV,IPNaumkin,GPR2,ChPu}.




\medskip
\item {\it Assumptions on the data}. \label{mtremdata}
 
\smallskip
\noindent
The assumptions in \eqref{initcond} are quite standard for these type of problems.
Finiteness of the weighted norm guarantees $|t|^{-1/2}$ pointwise decay for linear solutions. 
Propagating a suitable weighted bound for all times will be one of the main goals of our proof.
For the profile $g$, we can only propagate the weak bound \eqref{mtwbound}, 
while we will be able to control a stronger weighted norm of $f$.

A certain amount of Sobolev regularity is helpful in many parts of the proof when we deal with high frequencies.
However, although \eqref{KG} is a semilinear problem, it seems to us that it is not straightforward to propagate
any desired amount of Sobolev regularity, unlike in many other similar problems.
This is essentially due to the fact that the nonlinearity contains quadratic terms
which cannot be eliminated by normal forms and that (localized) decay is at best $|t|^{-3/4}$ in the absence of symmetries.

\medskip
\item {\it Global bounds and bootstrap spaces.} \label{rem4} 

\smallskip
\noindent
Most of our analysis is performed in the distorted Fourier space.
The main task is to prove a priori estimates in suitably constructed spaces 
for a {\it renormalized profile} obtained after a partial normal form transformation.
This is the profile\footnote{In the course of the proof we will denote the 
bilinear transformation $B$ by the letter $T$ (see the definition of $f$ in \eqref{Renof}-\eqref{RenoT} with 
$g$ defined in \eqref{vKG}-\eqref{vprof}. We use the different notation $B$ in the main theorem and this intro
to avoid any confusion with the transmission coefficient $T$ (see \eqref{TRformula}) here.
In later parts of the paper the distinction should be clear from the context.}
$f:=g-B(g,g)$ alluded to in the main Theorem.
We refer the reader to Section \ref{secdec}, and  in particular to Subsection \ref{SsecReno}, for the definition of $f$. 

The profile $f$ is measured in three norms: a Sobolev norm (like $g$), a weighted-type norm which incorporates 
the degeneration close to the bad frequencies $\pm\sqrt{3}$, 
and the sup-norm of its distorted Fourier transform. 
We refer to Subsection \ref{ssecff} for details about the functional framework,
and to the beginning of Section \ref{secBoot}
for the main bootstrap propositions on $f$ and $g$. 





\medskip
\item {\it The flat case}. \label{RemFlat}

\smallskip
\noindent
For the sake of explanation, it is interesting to consider \eqref{KG} in the simplified case $V=0$
\begin{equation}\label{RemFlatKG}
\partial_t^2 u + (- \partial_x^2 + 1) u = a(x) u^2,
\end{equation}
where $a(x)$ is odd and fast approaching $\pm \ell$ as $x\rightarrow \pm\infty$. 
Cubic terms of the form $u^3$ and $b(x)u^3$ (with $b$ even) can be included in the model.
For \eqref{RemFlatKG}
our result gives globally-decaying solutions for odd initial data. 
However, as discussed in Remark \eqref{remlocdec} above, 
this specific case of odd symmetry 
is simpler due to faster local decay.
A related, and more difficult, toy model that we can include in our treatment is (see \eqref{KG2}) 
\begin{equation}\label{RemFlatKG'}
\partial_t^2 u + (- \partial_x^2 + 1) u = \partial_x( a(x) u^2 )
\end{equation}
with zero average initial data.
Note that symmetries are not needed here, and
other variants are possible provided the zero average condition is preserved.

As mentioned after \eqref{LSmodel}, the flat case \eqref{RemFlatKG} with non-symmetric localized data,
and decaying coefficient $a(x)$, was treated 
in \cite{LLS} where a logarithmic slowdown of the decay rate was also shown to occur.
Cubic terms are also included in the results of \cite{LLS} provided $\what{a}(\pm\sqrt{3})=0$.
The general case of \eqref{RemFlatKG} without symmetries and with non-decaying $a(x)$ is still open.



\end{enumerate}




\smallskip
\subsection{Applications} 
\label{ssecappl}

In this subsection we discuss the relevance of our results to questions on the asymptotic stability 
of stationary solutions for several important physical problems.
We will be considering one dimensional scalar field theories
$$
\partial_t^2 \phi - \partial_x^2 \phi + U'(\phi) = 0
$$
deriving from the Hamiltonian
\begin{align}\label{phi4H}
\mathscr{H} = \frac{1}{2} \int \left( \phi_t^2 + \phi_x^2 \right) \, dx + \int U(\phi)  \,dx.
\end{align}
Choosing the potential $U$ with a double-well (Ginzburg-Landau) structure, 
special solutions connecting stable states at $\pm \infty$, known as kinks, emerge. 
The question of their stability, or asymptotic behavior, depends very delicately on the potential $U$, 
and leads to a wealth of interesting mathematical problems. 
Our analysis sheds light on this question for various models, some of which we review below.

\smallskip
\subsubsection{The $\phi^4$ model}\label{sssecappl}
This fundamental model corresponds to the choice 
$$
U(\phi) = U_0(\phi) = \frac{1}{4}(1-\phi^2)^2, 
$$
leading to the equation
\begin{align}
\label{phi4}
\partial_{t}^2 \phi - \partial_x^2 \phi = \phi - \phi^3,
\end{align}
which admits the kink solution $K_0(x) = \tanh(x/\sqrt{2})$. 
Setting $\phi = K_0+v$, where $v$ is a small (localized) perturbation, we see that
\begin{equation}
\label{linearizationK}
(\partial_t^2 + H_0 + 2)v = - 3K_0 v^2 - v^3, \qquad H_0 := -\partial_x^2 + V_0, 
  \qquad V_0(x) := - 3 \mathrm{sech}^2(x/\sqrt 2).
\end{equation}
It is known, see \cite{Cuccagna,PDbook,LLS}, 
that the spectrum of the Schr\"odinger operator $H_0$ has the following structure:
the $-2$ eigenvalue corresponding to the translation symmetry, 
an even zero energy resonance (a bounded solution of $H\psi = 0$),
and the eigenvalue $\lambda_1=-1/2$ corresponding to an odd exponentially decaying eigenfunction $\psi_{-1/2}$.
The latter is the so-called internal mode.
For the sake of explanation, 
let us restrict our attention to the subspace of odd functions.\footnote{On the one hand this has the practical advantage 
to avoid modulating the kink in order to track the motion of its center.
On the other hand, at a deeper level, oddness suppresses the even resonance which otherwise would have to be dealt with.}
By projecting onto the discrete and continuous modes one can decompose
$v = c_0(t) \psi_{-1/2} + P_c \, u(t,x)$,
where $P_c$ is the projection onto the continuous spectrum of $H_0$,
and obtain the equation $(\partial_t^2 + H_0 + 2)u = P_c(-3K_0 v^2 - v^3)$ for the radiation component.
One is then naturally led to analyzing the ``continuous subsystem"
\begin{align}\label{phi4linmodel}
\begin{split}
(\partial_t^2 + H_0 + 2)u & = P_c \left( -3K_0 u^2 - u^3 \right).
\end{split}
\end{align}
Since $V_0$ and its zero energy resonance are even, 
our results apply to show global bounds and decay for \eqref{phi4linmodel} with odd data.

Thus, we are able to settle at least part of the kink stability problem; 
the remaining difficulty, in the odd case, is to prove that the coupling of the internal 
mode to the continuous spectrum causes the energy of the internal mode to be dispersed 
through the phenomenon of `radiation damping' \cite{SofWein2,DMKink}. 
This is a serious obstacle, since the 
presence 
of the internal mode leads to the formation of a singularity in distorted Fourier space, 
at the frequency given by the Fermi golden rule. 
However, notice that a very 
similar phenomenon is dealt with in the present paper, 
with the formation of a singularity at the distorted frequencies $\pm\sqrt{3}$.

For general data, 
one has to deal with the resonance at zero frequency,
which should at least lead a logarithmic slowdown of the decay,
as observed in \cite{LLS20,LLSS}.
Since the decay rate is already critical for the cubic nonlinearity, 
this makes this question extremely delicate.

\smallskip
\subsubsection{The Sine-Gordon equation
}\label{ssecSG}

Choosing $U(\phi) = U_{SG}(\phi) = 1 - \cos \phi$
in \eqref{phi4H} gives the Sine-Gordon equation
\begin{align}\label{SG}
\partial_{t}^2 \phi - \partial_x^2 \phi + \sin\phi = 0
\end{align}
which is integrable and admits the kink solution $K_\mathrm{SG}(x) = 4 \arctan(e^x)$, \cite[Chapter 2]{PDbook}. 
Setting $\phi = K_{\mathrm{SG}}+v$, the perturbation $v$ solves
\begin{equation}\label{SGlin}
\partial_t^2 v + (H_\mathrm{SG} + 1)v = (\sin K_\mathrm{SG}) v^2 + O(v^3), 
  \qquad H_\mathrm{SG} = -\partial_x^2 - 2 \operatorname{sech}^2(x).
\end{equation}
$H_\mathrm{SG}$ has no internal mode (only the eigenvalue $\lambda = -1$ associated with the translation invariance),
and it is exceptional, but with an {\it odd zero energy resonance};
thus, the distorted Fourier transform of an odd function does not vanish at zero energy.
Therefore, despite its similarities with the $\phi^4$ model, 
\eqref{SG} equation does not a priori fall into the class of equation that we can treat with our approach.

The asymptotic stability of the kink could however be proved by means of inverse scattering by Chen, Liu and Lu \cite{CLL}, 
since the sine-Gordon equation is completely integrable. After the first version of the present paper appeared online, another proof of the asymptotic stability of the kink was published by L\"uhrmann and Schlag~\cite{LSSineG}; their beautiful and (relatively) short paper avoids the use of inverse scattering, or of the distorted Fourier transform. They rely on two key observations: on the one hand, the linearized operator around the kink can be factorized in a very convenient way; and on the other hand, the nonlinear coupling of the resonance to the continuous spectrum is canceled by the specific form of the equation. In hindsight, we believe that the latter observation would allow us to treat the sine-Gordon problem within the framework developed in the present paper.

\subsubsection{The double sine-Gordon equation}\label{ssecDSG} 
More interestingly, our results apply to the perturbation of \eqref{SG}
given by the {\it double sine-Gordon} model
\begin{align}\label{DSG}
\partial_t^2 \phi - \partial_x^2 \phi + U'_{DSG}(\phi) = 0, \qquad 
U_{DSG}(\phi) = \frac{1}{1+|4\eta|} \big[ \eta(1-\cos \phi) + 1 + \cos\big(\tfrac{\phi}{2}\big) \big],
\end{align}
where $\eta \in \R$. This model is not integrable for $\eta \neq 0$;
see \cite{PDbook} and Campbell-Peyrard-Sodano \cite{DSG86}, and references therein, 
also for a description of the various physical contexts where \eqref{DSG} has been classically used.
For $\eta < 0$ we obtain asymptotic stability results for kinks of \eqref{DSG}. 
More precisely, there are two ranges of the parameter $\eta$ with corresponding family of kinks
that we can consider:

\setlength{\leftmargini}{1.5em}
\begin{itemize}
\smallskip
\item[1.] For $-1/4 < \eta < 0$, \eqref{DSG} has (up to symmetries) 
a single odd kink connecting the minima of the potential $\pm2\pi$;
let us call this kink $K_1$.

\smallskip
\item[2.] For $\eta <-1/4$, \eqref{DSG} has an odd kink connecting the minima of the potential 
$\pm\phi_0$ with $\cos(\phi_0/2) = 1/4\eta$; let us denote this kink by $K_2$. 
There is also another kink in this range of $\eta$ that we do not consider since we cannot apply our results to it.
\end{itemize}


We have the following asymptotic stability of the $K_1$ and $K_2$ kink solutions for odd perturbations:

\begin{cor}\label{corDSG}
Consider \eqref{DSG} with $\eta \in (-1/4,0)$, respectively $\eta < -1/4$,
with an initial condition of the form 
$(\phi,\phi_t)(0,x)= (K_1(x),0) + (u_{1,0}(x),u_{1,1}(x))$,
respectively 
$(\phi,\phi_t)(0,x)= (K_2(x),0) + (u_{2,0}(x),u_{2,1}(x))$.
Assume that $(u_{i,0},u_{i,1})$, $i=1,2$ are odd and satisfy the same smallness condition in \eqref{initcond}.
Then, the associated global solution $\phi$ can be written as 
\begin{align*}
\phi(t,x) = K_i(x) + u_i(t,x)
\end{align*}
where $u_i$ decays globally on $\R$ as in \eqref{mtLinfty}, satisfies the bounds \eqref{mtbounds},
and has the 
asymptotic behavior described in \eqref{LinfSasyf0}\footnote{
In this case $\ell_{\pm \infty}$ can be explicitly calculated from the values of 
$\partial_\phi^\ell U(K_i(\pm\infty))$ for $\ell=3,4$.}
\end{cor}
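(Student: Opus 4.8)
The plan is to reduce the corollary to a direct application of Theorem~\ref{maintheo} (in the form of Corollary~\ref{cor1}) after carefully checking that the linearized equation \eqref{linearizationKi} falls into the allowed class once restricted to odd perturbations. First I would recall the standard local well-posedness and persistence of regularity for \eqref{DSG} in weighted Sobolev spaces, which is unproblematic since $U_{DSG}$ is smooth and $\phi = K_i + v$ with $K_i$ smooth and $U_i'(K_i), U_i''(K_i)$ smooth and exponentially approaching their limits; this lets us write, for short times, $\phi(t,x) = K_i(x) + u_i(t,x)$ with $u_i$ odd (oddness is propagated because $K_i$ is odd, $U_i'$ is odd as a function of $\phi$, hence the nonlinearity in \eqref{linearizationKi} maps odd functions to odd functions, and the zero initial data for $\phi_t$ and the odd data $(u_{i,0},u_{i,1})$ are consistent with this). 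The key structural point is that \eqref{linearizationKi} has exactly the form \eqref{mtKG}: setting $H_i = -\partial_x^2 + V_i$ with $V_i = U_i''(K_i) - m_i^2$ Schwartz (in fact exponentially decaying, which is more than enough for the Schwartz-type hypotheses, modulo the footnote about weaker assumptions), and expanding $-U_i'(K_i+v) + U_i'(K_i) + U_i''(K_i)v = -\tfrac12 U_i'''(K_i) v^2 + O(v^3)$, we read off $a(x) = -\tfrac12 U_i'''(K_i(x))$, which is smooth and whose difference from its limits $\ell_{\pm\infty} = -\tfrac12 U_i'''(\pm\text{(minimum)})$ decays super-polynomially (exponentially). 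The cubic and higher-order remainder is harmless by the remark in the introduction that cubic and higher terms with non-constant coefficients can be absorbed, the symmetry being respected here since all the Taylor coefficients $U_i^{(k)}(K_i)$ are smooth and the nonlinearity preserves oddness.

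Next I would verify the spectral hypotheses. By the Appendix (relying on the arguments of \cite[Section 5.6]{KowMarMunVDB}), $H_i$ is generic and has no negative eigenvalues except the translation mode $-m_i^2$ hmm --- wait, actually one must be slightly careful: $H_i + m_i^2$ is what appears, and the ``translation mode'' $\psi = K_i'$ satisfies $H_i K_i' = -m_i^2 K_i'$, i.e. it is the bottom of the spectrum of $H_i$, so $H_i$ does have a negative eigenvalue $-m_i^2$. However $K_i'$ is \emph{even} (derivative of an odd function), so on the subspace of odd functions $H_i$ has no bound states, which is exactly what is needed: restricted to odd data, hypothesis \eqref{mtV} effectively holds, and moreover $V_i$ being generic means $0$ is not a resonance, so by Remark~\eqref{RemZerofreq} the condition \eqref{mt0ave}, i.e. $\wt{u_i}(t,0) = 0$, is automatic (for generic $V$ one has $\wt{f}(0) = 0$ whenever $f \in L^1$, and the solution stays in $L^1$ by the decay and weighted bounds we propagate). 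Thus case (A) of Corollary~\ref{cor1} applies --- or more precisely, the odd-restricted version of Theorem~\ref{maintheo} applies with the eigenvalue obstruction removed by parity. I would state this reduction as a lemma: the map $v \mapsto (\text{odd part in the continuous spectral decomposition})$ is trivial here because there is no odd discrete mode, so $u_i = v$ and no modulation of the kink center is needed.

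With these verifications in place, Theorem~\ref{maintheo} yields: global existence of a unique solution $u_i \in C(\R, H^5)$, the pointwise decay \eqref{mtLinfty} (which gives $\|(\sqrt{H_i+m_i^2}\, u_i, \partial_t u_i)(t)\|_{L^\infty} \lesssim \e_0 (1+|t|)^{-1/2}$, hence in particular $\|u_i(t)\|_{L^\infty} \lesssim \e_0(1+|t|)^{-1/2}$ after noting $\sqrt{H_i+m_i^2}$ is elliptic of order one and bounded below), the global $L^2$-type bounds \eqref{mtbounds}, and the modified-scattering asymptotics \eqref{LinfSasyf0} for the renormalized profile. Translating back, $\phi(t,x) = K_i(x) + u_i(t,x)$ is the desired global solution and the stated bounds on $u_i$ follow verbatim. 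The main obstacle --- though it is bookkeeping rather than a genuine difficulty here --- is making sure the reduction from the full equation \eqref{DSG} to the continuous-spectrum model \eqref{mtKG} is clean: one must confirm that the even negative eigenvalue $-m_i^2$ of $H_i$ does not interfere (it does not, by the parity restriction), that no odd internal mode is present (this is the content of the Appendix computation showing $H_i$ has no eigenvalues beyond the translation mode), and that the cubic-and-higher remainder genuinely fits the framework with the correct symmetry; all of the hard analysis has already been done inside Theorem~\ref{maintheo}.
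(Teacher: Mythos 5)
Your argument is correct and follows essentially the same route the paper intends: write $\phi = K_i + u_i$, identify \eqref{linearizationKi} as an instance of \eqref{mtKG} with $a = -\tfrac12 U_i'''(K_i)$, invoke the Appendix for genericity and absence of non-translation bound states, and apply Theorem~\ref{maintheo} restricted to odd data. The one place where you add genuine value is making explicit the point the paper leaves implicit: $H_i$ \emph{does} have a negative eigenvalue $-m_i^2$ with eigenfunction $K_i'$, so hypothesis \eqref{mtV} fails literally, and it is only the parity restriction (odd perturbation, even translation mode) that removes the obstruction and makes modulation of the kink center unnecessary. That observation is needed for the reduction to be honest, and you state it clearly.
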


\begin{proof}

We let $\phi = K_i + v$, with $i=1,2$ and denote $U_1 := U_{DSG}$ when $-1/4 < \eta < 0$,
and $U_2 := U_{DSG}$ when $\eta < -1/4$.
Then, from \eqref{DSG} we get 
\begin{align}\label{linearizationKi}
\begin{split}
& (\partial_t^2 + H_i + m_i^2)v = - U_i'(K_i+v) + U_i'(K_i)
  + U_i''(K_i) v 
\\
& H_i = -\partial_x^2 + V_i, \qquad V_i(x) = U_i''(K_i)-m_i^2,
\qquad m_i^2 := \lim_{x\rightarrow \pm \infty} U_i''(K_i) > 0.
\end{split}
\end{align}
More precisely, $m_1^2 = (1-4\eta)^{-1}(\eta+1/4)$ and $m_2^2 = 1/(16\eta) - \eta$. 

It can be shown that {\it $H_i$ is generic and has no eigenvalues}, except the translation mode;
see the Appendix \ref{Appendix} for a short proof relying on the arguments of \cite[Section 5.6]{KowMarMunVDB}.
In particular, the assumptions of Theorem \ref{maintheo} 
hold for odd solutions of \eqref{linearizationKi}.
The conclusions of Theorem \ref{maintheo} applied to $v$ 
then imply the statement of this corollary.
\end{proof}

For the double sine-Gordon model \eqref{DSG} in the same range of $\eta$ above
(and also for several other scalar field models with the same spectral properties)
Kowalczyk-Martel-Mu\~{n}oz-Van Den Bosch \cite{KowMarMunVDB} proved local asymptotic stability in the
energy space. Compared to this latter result, Corollary \ref{corDSG} gives asymptotic
stability on the full real line, and modified scattering, provided the data is (mildly) localized and odd.

\smallskip
\subsubsection{General relativistic Ginzburg-Landau theories}\label{ssecGLintro}
Our approach and results apply similarly to general relativistic Ginzburg-Landau theories, 
where the potential in \eqref{phi4H} is taken to be of double-well type, with the following expansion at the minima $\pm a$
\begin{align}\label{GLU}
U(\phi) = U_\mathrm{GL}(\phi) = \frac{1}{2}(|\phi|-a)^2 + O\big((|\phi|-a)^{p+1}\big), \qquad p \geq 2.
\end{align}
The corresponding equations $\phi_{tt} - \phi_{xx} + U_{GL}'(\phi) = 0$
admit kink solutions $K_\mathrm{GL}$ exponentially converging to $\pm a$ at $\pm \infty$; 
see \cite{KomKop,KomKop2,Kowetal2}. 
The dynamics for the perturbation $v$ (up to a standard modulation if necessary) becomes
\begin{align}\label{linearizationK'}
\begin{split}
& (\partial_t^2 + H_\mathrm{GL} + 1)v = -U_{GL}'''(K_\mathrm{GL}) v^2 + \frac{1}{2} U_{GL}^{(4)}(K_\mathrm{GL}) v^3 
  + O(v^4), 
\\
& H_\mathrm{GL} = -\partial_x^2 + V_\mathrm{GL}, \qquad V_\mathrm{GL}(x) = U_\mathrm{GL}''(K)-1.
\end{split}
\end{align}
In analogy with the discussion on the $\phi^4$ model, 
our analysis can be applied directly to the ``continuous subsystem"  
(the analogue of \eqref{phi4linmodel}) which takes the form
\begin{align}\label{linearizationK'c}
(\partial_t^2 + H_{GL} + 1)u = P_c \big( -U_{GL}'''(K_{GL}) u^2 
  + \frac{1}{2} U_{GL}^{(4)}(K_{GL}) u^3 + O(u^4) \big).
\end{align}

If one assumes that the minima of the well are sufficiently flat, or in other words that $p$ is sufficiently big, 
the coefficients $U_\mathrm{GL}^{(k)}(K_{GL})$, $3\leq k\leq p+1$, become exponentially decaying, 
and this simplifies considerably the nonlinear analysis. 
Komech-Kopylova fully analyzed the radiation-damping phenomenon associated to the internal mode,
and obtained asymptotic stability in \cite{KomKop} for $p \geq 14$.
While Komech-Kopylova required a large $p$, 
the methods introduced in the present paper certainly allow the treatment 
of smaller values of $p$ (e.g., one should be able to comfortably reach $p=5$, that is, a non-localized quintic nonlinearity).

\smallskip
\subsubsection{The Nonlinear Klein-Gordon equation}\label{ssecNLKG}
This final example 
involves localized solitons.  The potential
$$
U(\phi) = U_{p}(\phi) = \frac{1}{2} \phi^2 - \frac{1}{p+1} \phi^{p+1}
$$
gives the $1+1$ focusing nonlinear Klein-Gordon equations
\begin{align}\label{NLKG}
\partial_{t}^2 \phi - \partial_x^2 \phi + \phi = \phi^p,
\end{align}
for $p=2,3,4,\dots$.
These admit the soliton solution
\begin{align}\label{NLKGSol}
Q(x) = Q_p(x):= (\alpha+1)^\frac{1}{2\alpha} \mathrm{sech}^{1/\alpha}(\alpha x), \qquad \alpha := \tfrac{1}{2}(p -1).
\end{align}
By assuming even symmetry we may neglect the soliton manifold obtained under Lorentz transformations.
The equation for the perturbation $v$ ($\phi=Q+v$) is
\begin{align}\label{linearizationQ}
\begin{split}
& (\partial_t^2 + H_{p} + 1)v 
  = \frac{1}{2}p(p-1) Q^{p-2} v^2 + \cdots + v^p. 
\\
& H_{p} := -\partial_x^2 + V_{p}, \qquad V_{p}(x) := -pQ^{p-1}.
\end{split}
\end{align}
It is known that $H_{p}$ has a negative eigenvalue at $-\alpha(\alpha+2)-1$, which makes the soliton unstable.
However, besides this and the $-1$ eigenvalue associated to the translation invariance, 
$H_{p}$ has no other negative eigenvalues 
when $p>3$ \cite{CGNT,KowetalNLKG}.
Note that when $p=3$, $H_3$ coincides (up to a rescaling) with $H_0$, see \eqref{linearizationK}; 
since the resonance is even our results do not apply to the corresponding continuous subsystem.

When $p=2$ instead, the linearized operator $H_2$ has an odd resonance.
Therefore, 
asymptotic stability holds for 
small even solutions of the continuous subsystem 
\begin{align}\label{linearizationQ2}
(\partial_t^2 + H_2 + 1)u = P_c\, u^2. 
\end{align}

A natural question for \eqref{NLKG} is the construction of stable 
manifolds for solutions suitably close to the soliton, 
and the asymptotic stability of the subclass of global solutions.
For $p>5$ 
this was done by Krieger-Nakanishi-Schlag \cite{KriNakSch}.
More recently, \cite{KowetalNLKG} proved a conditional 
asymptotic stability result locally in the energy space for global solutions.
For $p\leq 5$ the problem of full asymptotic stability appears to be still open.
A serious obstacle to the construction of a stable manifold is to 
prove a robust small data scattering theory for low power nonlinearities. 
While this cannot be done using Strichartz-type estimates, 
which only exploit the decay of the solution, it becomes amenable to our techniques, 
which take advantage of the full resonant structure.
In particular, the cases $p=2,4$ and $5$ 
can be directly approached with our methods.
Note that even for $p=4$ (or $5$), despite the quadratic and cubic terms in the nonlinearity are localized,
one would still need to exploit oscillations in frequency space 
to deal with the weak decaying quartic (or quintic) nonlinearity.

%
%



\medskip
\subsection*{Acknowledgements}
The authors would like to thank F. Rousset for many useful discussions.

\noindent
We would also like to thank warmly the anonymous referees who gave many helpful suggestions
which substantially improved the manuscript.

\bigskip
\section{Ideas of the proof}

The starting ingredient in our approach is the Fourier transform adapted to the Schr\"odinger operator 
$-\partial_{xx} + V$, the so called distorted Fourier transform (or Weyl-Kodaira-Titchmarsh theory).
The basic idea is to try to extend Fourier analytical techniques 
used to study small solutions of nonlinear equations without potentials,
and develop new tools in the perturbed setting.

In the setting of the distorted Fourier transform, we begin by
filtering the solution by the linear (perturbed) group, and view the (nonlinear) 
Duhamel's formula as an oscillatory integral in frequency and time.
In the unperturbed case $V=0$, this point of view was
proposed in the works \cite{GMS1,G,GMS2} 
with the so-called `space-time resonance' method; see also \cite{GNTGP}.
In the past ten years this proved to be a very useful 
approach to study the long-time behavior of weakly nonlinear dispersive equations 
in the Euclidean/unperturbed setting.
As already mentioned in Subsection \ref{introres}, the presence of a potential 
introduces some fundamental differences which lead to a number of new phenomena and difficulties. 

\medskip
\subsection{Setup: dFT and the quadratic spectral distribution}
We refer to Section \ref{secspth} for a more detailed presentation of the distorted Fourier transform (dFT),
and admit for the moment the existence of generalized eigenfunctions $\psi = \psi(x,\xi)$ such that
\begin{equation}
\label{efcts}
\forall \quad \xi \in \R, \qquad (- \partial_x^2 + V ) \psi(x,\xi) = \xi^2 \psi(x,\xi),
\end{equation}
and that the familiar formulas relating the Fourier transform and its inverse in dimension $d=1$ hold if one replaces
(up to a constant)
$e^{i\xi x}$ by $\psi(x,\xi)$:
\begin{equation}
\label{DistTrans}
\widetilde{f}(\xi) = \int_\R \overline{\psi(x,\xi)} f(x)\,dx 
  \qquad \mbox{and} \qquad f(x) = \int_\R {\psi(x,\xi)} \widetilde{f}(\xi) \,d\xi.
\end{equation}

Let us consider a solution of the equation
\begin{equation*}
\partial_t^2 u + (- \partial_x^2 + V(x) + 1) u = a(x)u^2, \qquad (u,u_t)(t=0) = (u_0,u_1).
\end{equation*}
Defining the profile $g$ by
\begin{align}\label{profile}
\begin{split}
& g(t,x):=e^{it\sqrt{H+ 1}}\big(\partial_t -i \sqrt{ H + 1 } \big)u,
\qquad \wt{g}(t,\xi) = e^{it\jxi}\big(\partial_t - i\jxi \big) \wt{u},
\end{split}
\end{align}
and denoting $\wt{g}_+=\wt{g}$, $\wt{g}_-= \bar{\wt{g}}$,
one sees that $\wt{g}$ satisfies an equation of the form
\begin{align}\label{dtgintro}
\partial_t \wt{g}(t,\xi) & = - \sum_{\iota_1,\iota_2\in\{+,-\}} 
\iota_1\iota_2  \iint e^{it \Phi_{\iota_1\iota_2}(\xi,\eta,\sigma)} 
\wt{g}_{\iota_1}(t,\eta) \wt{g}_{\iota_2}(t,\sigma) 
  \, \frac{\mu_{\iota_1\iota_2}(\xi,\eta,\sigma)}{4\jeta \jsig} 
  \, d\eta \, d\sigma,
\end{align}
where the oscillatory phase is given by
\begin{equation}\label{introPhi0}
\Phi_{\iota_1 \iota_2}(\xi,\eta,\sigma) = \jxi - \iota_1 \jeta - \iota_2 \jsig,
\end{equation}
and
\begin{align}
\label{intromu0}
\mu_{\iota_1\iota_2}(\xi,\eta,\sigma) := \int a(x) 
  \overline{\psi(x,\xi)} \psi_{\iota_2}(x,\eta) \psi_{\iota_1}(x,\sigma) \, dx
\end{align}
is what we refer to as the (quadratic) ``{\it nonlinear spectral distribution}'' (NSD).

For the sake of exposition we will drop the signs $(\iota_1,\iota_2)$ from $\wt{g}$ and $\mu$ 
since they do not play any major role. We will instead keep the relevant signs in \eqref{introPhi0}
and the analogous expressions for cubic interactions. 
We also drop the factor $\jeta\jsig$ in \eqref{dtgintro}.
With this simplifications, integrating \eqref{dtgintro} over time gives
\begin{align}\label{Duhamelintro}
\wt{g}(t,\xi) = \wt{g_0}(\xi)
-i \sum_{\iota_1,\iota_2 \in \{+,-\}} \int_0^t \iint e^{is \Phi_{\iota_1 \iota_2}(\xi,\eta,\sigma)} 
  \wt{g}(s,\eta) \wt{g}(s,\sigma) \mu(\xi,\eta,\sigma) \,d\eta\, d\sigma\,ds.
\end{align}


The first task is to analyze $\mu$ in \eqref{intromu0}, and we immediately see an essential difference with the flat case $V=0$:
in the absence of a potential, the generalized eigenfunctions $\psi(x,\xi)$ should be replaced by $e^{i\xi x}$,
in which case $\mu(\xi,\eta,\s) = \delta(\xi-\eta-\s)$; 
in particular, the sum of the frequencies of the two inputs, that is, $\eta$ and $\s$, gives the output frequency $\xi$.
This can be thought of as a `conservation of momentum' or `correlation' between the frequencies.
But if $V\neq 0$, the structure of $\mu$ becomes more involved, and there is no a priori relation between the frequencies.
This can be seen as a `de-correlation' or `uncertainty' due to the presence of the potential. 

For the sake of this presentation, we can essentially think that 
\begin{align}\label{muintro1}
\begin{split}
\mu(\xi,\eta,\sigma) = \sum_{\mu,\nu \in\{+,-\}} \Big[
& A_{\mu,\nu}(\xi,\eta,\sigma) \delta(\xi + \mu \eta + \nu \s)
\\+ & B_{\mu,\nu}(\xi,\eta,\sigma) \, \pv \frac{1}{\xi + \mu \eta + \nu \s} \Big] 
+ C(\xi,\eta,\sigma),
\end{split}
\end{align}
where $A_{\mu,\nu}$, $B_{\mu,\nu}$ and $C$ are 
smooth functions 
and ``$\pv$'' stands for principal value.

The $\delta$ component of $\mu$ gives a contribution to \eqref{Duhamelintro} which is essentially the same as in the flat case, 
only algebraically more complicated due to the different signs combinations and the coefficients 
(which are related to the transmission and reflection coefficients of the potential).
One could expect to treat these terms 
as in the classical flat case,
that is, using a normal form transformation to eliminate the quadratic term in favor of cubic ones \cite{shatahKGE,DelortKG1d,HNKG}.

The $\pv$ term in \eqref{muintro1} 
seriously 
impacts the nature of the problem at hand.
When the variable $\xi + \mu \eta + \nu \s$ that determines the singularity is very small, one could think that the 
corresponding interactions are not so different from those allowed by the $\delta$ distribution, 
possibly only logarithmically worse.
When instead $\xi + \mu \eta + \nu \s$ is not too small 
we have in essence a smooth kernel.
While this might seem like a favorable situation, it is in fact a major complication.
The de-correlation between the input and output frequencies prevents the application of a normal form transformation
(quadratic terms cannot be eliminated); even more, it creates a genuinely nonlinear phenomenon of loss of regularity (in Fourier space)
at specific bad frequencies. We explain this in more details in the following paragraphs.

\smallskip
\subsection{Oscillations and Resonances: Singular vs. Regular terms}\label{ssecosc}

Let us consider the quadratic interactions in \eqref{Duhamelintro} and according to \eqref{muintro1}
write these as
\begin{align}\label{osc1}
\int_0^t \iint e^{is \Phi_{\iota_1 \iota_2}(\xi,\eta,\sigma)} \wt{g}(s,\eta) \wt{g}(s,\sigma) 
  \, \mathfrak{m}(\xi,\eta,\sigma) \,d\eta\, d\sigma \, ds,
\end{align}
where $\mathfrak{m}(\xi,\eta,\sigma)$ can be a distribution (i.e., a $\delta$ or a $\pv$)
or a smooth function.
The properties of \eqref{osc1} are dictated by the oscillations of the 
exponential factor and the structure of the singularities of $\mathfrak{m}$.
More precisely,

\begin{itemize}

\smallskip
\item If $\mathfrak{m}=\delta(\xi - \mu \eta - \nu \sigma)$ or $\mathfrak{m} = \pv \frac{1}{\xi - \mu \eta - \nu \sigma}$, 
resonant oscillations can be characterized as the stationary points of the phase 
$s \Phi_{\iota_1 \iota_2}$, \textit{restricted} to the singular hypersurface $\{ \xi - \mu \eta - \nu \sigma = 0 \}$. 
Up to changing coordinates, we can reduce to the phase
\begin{align}\label{oscphiS0}
\Phi^S_{\iota_1 \iota_2}(\xi,\eta) 
  = \langle \xi \rangle - \iota_1 \langle \eta \rangle - \iota_2 \langle \xi-\eta \rangle,
\end{align}
(where we added the superscript $S$ to emphasize that we consider a singular $\mathfrak{m}$),
for which stationary points satisfy
\begin{align}\label{oscphiS}
\Phi^S_{\iota_1 \iota_2}(\xi,\eta) = \partial_\eta \Phi^S_{\iota_1 \iota_2}(\xi,\eta) = 0. 
\end{align}
These are the classical resonances.

\smallskip
\item If $\mathfrak{m}$ is smooth we need to look at the 
{\it unrestricted} stationary points of the phase $s \Phi^R_{\iota_1 \iota_2} = s(\jxi-\iota_1\jeta-\iota_2\jsig)$ 
(where we added the superscript $R$ to emphasize that we consider a regular $\mathfrak{m}$),
that is
\begin{align}\label{oscphiR}
\Phi^R_{\iota_1 \iota_2}(\xi,\eta,\sigma) = \partial_\eta \Phi^R_{\iota_1 \iota_2}(\xi,\eta,\sigma) 
  = \partial_\sigma \Phi_{\iota_1 \iota_2}^R(\xi,\eta,\sigma) = 0. 
\end{align}

\end{itemize}

This simple and natural distinction has important implications on the behavior of \eqref{osc1}, hence on the solution of 
the nonlinear equation, which we now discuss. 

\medskip
\subsection{Regular quadratic terms and the bad frequencies}\label{introQR}
Let us first look at the case when $\mathfrak{m}$ is smooth. 
The regular quadratic phase $\Phi^R_{\iota_1 \iota_2}(\xi,\eta,\sigma) = \jxi - \iota_1 \jeta - \iota_2 \jsig$ leads to 
rather harmless interactions if $(\iota_1\iota_2) \neq (++)$ since in this case there are no solutions to \eqref{oscphiR}.
For the $(\iota_1\iota_2)=(++)$ interaction we have that
\begin{align}\label{oscRes}
\Phi^R_{++}= \partial_\eta \Phi^R_{++} = \partial_\sigma \Phi^R_{++} = 0
\quad \Longleftrightarrow \quad (\xi,\eta,\s)=(\pm\sqrt{3},0,0).
\end{align}
This is a full resonance or coherent interaction 
and it is the source of many of the difficulties.
Notice that this sort of interaction is generic in dimension $1$ in the presence of a potential,
since in \eqref{oscphiR} there are $3$ variables 
and as many equations to solve.
Obviously, a similar phenomenon would occur already in the case $V=0$ and a nonlinear term of the form $a(x)u^2$.

Recall that the classical theory of quadratic/cubic one-dimensional dispersive problems 
revolves around trying to control weighted-type norms of the form $\|x g\|_{L^2}$. 
The natural candidate in our context is then $\| \partial_\xi \wt{g}\|_{L^2_\xi}$. 
In some cases, such as \eqref{KG}, or the more standard examples of flat cubic NLS and cubic KG equations,
one knows that a uniform-in-time bound cannot be achieved due to long-range effects already present in the 
corresponding flat problem.
As the next best thing one can try to establish
\begin{align}\label{introQRas}
\| \partial_\xi \widetilde{g}\|_{L^2_\xi} \lesssim \jt^\alpha
\end{align}
for some small $\alpha >0$. 

Let us now explain how \eqref{introQRas} is incompatible with the nonlinear resonance \eqref{oscRes}.
Since our assumptions will always guarantee $\wt{g}(0) = 0$, \eqref{introQRas} implies
\begin{equation}\label{sauterelle}
|\widetilde{g}(\xi)| \lesssim \jt^\alpha |\xi|^{1/2}.
\end{equation}
Consider then the main $(++)$ contribution to the right-hand side of \eqref{osc1}, namely
\begin{align}\label{QRintro++}
\mathcal{Q}^R_{++}(t,\xi) := \int_0^t \iint e^{is \Phi_{++}(\xi,\eta,\sigma)} 
  \wt{g}(s,\eta) \wt{g}(s,\sigma) \mathfrak{q}(\xi,\eta,\sigma) \,d\eta\, d\sigma\,ds,
\end{align}
where $\mathfrak{q}$ is a smooth symbol. 
Up to lower order terms,
\begin{equation}
\label{ecureuil}
\partial_\xi \mathcal{Q}^R_{++}(t,\xi) \approx \int_0^t \iint s \frac{\xi}{\langle \xi \rangle} 
	e^{is \Phi_{++}(\xi,\eta,\sigma)} 
	\mathfrak{q}(\xi,\eta,\sigma) \wt{g}(s,\eta) \wt{g}(s,\sigma) \, d\eta\,d\sigma. 
\end{equation}
Observe that $|s \Phi_{++} | \ll 1$ if $|\xi - \sqrt{3}| + |\eta|^2 + |\sigma|^2 \ll \js^{-1}$,
and that in this region
there are no oscillations that can help.  
Thus, when $\mathfrak{q}(\pm\sqrt{3},0,0) \neq 0$, 
we are led to the following heuristic lower bound:
for $||\xi| - \sqrt{3}| \approx r$
\begin{align}\label{introQR1}
\big| \partial_\xi \mathcal{Q}^R_{++}(t,\xi) \big|
  \gtrsim \int_1^{\min(\frac{1}{r},t)} s \cdot \js^{2\alpha} \int_{|\eta|^2 + |\sigma|^2 
  \leq s^{-1}} |\eta|^{1/2} |\sigma|^{1/2} \,d\eta\,d\sigma \,ds 
  \approx \min\Big(\frac{1}{r},t\Big)^{\frac{1}{2} + 2\alpha}.
\end{align}
This implies that, if $\jt^{-1} \leq r \ll \jt^{-1/2}$,
\begin{align}\label{introQR2}
{\big\| \partial_\xi \mathcal{Q}^R_{++} \big\|}_{L^2(|\xi-\sqrt{3}| \approx r)} \gtrsim r^{-2\alpha} \gg \jt^{\alpha},
\end{align}
which is inconsistent with the bootstrap hypothesis \eqref{introQRas}. 
We then need to modify the bootstrap norm to a version of $\| \partial_\xi \wt{f} \|_{L^2}$ 
which is localized dyadically around $\pm \sqrt{3}$ and degenerates as $|\xi| \rightarrow \sqrt{3}$.
The analysis needed to propagate such a degenerate norm
turns out to be quite delicate.
A phenomenon similar to the one described above was previously observed in \cite{DIP,DIPP} 
in the two dimensional (unperturbed) setting.

Note that in the heuristics \eqref{introQR2} one would get better bounds, 
consistent with \eqref{sauterelle}, when $\mathfrak{q}(\pm\sqrt{3},0,0) = 0$.
For the model \eqref{KG2} one has
$\mathfrak{q}(\pm\sqrt{3},0,0) \neq 0$
when $V$ is non-generic (case (E)) and $\wt{a}(\pm\sqrt{3}) \neq 0$.
A true degeneracy in frequency space will then occur for these models.
For \eqref{mtKG}, under the assumptions (A) or (B) or (C), it is instead 
possible to show that $\mathfrak{q}(\xi,0,0) = 0$;
this is connected to the discussion at the end of Remark \eqref{remlocdec},
and the possibility of simplifying the functional framework in this case.

\begin{rem}\label{RemRes}
The argument above also shows that, if $\wt{g}(0) \neq 0$, then
\begin{align*}
\mathcal{Q}^R_{++}(t,\xi) \approx  \int_0^t e^{is (\jxi-2)} \mathfrak{q}(\xi,0,0) 
  \big( \wt{g}(s,0) \big)^2 \frac{ds}{s+1} + \cdots
\end{align*}
so that $\mathcal{Q}^R_{++}(t,\pm\sqrt{3})$ is logarithmically diverging 
if $\mathfrak{q}(\pm\sqrt{3},0,0) \neq 0$. 
This suggests that $\wt{g}$ is not uniformly bounded, 
which in turn implies that the solution cannot decay pointwise at the linear rate; see \eqref{locdec0}.
In the case of \eqref{RemFlatKG} with localized $a(x)$ such that $\what{a}(\pm\sqrt{3}) \neq 0$
(and no cubic terms) 
this has been rigorously proved 
in \cite{LLS20}, where the authors construct global solutions that decay
in $L^\infty_x$ at the optimal rate of $\log t /t$.
This result was then extended in \cite{LLSS} to the case of any non-generic potential
with the corresponding condition $\wt{a}(\pm\sqrt{3}) = 0 $.


Also note that $\wt{g}(0)\neq 0$ will give an asymptotic of the form 
$\partial_\xi \mathcal{Q}^R_{++}(t,\xi) \approx |\jxi-2|^{-1}$.
When localized at the scale $||\xi|-\sqrt{3}| \approx 2^\ell$, this gives an $L^2$ norm of size $2^{-\ell/2}$.
The functional framework that we will adopt does not quite allow for such a singularity,
as this would correspond to choosing the parameter $\beta=1/2$ in the definition of the norm
in \eqref{wnorm} (this is the norm in which we will measure the derivative of our (renormalized) profile in frequency space).
However, we can allow essentially any slightly less singular behavior;
this seems to suggest that a zero energy resonance may be treated by our methods at least for long-times.
\end{rem}


\medskip
\subsection{Singular quadratic and cubic terms} 
\label{introQS}

Let us now consider the quadratic interactions in \eqref{Duhamelintro} which correspond to the first two terms in 
\eqref{muintro1}.
Disregarding the irrelevant signs $\mu,\nu$ and the coefficients $A,B$, let us denote them by
\begin{align}\label{QSintro}
\mathcal{Q}^M_{\iota_1\iota_2}(t,\xi) := \int_0^t \iint e^{is \Phi^S_{\iota_1\iota_2}(\xi,\eta,\sigma)} 
  \wt{g}(s,\eta) \wt{g}(s,\sigma) M(\xi-\eta-\sigma) \,d\eta\, d\sigma\,ds, 
  \quad M \in \{ \delta,\pv\} .
\end{align}

\
\subsubsection*{The $\delta$ case}
The case of the $\delta$ distribution corresponds to the Euclidean ($V=0$) quadratic Klein-Gordon
which is not resonant (in any dimension), 
in the sense that for any $\xi,\eta \in \R$, and $\iota_1,\iota_2\in\{+,-\}$, 
\eqref{oscphiS0} never vanishes,
and more precisely
\begin{align}\label{introlb}
\big| \jxi -\iota_1 \jeta -\iota_2 \langle \xi -\eta \rangle \big| \gtrsim \min(\jxi, \jeta,\langle\xi-\eta\rangle)^{-1}.
\end{align}
This implies that the quadratic interactions $\mathcal{Q}^{\delta}_{\iota_1\iota_2}(t,\xi)$ can be eliminated by a normal form transformation.
This was first shown in the seminal work of Shatah \cite{shatahKGE} in $3$d,
and crucially used in the $1$d case in \cite{DelortKG1d} and \cite{HNKG}.
 
Applying a normal form transformation to \eqref{QSintro} 
gives quadratic boundary terms that we disregard for simplicity, and cubic terms when 
$\partial_t$ hits the profile $\wt{g}$. From \eqref{Duhamelintro}-\eqref{muintro1} we see that
these cubic terms can be of several types depending on the various combinations of convolutions between $\delta,\pv$ 
and smooth functions.
Without going into the details of these (we refer the reader to Section \ref{secdec}), 
we concentrate on the simplest interaction,
that is, the `flat' one
\begin{align}\label{CSintro}
\begin{split}
\mathcal{C}^{S}_{\iota_1 \iota_2 \iota_3}
(t,\xi)
& = \iint e^{it \Phi^S_{\iota_1 \iota_2 \iota_3}(\xi,\eta,\zeta)} \mathfrak{c}^S_{\iota_1 \iota_2 \iota_3}(\xi,\eta,\zeta) 
	\,  \wt{g}_{\iota_1}(t,\xi-\eta) \wt{g}_{\iota_2}(t,\xi-\eta-\zeta)  \wt{g}_{\iota_3}(t,\xi-\zeta) \, d\eta \, d\zeta,
\end{split}
\end{align}
with a smooth symbol $\mathfrak{c}^S_{\iota_1 \iota_2 \iota_3}$, and phase functions
$$
\Phi^S_{\iota_1 \iota_2 \iota_3} (\xi,\eta,\zeta) 
= \langle \xi \rangle - \iota_1 \langle \xi - \eta \rangle - 
  \iota_2 \langle \xi - \eta -\zeta \rangle -\iota_3 \langle \xi - \zeta \rangle. 
$$
We observe that if $\{\iota_1,\iota_2,\iota_3\} \neq \{+,+,-\}$, the equations
$\partial_\eta \Phi^S_{\iota_1 \iota_2 \iota_3} 
= \partial_\zeta \Phi^S_{\iota_1 \iota_2 \iota_3} = \Phi^S_{\iota_1 \iota_2 \iota_3} = 0$
have no solutions,
and therefore the case $\{\iota_1,\iota_2,\iota_3\} = \{+,+,-\}$ is the main one.
If we look at the $(+-+)$ phase for simplicity,
we see that, for every fixed $\xi$,
$$
\Phi^S_{+-+} = \partial_\eta \Phi^S_{+-+} = \partial_\zeta \Phi^S_{+-+} = 0 \quad \Longleftrightarrow \quad \eta = \zeta = 0.
$$
This resonance is responsible for the logarithmic phase correction appearing in \eqref{LinfSasyf0}.
We refer the reader to \cite{KP,IoPu1,GPR2} 
where a similar phenomenon has been dealt with.
We should point out however that, in our case, 
the asymptotic behavior \eqref{LinfSasyf0} is slightly harder to capture
because of the degenerate weighted norm, and of the algebraic complications 
due to the treatment of potentials with general transmission and reflection coefficients.

\subsubsection*{The $\pv$ case}
The main observation that allows us to treat the terms 
$\mathcal{Q}^\pv_{\iota_1\iota_2}$ 
is the following: when $|\xi-\eta-\s|$ 
is much smaller than the right-hand side of \eqref{introlb} these terms 
are similar to $\mathcal{Q}^S_{\iota_1\iota_2}$.
When instead $|\xi-\eta-\s|$ is away from zero, the symbol in \eqref{QSintro} is actually smooth,
which gives a term like the regular $\mathcal{Q}^R_{\iota_1\iota_2}$ discussed before.

\medskip
\subsection{The functional framework}\label{ssecff}
To measure the evolution of our solutions we need to take into account various aspects including
pointwise decay, spatial localization (which we measure through regularity on the distorted Fourier side),
the coherent space-time resonance phenomenon \eqref{oscRes} (which dictates the choice of our $L^2$-based norm), 
and long-range asymptotics.
We describe our functional setting below after introducing the necessary notation.

%
%
%
%
%

\smallskip
\subsubsection{Notation}\label{secNotation}
To introduce our functional framework, we first define the Littlewood-Paley frequency decomposition. 

\medskip
\noindent
{\it Frequency decomposition}.
We fix a smooth even cutoff function  $\varphi: \R \to [0,1]$ 
supported in $[-8/5,8/5]$ and equal to $1$ on $[-5/4,5/4]$.
Note that the choice of the number $8/5$ for the support of $\varphi$ is fairly arbitrary, 
and other choices are possible; however, this number is chosen to be less than $\sqrt{3}$ 
so that when we define the cutoffs $\chi_\ell$ centered around $\pm\sqrt{3}$ in \eqref{chil} 
we can start the indexing at $0$.

For $k \in \Z$ we define $\varphi_k(x) := \varphi(2^{-k}x) - \varphi(2^{-k+1}x)$, 
so that the family $(\varphi_k)_{k \in\Z}$ forms a partition of unity,
\begin{equation*}
 \sum_{k\in\Z}\varphi_k(\xi)=1, \quad \xi \neq 0.
\end{equation*}
We  let
\begin{align}\label{cut0}
\varphi_{I}(x) := \sum_{k \in I \cap \Z}\varphi_k, \quad \text{for any} \quad I \subset \R, \quad
\varphi_{\leq a}(x) := \varphi_{(-\infty,a]}(x), \quad \varphi_{> a}(x) = \varphi_{(a,\infty)}(x),
\end{align}
with similar definitions for $\varphi_{< a},\varphi_{\geq a}$.
We will also denote $\varphi_{\sim k}$ a generic smooth cutoff function 
that is supported around $|\xi| \approx 2^k$, for example, $\varphi_{[k-2,k+2]}$ or $\varphi'_k$.

We denote by $P_k$, $k\in \Z$, the Littlewood-Paley projections adapted to the regular Fourier transform:
\begin{equation*}
\what{P_k f}(\xi) = \varphi_k(\xi) \what{f}(\xi), \quad \what{P_{\leq k} f}(\xi) = \varphi_{\leq k}(\xi) \what{f}(\xi), \quad \textrm{ etc.}
\end{equation*}
We will avoid using, as a recurrent notation, the distorted analogue of these projections.

We also define the cutoff functions
\begin{equation}\label{cut1}
\varphi_k^{(k_0)}(\xi) = 
\left\{
\begin{array}{ll}
\varphi_k(\xi) \quad & \mbox{if} \quad k> \lfloor k_0 \rfloor,
\\        
\varphi_{\leq \lfloor k_0 \rfloor }(\xi) \quad & \mbox{if} \quad k= \lfloor k_0 \rfloor,
\end{array}\right. 
\end{equation}
and
\begin{equation}\label{cut2}
\varphi_k^{[k_0,k_1]}(\xi) = 
\left\{
\begin{array}{ll}
\varphi_k(\xi) \quad & \mbox{if} \quad k \in (\lfloor k_0 \rfloor, \lfloor k_1 \rfloor) \cap \Z,
\\        
\varphi_{\leq \lfloor k_0 \rfloor}(\xi) \quad & \mbox{if} \quad k= \lfloor k_0 \rfloor,
\\
\varphi_{\geq \lfloor k_1 \rfloor}(\xi) \quad & \mbox{if} \quad k= \lfloor k_1 \rfloor.
\end{array}\right. 
\end{equation}
We are adopting the standard notation $\lfloor x \rfloor$ to denote the largest integer smaller than $x$.
Note that the indexes $k_0$ and $k_1$ in \eqref{cut1}-\eqref{cut2} do not need to be integers.
We also adopt the convention that if $k_0=k_1$ then $\varphi_k^{[k_0,k_1]}=1$.

We will denote by $T$ a positive time, and always work on an interval $[0,T]$
for our bootstrap estimates; see for example Proposition \ref{propbootg}.
To decompose the time integrals such as \eqref{Duhamelintro} for any $t \in [0,T]$
(this is first done in \eqref{wproof5.1} and then systematically throughout Sections \ref{secwR}-\ref{secw'})
we will use a suitable decomposition of the indicator function $\mathbf{1}_{[0,t]}$
by fixing functions $\tau_0,\tau_1,\cdots, \tau_{L+1}: \R \to [0,1]$, 
for an integer $L$ with $|L-\log_2 (t+2)| < 2$,
with the properties that 
\begin{align}\label{timedecomp}
\begin{split}
& \sum_{n=0}^{L+1}\tau_n(s) = \mathbf{1}_{[0,t]}(s),  
\qquad \supp (\tau_0) \subset [0,2], \quad  \supp (\tau_{L+1}) \subset \big[{\tfrac{1}{4}}t,t\big],
\\
& \mbox{and} \quad \supp(\tau_n) \subseteq [2^{n-1},2^{n+1}], 
  \quad |\tau_n'({s})|\lesssim 2^{-n}, \quad \mbox{for} \quad n= 1,\dots, L.
\end{split}
\end{align}
In all our arguments we also will often restrict to $n\geq 1$, as the contribution for $n=0$ 
is always trivial to handle.

In light of the coherent phenomenon explained in Subsection \ref{introQR} 
we also need cutoff functions
\begin{equation}
\label{chil}
\chi_{\ell,\sqrt 3}(z) = \varphi_\ell(|z|-\sqrt{3}), \quad \ell \in \Z \cap (-\infty,0],
\end{equation}
which localize around $\pm \sqrt{3}$ at a scale $\approx 2^\ell$.
In analogy with \eqref{cut0} and \eqref{cut1} 
we also define
\begin{equation}
\label{chil'}
\chi_{\ast,\sqrt 3}(z) = \varphi_\ast(|z|-\sqrt{3}),
\qquad \chi_{\ell,\sqrt 3}^{\ast}(z) = \varphi_\ell^{\ast}(|z|-\sqrt{3}). 
\end{equation}

\medskip
\noindent
{\it More notation}.

\smallskip
\noindent
For any $k \in \Z$, let $k^+ := \max(k,0)$ and $k^- := \min(k,0)$.

\smallskip
\noindent
We denote $\mathbf{1}_A$ the characteristic function of a set $A\subset \R$,
and let $\mathbf{1}_\pm$ be the characteristic function of $\{\pm x > 0\}$.

\smallskip
\noindent
We use $a\lesssim b$ when $a \leq Cb$ for some absolute constant $C>0$ independent on $a$ and $b$.
$a \approx b$ means that $a\lesssim b$ and $b\lesssim a$.
When $a$ and $b$ are expressions depending on variables or parameters, the inequalities
are assumed to hold uniformly over these.

\smallskip
\noindent
Given $c\in \R$, we will use the notation $c+$ to denote a number $d$ 
larger than $c$ but that can be chosen arbitrarily close to it.
Similarly, we will use $c-$ for a number smaller than $c$ that can be chosen arbitrarily close to it;
see for example \eqref{lemTboundmain}. 
We will sometimes use this convention also with $c=\infty$ to denote an arbitrarily large number
(see for example \eqref{dxiQR2+infty}).

\smallskip
\noindent
We will denote by $\min(x_1,x_2,\dots)$, resp. $\max(x_1,x_2,\dots)$, the minimum,
resp. maximum, over the set $\{x_1,x_2,\dots\}$.
We will also denote by $\min_2(x_1,x_2,\dots)$, resp. $\max_2(x_1,x_2,\dots)$, the second smallest,
resp. second largest, element in the set $\{x_1,x_2,\dots\}$.
We are also using $\med(x_1,x_2,x_3)$ for $\max_2(x_1,x_2,x_3)$; see, for example, \eqref{cregular1}
or \eqref{l<porder}.

\smallskip
\noindent
We denote by 
\begin{align}
\what{f}=\whF(f):= \frac{1}{\sqrt{2\pi}} \int_\R e^{-ix\xi} f(x) \, dx
\end{align}
the standard Fourier transform of $f$.

\smallskip
\noindent
We use the standard notation for Lebesgue $L^p$ spaces, and for Sobolev spaces $W^{k,p}$ and $H^k=W^{k,2}$.


\medskip
\subsubsection{Norms}
For $T>0$, we let $W_T$ be the space given by the norm
\begin{align}\label{wnorm}
\begin{split}
{\| h \|}_{W_T} := \sup_{n \geq 0} \sup_{\ell \in \mathbb{Z} \cap [\lfloor -\gamma n\rfloor,0]} 
  {\big\| \chi_{\ell,\sqrt{3}}^{[-\gamma n,0]}(\,\cdot\,) \, \tau_n(t) \, h(t,\cdot) 
  \mathbf{1}_{0 \leq t \leq T} \big\|}_{L^\infty_t L^2_\xi} 2^{\beta \ell} 2^{-\alpha n}
 \end{split}
\end{align}
where\footnote{We are using the same notation from \eqref{timedecomp}
for time-cutoffs to avoid introducing an additional notation,
but in the definition \eqref{wnorm} we do not need regularity assumptions on the $\tau_n$,
but just that they are a partition of unity.}
$\tau_n$ here denotes a partition of unity as in \eqref{timedecomp} with $T$ in place of $t$,
and where the parameters $0 < \alpha, \beta, \gamma < \frac{1}{2}$ satisfy
\begin{align}
\label{wnormparam0}
\gamma \beta' < \alpha < \frac{\beta'}{2}, \quad \beta' \ll 1,
\qquad \beta' := \frac{1}{2} - \beta, \quad \gamma' := \frac 1 2 - \gamma.
\end{align}
$\beta'$ is a fixed constant that needs to be chosen small enough
to satisfy various inequalities that we will impose in the course of the proof.
Note that we automatically have $\gamma<1/2$, 
and that one possible way to impose all of the conditions \eqref{wnormparam0} is to choose $\alpha$ sufficiently small and
$$\beta' = 2\alpha + 2\alpha^2, \quad  \gamma' = 2\alpha + \alpha^2.$$

Let us briefly explain the choice of the norm and parameters: 

\begin{itemize}

\smallskip
\item The norm \eqref{wnorm} will be used to measure our solution on the Fourier side.
More precisely, we will show that ${\|\jxi \partial_\xi \wt{f}\|}_{W_T} \lesssim \e_0$, 
where $\wt{f}$ is a renormalized version of the profile $\wt{g}$ in \eqref{mtprofile}.
As already pointed out, measuring $\partial_\xi$ on the Fourier side is akin to measuring a weighted norm in real space.



\smallskip
\item The quantity $2^\ell$ measures the distance from $\pm \sqrt{3}$ starting at smallest scale 
$2^{-\g n}$, where $2^n \approx |t|$, 
and the norm is penalized by the factor $2^{\beta \ell}$. 
The additional penalization of $2^{-\alpha n}$ is added globally
to take into account 
long-range effects which are present at every frequency.




\smallskip
\item To make sure that localization and derivation in the $W_T$ norm commute
(under the hypothesis that $\wt{f}$ is uniformly bounded) 
one needs $\beta' \gamma \leq \alpha$.

\smallskip
\item In order to deduce from a bound on the $W_T$ 
norm (together with a bound on the $\wtF^{-1}\jxi^{-3/2}L^\infty$) 
the necessary linear decay estimate at the optimal rate of $\jt^{-1/2}$, 
we need $\alpha + \beta \gamma < 1/4$; see Proposition \ref{propdisp}.
Since 
$$\alpha + \beta \gamma = \alpha + 1/4 -  \beta\g' - \beta'/2,$$ 
it suffices to impose $\beta' \geq 2\alpha$.

\end{itemize}

\medskip
\subsection{The main bootstrap and proof of Theorem \ref{maintheo}}\label{Ssecmtpr0}
For $T >0$, consider a local solution $u\in C([0,T],H^5(\R)) \cap C^1([0,T],H^4(\R))$ 
of \eqref{KG} constructed by standard methods. 
Our proof is based on showing an a priori estimate for the following norm: 
\begin{align}\label{apriori0}
{\| u \|}_{X_T} = \sup_{t\in[0,T]} \Big[ 
  \langle t \rangle^{-p_0} {\big\| (\sqrt{H+1}\,u,u_t)(t) \big\|}_{H^4} 
  + \jt^{1/2} {\| (\partial_t,\partial_x)u(t) \|}_{L^\infty} \Big], \qquad 0<p_0<\alpha.
\end{align}
Under the initial smallness condition \eqref{initcond}, we will assume the a priori bound
\begin{equation}
\label{bootstrap}
{\|u \|}_{X_T} \leq \e_1,
\end{equation}
and show that this implies
\begin{align}
\label{bootstrapest}
{\| u \|}_{X_T} \leq C\e_{0} +  C\e_{1}^2,
\end{align}
for some absolute constant $C>0$.
Picking $\e_0$ sufficiently small and using a standard bootstrap argument with $\e_1 = 2 C \e_0$,
\eqref{bootstrapest} gives global existence of solutions that are small in the space $X_\infty$.
Using also time reversibility we obtain solutions for all times.

The structure of the paper and of the proof of Theorem \ref{maintheo}, with details
on how the main bootstrap \eqref{bootstrapest} will be proved, are described below.

%

\medskip
\subsection{Structure of the paper and the proof of Theorem \ref{maintheo}}\label{Ssecmtpr}
In this subsection we discuss the organization of the paper,
describe the overall structure of the 
proof, and give more details about the various estimates needed to show \eqref{bootstrapest},
under the a priori assumption \eqref{bootstrap}.






\setlength{\leftmargini}{1.5em}
\begin{itemize}

\bigskip
\item Section \ref{secspth} contains an exposition of the elements 
of the scattering theory for Schr\"odinger operators $H:=-\partial_x^2 + V$ on $\R$ which we will need.

\smallskip
\noindent
After introducing the Jost functions $f_\pm$, see \eqref{f+-},
and the transmission and reflection coefficients $T$ and $R_\pm$, see \eqref{f+f-} and \eqref{TRformula},
we define the distorted Fourier transform (dFT) as in  \eqref{DistTrans} (see \eqref{distF}), 
with the `distorted' (or generalized) exponentials (or eigenfunctions) $\psi(x,\xi)$ given by \eqref{psixk}.  

\smallskip
\noindent
Some basic properties of the dFT are discussed in \S\ref{ssecdFT}.
Then, the $\psi(x,\xi)$ are analyzed in details in Subsection \ref{ssecpsi}
and decomposed into a singular and a regular part.
The singular part behaves at spatial infinity like linear combinations of (standard) complex exponentials,
while the regular part is fast decaying.
This decomposition is also at the heart of the decomposition of the nonlinear spectral distribution $\mu$
defined in \eqref{intromu0}.

\smallskip
\noindent
In Subsection \ref{secLin} we prove the first non-trivial result involving the dFT,
that is, the estimate for the linear flow $e^{it\sqrt{H+1}} = e^{it\langle \wt{D}\rangle}$ 
(see the notation for Fourier multipliers in \S\ref{ssecmultipliers})
given in \eqref{disp2}, which involves the degenerate norm $W_T$.
This estimate shows that sharp $L^\infty_x$ decay (i.e., at the rate of $t^{-1/2}$)
for the evolution $e^{it\sqrt{H+1}} h(t)$ of a (time-dependent) profile $h$ on an interval $[0,T]$, 
follows from controlling the norms
\begin{align}\label{mtpr2}
\sup_{t \in [0,T]} {\big\| \jxi^{3/2} \wt{h}(t) \big\|}_{L^\infty_\xi} \leq C,
\qquad  {\| \jxi \partial_\xi \wt{h} \|}_{W_T} \leq C,
\qquad 
\sup_{t \in[0,T]} \big( \jt^{-p_0} {\| \jxi^4 \wt{h} \|}_{L^2} \big) \leq C,
\end{align}
where $W_T$ is the norm defined in \eqref{wnorm}, with the restriction on the parameters \eqref{wnormparam0}, 
$p_0$ is sufficiently small, and $C$ is an absolute constant independent of $T$.
As it turns out, we cannot control the norms in \eqref{mtpr2} for the profile associated to the solution $u$,
and infer the bound for the $L^\infty_x$ norm in \eqref{apriori0} from this.
Instead, we need to take a longer route and estimate norms as in \eqref{mtpr2} for a renormalized profile,
which is defined in Section \ref{secdec}, see \eqref{Renof}.


\bigskip
\item Before moving on to the analysis of the nonlinear time evolution, we study more precisely
the nonlinear spectral measure in Section \ref{secmu}. 

\smallskip
\noindent
The main Proposition \ref{muprop} describes the precise structure 
of the NSD $\mu$, see \eqref{intromu0}; 
for lighter notation we omit the indexes $\iota_1,\iota_2$ here.
The main goal is to decompose $\mu$ into a `singular' and a `regular' part.

\smallskip
\noindent
The singular part, denoted $\mu^S$, is a linear combination of $\delta$ and $\pv$ distributions,
as anticipated in \eqref{muintro1}; 
the precise definition is given by \eqref{muSdec}-\eqref{muL+-},
with formulas for the coefficients given in \eqref{mucoeff}-\eqref{mucoeffexp}.
Notice that these coefficients may not be smooth at $\xi=0$ (e.g., in the generic case).
As it is apparent, handling formulas involving these coefficients requires a good amount of 
somewhat tedious bookkeeping;
however, this is necessary for two main reasons: first, we need the exact expressions
to calculate the final asymptotics for the solution of \eqref{KG} and, second,
we will need to check some smoothness properties for the multipliers of the trilinear terms that 
will appear after a normal form transformation, and which involve these coefficients.

\smallskip
\noindent
The regular part of the NSD, denoted $\mu^R$ is defined in \eqref{muR} with \eqref{muR'},
and it is essentially a smooth function of the three frequencies $(\xi,\eta,\s)$
up to possible jump singularities on the axes.
The mapping properties of the associated bilinear operator
are established in Subsection \ref{sseclemmuR}, with \eqref{lemmuRbound}
showing that it essentially behaves like multiplication by a localized function.

\bigskip
\item 
In Section \ref{secdec} we begin the analysis of the time evolution by defining 
the profile associated to $u$ as 
\begin{align}\label{mtprug}
g := e^{it\sqrt{H+1}} (\partial_t - i\sqrt{H+1})u; 
\end{align}
see \eqref{vprof} and \eqref{vKG}. 
From the main equation \eqref{KGu} we write the nonlinear evolution for $\wt{g}$ 
as in \eqref{dtwtf}-\eqref{mu12} (which is the same as the formula \eqref{dtgintro}).

\smallskip
\noindent
Using the decomposition of $\mu = \mu^S + \mu^R$ we would like to decompose accordingly
the quadratic terms in the formula for $\partial_t\wt{g}$ into singular terms 
and regular terms. 
However, as briefly mentioned in \eqref{introQS},
because of the presence of the $\pv$ term coming from $\mu^S$ we cannot do this decomposition directly.
We instead need a further distinction within the terms containing the $\pv$
into `truly' singular terms, where the $\pv$ is restricted close to its singularity, 
and more regular ones that are supported away from the singularity.
This is the role of the cutoff $\varphi^\ast$ defined in \eqref{QZphistar} and appearing in \eqref{QZ}.
The singular terms are then defined according to \eqref{Q}-\eqref{QZ}.
The precise choice of $\varphi^\ast$ is made so that, on its support, we can derive lower bounds
for the oscillating phases $\Phi$ in \eqref{mu12}.

\smallskip
\noindent
The main motivation for the splitting $$\partial_t\wt{g} = \mathcal{Q}^S + \mathcal{Q}^R,$$
as done in Subsection \ref{SecDecN1},
is that the singular quadratic terms resemble the quadratic terms that one would get 
for a flat ($V=0$) quadratic KG equation. 
In particular, the oscillating phases are lower bounded on the support of $\mathcal{Q}^S$, 
as established in Lemma \ref{lemmaetazeta};
then, we can apply a normal form transformation to recast these terms into cubic ones.

\smallskip
\noindent
The algebra for the normal form step is carried out in Subsection \ref{SecDecN2}.
Starting from the simple identity \eqref{SecDecN20} 
we naturally define the {\it bilinear normal form transformation}\footnote{This is the
bilinear operator which we denoted by $B$ in Theorem \ref{maintheo} 
to avoid confusion with the reflection coefficient there.} 
$T$ in \eqref{Tgg},
which arises from the boundary terms in the time-integration by parts.
More precisely (but still omitting the various sums over the signs such as $\iota_1$ and $\iota_2$) 
we have
\begin{align}
\int_0^t\mathcal{Q}^S \,ds = \wtF{T}(g,g)(t) -  \wtF{T}(g,g)(0) + \int_0^t B_1(s) + B_2(s) \, ds
\end{align}
where the bulk terms $B_1$ and $B_2$ are the expressions defined in \eqref{bulk1'} and \eqref{bulk2}
and are cubic in $g$.
The only quadratic terms left are then the $\mathcal{Q}^R$ terms which include the contribution
from $\mu^R$, and the $\pv$ part restricted outside the support of $\varphi^\ast$.

\smallskip
\noindent
In Subsection \ref{SecDecS1}, respectively, Subsection \ref{SecDecS2}, 
we analyze the leading order symbol $\mathfrak{b}^1$, respectively, the lower order symbol $\mathfrak{b}^2$,
of the bulk term $B_1$, respectively, $B_2$ above.
These are somewhat complicated expressions since they involve the symbol \eqref{QZ}, and
its variant without the cutoff $\varphi^\ast$, and therefore combinations of the (non-smooth)
coefficients \eqref{mucoeff}.
The leading order symbol is made by the convolution of $\delta$ and $\pv$-type distributions, 
see \eqref{coeff1}-\eqref{coeff10}.
For the later nonlinear analysis we need to make sure that this symbol is nice enough
so that the associated trilinear operators satisfies H\"older type bounds.
An important technical point then is the verification of the smoothness with respect to
variable in which the convolution is performed; this is done \S\ref{SecDecsig}.
In \S\ref{SecDectop} we then calculate precisely the top order (singular $\delta$ and $\pv$-type) contribution from $\mathfrak{b}^1$,
that is the symbol $\mathfrak{c}^S$ in \eqref{formulacubiccoeff};
the associated trilinear operator will be denoted by $\mathcal{C}^S$.
Other contributions from $\mathfrak{b}^1$ and the symbol $\mathfrak{b}^2$ are analyzed in Subsection \ref{SecDecS2};
the associated trilinear operator will be denoted by $\mathcal{C}^R$.
The mapping properties of these trilinear operators are analyzed in Section \ref{secmulti}.

\smallskip
\noindent
In the last Subsection \ref{SsecReno} we finally arrive at the definition \eqref{Renof}
of the {\it renormalized profile} 
\begin{align}\label{mtpr3}
f = g - T(g,g). 
\end{align}
We see that $f$ satisfies an equation where the only quadratic terms are regular ones,
and the cubic terms are those analyzed in the previous subsections.
The equation \eqref{Renodtf} for $\wt{f}$ is the starting point for the nonlinear analysis,
and we record it here for ease of reference in a slightly simplified form 
(omitting the easier regular cubic terms, see \eqref{CubicR}-\eqref{cregular1})
\begin{align}\label{mtpr4}
\partial_t \wt{f} = \mathcal{Q}^R(g,g) + \mathcal{C}^{S}(g,g,g);
\end{align}
see the definitions \eqref{QR}-\eqref{CubicS}.

\smallskip
\noindent
The heart of the proof of the bootstrap \eqref{bootstrapest} is 
another bootstrap argument for the renormalized profile $f$ involving the norms in \eqref{mtpr2}
and is based on (a renormalization of) the equation \eqref{mtpr4}. 
See the description of the contents of Section \ref{secBoot} below.



\bigskip
\item Section \ref{secmulti} contains bilinear and trilinear estimates for the various
operators appearing in our problem. 
Here we need to analyze different types of pseudo-product operators,
from the standard bilinear ones \eqref{defCC}, to 
trilinear ones involving a $\pv$ as \eqref{defUUVV}. 
Bounds for general bilinear and trilinear operators of the types that appear in our proof
are established in Lemmas \ref{lemmamultilin1} and \ref{lemmamultilin2},
and basic criteria to check the assumptions in these lemmas area also given.


\smallskip
\noindent
In Subsection \ref{ssecT} we analyze in detail the normal form operator $T$,
and establish, in Lemma \ref{lemT}, that it satisfies H\"older-type bounds with a gain of regularity on the inputs.

\smallskip
\noindent
The other main results in this Section are: 
Lemma \ref{lemQR} which gives improved H\"older-type inequalities
for the smooth bilinear operator $\mathcal{Q}^R$,
and Lemma \ref{lemCS} which gives sharp H\"older-type bounds with some gain of regularity for the
singular cubic terms $\mathcal{C}^S$.


\bigskip
\item In Section \ref{secBoot} we setup the proof
of the main bootstrap bound \eqref{bootstrapest}.
As mentioned above, these estimates will mostly involve the renormalized profile $f$,
but we first need to relate the desired bounds for $g$ (and $u$, as stated in \eqref{mtLinfty}-\eqref{mtbounds}) 
to the necessary bounds on $f$. Here is how we proceed.

\smallskip
\noindent
With $\e_0$ as in \eqref{initcond}, we let $\e_2 \gg \e_1 \gg \e_0$.
Proposition \ref{propbootg} gives an a priori bootstrap on $g$ for the norms
\begin{align}\label{mtpr11}
\sup_{t\in[0,T]} \Big[ \jt^{-p_0} {\big\| \jxi^{4} \wt{g}(t) \big\|}_{L^{2}} 
  + \jt^{1/2} {\| e^{-it\jnab} \mathbf{1}_{\pm}(D)\W g(t) \|}_{L^\infty} \Big], 
\end{align}
where $\mathcal{W}^\ast$ is the adjoint of the wave operator defined in \eqref{propWdef};
we assume that \eqref{mtpr11} is bounded by $2\e_2$ and claim that it can bounded by $\e_2$.
Proposition \ref{propbootf} instead gives the main bootstrap for the following norms of $f$:
\begin{align}\label{mtpr12}
\sup_{t\in[0,T]} \Big[ \jt^{-p_0} {\big\|  \jxi^{4} \wt{f}(t) \big\|}_{L^{2}} + {\| \jxi \partial_\xi \wt{f} \|}_{W_t} 
  + {\| \jxi^{3/2} \wt{f}(t) \|}_{L^\infty} \Big]; 
\end{align}
assuming that \eqref{mtpr12} is bounded by $2\e_1$ we claim that it can be bounded by $\e_1$.
Note that we are assuming much stronger information on $f$ than on $g$.

\smallskip
\noindent
Subsection \ref{ssecBoot} is 
dedicated to showing how the a priori bound on \eqref{mtpr12} by $2\e_1$, 
can be used to close the claimed bootstrap for the norms in \eqref{mtpr11}.
This is not too hard to do using the relation $g = f + T(g,g)$, see \eqref{mtpr3},
the bilinear bounds on the $T$ operator established in Section \ref{ssecT}, and the linear estimate \eqref{disp2}.

\smallskip
\noindent
Note that once we have proven a bound for \eqref{mtpr11} by $\e_2 = C\e_0$,
we can immediately deduce the Sobolev bound in \eqref{mtbounds} from
\eqref{mtprug} and the boundedness of wave operators (Theorem \ref{thmweder}).
The decay bound \eqref{mtLinfty} does not follow directly from the $L^\infty$ bound in \eqref{mtpr11}
(because wave operators may be unbounded on $L^\infty$), and it is proved separately in \eqref{propbootginfty}.
The weighted bound in \eqref{mtwbound} 
is proved in Lemma \ref{lemdxig}.
Since \eqref{mtLinfty}-\eqref{mtwbound} follow from the bound on \eqref{mtpr11},
then the proof of the main theorem has been reduced to proving the bootstrap
Proposition \ref{propbootf}.
As part of the arguments needed to prove these bootstrap estimates on $f$
we will also establish its asymptotic behavior, see \eqref{LinfSasyf0} (and Section \ref{secLinfS}).

\smallskip
\noindent
The rest of Section \ref{secBoot} prepares for later analysis and the proof of Proposition \ref{propbootf}.
Subsection \ref{ssecpre} contains some preliminary bounds on $f$ that follow from the a priori bound on 
the norms in \eqref{mtpr12} (Lemma \ref{lembb}).
Then, in Subsection \ref{Ssecexp}, using \eqref{mtpr3}, we rewrite the equation for $f$, 
see \eqref{mtpr4}, as
\begin{align}\label{mtpr4'}
\wt{f}(t) - \wt{f}(0) = \int_0^t \mathcal{Q}^R(f,f) \,ds + \int_0^t \mathcal{C}^{S}(f,f,f) \,ds + \cdots + \mathcal{R}(t),
\end{align}
where the ``$\dots$'' denote other cubic and quartic terms in $f$,
and $\mathcal{R}$ denote terms that have a higher degree of homogeneity in $f$ and $g$ and can be treated 
as remainders.
We actually use expansions at different orders depending on 
which norm we are trying to estimate.

\smallskip
\noindent
To close the bootstrap for $f$ we then need to estimate
the terms on the right-hand side of \eqref{mtpr4'}.
Lemmas \ref{lemQRexp} and \ref{lemCSexp} give, among other things, suitable bounds on the remainders $\mathcal{R}$,
in all the norms in \eqref{mtpr12}.

\smallskip
\noindent
In Subsection \ref{Ssecmidsum}, for the convenience of the reader, 
we 
summarize the bounds obtained thus far and list all the bounds that are left to prove.




\medskip
\item Section \ref{secwR} and \ref{secwL}, constitute the heart of the 
paper and the more technical part of the analysis.
The goal of these two sections is to carry out the main parts of the 
estimates for the weighted $L^2$ norm \eqref{wnorm} 
of the regular quadratic terms, $\mathcal{Q}^R(f,f)$, and of the singular cubic terms, $\mathcal{C}^S(f,f,f)$.

\smallskip
\noindent
The desired weighted bound for $\mathcal{Q}^R(f,f)$ 
is \eqref{prowRest} in Proposition \ref{prowR}.
Section \ref{secwR} is then entirely dedicated to proving this key bound when the interactions are restricted
to 
the main resonant ones, 
that is, $(\eta,\sigma) = (0,0) \longrightarrow  \xi = \pm\sqrt{3}$ (see the notation used in \eqref{wproofQR2} and \eqref{wRmain}).

\smallskip
\noindent
Subsections \ref{ssecwR1}-\ref{secdecred} give some preliminary bounds and reductions.
We first take care of frequencies $\xi$ that are 
very close to $\pm\sqrt{3}$ 
and reduce the desired bound 
to showing \eqref{wproofdecest} with \eqref{wproofdecpar},
for the localized operator\footnote{We continue to adopt our convention
of omitting the various indexes in this discussion.} $I(t,\xi)$ defined in \eqref{wproofnot1++};
these reductions are summarized in Lemma \ref{lemred}.

\smallskip
\noindent
Note that the estimate \eqref{wproofnot1++} 
involves localization in the size of the input variables $|\eta|$ and $|\s|$,
in the distance $||\xi|-\sqrt{3}|$, in the size of the oscillating phase $|\Phi|$, and in the integrated time $s$,
at dyadic scales with respective parameters $k_1,k_2,\ell,p$ and $m$.
These localizations allows us to distinguish various cases, 
and to exploit efficiently the oscillations in either frequency space or in time 
depending on the relative size of the quantities involved.
The estimates are split into four main regions, as described in \eqref{regions}.
and treating each of these regions occupies one of the Subsections \ref{Ssecpsmall}-\ref{Ssecl<2k_1}.

\smallskip
\noindent
We remark that a useful quantity is the one defined in \eqref{wproofnot2}, which incorporates some improved decay
properties of the solution (for small frequencies). 

\medskip
\item In Section \ref{secwL} we estimate the weighted $L^2$ norm for the singular cubic terms $\mathcal{C}^S(f,f,f)$
of the form \eqref{CSintro}  (see \eqref{CubicS} with \eqref{formulacubiccoeff} for the precise definition),
focussing on the case of the main resonant interactions $\pm (\sqrt{3},\sqrt{3},\sqrt{3}) \rightarrow \pm\sqrt{3}$.
In particular, we achieve the main step in the proof of Proposition \ref{propCSbound}, that deals with
the interactions of the type $(+-+)$, where the signs 
correspond to the signs of the oscillating factors in the cubic phases in \eqref{CubicS}.
Subsection \ref{subsecCS1} treats the terms that involve a $\delta$ factor, 
while Subsection \ref{subsecCS2} those with a $\pv$
(recall the form of the cubic symbols \eqref{formulacubiccoeff}).
Once again we need to distinguish various cases depending on the distance of the input and output variables from
the bad frequency $\sqrt{3}$, relative to time and to the size of their differences 
(see, for example, the dyadic localizations in \eqref{defljm2}).

\medskip
\item 
With Sections \ref{secwR} and \ref{secwL} we have taken care of estimating the weighted norm for the 
leading order terms on the right-hand side of \eqref{mtpr4'}, in the case of the main resonant interactions.
All of the other non-resonant interactions are estimated in Section \ref{secw'}.

\smallskip
\noindent
Section \ref{secLinfS} contains the main part of the proof for the control of the Fourier-$L^\infty$ norm in \eqref{mtpr12},
that is, the proof of Proposition \ref{propLinfS}, which gives asymptotics for the singular cubic terms $\mathcal{C}^S$ 
(see \eqref{LinfSasy} where the Hamiltonian function is given in \eqref{asyHam0}).
From this we can then derive an asymptotic ODE for $\wt{f}$ 
and thus the asymptotic behavior of the solution as in \eqref{LinfSasyf} (see \eqref{LinfSasyf0}).


\smallskip
\noindent
Subsection \ref{secha} provides first a formal computation for the asymptotics, based on the stationary phase lemma.
Subsection \ref{secModScatt} utilizes these computations to give the exact structure of the long-range asymptotics,
and the form of the Hamiltonian $H$ appearing in the statement of Proposition \ref{propLinfS}). 
Rigorous bounds are then proved in \eqref{secra}. 



\medskip
\item Section \ref{secw'} contains the estimates needed to control all 
the contributions from the nonlinear terms on the right-hand side of \eqref{mtpr4'}
that have not been dealt with in Sections \ref{secBoot}-\ref{secLinfS},
since they are lower order compared to the main ones.
We refer the reader to the first paragraph of Section \ref{secw'} 
for a list of the estimates that are carried out there,
and the details on how they complete the proofs of the main propositions stated in the previous sections.

\smallskip
\noindent
The estimates of Section \ref{secw'} complete the bootstrap on the norm \eqref{mtpr12}.

\bigskip
\item Finally, Appendix \ref{Appendix} contains a verification of the spectral assumptions needed to apply our results to the
Double Sine-Gordon model \eqref{DSG} and obtain Corollary \ref{corDSG}.

\end{itemize}



\bigskip
\section{Spectral theory and distorted Fourier transform in $1$d}\label{secspth}
We develop in this section the spectral and scattering theory of
$$
H = - \partial_x^2 + V,
$$
assuming that $V \in \mathcal{S}$, and that $H$ only has continuous spectrum. 
We state the results that are needed for the nonlinear problem that interests us here, 
and sketch the important proofs.

This theory is due to Weyl, Kodaira and Titchmarsh (who also considered more general Sturm-Liouville problems).
Complete expositions can be found in \cite{Dunford} and \cite{Wilcox};
we mention in particular Yafaev \cite[Chapter 5]{Yafaev},
where the operator $H$ is considered, and direct proofs are given.

\medskip
\subsection{Linear scattering theory}\label{Ssecspth}

\subsubsection{Jost solutions}\label{spth1}

Define $f_{+}(x,\xi)$ and $f_-(x,\xi)$ by the requirements that
\begin{align}
\label{f+-}
(- \partial_x^2 + V) f_{\pm}  = \xi^2 f_{\pm}, \quad \mbox{for all $x,\xi\in\R$, \quad and} \quad
\left\{
\begin{array}{ll}
\lim_{x\rightarrow \infty} |f_{+}(x,\xi) - e^{ix\xi}| = 0,
\\
\lim_{x\rightarrow -\infty} |f_{-}(x,\xi) - e^{- ix\xi}| = 0.
\end{array}
\right.
\end{align}
Define further
\begin{align}
\label{m+-}
m_{+}(x,\xi) = e^{-i\xi x} f_{+}(x,\xi) \quad \mbox{and} \quad m_{-}(x,\xi) = e^{i\xi x} f_{-}(x,\xi),
\end{align}
so that $m_{\pm}$ is a solution of 
\begin{equation}
\label{equationm}
\partial_x^2 m_{\pm} \pm 2i\xi \partial_x m_{\pm} = Vm_{\pm}, \qquad m_\pm(x,\xi) \to 1 \;\mbox{as $x \to \pm \infty$}.
\end{equation}

The functions $m_\pm$ satisfy symbol type bounds for $\pm x>0$, as stated in the following lemma.

\begin{lem}
\label{lemm+-}
For all non-negative integers $\alpha, \beta, N$,
\begin{align}
\label{mgood}
& \big| \partial_x^\alpha \partial_{\xi}^\beta (m_{\pm}(x,\xi) - 1) \big| 
  \lesssim \langle x \rangle^{-N} \langle \xi \rangle^{-1-\beta}, \quad \pm x \geq -1,
  \\
\label{mbad}
& \big| \partial_x^\alpha \partial_{\xi}^\beta (m_{\pm}(x,\xi) - 1) \big| 
  \lesssim  \langle x \rangle^{1+\beta}\langle \xi \rangle^{-1-\beta}, \quad \pm x \leq 1.
  \end{align}
\end{lem}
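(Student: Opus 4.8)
The statement is a standard quantitative estimate on the Jost functions $m_\pm$, obtained by setting up a Volterra integral equation for $m_\pm$ and iterating. By the symmetry $x \mapsto -x$, $V(x) \mapsto V(-x)$, it suffices to treat $m_+$; I will write $m = m_+$ and work with the equation \eqref{equationm}, i.e. $\partial_x^2 m + 2i\xi \partial_x m = Vm$ with $m(x,\xi) \to 1$ as $x \to +\infty$. The first step is to convert this into the integral equation
\begin{align}\label{volterraplan}
m(x,\xi) = 1 + \int_x^\infty D_\xi(y-x)\, V(y)\, m(y,\xi)\, dy, \qquad D_\xi(z) := \int_0^z e^{-2i\xi t}\, dt = \frac{1 - e^{-2i\xi z}}{2i\xi},
\end{align}
which one checks solves \eqref{equationm} with the correct normalization (the kernel $D_\xi$ is the solution operator for $\partial_z^2 + 2i\xi\partial_z$ vanishing to first order at $z=0$). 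The key elementary bound on the kernel is $|D_\xi(z)| \lesssim \min(|z|, \langle \xi\rangle^{-1})$ for $z \geq 0$, and more generally $|\partial_\xi^\beta D_\xi(z)| \lesssim |z|^{1+\beta}\langle\xi\rangle^{-1}$ together with the "good" bound $|\partial_\xi^\beta D_\xi(z)| \lesssim \langle z\rangle^\beta \langle \xi \rangle^{-1-\beta}$ when one is allowed to lose a power of $z$ from integration by parts in the Volterra iteration. Then one runs the standard Picard iteration $m_0 = 1$, $m_{n+1}(x,\xi) = \int_x^\infty D_\xi(y-x) V(y) m_n(y,\xi)\, dy$: since $V$ is Schwartz, each application of the integral operator gains a factor controlled by $\int_x^\infty \langle y \rangle \langle y\rangle^{-N'} dy$-type quantities, so the series converges absolutely and uniformly, giving $m - 1 = \sum_{n\geq 1} m_n$ with geometric decay of the terms.

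\smallskip
The bounds themselves are then read off from \eqref{volterraplan} by differentiating under the integral sign. For the estimate \eqref{mgood} in the region $x \geq -1$: each $x$-derivative either hits the upper limit or lands on $D_\xi(y-x)$; one uses that on $\{y \geq x \geq -1\}$ one has $\langle y \rangle \lesssim \langle y - x\rangle \langle x \rangle$ but, more importantly, $V(y)$ furnishes arbitrary polynomial decay in $y$, hence in both $y-x$ and $x$, so one extracts the factor $\langle x\rangle^{-N}$ for any $N$ by allotting part of the decay of $V$. For the $\xi$-derivatives one differentiates $D_\xi$: the naive bound gives $|\partial_\xi^\beta D_\xi(z)| \lesssim |z|^{1+\beta}\langle\xi\rangle^{-1}$, which after integrating against Schwartz $V$ still yields $\langle x\rangle^{-N}\langle\xi\rangle^{-1}$ but not the gain $\langle\xi\rangle^{-1-\beta}$; to recover the extra decay in $\xi$ one integrates by parts in $y$ in \eqref{volterraplan}, using $\partial_y D_\xi(y-x) = e^{-2i\xi(y-x)}$ and then $e^{-2i\xi(y-x)} = \frac{1}{-2i\xi}\partial_y e^{-2i\xi(y-x)}$ to trade each unit of $\langle\xi\rangle^{-1}$ against a $y$-derivative falling on $V \cdot m$, which is harmless since $V$ is Schwartz and the $m$-derivatives are controlled inductively. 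Carrying this through the iteration (the bounds are stable under the fixed-point map) gives \eqref{mgood}. For \eqref{mbad}, valid on the larger region $x \leq 1$ where $x$ may be very negative, one no longer has decay in $x$ — instead $m_\pm$ can grow — and the worst case is governed by the free kernel size $|D_\xi(y-x)| \lesssim |y - x| \lesssim \langle x\rangle$ on the relevant range, with $\beta$ extra $\xi$-derivatives each costing a power of $\langle x \rangle$ and saving $\langle\xi\rangle^{-1}$; the iteration again converges because $V$ is integrable against these polynomial weights, producing the claimed $\langle x\rangle^{1+\beta}\langle\xi\rangle^{-1-\beta}$.

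\smallskip
The step I expect to be the main (though still routine) obstacle is obtaining the sharp $\langle\xi\rangle^{-1-\beta}$ decay in \eqref{mgood} rather than the weaker $\langle\xi\rangle^{-1}$ that falls out of the crude kernel estimate: this requires the integration-by-parts-in-$y$ argument to be organized carefully so that (a) the boundary terms at $y = x$ and $y = \infty$ either vanish or are of the right size, (b) the $y$-derivatives that accumulate on $V \cdot m$ are absorbed using the already-established symbol bounds on $m$ lower in the induction, and (c) the whole scheme is consistent under the Picard iteration so that the series for $m-1$ converges with these improved bounds. Once the kernel estimates and this bookkeeping are in place, \eqref{mgood} and \eqref{mbad} follow by a direct, if slightly tedious, induction on $\alpha + \beta$. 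I would present the kernel bounds as a short preliminary lemma, then do the $m_+$ estimate, then invoke symmetry for $m_-$.
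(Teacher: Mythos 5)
The paper does not give a proof of this lemma: it records the Volterra equation \eqref{integralmpm} with $D_\xi(z)=\frac{e^{2i\xi z}-1}{2i\xi}$ and defers to Delort's Appendix A. Your outline — Volterra equation, kernel bounds, Picard iteration, and integration by parts in $y$ to trade factors of $\langle\xi\rangle^{-1}$ for $y$-derivatives landing on $V\cdot m$ — is precisely the standard route being referenced, and you correctly identify the IBP as the one step requiring bookkeeping.

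A few technical points, none of which derail the argument but which you should fix. First, your kernel $D_\xi(z)=\int_0^z e^{-2i\xi t}\,dt$ has the wrong sign in the exponent: since $\partial_x D_\xi(y-x)=-D_\xi'(y-x)$, the operator $\partial_x^2+2i\xi\partial_x$ applied to $D_\xi(y-x)$ produces $D_\xi''(y-x)-2i\xi D_\xi'(y-x)$, so one needs $D_\xi''-2i\xi D_\xi'=0$ with $D_\xi'(0)=1$, i.e. $D_\xi(z)=(e^{2i\xi z}-1)/(2i\xi)$ as in \eqref{integralmpm}. The estimates are unaffected since $|D_\xi|$ is unchanged, but the sign would propagate into your IBP identities. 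Second, the bound $|D_\xi(z)|\lesssim\min(|z|,\langle\xi\rangle^{-1})$ is false when $|\xi|\ll1$ and $|z|\gg1$ (there $D_\xi(z)\to z$, so $|D_\xi(z)|\approx|z|\gg1\approx\langle\xi\rangle^{-1}$); the correct version is $\min(|z|,|\xi|^{-1})$, equivalently $|D_\xi(z)|\lesssim|z|\langle\xi\rangle^{-1}$. Similarly the advertised "good" bound $|\partial_\xi^\beta D_\xi(z)|\lesssim\langle z\rangle^\beta\langle\xi\rangle^{-1-\beta}$ does not hold pointwise for the kernel in the low-$\xi$/large-$z$ regime — it is only an effective estimate for the whole integral once the $\beta$-fold IBP is carried out, so presenting it as a kernel bound is misleading. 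The estimate that does hold and that you iterate is $|\partial_\xi^\beta D_\xi(z)|\lesssim|z|^{1+\beta}\langle\xi\rangle^{-1}$, cleanest from the representation $D_\xi(z)=z\int_0^1 e^{2i\xi z s}\,ds$. Finally, the Picard series for $m-1$ does not converge geometrically from a smallness of $V$; it converges by the usual Volterra $1/n!$ mechanism, $|m_n(x,\xi)|\leq\frac{1}{n!}\bigl(\int_x^\infty\langle y-x\rangle|V(y)|\,dy\bigr)^n$, and it is worth saying this correctly since no smallness of $V$ is assumed.
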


The estimates \eqref{mgood}-\eqref{mbad} 
can be obtained from 
the integral form of \eqref{equationm},
\begin{equation}
\label{integralmpm}
\begin{split}
& m_+(x,\xi) = 1 + \int_x^\infty D_\xi(y-x) V(y) m_+(y,\xi) \,dy,
 \\
& m_-(x,\xi) = 1 + \int_{-\infty}^x D_\xi(x-y) V(y) m_-(y,\xi) \,dy,
\end{split}
\end{equation}
where
$$
D_\xi(z) = \frac{e^{2i\xi z} - 1}{2i\xi}.
$$
Since the proof is fairly standard we skip the details, 
and refer the reader to \cite[Appendix A]{DelortNLSV}.

\smallskip
\subsubsection{Transmission and Reflection coefficients}\label{spth2}
A classical reference for the formulas that we recall here is \cite{DeiTru}
(see also \cite{Weder2,Yafaev}, for example).
Denote $T(\xi)$ and $R_{\pm}(\xi)$, respectively, the {\it transmission} 
and {\it reflection} coefficients associated to the potential $V$.
These coefficients are such that
\begin{align}
\label{f+f-}
\begin{split}
&f_+ (x,\xi) = \frac{1}{T_+(\xi)} f_-(x,-\xi) + \frac{R_-(\xi)}{T_+(\xi)} f_-(x,\xi),
\\
&f_- (x,\xi) = \frac{1}{T_-(\xi)} f_+(x,-\xi) + \frac{R_+(\xi)}{T_-(\xi)} f_+(x,\xi),
\end{split}
\end{align}
or, equivalently,
\begin{align*}
&f_+ (x,\xi) \sim \frac{1}{T_+(\xi)} e^{i\xi x} + \frac{R_-(\xi)}{T_+(\xi)} e^{-i\xi x} \quad \mbox{as $x \to - \infty$},
\\
&f_- (x,\xi) \sim \frac{1}{T_-(\xi)} e^{-i\xi x} + \frac{R_+(\xi)}{T_-(\xi)} e^{i\xi x} \quad \mbox{as $x \to \infty$}.
\end{align*}
In the equalities above $T_+$ and $T_-$ do a priori differ;
however, since the Wronskian
\begin{align}\label{Wronskian}
W(\xi) := W(f_+(\xi),f_-(\xi)), \qquad  W(f,g) = f'g - fg'
\end{align}
is independent of the point $x$ where it is computed for solutions of~\eqref{f+-}, 
one sees (taking $x\rightarrow \pm\infty)$ that $T_+ = T_- = T$ and 
\begin{align}\label{W=xi/T}
W(\xi) = \frac{2i\xi}{T(\xi)}.
\end{align}
Since $\overline{f_{\pm}(x,\xi)} = f_{\pm}(x,-\xi)$, we obtain furthermore that
\begin{align}\label{TRconj}
\overline{T(\xi)} = T(-\xi) \qquad \mbox{and} \qquad \overline{R_{\pm}(\xi)} = R_{\pm}(-\xi).
\end{align}
Finally, computing $W(f_{+}(\xi),f_-(\xi))$, $W(f_{+}(\xi),f_+(-\xi))$, $W(f_{-}(\xi),f_-(-\xi))$ at $x = \pm \infty$ gives
\begin{align} \label{TR}
& |R_{\pm}(\xi)|^2 + |T(\xi)|^2 = 1, \qquad \mbox{and} \qquad T(\xi)\overline{R_-(\xi)} + R_+(\xi)\overline{T(\xi)} = 0.
\end{align}
As a consequence, the scattering matrix associated to the potential $V$ is unitary:
\begin{align}
\label{scatmat}
S(\xi) :=
\left( \begin{array}{cc}
T(\xi)  & R_+(\xi) \\ R_-(\xi) & T(\xi)
\end{array}
 \right), \qquad
S^{-1}(\xi) :=
\left( \begin{array}{cc}
\overline{T(\xi)} & \overline{R_-(\xi)} \\  \overline{R_+(\xi)} & \overline{T(\xi)}
\end{array}
 \right).
\end{align}

Starting from the integral formula \eqref{integralmpm} giving $m_{\pm}$, 
letting $x \to \mp \infty$, and relating it to the definition of $T$ and $R_{\pm}$ gives
\begin{align}
\label{TRformula}
\begin{split}
&T(\xi) = \frac{2i\xi}{2i\xi - \int V(x) m_{\pm} (x,\xi)\,dx},\\
& R_{\pm}(\xi) = \frac{\int e^{\mp 2i\xi x} V(x) m_{\mp}(x,\xi)\,dx}{2i\xi - \int V(x) m_{\pm} (x,\xi)\,dx}.
\end{split}
\end{align}
These formulas are only valid for $\xi \neq 0$ a priori. 
But a moment of reflection shows that $T$ and $R_{\pm}$ can be extended to be smooth functions on the whole real line. 
Combining these formulas with Lemma \ref{lemm+-} gives the following lemma.

\begin{lem}\label{lemTR} 
Let $T$ and $R_\pm$ be defined as in \eqref{TRformula}.
Then, under our assumptions on $V$, for any $\beta$ and $N$, we have
\begin{align}\label{TRk}
|\partial_\xi^\beta [T(\xi) - 1]| \lesssim \langle \xi \rangle^{-1-\beta},
\qquad |\partial_\xi^\beta R_{\pm}(\xi) | \lesssim \langle \xi \rangle^{-N}.
\end{align}
\end{lem}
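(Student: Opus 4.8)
The plan is to derive the estimates~\eqref{TRk} directly from the explicit formulas~\eqref{TRformula}, using the symbol-type bounds on $m_\pm$ from Lemma~\ref{lemm+-}. Write $T(\xi) = \big(1 - (2i\xi)^{-1}\int V(x) m_\pm(x,\xi)\,dx\big)^{-1}$ and split $m_\pm = 1 + (m_\pm - 1)$. The contribution of the ``$1$'' is $(2i\xi)^{-1}\int V(x)\,dx$, which is singular at $\xi = 0$; the point is that this singularity is spurious, since one may also use the \emph{other} formula (with $m_\mp$) where the same term appears, or — more robustly — observe that the function $2i\xi - \int V m_\pm\,dx$ is smooth and, by the no-bound-state assumption together with the absence of a zero resonance being \emph{not} assumed here (so one allows $T(0)=1$ in the generic case and $T(0)=0$ exceptionally), in all cases $W(\xi) = 2i\xi/T(\xi)$ is a smooth function of $\xi$ that is nonvanishing for $\xi \neq 0$; hence $T$ itself extends smoothly across $\xi = 0$ as already asserted in the text just before the lemma. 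Granting this smooth extension, it suffices to prove the decay as $|\xi| \to \infty$.

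For the large-$\xi$ decay, I would proceed as follows. First, from~\eqref{mgood} with $N$ large and $\beta = 0$ one gets $|\int V(x)(m_\pm(x,\xi)-1)\,dx| \lesssim \langle\xi\rangle^{-1}$, using only the region $\pm x \geq -1$ for the good bound and absorbing the region $\pm x \leq 1$ into it since $V$ is Schwartz (the bad bound~\eqref{mbad} carries a factor $\langle x\rangle$ which is harmless against the rapid decay of $V$; so in fact $\int |V(x)|\,|m_\pm(x,\xi)-1|\,dx \lesssim \langle\xi\rangle^{-1}$ globally). Also $\int V(x)\,dx$ is a constant, so $\int V m_\pm \,dx = \int V\,dx + O(\langle\xi\rangle^{-1})$, and therefore the denominator $2i\xi - \int V m_\pm\,dx = 2i\xi\big(1 + O(\langle\xi\rangle^{-1})\big)$ for $|\xi|$ large. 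Consequently $T(\xi) = \big(1 + O(\langle\xi\rangle^{-1})\big)^{-1} = 1 + O(\langle\xi\rangle^{-1})$, giving the $\beta = 0$ case. For the derivatives, differentiate $T(\xi) = 2i\xi/W(\xi)$ with $W(\xi) = 2i\xi - \int V m_\pm\,dx$: each $\partial_\xi$ applied to the integral term gains a factor $\langle\xi\rangle^{-1}$ by~\eqref{mgood}, while $\partial_\xi(2i\xi) = 2i$ is order $1$; expanding $T - 1 = -\big(W(\xi) - 2i\xi\big)/W(\xi) = \big(\int V m_\pm\,dx\big)/W(\xi)$ and applying the Leibniz rule and the quotient rule, and using $|W(\xi)| \gtrsim \langle\xi\rangle$ for $|\xi|$ large together with $|\partial_\xi^\beta W(\xi)| \lesssim \langle\xi\rangle^{1-\beta}$ for $\beta\geq 1$ (resp.\ $\lesssim \langle\xi\rangle$ for $\beta=0$), one gets $|\partial_\xi^\beta(T-1)| \lesssim \langle\xi\rangle^{-1-\beta}$.

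For $R_\pm$, the numerator is $\int e^{\mp 2i\xi x} V(x) m_\mp(x,\xi)\,dx$, and here the key gain comes from the oscillatory factor $e^{\mp 2 i\xi x}$: integrating by parts repeatedly in $x$, each integration produces a factor $\langle\xi\rangle^{-1}$ while moving a derivative onto $V m_\mp$, and since $V$ is Schwartz and $m_\mp$ together with its $x$-derivatives satisfies the bounds of Lemma~\ref{lemm+-} (polynomial growth in $x$, at worst $\langle x\rangle^{1+\beta}$, beaten by $V$), one obtains $|\int e^{\mp 2i\xi x} V m_\mp\,dx| \lesssim_N \langle\xi\rangle^{-N}$ for every $N$. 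Dividing by the denominator $W(\xi)$, which is $\gtrsim \langle\xi\rangle$ for large $\xi$ and smooth nonvanishing near $0$, only improves this. The $\xi$-derivatives are handled the same way: $\partial_\xi$ hitting $e^{\mp 2i\xi x}$ brings down a factor $\mp 2ix$, which is again controlled by the Schwartz decay of $V$ and costs nothing in $\xi$, while $\partial_\xi$ on $m_\mp$ or on $1/W(\xi)$ only helps; so after commuting derivatives and integrating by parts we still gain arbitrary powers of $\langle\xi\rangle^{-1}$, yielding $|\partial_\xi^\beta R_\pm(\xi)| \lesssim_N \langle\xi\rangle^{-N}$.

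The only genuinely delicate point — and the one I would be most careful about — is the claimed smoothness of $T$ and $R_\pm$ \emph{across} $\xi = 0$, since the raw formulas~\eqref{TRformula} have an apparent $1/\xi$ singularity and $W(0)$ can vanish (in the exceptional case). This is precisely the ``moment of reflection'' remark in the text: one must use the relation~\eqref{f+f-} together with the behavior of the Jost functions and the Wronskian identity to see that the singularities in numerator and denominator cancel, so that $T$ and $R_\pm$ in fact admit smooth (indeed, in the Schwartz-$V$ setting, $C^\infty$) extensions to all of $\R$. Granting that input — which the paper treats as known and cites to~\cite{DeiTru} — everything else is the routine large-frequency analysis sketched above, and the lemma follows by combining the near-zero smoothness with the decay estimates at infinity.
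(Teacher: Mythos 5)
Your proof is correct and follows the same route as the paper, which itself only states that the lemma follows from "combining these formulas with Lemma~\ref{lemm+-}" and likewise defers the smooth extension of $T,R_\pm$ across $\xi=0$ to a brief ``moment of reflection'' remark rather than proving it. One small slip in your aside: by Lemma~\ref{lemVgen} one has $T(0)=0$ in the \emph{generic} case and $T(0)=\tfrac{2a}{1+a^2}$ (so $T(0)=1$ iff $a=1$) in the \emph{exceptional} case — you stated the two the other way around — but this does not affect the argument, since you only invoke the granted smooth extendability of $T$ and $R_\pm$ near $\xi=0$, not the specific values.
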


\smallskip
\subsubsection{Generic and exceptional potentials}\label{SecPot} 
We call the potential $V$
\begin{itemize}
\item \textit{generic} if $\displaystyle \int V(x) m_{\pm} (x,0)\,dx \neq 0$
\item \textit{exceptional} if $\displaystyle \int V(x) m_{\pm} (x,0)\,dx = 0$
\item \textit{very exceptional} if $\displaystyle \int V(x) m_{\pm} (x,0)\,dx = \int x V(x) m_{\pm} (x,0)\,dx = 0$
\end{itemize}

\begin{lem}\label{lemVgen}
The following four assertions are equivalent

\begin{itemize}
\item[(i)] 
$V$ is generic.

\smallskip
\item[(ii)] $\displaystyle T(0) = 0, R_{\pm}(0) = -1$.

\smallskip
\item[(iii)] $W(0) \neq 0$.

\smallskip
\item[(iv)] The potential $V$ does not have a resonance at $\xi=0$, 
in other words there does not exist a bounded non trivial solution in the kernel of $-\partial_x^2 + V$.

\end{itemize}

\end{lem}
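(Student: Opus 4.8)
The plan is to establish the chain of equivalences by proving $(i)\Leftrightarrow(iii)$, then $(iii)\Leftrightarrow(ii)$, and finally $(iii)\Leftrightarrow(iv)$. The key link is the Wronskian $W(\xi) = W(f_+(\xi),f_-(\xi))$, which ties together the three algebraic characterizations and, because it detects linear dependence of the two Jost solutions, also the analytic statement about resonances. Throughout I would work at $\xi = 0$, where $f_\pm(x,0) = m_\pm(x,0)$ solve $(-\partial_x^2 + V)\psi = 0$ with $m_\pm(x,0) \to 1$ as $x \to \pm\infty$.

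For $(i)\Leftrightarrow(iii)$: from \eqref{W=xi/T} we have $W(\xi) = 2i\xi/T(\xi)$, so I would first use the formula \eqref{TRformula} for $T(\xi)$ to compute the limit $\lim_{\xi\to 0} 2i\xi/T(\xi) = -\int V(x) m_\pm(x,0)\,dx$ (the limit exists by the smoothness of $T$ and the bounds in Lemma \ref{lemTR}; one can also evaluate directly from the integral equation \eqref{integralmpm}). Hence $W(0) = -\int V(x) m_+(x,0)\,dx = -\int V(x) m_-(x,0)\,dx$ (the two integrals agree — this falls out of the same computation since the Wronskian is $x$-independent), and $W(0) \neq 0$ is by definition exactly genericity. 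For $(iii)\Leftrightarrow(ii)$: when $W(0) \neq 0$, then $2i\xi/W(\xi) = T(\xi) \to 0$, i.e.\ $T(0) = 0$; conversely if $W(0) = 0$ then $T(0) = \lim 2i\xi/W(\xi) \neq 0$ (using that $W$ has at worst a simple zero, which follows from $W'(\xi)$ being controlled, or more cleanly from the fact that $1/T$ is smooth with $1/T(0) = 0$ precisely in the generic case). To get $R_\pm(0) = -1$ in the generic case, I would pass to the limit $\xi \to 0$ in the unitarity relation $|R_\pm(\xi)|^2 + |T(\xi)|^2 = 1$ from \eqref{TR}, giving $|R_\pm(0)| = 1$, and then pin down the phase using the explicit formula for $R_\pm$ in \eqref{TRformula}: as $\xi \to 0$, both numerator $\int e^{\mp 2i\xi x} V(x) m_\mp(x,\xi)\,dx$ and denominator $2i\xi - \int V(x)m_\pm(x,\xi)\,dx$ tend to $-\int V m_\mp(\cdot,0)\,dx = -\int V m_\pm(\cdot,0)\,dx = W(0)$ (again using the coincidence of the two integrals), so the ratio tends to $-1$.

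For $(iii)\Leftrightarrow(iv)$, which I expect to be the main point: $W(f_+(0),f_-(0)) = 0$ iff $f_+(\cdot,0)$ and $f_-(\cdot,0)$ are linearly dependent as solutions of the second-order ODE $(-\partial_x^2+V)\psi = 0$. If they are dependent, then $f_+(\cdot,0) = c\, f_-(\cdot,0)$ is a single solution that is bounded (it tends to $1$ at $+\infty$ and to $c$ at $-\infty$) and nontrivial, hence a zero-energy resonance. Conversely, if a bounded nontrivial solution $\psi$ exists, I must show $W(0) = 0$: using the integral equation \eqref{integralmpm}, a bounded solution of $-\psi'' + V\psi = 0$ has $\psi(x) = \alpha + o(1)$ as $x\to+\infty$ and $\psi(x) = \beta + o(1)$ as $x \to -\infty$ for constants $\alpha,\beta$, not both zero (if $\psi$ were bounded with $\alpha = \beta = 0$, the integral equation and $V \in \mathcal S$ would force $\psi \equiv 0$ — this uses the quick decay of $V$ and a Gr\"onwall/contraction argument). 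If $\alpha \neq 0$ then $\psi$ is a multiple of $f_+(\cdot,0)$, which is therefore bounded, hence also a multiple of $f_-(\cdot,0)$ (both must match the constant $\beta$ at $-\infty$, and a solution of the ODE is determined by its behaviour there once we know it does not grow), giving linear dependence and $W(0) = 0$; symmetrically if $\beta \neq 0$. The hard part is handling the degenerate case and making the ``bounded solution is a multiple of the Jost solution'' step rigorous — this is where one genuinely uses that $V$ is Schwartz (so that the relevant integrals converge and the asymptotics at $\pm\infty$ are attained), and it is cleanest to phrase it via the two-dimensional solution space: the subspace of solutions bounded at $+\infty$ is spanned by $f_+(\cdot,0)$ alone (any solution growing as $x\to+\infty$ is not in it), and likewise at $-\infty$ for $f_-(\cdot,0)$, so a globally bounded solution lies in the intersection, which is nonzero precisely when $f_+(\cdot,0)$ and $f_-(\cdot,0)$ are proportional, i.e.\ when $W(0)=0$.
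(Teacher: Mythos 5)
Your proposal is correct and fills in exactly the details the paper leaves to the reader: the paper's entire ``proof'' is the remark that the equivalences follow from the formulas \eqref{TRformula}, and you carry that out, with the Wronskian $W(\xi)=2i\xi/T(\xi)=2i\xi-\int V\,m_\pm(\cdot,\xi)\,dx$ serving as the hub and the linear-dependence-of-Jost-solutions characterization supplying $(iii)\Leftrightarrow(iv)$. One point worth tightening: for the converse of $(iii)\Leftrightarrow(ii)$ you appeal somewhat loosely to ``$W$ has at worst a simple zero''; the cleanest justification is that $T$ extends continuously (indeed smoothly) to $\xi=0$ by \eqref{TRformula}, so $W(\xi)T(\xi)=2i\xi$ forces $W$ to vanish to order at most one, or alternatively one can simply invoke the explicit exceptional-case value $T(0)=2a/(1+a^2)\neq 0$ from Proposition \ref{LES}.
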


Checking the equivalence of these assertions is easy based on the formulas \eqref{TRformula}.

\begin{prop}[Low energy scattering]\label{LES}
If $V$ is generic, there exists $\alpha \in i \mathbb{R}$ such that
\begin{align}
\label{LEST}
T(\xi) = \alpha \xi + O(\xi^2).
\end{align}

If $V$ is exceptional, let
$$
a := f_+(-\infty,0) \in \mathbb{R} \setminus \{ 0 \}.
$$
Then, 
\begin{align}\label{LESTR0}
T(0) = \frac{2a}{1+a^2}, \qquad R_+(0) = \frac{1 - a^2}{1+ a^2}, \qquad \mbox{and} \qquad R_-(0) = \frac{a^2-1}{1+ a^2}.
\end{align}
\end{prop}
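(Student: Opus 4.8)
The plan is to prove Proposition \ref{LES} directly from the integral representation \eqref{integralmpm} and the formulas \eqref{TRformula}. Recall that for $\xi \neq 0$ we have $T(\xi) = 2i\xi / (2i\xi - \int V(x) m_+(x,\xi)\,dx)$, so everything hinges on understanding $I(\xi) := \int V(x) m_+(x,\xi)\,dx$ near $\xi = 0$. From Lemma \ref{lemm+-} (and the integral equation \eqref{integralmpm}, which depends smoothly on $\xi$ including at $\xi = 0$ since $D_\xi(z) \to z$) one knows that $\xi \mapsto m_\pm(x,\xi)$ extends smoothly to $\xi = 0$, with $V m_\pm$ integrable uniformly, hence $I(\xi)$ is smooth near $0$ and $I(0) = \int V(x) m_+(x,0)\,dx$.

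First, for the generic case: by definition $I(0) \neq 0$, so from $T(\xi) = 2i\xi/(2i\xi - I(\xi))$ we immediately get $T(\xi) = 2i\xi/(-I(0)) + O(\xi^2) = \alpha \xi + O(\xi^2)$ with $\alpha = -2i/I(0)$. It remains to check $\alpha \in i\mathbb{R}$, equivalently $I(0) \in \mathbb{R}$. This follows because $m_+(x,0)$ is real-valued: the integral equation for $m_+(x,0)$ has real kernel $D_0(y-x) = y-x$ and real inhomogeneity $1$, and $V$ is real, so by uniqueness (contraction argument) $m_+(\cdot,0)$ is real; hence $I(0) = \int V m_+(x,0)\,dx \in \mathbb{R}$ and $\alpha \in i\mathbb{R}$.

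Second, for the exceptional case: here $I(0) = 0$, so the leading term of $T$ at $\xi = 0$ is $T(0) = \lim_{\xi \to 0} 2i\xi/(2i\xi - I(\xi))$, and since $I(\xi) = I'(0)\xi + O(\xi^2)$ we get $T(0) = 2i/(2i - I'(0))$. The task is to identify $I'(0)$ in terms of $a = f_+(-\infty,0) = m_+(-\infty,0)$. The idea is to differentiate the integral equation \eqref{integralmpm} in $\xi$ at $\xi = 0$ to express $\partial_\xi m_+(x,0)$, use $\partial_\xi D_\xi(z)|_{\xi=0} = z^2 \cdot (\text{const})$ — more precisely $D_\xi(z) = z + i\xi z^2 + O(\xi^2)$ so $\partial_\xi D_\xi(z)|_0 = i z^2$ — but the cleaner route is to compute $I'(0)$ via a Wronskian-type identity: from \eqref{W=xi/T}, $W(\xi) = 2i\xi/T(\xi) = 2i\xi - I(\xi)$, so $W(0) = 0$ (consistent with $V$ exceptional and Lemma \ref{lemVgen}) and $W'(0) = 2i - I'(0)$. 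One then computes $W'(0) = \partial_\xi W(f_+(\xi), f_-(\xi))|_{\xi=0}$ by relating $\partial_\xi f_\pm$ at $\xi = 0$ to the zero-energy resonance. When $V$ is exceptional, $f_+(\cdot,0)$ and $f_-(\cdot,0)$ are proportional (both span the one-dimensional space of bounded solutions of $H\psi=0$), with $f_+(x,0) = a^{-1} f_-(x,0)$ after normalizing via the asymptotics $f_+(+\infty,0)=1$, $f_-(-\infty,0)=1$, $f_+(-\infty,0) = a$; one evaluates the relevant Wronskians of $f_\pm(\cdot,0)$ and $\partial_\xi f_\pm(\cdot,0)$ at $x = \pm\infty$ using the known asymptotic expansions of the Jost solutions (which are affine in $x$ with coefficients built from $a$). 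This yields $W'(0)$, hence $I'(0)$, hence $T(0) = 2a/(1+a^2)$; the reflection coefficients then follow from the analogous limits in \eqref{TRformula} for $R_\pm$, or more cheaply from the unitarity relations \eqref{TR} together with the sign/reality information, giving $R_+(0) = (1-a^2)/(1+a^2)$ and $R_-(0) = (a^2-1)/(1+a^2)$.

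The main obstacle I anticipate is the bookkeeping in the exceptional case: correctly tracking the $O(\xi)$ terms in the asymptotic expansions of $f_\pm(x,\xi)$ as $x \to \pm\infty$ and assembling them into the Wronskian derivative $W'(0)$ without sign errors. The subtlety is that $\partial_\xi f_\pm(x,0)$ grows linearly in $x$, so one must be careful that the Wronskian $W(f_+, f_-)$ — which is $x$-independent for fixed $\xi$ — is differentiated consistently, e.g. by computing $W(\partial_\xi f_+(\cdot,\xi), f_-(\cdot,\xi)) + W(f_+(\cdot,\xi), \partial_\xi f_-(\cdot,\xi))$ at a convenient $x$ (sending $x \to +\infty$ for one term and $x \to -\infty$ for the other, exploiting that each Jost solution has clean asymptotics on its ``own'' side). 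The reality of $a$ and the consistency of the signs of $R_\pm(0)$ with \eqref{TRconj} serve as useful checks. Everything else is a routine consequence of Lemma \ref{lemm+-}, Lemma \ref{lemTR}, and the smooth dependence of $m_\pm$ on $\xi$ down to $\xi = 0$.
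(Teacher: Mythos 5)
Your generic-case argument is the same as the paper's, and it is correct: $T(\xi) = 2i\xi/(2i\xi - I(\xi))$ with $I(0)\in\mathbb{R}\setminus\{0\}$ by reality of $m_+(\cdot,0)$, so $\alpha = -2i/I(0) \in i\mathbb{R}$.

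For the exceptional case you propose a genuinely different route from the paper. The paper writes $T(0)=2i/(2i-b)$, $R_\pm(0)=(b\mp 2ic_\mp)/(2i-b)$ with $b=\int V\,\partial_\xi m_\pm(\cdot,0)\,dx$, $c_\pm=\int xV\,m_\pm(\cdot,0)\,dx$, reads $c_+=a-1$ and $c_-=1-1/a$ off the integral equation, shows $\mathfrak{Re}\,b=0$ by an ODE argument, and then pins down $b$ via the unitarity identity $|T(0)|^2+|R_-(0)|^2=1$. You instead compute $W'(0)=W(\partial_\xi f_+(\cdot,0),f_-(\cdot,0))+W(f_+(\cdot,0),\partial_\xi f_-(\cdot,0))$ directly, evaluating the first Wronskian at $x\to+\infty$ and the second at $x\to-\infty$. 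This does work and is arguably more transparent: since $\partial_\xi f_\pm(\cdot,0)$ solve $H w=0$, both Wronskians are $x$-independent; the asymptotics $\partial_\xi f_+(x,0)=ix+o(1)$, $\partial_\xi f_+'(x,0)=i+o(1)$ as $x\to+\infty$, together with $f_-(\cdot,0)\to 1/a$ and rapid decay of $f_-'(\cdot,0)$ there, give $W(\partial_\xi f_+,f_-)=i/a$; symmetrically $W(f_+,\partial_\xi f_-)=ia$; hence $W'(0)=i(a+a^{-1})$ and $T(0)=2i/W'(0)=2a/(1+a^2)$, consistent with the paper's $b=(2-a-1/a)i$. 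The trade-off is that your route is self-contained and avoids the unitarity step to determine $b$, while the paper's route is more algebraic and never needs the Jost-derivative asymptotics at the ``far'' side.

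However, as written your proposal has a real gap: the step you flag as a ``bookkeeping obstacle'' is actually the entire content of the exceptional case, and you never carry it out — you assert ``This yields $W'(0)$'' without computing it. You also have a small but consequential error in the proportionality constant: with the normalizations $f_+(+\infty,0)=1$, $f_-(-\infty,0)=1$ and $f_+(-\infty,0)=a$, one gets $f_+(\cdot,0)=a\,f_-(\cdot,0)$, not $a^{-1}f_-(\cdot,0)$; the wrong constant propagates to the wrong $T(0)$ if used naively. Finally, for $R_\pm(0)$ the unitarity relations alone only give $|R_\pm(0)|=|1-a^2|/(1+a^2)$ and $R_-(0)=-R_+(0)$; the sign requires one further input, most cleanly the $\xi=0$ limit of \eqref{f+f-}, which with $f_+(\cdot,0)=a f_-(\cdot,0)$ gives $a=(1+R_-(0))/T(0)$ and hence $R_-(0)=aT(0)-1=(a^2-1)/(1+a^2)$. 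So the plan is sound and is a legitimate alternative to the paper's argument, but the exceptional case needs to be actually executed with the corrected constant.
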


\begin{proof} In the generic case, observe that
$$
T(\xi) = \frac{2i}{-\int V(x) m_{\pm}(x,0)\,dx} \xi + O(\xi^2),
$$
hence the desired result since $m_{\pm}(\cdot,0)$ is real-valued.

We now turn to the exceptional case. 
Denoting
$$
b = \int V(x) \partial_\xi m_{\pm}(x,0) \,dx, \quad \mbox{and} \quad c_{\pm} = \int V(x) x m_{\pm}(x,0)\,dx,
$$
$T(0)$ and $R_{\pm}(0)$ can, thanks to~\eqref{TRformula}, be expressed as
\begin{equation}
\label{newformulaRT}
T(0) = \frac{2i}{2i-b}, \qquad R_{\pm}(0) = \frac{b \mp 2ic_{\mp}}{2i-b}.
\end{equation}
There remains to determine the values of $b$ and $c_{\pm}$.
In order to determine  $c_{+}$, recall the integral equation \eqref{integralmpm} satisfied by $m_+$, 
and let $\xi \to 0$ and $x \to -\infty$ in that formula. 
Taking advantage of the condition $\int V(y) m_+(y,0)\,dy=0$, 
we observe that
\begin{align}\label{LESa}
a = m_+(-\infty,0) = 1 + \int_{-\infty}^\infty y V(y) m_+(y,0)\,dy = 1 + c_+.
\end{align}
Similarly, we find $\frac{1}{a} = 1 - c_-$.

Turning to $b$,  we first claim that it is purely imaginary. 
Indeed, differentiating the equation \eqref{equationm}, setting $\xi = 0$, and taking the real part, we obtain that
$$
\mathfrak{Re} \left[ (\partial_x^2 - V) \partial_\xi m_+(x,0) \right] = 0.
$$ 
Since $\partial_\xi m_+(x,0) \to 0$ as $x \to \infty$, we deduce that $\mathfrak{Re} \partial_\xi m_+ = 0$.
Using this fact, and plugging the formulas \eqref{newformulaRT} into the identity $|T(0)|^2 + |R_{-}(0)|^2 = 1$, we find
$$
b = \left(2 - a - \frac{1}{a}\right)i.
$$
The formulas giving $b$ and $c$ in terms of $a$ now lead to the desired formulas for $T(0)$ and $R(0)$.
\end{proof}

\begin{rem}\label{RemVE}
From \eqref{LESa} we see that in the very exceptional case (see the definition before Lemma \ref{lemVgen})
we have $a=1$ and therefore $T(0) = 1$ and $R_\pm(0) = 0$.
Also notice that $a=1$ in the exceptional case when the zero energy resonance is even.
When instead it is odd, we have $a=-1$, and therefore $T(0)=-1$ and $R_\pm(0)=0$.
\end{rem}

\smallskip
\subsubsection{Resolvent and spectral projection} 
If $\mathfrak{Im} \lambda \neq 0$, the resolvent of $H$ is defined by $R_V(\lambda) = (H - \lambda^{-1})^{-1}$.

Assuming first that $\mathfrak{Re}(\lambda) >0$ and $\mathfrak{Im} \lambda > 0$, 
we let $\xi + i \eta = \sqrt{\lambda}$, with $\xi,\eta>0$. 
Then $f_{\pm}(\xi + i \eta)$ can be defined through natural extensions of the above definition, 
and the resolvent $R_V(\lambda)$ is given by the kernel
$$
R_V(\lambda)(x,y) = -\frac{1}{W(\xi + i \eta)} [f_+(\max(x,y),\xi+i\eta) f_-(\min(x,y),\xi+i\eta)].
$$
Letting $\mathfrak{Im} \lambda \to 0$, (and still with the convention that $\xi>0$),
$$
R_V(\xi^2 + i0) = - \frac{1}{W(\xi)} f_+(\max(x,y),\xi) f_-(\min(x,y),\xi).
$$
Similarly,
$$
R_V(\xi^2 - i0) = - \frac{1}{W(-\xi)} f_+(\max(x,y),-\xi) f_-(\min(x,y),-\xi).
$$
By Stone's formula, the spectral measure associated to $H$ is, for $\lambda >0$
$$
E(d\lambda) = \frac{1}{2\pi i} [R_V(\lambda + i 0) - R_V(\lambda - i0)] d\lambda.
$$
The formulas above for $R_V(\lambda \pm i 0)$ lead to
$$
E(d\lambda)(x,y) = \frac{1}{4\pi} \frac{|T(\sqrt \lambda)|^2}{\sqrt \lambda} [f_-(x,\sqrt \lambda) \overline{f_-(y,\sqrt \lambda)} + f_+(x,\sqrt \lambda) \overline{f_+(y,\sqrt \lambda)}] d\lambda.
$$

\medskip
\subsection{Distorted Fourier transform}\label{secFT}

\subsubsection{Definition and first properties}\label{ssecdFT}
We adopt the following normalization for the (flat) Fourier transform on the line:
$$
\widehat{\mathcal{F}} \phi (\xi) = \widehat{\phi}(\xi) = \frac{1}{\sqrt{2\pi}} \int e^{-i\xi x} \phi(x) \, dx.
$$
As is well-known,
$$
\widehat{\mathcal{F}}^{-1} \phi = \frac{1}{\sqrt{2\pi}} \int e^{i\xi x} \phi(\xi) \, d\xi= \what{\mathcal{F}}^* \phi,
$$
and $\mathcal{F}$ is an isometry on $L^2(\mathbb{R})$.

We now define the wave functions associated to $H$:
\begin{align}
 \label{psixk}
\psi(x,\xi) := \frac{1}{\sqrt{2\pi}}
\left\{
\begin{array}{ll}
T(\xi) f_+(x,\xi) & \mbox{for $\xi \geq 0$} \\ \\
T(-\xi) f_-(x,-\xi) & \mbox{for $\xi < 0$}.
\end{array}
\right.
\end{align}
Once again, this definition a priori only makes sense for $\xi \neq 0$, 
but it can be extended by continuity to $\xi=0$.
It follows from the estimates on $T$ and $f_{\pm}$ that for any $\alpha$, $\beta$, and for any $\xi \neq 0$,
\begin{equation}
\label{derpsi}
|\partial_x^\alpha \partial_\xi^\beta \psi (x,\xi)| \lesssim \langle x \rangle^\beta \langle \xi \rangle^\alpha.
\end{equation}
The distorted Fourier transform is then defined by
\begin{align}
\label{distF}
\widetilde{\mathcal{F}} \phi(\xi) = \widetilde {\phi}(\xi) = \int_\mathbb{R} \overline{\psi(x,\xi)} \phi(x)\,dx.
\end{align}

\begin{prop}[Mapping properties of the distorted Fourier transform]\label{propFT}
With $\wtF$ defined in \eqref{distF},

\begin{itemize}
\smallskip
\item[(i)] $\widetilde{\mathcal{F}}$ is a unitary operator from $L^2$ onto $L^2$. In particular, its inverse is
$$
\widetilde{\mathcal{F}}^{-1} \phi(x) = {\widetilde{\mathcal{F}}}^\ast \phi(x) = \int_\R \psi(x,\xi) \phi(\xi)\,d\xi.
$$

\smallskip
\item[(ii)] $\widetilde{\mathcal{F}}$ maps $L^1(\mathbb{R})$ to functions in $L^\infty(\mathbb{R})$ 
that are continuous at every point except $0$, and converge to $0$ at $\pm \infty$. 

\smallskip
\item[(iii)] $\widetilde{\mathcal{F}}$ maps the Sobolev space $H^s(\mathbb{R})$ 
onto the weighted space $L^2(\langle \xi \rangle^{2s} \,d\xi)$.

\smallskip
\item[(iv)] If $\widetilde{f}$ is continuous at zero, then, for any integer $s \geq 0$
\begin{align*}
\|\langle \xi \rangle^s \partial_\xi \widetilde{f} \|_{L^2} \lesssim \| f \|_{H^s} + \| \langle x \rangle f \|_{H^s}.
\end{align*}

\end{itemize}
\end{prop}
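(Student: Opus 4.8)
The plan is to establish (i)--(iv) in order, deriving each from the spectral identities assembled above together with the pointwise bounds of Lemmas~\ref{lemm+-} and \ref{lemTR}; since those bounds hold for any Schwartz potential without bound states, the argument does not distinguish the generic and exceptional cases. For (i) I would start from Stone's formula and the formula for the spectral measure $E(d\lambda)(x,y)$ derived above. Changing variables $\lambda=\xi^2$ and using that $H$ has purely absolutely continuous spectrum (no point spectrum by hypothesis, and no singular continuous spectrum for $V\in\mathcal{S}$), the resolution of the identity $\int_0^\infty E(d\lambda)=\mathrm{Id}$ becomes the completeness relation $\int_\R\psi(x,\xi)\overline{\psi(y,\xi)}\,d\xi=\delta(x-y)$; tested against Schwartz functions this is the Plancherel identity $\|\widetilde{\mathcal{F}}\phi\|_{L^2}=\|\phi\|_{L^2}$, so $\widetilde{\mathcal{F}}$ extends to an isometry of $L^2$, and completeness gives surjectivity with inverse $\widetilde{\mathcal{F}}^\ast$ (one may alternatively just invoke \cite[Chapter 5]{Yafaev}).

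For (ii), the bound $|\psi(x,\xi)|\lesssim 1$ from \eqref{derpsi} gives $\|\widetilde f\|_{L^\infty}\lesssim\|f\|_{L^1}$ at once; for each fixed $x$ the map $\xi\mapsto\psi(x,\xi)$ is smooth on $\R\setminus\{0\}$, with a possible jump at $\xi=0$ between the two branches $T(\xi)f_+(x,\xi)$ and $T(-\xi)f_-(x,-\xi)$, so dominated convergence (using $|\psi|\lesssim 1$ and $f\in L^1$) gives continuity of $\widetilde f$ on $\R\setminus\{0\}$. For the decay at $\pm\infty$ I would write $\psi(x,\xi)=\tfrac{1}{\sqrt{2\pi}}e^{i\xi x}+\psi_{\mathrm{r}}(x,\xi)$, using the representation of $\psi$ through $m_+$ for $x\gtrsim-1$ and through $m_-$ for $x\lesssim 1$ together with Lemmas~\ref{lemm+-} and \ref{lemTR} to bound $|\psi_{\mathrm{r}}(x,\xi)|\lesssim\langle\xi\rangle^{-1}$ uniformly in $x$; this yields $\widetilde f(\xi)=\widehat f(\xi)+O(\langle\xi\rangle^{-1}\|f\|_{L^1})$, and $\widehat f(\xi)\to 0$ by Riemann--Lebesgue.

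For (iii), the key observation is that $\widetilde{\mathcal{F}}$ conjugates $H$ to multiplication by $\xi^2$, so that $\widetilde{\mathcal{F}}\big((H+1)^{s/2}f\big)(\xi)=\langle\xi\rangle^s\widetilde f(\xi)$; combined with the isometry from (i) this gives $\|\langle\xi\rangle^s\widetilde f\|_{L^2}=\|(H+1)^{s/2}f\|_{L^2}$, and the statement reduces to the norm equivalence $\|(H+1)^{s/2}f\|_{L^2}\approx\|f\|_{H^s}$, which for a Schwartz potential is standard (for integer $s$ it follows by expanding $(H+1)^s$ and using boundedness of $V$ and its derivatives). Surjectivity onto $L^2(\langle\xi\rangle^{2s}\,d\xi)$ is inherited from the surjectivity of $\widetilde{\mathcal{F}}$ in (i).

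Finally, for (iv) --- the estimate that is actually used in the nonlinear analysis --- I would differentiate under the integral sign: away from $\xi=0$ one has $\partial_\xi\widetilde f(\xi)=\int_\R\overline{\partial_\xi\psi(x,\xi)}\,f(x)\,dx$, and the hypothesis that $\widetilde f$ be continuous at $0$ is exactly what rules out a $\delta_0$-contribution to $\partial_\xi\widetilde f$ stemming from the jump of $\psi(x,\cdot)$ there, so this holds as an identity of $L^2_\xi$ functions. The structural input is the splitting $\partial_\xi\psi(x,\xi)=ix\,\psi(x,\xi)+\rho(x,\xi)$, obtained by letting $\partial_\xi$ fall on the appropriate $m_\pm$-representation of $\psi$: differentiating the plane-wave factor $e^{\pm i\xi x}$ produces the term $ix\,\psi$, while the remaining terms, where $\partial_\xi$ hits $T$, $R_\pm$, or $m_\pm-1$, go into $\rho$. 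Setting $\mathcal{R}f(\xi):=\int_\R\overline{\rho(x,\xi)}\,f(x)\,dx$, this gives $\langle\xi\rangle^s\partial_\xi\widetilde f=-i\,\widetilde{\mathcal{F}}\big((H+1)^{s/2}(xf)\big)+\langle\xi\rangle^s\,\mathcal{R}f$, whose first term is $\lesssim\|xf\|_{H^s}\le\|\langle x\rangle f\|_{H^s}$ by (iii); everything thus reduces to the boundedness of $\mathcal{R}$ from $H^s$ to $L^2(\langle\xi\rangle^{2s}\,d\xi)$. This last bound is the main obstacle: by \eqref{mgood} the pieces of $\rho$ involving $\partial_\xi m_\pm$ have rapidly $x$-decaying and $\xi$-decaying kernels and are harmless, but the piece $\propto T'(\xi)\,m_\pm(x,\xi)$ does not decay in $x$ in the ``far'' region and must be treated via the oscillation of $e^{\pm i\xi x}$ (its contribution being essentially $T'(\xi)$ times a flat Fourier transform of $f$, with $\langle\xi\rangle^s T'(\xi)$ bounded up to a power of $\langle\xi\rangle$ absorbed into $\|f\|_{H^s}$); keeping track of the crossover between the two $m_\pm$-representations near $x=\pm1$, and of the extra $\langle x\rangle$-powers permitted by \eqref{mbad}, is where the care is required.
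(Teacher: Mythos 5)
Your proposal is correct, and parts (i), (ii), (iv) follow essentially the same path as the paper: Stone's formula together with absolute continuity of the spectrum for the Plancherel identity, the uniform bound on $\psi$ from \eqref{derpsi} plus Riemann--Lebesgue for (ii), and, for (iv), differentiating the explicit representation of $\psi$ and isolating the $\pm ix\,\psi$ piece so that $\langle\xi\rangle^s\partial_\xi\widetilde f$ is controlled by $\widetilde{\mathcal{F}}$ applied to $xf$ plus a remainder bounded by pseudodifferential estimates. The genuine divergence is in (iii): the paper obtains it immediately from Weder's theorem (Theorem \ref{thmweder}) on $W^{k,p}$-boundedness of the wave operators, via $\|\langle\xi\rangle^s\widetilde f\|_{L^2}=\|\mathcal{W}^*f\|_{H^s}$, whereas you go through the functional-calculus identity $\|\langle\xi\rangle^s\widetilde f\|_{L^2}=\|(H+1)^{s/2}f\|_{L^2}$ and the elliptic norm equivalence $\|(H+1)^{s/2}f\|_{L^2}\approx\|f\|_{H^s}$. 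Both work; your route is more self-contained (it uses only ellipticity of $H$ and no bound states, not the full strength of Weder), at the cost of having to prove the domain equivalence, including the $\gtrsim$ direction and the interpolation for fractional $s$; the paper's route is shorter given that Weder's theorem is invoked elsewhere anyway.

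Two small cautions. In (iv), the split $\partial_\xi\psi=ix\psi+\rho$ is clean on the transmitted-wave piece, but the reflected wave carries $e^{\mp ix\xi}$, so $\partial_\xi$ there produces $\mp ix$; with your definition, $\rho$ thus contains a term $\sim -2ix\,\chi_\mp(x)R_\mp(\xi)m_\mp(x,\xi)e^{\mp ix\xi}$ that grows linearly in $x$, and $\mathcal{R}$ is not bounded $H^s\to L^2(\langle\xi\rangle^{2s})$ as stated --- only after allowing an $\langle x\rangle$-weight on the input. The rapid decay of $R_\pm$ in $\xi$ and the $\|\langle x\rangle f\|_{H^s}$ allowance on the right-hand side make this harmless for the final estimate, but it is exactly the bookkeeping the paper's PDO formulation (keeping the oscillation $e^{-ix\xi}$ explicit per branch) avoids. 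In (i), the completeness relation $\int\psi(x,\xi)\overline{\psi(y,\xi)}\,d\xi=\delta(x-y)$ gives the isometry $\widetilde{\mathcal{F}}^*\widetilde{\mathcal{F}}=\mathrm{Id}$ but not surjectivity by itself; the paper supplies a separate density argument by contradiction, and your appeal to ``completeness gives surjectivity'' should either be replaced by the dual orthogonality relation or by the Yafaev citation you mention.
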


\begin{proof}
{As in other parts of this section we follow Yafaev \cite[Chap. 5]{Yafaev}.}

\medskip
$(i)$ To see that $\mathcal{F}$ is an isometry, we use the Stone formula 
derived in the previous subsection to write, for any functions $g,h \in L^2$
(recall that $E$ is the 
spectral measure associated to $H$)
\begin{align*}
\langle g, h \rangle & = \int E
(d\lambda) g \, \overline{h} 
\\
& = \frac{1}{4\pi} \iiint \frac{|T(\sqrt{\lambda})|^2}{\sqrt \lambda} 
  \left[ f_-(x, \sqrt{\lambda}) \overline{f_-(y,\sqrt \lambda)} 
  + f_+(x,\sqrt \lambda) \overline{f_+(y,\sqrt \lambda)} \right] g(y) \overline{h(x)} \,dy \,dx \,d\lambda
  .
\end{align*}
Changing the integration variable to $\xi = \sqrt{\lambda}$, this is 
\begin{align*}
\frac{1}{2\pi} \int_{\xi \in \R_+} \int_{\R_x} \int_{\R_y} |T(\xi)|^2 \left[ f_-(x,\xi) \overline{f_-(y,\xi)} 
  + f_+(x,\xi) \overline{f_+(y,\xi)} \right] g(y) \overline{h(x)} \,dy \,dx \, d\xi 
  = \langle \widetilde{g} \,,\, \widetilde{h} \rangle.
\end{align*}


To see that the range of $\widetilde{\mathcal{F}}$ is $L^2$, we argue by contradiction. 
If this was not the case, there would exist $g \in L^2$ not zero, such that, 
%
%
for any $f \in \mathcal{C}^\infty_0$ 
{and any $0<R_0<R_1$
$$\langle g \,,\,\widetilde{\mathcal{F}} E([R_0^2,R_1^2]) f \rangle = 0.$$
Using the spectral theorem representation, and the intertwining identity $\wtF H = k^2 \wtF$, 
we deduce that
$$
\widetilde{\mathcal{F}} \left[ E([R_0^2,R_1^2]) f \right] (\xi) = 
  \big( \mathbf{1}_{[-R_1,-R_0]}(\xi) + \mathbf{1}_{[R_0,R_1]}(\xi) \big) \wt{f}(\xi).
$$
Therefore, for any $f \in \mathcal{C}^\infty_0$ 
\begin{equation*}
0 = \langle g \,,\,\widetilde{\mathcal{F}} E([R_0^2,R_1^2]) f \rangle 
  = \int_{R_0}^{R_1} \int \big[ \psi(x,\xi) g(\xi)  + \psi(x,-\xi) g(-\xi) \big] \overline{f(x)} \,dx \,d\xi.
\end{equation*}
This implies that 
$$
\int_{R_0}^{R_1} \big[ \psi(x,\xi)g(\xi) + \psi(x,-\xi) g(-\xi) \big] \,d\xi = 0.
$$
Since $R_0,R_1$ are arbitrary we deduce
$
\psi(x,\xi)g(\xi) + \psi(x,-\xi) g(-\xi) = 0,
$
a.e. $\xi$.
Since $x \mapsto \psi(x,\xi)$ and $x \mapsto \psi(x,-\xi)$ 
are independent functions (non-vanishing Wronskian), this implies $g = 0$.
}


\medskip
$(ii)$ is a consequence of \eqref{derpsi} and the Riemann-Lebesgue lemma.

\medskip 
$(iii)$ is a consequence of Theorem \ref{thmweder} below.

\medskip
$(iv)$ Focusing on $x>0$ (through a smooth cutoff function $\chi_+$) and $\xi>0$, the distorted Fourier transform can be written as a pseudodifferential operator
$$
\widetilde{\mathcal{F}} \left[ \chi_+ f \right] (\xi) = \int a(x,\xi) e^{-ix\xi} f(x)\,dx,
$$
with symbol
$$
a(x,\xi) = \frac{1}{\sqrt{2\pi}} \overline{T(\xi) m_+(x,\xi)}.
$$
Taking a derivative in $\xi$,
$$
\partial_\xi \widetilde{\mathcal{F}} \left[ \chi_+ f \right] (\xi) = \int \partial_\xi a(x,\xi) e^{-ix\xi} f(x)\,dx + \int a(x,\xi) e^{-ix\xi} (-ix) f(x)\,dx.
$$
From the bounds \eqref{mgood} and~\eqref{TRk}, 
along with a classical theorem on the boundedness of pseudo-differential operators, 
the statement $(iv)$ follows for $s = 0$. If $s\in \mathbb{N}$, it suffices to multiply the above by 
$\langle \xi \rangle^s$, and integrate by parts in $x$ in the integrals. 
We only discussed the case of positive frequencies, but the case of negative frequencies is 
identical. It remains to check that no singularity arises at $\xi=0$ 
when applying $\xi>0$, which is ensured by the assumption that $\wt{f}$ is continuous.
\end{proof}

\medskip
\begin{lem}\label{lemevenodd}
If the potential $V$ is even, then the distorted Fourier transform preserves evenness and oddness.
\end{lem}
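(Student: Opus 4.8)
The statement to be proven is Lemma \ref{lemevenodd}: if $V$ is even, then the distorted Fourier transform $\wtF$ preserves evenness and oddness. The plan is to reduce everything to the behavior of the Jost solutions $f_\pm(x,\xi)$ and the wave functions $\psi(x,\xi)$ under the reflection $x \mapsto -x$. The key observation is that if $V(-x) = V(x)$, then $f_+(-x,\xi)$ solves the same ODE $(-\partial_x^2 + V)g = \xi^2 g$ as $f_-(x,\xi)$, and both have the asymptotic behavior $\sim e^{-ix\xi}$ as $x \to -\infty$ (for $f_+(-x,\xi)$ this follows from $f_+(x,\xi) \sim e^{ix\xi}$ as $x \to +\infty$). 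By uniqueness of the Jost solution with prescribed asymptotics, this forces
\begin{align*}
f_-(x,\xi) = f_+(-x,\xi) \qquad \text{for all } x,\xi.
\end{align*}

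First I would establish this reflection identity for the Jost solutions rigorously via the uniqueness statement implicit in the integral equations \eqref{integralmpm}; equivalently $m_-(x,\xi) = m_+(-x,\xi)$. Next I would deduce consequences for the scattering coefficients: plugging into \eqref{TRformula} (using evenness of $V$ and the change of variables $x \mapsto -x$) gives that $R_+ = R_-$ (and $T$ is of course unchanged); this symmetry will be convenient but the main point is just the identity for $f_\pm$. Then I would turn to the wave function $\psi(x,\xi)$ defined in \eqref{psixk}. Using the reflection identity, for $\xi \geq 0$ one has $\psi(-x,\xi) = \frac{1}{\sqrt{2\pi}} T(\xi) f_+(-x,\xi) = \frac{1}{\sqrt{2\pi}} T(\xi) f_-(x,\xi)$, and comparing with the definition of $\psi(x,-\xi) = \frac{1}{\sqrt{2\pi}} T(\xi) f_-(x,\xi)$ for $-\xi < 0$, we get the clean parity relation
\begin{align}
\psi(-x,\xi) = \psi(x,-\xi), \qquad x \in \R, \ \xi \in \R.
\end{align}
(The case $\xi \leq 0$ is symmetric, and the point $\xi = 0$ follows by continuity.)

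With the identity $\psi(-x,\xi) = \psi(x,-\xi)$ in hand, the conclusion is immediate from the definition \eqref{distF}. If $\phi$ is even, then changing variables $x \mapsto -x$ in $\wt\phi(\xi) = \int \overline{\psi(x,\xi)} \phi(x)\,dx$ gives $\wt\phi(\xi) = \int \overline{\psi(-x,\xi)} \phi(-x)\,dx = \int \overline{\psi(x,-\xi)}\phi(x)\,dx = \wt\phi(-\xi)$, so $\wt\phi$ is even; if $\phi$ is odd the extra sign makes $\wt\phi$ odd. I would not expect any genuine obstacle here: the only mild subtlety is making the uniqueness argument for the Jost solutions clean (one should phrase it via the Volterra integral equations \eqref{integralmpm}, whose solutions are unique by the standard contraction/Gronwall argument referenced after Lemma \ref{lemm+-}), and being careful about the definition of $\psi$ at the single point $\xi = 0$, which is handled by the continuity extension already noted in the text. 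The whole argument is a few lines once the reflection identity for $f_\pm$ is recorded.
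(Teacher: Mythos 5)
Your proof is correct and follows essentially the same route as the paper: both rest on the reflection identity $f_+(x,\xi)=f_-(-x,\xi)$ (obtained by uniqueness of the Jost solution from the evenness of $V$) followed by a change of variables in the definition of $\wtF$. Your intermediate step of recording the clean parity relation $\psi(-x,\xi)=\psi(x,-\xi)$ before substituting is a minor stylistic streamlining, not a genuinely different argument.
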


\begin{proof}
Observe that when $V$ is even we have the relation $f_+(x,\xi) = f_-(-x,\xi)$ between the
generalized eigenfunctions \eqref{f+-}, by uniqueness of solutions for the ODE.
From this and the definition \eqref{psixk} we see that $\psi(x,\xi)=\psi(-x,-\xi)$ .
The preservation of parity for the distorted Fourier transform then follows
directly from the definition \eqref{distF}.
\end{proof}

\medskip
As appears in Proposition \ref{propFT}, one of the main differences between the mapping properties of 
$\widehat{\mathcal{F}}$ and $\widetilde{\mathcal{F}}$ has to do with zero frequency. 
Since the zero frequency furthermore plays a key role in the nonlinear analysis developed in the present paper,
we investigate this question a bit more.

\begin{itemize}

\item If $V$ is generic, then $\psi(x,0) = 0$ and $\widetilde{f}(0) = 0$ if $f \in L^1$. 
Furthermore, assuming better integrability properties at $\infty$,
\begin{align}\label{wtf0gen}
\begin{split}
& \mbox{if $\xi > 0$}, \qquad \widetilde{f}(\xi) = - \frac{\alpha\xi}{\sqrt{2\pi}} \int f(x) f_+(x,0)\,dx +O(\xi^2)
\\
& \mbox{if $\xi < 0$}, \qquad \widetilde{f}(\xi) = \frac{\alpha\xi}{\sqrt{2\pi}} \int f(x) f_-(x,0)\,dx+O(\xi^2),
\end{split}
\end{align}
where $\alpha$ was defined in \eqref{LEST}.
Thus, $\widetilde{f}$ is typically continuous, but not continuously differentiable at zero.

\smallskip
\item If $V$ is exceptional, then
\[
\sqrt{2\pi} \, \psi(x,0+) = \frac{2a}{1+a^2} f_+(x,0), 
  \qquad \mbox{and} \qquad \sqrt{2\pi} \, \psi(x,0-) = \frac{1}{a} \psi(x,0+),
\]
where $a$ was defined in Proposition \ref{LES}. Therefore, if $f \in L^1$,
\begin{align}\label{wtf0exc}
\begin{split}
\widetilde{f}(0+) = \frac{2a}{1+a^2} \frac{1}{\sqrt{2\pi}} \int f(x) f_+(x,0) \,dx 
\qquad \mbox{and} \qquad \widetilde{f}(0-) = \frac{1}{a} \widetilde{f}(0+).
\end{split}
\end{align}
As a consequence, $\widetilde{f}$ is continuous if $a=1$, but might not be otherwise.
\end{itemize}

\smallskip
\subsubsection{Fourier multipliers}\label{ssecmultipliers}
Given $m$ a function on the real line, the flat and distorted Fourier multipliers are defined by
\begin{align*}
& m(D) = \widehat{\mathcal F}^{-1} m(\xi)  \widehat{\mathcal F} 
\\
& m(\widetilde D) =  \widetilde{\mathcal F}^{-1} m(\xi)  \widetilde{\mathcal F}.
\end{align*}
Denoting
$H_0$ and $H$ for the flat and perturbed Schr\"odinger operators
$$
H_0 = - \partial_x^2, \qquad H = - \partial_x^2 + V,
$$
these operators are diagonalized by $\widehat{\mathcal{F}}$ and $\widetilde{\mathcal{F}}$, giving the functional calculus
\begin{align*}
& f(H_0) = \widehat{\mathcal{F}}^{-1} f(\xi^2)\widehat{\mathcal{F}}\\
& f(H) = \widetilde{\mathcal{F}}^{-1} f(\xi^2) \widetilde{\mathcal{F}}.
\end{align*}
In particular,
$$
e^{it \sqrt{1 + H_0}} = e^{it \langle D \rangle} \qquad \mbox{and} \qquad e^{it \sqrt{1 + H}} = e^{it \langle \widetilde D \rangle}.
$$

\begin{lem} \label{reality}
Assume that $f$ is real-valued, and that $m$ is even and real-valued. Then $m(\widetilde{D}) f$ is real-valued.
\end{lem}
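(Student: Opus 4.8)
The plan is to recognize $m(\widetilde{D})$ as a function of $H$ and then exploit that $H$ is a real operator. First I would use that $m$ is even to write $m(\xi) = n(\xi^2)$ with $n(\lambda) := m(\sqrt{\lambda})$ for $\lambda \geq 0$; since $m$ is real-valued, $n$ is real-valued on $[0,\infty)$. Applying the functional calculus recalled above (namely $g(H) = \widetilde{\mathcal{F}}^{-1} g(\xi^2)\widetilde{\mathcal{F}}$) with the Borel function $n$ yields $n(H) = \widetilde{\mathcal{F}}^{-1} n(\xi^2)\widetilde{\mathcal{F}} = \widetilde{\mathcal{F}}^{-1} m(\xi)\widetilde{\mathcal{F}} = m(\widetilde{D})$. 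So it suffices to show that $n(H)$ preserves real-valuedness.

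Next I would introduce $C\colon \phi \mapsto \overline{\phi}$, complex conjugation, an antiunitary involution of $L^2(\R)$ which leaves $H^2(\R)$ invariant. Because $V$ is real-valued, the differential expression $-\partial_x^2 + V$ has real coefficients, hence $C H C = H$ on $H^2(\R)$. By the standard transformation law of the Borel functional calculus under an antiunitary (immediate for $g(\lambda) = e^{is\lambda}$, since antilinearity of $C$ together with $CHC = H$ gives $C e^{isH} C = e^{-isH}$, and then by density for general bounded Borel $g$), one has $C\, n(H)\, C = \overline{n}(H)$, where $\overline{n}$ is the pointwise complex conjugate of $n$. Since $n$ is real-valued, $\overline{n} = n$, so $C\, n(H)\, C = n(H)$. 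Consequently, if $f$ is real-valued, i.e. $Cf = f$, then $C\big(m(\widetilde{D}) f\big) = C\, n(H)\, C f = n(H) f = m(\widetilde{D}) f$, which is precisely the statement that $m(\widetilde{D}) f$ is real-valued.

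The only genuinely delicate points are the commutation $CHC = H$ as self-adjoint operators (matching of domains included) and the transformation law under $C$; both are classical. I should note an alternative that bypasses the abstract functional calculus when $m$ lies in a suitable class: writing $m(\xi) = \int_\R \check{m}(t)\cos(t\xi)\,dt$, where $\check{m}$ is real-valued because $m$ is real and even, gives $m(\widetilde{D}) = \int_\R \check{m}(t)\cos(t\sqrt{H})\,dt$, and each wave propagator $\cos(t\sqrt{H})$ maps real-valued functions to real-valued functions because $\partial_t^2 w + H w = 0$ has real coefficients. In either approach the evenness of $m$ is what is essential, as it is exactly the hypothesis that lets one regard $m$ as a function of $\xi^2$, equivalently of $H$.
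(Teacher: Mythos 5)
Your proof is correct, but it takes a genuinely different route from the paper's. The paper argues concretely on the distorted Fourier side: it records the exact characterization of real-valuedness of $f$ as a relation between $\widetilde{f}(\xi)$, $\overline{\widetilde{f}(\xi)}$, $\overline{\widetilde{f}(-\xi)}$ involving $T$ and $R_\pm$ (obtained by conjugating the inversion formula and using $\overline{\psi(x,\xi)} = $ a combination of $\psi(x,\pm\xi)$), and then observes that multiplying by a real, even $m(\xi)$ preserves that relation. You instead lift the problem to operator level: write $m(\widetilde D)=n(H)$ using evenness, and invoke the antiunitary symmetry $CHC=H$ (with $C$ complex conjugation, valid because $V$ is real) together with the transformation law $C\,g(H)\,C=\overline{g}(H)$ of the Borel functional calculus, so real $n$ gives a real-preserving operator. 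Your argument is more structural and more robust — it works verbatim for any self-adjoint operator with real coefficients and makes it transparent why both hypotheses on $m$ (real and even) are needed; the paper's argument, while terser, has the side benefit of exhibiting the explicit reality condition on $\widetilde{f}$, which is a formula one may want to reuse. One small point worth keeping explicit in your write-up is that the identity $m(\xi)=n(\xi^2)$ relies precisely on evenness (since $n(\xi^2)=m(|\xi|)$), and that the paper's functional calculus statement $f(H)=\widetilde{\mathcal F}^{-1}f(\xi^2)\widetilde{\mathcal F}$ is exactly what makes the identification $m(\widetilde D)=n(H)$ legitimate. Your alternative via $\cos(t\sqrt H)$ (for $m$ with $\check m\in L^1$, using that the wave equation has real coefficients) is a nice concrete variant of the same idea.
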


\begin{proof} This follows from the simple observation that $f$ is real valued if and only if
$$
\left\{
\begin{array}{l}
T(\xi) \widetilde{f}(\xi) = - T(-\xi) R_+(\xi) \overline{\widetilde{f}(\xi)} + \overline{\widetilde{f}(-\xi)} \qquad \mbox{for $\xi>0$} \\
T(-\xi) \widetilde{f}(\xi) = - T(\xi) R_-(-\xi) \overline{\widetilde{f}(\xi)} + \overline{\widetilde{f}(-\xi)} \qquad \mbox{for $\xi<0$}.
\end{array}
\right.
$$
\end{proof}

\smallskip
\subsubsection{The wave operator}
The wave operator $\mathcal{W}$ is given by
$$
\mathcal{W} = \operatorname{s-lim}_{t\to \infty} e^{itH} e^{-itH_0}.
$$

\begin{prop}\label{propW}
The wave operator is unitary on $L^2$ and given by
$$
\mathcal{W} =  \widetilde{\mathcal{F}}^{-1}  \widehat{\mathcal{F}}.
$$
As a consequence,
\begin{align}\label{propWdef}
\mathcal{W}^{-1} = \mathcal{W}^* = \widehat{\mathcal{F}}^{-1} \widetilde{\mathcal{F}},
\end{align}
and the wave operator intertwines $H$ and $H_0$:
$$
f(H) = \mathcal{W} f(H_0) \mathcal{W}^*.
$$
\end{prop}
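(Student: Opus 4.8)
The plan is to establish the three assertions in order: first the formula $\mathcal{W} = \widetilde{\mathcal{F}}^{-1}\widehat{\mathcal{F}}$, then unitarity, then the intertwining relation (which is essentially a formal consequence of the first two together with the functional calculus already recorded).

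The main step is the identity $\mathcal{W} = \widetilde{\mathcal{F}}^{-1}\widehat{\mathcal{F}}$. I would argue as follows. Fix $\phi$ in a dense class, say Schwartz with compactly supported (flat) Fourier transform away from $\xi = 0$, so that all the oscillatory integrals below converge absolutely. Write $e^{itH}e^{-itH_0}\phi$ using the functional calculus: $e^{-itH_0}\phi = \widehat{\mathcal{F}}^{-1} e^{-it\xi^2}\widehat{\phi}$, and then apply $e^{itH} = \widetilde{\mathcal{F}}^{-1} e^{it\xi^2}\widetilde{\mathcal{F}}$. The point is to compute $\widetilde{\mathcal{F}}\big[\widehat{\mathcal{F}}^{-1} e^{-it\xi^2}\widehat{\phi}\big](\eta)$, which by \eqref{distF} and the definition of $\widehat{\mathcal{F}}^{-1}$ is
\begin{align*}
\int_\R \overline{\psi(x,\eta)}\, \frac{1}{\sqrt{2\pi}}\int_\R e^{ix\xi} e^{-it\xi^2}\widehat{\phi}(\xi)\,d\xi\,dx
= \int_\R \Big(\frac{1}{\sqrt{2\pi}}\int_\R \overline{\psi(x,\eta)}\, e^{ix\xi}\,dx\Big) e^{-it\xi^2}\widehat{\phi}(\xi)\,d\xi.
\end{align*}
The inner $x$-integral is (a constant times) the flat Fourier transform of $\overline{\psi(\cdot,\eta)}$; using $\psi(x,\eta) = \frac{1}{\sqrt{2\pi}} T(\eta) f_+(x,\eta)$ for $\eta>0$ and $m_+(x,\eta) = e^{-i\eta x}f_+(x,\eta) \to 1$ with symbol bounds (Lemma \ref{lemm+-}, Lemma \ref{lemTR}), one sees that as $x\to+\infty$ this kernel concentrates, in the $t\to\infty$ stationary-phase regime, on the diagonal $\xi = \eta$, reproducing $e^{-it\eta^2}\widehat{\phi}(\eta)$; the reflected piece (coming from $x\to-\infty$, with the $R_\pm/T$ and $e^{\mp i\eta x}$ terms in \eqref{f+f-}) oscillates at a non-stationary phase and contributes $o(1)$ as $t\to\infty$. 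Hence $e^{it\xi^2}\widetilde{\mathcal{F}}[\cdots] \to \widehat{\phi}$ pointwise (in a suitable averaged/distributional sense), and therefore $e^{itH}e^{-itH_0}\phi \to \widetilde{\mathcal{F}}^{-1}\widehat{\phi}$ in $L^2$. This gives $\mathcal{W} = \widetilde{\mathcal{F}}^{-1}\widehat{\mathcal{F}}$ on a dense set, hence everywhere.

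Given the formula, unitarity is immediate: both $\widehat{\mathcal{F}}$ and $\widetilde{\mathcal{F}}$ are unitary on $L^2$ (the latter by Proposition \ref{propFT}(i)), so their composition is; and $\mathcal{W}^{-1} = \mathcal{W}^* = (\widetilde{\mathcal{F}}^{-1}\widehat{\mathcal{F}})^* = \widehat{\mathcal{F}}^* (\widetilde{\mathcal{F}}^{-1})^* = \widehat{\mathcal{F}}^{-1}\widetilde{\mathcal{F}}$, which is \eqref{propWdef}. Finally, for the intertwining relation, recall from the functional calculus in this subsection that $f(H_0) = \widehat{\mathcal{F}}^{-1} f(\xi^2)\widehat{\mathcal{F}}$ and $f(H) = \widetilde{\mathcal{F}}^{-1} f(\xi^2)\widetilde{\mathcal{F}}$. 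Then
\begin{align*}
\mathcal{W} f(H_0)\mathcal{W}^* = \widetilde{\mathcal{F}}^{-1}\widehat{\mathcal{F}}\cdot \widehat{\mathcal{F}}^{-1} f(\xi^2)\widehat{\mathcal{F}}\cdot \widehat{\mathcal{F}}^{-1}\widetilde{\mathcal{F}} = \widetilde{\mathcal{F}}^{-1} f(\xi^2)\widetilde{\mathcal{F}} = f(H),
\end{align*}
as claimed.

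The hard part is making the $t\to\infty$ limit rigorous — i.e. justifying that the reflected (off-diagonal) part of the kernel genuinely vanishes in the strong limit and that the transmitted part converges strongly to the identity on $L^2$. The cleanest route is to avoid pointwise stationary phase altogether: show directly that $\widetilde{\mathcal{F}}^{-1}\widehat{\mathcal{F}}$ is the strong limit by writing $e^{itH}e^{-itH_0} - \widetilde{\mathcal{F}}^{-1}\widehat{\mathcal{F}} = \widetilde{\mathcal{F}}^{-1}\big(e^{it\xi^2} M_t - \mathrm{Id}\big)\widehat{\mathcal{F}}$ where $M_t$ is the multiplier-type operator above, and estimating the operator $e^{it\xi^2}M_t - \mathrm{Id}$ on the dense class using the symbol bounds of Lemma \ref{lemm+-} together with an integration by parts in $\xi$ exploiting the non-stationarity $\partial_\xi(x\xi - t\xi^2) = x - 2t\xi$ on the support where $x$ and $t\xi$ are not comparable; the decay $\langle x\rangle^{-N}(m_+-1)$ kills the remaining region. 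Alternatively, one can simply cite the classical construction of wave operators for short-range 1d Schrödinger operators (e.g. \cite{DeiTru, Yafaev, Weder1}) and identify the limit with $\widetilde{\mathcal{F}}^{-1}\widehat{\mathcal{F}}$ by the same diagonalization computation performed at the level of spectral measures, which is what the Stone-formula identity in the proof of Proposition \ref{propFT}(i) already sets up.
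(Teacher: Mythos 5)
Your overall strategy is right, and your ``cleanest alternative route'' is in spirit the one the paper takes; the formal deductions of unitarity and of the intertwining identity from the formula $\mathcal{W}=\widetilde{\mathcal{F}}^{-1}\widehat{\mathcal{F}}$ are exactly as in the paper. But the core step is handled differently, and the difference is worth noting. Both of your sketches pass through the composed operator $\widetilde{\mathcal{F}}\widehat{\mathcal{F}}^{-1}$ (or $M_t$), whose kernel $\int \overline{\psi(x,\eta)}\,e^{ix\xi}\,dx$ is only a distribution — $\psi(\cdot,\eta)$ is bounded and non-decaying, so this integral carries $\delta$-type masses at $\xi=\pm\eta$ — which is why your first sketch has to be read ``in a suitable averaged/distributional sense.'' The paper avoids composing the two transforms altogether: it writes
\begin{align*}
e^{itH}e^{-itH_0}f - \widetilde{\mathcal{F}}^{-1}\widehat{\mathcal{F}}f
= e^{itH}\big(\widehat{\mathcal{F}}^{-1}-\widetilde{\mathcal{F}}^{-1}\big)e^{-it\xi^2}\widehat{\mathcal{F}}f,
\end{align*}
and then uses the unitarity of $e^{itH}$ to reduce matters to $\|\widehat{\mathcal{F}}^{-1}e^{it\xi^2}g - \widetilde{\mathcal{F}}^{-1}e^{it\xi^2}g\|_{L^2}\to 0$ on a dense class (say $g\in C^\infty_0$, $\operatorname{supp} g\subset(0,\infty)$). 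This is a single oscillatory integral in $\xi$ with the pointwise bounded symbol $e^{ix\xi}-\sqrt{2\pi}\psi(x,\xi)$, which decomposes via \eqref{psi>} into three clean pieces: for $x>0$, the transmitted correction $(1-T(\xi)m_+(x,\xi))$ with phase $x\xi+t\xi^2$ (non-stationary since $x,\xi,t>0$); for $x<0$, the reflected wave with phase $-x\xi+t\xi^2$ (non-stationary since $-x,\xi,t>0$); and for $x<0$, the incoming correction $(1-m_-(x,\cdot))$ with phase $x\xi+t\xi^2$, which \emph{is} stationary at $x=-2t\xi$ — but here the coefficient decays like $\langle x\rangle^{-N}$, so the pointwise stationary-phase decay combined with dominated convergence finishes. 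Your plan to integrate by parts exploiting $\partial_\xi(x\xi+t\xi^2)=x+2t\xi$ away from the resonant region and the decay of $m_\pm -1$ there would carry this out equivalently; the key technical simplification you're missing is the ``unitarity of $e^{itH}$'' step that keeps everything at the level of a single transform.
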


\begin{proof}
In order to prove the desired formula for the wave operator, it suffices to check that, for any $f \in L^2$,
$$
\left\| e^{itH} e^{-itH_0} f - \widetilde{\mathcal{F}}^{-1}  \widehat{\mathcal{F}} f \right\|_2 \rightarrow 0.
$$
By the functional calculus, this is equivalent to
$$
\left\| \widehat{\mathcal{F}}^{-1} e^{it \xi^2} f - \widetilde{\mathcal{F}}^{-1} e^{it \xi^2} f \right\|_2 \rightarrow 0.
$$
By unitarity, it suffices to check the above for a dense subset of $f$, 
thus we might assume $f \in \mathcal{C}^\infty_0$. 
By symmetry between positive and negative frequencies, 
we can furthermore assume that $\operatorname{Supp} f \subset (0,\infty)$.
Therefore, matters reduce to proving that
$$
\left\| \int_0^\infty e^{i(x\xi + t\xi^2)} (1- T(\xi) m_+(x,\xi)) f(\xi)\,d\xi \right\|_{L^2_x} \rightarrow 0.
$$
To see that the above is true, we split the function whose $L^2$ norm we want to estimate into
\begin{align*}
& \int_0^\infty e^{i(x\xi + t\xi^2)} \mathbf{1}_+(x) (1-T(\xi) m_+(x,\xi)) f(\xi)\,d\xi 
  - \int_0^\infty e^{i(-x\xi + t\xi^2)} \mathbf{1}_-(x) R_-(\xi) m_-(x,\xi) f(\xi)\,d\xi
\\
& \qquad +  \int_0^\infty e^{i(x\xi + t\xi^2)} \mathbf{1}_-(x) (1- m_-(x,\xi)) f(\xi)\,d\xi
\\
& = I + II + III.
\end{align*}
The terms $I$ and $II$ have non-stationary phases, 
from which it follows that they converge to zero as $t \to \infty$. 
As for $III$, it goes to zero pointwise by the stationary phase lemma, 
and is uniformly (in $t$) bounded by a decaying function of $x$, as follows from the estimates on $m_-$;
therefore, it goes to zero in $L^2$ by the dominated convergence theorem.
\end{proof}

Finally, the following theorem gives boundedness of the wave operators on Sobolev spaces.

\begin{thm}[Weder \cite{Weder1}]\label{thmweder}
$\mathcal{W}$ and $\mathcal{W}^*$ extend to bounded operators on $W^{k,p}(\R)$ for any $k$ and $1 < p < \infty$. 
Furthermore, in the exceptional case, if $f_+(-\infty,0) = 1$, this remains true if $p=1$ or $\infty$.
\end{thm}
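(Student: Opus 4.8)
The plan is to reduce the $W^{k,p}$ statement to the case $k=0$ and then to prove the $L^p$ boundedness of $\mathcal{W}$ and $\mathcal{W}^\ast$ by means of the explicit representation of the wave operator in terms of the Jost functions $m_\pm$ and of the coefficients $T,R_\pm$. For the reduction, note that $(1+H_0)^{k/2}=\langle D\rangle^k$ exactly, so that the intertwining relation $\mathcal{W}\,\varphi(H_0)=\varphi(H)\,\mathcal{W}$ of Proposition \ref{propW} yields
\[
\langle D\rangle^k\mathcal{W}g=(1+H_0)^{k/2}(1+H)^{-k/2}\,\mathcal{W}\,\langle D\rangle^k g .
\]
Hence the $W^{k,p}$ bound for $\mathcal{W}$ (and, identically, $\mathcal{W}^\ast$) follows from its $L^p$ bound together with the $L^p$ boundedness of $(1+H_0)^{k/2}(1+H)^{-k/2}$, which is a routine consequence of the resolvent expansion and the Schwartz decay of $V$ and holds for all $1\le p\le\infty$. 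From now on $k=0$.

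For the core estimate, write $f_\pm(x,\xi)=e^{\pm i\xi x}m_\pm(x,\xi)$ in \eqref{psixk} to get
\[
\mathcal{W}f(x)=\int_\R\psi(x,\xi)\,\widehat f(\xi)\,d\xi=f(x)+\mathcal{R}f(x),
\]
where the constant $1$ extracted from $T(\pm\xi)m_\pm(x,\pm\xi)$ on $\{\pm\xi>0\}$ reassembles into $\tfrac{1}{\sqrt{2\pi}}\int_\R e^{ix\xi}\widehat f(\xi)\,d\xi=f(x)$, and $\mathcal{R}$ has symbol $T(\xi)m_+(x,\xi)-1$ on $\{\xi>0\}$ and $T(-\xi)m_-(x,-\xi)-1$ on $\{\xi<0\}$ (the operator $\mathcal{W}^\ast=\widehat{\mathcal F}^{-1}\widetilde{\mathcal F}$ is treated the same way with $m_\pm,T,R_\pm$ replaced by their conjugates). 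Using a smooth splitting $1=\chi_+(x)+\chi_-(x)$ adapted to the two half-lines, on the half-line where $m_\pm\to1$ rapidly I split $Tm_\pm-1=T(m_\pm-1)+(T-1)$ and invoke Lemma \ref{lemm+-}, while on the opposite half-line I use the asymptotics $T(\xi)m_+(x,\xi)\to1+R_-(\xi)e^{-2ix\xi}$ as $x\to-\infty$ (and its analogue for $m_-$), extracted from the scattering identities \eqref{f+f-}. This displays $\mathcal{R}$ as a sum of: (a) \emph{transmission errors}, whose symbols decay rapidly in $\pm x$ and decay in $\xi$, hence define kernels controlled by a Schur test and bounded on every $L^p$, $1\le p\le\infty$; (b) \emph{reflection errors}, built from $R_\pm(\xi)e^{\mp2ix\xi}$ on the ``wrong'' half-line; (c) the low-frequency part of $T-1$, which modulo type-(a) terms reduces to the multiplier $\mathbf{1}_{\pm\xi>0}\,(T(\pm\xi)-1)$ with at worst a corner at $\xi=0$, hence an integrable kernel, so again bounded for all $p$.

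The decisive point is (b). Writing $R_\pm(\xi)=R_\pm(0)+(R_\pm(\xi)-R_\pm(0))$, the difference is smooth, vanishes at $0$ and decays rapidly (Lemma \ref{lemTR}), so its contribution is of type (a); the remaining piece is $R_\pm(0)$ times a Fourier half-line projection $\mathbf{1}_{\xi>0}$, up to a reflection $x\mapsto-x$ and a spatial cutoff, i.e.\ $R_\pm(0)$ times a Hilbert-transform-type operator. This is bounded on $L^p$ for $1<p<\infty$ in all cases, which proves the first assertion, and it is unbounded on $L^1$ and $L^\infty$ precisely when $R_\pm(0)\neq0$. In the exceptional case with $a:=f_+(-\infty,0)=1$, Proposition \ref{LES} (see \eqref{LESTR0}) gives $T(0)=1$ and $R_\pm(0)=0$; hence every type-(b) symbol is continuous at $\xi=0$, the type-(c) symbol $T-1$ vanishes at $\xi=0$, all error operators become type (a), and $\mathcal{W},\mathcal{W}^\ast$ extend to bounded operators on $L^1$ and $L^\infty$ as well.

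The main obstacle is the uniform control of the type-(a) and mixed kernels in the low-frequency regime and, above all, in the spatial region where Lemma \ref{lemm+-} only furnishes the growing bound $\langle x\rangle^{1+\beta}\langle\xi\rangle^{-1-\beta}$ for $m_\pm$: there the symbol cannot be estimated naively, and one must integrate by parts, exploiting the oscillation $e^{ix\xi}$ together with the structure of the scattering coefficients near $\xi=0$ (in particular the vanishing of $T$ at $0$ in the generic case, which tames $Tm_\pm$), so as to produce kernels with enough decay for the Schur test. This is the technical heart of Weder's argument and the precise place where the generic/exceptional dichotomy enters.
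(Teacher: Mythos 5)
Since the paper cites Weder \cite{Weder1} and gives no proof of Theorem~\ref{thmweder}, there is no internal argument to compare with; your sketch stands on its own. The architecture — reducing to $k=0$ via the intertwining relation, writing $\mathcal{W}=I+\mathcal{R}$ through the Jost representation, splitting in $x$ and $\xi$ with the scattering relations \eqref{f+f-}, and isolating half-line Fourier projections (Hilbert-transform-type operators) — is faithful to Weder's approach. However, your diagnosis of the endpoint behavior contains a concrete error that in fact contradicts the cited result.

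You claim that (c) has only a corner and that (b) is unbounded on $L^1,L^\infty$ ``precisely when $R_\pm(0)\neq 0$.'' Neither is right. After the split $1=\chi_+(x)+\chi_-(x)$, the (c) piece consists of $\chi_+(x)\,\mathbf{1}_{\xi>0}\,(T(\xi)-1)$ together with $\chi_-(x)\,\mathbf{1}_{\xi<0}\,(T(-\xi)-1)$; each of these multipliers has a \emph{jump} of size $|T(0)-1|$ at $\xi=0$, not a corner — you cannot glue the two half-line multipliers into one because they sit under opposite spatial cutoffs. The decisive mechanism is a \emph{cancellation} between pieces: on each spatial half-line the full symbol of $\mathcal{R}$ is continuous at $\xi=0$ exactly when
\[
T(0)=1+R_\pm(0),
\]
as one sees by comparing the $\xi\to 0^+$ and $\xi\to 0^-$ limits of the combined (a)+(b)+(c) symbol (for $x<0$, say, these are $(1+R_-(0))\,m_-(x,0)-1$ and $T(0)\,m_-(x,0)-1$ respectively). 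Plugging in \eqref{LESTR0}, this identity holds in the generic case ($T(0)=0$, $R_\pm(0)=-1$) and in the exceptional case with $a=1$ ($T(0)=1$, $R_\pm(0)=0$), and fails only in the remaining exceptional cases. In particular, Weder's theorem gives $L^1,L^\infty$ boundedness in the \emph{generic} case as well, where $R_\pm(0)=-1\neq 0$, so ``$R_\pm(0)\neq0$ implies $L^1,L^\infty$ unboundedness'' is false. To repair the sketch, check continuity of the full symbol on each half-line at $\xi=0$, rather than arguing that (b) and (c) are individually nice; this yields the correct algebraic condition without changing the $1<p<\infty$ conclusion (jumps are still Hilbert-transform-bounded there) or the $a=1$ sufficiency. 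The remaining parts of the sketch — the Sobolev reduction and the deferral of the $\langle x\rangle^{1+\beta}$-growth of $m_\pm$ to a genuine integration-by-parts argument — are reasonable.
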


\smallskip
\subsubsection{What if discrete spectrum is present?}
The above discussion relied on the assumption that
$$
L^2_{ac} = P_{ac} L^2 = L^2,
$$
where we denoted $P_{ac}$ the projector on the absolutely continuous spectrum of $H$. 
Since we are assuming $V \in \mathcal{S}$, we can exclude singularly continuous spectrum, 
as well as embedded discrete spectrum, 
but there might be a finite number of negative eigenvalues $\lambda_N < \dots < \lambda_1 < 0$,
with corresponding eigenfunctions $\phi_1,\dots,\phi_N$, see \cite{DeiTru}.
Then all the statements made above require small adaptations. 
Indeed, $\widetilde{\mathcal{F}}$ is zero on $\phi_j$ for all $j$, and unitary from $L^2_{ac}$ to $L^2$. Thus,
$$
\wtF \wtF^{-1} = \operatorname{Id}_{L^2} \qquad \mbox{and} \qquad \wtF^{-1} \wtF = P_{ac}.
$$

\medskip
\subsection{Decomposition of $\psi(x,\xi)$}\label{ssecpsi}
Let $\rho$ be an even, smooth, non-negative function, equal to $0$ outside of $B(0,2)$ and such that $\int \rho = 1$. Define $\chi_{\pm}$ by
\begin{equation}
\label{chi+-}
\chi_+(x) = H * \rho = \int_{-\infty}^{x} \rho(y)\,dy , \quad \mbox{and} \quad \chi_+(x) + \chi_-(x) = 1,
\end{equation}
where $H$ is the Heaviside function, $H=\mathbf{1}_{+}$. Notice that
$$
\chi_+(x) = \chi_-(-x).
$$

With $\chi_\pm$ as above, and using the definition of $\psi$ in \eqref{psixk}
and $f_\pm$ and $m_\pm$ in \eqref{f+-}-\eqref{m+-}, as well as the identity \eqref{f+f-} we can write
\begin{align}
\label{psi>}
\begin{split}
\mbox{for} \quad \xi>0 \qquad \sqrt{2\pi}\psi(x,\xi) & = \chi_+(x)T(\xi)m_+(x,\xi)e^{ix\xi}
  \\ & + \chi_-(x) \big[ m_-(x,-\xi)e^{i\xi x} + R_-(\xi) m_-(x,\xi)e^{-i\xi x} \big],
\end{split}
\end{align}
and
\begin{align}
\label{psi<}
\begin{split}
\mbox{for} \quad \xi<0 \qquad \sqrt{2\pi}\psi(x,\xi) & = \chi_-(x)T(-\xi)m_-(x,-\xi)e^{ix\xi}
  \\ & + \chi_+(x) \big[m_+(x,\xi) e^{i\xi x} + R_+(-\xi)m_+(x,-\xi) e^{-i\xi x} \big].
\end{split}
\end{align}

We then decompose
\begin{align}
\label{psidec}
\sqrt{2\pi}\psi(x,\xi) = \psi^S(x,\xi) + \psi^R(x,\xi) ,
\end{align}
where, on the one hand, the singular part (non-decaying in $x$) is
\begin{align}
\label{psiSdec}
\begin{split}
\mbox{for} \quad \xi>0 \qquad \psi^S(x,\xi) & 
  := \chi_+(x) T(\xi) e^{i\xi x } + \chi_-(x) (e^{i\xi x} + R_-(\xi)  e^{-i\xi x}),
\\
\mbox{for} \quad \xi<0 \qquad \psi^S(x,\xi) & 
  := \chi_-(x) T(-\xi) e^{i\xi x} + \chi_+(x) (e^{i\xi x} + R_+(-\xi)  e^{-i\xi x}),
\end{split}
\end{align}
and the regular part is
\begin{align}
\label{psiR+-}
\begin{split}
\mbox{for} \quad \xi>0 \qquad \psi^R(x,\xi) & := \chi_+(x) T(\xi) (m_+(x,\xi)-1) e^{i\xi x}
  \\ & + \chi_-(x) \big[ (m_-(x,-\xi) - 1)e^{i\xi x} + R_-(\xi)(m_-(x,\xi) - 1) e^{-ix\xi} \big],
\\
\mbox{for} \quad \xi<0 \qquad \psi^R(x,\xi) & := \chi_-(x) T(-\xi) (m_-(x,-\xi)-1) e^{i\xi x}
  \\ & + \chi_+(x) \big[ (m_+(x,\xi) - 1)e^{i\xi x} + R_+(-\xi)(m_+(x,-\xi) - 1) e^{-ix\xi} \big].
\end{split}
\end{align}

\medskip
\subsection{Linear estimates}\label{secLin}
Recall that  $\langle D \rangle = \sqrt{ -\partial_x^2 + 1 }$ and
$\langle \wt{D} \rangle= \sqrt{ -\partial_x^2  + V + 1} = \wtF^{-1} \jxi \wtF$.

\begin{prop}[Dispersive estimates]\label{propdisp}
Recall the definition \eqref{wnorm} with \eqref{wnormparam0}. 
The following statements hold true:

\smallskip
\begin{itemize}

\item[(i)] For any $0\leq |t| \leq T$, and for $I =  [0,\infty)$ or $(-\infty,0]$,
\begin{align}\label{disp1}
\begin{split}
{\| e^{\pm it\langle D \rangle}  \mathbf{1}_I (D)  f \|}_{L^\infty_x} 
  \lesssim  \frac{1}{\jt^{1/2}} \, {\big\| \jxi^{3/2} \what{f} \big\|}_{L^\infty_\xi}
  +   \frac{1}{\jt^{3/4 - \alpha - \beta \gamma}} {\big\| 
  \jxi 
  \partial_\xi \what{f} \big\|}_{W_T} 
  + \frac{1}{\jt^{7/12}}  {\| \jxi^4 \what{f} \|}_{L^2}.
\end{split}
\end{align}

\smallskip
\item[(ii)] If $V$ satisfies the a priori assumptions of Theorem \ref{maintheo}, then for any $0\leq |t| \leq T$,
\begin{align}\label{disp2}
\begin{split}
{\big\| e^{\pm it \langle \wt{D} \rangle}  f \big\|}_{L^\infty_x} 
 \lesssim  \frac{1}{\jt^{1/2}} \, {\big\| \jxi^{3/2} \wt{f} \big\|}_{L^\infty_\xi}
  +   \frac{1}{\jt^{3/4 - \alpha - \beta\g}} {\big\| 
  \jxi 
  \partial_\xi \wt{f} \big\|}_{W_T} 
  +  \frac{1}{\jt^{7/12}}  {\| \jxi^4 \wt{f} \|}_{L^2}.
\end{split}
\end{align}



\end{itemize}

\end{prop}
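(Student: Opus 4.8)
The plan is to prove the flat estimate $(i)$ by a stationary-phase analysis of the half-wave integral, and then to deduce $(ii)$ from $(i)$ using the splitting $\sqrt{2\pi}\,\psi=\psi^S+\psi^R$ of \eqref{psidec}. \textbf{For $(i)$:} by time reversal and the symmetry $\xi\mapsto-\xi$ we may take the $+$ sign and $I=[0,\infty)$, so (dropping the factor $2\pi$) it suffices to bound $\int_0^\infty e^{i(x\xi+t\langle\xi\rangle)}\what f(\xi)\,d\xi$ in $L^\infty_x$ uniformly for $1\le t\le T$. Writing the phase as $t\,\phi(\xi)$, $\phi(\xi)=\langle\xi\rangle+v\xi$ with $v:=x/t$, one has $\phi'(\xi)=\xi\langle\xi\rangle^{-1}+v$, $\phi''(\xi)=\langle\xi\rangle^{-3}$, so an interior stationary point $\xi_0>0$ (with $\langle\xi_0\rangle=(1-v^2)^{-1/2}$) exists precisely when $v\in(-1,0)$. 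First I would dispose of the very high frequencies $\langle\xi\rangle\ge\langle t\rangle^{1/6}$ by summing the crude bounds $\|e^{it\langle D\rangle}P_kf\|_{L^\infty}\le\|\widehat{P_kf}\|_{L^1}\lesssim 2^{k/2}\|P_kf\|_{L^2}\lesssim 2^{-7k/2}\|\langle\xi\rangle^4\what f\|_{L^2}$ over $2^k\ge\langle t\rangle^{1/6}$, which produces exactly the $\langle t\rangle^{-7/12}\|\langle\xi\rangle^4\what f\|_{L^2}$ term. On the remaining range $\langle\xi\rangle\le\langle t\rangle^{1/6}$, when there is no interior stationary point ($v\ge0$ or $v\le-1$) I would peel off a non-oscillatory window $|\xi|\lesssim\langle t\rangle^{-1/2}$ near the origin — whose contribution is $\lesssim\langle t\rangle^{-1/2}\|\langle\xi\rangle^{3/2}\what f\|_{L^\infty}$, treating $\xi=0$ as a stationary point of $\phi$ — integrate by parts once off that window using the lower bound on $|\phi'|$, and apply Cauchy--Schwarz; this leaves $\langle t\rangle^{-1}\int_{\langle\xi\rangle\le\langle t\rangle^{1/6}}(\langle\xi\rangle^2|\partial_\xi\what f|+\langle\xi\rangle|\what f|)\,d\xi$, which I would estimate by the $L^\infty_\xi$-term, the $L^2$-term, and — after splitting the $L^2$-norm of $\langle\xi\rangle\partial_\xi\what f$ on the time-$t$ slice into a piece near $\pm\sqrt3$ and a piece away from it — by $\langle t\rangle^{-(3/4-\alpha-\beta\gamma)}\|\chi_{\leq 0,\sqrt 3}\partial_\xi\what f\|_{W_T}$ and $\langle t\rangle^{-(3/4-\alpha)}\|\chi_{>0,\sqrt 3}\langle\xi\rangle\partial_\xi\what f\|_{W_T}$. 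The $\langle t\rangle^{\beta\gamma}$ loss near $\pm\sqrt3$ comes exactly from the innermost block $\ell=-\gamma n$ of the norm \eqref{wnorm}, and the condition $\alpha+\beta\gamma<1/4$ from \eqref{wnormparam0} ensures these are genuine error terms, decaying strictly faster than $\langle t\rangle^{-1/2}$.

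\textbf{The main case $v\in(-1,0)$.} With $\delta:=\langle\xi_0\rangle^{3/2}\langle t\rangle^{-1/2}$ I would localize smoothly to $|\xi-\xi_0|\le c\,\delta$ and integrate by parts off this window (using $|\phi'(\xi)|\gtrsim\langle\xi_0\rangle^{-3}|\xi-\xi_0|$ near $\xi_0$, $|\phi'|\gtrsim\langle\xi\rangle^{-2}$ far away, and boundary terms of size $\lesssim\delta\,|\what f|\lesssim\langle t\rangle^{-1/2}\|\langle\xi\rangle^{3/2}\what f\|_{L^\infty}$ at $\xi_0\pm c\delta$ and, when relevant, at $\xi=0$). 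On the window one writes $\what f(\xi)=\what f(\xi_0)+\big[\what f(\xi)-\what f(\xi_0)\big]$; the constant part contributes $\lesssim\delta\,|\what f(\xi_0)|=\langle t\rangle^{-1/2}\langle\xi_0\rangle^{3/2}|\what f(\xi_0)|\lesssim\langle t\rangle^{-1/2}\|\langle\xi\rangle^{3/2}\what f\|_{L^\infty}$, the main term. For the difference part: when $\langle\xi_0\rangle$ is large we use $|\what f(\xi)-\what f(\xi_0)|\lesssim\langle\xi_0\rangle^{-3/2}\|\langle\xi\rangle^{3/2}\what f\|_{L^\infty}$, so the $\langle\xi_0\rangle^{3/2}$ from $\delta$ cancels and we again land on the main term; when $\langle\xi_0\rangle=O(1)$ we use $|\what f(\xi)-\what f(\xi_0)|\le|\xi-\xi_0|^{1/2}\|\partial_\xi\what f\|_{L^2}$, so the contribution is $\lesssim\delta^{3/2}\|\partial_\xi\what f\|_{L^2(|\xi-\xi_0|\lesssim\delta)}\sim\langle t\rangle^{-3/4}\|\partial_\xi\what f\|_{L^2(|\xi-\xi_0|\lesssim\delta)}$, and the last $L^2$-norm is bounded by the $W_T$-norm exactly as above, with a $\langle t\rangle^{\beta\gamma}$ loss only when $\xi_0$ lies within $\langle t\rangle^{-\gamma}$ of $\pm\sqrt3$ and none otherwise.

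\textbf{For $(ii)$:} one cannot simply write $e^{it\langle\widetilde D\rangle}=\mathcal{W}\,e^{it\langle D\rangle}\,\mathcal{W}^\ast$ and invoke Theorem \ref{thmweder}, since in the generic case $\mathcal{W}$ need not be bounded on $L^\infty$. Instead I would use $e^{it\langle\widetilde D\rangle}f(x)=\int\psi(x,\xi)e^{it\langle\xi\rangle}\wt f(\xi)\,d\xi$ together with $\sqrt{2\pi}\,\psi=\psi^S+\psi^R$. By \eqref{psiSdec} the $\psi^S$-contribution is a finite sum of terms $\chi_\pm(x)\int e^{\pm ix\xi+it\langle\xi\rangle}c(\xi)\wt f(\xi)\,d\xi$ with smooth coefficients $c\in\{1,T,R_\pm\}$; after the change of variables $\xi\mapsto-\xi$ in the $e^{-ix\xi}$ pieces this is precisely the flat half-wave integral of $(i)$ applied to the profile $c(\pm\xi)\wt f(\pm\xi)$ and multiplied by the bounded cutoff $\chi_\pm(x)$. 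Since $|T|,|R_\pm|\le1$, $\langle\xi\rangle=\langle-\xi\rangle$, and the cutoffs $\chi_{\ell,\sqrt3}$ (hence the $W_T$-norm) are invariant under $\xi\mapsto-\xi$, the $L^\infty_\xi$- and $W_T$-norms of this new profile are controlled by those of $\wt f$; the commutator terms arising when $\partial_\xi$ falls on $T$ or $R_\pm$ carry the decaying symbols of Lemma \ref{lemTR} and are absorbed into the lower-order ($L^2$/Sobolev) terms. The $\psi^R$-contribution is, by \eqref{psiR+-} and Lemma \ref{lemm+-}, of the form $\int a(x,\xi)e^{\pm ix\xi+it\langle\xi\rangle}\wt f(\xi)\,d\xi$ with $|\partial_x^j\partial_\xi^k a(x,\xi)|\lesssim\langle x\rangle^{-N}\langle\xi\rangle^{-1-k}$: the factor $\langle x\rangle^{-N}$ gives uniform control in $x$ after a single stationary-phase/integration-by-parts argument as in $(i)$, and the extra $\langle\xi\rangle^{-1}$ is more than enough to place this contribution into the $\langle t\rangle^{-7/12}\|\langle\xi\rangle^4\wt f\|_{L^2}$-type term.

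\textbf{The main obstacle} is the frequency bookkeeping in $(i)$ near $\pm\sqrt3$: one must track exactly how $\|\partial_\xi\what f\|_{L^2}$ over a window shrinking to $\pm\sqrt3$ is captured by the degenerate norm $W_T$ — that the loss is no worse than the $\langle t\rangle^{\beta\gamma}$ coming from the innermost block $\ell=-\gamma n$ — and verify, using $\alpha+\beta\gamma<1/4$, that the resulting contributions decay strictly faster than the $\langle t\rangle^{-1/2}$ main term; obtaining the exact exponents $3/4-\alpha$ and $3/4-\alpha-\beta\gamma$ forces some care with the thresholds separating the stationary window, the integration-by-parts region, and the high-frequency tail. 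A lesser point is the algebra in $(ii)$ ensuring that multiplication by $T,R_\pm$ and the reflection $\xi\mapsto-\xi$ preserve the degenerate norms, which rests on the symmetry of $\chi_{\ell,\sqrt3}$ and Lemma \ref{lemTR}.
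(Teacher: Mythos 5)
Your overall plan — stationary phase for the flat half-wave in $(i)$, then reduce $(ii)$ to scalar oscillatory integrals using the structure of the generalized eigenfunctions — is the right one, and your frequency bookkeeping (high-frequency Sobolev tail $\langle t\rangle^{-7/12}\|\jxi^4\cdot\|_{L^2}$, stationary window of width $\jxi_0^{3/2}\langle t\rangle^{-1/2}$ giving the main $\langle t\rangle^{-1/2}\|\jxi^{3/2}\cdot\|_{L^\infty}$ term, and the $W_T$-losses $\langle t\rangle^{\alpha+\beta\gamma}$, $\langle t\rangle^{\alpha}$ from the pieces near and away from $\pm\sqrt 3$) matches the paper's. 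Your observation that one cannot conjugate by $\mathcal W$ and invoke Theorem~\ref{thmweder} in the generic case is also correct and identifies the right reason for a direct argument.

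The genuine difference is in $(ii)$. You split $\sqrt{2\pi}\,\psi=\psi^S+\psi^R$ and propose to handle $\psi^R$ separately, absorbing it into the Sobolev error. The paper does not separate $\psi^S$ from $\psi^R$. Instead it formulates a single oscillatory-integral lemma (Lemma~\ref{lemoscil}) for integrals $\int e^{it(\nu\jxi-\xi X)}a(x,\xi)g(\xi)\,d\xi$ under the hypothesis \eqref{hypoa}, $|a|+\jxi|\partial_\xi a|\lesssim 1$ uniformly in $x$, and then applies it directly to the three terms in \eqref{psi>}, with symbols $T(\xi)m_+(x,\xi)$, $m_-(x,-\xi)$, $R_-(\xi)m_-(x,\xi)$; the hypothesis \eqref{hypoa} holds by Lemmas~\ref{lemm+-} and \ref{lemTR}. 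This avoids the $\psi^S/\psi^R$ split altogether. The payoff is that $(i)$ is literally the $a\equiv 1$ case of the same lemma, so one proof serves both parts.

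A small but real inaccuracy in your $(ii)$: the $\psi^R$ piece does \emph{not} go entirely into the $\langle t\rangle^{-7/12}\|\jxi^4\wt f\|_{L^2}$ term. Its symbol has $|a|\lesssim\jx^{-N}\jxi^{-1}$, but a stationary-phase analysis still produces a window contribution of size $\approx\jxi_0^{3/2}\langle t\rangle^{-1/2}\cdot\jxi_0^{-1}|\wt f(\xi_0)|\lesssim\jxi_0^{-1}\langle t\rangle^{-1/2}\|\jxi^{3/2}\wt f\|_{L^\infty}$, i.e.\ a $\langle t\rangle^{-1/2}$ main term (merely with a smaller constant when $\jxi_0$ is large); for $\jxi_0=O(1)$ no further gain is possible from the extra $\jxi^{-1}$. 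So $\psi^R$ must be run through the same four-term bound as $\psi^S$, which is exactly what Lemma~\ref{lemoscil} does. The conclusion you want still holds — the $\psi^R$ contribution is bounded by (smaller multiples of) all four terms on the right of \eqref{disp2} — but your stated reason ("absorbed into the Sobolev term") is not the correct one. Secondly, in $(i)$ your Taylor-splitting $\what f(\xi)=\what f(\xi_0)+[\what f(\xi)-\what f(\xi_0)]$ on the stationary window is unnecessary: the paper bounds the window contribution directly by $|\text{window}|\cdot\sup_{|\xi|\approx|\xi_0|}|\what f|\lesssim\langle t\rangle^{-1/2}\|\jxi^{3/2}\what f\|_{L^\infty}$, which is simpler and gives the same result.
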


A more precise asymptotic formula with an explicit leading order term
can be read off the proof of Proposition \ref{propdisp}; 
in particular, up to a faster decaying remainder of the same form of those appearing in \eqref{disp2},
we have
\begin{align}\label{remlinear}
e^{it\langle \widetilde D \rangle} f \approx \frac{e^{i \frac{\pi}{4}}}{\sqrt{2t}} 
  \langle \xi_0 \rangle^{3/2} e^{i t \langle \xi_0 \rangle + i x \xi_0} \widetilde{f} \left( \xi_0 \right) 
  \qquad \mbox{as $t \to \infty$}, \quad  \frac{\xi_0}{\langle \xi_0 \rangle} = - \frac{x}{t}.
\end{align}

\begin{rem}\label{disprem}
Note that, in view of \eqref{wnormparam0}, we have $\alpha + \beta\gamma < 1/4$.
Therefore, uniform-in-time control of the profile in $\wtF^{-1}\jxi^{-3/2} L^\infty$ and $W_t$, 
and in $H^4$  with small time growth, gives the sharp $|t|^{-1/2}$ decay for linear solutions through \eqref{disp2}.

Furthermore, let $\mathbf{a}$ be any of the coefficients defined in \eqref{mucoeffexp}.
In view of \eqref{disp1} and the regularity of $T(\xi)$ and $R(\xi)$ in \eqref{TRk}, we have
\begin{align}\label{dispinfty}
\begin{split}
{\big\| e^{\pm it \langle D \rangle} \whF^{-1} \big(\mathbf{a}(\xi) \wt{f} \, \big) \big\|}_{L^\infty_x}
  \lesssim \frac{1}{\jt^{1/2}} {\| \jxi^{3/2} \wt{f} \|}_{L^\infty_\xi} +  \frac{1}{\jt^{3/4-\alpha -\beta\gamma}} \big( {\| \jxi \partial_\xi \wt{f} \|}_{W_t}  
  	+ {\| \wt{f} \|}_{L^2} \big)
	\\ +   \frac{1}{\jt^{7/12}}  {\| \jxi^4 \wt{f} \|}_{L^2}.
\end{split}
\end{align}
\end{rem}

\smallskip
\begin{rem}\label{remdispint}
Besides the pointwise decay estimates of Proposition \ref{propdisp} above, we will also use the following variant:
for $k \geq 5$
\begin{align}\label{dispint}
{\| e^{\pm it\langle D \rangle} \varphi_k(D) f \|}_{L^\infty} 
  \lesssim \frac{1}{t^{1/2}} 2^{3k/2} {\big\| \varphi_k\widehat{f} \big\|}_{L^2}^{1/2} 
   \big( {\big\| \varphi_k\partial_{\xi}\widehat{f} \big\|}_{L^2} + {\big\| \varphi_k\widehat{f} \big\|}_{L^2}  \big)^{1/2},
\end{align}
which follows from the standard $L^1\rightarrow L^\infty$ decay and the 
interpolation inequality 
\begin{align*}
{\big\| \varphi_k(D) f \big\|}_{L^1}
	\lesssim {\big\| \varphi_k(D) f \big\|}_{L^2}^{1/2}  {\big\| x \, \varphi_k(D) f \big\|}_{L^2}^{1/2}.
\end{align*}
Notice that \eqref{dispint} also implies, see \eqref{propWdef},
\begin{align}\label{dispintW}
{\| e^{\pm it\langle D \rangle} \W \varphi_k(\wt{D}) f \|}_{L^\infty} 
  \lesssim \frac{1}{|t|^{1/2}} 2^{3k/2} {\big\| \varphi_k\wt{f} \big\|}_{L^2}^{1/2} 
   \big( {\big\| \varphi_k\partial_{\xi}\wt{f} \big\|}_{L^2} + {\big\| \varphi_k\wt{f} \big\|}_{L^2} \big)^{1/2}.
\end{align}

\end{rem}


To prove Proposition \ref{propdisp}, we use the following stationary phase lemma: 

\begin{lem}
\label{lemoscil}
 Consider for $X \in \mathbb{R}$, $t  \geq 0$, $x\in \R$ the integrals 
\begin{equation*}
I_{\mu, \nu}(t,X,x) = \int_{\R_{\mu}}  e^{it ( \nu \langle \xi \rangle -  \xi X)} a(x,\xi) g(\xi)\, d\xi, \qquad \mu,\nu \in \{+,-\},
 \end{equation*}
 and assume that 
\begin{equation}
\label{hypoa}
  \sup_{x \in \R, \, \xi \in \R_\mu}\left(|a(x,\xi)| + \langle \xi \rangle |\partial_{\xi}a(x,\xi)| \right)\lesssim 1.
\end{equation}
Then we have the estimate
\begin{align}
\begin{split}\label{dispdes}
\big| I_{\mu, \nu}(t,X,x) \big| \lesssim \frac{1}{\jt^{1/2}} {\| \jxi^{3/2} g(\xi) \|}_{L^\infty}
	+ \frac{1}{\jt^{3/4 - \alpha - \beta\gamma}} {\| 
	\jxi \partial_\xi g \|}_{W_T} 
	+ \frac{1}{\jt^{7/12}}  {\| \jxi^4 g \|}_{L^2}.
\end{split}
\end{align}
\end{lem}

We postpone the proof of the lemma and give first the proof of Proposition \ref{propdisp}.

\smallskip
\begin{proof}[Proof of Proposition \ref{propdisp}]
In order to prove \eqref{disp1}, we write
\begin{align*}
e^{\pm i t \langle D \rangle} \mathbf{1}_I (D) f = \frac{1}{\sqrt{2\pi}} \int_{I} e^{i t(\pm \langle \xi \rangle - \xi X)} \what{f}(\xi)\, d\xi, \quad X := -  x/t,
\end{align*}
and use Lemma \ref{lemoscil} on $I=\R_{+}$ or $\R_{-}$ and $a\equiv1$.
 
 To prove \eqref{disp2}, we use the distorted Fourier inversion, see \eqref{propFT}, to write
\begin{align*} 
e^{\pm i t \langle \wt{D} \rangle} f 
 = \int_{\R_+}  e^{\pm i t \langle \xi \rangle} \psi(x,\xi) \wt{f}(\xi)\, d\xi 
 +  \int_{\R_-}  e^{\pm i t \langle \xi \rangle}  \psi (x,\xi) \wt{f}(\xi)\, d\xi.
\end{align*}
Let us estimate the first integral, the other one being similar.
Using \eqref{psi>}, we can write
\begin{align*}
\nonumber
\sqrt{2\pi} \int_{\R_{+}}  e^{\pm i t \langle \xi \rangle} \psi(x,\xi) f(\xi)\, d\xi
	& =  \chi_+(x) \int_{\R_+}  e^{ i t(\pm \langle \xi \rangle - \xi X)} T(\xi) m_{+}(x,\xi) f(\xi)\, d\xi
	\\
	& + \chi_-(x) \int_{\R_+}  e^{i t(\pm \jxi - \xi X)}  m_{-}(x,-\xi) f(\xi)\, d\xi
	\\
	&+ \chi_-(x) \int_{\R_+}  e^{ i t(\pm \jxi  + \xi X)} R_{-}(\xi) m_{-}(x,\xi) f(\xi)\, d\xi.
\end{align*}
Then, the desired estimate follows by using Lemma \ref{lemoscil} with $a(x,\xi)= T(\xi) m_{+}(x,\xi)$,
$m_{-}(x,-\xi)$, and $R_-(\xi)m_{-}(x,\xi)$, where the assumption \eqref{hypoa} holds thanks to Lemmas 
\ref{lemm+-} and  \ref{lemTR}.
\end{proof}

\medskip
\begin{proof}[Proof of Lemma \ref{lemoscil}]
It suffices to consider only the case $\mu=+$, $\nu=+$ and $t \geq 1$, $X \geq 0$; all other cases are similar or easier.
We let 
\begin{align}\label{disppr1}
I_{++} = \sum_{k\in\Z} I_k , \qquad I_k(t,X) := \int_{\R_+}  e^{it ( \jxi - \xi X)}  a(x,\xi) g(\xi) \varphi_k(\xi) \, d\xi.
\end{align}
First notice that since
\begin{align}\label{disppr2}
\big| I_k(t,x) \big| \lesssim \int_{\R_+}  | g(\xi) | \varphi_k(\xi) \, d\xi \lesssim \min \big( 2^k {\| g \|}_{L^\infty}, 2^{-7k/2} {\| \jxi^4 g \|}_{L^2} \big) 
\end{align}
we see that $|I_{++}|$ enjoys the desired bound if  $2^k \gtrsim t^{1/6}$, or $2^k \lesssim t^{-1/2}$.
From now on we assume 
\begin{align}\label{dispprk}
C t^{-1/2} \leq 2^k \leq (1/C) t^{1/6}
\end{align}
for a suitably large absolute constant $C>0$.

Let us denote
\begin{align}\label{dispprphi}
\begin{split}
& \phi_{X}(\xi) := \jxi -   \xi X, \quad
\\
& \phi_X'(\xi) = \frac{\xi}{\jxi} - X, 
  \quad \phi_{X}''(\xi) = \frac{1}{\jxi^3}, \qquad \xi_{0} := X/\sqrt{1-X^2},
\end{split}
\end{align}
and note that the phase $\phi_X$ has no stationary points if $X \geq 1$, and a unique, non-degenerate, 
stationary point at $\xi_0$ for any $X \in [0, 1)$.
Consider $n \in \Z_+$ such that $t \in [2^{n-1},2^n]$ and let $q_0 \in \Z$ be the smallest integer such that 
$2^{q_0} \geq 2^{(3/2)k^+} 2^{-n/2} \approx \jxi^{3/2} t^{-1/2}$.
Note that $2^{q_0} \ll \min(2^k,1)$, if $C$ in \eqref{dispprk} is large enough.

In what follows we may assume that $|\xi_0|\approx 2^k$, for otherwise there is no stationary point
on the support of \eqref{disppr1}, $|\phi_X'(\xi)| \gtrsim 2^k 2^{-3k^+}$, and the proof of the statement is easier.
In other words, for fixed $\xi_0$, we may assume that there are only a finite number of indexes $k$
for which $I_k$ does not vanish.

Using the notation \eqref{cut2} 
we decompose 
\begin{align}\label{dispprkq}
I_k =  \sum_{q \in [q_0,\infty) \cap \Z} I_{k,q},
	\qquad I_{k,q}(t,X) := \int_{\R_+}  e^{it ( \jxi - \xi X)}  a(x,\xi) \, \varphi_q^{(q_0)}(\xi-\xi_0) \, \varphi_k(\xi) \, g(\xi)\, d\xi.
\end{align}
Bounding the contribution to the sum over $k$ of the term with $q = q_0$ is immediate.
Let us then consider $q > q_0$ and note that on the support of the integral in \eqref{dispprkq} we have
\begin{align}\label{dispprphi'}
|\phi_X'(\xi)| \approx |\xi-\xi_0| |\phi_X''| \approx 2^{q} 2^{-3k^+} \gtrsim 2^{-n/2} 2^{-(3/2)k^+}.
\end{align}
Integrating by parts using $(it \phi_{X}')^{-1} \partial_\xi e^{it \phi_X} =  e^{it \phi_X}$, we obtain:
\begin{align}\label{dispprIBP}
\begin{split}
& I_{k,q} = \frac{1}{it} \big[ J_{k,q}^{(1)} + J_{k,q}^{(2)} + J_{k,q}^{(3)} + J_{k,q}^{(4)} \big] ,
\\
& J_{k,q}^{(1)}(t,X) = \int_{\R_+}  e^{it ( \jxi - \xi X)} \frac{\phi_X''}{(\phi_X')^2} a(x,\xi) \, \varphi_q(\xi-\xi_0) \, \varphi_k(\xi) \, g(\xi) \, d\xi,
\\
& J_{k,q}^{(2)}(t,X) = -\int_{\R_+}  e^{it ( \jxi - \xi X)}  \frac{1}{\phi_X'} \, \partial_\xi a(x,\xi) \, \varphi_q(\xi-\xi_0) \, \varphi_k(\xi) \, g(\xi) \, d\xi,
\\
& J_{k,q}^{(3)}(t,X) = - \int_{\R_+}  e^{it ( \jxi - \xi X)}  \frac{1}{\phi_X'} a(x,\xi) \, \partial_\xi \big[ \varphi_q(\xi-\xi_0) \, \varphi_k(\xi) \big] \, g(\xi) \, d\xi,
\\
& J_{k,q}^{(4)}(t,X) = - \int_{\R_+}  e^{it ( \jxi - \xi X)}  \frac{1}{\phi_X'} a(x,\xi) \, \varphi_q(\xi-\xi_0) \, \varphi_k(\xi) \, \partial_\xi g(\xi) \, d\xi.
\end{split}
\end{align}

Using \eqref{dispprphi} and \eqref{dispprphi'} and changing the index of summation $q \mapsto p+(3/2)k^+$ we have
\begin{align}\label{dispprb1}
\begin{split}
\Big| \sum_{q\geq q_0, k} J_{k,q}^{(1)}(t,X) \Big|& \lesssim 
  \sum_{p \geq -n/2, k} \int_{\R_+} \frac{\jxi^3}{(\xi-\xi_0)^2} \, \varphi_{\sim p}\big( (\xi-\xi_0)2^{-(3/2)k^+} \big) \, \varphi_k(\xi) \, |g(\xi)| \, d\xi 
  \\
& \lesssim  t^{1/2} {\| \jxi^{3/2} g \|}_{L^\infty}.
\end{split}
\end{align}
For the second term, using $|\partial_\xi a|\leq 1$,  we can estimate
\begin{align}\label{dispprb2}
\begin{split}
\Big| \sum_{q\geq q_0, k} J_{k,q}^{(2)}(t,X) \Big| &\lesssim 
	\sum_{p \geq -n/2, k} \int_{\R_+} \frac{\jxi^3}{|\xi-\xi_0|} \, \varphi_{\sim p}\big( (\xi-\xi_0) 2^{-(3/2)k^+} \big) \, \varphi_k(\xi) \, |g(\xi)| \, d\xi 
	\\
&	\lesssim  t^{1/2} {\| g \|}_{L^\infty}.
\end{split}
\end{align}
The third term in \eqref{dispprIBP} is similar to the first one:
\begin{align}\label{dispprb3}
\begin{split}
\Big| \sum_{q\geq q_0, k} J_{k,q}^{(3)}(t,X) \Big| 
	\lesssim  t^{1/2} {\| \jxi^{3/2} g \|}_{L^\infty}.
\end{split}
\end{align}
The upper bounds \eqref{dispprb1}-\eqref{dispprb3}, after being multiplied by $t^{-1}$ 
are bounded by the first term on the right-hand side of \eqref{dispdes}.

For the last integral in \eqref{dispprIBP} we want to distinguish cases depending on the location of $\xi$ relative to the frequency $\sqrt{3}$.
We insert cutoffs $\varphi_\ell^{(\ell_0)}(\xi-\sqrt{3})$, for $\ell_0 := -\gamma n$, and bound
\begin{align}\label{disppr4}
\begin{split}
& t^{-1} \big| J_{k,q}^{(4)} \big| \lesssim t^{-1} \big[ K_{\leq \ell_0} + \sum_{\ell_0 < \ell \leq 0} K_{\ell} + K_{>0} \big] 
\\
& K_{\ast}(t,X) = \int_{\R_+} \frac{1}{|\phi_X'|} |a(x,\xi)| \, \varphi_q(\xi-\xi_0) \, \varphi_k(\xi) \, \varphi_{\ast}\big(\xi-\sqrt{3}\big) \, |\partial_\xi g(\xi)| \, d\xi.
\end{split}
\end{align}
The first term can be estimated as follows:
\begin{align*}
t^{-1} \sum_{q > q_0} \big| K_{\leq \ell_0}(t,X) \big| 
	& \lesssim t^{-1} \sum_{q>q_0} 2^{-q} \cdot 2^{\min(q,-\g n)/2} \cdot {\big\|  \varphi_{\leq -\g n} \big(\xi-\sqrt{3}\big)\partial_\xi g \big\|}_{L^2}
	\\
	& \lesssim t^{-1} \sum_{q>q_0} 2^{-q/2} 2^{(\b \g +\alpha) n} {\| \chi_{\leq 0,\sqrt{3}} \, \partial_\xi g \|}_{W_T}
	\\
	& \lesssim t^{-1} 2^{-q_0/2} t^{\b\g +\alpha} {\| \chi_{\leq 0,\sqrt{3}} \, \partial_\xi g \|}_{W_T},
\end{align*}
consistently with \eqref{dispdes}, since we must have $|k|\leq 5$ and $2^{q_0} \approx t^{-1/2}$.
The last term in \eqref{disppr4} can be estimated similarly:
\begin{align}\label{disppr10}
\begin{split}
t^{-1} \sum_{q>q_0} \big| K_{>0}(t,X) \big| 
	& \lesssim t^{-1} \sum_{q>q_0} 2^{3k^+-q} \cdot 2^{\min(q,k)/2} \cdot 
	{\big\| \varphi_k \varphi_{>0} \big(\xi-\sqrt{3}\big)  \partial_\xi g \big\|}_{L^2}
	\\
	& \lesssim t^{-3/4} 2^{(5/4)k^+} \cdot t^{\alpha} {\| \chi_{> 0,\sqrt{3}} \jxi \partial_\xi g \|}_{W_T};
\end{split}
\end{align}
upon summing over $2^k \leq t^{1/6}$ the right-hand side of \eqref{disppr10},
we obtain a contribution bounded by the second term on the right-hand side of \eqref{dispdes},
since $-3/4 + (5/4) (1/6) + \alpha < 
-3/4 + \alpha + \beta \gamma$ with our choice of parameters 
in \eqref{wnormparam0} (provided, for example, that $\beta',\gamma' < 1/24$).

Finally, we estimate
\begin{align*}
t^{-1} \big| K_{\ell}(t,X) \big| 
	& \lesssim t^{-1} 2^{-q} \cdot 2^{\min(q,\ell)/2} \cdot {\big\| \varphi_{\ell}\big(\xi-\sqrt{3}\big)\partial_\xi g \big\|}_{L^2}
	\\
	& \lesssim t^{-1} 2^{-q/2} \cdot 2^{-\b \ell} 2^{\alpha n} {\| \chi_{\leq 0,\sqrt{3}} \, \partial_\xi g \|}_{W_T},
\end{align*}
and, using again that $|k|\leq 5$ for $\ell \leq 0$,
we see that this contributions can be summed over $q>q_0$ with $2^{q_0} \gtrsim t^{-1/2}$,
and $\ell > \ell_0$ with $2^{\ell_0} \gtrsim t^{-\g}$, and be bounded by the 
second term on the right-hand side of \eqref{dispdes}.
\end{proof}


\bigskip
\section{The quadratic spectral distribution}\label{secmu}

In this section we study the distribution \eqref{intromu0}.

\subsection{The structure of the quadratic spectral distribution}

Recall that we denote, for a function $f$,
$$
f_+(x) = f(x) \quad \mbox{and} \quad f_-(x) = \overline{f(x)}.
$$

\begin{prop}\label{muprop}
Under the assumptions on $V=V(x)$ and $a=a(x)$ in Theorem \ref{maintheo}, 
there exists a tempered distribution $\mu_{\iota_1 \iota_2} \in \mathcal{S}'(\R^3)$,
for $\iota_1,\iota_2 \in \{+,-\}$, such that, if $f,g \in \mathcal{S}$,
$$
\widetilde{\mathcal{F}} \left[ a(x) f_{\iota_1} g_{\iota_2} \right] (\xi)
= \iint \widetilde{f}_{\iota_1} (\eta) \widetilde{g}_{\iota_2}(\sigma) 
\mu_{\iota_1 \iota_2} (\xi,\eta,\sigma) \,d\eta \,d\sigma.
$$
The distribution $\mu_{\iota_1 \iota_2}$ can be decomposed into
\begin{align}
\label{mudec}
\begin{split}
2\pi \mu_{\iota_1 \iota_2} (\xi,\eta,\sigma) = \mu_{\iota_1 \iota_2}^S(\xi,\eta,\sigma)  + \mu_{\iota_1 \iota_2}^R(\xi,\eta,\sigma)
\end{split}
\end{align}
where the following hold:

\setlength{\leftmargini}{1.5em}
\begin{itemize}

\medskip
\item  The `$\mathrm{singular}$' part of the distribution can be written as
\begin{align}
\label{muSdec}
\mu_{\iota_1 \iota_2}^S(\xi,\eta,\sigma) = \mu_{\iota_1 \iota_2} ^{S,-}(\xi,\eta,\sigma) + \mu_{\iota_1 \iota_2} ^{S,+}(\xi,\eta,\sigma),
\end{align}
with $\epsilon\in\{+,-\}$,
\begin{align}\label{muL+-}
\begin{split}
\mu_{\iota_1 \iota_2}^{S,\epsilon} (\xi,\eta,\sigma) :=  \ell_{\epsilon\infty}
  \sum_{\lambda,\mu,\nu \in \{+,-\}} a_{\substack{- \iota_1 \iota_2 \\ \lambda \mu \nu}}^\epsilon (\xi,\eta, \sigma) \, \Big[\sqrt{\frac \pi 2} \delta(p) + \epsilon  \pv \frac{\widehat \phi(p)}{ip} \Big],
  \\ p := \lambda \xi - \iota_1 \mu \eta - \iota_2 \nu \sigma,
\end{split}
\end{align}
where $\phi$ is a smooth, even, real-valued, compactly supported function with integral one;
the coefficients are given by
\begin{align}\label{mucoeff}
a_{\substack{\iota_0 \iota_1 \iota_2 \\ \lambda \mu \nu}}^\epsilon(\xi,\eta,\sigma) 
  = \mathbf{a}_{\lambda,\iota_0}^\epsilon(\xi) \mathbf{a}_{\mu,\iota_1}^\epsilon(\eta) 
  \mathbf{a}_{\nu,\iota_2}^\epsilon(\sigma) 
  \qquad \mbox{with} \qquad \mathbf{a}_{\mu,\iota}^\epsilon = \big( \mathbf{a}_{\mu}^\epsilon \big)_{\iota}
\end{align}
and
\begin{align}\label{mucoeffexp}
\left\{ \begin{array}{l}
\mathbf{a}^-_+(\xi) = \mathbf{1}_+(\xi) + \mathbf{1}_-(\xi) T(-\xi) \\
\mathbf{a}^-_-(\xi) = \mathbf{1}_+(\xi) R_-(\xi)
\end{array} \right. 
\qquad
\left\{ \begin{array}{l}
\mathbf{a}^+_+(\xi) = T(\xi) \mathbf{1}_+(\xi) + \mathbf{1}_-(\xi) \\
\mathbf{a}^+_-(\xi) = \mathbf{1}_-(\xi) R_+(-\xi).
\end{array} \right. 
\end{align}

\medskip
\item The `$\mathrm{regular}$' part of the distribution $\mu^R_{\iota_1\iota_2}$
can be written as a linear combination of the form
\begin{align}\label{muR}
\mu_{\iota_1 \iota_2}^R(\xi,\eta,\sigma) = \sum_{\epsilon_1,\epsilon_2,\epsilon_3 \in \{+,-\}} 
  \mathbf{1}_{\epsilon_1}(\xi)\mathbf{1}_{\epsilon_2}(\eta)\mathbf{1}_{\epsilon_3}(\s)
    \mathfrak{r}_{\epsilon_1\epsilon_2\epsilon_3}(\xi,\eta,\sigma),
\end{align}
where the symbols $\mathfrak{r}_{\epsilon_1\epsilon_2\epsilon_3}: \R^3 \rightarrow \C$ are smooth, 
and satisfy, for any non-negative integer $N$, and $a,b,c$,
\begin{align}\label{muR'}
\begin{split}
\big|\partial_\xi^a \partial_\eta^b \partial_\s^c \mathfrak{r}_{\epsilon_1\epsilon_2\epsilon_3}(\xi,\eta,\s) \big| 
  \lesssim \langle \inf_{\mu,\nu} |\xi - \mu \eta - \nu \sigma| \rangle^{-N}.
\end{split}
\end{align}
\end{itemize}
\end{prop}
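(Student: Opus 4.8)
The plan is to start from the formal identity $\mu_{\iota_1\iota_2}(\xi,\eta,\sigma) = \int a(x)\,\overline{\psi(x,\xi)}\,\psi_{\iota_1}(x,\eta)\,\psi_{\iota_2}(x,\sigma)\,dx$ and make sense of it by inserting the decomposition $\sqrt{2\pi}\,\psi = \psi^S + \psi^R$ of \eqref{psidec}--\eqref{psiR+-} into each of the three wave functions, while splitting the coefficient $a$ according to its limits at $\pm\infty$: using $\chi_+ + \chi_- = 1$ (see \eqref{chi+-}) one writes
\begin{align*}
a(x) = \ell_{+\infty}\,\chi_+(x) + \ell_{-\infty}\,\chi_-(x) + b(x), \qquad b(x) := \chi_+(x)\big(a(x)-\ell_{+\infty}\big) + \chi_-(x)\big(a(x)-\ell_{-\infty}\big),
\end{align*}
so that $b$, together with all its derivatives, decays super-polynomially. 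Expanding the product then yields a finite sum of terms, each of which is a coefficient depending on $(\xi,\eta,\sigma)$ only through $T$ and $R_\pm$ (evaluated at $\pm\xi,\pm\eta,\pm\sigma$), times an integral $\int c(x,\xi,\eta,\sigma)\,e^{i(\lambda\xi-\iota_1\mu\eta-\iota_2\nu\sigma)x}\,dx$; here $\lambda,\mu,\nu\in\{+,-\}$ and the amplitude $c$ is a product of: one of $\chi_+,\chi_-,b$; zero or more further factors $\chi_\pm$; and, whenever at least one regular piece $\psi^R$ is chosen, one or more factors $m_\pm(x,\cdot)-1$. The signs $\lambda,\mu,\nu$ (and the output sign, which is $\iota_0=-$) come from the plane-wave exponents in \eqref{psiSdec}--\eqref{psiR+-} together with the conjugations in $\overline{\psi(x,\xi)}$ and in $\psi_{\iota_j}$, for which one uses $\overline{f_\pm(x,\xi)}=f_\pm(x,-\xi)$, $\overline{T(\xi)}=T(-\xi)$, $\overline{R_\pm(\xi)}=R_\pm(-\xi)$ (see \eqref{TRconj}); the $\mathbf{1}_\pm$ cutoffs in \eqref{psi>}--\eqref{psi<} fix the signs of $\xi,\eta,\sigma$ in each term.

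I would then let the regular part $\mu^R_{\iota_1\iota_2}$ collect every term whose amplitude $c$ is rapidly decaying in $x$, which happens when: (i) at least one factor $m_\pm(x,\cdot)-1$ is present, since in \eqref{psiR+-} each such factor is multiplied by the matching cutoff $\chi_\pm$, supported on $\{\pm x\ge -2\}$, where the good bounds \eqref{mgood} of Lemma \ref{lemm+-} apply; (ii) the amplitude carries the piece $b(x)$ of $a$; (iii) all three wave functions contribute their singular piece $\psi^S$ and the $a$-factor is $\ell_{\pm\infty}\chi_\pm$, but the resulting product of cutoffs $\chi_{\epsilon_0}\chi_{\epsilon_1}\chi_{\epsilon_2}\chi_{\epsilon_3}$ is neither $\chi_+^4$ nor $\chi_-^4$ — then it contains a factor $\chi_+\chi_-$ and is therefore smooth and compactly supported. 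In all three cases $c$ is smooth, rapidly decaying in $x$, and satisfies symbol-type bounds in $(\xi,\eta,\sigma)$ by Lemmas \ref{lemm+-} and \ref{lemTR}, so its partial Fourier transform in $x$, evaluated at $p:=\lambda\xi-\iota_1\mu\eta-\iota_2\nu\sigma$, is smooth and rapidly decaying in $p$ together with all its derivatives, uniformly in the remaining variables. Since for every admissible sign choice one has $|p|\ge \inf_{\mu',\nu'}|\xi-\mu'\eta-\nu'\sigma|$ (the case $\lambda=+$ being immediate, the case $\lambda=-$ following after relabelling the signs of $\eta$ and $\sigma$), grouping the terms according to the signs $(\epsilon_1,\epsilon_2,\epsilon_3)$ of $(\xi,\eta,\sigma)$ produces exactly the form \eqref{muR}--\eqref{muR'}.

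The singular part then comes from the leftover terms: three $\psi^S$-factors, the $\ell_{\epsilon\infty}\chi_\epsilon$ piece of $a$, and a product of cutoffs equal to $\chi_\epsilon^4$. Writing $\chi_\epsilon^4 = \chi_\epsilon + (\chi_\epsilon^4-\chi_\epsilon)$ and absorbing the smooth, compactly supported remainder into $\mu^R$ as in case (iii), one is reduced to $\ell_{\epsilon\infty}$ times a coefficient times $\int \chi_\epsilon(x)\,e^{ipx}\,dx$. Since $\chi_+=H*\rho$ with $\rho$ even, smooth, compactly supported, of integral one, and $\chi_-(x)=\chi_+(-x)$, a direct Fourier computation — keeping track of the normalization constants (in particular the three factors $(2\pi)^{-1/2}$ from \eqref{psixk}) — gives that $\int\chi_\pm(x)e^{ipx}\,dx$ equals $\sqrt{\pi/2}\,\delta(p)$ plus $\pm\,\pv\,\widehat\phi(p)/(ip)$, with $\phi:=\rho$; this is the bracket appearing in \eqref{muL+-}. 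Reading off the coefficient of each plane wave $e^{i\lambda\xi x}$, $e^{-i\iota_1\mu\eta x}$, $e^{-i\iota_2\nu\sigma x}$ from \eqref{psiSdec} on the four regions $\{\pm\xi>0\}$, $\{\pm\eta>0\}$, $\{\pm\sigma>0\}$ yields the factorized coefficients $\mathbf{a}^\epsilon_{\mu,\iota}$ of \eqref{mucoeff}--\eqref{mucoeffexp}, which establishes \eqref{muSdec}--\eqref{mucoeffexp}. Finally, the pairing identity $\widetilde{\mathcal{F}}[a f_{\iota_1} g_{\iota_2}] = \iint \widetilde f_{\iota_1}\widetilde g_{\iota_2}\,\mu_{\iota_1\iota_2}\,d\eta\,d\sigma$ is justified, for $f,g\in\mathcal{S}$, by inserting a spatial cutoff $\theta(x/R)$ into the defining $x$-integral — which is then absolutely convergent and equals the double integral against the regularized smooth symbol $\mu^{(R)}_{\iota_1\iota_2}$ — and letting $R\to\infty$: the regular part passes to the limit by dominated convergence, and the singular part via the explicit computation above, since $\int\chi_\pm(x)\theta(x/R)e^{ipx}\,dx\to\int\chi_\pm(x)e^{ipx}\,dx$ in $\mathcal{S}'$. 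I expect the main obstacle to be entirely organizational: tracking, across the various cases and all sign combinations $(\lambda,\mu,\nu)$, $(\iota_1,\iota_2)$ and $(\epsilon_1,\epsilon_2,\epsilon_3)$, which coefficient ($T$, $R_\pm$, $1$, possibly conjugated) multiplies which plane wave so as to land exactly on the factorized form \eqref{mucoeff}; the only genuinely analytic inputs are the symbol bounds of Lemmas \ref{lemm+-}--\ref{lemTR} and the elementary fact that the Fourier transform in $x$ of a rapidly decaying symbol-type amplitude is again rapidly decaying, with symbol bounds, in the dual variable.
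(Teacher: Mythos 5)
Your proposal is correct and, architecturally, it is the same proof as the paper's: decompose each $\psi$ via \eqref{psidec} into $\psi^S + \psi^R$, regularize with a spatial cutoff $\theta(x/R)$, split $a$ into its asymptotic constants plus a rapidly decaying remainder, send every term carrying an $m_\pm - 1$ factor, the $b$-piece of $a$, or a mixed $\chi_+\chi_-$ product to $\mu^R$ (using Lemmas \ref{lemm+-}--\ref{lemTR} plus integration by parts to get symbol bounds in $p$, hence in $\inf_{\mu,\nu}|\xi-\mu\eta-\nu\sigma|$), and read off $\mu^S$ from the surviving Fourier transform of the one-sided cutoff. The one genuine variation is how that last Fourier transform is organized. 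The paper works with $\widehat{(\chi_\pm)^3}$: it decomposes $\partial_x(\chi_-)^3 = \phi^o - \phi^e$ into odd and even compactly supported pieces, writes $\phi^o = \partial_x\psi$, and obtains $\widehat{(\chi_\pm)^3}(p) = \sqrt{\pi/2}\,\delta(p) \pm \widehat{\phi}(p)/(ip) + \widehat{\psi}(\pm p)$, the last summand being absorbed into $\mu^R$. You instead split $a$ into $\ell_{+\infty}\chi_+ + \ell_{-\infty}\chi_- + b$ \emph{before} expanding, obtain a power $\chi_\epsilon^4$ of the cutoff, reduce it to $\chi_\epsilon$ by moving $\chi_\epsilon^4 - \chi_\epsilon$ (smooth, compactly supported) to $\mu^R$, and then use $\chi_+ = H * \rho$ directly, giving $\widehat{\chi_\pm}(p) = \sqrt{\pi/2}\,\delta(p) \pm \widehat{\rho}(p)/(ip)$ with $\phi := \rho$ — slightly leaner, no $\widehat{\psi}$ remainder, and a different but equally admissible choice of $\phi$. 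Both routes land on \eqref{muL+-}. One small bookkeeping caution, which you flagged but should be made precise: the object that equals $\sqrt{\pi/2}\,\delta(p) \pm \pv\,\widehat{\rho}(p)/(ip)$ is $\widehat{\mathcal F}[\chi_\pm](p) = (2\pi)^{-1/2}\int \chi_\pm(x)\,e^{-ipx}\,dx$, not $\int \chi_\pm(x)\,e^{+ipx}\,dx$; the $(2\pi)^{-1/2}$ from the three factors in \eqref{psixk} and the overall $2\pi$ on the left of \eqref{mudec} are exactly what reduce the prefactor $\frac{1}{(2\pi)^{3/2}}$ of the $x$-integral to the $\frac{1}{\sqrt{2\pi}}$ that recognizes $\widehat{\mathcal F}[\chi_\pm](p)$, and the exponent sign is fixed by $p = \lambda\xi - \iota_1\mu\eta - \iota_2\nu\sigma$ together with the complex conjugation on $\psi(x,\xi)$.
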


\smallskip 
\begin{proof}
We proceed in a few steps.

\medskip 
\noindent 
{\it The Fourier transform of $(\chi_{\pm})^3$}.
By the choice \eqref{chi+-} of $\chi_{-}$, $ \partial_{x} (\chi_-)^3$ is a $\mathcal{C}^\infty_{c}$ function,
which we can write as $ \partial_{x} (\chi_-)^3= \phi^o - \phi^e$, where $\phi^o$ and $\phi^e$
are respectively odd and even and $\mathcal{C}^\infty_{c}$.
Furthermore, since $\phi^o$ is odd,  we can write $\phi^o =  \partial_{x} \psi$ where $\psi \in \mathcal{C}^\infty_{c}$ and $\psi$ is even.
We have thus obtained that
\begin{align*}
(\chi_-)^3= \psi + \int_{x}^{+ \infty} \phi^e (y)\, dy = \psi + \phi^e \ast \mathbf{1}_-, \qquad \int_\R\phi^e (y)\,dy =1,
\end{align*}
where we denoted $\mathbf{1}_\pm$ the characteristic function of $\{\pm x > 0\}$.
Taking the Fourier transform, and using the classical formulas
\begin{align}\label{Fsign}
\what{f \ast g} = \sqrt{2\pi} \what{f} \cdot \what{g}, \qquad \what{1} = \sqrt{2\pi} \delta_0, \qquad
\what{\sign x} = \pv \sqrt{\frac{2}{\pi}}\frac{1}{i\xi},
\end{align}
we see that $\widehat{\mathbf{1}_-} = \sqrt{\dfrac{\pi}{2}} \delta 
  - \dfrac{1}{\sqrt{2\pi}}  \pv\dfrac{1}{i\xi}$, and therefore, since $\widehat{\phi^e}(0) = \dfrac{1}{\sqrt{2\pi}}$,
\begin{equation*}
\what{(\chi_-)^3} - \what{\psi} = \widehat{\mathcal{F}} \big(\phi^e \ast \mathbf{1}_- \big)
  = \sqrt{2\pi}  \what{\mathbf{1}_-}(\xi) \what{\phi^e}(\xi)
  = \sqrt{\frac{\pi}{2}} \delta(\xi) -  \pv\frac{\what{\phi^e}(\xi)}{i\xi}.
\end{equation*}
Since $\chi_+(-x) = \chi_-(x)$, this implies a corresponding formula for $\chi_+$. 
To summarize, setting $\phi = \phi^e$,
\begin{equation}
\label{decphi-}
\what{(\chi_{\pm})^3}(\xi) = \sqrt{\frac{\pi}{2}} \delta(\xi) \pm \pv {\what{\phi}(\xi)\over i\xi} 
+ \what{\psi}(\xi).
\end{equation}

\medskip 
\noindent 
{\it The regularization step}. 
Considering for simplicity the case $\iota_1 = \iota_2 = +$. 
If $f,g,h \in \mathcal{S}$, denoting $w$ a cutoff function,
\begin{align}
\nonumber
\langle \widetilde{a(x)fg}(\xi) ,\widetilde{h}(\xi) \rangle 
	& = \iint \overline{\psi(x,\xi)} a(x) 
	\int \wt{f}(\eta) \psi(x,\eta)\,d\eta  \int \wt{g}(\sigma) \psi(x,\sigma)\,d\sigma \,dx \, \overline{\wt{h}(\xi)} \,d\xi
	\\
\nonumber
& = \lim_{R \to \infty} \iint a(x) w(x/R) \overline{\psi(x,\xi)} \int \widetilde{f}(\eta) \psi(x,\eta)\,d\eta  \int \widetilde{g}(\sigma) \psi(x,\sigma)\,d\sigma \,dx \, \overline{\widetilde{h}(\xi)} \,d\xi 
\\
\nonumber
& = \lim_{R \to \infty} \iiint \widetilde{f}(\eta) \widetilde{g}(\sigma)  \overline{\widetilde{h}(\xi)} 
  \Big( \int a(x) w(x/R) \overline{\psi(x,\xi)} \psi(x,\eta) \psi(x,\sigma) \,dx \Big) \,d\eta \,d\sigma \,d\xi
\\
& =  \iiint \widetilde{f}(\eta) \widetilde{g}(\sigma)
  \overline{\widetilde{h}(\xi)} \mu_{++} (\xi,\eta,\sigma)  \,d\eta \,d\sigma \,d\xi 
  \label{pairing}
\end{align}
where $\mu_{++}$ is defined as the limit in the sense of (tempered) distributions
\begin{align}\label{limit}
\mu_{++} (\xi,\eta,\sigma) = \lim_{R \to \infty} 
  \int a(x) w(x/R) \overline{\psi(x,\xi)} \psi(x,\eta) \psi(x,\sigma) \,dx.
\end{align}
Note that while the limit \eqref{limit} can be easily seen to exist in the sense of (tempered) distribution,
the limit leading to \eqref{pairing} needs to be understood in a different topology.
In fact, although $\mu_{++}$ is a tempered distribution, $\widetilde{f}$ is not a Schwartz function, 
even if $f$ is a Schwartz function: 
$\widetilde{f}$ might not be smooth, or may even be discontinuous, at zero; see \eqref{wtf0gen} and \eqref{wtf0exc}. 
Nevertheless, one can still make sense rigorously of the limit and the pairing in \eqref{pairing} 
for $f,g,h \in \mathcal{S}$. 
It actually suffices to consider $\wt{f},\wt{g},\wt{h} \in L^1\cap L^\infty$, for example.


First, let us see that $\mu_{++}$  can be integrated against 
$\widetilde{f}(\eta) \widetilde{g}(\sigma)  \overline{\widetilde{h}(\xi)}$ provided that
$\widetilde{f}$, $\widetilde{g}$ and $\widetilde{h}$ are in $L^1 \cap L^2$ (which is the case if $f,g,h \in \mathcal{S}$).
Indeed, we will see that, up to more regular terms,
$\mu_{++}$ is a linear combination of $\delta(p)$ and $\pv \frac{1}{p}$ distributions,
where $p$ is as in \eqref{muL+-}, with piece-wise smooth coefficients in the variables $\xi,\eta$ and $\s$. 
The coefficients do not matter, so it suffices to look at the cases 
$\mu_{++} = \delta(\xi - \eta - \sigma)$ or $\pv \frac{1}{\xi - \eta - \sigma}$
(since also the signs $\lambda, \mu, \nu$ in the definition of $p$ are not relevant). 
Then, in the case of the $\delta$ distribution we have
$$
\iiint \delta(\xi - \eta - \sigma) \widetilde{f}(\eta) \widetilde{g}(\sigma)
  \overline{\widetilde{h}(\xi)} d\eta \, d\sigma \,d\xi = \iint \widetilde{f}(\eta) \widetilde{g}(\sigma)
  \overline{\widetilde{h}(\eta + \sigma)} d\eta \, d\sigma,
$$
which is well defined by the Cauchy-Schwarz inequality if $\widetilde{f}, \widetilde{g}, \widetilde{h} \in L^1 \cap L^2$ . 
In the case of the $\pv \frac{1}{p}$ distribution, denoting by $H$ the (standard) Hilbert transform, we have
$$
\iiint \pv \frac{1}{\xi - \eta - \sigma} \widetilde{f}(\eta) \widetilde{g}(\sigma)  \overline{\widetilde{h}(\xi)} d\eta \, d\sigma \,d\xi = \iint \widetilde{f}(\eta) \widetilde{g}(\sigma)  \overline{[H \widetilde{h}](\eta + \sigma)} d\eta \, d\sigma,
$$
which is well defined by the boundedness of the Hilbert transform on $L^2$, and the Cauchy-Schwarz inequality.
%

Second, to justify the limit \eqref{pairing}
let us split
$$
\wt{f} =  \wt{f} (1 - \chi(\cdot / \epsilon) )  + \wt{f} \chi(\cdot / \epsilon) 
  = \wt{f_{1,\epsilon}} + \widetilde{f_{2,\epsilon}},
$$
where $\chi$ is a smooth cutoff function equal to $1$ in a neighborhood of $0$, 
and similarly for $g$ and $h$.
We can then write
\begin{align}\label{eps1}
\langle \wtF\big( a(x)w(x/R) fg\big)(\xi), \wt{h}(\xi) \rangle 
  & = \langle \wtF\big( a(x)w(x/R) f_{1,\epsilon} g_{1,\epsilon}\big)(\xi),\wt{h}_{1,\epsilon}(\xi) \rangle
  \\
  \label{eps2}
  & + \langle \wtF\big(a(x)w(x/R)f_{2,\epsilon} g_{1,\epsilon}\big)(\xi),\wt{h}_{1,\epsilon}(\xi) \rangle 
  + \{ \mbox{similar terms}\}.
\end{align}
Here, the ``similar terms'' contain at least one factor with an index $2$, 
namely $g_{2,\epsilon}$ or $h_{2,\epsilon}$. 
We claim that the terms in \eqref{eps2} are $O(\epsilon)$ remainder terms, uniformly in $R$.
If this is the case, then
$$
\langle \wtF\big( a(x)w(x/R)fg\big)(\xi), \wt{h}(\xi) \rangle 
  = \langle \wtF \big(a(x) w(x/R) f_{1,\epsilon} g_{1,\epsilon} \big)(\xi),
  \wt{h_{1,\epsilon}}(\xi) \rangle + O(\epsilon).
$$
For the main term on the right-hand side above, the
limit as in \eqref{pairing} is justified, since the functions involved are 
Schwartz. 
Therefore, one can let $R \to \infty$ first, and then let $\epsilon \to 0$, and obtain the desired formula.

To show that the remainder terms in \eqref{eps2} are $O(\epsilon)$ we use 
the properties of the distorted Fourier transform:
\begin{align*}
\Big| \langle \wtF \big( a(x)w(x/R)f_{2,\epsilon} g_{1,\epsilon} \big) (\xi),
  \wt{h_{1,\epsilon}}(\xi) \rangle \Big|
  = \Big| \langle a(x) w(x/R) f_{2,\epsilon} g_{1,\epsilon}, h_{1,\epsilon} \rangle \Big| 
\\
\lesssim {\| f_{2,\epsilon} \|}_{L^\infty} {\| g_{1,\epsilon} \|}_{L^2} 
  \| h_{1,\epsilon} \|_{L^2} 
  \lesssim {\| \wt{f_{2,\epsilon}} \|}_{L^1} {\| g_{1,\epsilon} \|}_{L^2} 
  {\| h_{1,\epsilon} \|}_{L^2} 
  \\
  \lesssim \epsilon {\| \wt{f} \|}_{L^\infty} {\| g \|}_{L^2} {\| h \|}_{L^2} \lesssim \epsilon.
\end{align*}

\smallskip
In the following, we simply denote
$$
\mu_{\iota_1 \iota_2} (\xi,\eta,\sigma) = \int a(x) \overline{\psi(x,\xi)} \psi_{\iota_1}(x,\eta) \psi_{\iota_2}(x,\sigma) \, dx.
$$

\medskip 
\noindent 
{\it Decomposition of the quadratic spectral distribution}. 
We can write $\mu_{\iota_1\iota_2}$ as a sum of terms of the form
\begin{align}
\label{muterms}
\frac{1}{(2\pi)^{3/2}} \int a(x) \overline{\psi^A(x,\xi)}\psi_{\iota_1}^B(x,\eta) \psi_{\iota_2}^C(x,\sigma) \, dx,
\qquad A,B,C  \in \{S,R\},
\end{align}
where we are using our main decomposition of $\psi$ in \eqref{psidec}.

\medskip 
\noindent 
{\it The singular part $\mu_S$}.
The main singular component comes from part of the contribution to \eqref{muterms} with $A,B,C = S$.
The decomposition \eqref{psiSdec} can be written under the form
\begin{align}\label{psiSpm}
\begin{split}
\psi^S(x,\xi)& = \chi_+(x) \sum_{\lambda \in \{+,-\} } \mathbf{a}_{\lambda}^+(\xi) e^{i \lambda \xi x}
  + \chi_-(x) \sum_{\lambda \in \{+,-\} } \mathbf{a}_{\lambda}^-(\xi) e^{i \lambda \xi x} 
  \\
 & =: \chi_+(x) \psi^{S,+}(x,\xi) + \chi_-(x) \psi^{S,-}(x,\xi)
\end{split}
\end{align}
where
\begin{equation}
\label{a-esym}
\mathbf{a}_{\lambda}^-(\xi) =
\left\{
\begin{array}{llll}
1 & \mbox{if} \qquad \lambda = + & & \mbox{and} \qquad \xi > 0, \\
R_-(\xi) & \mbox{if} \qquad \lambda = - & & \mbox{and} \qquad \xi > 0, \\
T(-\xi) & \mbox{if} \qquad \lambda= + & & \mbox{and} \qquad \xi < 0, \\
0 & \mbox{if} \qquad \lambda = - & & \mbox{and} \qquad \xi < 0.
\end{array}
\right.
\end{equation}
and
\begin{equation}
\label{a+esym}
\mathbf{a}_{\lambda}^+(\xi) =
\left\{
\begin{array}{llll}
T(\xi) & \mbox{if} \qquad \lambda = + & & \mbox{and} \qquad \xi > 0, \\
0 & \mbox{if} \qquad \lambda = - & & \mbox{and} \qquad \xi > 0, \\
1 & \mbox{if} \qquad \lambda= + & & \mbox{and} \qquad \xi < 0, \\
R_+(-\xi) & \mbox{if} \qquad \lambda = - & & \mbox{and} \qquad \xi < 0,
\end{array}
\right.
\end{equation}
or, equivalently, 
\begin{align}
\label{a+-sym}
\begin{split}
& \mathbf{a}^-_+(\xi) = \mathbf{1}_+(\xi) + \mathbf{1}_-(\xi) T(-\xi) \\
& \mathbf{a}^-_-(\xi) = \mathbf{1}_+(\xi) R_-(\xi)
\\
& \mathbf{a}^+_+(\xi) = T(\xi) \mathbf{1}_+(\xi) + \mathbf{1}_-(\xi) \\
& \mathbf{a}^+_-(\xi) = \mathbf{1}_-(\xi) R_+(-\xi).
\end{split}
\end{align}

Consider the terms in \eqref{muterms} with $A,B,C = S$ and such that in each decomposition of $\psi^S$
there are only contributions containing $\chi_+$ (that is, $\psi^{S,+}$) or $\chi_-$ (that is $\psi^{S,-}$). 
We can write this as
\begin{align}
\label{mu0S2}
\begin{split}
&  \frac{1}{\sqrt{2\pi}} \int_\R a(x) \chi_\pm^3(x) \overline{\psi^{S,\pm}(x,\xi)} 
  \psi^{S,\pm}_{\iota_1}(x,\eta) \psi^{S,\pm}_{\iota_2}(x,\s) \,dx 
  \\
& = \frac{1}{\sqrt{2\pi}} \sum_{\lambda,\mu,\nu \in \{\pm \}} \int_\R a(x) \chi_\pm^3(x)  
  \, {a}^\pm_{\substack{- \iota_1 \iota_2 \\ \lambda \mu \nu}}(\xi,\eta,\sigma) \,
  \overline{e^{\lambda i\xi x}} e^{\iota_1 \mu i\eta x}e^{\iota_2 \nu i \sigma x}  \, dx, 
  \\
& =  \sum_{\lambda,\mu,\nu \in \{\pm\}}  \whF\big(a(x)(\chi_{\pm})^3\big)(\lambda \xi - \iota_1 \mu \eta - \iota_2 \nu \sigma) 
  \, a^\pm_{\substack{- \iota_1 \iota_2 \\ \lambda \mu \nu}}(\xi,\eta,\sigma).
\end{split}
\end{align}
We then write $a(x)(\chi_{\pm})^3 = \ell_{\pm\infty}(\chi_{\pm})^3 + (a(x)-\ell_{\pm\infty})(\chi_{\pm})^3$,
where this last function is Schwartz.
Using the formula \eqref{decphi-} for $\widehat{(\chi_{\pm})^3}$,
we see that the first terms in the right-hand side of \eqref{decphi-}, 
namely $\frac{\sqrt{\pi}}{2} \delta \pm \pv \frac{\widehat{\phi}}{i\xi}$, 
make up the singular part of the distribution, $\mu^{S,\pm}$ in \eqref{muL+-}. 
The contribution corresponding to the last term, $\widehat{\psi}(\xi)$, together with 
the one from $\whF \big(a(x)-\ell_{\pm\infty}\big)$, 
can be absorbed into regular part of the distribution $\mu^R$, see \eqref{mu0R22}.

\medskip 
\noindent
{\it The regular part $\mu_R$}.
The regular part $\mu_R$ contains all other contributions.
These are of two main types: terms of the form \eqref{muterms} when one of the indexes $A,B,C$ is $R$,
or contributions where both $\chi_+$ and $\chi_-$ appear, see \eqref{psiSpm}.
More precisely, we can write
\begin{align}\label{muR0}
\mu_{\iota_1 \iota_2}^R(\xi,\eta,\sigma) 
= \mu_{\iota_1 \iota_2}^{R1}(\xi,\eta,\sigma) + \mu_{\iota_1 \iota_2}^{R2}(\xi,\eta,\sigma)
\end{align}
where, if we let $X_R = \{ (A_1,A_2,A_3) \, : \, \exists \, j=1,2,3 \,\, \mbox{ s.t.} \, A_j = R\}$,
\begin{align}
\label{muR01}
\mu_{\iota_1 \iota_2}^{R1}(\xi,\eta,\sigma) & := \sum_{(A,B,C) \in X_R} 
  \int a(x) \, \overline{\psi^A(x,\xi)}\psi^B_{\iota_1}(x,\eta) \psi^C_{\iota_2}(x,\sigma) \, dx
\end{align}
and
\begin{align}
\label{muR02}
\begin{split}
\mu_{\iota_1 \iota_2}^{R2}(\xi,\eta,\sigma) := \sum_{A,B,C = S} 
  \int a(x) \, \overline{\psi^A(x,\xi)} \psi^B_{\iota_1}(x,\eta) \psi^C_{\iota_2}(x,\sigma)  \, dx - \mu^S_{\iota_1\iota_2} (\xi,\eta,\sigma).
\end{split}
\end{align}
In the remaining of the proof we verify the properties \eqref{muR}-\eqref{muR'} for \eqref{muR01}-\eqref{muR02}.



To understand \eqref{muR01} we start by looking at the case $A=R$ and $B,C=S$.
We restrict our analysis to $\xi>0$, see \eqref{psiR+-}; $\xi<0$ can be treated in the same way.
According to \eqref{psiR+-} this gives the terms
\begin{align}\label{muR09}
\begin{split}
& \int a(x) \overline{\chi_+(x) T(\xi) (m_+(x,\xi)-1) e^{i\xi x} } \psi^S(x,\eta) \psi^S(x,\sigma) \, dx
\\
& + \int a(x) \overline{\chi_-(x) \big[ (m_-(x,-\xi) - 1)e^{i\xi x} + R_-(\xi)(m_-(x,\xi) - 1) e^{-ix\xi} \big]}
  \psi^S(x,\eta) \psi^S(x,\sigma) \, dx.
\end{split}
\end{align}
Let us look at the first term above and only at the contributions
to $\psi^S$ coming from $\psi^{S,+}$, see \eqref{psiSpm}, that is 
\begin{align}\label{muR10}
R_{\mu\nu}(\xi,\eta,\s) & := \bar{T(\xi)} \mathbf{a}_\mu^+(\eta) \mathbf{a}_\nu^+(\sigma) 
  \int a(x) \chi_+^3(x) (\overline{m_+(x,\xi)}-1) \, e^{-i\xi x} e^{i\mu \eta x} e^{i\nu\s x} \, dx.
\end{align}
Notice that the coefficients in front of the integral are products of indicator functions and smooth functions, 
consistently with \eqref{muR}-\eqref{muR'}.
Dropping the irrelevant signs $\mu,\nu$, it then suffices to treat
\begin{align}\label{muR11}
R(\xi,\eta,\s) & := \int a(x) \chi^3_+(x) (\overline{m_+(x,\xi)}-1) \, e^{-i x\xi} \, e^{ix(\eta+\s)} \, dx.
\end{align}
We use the fast decay and smoothness of $m_+-1$ from Lemma \ref{lemm+-} to integrate by parts.
More precisely, for any $M$, we write
\begin{align*}
\big| R(\xi,\eta,\s) \big| & = \Big| \frac{1}{[i(\xi-\eta-\s)]^M} 
  \int e^{-i x\xi} \, e^{ix(\eta+\s)} \partial_x^M \big[ a(x) \chi_+^3(x) \overline{(m_+(x,\xi)-1)} \big] \, dx \Big|
\\
& \lesssim \frac{1}{|\xi-\eta-\s|^M} 
  \sum_{0 \leq \alpha \leq M} \int_{x\geq -1} \big| \partial_x^\alpha (m_+(x,\xi)-1) \big|\, dx
  \lesssim \frac{1}{|\xi-\eta-\s|^M},
\end{align*}
having used \eqref{mgood} for the last inequality.

We have therefore bounded the expression \eqref{muR10} by the right-hand side of \eqref{muR'}, for $a=b=c=0$.

To estimate the derivatives, notice that applying multiple $\eta$- and $\s$-derivatives 
is harmless since these result in additional powers of $x$, but $m_+-1$ decays as fast as desired.
Similarly, again from \eqref{mgood} we see that $\partial_\xi$ derivatives can also be handled easily
since $\partial_\xi^\alpha m$ decays fast as well.
Notice that the second line in \eqref{muR09} 
can be treated exactly like the first one, using the properties of $m_-$ from \eqref{mgood}.
All the other terms in \eqref{muR01} 
can also be treated in the same way.

Let us look at the remaining piece \eqref{muR02}. 
We can write, according to the notation \eqref{psiSpm} and the definition \eqref{mu0S2},
\begin{align}
\label{mu0R21}
\begin{split}
\mu_{\iota_1 \iota_2}^{R2}(\xi,\eta,\sigma) = 
  \sum
  \int a(x) \, \chi_{\epsilon_1}(x)\chi_{\epsilon_2}(x)\chi_{\epsilon_3}(x) 
  \overline{\psi^{S,\epsilon_1}(x,\xi)} \psi^{S,\epsilon_2}(x,\eta) \psi^{S,\epsilon_3}(x,\sigma) \, dx
  \qquad 
\end{split}
\end{align}
where the sum is over $(\epsilon_1,\epsilon_2,\epsilon_3) \neq (+,+,+), (-,-,-)$.
In particular, this means that $a \, \chi_{\epsilon_1}\chi_{\epsilon_2}\chi_{\epsilon_3}$ is a smooth
compactly supported function, which we denote by $\chi$ 
(omitting the dependence on the signs which is not relevant here),
and \eqref{mu0R21} is a linear combination of terms of the form
\begin{align}\label{mu0R22}
\begin{split}
\int \chi(x) 
  \overline{\mathbf{a}_\lambda^{\epsilon_1}(\xi) e^{i\lambda \xi x}} 
  \mathbf{a}_\mu^{\epsilon_2}(\eta) e^{i\mu \eta x} \mathbf{a}_\nu^{\epsilon_3}(\s) e^{i\nu \s x} \, dx
  = 
  \what{\chi}(\lambda\xi -\mu\eta-\nu\s) 
  \overline{\mathbf{a}_\lambda^{\epsilon_1}(\xi)}\mathbf{a}_\mu^{\epsilon_2}(\eta)\mathbf{a}_\nu^{\epsilon_3}(\s)
\end{split}
\end{align}
The desired conclusion \eqref{muR}-\eqref{muR'} follows from the properties of the coefficients 
${\bf a}_{\epsilon}^\lambda$ and the fact that $\what\chi$ is Schwartz.
\end{proof}

\medskip
\subsection{Mapping properties for the regular part of the quadratic spectral distribution}
\label{sseclemmuR}
The product operation $(f,g) \mapsto fg$ obviously satisfies H\"older's inequality; but it is natural to ask about the mapping properties of the bilinear operators associated to the distributions $\mu^S$ and $\mu^R$
$$
(f,g) \mapsto \widetilde{\mathcal{F}}^{-1} \int \mu^{S,R}(\xi,\eta,\sigma) \widetilde{f}(\eta) \widetilde{f}(\sigma) \,d\eta\,d\sigma.
$$
 The singular part $\mu^S$ can be thought of as the leading order term; and indeed, it does satisfy H\"older's inequality, and this is optimal. The regular part is lower order, in that it gains integrability "at $\infty$", but it does not gain regularity. Thus, it can essentially be thought of as an operator of the type $(f,g) \mapsto F f g$, where $F$ is bounded and rapidly decaying. The following lemma gives a rigorous statement along these lines.
 
\begin{lem}[Bilinear estimate for $\mu^R$]\label{lemmuR}
Under the same assumptions and with the same notations as in Proposition \ref{muprop},
consider the measure $\mu^R = \mu^R_{\iota_1\iota_2}$ and the corresponding bilinear operator
\begin{align}\label{lemmuRop}
M_R[a,b] := \widetilde{\mathcal{F}}^{-1} \iint \mu^R(\xi,\eta,\s) \wt{a}(\eta) \wt{b}(\s) d\eta\,d\s.
\end{align}
Then, for all 
\begin{align*}
p_1,p_2\in[2,\infty), \qquad \frac{1}{p_1}+\frac{1}{p_2} \leq \frac{1}{2}
\end{align*}
it holds
\begin{align}\label{lemmuRbound} 
\begin{split}
{\big\| M_R[a,b] \big\|}_{L^2} \lesssim {\| a \|}_{L^{p_1}} {\| b \|}_{L^{p_2}}.
\end{split}
\end{align}
Moreover, for $p_1,p_2$ as above and $p_3,p_4$ another pair satisfying the same assumptions, we have,
for any integer $l\geq0$,
\begin{align}\label{lemmuRSob} 
\begin{split}
{\big\| \jnab^l M_R[a,b] \big\|}_{L^2} \lesssim {\| \jnab^l a \|}_{L^{p_1}} {\| b \|}_{L^{p_2}}
	+ {\| a \|}_{L^{p_3}} {\| \jnab^l b \|}_{L^{p_4}}.
\end{split}
\end{align}
\end{lem}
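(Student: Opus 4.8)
The strategy is to reduce the bilinear operator $M_R$ to a form where H\"older's inequality applies directly, using the structure of $\mu^R$ established in Proposition \ref{muprop}. The key point is that, by \eqref{muR}, $\mu^R$ is a finite linear combination of pieces of the form $\mathbf{1}_{\epsilon_1}(\xi)\mathbf{1}_{\epsilon_2}(\eta)\mathbf{1}_{\epsilon_3}(\sigma)\,\mathfrak{r}_{\epsilon_1\epsilon_2\epsilon_3}(\xi,\eta,\sigma)$, where each symbol $\mathfrak{r}$ decays rapidly in $\langle \inf_{\mu,\nu}|\xi-\mu\eta-\nu\sigma|\rangle$ together with all its derivatives. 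Since the sign cutoffs $\mathbf{1}_{\pm}$ simply split $L^2$ into two orthogonal pieces (and the dFT maps into $L^2$ isometrically by Proposition \ref{propFT}), it suffices to bound, for each fixed choice of signs and each fixed choice of $\mu,\nu\in\{+,-\}$ realizing the infimum on a given region, the operator with symbol $\mathfrak{r}(\xi,\eta,\sigma)$ that decays like $\langle \xi-\mu\eta-\nu\sigma\rangle^{-N}$. After relabeling $\eta\mapsto\mu\eta$, $\sigma\mapsto\nu\sigma$ (which only swaps $\wt a$ or $\wt b$ with their conjugates and does not affect $L^p$ norms), we may assume the decay is in $\langle\xi-\eta-\sigma\rangle^{-N}$.

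First I would write out the operator in physical space. Using the Fourier inversion formula \eqref{distF} for $\widetilde{\mathcal F}^{-1}$ and the generalized eigenfunction bounds \eqref{derpsi}, one sees that $M_R[a,b](x)$ can be written as an integral against a kernel built from $\psi(x,\xi)$, $\overline{\psi(y,\eta)}$, $\overline{\psi(z,\sigma)}$, $a(y)$, $b(z)$ and $\iiint \psi(x,\xi)\overline{\psi(x',\xi)}\,\mathfrak{r}(\xi,\eta,\sigma)\,e^{(\cdots)}\,d\xi$-type integrals; but a cleaner route is to exploit the wave operator $\mathcal W=\widetilde{\mathcal F}^{-1}\widehat{\mathcal F}$ from Proposition \ref{propW}, together with the estimate \eqref{muR'}. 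Concretely, write $\wt a=\widehat{\mathcal F}(\mathcal W^{*}a)$, $\wt b=\widehat{\mathcal F}(\mathcal W^{*}b)$, and note $\widetilde{\mathcal F}^{-1}=\mathcal W\widehat{\mathcal F}^{-1}$; then $M_R[a,b]=\mathcal W\,N_R[\mathcal W^{*}a,\mathcal W^{*}b]$ where $N_R$ is the \emph{flat} bilinear operator with the same symbol $\mathfrak r$. Since $\mathcal W$ and $\mathcal W^{*}$ are bounded on every $W^{k,p}$, $1<p<\infty$ (Theorem \ref{thmweder}), and in the exceptional case with $f_+(-\infty,0)=1$ also on $L^{p}$ for $p\in\{1,\infty\}$, the estimates \eqref{lemmuRbound}–\eqref{lemmuRSob} reduce to the corresponding flat statements for $N_R$ — and here $p_i\in[2,\infty)$ keeps us safely in the range where the wave operators are bounded.

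For the flat operator $N_R$, the symbol $\mathfrak r(\xi,\eta,\sigma)$ satisfies $|\partial^{a}_\xi\partial^b_\eta\partial^c_\sigma\mathfrak r|\lesssim\langle\xi-\eta-\sigma\rangle^{-N}$. Changing variables to $\zeta:=\xi-\eta-\sigma$ and $(\eta,\sigma)$, the operator becomes $N_R[a,b](x)=\iint K(x,x-y,x-z)\,a(y)\,b(z)\,dy\,dz$ where $K$ is, in the $(\zeta,\eta,\sigma)$ frequency variables, the inverse Fourier transform of a symbol that is Schwartz in $\zeta$ and merely bounded (with bounded derivatives) in $\eta,\sigma$; hence $K(x,u,v)$ is a Schwartz function of its first slot times $\delta$-like concentration of $u,v$ — more precisely, by integrating by parts in $\zeta$, $|K(x,u,v)|\lesssim \langle x\rangle^{-N'}\cdot(\text{something that forces }u=v=0)$. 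The honest way to phrase this without distributions: decompose $1=\sum_k\varphi_k(\zeta)$; on the piece where $|\zeta|\approx 2^k$ one gains $2^{-kN}$ from the symbol and loses at most $2^{k}$ from the volume/derivative count after integration by parts in the $x$-variable, leaving a rapidly convergent sum of operators each of which is, up to a harmless smooth multiplier in the output, of the form $(a,b)\mapsto \Theta\cdot(a\,b)$ with $\Theta$ Schwartz. Such operators obviously obey $\|\Theta\,ab\|_{L^2}\lesssim\|\Theta\|_{L^r}\|a\|_{L^{p_1}}\|b\|_{L^{p_2}}$ with $\tfrac1r+\tfrac1{p_1}+\tfrac1{p_2}=\tfrac12$, which is exactly \eqref{lemmuRbound}. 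The Sobolev bound \eqref{lemmuRSob} follows by distributing $\langle\partial_x\rangle^l$ across the product via the Leibniz rule, noting that derivatives landing on the (smooth, compactly-Fourier-supported or Schwartz) multiplier cost nothing and those landing on $a$ or $b$ are controlled by the two displayed H\"older pairings.

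The main obstacle is handling the lack of compact frequency support of the inputs in the de-correlated setting: one must make rigorous the passage from "$\mathfrak r$ decays in $\langle\xi-\eta-\sigma\rangle$" to "the kernel is essentially that of multiplication by a Schwartz function" without destroying the $L^{p_1}\times L^{p_2}\to L^2$ structure, and in particular one must be careful that the $\eta,\sigma$ variables, in which $\mathfrak r$ is only bounded, do not produce a genuine (non-integrable) Calder\'on–Zygmund singularity. The bound \eqref{muR'} in $\inf_{\mu,\nu}$ rather than a single linear form is what saves us: on each of the finitely many regions where a particular $\mu,\nu$ achieves the infimum, the complementary linear forms are comparable to $\langle\eta\rangle$ or $\langle\sigma\rangle$, so localizing in the remaining variables via Littlewood–Paley and summing (using the rapid decay to beat the number of pieces) closes the estimate. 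I would also double-check the endpoint $\tfrac1{p_1}+\tfrac1{p_2}=\tfrac12$ is genuinely allowed — it is, because then $r=\infty$ and we only need $\|\Theta\|_{L^\infty}<\infty$, which holds since $\Theta$ is Schwartz.
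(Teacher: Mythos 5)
Your wave-operator reduction $M_R[a,b]=\mathcal{W}\,N_R[\mathcal{W}^*a,\mathcal{W}^*b]$ is correct and is indeed the first step of the paper's proof. The gap is in what comes next: you claim that, after Littlewood--Paley in the good direction $\zeta=\xi-\mu\eta-\nu\sigma$, each dyadic piece of $N_R$ is ``of the form $(a,b)\mapsto\Theta\cdot(ab)$ with $\Theta$ Schwartz.'' This is not correct for an abstract symbol satisfying only \eqref{muR'}. After Fourier-transforming in $\zeta$ you get an $x$-dependent bilinear symbol $G(x,\eta,\sigma)$ that is Schwartz in $x$ but only \emph{bounded with bounded derivatives} in $(\eta,\sigma)$ --- i.e.\ a bilinear $S^0_{0,0}$ symbol, not a Coifman--Meyer one. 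A bounded function of $\eta,\sigma$ with merely bounded (non-decaying) derivatives does not give a pointwise multiplication, nor does it give an $L^{p_1}\times L^{p_2}\to L^2$ bounded pseudo-product when $p_1,p_2\ne 2$; even in the linear case, a Fourier multiplier $m(D)$ with $|m^{(k)}|\lesssim 1$ (but not $|\eta^k m^{(k)}|\lesssim1$) need not be bounded on $L^p$ for $p\ne 2$. Since the constraints $p_1,p_2\in[2,\infty)$ with $1/p_1+1/p_2\le1/2$ force both exponents strictly above $2$, this failure mode is exactly what you need to rule out, and \eqref{muR'} alone does not rule it out. Your remark that ``the bound in $\inf_{\mu,\nu}$ is what saves us'' addresses the partition into regions with a single decay direction, but not the residual $S^0_{0,0}$ behaviour in $\eta,\sigma$ within each region.

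The paper's proof sidesteps this entirely by not working from the abstract estimate \eqref{muR'}: it returns to the explicit decomposition $\mu^R=\mu^{R1}+\mu^{R2}$ from \eqref{muR0}--\eqref{muR02}. For $\mu^{R1}$ (one of $A,B,C$ equal to $R$) the operator is rewritten as a linear PDO in $\xi$ with symbol built from $T(\xi)(m_\pm(y,\xi)-1)$ acting on a product $a^B(y)b^C(y)$, and one invokes Calder\'on--Vaillancourt ($L^2$) and Calder\'on--Vaillancourt on $L^p$ (via \cite{CVPDO}), together with $L^p$-boundedness of the maps $u\mapsto u^S,u^R$ --- the latter reduce to the Hilbert transform plus multipliers $T(\pm\xi),R_\pm(\mp\xi)$ whose Fourier transforms are in $L^1$ thanks to \eqref{TRk}. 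For $\mu^{R2}$ the explicit formula $\widehat{\chi}(\lambda\xi-\mu\eta-\nu\sigma)\overline{\mathbf a^{\epsilon_1}_\lambda(\xi)}\mathbf a^{\epsilon_2}_\mu(\eta)\mathbf a^{\epsilon_3}_\nu(\sigma)$ really is a Schwartz kernel times a product of \emph{specific} $L^p$-bounded multipliers. In both pieces the key input is $L^p$ behaviour of the concrete multipliers from the scattering data --- precisely the information that the symbol estimate \eqref{muR'} does not encode. To repair your argument you would need to record these finer properties of $\mu^R$ (separability into $\mathbf a$-multipliers times a Schwartz profile, and Mikhlin-type bounds on those multipliers) rather than rely on \eqref{muR'} alone.
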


\medskip
\begin{proof}[Proof of Lemma \ref{lemmuR}]
The starting point is the splitting of $\mu^R$ in \eqref{muR0}-\eqref{muR02}.
We will omit the irrelevant signs $\iota_1\iota_2$ in what follows and just denote 
$\mu^{R1,2}=\mu_{\iota_1 \iota_2}^{R1,2}$.
Also notice that in the definition of $M_R$ we can replace 
all the distorted Fourier transforms by flat Fourier transforms, 
in view of the boundedness of the (adjoint) wave operator $\W := \whF^{-1} \wtF$ on $L^p$, $p\in[2,\infty)$; 
see Proposition \ref{propW} and Theorem \ref{thmweder}.

\medskip
\noindent
{\it Proof of \eqref{lemmuRbound}}.
From \eqref{muR01} we see that $\mu^{R1}$ is a linear combination of terms of the form
\begin{align}\label{muRpr1}
\int \overline{\psi^A(x,\xi)}\psi^B(x,\eta) \psi^C(x,\sigma) \, dx
\end{align}
where at least one of the apexes $A,B$ or $C$ is equal to $R$; 
recall the definition of  $\psi^S$ and $\psi^R$ in \eqref{psiSdec} and \eqref{psiR+-}.
It suffices to look at the two cases $A=R$ or $B=R$.

Let us first look at the case $A=R$ and further restrict our attention to $\xi>0$ and the contribution from $\chi_+$;
all the other contributions can be handled in the same way. 
We are then looking at the distribution
\begin{align}\label{muRpr2}
\mu_1(\xi,\eta,\s) := \int \overline{\chi_+(x) T(\xi) (m_+(x,\xi)-1) e^{i\xi x}} \psi^B(x,\eta) \psi^C(x,\sigma) \, dx,
  \qquad B,C = S \mbox{ or } R.
\end{align}
The bilinear operator associated to it is
\begin{align}\label{muRpr3}
\begin{split}
M_1&[a,b]  = \whF^{-1}_{\xi\mapsto x} \iint \mu_1(\xi,\eta,\s) \what{a}(\eta) \what{b}(\s) d\eta\,d\s
  \\ 
  & = \whF^{-1}_{\xi\mapsto x}
  \int \chi_+(y) \overline{T(\xi) (m_+(y,\xi)-1)}  e^{-i\xi y}  \Big( \int_\R \what{a}(\eta) \psi^B(y,\eta) d\eta \Big) 
  \Big( \int_\R \what{b}(\s) \psi^C(y,\sigma) \, d\s \Big) \, dy.
\end{split}
\end{align}
If we define
\begin{align}\label{muRpr4}
u^A(x) := \int_\R \what{u}(\xi) \psi^A(x,\xi)\,d\xi, \qquad A=S,R,
\end{align}
and the symbol $m(y,\xi):= \jy \chi_+(y) \overline{T(\xi) (m_+(y,\xi)-1)}$,
we see that
\begin{align*}
{\| M_1[a,b] \|}_{L^2} & \lesssim
  {\Big\|\int_\R 
  m(y,\xi) e^{-i\xi y}  \cdot \jy^{-1} \cdot a^B(y) \cdot b^C(y) \, dy \Big\|}_{L^2_\xi}
\end{align*}
In view of Lemmas \ref{lemm+-} and \ref{lemTR} we see that
$m=m(y,\xi)$ satisfies standard pseudo-differential symbol estimates,
and deduce that the associated operator is bounded $L^2\mapsto L^2$.
It follows that 
\begin{align}\label{muRpr5}
\begin{split}
{\| M_1[a,b] \|}_{L^2} & \lesssim
  {\| \jy^{-1} \cdot a^B \cdot b^C \|}_{L^2_y}
  \lesssim {\| a^B \|}_{L^{p_1}} {\| b^C \|}_{L^{p_2}}.
\end{split}
\end{align}

The estimate \eqref{muRpr5} gives us the right-hand side of \eqref{lemmuRbound}
provided we show that $u \mapsto u^S , u^R$ 
as defined in \eqref{muRpr4} are bounded on $L^p$, $p\in[2,\infty)$. 
Since $u^S + u^R = u$ it suffices to show
${\| u^S \|}_{L^p} \lesssim {\| u \|}_{L^p}$.
From the definition of $\psi^S$ in \eqref{psiSdec} and \eqref{psiSpm}--\eqref{a+-sym}, 
we see that this reduces to proving
\begin{align}\label{muRpr6}
{\Big\| \int_\R e^{i\lambda x\xi} \mathbf{a}_{\lambda}^\pm(\xi) 
	\what{u}(\xi) \,d\xi \Big\|}_{L^p} \lesssim {\|u\|}_{L^p}. 
\end{align}
In view of the boundedness of the Hilbert transform it is enough to obtain the same bound 
where the coefficients $\mathbf{a}_{\lambda}^\pm$ are replaced just by $T(\pm\xi)$ or $R_{\pm}(\mp\xi)$.
The desired bound then follows 
since $T(\pm\xi)-1$ and $R_{\pm}(\mp\xi)$ are $H^1$ functions, see \eqref{TRk},
so that their Fourier transforms are in $L^1$.

Consider next the case $B=R$. Again, without loss of generality we may 
restrict our attention to $\eta>0$ and the contribution from $\chi_+$, that is, we look at the measure
\begin{align}\label{muRpr2'}
\mu_1'(\xi,\eta,\s) := \int \chi_+(x) \psi^S(x,\xi) \overline{T(\eta) (m_+(x,\eta)-1) e^{i\eta x}} \psi^C(x,\sigma) \, dx,
  \qquad C = S \mbox{ or } R.
\end{align}
Letting the associated operator be 
\begin{align}\label{muRpr3'}
\begin{split}
M_1'&[a,b] := \whF^{-1}_{\xi\mapsto x} \iint \mu_1'(\xi,\eta,\s) \what{a}(\eta) \what{b}(\s) \,d\eta\,d\s
  \\ 
  & = \whF^{-1}_{\xi\mapsto x}
  \int \chi_+(y) \psi^S(y,\xi)  \Big( \int_\R \what{a}(\eta) \overline{T(\eta) (m_+(y,\eta)-1) e^{i\eta y}} \, d\eta \Big) 
  \Big( \int_\R \what{b}(\s) \psi^C(y,\sigma) \, d\s \Big) \, dy,
\end{split}
\end{align}
we see that
\begin{align*}
{\| M_1'[a,b] \|}_{L^2} & \lesssim
  {\Big\|  \jy \int_\R \overline{T(\eta) (m_+(y,\eta)-1) e^{i\eta y}} \, \what{a}(\eta) \, d\eta \Big\|}_{L^{p_1}}
  {\| b^C \|}_{L^{p_2}}
\end{align*}
having used that $\psi^S$ defines a bounded PDO on $L^2$, as we showed above.
The desired conclusion \eqref{lemmuRbound} then follows 
since $\jx T(\eta) (m_+(x,\eta)-1)$ is the symbol of a bounded PDO on $L^p$, for $p\in(2,\infty)$
in view of Lemmas \ref{lemm+-}, \ref{lemTR} and standard results on PDOs; see for example \cite{CVPDO}.


We now analyze the $\mu^{R,2}$ component from \eqref{muR02}
by looking at the more explicit expression \eqref{mu0R22} for it. 
From this we see that it suffices to look at bilinear operators of the form
\begin{align}\label{muRpr10}
M_2[a,b] := \whF^{-1}_{\xi\mapsto x} \iint_{\R\times\R}  \what{\chi}(\lambda\xi -\mu\eta-\nu\s) 
  \overline{\mathbf{a}_\lambda^{\epsilon_1}(\xi)}\mathbf{a}_\mu^{\epsilon_2}(\eta)\mathbf{a}_\nu^{\epsilon_3}(\s)
  \what{a}(\eta) \what{b}(\s) \,d\eta\,d\s,
\end{align}
where $\chi$ is Schwartz.
By boundedness of the Fourier multipliers $\mathbf{a}^{\epsilon}_{\lambda}$,
\begin{align*}
{\| M_2[a,b] \|}_{L^2} & \lesssim 
	{\big\| \chi \cdot \whF^{-1}\big( \mathbf{a}_\mu^{\epsilon_2} \what{a}\big) \whF^{-1}\big( \mathbf{a}_\nu^{\epsilon_3}\what{b} \big)
	\big\|}_{L^2}
	\\ & \lesssim 
	{\big\| \whF^{-1}\big( \mathbf{a}_\mu^{\epsilon_2} \what{a}\big) \big\|}_{L^{p_1}}
	{\big\| \whF^{-1}\big( \mathbf{a}_\nu^{\epsilon_3}\what{b} \big) \big\|}_{L^{p_2}}
	\lesssim  {\| a \|}_{L^{p_1}} {\| b \|}_{L^{p_2}}.
\end{align*}

\medskip
\noindent
{\it Proof of \eqref{lemmuRSob}}.
We proceed similarly to the proof of \eqref{lemmuRbound}, and reduce to estimating
derivatives of the bilinear operators $M_1, M_1^\prime$ and $M_2$, 
respectively defined in \eqref{muRpr3}, \eqref{muRpr3'} and \eqref{muRpr10};

Applying derivatives to $M_1$ gives 
\begin{align}\label{muRpr15}
\begin{split}
\partial_x^l M_1[a,b] & = \whF^{-1}_{\xi\mapsto x}
  \int_\R \chi_+(y) \overline{T(\xi) (m_+(y,\xi)-1)} \, (i\xi)^l e^{-i\xi y}
  a^A(y) \, b^B(y) \, dy. 
\end{split}
\end{align}
Integrating by parts in $y$ and distributing derivatives on $a^A$, $b^B$ and $m_+-1$  
%
gives a linear combination of terms of the form
\begin{align}\label{muRpr16}
\begin{split}
M_{l_1,l_2}[a,b] := \whF^{-1}_{\xi\mapsto x}
  \int_\R \overline{T(\xi)} \, \partial_y^{l_1} \big( \chi_+(y) (\overline{m_+(y,\xi)-1} ) \big) \, e^{-i\xi y}
  \cdot \partial_y^{l_2} \big(a^A(y) \, b^B(y) \big) \, dy
\end{split}
\end{align}
with $l_1+l_2=l$. 
From Lemmas \ref{lemm+-}, \ref{lemTR} we see that 
$m_{l_1}(x,\xi) := \overline{T(\xi)} \, \partial_x^{l_1} \big( \chi_+(x) (\overline{m_+(x,\xi)-1} ) \big)$
gives rise to a standard PDO bounded on $L^2$. 
Therefore, to bound \eqref{muRpr16} by the right-hand side of \eqref{lemmuRSob},
it suffices to use product Sobolev inequalities and 
${\| \partial_x^l u^S \|}_{L^p} \lesssim {\| \jnab^l u \|}_{L^p}$, $p\in[2,\infty)$,
which follows from the inequality \eqref{muRpr6} with $\partial_x^lu$ instead of $u$.

A similar argument can be used for $M_1^\prime$:
from \eqref{muRpr3'} we see that $x$-derivatives become powers of $\xi$,
which in turn can be transformed to $y$-derivatives since $\psi^S(y,\xi)$ is a linear combination of exponentials
$e^{\pm i y\xi}$ by harmless $\xi$-dependent coefficients;
integrating by parts in $y$ and using the boundedness on $L^p$ of the PDO
with symbol $\jx m_{l_1}(x,\xi) $ 
gives the desired bound.

The argument for $M_2$ is straightforward, using that $T-1$ is a bounded multiplier and $\chi$ is Schwartz:
\begin{align*}
{\| \jnab^l M_2[a,b] \|}_{L^2} \lesssim 
	{\Big\| \jnab^l \Big[ \chi \cdot \whF^{-1}\big( a_\mu^{\epsilon_2} \what{a}\big) 
	\whF^{-1}\big( a_\nu^{\epsilon_3}\what{b} \big) \Big] \Big\|}_{L^2}
	\lesssim
	{\Big\| \jnab^l \Big[ \whF^{-1}\big( a_\mu^{\epsilon_2} \what{a}\big) 
	\whF^{-1}\big( a_\nu^{\epsilon_3}\what{b} \big) \Big] \Big\|}_{L^q}
\end{align*}
with $1/q=1/p_1+1/p_2$, and we can use standard Gagliardo-Nirenberg-Sobolev inequalities
and \eqref{muRpr6} to obtain \eqref{lemmuRSob}.
\end{proof}



\bigskip
\section{The main nonlinear decomposition}\label{secdec}

In this section, we first write Duhamel's formula in distorted Fourier space
and decompose the nonlinear terms according to the results in Section \ref{secmu}
and their nonlinear resonance properties.
In particular, in Subsection \ref{SecDecN1},
we give our main splitting of the quadratic terms into `singular' and `regular'.
In Subsection \ref{SecDecPhi} we prove lower bounds for the oscillating phases that
appear in the singular quadratic terms, and use this in Subsection \ref{SecDecN2}
to apply normal form transformations. 
We then analyze the 
various resulting cubic terms in Subsections \ref{SecDecS1} and \ref{SecDecS2}.
Here, there is a substantial algebraic component because we are treating general transmission and reflection coefficients,
and we need to keep track of exact expressions to calculate asymptotics later on;
moreover, the coefficients \eqref{mucoeffexp} may have jump discontinuities which we need to take care of
after the normal form transformations; 
finally we also need to study convolutions of $\delta$
distributions and (cutoff) $\pv$-type distributions and prove various symbol type estimates on the expressions
obtained after the normal forms.
In the final Subsection \ref{SsecReno} we introduce the renormalized profile $f$
on which we will perform all main estimates moving forward;
we then recapitulate all the formulas and properties obtained so far
and prove regularity in $\xi$ for the symbols of the relevant operators.

\medskip
\subsection{Duhamel's formula}\label{secdec1}
Let $u = u(t,x)$ be a solution of the quadratic Klein-Gordon equation
\begin{align}\tag{KG}
\label{KGu}
\begin{split}
& \partial_{t}^2 u +( - \partial_x^2 +V + 1) u = a(x)u^2,  \qquad (u,u_t)(t=0) = (u_0,u_1),
\end{split}
\end{align}
with the assumptions of Theorem \ref{maintheo}.
In the distorted Fourier space \eqref{KGu} is
\begin{align}
\label{KGuF}
& \partial_{t}^2 \wt{u} + (\xi^2 + 1) \wt{u} = \wt{\mathcal{F}}(a(x)u^2),
  \qquad (\wt{u},\wt{u_t})(t=0) = (\wt{u_0},\wt{u_1}).
\end{align}
To write Duhamel's formula in the distorted Fourier space we define (recall $H = - \partial_x^2 + V$)
\begin{align}
\label{vKG}
& v(t,x) := \big(\partial_{t} -i \sqrt{ H + 1 } \big)u, \qquad \wt{v}(t,\xi) := \big(\partial_{t} - i \langle \xi \rangle \big) \wt{u}.
\end{align}
Notice that, by Lemma \ref{reality}, $\sqrt{H + 1} u$ is real-valued since $u$ is; 
therefore,
\begin{align}
\label{vu}
& u = \frac{v-\bar{v}}{-2i\sqrt{H + 1}}
\end{align}
and
\begin{align}
\label{veq}
& \big(\partial_{t} +i \sqrt{H + 1}\big)v = a(x)u^2, 
  \qquad \big(\partial_{t} + i \langle \xi \rangle \big)\wt{v} = \wt{\mathcal{F}}(a(x)u^2).
\end{align}

By defining the profile
\begin{align}
\label{vprof}
& g(t,x) := \Big(e^{it\sqrt{H+ 1}}v(t,\cdot) \Big)(x), \qquad \wt{g}(t,\xi) = e^{it\langle \xi \rangle} \wt{v}(t,\xi)
\end{align}
we have
\begin{align}
\label{profeq}
& \partial_t \wt{g}(t,\xi) = e^{it\langle \xi \rangle} \wt{\mathcal{F}}(a(x)u^2).
\end{align}
Using the definition of the distorted Fourier transform \eqref{distF},
in view of \eqref{vu} and \eqref{vprof}, this becomes
\begin{equation}
\label{dtwtf}
\begin{split}
&\partial_t \wt{g}(t,\xi) 
\\
& = \sum_{\iota_1,\iota_2 \in \{+,-\}} \iota_1 \iota_2 e^{it \jxi} 
  \iint \Big( \int a(x) \bar{\psi(x,\xi)} \psi_{\iota_1}(x,\eta) 
  \psi_{\iota_2}(x,\sigma) \, dx \Big) 
  \frac{e^{-\iota_1 i t \jeta}}{2i\jeta}\wt{g}_{\iota_1}(t,\eta)
  \frac{e^{-\iota_2 i t \jsig}}{2i\jsig}\wt{g}_{\iota_2}(t,\sigma) \, d\eta\, d\sigma 
  \\
& =  - \sum_{\iota_1,\iota_2 \{+,-\}}  \iota_1\iota_2
\iint e^{it \Phi_{\iota_1\iota_2}(\xi,\eta,\sigma)} \wt{g}_{\iota_1}(t,\eta) \wt{g}_{\iota_2}(t,\sigma) 
\, \frac{1}{4\langle \eta \rangle \langle \sigma \rangle} \mu_{\iota_1\iota_2}(\xi,\eta,\sigma) \, d\eta \, d\sigma,
\end{split}
\end{equation}
where the quadratic spectral distribution $\mu_{\iota_1 \iota_2}$ is defined in Proposition \ref{muprop},
\begin{align}
\label{mu12}
& \Phi_{\iota_1\iota_2}(\xi,\eta,\sigma) := \jxi - \iota_1 \jeta - \iota_2 \jsig,
\end{align}
and we have denoted
\begin{align*}
\wt{g}_{+} = \wt{g}, \qquad \wt{g}_{-} = \overline{\wt{g}}.
\end{align*}

\medskip
\subsection{Decomposition of the quadratic nonlinearity}\label{SecDecN1}
Starting from \eqref{dtwtf}-\eqref{mu12} and using the decomposition of the distribution 
$\mu$ in Proposition \ref{muprop},
we can decompose accordingly the nonlinearity.
More precisely, we write
\begin{align}
\label{DecN1}
\partial_t \wt{g} & = \mathcal{Q}^S + \mathcal{Q}^R =  \sum_{\iota_1,\iota_2 \in \{+,-\}}
  \mathcal{Q}^{S}_{\iota_1\iota_2} + \mathcal{Q}_{\iota_1\iota_2}^R,
\end{align}
where $ \mathcal{Q}^{S}_{\iota_1\iota_2}$ and  $\mathcal{Q}_{\iota_1\iota_2}^R$ are defined below.

\medskip
\noindent
{\it Notation convention}.
When summing over different combinations of signs, such as in the formula \eqref{DecN1}, 
we will often just indicate the indexes or apexes with the understanding that they can be either $+$ or $-$.
Also, we will have expressions which depend on several signs, such as the ones
appearing in \eqref{QZ}. 
In such cases we will only separate the various indexes or apexes by commas when there is a risk of confusion;
see for example \eqref{bulk1} versus \eqref{Q}.

\medskip
\noindent
{\it The singular quadratic interaction $\mathcal{Q}^S_{\iota_1\iota_2}$}. 
We define $\mathcal{Q}^S_{\iota_1\iota_2}$ to be the contribution coming from the singular part of $\mu$, 
see \eqref{mudec}-\eqref{muSdec},
with an additional cutoff in frequency which localizes the principal value part to a suitable  
neighborhood of the singularity:
\begin{align}
\label{Q}
\begin{split}
& \mathcal{Q}_{\iota_1\iota_2}^{S} (t,\xi) := 
  -\iota_1\iota_2 \sum_{\substack{\lambda,\mu,\nu \in \{+,-\}\\ \epsilon \in \{+,-\}}}
  \iint e^{it \Phi_{\iota_1\iota_2}(\xi,\eta,\sigma)} 
  \widetilde{g}_{\iota_1}(t,\eta) \widetilde{g}_{\iota_2}(t, \sigma) Z^{\epsilon} _{\substack{- \iota_1 \iota_2 \\ \lambda \mu \nu}}(\xi,\eta,\sigma)\, d\eta \,d\sigma,
\end{split}
\end{align}
with
\begin{align}
\label{QZ}
\begin{split}
Z^{\epsilon} _{\substack{\iota_0 \iota_1 \iota_2 \\ \lambda \mu \nu}}(\xi,\eta,\sigma) 
  & := \ell_{\epsilon \infty} \frac{a^\epsilon_{\substack{\iota_0 \iota_1 \iota_2 \\ \lambda\mu\nu}} (\xi, \eta,\sigma)}{8\pi\langle \eta \rangle \langle \sigma \rangle} 
  \left[ \sqrt{\frac{\pi}{2}} \delta(p) + \epsilon \varphi^\ast(p,\eta,\sigma) \, \pv \frac{\widehat{\phi}(p)}{ip} \right], 
\end{split}
\end{align}
where
\begin{align}\label{QZphistar}
\varphi^\ast(p,\eta,\sigma) := \varphi_{\leq -D_0}\big(p R(\eta,\sigma)\big),
\qquad p =  - \iota_0 \lambda \xi - \iota_1 \mu \eta - \iota_2 \nu \sigma,
\end{align}
for $D_0$ a suitably large absolute constant, and 
\begin{align}\label{Rfactor}
R(\eta,\sigma) = \frac{\langle \eta \rangle \langle \sigma \rangle}{\langle \eta \rangle + \langle \sigma \rangle}.
\end{align}
The last expression may be thought of as a regularization of $\min(\langle \eta \rangle ,\langle \sigma \rangle)$,
and satisfies
\begin{equation}
\label{estimatesR}
| \partial_\eta^a \partial_\sigma^b R(\eta,\sigma)| \lesssim 
\min(\langle \eta \rangle ,\langle \sigma \rangle) \langle \eta \rangle^{-a} \langle \sigma \rangle^{-b}.
\end{equation}

\bigskip
\noindent
{\it The regular quadratic interaction $\mathcal{Q}_{\iota_1\iota_2}^R$}.
The term $\mathcal{Q}_{\iota_1\iota_2}^R$ 
gathers the contributions coming from the smooth distribution $\mu^R$, see \eqref{mudec} and \eqref{muR}-\eqref{muR'},
and the smooth part from the $\pv$ that is not included in  \eqref{QZ}. 
We can write it as 
\begin{align}
\label{QR1}
\begin{split}
& \mathcal{Q}_{\iota_1\iota_2}^R(t,\xi) := -\iota_1\iota_2 
  \iint e^{it \Phi_{\iota_1\iota_2}(\xi,\eta,\sigma)} \, \mathfrak{q}_{\iota_1\iota_2}(\xi,\eta,\sigma) \,
  \wt{g}_{\iota_1}(t,\eta) \wt{g}_{\iota_2}(t,\sigma) \, d\eta \, d\sigma,
\end{split}
\end{align}
where $\Phi_{\iota_1\iota_2}$ is the phase in \eqref{mu12}, and the symbol is
\begin{align}\label{QR2}
\begin{split}
& \mathfrak{q}_{\iota_1\iota_2}(\xi,\eta,\sigma) 
  = \mathfrak{q}_{\iota_1\iota_2}^+(\xi,\eta,\sigma) + \mathfrak{q}_{\iota_1\iota_2}^-(\xi,\eta,\sigma) 
 + \frac{1}{8\pi \jeta\jsig}\mu^R_{\iota_1\iota_2}(\xi,\eta,\sigma),
\\
& \mathfrak{q}^\epsilon_{\iota_1\iota_2}(\xi,\eta,\sigma) = \frac{\epsilon}{8\pi\jeta\jsig}
  \sum_{\lambda, \mu, \nu} a^\epsilon_{\substack{- \iota_1 \iota_2 \\ \lambda\mu\nu}} (\xi,\eta,\sigma)
  \big(1 - \varphi^\ast(p,\eta,\sigma) \big)
  \frac{\what{\phi}(p)}{i p}, 
\end{split}
\end{align}
with $\mu^R_{\iota_1\iota_2}$ satisfying the properties \eqref{muR}-\eqref{muR'}
(also recall that $p = \lambda \xi - \iota_1 \mu \eta - \iota_2 \nu \sigma$).
Here is a remark that will help us simplify the notation:

\begin{rem}[A more convenient rewriting of $\mathcal{Q}_{\iota_1\iota_2}^R$]\label{Remkappas} 
For $\iota,\kappa = \pm 1$, let
\begin{align}
\label{gki}
\widetilde{g}^{\kappa}_{\iota}(\xi) := \widetilde{g}_\iota (\xi) \mathbf{1}_{\kappa}(\xi),
\end{align}
and notice that for all $\iota,\kappa$, $\widetilde{g}_\iota^\kappa$
enjoys the same bootstrap assumptions as $\wt{g}$; see \eqref{propbootgas}.
Then, inspecting the definition of \eqref{QR1} and of its symbol \eqref{QR2}, 
and recalling the definitions of the coefficients \eqref{mucoeff}-\eqref{mucoeffexp},
and the property of $\mu^R_{\iota_1\iota_2}$ in \eqref{muR}-\eqref{muR'}
we see that we can peel off all indicator functions and write
\begin{align}\label{remkappasQ}
\begin{split}
\mathcal{Q}^R_{\iota_1 \iota_2} & = \displaystyle \sum_{\kappa_0, \kappa_1, \kappa_2} 
  \mathcal{Q}^R_{\substack{\iota_1 \iota_2 \\ \kappa_0 \kappa_1 \kappa_2}},
\\
\mathcal{Q}^R_{\substack{\iota_1 \iota_2 \\ \kappa_0 \kappa_1 \kappa_2}} & 
  :=  -\iota_1\iota_2 \mathbf{1}_{\kappa_0}(\xi)
  \iint e^{it \Phi_{\iota_1\iota_2}(\xi,\eta,\sigma)} \, \mathfrak{q}_{\substack{\iota_1 \iota_2 \\ \kappa_0\kappa_1 \kappa_2}}
  (\xi,\eta,\sigma) \,
  \wt{g}_{\iota_1}^{\kappa_1} (t,\eta) \wt{g}_{\iota_2}^{\kappa_2}(t,\sigma) \, d\eta \, d\s,
\end{split}
\end{align}
where the symbols $\mathfrak{q}_{\substack{\iota_1 \iota_2 \\ \kappa_0\kappa_1 \kappa_2}}$ are smooth.
In what follows we will often omit the signs $\kappa_0,\kappa_1,\kappa_2$ in our notation (for the operators and the symbols), 
as these play no essential role.
We will instead keep the $\iota_1,\iota_2$ signs since they do play a role: the case $\iota_1,\iota_2=+$ is
the main resonant one, while the other cases are relatively easier to treat.
Also notice that the indicator function in the output variable $\xi$ will not be a problem upon differentiation
(which will happen when estimating weighted $L^2$-norms, see \eqref{mtpr2}) as shown in Lemma \ref{lemdxiQR}.
\end{rem}

\medskip
\subsection{Estimates on the phases}\label{SecDecPhi}
As a preparation for the normal form transformation to come, we need very precise estimates on the phase. 
The complication here arises since the quadratic modulus of resonance, 
although positive for all interactions, degenerates at $\infty$ in certain directions.

\begin{lem}[Lower bound for the phases]
\label{lemmaetazeta}
For any $\eta,\sigma \in \R$,
\begin{equation}
\label{equivalent}
\langle \eta + \sigma \rangle - \langle \eta \rangle - \langle \sigma \rangle \approx
\left\{
\begin{array}{ll}
- \min( \langle \eta \rangle , \langle \sigma \rangle) & \mbox{if $\eta \sigma < 0$} \\
\displaystyle - \frac{1}{\min(  \langle \eta \rangle , \langle \sigma \rangle)} & \mbox{if $\eta \sigma > 0$} \, .
\end{array}
\right.
\end{equation}
As a consequence, for any choice of $\iota_1,\iota_2 \in \{+,-\}$, and any $\eta,\sigma \in \R$,
\begin{equation}
\label{hummingbirdz}
\left| \frac{1}{\Phi_{\iota_1 \iota_2}(\eta + \sigma,\eta,\sigma)} \right| 
  \lesssim \min(\langle \eta + \sigma \rangle, \jeta, \jsig).
\end{equation}
Furthermore, if $p:= \xi - \iota_1\eta - \iota_2\s$ is such that
\begin{align*}
|p| \leq \frac{2^{-D_0+2}}{ R(\eta, \sigma)}, 
\end{align*}
with $D_0$ sufficiently large, then
\begin{equation}
\label{Dsufficientlylarge}
\left| \frac{1}{\Phi_{\iota_1 \iota_2}(\xi,\eta,\sigma)} \right| 
  \lesssim \min(
  \jeta, \jsig).
\end{equation}
\end{lem}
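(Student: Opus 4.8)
The plan is to prove the three estimates in order, since each relies on the previous one. I would begin with the elementary inequality \eqref{equivalent}, which is a purely one-variable computation. Writing $\langle \eta \rangle = \sqrt{1+\eta^2}$, by symmetry assume $|\eta| \le |\sigma|$, so $\min(\langle\eta\rangle,\langle\sigma\rangle) = \langle\eta\rangle$. For the case $\eta\sigma<0$, I would factor
\[
\langle\eta+\sigma\rangle - \langle\eta\rangle - \langle\sigma\rangle = \frac{(\eta+\sigma)^2 - (\langle\eta\rangle+\langle\sigma\rangle)^2}{\langle\eta+\sigma\rangle+\langle\eta\rangle+\langle\sigma\rangle} = \frac{2\eta\sigma - 1 - 2\langle\eta\rangle\langle\sigma\rangle}{\langle\eta+\sigma\rangle+\langle\eta\rangle+\langle\sigma\rangle},
\]
and observe that when $\eta\sigma<0$ the numerator is $\approx -\langle\eta\rangle\langle\sigma\rangle$ (since $2\eta\sigma<0$ and $\langle\eta\rangle\langle\sigma\rangle \ge 1$), while the denominator is $\approx \langle\sigma\rangle$ because $|\eta+\sigma| \le |\sigma|$ and $\langle\eta\rangle\le\langle\sigma\rangle$; the ratio is therefore $\approx -\langle\eta\rangle = -\min(\langle\eta\rangle,\langle\sigma\rangle)$. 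For the case $\eta\sigma>0$, the numerator $2\eta\sigma-1-2\langle\eta\rangle\langle\sigma\rangle = -(2\langle\eta\rangle\langle\sigma\rangle - 2\eta\sigma + 1)$; using $\langle\eta\rangle\langle\sigma\rangle - \eta\sigma = \frac{\langle\eta\rangle^2\langle\sigma\rangle^2 - \eta^2\sigma^2}{\langle\eta\rangle\langle\sigma\rangle+\eta\sigma} = \frac{\eta^2+\sigma^2+1}{\langle\eta\rangle\langle\sigma\rangle+\eta\sigma} \approx \frac{\langle\sigma\rangle^2}{\langle\sigma\rangle^2} \approx 1$ when $|\eta|\le|\sigma|$ and $\eta\sigma>0$, the numerator is $\approx -1$, while the denominator $\langle\eta+\sigma\rangle+\langle\eta\rangle+\langle\sigma\rangle \approx \langle\sigma\rangle \approx \max(\langle\eta\rangle,\langle\sigma\rangle) \approx \langle\eta\rangle^{-1}\langle\eta\rangle\langle\sigma\rangle$; hmm, more carefully the ratio is $\approx -1/\langle\sigma\rangle$. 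But $\min(\langle\eta\rangle,\langle\sigma\rangle)=\langle\eta\rangle$, and I need $-1/\langle\eta\rangle$. The point is that when $\eta\sigma>0$ and $|\eta|\le|\sigma|$ we also have $\langle\sigma\rangle \approx \langle\eta+\sigma\rangle$, but that's not $\langle\eta\rangle$ unless $|\eta|\approx|\sigma|$; I will need to recheck: actually $\langle\eta\rangle\langle\sigma\rangle - \eta\sigma$ equals $\frac{1+\eta^2+\sigma^2}{\langle\eta\rangle\langle\sigma\rangle+\eta\sigma}$ which is $\approx \frac{\langle\sigma\rangle^2}{\langle\sigma\rangle^2} \approx 1$ only when... it's always between $\tfrac12$ and something like $\langle\eta\rangle^{-2}\cdot$ — I'd track this carefully, the correct statement being that the numerator is $\approx -1/\langle\eta\rangle\cdot\langle\eta\rangle$, i.e. the whole expression comes out $\approx -1/\langle\eta\rangle = -1/\min$. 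This case analysis is routine but must be done with care.

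Next, \eqref{hummingbirdz} follows from \eqref{equivalent} by incorporating the signs $\iota_1,\iota_2$. With $\xi = \eta+\sigma$, the phase is $\Phi_{\iota_1\iota_2}(\eta+\sigma,\eta,\sigma) = \langle\eta+\sigma\rangle - \iota_1\langle\eta\rangle - \iota_2\langle\sigma\rangle$. If $(\iota_1,\iota_2)=(+,+)$ this is exactly the quantity in \eqref{equivalent}, and $1/|\Phi|$ is bounded by $\max(\min(\langle\eta\rangle,\langle\sigma\rangle),\min(\langle\eta\rangle,\langle\sigma\rangle)^{-1})$, hence by $\min(\langle\eta+\sigma\rangle,\langle\eta\rangle,\langle\sigma\rangle)$ after noting $\langle\eta+\sigma\rangle \gtrsim \min(\langle\eta\rangle,\langle\sigma\rangle)$ when $\eta\sigma<0$ — wait, that's false; but when $\eta\sigma<0$ one has $|1/\Phi| \approx 1/\min(\langle\eta\rangle,\langle\sigma\rangle) \le \min(\langle\eta+\sigma\rangle,\langle\eta\rangle,\langle\sigma\rangle)$ trivially since the right side is $\ge 1$; and when $\eta\sigma>0$, $|1/\Phi| \approx \min(\langle\eta\rangle,\langle\sigma\rangle)$ and also $\langle\eta+\sigma\rangle \ge \max(\langle\eta\rangle,\langle\sigma\rangle) \ge \min$, so the bound holds. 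If instead one of $\iota_1,\iota_2$ is $-$, then $|\Phi| \gtrsim 1$ (in fact $\gtrsim \langle\eta+\sigma\rangle - \langle\eta\rangle + \langle\sigma\rangle \gtrsim 1$ or similar) — more precisely, $|\langle\eta+\sigma\rangle - \iota_1\langle\eta\rangle-\iota_2\langle\sigma\rangle|$ with a mismatched sign is bounded below by a positive constant because the three $\langle\cdot\rangle$'s are all $\ge 1$ and the triangle inequality $\langle\eta+\sigma\rangle \le \langle\eta\rangle+\langle\sigma\rangle$ is strict with a quantitative gap in the relevant ranges; I would just observe $|1/\Phi|\lesssim 1 \le \min(\dots)$. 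So the $(+,+)$ case is the only substantive one.

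Finally, for \eqref{Dsufficientlylarge}, I would write $\Phi_{\iota_1\iota_2}(\xi,\eta,\sigma) = \Phi_{\iota_1\iota_2}(\eta+\sigma,\eta,\sigma) + \big(\langle\xi\rangle - \langle\eta+\sigma\rangle\big)$ and estimate the correction term: by the mean value theorem $|\langle\xi\rangle - \langle\eta+\sigma\rangle| \le |p| = |\xi-\eta-\sigma|$, and in fact $|\langle\xi\rangle-\langle\eta+\sigma\rangle| \lesssim |p|$ with the implicit constant $1$. Under the hypothesis $|p| \le 2^{-D+2}/R(\eta,\sigma) \approx 2^{-D+2}/\min(\langle\eta\rangle,\langle\sigma\rangle)$, this correction is at most $2^{-D+2}/\min(\langle\eta\rangle,\langle\sigma\rangle)$, which for $D$ large is much smaller than the lower bound on $|\Phi_{\iota_1\iota_2}(\eta+\sigma,\eta,\sigma)|$ coming from \eqref{hummingbirdz}, namely $\gtrsim 1/\min(\langle\eta+\sigma\rangle,\langle\eta\rangle,\langle\sigma\rangle) \ge 1/\min(\langle\eta\rangle,\langle\sigma\rangle)$. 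Hence by the triangle inequality $|\Phi_{\iota_1\iota_2}(\xi,\eta,\sigma)| \gtrsim 1/\min(\langle\eta\rangle,\langle\sigma\rangle) \gtrsim 1/\min(\langle\xi\rangle,\langle\eta\rangle,\langle\sigma\rangle)$, where in the last step I use that $|p|$ small also forces $\langle\xi\rangle \approx \langle\eta+\sigma\rangle \gtrsim \min(\langle\eta\rangle,\langle\sigma\rangle)$ in the regime $\eta\sigma>0$, and $\langle\xi\rangle \ge 1$ handles the case $\eta\sigma<0$ where $\min(\langle\eta+\sigma\rangle,\langle\eta\rangle,\langle\sigma\rangle)$ may already be $\le \langle\eta\rangle$ but $1/\Phi \approx 1/\min(\langle\eta\rangle,\langle\sigma\rangle) \le \min(\langle\xi\rangle,\dots)$ still holds as the right side is $\ge 1$. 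I expect the main obstacle to be the bookkeeping in the $\eta\sigma>0$ subcase of \eqref{equivalent} — getting the constant $\langle\eta\rangle\langle\sigma\rangle-\eta\sigma \approx 1$ versus its true size, and correctly matching powers so that the final answer is $-1/\min(\langle\eta\rangle,\langle\sigma\rangle)$ and not $-1/\max$ — together with verifying the sign-mismatch lower bound $|\Phi|\gtrsim 1$ uniformly; both are elementary but error-prone.
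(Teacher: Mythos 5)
Your approach to \eqref{equivalent} is essentially the paper's: rationalize the numerator and estimate the resulting ratio. You do make a slip along the way --- $\langle\eta\rangle\langle\sigma\rangle - \eta\sigma$ is not $\approx 1$ but rather $\approx \max(\langle\eta\rangle,\langle\sigma\rangle)/\min(\langle\eta\rangle,\langle\sigma\rangle)$, since the denominator $\langle\eta\rangle\langle\sigma\rangle+\eta\sigma$ is $\approx \langle\eta\rangle\langle\sigma\rangle$, not $\max^2$ --- but you flag the confusion yourself and land on the correct final exponent, so this part is recoverable. Likewise, your perturbative argument for \eqref{Dsufficientlylarge} (bound $|\langle\xi\rangle-\langle\eta+\sigma\rangle|\le|p|$, compare against the unperturbed size of $|\Phi|$, take $D$ large) is in substance the same as the paper's triangle-inequality estimate.

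The genuine gap is in \eqref{hummingbirdz} for the mixed-sign cases $(\iota_1,\iota_2)\in\{(+,-),(-,+)\}$. You assert that $|\Phi_{+-}(\eta+\sigma,\eta,\sigma)| = |\langle\eta+\sigma\rangle - \langle\eta\rangle + \langle\sigma\rangle|$ is bounded below by a positive constant and plan to dispose of these cases by $|1/\Phi|\lesssim 1$. This is false: take $\eta = 2M$, $\sigma = -M$, so that $\eta+\sigma = M$ and
\[
\Phi_{+-}(M,2M,-M) = 2\langle M\rangle - \langle 2M\rangle = \frac{3}{4M} + O(M^{-3}) \longrightarrow 0.
\]
The estimate \eqref{hummingbirdz} still holds in this example because $1/|\Phi|\approx M \lesssim \min(\langle M\rangle,\langle 2M\rangle,\langle M\rangle)$, but not for the reason you give. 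The paper instead reduces the $(+,-)$ case to the $(+,+)$ case: setting $\alpha = \eta+\sigma$, $\beta = -\sigma$ gives $\Phi_{+-}(\eta+\sigma,\eta,\sigma) = -\big(\langle\alpha+\beta\rangle - \langle\alpha\rangle - \langle\beta\rangle\big)$, to which \eqref{equivalent} applies; one then checks that $\min(\langle\alpha\rangle,\langle\beta\rangle)$ and $1/\min(\langle\alpha\rangle,\langle\beta\rangle)$ are both $\lesssim \min(\langle\eta+\sigma\rangle,\langle\eta\rangle,\langle\sigma\rangle)$, using that $|\eta| = |\eta+\sigma| + |\sigma|$ when $\alpha\beta>0$. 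Because your proof of \eqref{Dsufficientlylarge} perturbs off the lower bound from \eqref{hummingbirdz}, the same gap propagates: for $(\iota_1,\iota_2) = (+,-)$ you have not actually established the unperturbed lower bound $|\Phi_{+-}(\eta+\sigma,\eta,\sigma)| \gtrsim 1/R(\eta,\sigma)$ on which the choice of $D$ depends.
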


\begin{proof} 
In order to prove \eqref{equivalent}, 
we focus on the case where $\eta$ and $\sigma$ have equal signs, since the other case is trivial. 
The expression under study can be written
$$
\langle \eta + \sigma \rangle - \jeta - \jsig 
  = \frac{-1 + 2\eta \sigma - 2 \jeta \jsig }{\langle \eta + \sigma \rangle + \jeta + \jsig}.
$$
If $\eta$ and $\sigma$ are $O(1)$, 
the result is obvious, so we focus on the case where $\eta + \sigma \gg 1$. 
On the one hand, the denominator above is $\sim \max(\jeta, \jsig)$.
On the other hand, if $\eta \approx\sigma$, the numerator above can be expanded as
\begin{align*}
-1 + 2\eta \sigma - 2 \jeta \jsig  
& =  -1 + 2 \eta \sigma \left(- \frac{1}{2\eta^2} - \frac{1}{2 \sigma^2}
  + O\left(\frac{1}{ \eta^4} \right) \right) \approx -1.
\end{align*}
If $\eta \gg \sigma$, the numerator can be written
\begin{align*}
-1 + 2\eta \sigma - 2 \jeta \jsig 
& =  -1 + 2 \eta \left(\sigma - \jsig - \frac{\jsig}{2 \eta^2} 
  + O \left( \frac{\jsig}{\eta^4} \right)\right) \approx - \frac{\eta}{\jsig},
\end{align*}
where the above line follows from $\sigma - \jsig \approx - \frac{1}{\jsig}$ 
and $\frac{\jsig}{\eta^2} \ll \frac{1}{\jsig}$. 
\eqref{equivalent} follows from the above relations.

In order to prove \eqref{hummingbirdz}, we observe that the case $\iota_1,\iota_2 = +$ 
was just treated, while the case $--$ is trivial. There remains the case $+-$, which easily reduces to $++$.

Finally, in order to prove~\eqref{Dsufficientlylarge}, only the cases $++$ and $+-$ require attention. 
We focus on the former, the argument for the latter being an immediate adaptation. 
It follows from the estimate~\eqref{equivalent} that only the case $\eta,\sigma > 0$ requires attention.
Then
\begin{align}\label{phaselb1}
\begin{split}
\big| \Phi_{++}(\xi,\eta,\s) \big| & = 
| \jxi - \jeta -  \langle  \xi -  \eta \rangle 
 + (\langle \xi -  \eta \rangle - \jsig)|
 \\
 & \geq | \jxi -  \jeta - \langle \xi -  \eta \rangle |
 - \frac{| \xi -  \eta|^2 - |\s|^2}{\langle \xi -   \eta \rangle + \jsig} 
 \\
 & \geq  \frac{C}{R(\eta,\sigma)}
 - |p| \cdot \frac{| \xi - \eta + \s|}{\langle \xi -  \eta \rangle + \jsig}.
\end{split}
\end{align}
By choosing the absolute constant $D_0$ large enough,
it follows that
\begin{align}\label{phaselb}
 \big| \Phi_{++}(\xi,\eta,\sigma) \big| \gtrsim \frac{1}{R(\eta,\sigma)} 
 \approx\frac{1}{\min(
 \jeta, \jsig)}.
\end{align}
\end{proof}

\begin{lem}[Derivatives of the phases] \label{californiacondor} 
Assume that $|p| \leq 2^{-D_0+2} R(\eta,\sigma)^{-1}$ 
(note that here $p$ is regarded as an independent variable) 
and let $a,b,c$ be arbitrary non-negative integers. Then:

\begin{itemize}
\item[(i)] For any $\eta, \sigma>0$,
\begin{equation}\label{californiacondor0}
\left| \partial_\eta^a \partial_\sigma^b \partial_p^c 
  \frac{1}{\langle p+\eta+\s \rangle - \langle \eta \rangle - \langle \sigma \rangle} \right| 
  \lesssim \frac{R(\eta,\sigma)^{1+c}}{\langle \eta \rangle^a \langle \sigma \rangle^b}.
\end{equation}

\item[(ii)] For any $\eta, \sigma>0$,
\begin{equation}\label{californiacondor01}
\left| \partial_\eta^a \partial_\sigma^b \partial_p^c 
  \frac{1}{\langle p+\eta+\s \rangle + \langle \eta \rangle - \langle \sigma \rangle} \right| 
  \lesssim \frac{1}{\langle \eta \rangle^a \langle \sigma \rangle^b}.
\end{equation}

\item[(iii)] For any $\iota_1,\iota_2 \in \{+,-\}$
\begin{equation}\label{californiacondor1}
\left| \partial_\eta^a \partial_\sigma^b \partial_p^c 
  \frac{1}{\Phi_{\iota_1 \iota_2}(p+\eta+\s,\eta,\sigma)} \right|
  \lesssim \min(\langle p+\eta+\s \rangle, \langle \eta \rangle, \langle \sigma \rangle)^{1+c}.
\end{equation}
\end{itemize}
\end{lem}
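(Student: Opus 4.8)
\textbf{Proof plan for Lemma \ref{californiacondor}.}

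The plan is to reduce everything to the scalar estimates (i) and (ii) and then derive (iii) from them by case analysis on $\iota_1,\iota_2$, exactly mirroring the proof of Lemma \ref{lemmaetazeta}. For (i), I would start from the identity used already in the previous lemma,
\[
\langle p+\eta+\sigma \rangle - \langle \eta \rangle - \langle \sigma \rangle
  = \frac{-1 + 2(p+\eta+\sigma)\cdot 0 \,\cdots}{\langle p+\eta+\sigma \rangle + \langle \eta \rangle + \langle \sigma \rangle},
\]
or more precisely the quotient representation
$\langle A\rangle - \langle B \rangle - \langle C \rangle = \bigl(\langle A\rangle^2 - (\langle B\rangle+\langle C\rangle)^2\bigr)/\bigl(\langle A\rangle + \langle B\rangle + \langle C\rangle\bigr)$ with $A = p+\eta+\sigma$, $B=\eta$, $C=\sigma$. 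Under the hypothesis $|p|\le 2^{-D+2}R(\eta,\sigma)^{-1}$ one has $|p| \lesssim \min(\langle\eta\rangle,\langle\sigma\rangle)^{-1}$, so by \eqref{phaselb1}--\eqref{phaselb} in Lemma \ref{lemmaetazeta} the denominator $\Phi_{++}$ is of size $\approx R(\eta,\sigma)^{-1} = \min(\langle\eta\rangle,\langle\sigma\rangle,\langle\xi\rangle)^{-1}$ and does not vanish. The key point for the derivative bounds is that $1/\Phi_{++}$ is a smooth function of $(\eta,\sigma,p)$ on this region and each derivative is controlled by differentiating the quotient: $\partial_p$ acting on $\langle p+\eta+\sigma\rangle$ is $O(1)$, while $\partial_\eta,\partial_\sigma$ acting on the numerator produce either $O(1)$ terms or terms that, after using the expansions $\sigma-\langle\sigma\rangle \approx -\langle\sigma\rangle^{-1}$ from the proof of \eqref{equivalent}, gain the claimed $\langle\eta\rangle^{-a}\langle\sigma\rangle^{-b}$ decay; combined with a Leibniz-rule expansion and the lower bound on $\Phi_{++}$ this yields the factor $R(\eta,\sigma)^{1+c}$.

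For (ii) the phase is $\langle p+\eta+\sigma\rangle + \langle\eta\rangle - \langle\sigma\rangle$, which is comparable to $\max(\langle\eta\rangle,\langle\sigma\rangle) \gtrsim 1$ with no degeneration — this is the ``non-resonant'' case analogous to the $+-$ combination in Lemma \ref{lemmaetazeta}, and here the bound has no $R$ factor at all. I would simply note that the function is smooth and bounded below by an absolute constant times $\langle\sigma\rangle$ (say), and that $\partial_\eta,\partial_\sigma$ differentiation of $\langle\eta\rangle \pm \langle\sigma\rangle$ etc. again gains $\langle\eta\rangle^{-a}\langle\sigma\rangle^{-b}$, while $\partial_p$ is harmless since the $p$-dependence enters only through $\langle p+\eta+\sigma\rangle$, whose derivatives are uniformly bounded; no growth in $c$ appears because the denominator is $\gtrsim 1$ uniformly. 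Then (iii) follows: for $\iota_1\iota_2 = ++$, writing $\Phi_{++}(p+\eta+\sigma,\eta,\sigma)$ and using that $\min(\langle p+\eta+\sigma\rangle,\langle\eta\rangle,\langle\sigma\rangle) \approx R(\eta,\sigma)$ on the relevant region (since $|p|$ is small), one gets exactly (i); for $--$ the phase is $\langle\xi\rangle+\langle\eta\rangle+\langle\sigma\rangle \gtrsim 1$ and one is in an even easier version of (ii); and for $+-$ or $-+$ one changes variables so that the ``$-$'' frequency is the output, reducing to the $++$ or $--$ sub-cases as in Lemma \ref{lemmaetazeta}.

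The main obstacle I anticipate is bookkeeping the derivative bounds carefully enough in case (i): a naive application of the Faà di Bruno / Leibniz rule to $1/\Phi_{++}$ produces, at order $(a,b,c)$, a sum of products of lower-order derivatives of $\Phi_{++}$ divided by high powers of $\Phi_{++}$, and one must check that the favorable decay $\langle\eta\rangle^{-a}\langle\sigma\rangle^{-b}$ survives while exactly one power of $R(\eta,\sigma)$ is spent per $\partial_p$ (giving $R^{1+c}$) and no extra powers are spent on $\partial_\eta,\partial_\sigma$. This requires the sharp observation, already implicit in the proof of \eqref{equivalent}, that the numerator $\langle A\rangle^2 - (\langle\eta\rangle+\langle\sigma\rangle)^2$ is itself $\approx -R(\eta,\sigma)^{-1}\cdot\max(\langle\eta\rangle,\langle\sigma\rangle)$ with derivatives that are no worse, so that ratios telescope correctly; once this is set up the estimate is routine but the indices must be tracked with care.
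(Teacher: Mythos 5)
Your overall skeleton — a lower bound on the phase, pointwise bounds on its derivatives, and Faà di Bruno / quotient-rule bookkeeping for $1/\Phi$ — is the same as the paper's, and the quotient representation you use in (i) is an equivalent route to the derivative bounds that the paper gets by differentiating $\Phi_{++}$ directly. However, there are two real issues.

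First, in (ii) the lower bound you quote is wrong and the error is load-bearing. You write that the phase is bounded below by (roughly) $\langle\sigma\rangle$ or just "$\gtrsim 1$", and that "$\partial_\eta,\partial_\sigma$ differentiation of $\langle\eta\rangle\pm\langle\sigma\rangle$ etc.\ again gains $\langle\eta\rangle^{-a}\langle\sigma\rangle^{-b}$." Neither claim holds as stated: for $\eta$ small and $\sigma$ large, $\langle p+\eta+\sigma\rangle+\langle\eta\rangle-\langle\sigma\rangle\approx 1$, so the lower bound is not $\gtrsim\langle\sigma\rangle$; and $\partial_\eta\Phi_{-+}=\tau'(p+\eta+\sigma)+\tau'(\eta)$ is $O(1)$, with no decay in $\eta$, so a single $\partial_\eta$ does not by itself produce a factor $\langle\eta\rangle^{-1}$. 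The decay in $\eta$ on the right of \eqref{californiacondor01} comes from the sharp lower bound $|\Phi_{-+}|\gtrsim\langle\eta\rangle$ (which holds because, for $\eta,\sigma>0$ with $|p|$ small, $\langle p+\eta+\sigma\rangle-\langle\sigma\rangle\gtrsim 0$ and $+\langle\eta\rangle$ contributes the full $\langle\eta\rangle$), combined with $|\partial_\eta\Phi_{-+}|\lesssim 1$ for one derivative and $|\partial_\eta^a\Phi_{-+}|\lesssim\langle\eta\rangle^{-a-1}$ for $a\geq 2$. Without the $\langle\eta\rangle$ lower bound, the quotient rule only gives $O(1)$ after one $\partial_\eta$, not $O(\langle\eta\rangle^{-1})$, and the stated conclusion does not follow.

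Second, the change of variables you invoke for the $(\mp,\pm)$ cases of (iii) is not as immediate as you suggest. Under the natural substitution (e.g.\ in the hard sub-case $\eta<0<\sigma$, $|\sigma|>|\eta|$: set $\tilde\eta=p+\eta+\sigma$, $\tilde\sigma=-\eta$, $\tilde p=-p$, so that $\Phi_{-+}$ becomes $-\Phi_{++}(\tilde\xi,\tilde\eta,\tilde\sigma)$), the original derivatives mix: $\partial_\eta=\partial_{\tilde\eta}-\partial_{\tilde\sigma}$, $\partial_\sigma=\partial_{\tilde\eta}$, $\partial_p=\partial_{\tilde\eta}-\partial_{\tilde p}$. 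Since the right-hand side of (i) treats $\partial_p$ differently from $\partial_\eta,\partial_\sigma$ (the factor $R^{1+c}$ records only the $p$-derivatives), the estimate does not transfer term-by-term; you would have to expand the mixed derivatives, apply (i) to each, and verify that every resulting term is $\lesssim\min(\langle p+\eta+\sigma\rangle,\langle\eta\rangle,\langle\sigma\rangle)^{1+c}$. This can be pushed through, but it is precisely the bookkeeping that needs to be made explicit. The paper instead sidesteps the change of variables and directly records new derivative bounds for $\Phi_{-+}$ in the hard sub-case, e.g.\ $|\partial_\eta^a\Phi_{-+}|\lesssim\min(\langle\eta\rangle,\langle\eta+\sigma\rangle)^{-a-1}$ and $|\partial_\sigma^a\Phi_{-+}|\lesssim\langle\eta+\sigma\rangle^{-a-1}$, together with the lower bound $|\Phi_{-+}|\sim\min(\langle p+\eta+\sigma\rangle,\langle\eta\rangle,\langle\sigma\rangle)^{-1}$, and then concludes as in (i). Either route works, but as written your sketch leaves a genuine gap at exactly the point where the cases $(\iota_1,\iota_2)\neq(+,+)$ differ from (i).
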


\begin{proof} 
Let us denote $\xi=p+\eta+\s$.
The proof of the first assertion relies
on the lower bound $|\Phi_{++}(\xi,\eta,\sigma)| \gtrsim R(\eta,\sigma)^{-1}$, and 
on the bounds on derivatives
\begin{align*}
& \left| \partial_\eta^a \Phi_{++}(\xi,\eta,\sigma) \right| \lesssim  \langle \eta \rangle^{-a-1} \\
& \left| \partial_\sigma^a \Phi_{++}(\xi,\eta,\sigma) \right| \lesssim  \langle \sigma \rangle^{-a-1} \\
& \left| \partial_p \Phi_{++}(\xi,\eta,\sigma)  \right| \lesssim 1 \\
& \left| \partial_p^a \partial_\eta^b \partial_\sigma^c \Phi_{++}(\xi,\eta,\sigma) \right| \lesssim \langle \eta + \sigma \rangle^{-a-b-c-1} \qquad \mbox{if at most one of $a,b,c$ vanishes, or $a \geq 2$}.
\end{align*}
Similarly, the proof of the second assertion relies on the lower bound $|\Phi_{-+}(\xi,\eta,\sigma)| \gtrsim \langle \eta \rangle$ and on the bounds on derivatives
\begin{align*}
& \left| \partial_\eta^a \Phi_{-+}(\xi,\eta,\sigma) \right| \lesssim 1 
\qquad \mbox{if $a=1, \quad$ and} \quad \langle \eta \rangle^{-a-1} \quad  \mbox{if $a\geq 2$}
\\
& \left| \partial_\sigma^a \Phi_{-+}(\xi,\eta,\sigma) \right| \lesssim  \langle \sigma \rangle^{-a-1}
\\
& \left| \partial_p \Phi_{-+}(\xi,\eta,\sigma)  \right| \lesssim 1 
\\
& \left| \partial_p^a \partial_\eta^b \partial_\sigma^c \Phi_{-+}(\xi,\eta,\sigma) \right| \lesssim \langle \eta + \sigma \rangle^{-a-b-c-1} \qquad \mbox{if at most one of $a,b,c$ vanishes, or $a \geq 2$}.
\end{align*}
In order to prove the third assertion, we must distinguish several cases. 
First, the case $(\iota_1,\iota_2) = (-,-)$ is trivial. 
Second, if $(\iota_1,\iota_2) = (+,+)$ and $\eta, \sigma$ have the same sign, 
then it suffices to use~\eqref{californiacondor0}, 
while if they have opposite signs, the inequality is trivial. 
Finally, if $(\iota_1,\iota_2) = (-,+)$, the only difficult case is that for which $\eta \sigma <0$ and $|\sigma| > |\eta|$. In that case, $\Phi_{-+}$ enjoys the lower bound
$$
|\Phi_{-+} (\xi,\eta,\sigma) | \sim 
  \min( \jxi,\jeta)^{-1} \sim 
  \min( \jxi,\jeta,\jsig)^{-1},
$$
while its derivatives can be bounded as follows
\begin{align*}
& \left| \partial_\eta^a \Phi_{-+}(\xi,\eta,\sigma) \right| \lesssim \min(\langle \eta \rangle, \langle \eta + \sigma \rangle )^{-a-1} \\
& \left| \partial_\sigma^a \Phi_{-+}(\xi,\eta,\sigma) \right| \lesssim  \langle \eta + \sigma \rangle^{-a-1} \\
& \left| \partial_p \Phi_{-+}(\xi,\eta,\sigma)  \right| \lesssim 1 \\
& \left| \partial_p^a \partial_\eta^b \partial_\sigma^c \Phi_{-+}(\xi,\eta,\sigma) \right| \lesssim \langle \eta + \sigma \rangle^{-a-b-c-1} \qquad \mbox{if at most one of $a,b,c$ vanishes, or $a \geq 2$}.
\end{align*}
Combining these estimates gives the desired bound~\eqref{californiacondor1}.
\end{proof}

\medskip
\subsection{Performing the normal form transformation}\label{SecDecN2}

We will now perform a normal form transformation on $\mathcal{Q}^{S}$. 
It is not possible to do so globally on $\mathcal{Q}^R$, 
which is ultimately one of the main difficulties in the nonlinear analysis.
The lower bounds in Lemma \ref{lemmaetazeta} 
allow us to integrate by parts using the identity
\begin{align}\label{SecDecN20}
\frac{1}{i \Phi_{\iota_1\iota_2}} \partial_s e^{is \Phi_{\iota_1\iota_2}} = e^{is \Phi_{\iota_1 \iota_2}}.
\end{align}
By symmetry, it will suffice to consider the case when the time derivative hits the second function. 
This gives
\begin{align}\label{SecDecN21}
\sum_{\substack{\iota_1,\iota_2 }} \int_0^t \mathcal{Q}_{\iota_1\iota_2}^{S} (s,\xi) \, ds = 
\{ \mbox{boundary terms} \} + \{ \mbox{integrated terms} \}.
\end{align}
The boundary terms are given by the following expression:
\begin{align}\label{Tgg}
\begin{split}
& \{ \mbox{boundary terms} \} = \sum_{\substack{\iota_1,\iota_2 }}  \F T_{\iota_1\iota_2}(g,g)(t) 
  - \F T_{\iota_1\iota_2}(g,g)(0)
\\
& \F \big(T_{\iota_1\iota_2}(g,g)\big)(t,\xi) := - \iota_1\iota_2  \sum_{\substack{\lambda, \mu, \nu \\ \epsilon} } 
  \iint e^{it \Phi_{\iota_1 \iota_2}(\xi,\eta,\sigma)} \wt{g}_{\iota_1}(t,\eta) \wt{g}_{\iota_2}(t, \sigma) 
  \frac{Z^{\epsilon} _{\substack{- \iota_1 \iota_2 \\ \lambda \mu \nu}}(\xi,\eta,\sigma)}{i \Phi_{\iota_1 \iota_2}(\xi,\eta,\sigma)}\, d\eta\, d\sigma 
  \\
\end{split}
\end{align}
The integrated terms read
\begin{align}\label{SecDecN22}
\begin{split}
& \{ \mbox{integrated terms} \} =
\\
& \qquad \qquad \sum_{\substack{\iota_1, \iota_2 \\ \epsilon}} 2 \iota_1\iota_2 \int_0^t \sum_{\lambda, \mu, \nu}  
\iint e^{it \Phi_{\iota_1 \iota_2}(\xi,\eta,\sigma)} 
\wt{g}_{\iota_1}(s,\eta) \partial_s \wt{g}_{\iota_2}(s,\s) 
\frac{Z^{\epsilon}_{\substack{- \iota_1 \iota_2 \\ \lambda \mu \nu}}(\xi,\eta,\sigma)}{i 
\Phi_{\iota_1 \iota_2}(\xi,\eta,\sigma)}\, d\eta\, d\sigma\,ds.
\end{split}
\end{align}
We now plug in $\partial_s \widetilde{g} = \sum_{\iota_1,\iota_2} \mathcal{Q}^{S \sharp}_{\iota_1\iota_2} 
+ \mathcal{Q}^{R \sharp}_{\iota_1\iota_2}$, 
where $\mathcal{Q}^{R\sharp,S\sharp}_{\iota_1\iota_2}$
are defined exactly as $\mathcal{Q}^{R,S}_{\iota_1\iota_2}$, see \eqref{Q} and \eqref{QR1}, 
with the exception that $\varphi^*$ is replaced by $1$; 
similarly for $Z^{\sharp \,\epsilon}_{\substack{ \iota_0,\iota_1 , \iota_2  \\ \lambda \mu \nu}}$ 
versus $Z^{\epsilon}_{\substack{ \iota_0,\iota_1 , \iota_2  \\ \lambda \mu \nu}}$, 
and $\mathfrak{q}^\sharp$ versus $\mathfrak{q}$ below. In particular, see \eqref{QR2},
$\mathfrak{q}^\sharp(\xi,\eta,\sigma) = (8\pi \jeta\jsig)^{-1}\mu^R_{\iota_1\iota_2}(\xi,\eta,\sigma)$.

An important observation is that, since $\widehat{\phi}$ is even and real-valued,
$$
\left( Z^{\epsilon'}_{\substack{\iota_0',\iota_1',\iota_2' \\ \\\lambda',\mu',\nu'}}(\sigma,\eta',\sigma') 
\right)_{\iota_2} = Z^{\epsilon'}_{\substack{\iota_2 \iota_0',\iota_2 \iota_1',\iota_2 \iota_2' \\ \\\lambda',\mu',\nu'}}(\sigma,\eta',\sigma').
$$
This gives
\begin{align}\label{SecDecN22'}
& \{ \mbox{integrated terms} \} = \int_0^t \big( B_1(s) + B_2(s) \big) \, ds
\end{align}
where
\begin{align}\label{bulk1}
\begin{split}
B_1(s) & = -2 \sum_{\substack{\epsilon, \epsilon' \\ \lambda, \mu, \nu \\ \lambda', \mu', \nu' \\ \iota_1, \iota_2, 
\iota_1', \iota_2'}} \iota_1\iota_2 \iota_1' \iota_2'
  \iiiint e^{is \Phi_{\iota_1, \iota_2 \iota_1',\iota_2 \iota_2'}(\xi,\eta,\eta',\sigma')} 
Z^{\sharp \,\epsilon'} _{\substack{- \iota_2,\iota_2 \iota_1', \iota_2 \iota_2' \\ \lambda' \mu' \nu'}}(\s,
\eta',\sigma')
\frac{Z^{\epsilon} _{\substack{- \iota_1 \iota_2 \\ \lambda \mu \nu}}(\xi,\eta,\sigma)}{i \Phi_{\iota_1 \iota_2}(\xi,\eta,\sigma)} 
\\
& \qquad \qquad \qquad \qquad \qquad \qquad \times \wt{g}_{\iota_1}(s,\eta) 
\wt{g}_{\iota_2 \iota_1'}(s,\eta') \wt{g}_{\iota_2 \iota_2'}(s,\sigma') \, d\eta\, d\eta' \, d\s\, d\sigma', \\
\end{split}
\end{align}
 and
\begin{align}\label{bulk1pahse}
\Phi_{\kappa_1 \kappa_2 \kappa_3}(\xi,\eta,\eta',\s') := 
  \jxi - \kappa_1 \jeta - \kappa_2  \langle \eta' \rangle - \kappa_3 \langle \s' \rangle.
\end{align}
Upon setting $\kappa_1 := \iota_1$, $\kappa_2 := \iota_2 \iota_1'$, $\kappa_3 := \iota_2 \iota_2'$, this becomes
\begin{align}\label{bulk1'}
B_1(s) = \sum_{\kappa_1 \kappa_2 \kappa_3} \iiint e^{is \Phi_{\kappa_1 \kappa_2 \kappa_3  }(\xi,\eta,\eta',\sigma')}
\mathfrak{b}^1_{\kappa_1 \kappa_2 \kappa_3}(\xi,\eta,\eta',\sigma') \wt{g}_{\kappa_1}(s,\eta) 
\wt{g}_{\kappa_2} (s,\eta') \wt{g}_{\kappa_3}(s,\sigma') \, d\eta\, d\eta' \, d\sigma',
\end{align}
with the natural definition of the symbol $\mathfrak{b}^1_{\kappa_1 \kappa_2 \kappa_3}$
obtained by carrying out the $d\s$ integration in \eqref{bulk1}.

Similarly,
\begin{align}
\begin{split}
\label{bulk2}
B_2(s) & = - 2 \sum_{\substack{\epsilon \\ \lambda \mu \nu 
\\ \iota_1,\iota_2, \iota_1', \iota_2'}} \iota_1\iota_2 \iota'_1\iota'_2
  \iiiint e^{is \Phi_{\iota_1, \iota_2 \iota_1',\iota_2 \iota_2'}(\xi,\eta,\eta',\sigma')} 
\mathfrak{q}^\sharp_{\iota_1'\iota_2'}(\s,\eta',\s')
\frac{Z^{\epsilon} _{\substack{-,\iota_1 ,\iota_2 \\ \lambda, \mu ,\nu}}(\xi,\eta,\sigma)}{i \Phi_{\iota_1 \iota_2}(\xi,\eta,\sigma)} 
\\
& \qquad \qquad \qquad \qquad \qquad \qquad \times \wt{g}_{\iota_1}(s,\eta)
\wt{g}_{\iota_2 \iota_1'}(s,\eta') \wt{g}_{\iota_2 \iota_2'}(s,\sigma') \, d\eta\, d\eta' \, d\s\, d\sigma' 
\\
& = \sum_{\kappa_1 \kappa_2 \kappa_3} \iiint e^{is \Phi_{\kappa_1 \kappa_2 \kappa_3  }(\xi,\eta,\eta',\sigma')} \mathfrak{b}^2_{\kappa_1 \kappa_2 \kappa_3}(\xi,\eta,\eta',\sigma') \wt{g}_{\kappa_1}(s,\eta) 
\wt{g}_{\kappa_2} (s,\eta') \wt{g}_{\kappa_3}(s,\sigma') \, d\eta\, d\eta' \, d\sigma',
\end{split}
\end{align}
with the natural definition of the symbol $\mathfrak{b}^2_{\kappa_1 \kappa_2 \kappa_3}$.

It remains to obtain a good description of the symbols $ \mathfrak{b}^1_{\kappa_1 \kappa_2 \kappa_3}$
and $ \mathfrak{b}^2_{\kappa_1 \kappa_2 \kappa_3}$ obtained when 
carrying out the integration over $\sigma$ in the expressions above.
We do this in the following two subsections.
Subsection \ref{SecDecS1} deals with the top order symbol $\mathfrak{b}^1$ 
whose description requires us to study, and obtain precise formulas for, 
the convolutions of $\delta$ and $\pv 1/\xi$ distributions that are cutoff as in \eqref{QZ}.
Subsection \ref{SecDecS2} deals with the symbol $\mathfrak{b}^2$ 
which is lower order since $\mathfrak{q}^\sharp$ is smooth.

\medskip
\subsection{Top order symbols}\label{SecDecS1}  

\subsubsection{Regularity in $\sigma$}\label{SecDecsig}
The first question that we need to address is that of the possible lack of regularity
of the coefficients in the top order symbols,
that could arise from the lack of regularity of the coefficient ${\bf a}^{\epsilon}_{\l}$
defined in \eqref{mucoeffexp} (for instance, in the case of a generic potential, these are discontinuous at the origin).

First, we observe that the coefficients of the type ${\bf a}^{\eps}_{\l}(x)$, with $x=\xi,\eta,\eta'$ or $\sigma'$,
that appear in \eqref{bulk1}-\eqref{bulk2}, are not harmful.
For the input variables $\eta,\eta'$ and $\s'$ this follows from 
the fact that the corresponding input functions, $\wt{g}$, vanish at zero;
in particular the non-smooth coefficients can be handled as in Remark \ref{Remkappas},
by pairing the indicator functions with the input profiles;
see also Lemma \ref{lemwtf0} which guarantees that also the renormalized profile $\wt{f}$ (see \eqref{Renof})
which will be put in place of $\wt{g}$, vanishes at 0).
For the output variable $\xi$, we can also disregard the jump singularities of ${\bf a}^{\eps}_{\l}(\xi)$
thanks to the following:
first, Lemma \ref{lemdxiT} and Remark \ref{remdxi}(iii) allow us to differentiate once in $\xi$
as needed to estimate the weighted $L^2$-norms; second, we will always estimate $L^p$ norm of the operators
with $1<p<\infty$, where $\mathbf{1}_\pm(\xi)$ is a bounded symbol.

Therefore, we do not need to worry about the coefficients ${\bf a}^{\eps}_{\l}(x)$, with $x=\xi,\eta,\eta'$ or $\sigma'$,
and can just assume they are smooth disregarding the $\mathbf{1}_\pm(x)$ factors that they contain.
However, coefficients ${\bf a}^{\epsilon}_{\l}(\sigma)$, 
which enter the definition of $\mathfrak{b}^1$ through integration over $\sigma$, see \eqref{bulk1}-\eqref{bulk1'}, 
might be harmful. 
We now check that a cancellation occurs upon a proper symmetrization of the symbol.

The symbol $\mathfrak{b}^1_{\kappa_1 \kappa_2 \kappa_3}$ can be written
\begin{align}\label{coeff1}
\mathfrak{b}^1_{\kappa_1 \kappa_2 \kappa_3}(\xi,\eta,\eta',\sigma') & 
= -2 \kappa_1 \kappa_2 \kappa_3  \sum_{\iota_2} \iota_2 \sum_{\substack{\lambda, \mu, \mu', \nu'\\ \epsilon,  \epsilon'}} \int M(\xi,\eta,\s,\eta',\s')\,d\sigma,
\\
\label{coeff2}
M (\xi,\eta,\s,\eta',\s') & :=  \frac{1}{i \Phi_{\kappa_1 \iota_2}(\xi,\eta,\sigma)}
  \sum_{\nu, \lambda'} 
  Z^{\sharp \epsilon'} _{\substack{- \iota_2,\kappa_2, \kappa_3 \\ \lambda' ,\mu' ,\nu'}}(\s,\eta',\sigma') \,
  Z^{\epsilon}_{\substack{-, \kappa_1, \iota_2 \\ \lambda, \mu, \nu}}(\xi,\eta,\sigma),
\end{align}
where we have omitted the dependence on the signs for easier notation.
We can write this out as
\begin{align}\label{coeff10}
\begin{split}
M (\xi,\eta,\s,\eta',\s') :=
  \sum_{\nu, \lambda'}  
  \frac{1}{64 \pi^2 \langle \eta \rangle \langle \sigma \rangle\langle \eta' \rangle \langle \sigma' \rangle}  
  \frac{1}{i \Phi_{\kappa_1 \iota_2}(\xi,\eta,\sigma)}
  \, a^\epsilon_{\substack{- ,\kappa_1, \iota_2 \\ \lambda,\mu,\nu}} (\xi, \eta,\sigma)
  \, a^{\epsilon'}_{\substack{- \iota_2,\kappa_2, \kappa_3 \\ \lambda', \mu' ,\nu'}} (\s,\eta',\sigma')
  \\
  \times \ell_{\epsilon \infty} \ell_{\epsilon' \infty} \left[ \sqrt{\frac{\pi}{2}} \delta(p) 
  + \epsilon \, \varphi^\ast(p,\eta,\sigma) \, \frac{\widehat{\phi}(p)}{ip} \right]
  \left[ \sqrt{\frac{\pi}{2}} \delta(p') + \epsilon'  \pv \frac{\widehat{\phi}(p')}{ip'} \right]
\end{split}
\end{align}
where $p := \lambda \xi - \kappa_1 \mu \eta - \iota_2 \nu \sigma$ as in \eqref{QZphistar}, 
we denoted $$p' := \iota_2\lambda' \s - \kappa_2 \mu' \eta' - \kappa_3 \nu' \sigma',$$ 
and dropped the $\pv$ symbols for brevity.

The main observation is that exchanging simultaneously $\s \mapsto -\s$ and $(\nu,\lambda')\mapsto(-\nu,-\lambda')$ 
leaves $\Phi_{\iota_1 \iota_2}$, $p$ and $p'$ invariant, 
and therefore in particular does not change the distributions in square brackets in \eqref{coeff10}; 
therefore, the coefficients appearing in the first line of \eqref{coeff10} can be symmetrized and 
we may write instead
\begin{align}\label{coeff11}
\begin{split}
& \frac{1}{2} 
\left[ a^{\epsilon'} _{\substack{-\iota_2, \kappa_2, \kappa_3 \\ \lambda' \mu' \nu'}}(\s,\eta',\sigma')
\cdot 
a^{\epsilon}_{\substack{-, \kappa_1, \iota_2 \\ \lambda, \mu, \nu}}(\xi,\eta,\sigma)
+
a^{\epsilon'} _{\substack{- \iota_2,\kappa_2 , \kappa_3 \\ -\lambda', \mu', \nu'}}(-\s,\eta',\sigma')
\cdot 
a^{\epsilon}_{\substack{-, \kappa_1, \iota_2 \\ \lambda, \mu, -\nu}}(\xi,\eta,-\sigma) \right].
\end{split}
\end{align}
Recalling \eqref{mucoeff}, the terms in the sum above can be written more explicitly as
\begin{align}\label{coeff12}
\begin{split}
& = \frac{1}{2} 
  \left[ \mathbf{a}_{\lambda',-\iota_2}^{\epsilon'}(\s) 
  \mathbf{a}_{\mu',\kappa_2}^{\epsilon'}(\eta') \mathbf{a}_{\nu', \kappa_3}^{\epsilon'}(\s')
  \cdot \mathbf{a}_{\lambda,-}^\epsilon(\xi) 
  \mathbf{a}_{\mu,\kappa_1}^\epsilon(\eta) \mathbf{a}_{\nu,\iota_2}^\epsilon(\sigma) \right.
  \\
  & \qquad \qquad \left. + \mathbf{a}_{-\lambda',-\iota_2}^{\epsilon'}(-\s) 
  \mathbf{a}_{\mu',\kappa_2}^{\epsilon'}(\eta') \mathbf{a}_{\nu',\kappa_3}^{\epsilon'}(\s')
  \cdot \mathbf{a}_{\lambda,-}^\epsilon(\xi) 
  \mathbf{a}_{\mu,\iota_1}^\epsilon(\eta) \mathbf{a}_{-\nu,\iota_2}^\epsilon(-\sigma) \right]
  \\
  & = \underbrace{\frac{1}{2} 
  \Big[ \mathbf{a}_{\lambda',-\iota_2}^{\epsilon'}(\s)  \mathbf{a}_{\nu,\iota_2}^\epsilon(\sigma)
  + \mathbf{a}_{-\lambda',-\iota_2}^{\epsilon'}(-\s) 
  \mathbf{a}_{-\nu,\iota_2}^\epsilon(-\sigma) \Big]}_{
  := \Big(\displaystyle A^{\epsilon, \epsilon'}_{\nu,\lambda'}(\sigma) \Big)_{\iota_2}} 
  \, \cdot \, \underbrace{\mathbf{a}_{\lambda,-}^\epsilon(\xi) \mathbf{a}_{\mu,\kappa_1}^\epsilon(\eta) 
  \mathbf{a}_{\mu',\kappa_2}^{\epsilon'}(\eta') 
  \mathbf{a}_{\nu',\kappa_3}^{\epsilon'}(\s')}_{\displaystyle  
  := a^{\epsilon,\epsilon'}_{\substack{\lambda,\mu,\mu',\nu' \\ -,\kappa_1,\kappa_2,\kappa_3}}}
\end{split}
\end{align}

Using the formulas for the coefficients $\bf{a}^\epsilon_\lambda$ in \eqref{mucoeffexp} and the relations \eqref{TRconj}-\eqref{TR} 
for the transmission and reflection coefficients, we have
\begin{align*}
2
A^{+,+}_{+,+}(\sigma)
& = \bar{\mathbf{a}_+^+(\s)}  \mathbf{a}_{+}^+(\sigma)
  + \bar{\mathbf{a}_{-}^{+}(-\s)} \mathbf{a}_{-}^+(-\sigma) 
  \\ & = \big( |T(\sigma)|^2 \mathbf{1}_+(\sigma) + \mathbf{1}_-(\sigma) \big) + \mathbf{1}_+(\sigma) |R_+(\sigma)|^2
  \equiv 1,
\end{align*}
and
\begin{align*}
2
A^{+,+}_{+,-}(\sigma)
& = \bar{\mathbf{a}_-^+(\s)}  \mathbf{a}_{+}^+(\sigma)
  + \bar{\mathbf{a}_{+}^{+}(-\s)} \mathbf{a}_{-}^+(-\sigma) 
  \\ & = \bar{R_+(-\sigma)} \mathbf{1}_-(\sigma) + R_+(\sigma) \mathbf{1}_+(\sigma) 
  \equiv R_+(\sigma).
\end{align*}
We can similarly calculate the other expression and arrive at the following formulas:
\begin{align}\label{Acoeff}
\begin{split}
& A^{+,+}_{+,+}(\sigma) = A^{+,+}_{-,-}(-\sigma) = \frac{1}{2},
  \qquad 
  A^{+,+}_{+,-}(\sigma) = A^{+,+}_{-,+}(-\sigma) = \frac{1}{2}R_+(\sigma),
\\
& A^{-,-}_{+,+}(\sigma) = A^{-,-}_{-,-}(-\sigma) = \frac{1}{2},
  \qquad 
  A^{-,-}_{+,-}(\sigma) = A^{-,-}_{-,+}(-\sigma) = \frac{1}{2}R_-(-\sigma),
\\
& A^{+,-}_{+,+}(\sigma) = A^{+,-}_{-,-}(-\sigma) = \frac{1}{2} T(\sigma),
  \qquad 
  A^{+,-}_{+,-}(\sigma) = A^{+,-}_{-,+}(-\sigma) = 0,
\\
& A^{-,+}_{+,+}(\sigma) = A^{-,+}_{-,-}(-\sigma) = \frac{1}{2} T(-\sigma),
  \qquad 
  A^{-,+}_{+,-}(\sigma) = A^{-,+}_{-,+}(-\sigma) = 0.
\end{split}
\end{align}
In particular, we see that this coefficient is smooth. 
The exact values above will be relevant when computing the nonlinear scattering correction in Subsection \ref{secModScatt}.


\smallskip
\subsubsection{Integrating over $\sigma$}\label{SecDectop}
There remains to integrate~\eqref{coeff10} over $\sigma$. 
Observe that the integrand is singular when the variables $p$ or $p'$ hit zero. 
They can be written
\begin{equation}
\begin{split}
\label{variousdef}
\left\{ 
\begin{array}{l}
p = \iota_2 \nu( \Sigma_0 - \sigma) \\
p' = \iota_2 \lambda'(\sigma -\Sigma_1)
\end{array}
\right.
\qquad \mbox{with} \qquad
\left\{
\begin{array}{l}
\Sigma_0 = \iota_2 \nu (\lambda \xi - \iota_1 \mu \eta) \\
\Sigma_1 = \lambda' \iota_2(\kappa_2\mu' \eta' + \kappa_3 \nu' \sigma').
\end{array}
\right.
\end{split}
\end{equation}
Let furthermore
\begin{equation}
\label{defpstar}
p_* := 
\iota_2 \nu p + \iota_2 \lambda' p' = \Sigma_0 - \Sigma_1.
\end{equation}
Depending on whether $Z$ and $Z^\sharp$ contribute $\delta$ or $\frac{1}{x}$, $M$ can be split into
$$
M = \sum_{\nu, \lambda'} 
\big(M^{\delta,\delta} + M^{\delta,\frac{1}{x}} + M^{\frac{1}{x},\delta} + M^{\frac{1}{x},\frac{1}{x}}\big)
$$
with
\begin{align}\label{xyz}
\begin{split}
& M^{\delta,\delta} (\xi,\eta,\s,\eta',\s') :=\mathcal{M}(\xi,\eta,\s,\eta',\s')
 \frac{\pi}{2} \delta(p) \delta(p'),  \\
& M^{\delta,\frac{1}{x}}(\xi,\eta,\s,\eta',\s') := \mathcal{M}(\xi,\eta,\s,\eta',\s') \sqrt{\frac{\pi}{2}} \delta(p) \epsilon'  \frac{\widehat{\phi}(p')}{ip'},  \\
& M^{\frac{1}{x},\delta}(\xi,\eta,\s,\eta',\s') := \mathcal{M}(\xi,\eta,\s,\eta',\s') \epsilon \, \varphi^\ast(p,\eta,\sigma) \, \frac{\widehat{\phi}(p)}{ip} \sqrt{\frac{\pi}{2}} \delta(p'), \\
& M^{\frac{1}{x},\frac{1}{x}}(\xi,\eta,\s,\eta',\s') := \mathcal{M}(\xi,\eta,\s,\eta',\s') \epsilon \epsilon' \, \varphi^\ast(p,\eta,\sigma) \, \frac{\widehat{\phi}(p)}{ip}  \frac{\widehat{\phi}(p')}{ip'},
\end{split}
\end{align}
where
\begin{align}\label{xyz'}
\mathcal{M}(\xi,\eta,\s,\eta',\s') := 
\frac{\ell_{\eps\infty}\ell_{\eps'\infty}}{64 \pi^2 \jeta \jsig \langle \eta' \rangle \langle \sigma' \rangle}
\frac{1}{i \Phi_{\kappa_1 \iota_2}(\xi,\eta,\sigma)} \Big(A^{\epsilon,\epsilon'}_{\nu,\l'}(\sigma)\Big)_{\iota_2}
a^{\epsilon,\epsilon'}_{\substack{\lambda,\mu,\mu',\nu' \\ -,\kappa_1,\kappa_2,\kappa_3}}(\xi,\eta,\eta',\sigma').
\end{align}
When integrating over $\sigma$, we rely on the identities
$$
\delta * \delta = \delta , \qquad  \delta * \frac{1}{x} = \frac{1}{x}, 
  \qquad \frac{1}{x} * \frac{1}{x} = - \pi^2 \delta,
$$
see \eqref{Fsign},
which imply that, for a smooth function $F$,
\begin{align*}
& \int \delta(p) \delta(p') F(\sigma) \,d\sigma = F(\Sigma_0) \delta(p_*) \\
& \int \delta(p)  \frac{\widehat{\phi}(p')}{p'}F(\sigma)\,d\sigma = \iota_2 \lambda' F(\Sigma_0) \frac{\widehat{\phi}(p_*)}{p_*} \\
& \int \varphi^*(p,\eta,\sigma) \frac{\widehat{\phi}(p)}{p} \delta(p')F(\sigma) \,d\sigma = \iota_2 \nu F(\Sigma_0) \frac{\widehat{\phi}(p_*)}{p_*}  + \{\mbox{error}\} \\
& \int \varphi^*(p,\eta,\sigma) \frac{\widehat{\phi}(p)}{p}\frac{\widehat{\phi}(p')}{p'}F(\sigma)\,d\sigma = -\frac{\pi}{2} \nu \lambda' F(\Sigma_0) \delta(p_*) + \{\mbox{error}\}.
\end{align*}
The error terms will be dealt with in the following subsection, in 
Lemmas \ref{birdofparadise1} and \ref{birdofparadise2}. 
For the moment, we record the top order contribution to $\mathfrak{b}^1$, namely
\begin{equation}
\label{formulacubiccoeff}
\begin{split}
& \mathfrak{c}^S_{\kappa_1,\kappa_2,\kappa_3}(\xi,\eta,\eta',\sigma') = 
\\
& \qquad = -2 \kappa_1 \kappa_2 \kappa_3 \sum_{\substack{\epsilon,\epsilon',\iota_2 \\ \lambda,\mu,\nu,\lambda',\mu',\nu'}}
\frac{1}{64 \pi^2 \langle \eta \rangle \langle \Sigma_0 \rangle \langle \eta' \rangle \langle \sigma' \rangle } 
\frac{\big( A^{\epsilon,\epsilon'}_{\nu,\lambda'}(\Sigma_0) \big)_{\iota_2}}{i \Phi_{\kappa_1 \iota_2} (\xi,\eta,\Sigma_0)}
a^{\epsilon,\epsilon'}_{\substack{\lambda,\mu,\mu',\nu' \\ -,\kappa_1,\kappa_2,\kappa_3}}(\xi,\eta,\eta',\sigma')
\\
& \qquad \qquad \qquad \qquad \qquad \times \ell_{\epsilon \infty} \ell_{\epsilon' \infty} 
  \left[\iota_2 \frac{\pi}{2}({1+ \epsilon \epsilon' \nu \lambda'} ) \delta(p_*) 
  + \sqrt{ \frac{\pi}{2}} (\epsilon' \lambda'+ \epsilon \nu) \frac{\widehat{\phi}(p_*)}{i p_*} \right] 
\\
& \qquad = \mathfrak{c}^{S,1}_{\kappa_1,\kappa_2,\kappa_3}(\xi,\eta,\eta',\sigma') 
  + \mathfrak{c}^{S,2}_{\kappa_1,\kappa_2,\kappa_3}(\xi,\eta,\eta',\sigma'),
\end{split}
\end{equation}
where $\mathfrak{c}^{S,1}$ gathers all terms containing $\delta$ functions, 
while $\mathfrak{c}^{S,2}$ gathers all terms containing terms of the type $\widehat{\phi}(p_*)/p_*$. 

The multilinear operator with symbol $\mathfrak{c}^S_{\kappa_1,\kappa_2,\kappa_3}$ will be denoted $\mathcal{C}^S_{\kappa_1,\kappa_2,\kappa_3}$. The decomposition of  $\mathfrak{c}^S_{\kappa_1,\kappa_2,\kappa_3}$ into $\mathfrak{c}^{S,1}_{\kappa_1,\kappa_2,\kappa_3} + \mathfrak{c}^{S,2}_{\kappa_1,\kappa_2,\kappa_3}$ gives a further decomposition of $\mathcal{C}^S_{\kappa_1,\kappa_2,\kappa_3}$:
\begin{equation}
\label{defCS12}
\mathcal{C}^S_{\kappa_1,\kappa_2,\kappa_3} = \mathcal{C}^{S,1}_{\kappa_1,\kappa_2,\kappa_3} + \mathcal{C}^{S,2}_{\kappa_1,\kappa_2,\kappa_3}.
\end{equation}

\medskip
\subsection{Lower order symbols}\label{SecDecS2} 

\subsubsection{The symbol $ \mathfrak{b}^2$} 
Dropping unnecessary subscripts and superscripts, the symbol $\mathfrak{b}^2$ in \eqref{bulk2}
can be written as a sum of terms of the type
$$
\mathbf{a}(\xi) \mathbf{a}(\eta) C(\xi,\eta,\eta',\sigma')
$$
with
$$
C(\xi,\eta,\eta',\sigma') = \frac{1}{\jeta \langle \sigma' 
  \rangle \langle \eta' \rangle} \int \frac{1}{\jsig} 
  \mu^R(\sigma,\eta',\sigma') \mathbf{a}(\sigma) \left[ \sqrt{\frac{\pi}{2}} \delta(p) 
  \pm \varphi^*(p,\eta,\sigma) \frac{\what{\phi}(p)}{p} \right] \frac{1}{\Phi(\xi,\eta,\sigma)} \,d\sigma.
$$

In what follows we adopt the convention that the measure $\mu_R$ appearing above is smooth
in the variables $\eta'$ and $\s'$; in other words, we are disregarding indicator functions in
these two variables which, as explained at the beginning of Subsection \ref{SecDecS1}, can be done without
loss of generality. 

\begin{lem}\label{birdofparadise0}
The symbol $C$ can be split into 
\begin{align*}
C(\xi,\eta,\eta',\sigma') = \mathbf{a}(\Sigma_0) C_1(\xi,\eta,\eta',\sigma') + C_2(\xi,\eta,\eta',\sigma'), 
\end{align*}
where $\Sigma_0$ is defined as in \eqref{variousdef}, and
with
\begin{align}
\label{housefinch1} & |\partial_\xi^a \partial_{\eta}^b \partial_{\eta'}^c \partial_{\sigma'}^d 
  C_1(\xi,\eta,\eta',\sigma')| \lesssim  
  \frac{1}{\langle \eta \rangle \langle \sigma' \rangle \langle \eta' \rangle}  \langle  
  \inf_{\mu,\nu}| \Sigma_0 + \mu \eta' + \nu \sigma' | \rangle^{-N},
\\
\label{housefinch2} & |\partial_\xi^a \partial_{\Sigma_0}^b \partial_{\eta'}^c 
  \partial_{\sigma'}^d C_2(\xi,\eta,\eta',\sigma') | 
  \lesssim \frac{1}{\langle \eta \rangle \langle \sigma' \rangle \langle \eta' \rangle} \langle  
  \inf_{\mu,\nu} | \Sigma_0 + \mu \eta' + \nu \sigma'| \rangle^{-N} 
\left\{ \begin{array}{ll}
| \log |\Sigma_0| | & \mbox{if $a+b=0$} \\ |\Sigma_0|^{-a-b} & \mbox{if $a+b\geq 1$.}
\end{array}\right.  
\end{align}
Note that in \eqref{housefinch1} we regard $\Sigma_0$ as a dependent variable
(since the main singular dependence on $\Sigma_0$ can be factorized) 
while in \eqref{housefinch2} we regard it as an independent one. 
\end{lem}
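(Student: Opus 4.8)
The idea is to localize the $\sigma$-integral defining $C$ according to the two regions dictated by the structure of the two singular factors: the region where $p=\iota_2\nu(\Sigma_0-\sigma)$ is near zero (i.e. $\sigma\approx\Sigma_0$) and the complementary region where the $\widehat\phi(p)/p$ factor, together with the cutoff $\varphi^\ast(p,\eta,\sigma)$, is a genuinely smooth, rapidly decaying symbol. First I would insert a partition of unity $1=\varphi_{\leq -D_1}(p\,R(\eta,\sigma))+\varphi_{>-D_1}(p\,R(\eta,\sigma))$ inside the integrand, with $D_1$ large but $\leq D$, so that on the support of the first cutoff the phase lower bound \eqref{Dsufficientlylarge} of Lemma \ref{lemmaetazeta} applies and all the derivative bounds of Lemma \ref{californiacondor} for $1/\Phi_{\kappa_1\iota_2}$ are available, while on the support of the second cutoff $\varphi^\ast(p,\eta,\sigma)\,\widehat\phi(p)/p$ is smooth with all derivatives bounded and rapidly decaying in $p$ (hence in $\Sigma_0-\sigma$).

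\textbf{The $\delta(p)$ part and the smooth part give $C_1$.} For the term carrying $\delta(p)$, integration in $\sigma$ simply evaluates at $\sigma=\Sigma_0$, producing the factor $\mathbf a(\Sigma_0)$ pulled out front and a remaining symbol
$$
C_1 = \frac{1}{\jeta\langle\sigma'\rangle\langle\eta'\rangle}\,\sqrt{\tfrac{\pi}{2}}\,\frac{1}{\langle\Sigma_0\rangle}\,\frac{1}{\Phi(\xi,\eta,\Sigma_0)}\,\mu^R(\Sigma_0,\eta',\sigma') + (\text{smooth-}p\ \text{contribution}).
$$
The bound \eqref{housefinch1} then follows by combining: (i) the rapid decay estimate \eqref{muR'} for $\mu^R$ in the variable $\inf_{\mu,\nu}|\Sigma_0+\mu\eta'+\nu\sigma'|$; (ii) the derivative bounds \eqref{californiacondor1} for $1/\Phi$, which are harmless since on the $\delta$ support $p=0$ so $\Phi\gtrsim\min(\langle\Sigma_0\rangle,\ldots)^{-1}$; and (iii) for the smooth-$p$ contribution, the fact that $\varphi^\ast(p,\eta,\sigma)\widehat\phi(p)/p$ is a Schwartz function of $p$, so after the $\sigma$-integral one again lands on a function with the claimed localization in $\Sigma_0$ plus the convolution variable. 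The point is that on this branch the singular behavior in $\Sigma_0$ alone is completely absent: everything is controlled by $\langle\Sigma_0\rangle^{-1}$ times rapid decay in the combined variable, which is precisely \eqref{housefinch1}.

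\textbf{The genuinely singular part gives $C_2$.} The remaining contribution is the one where the $\widehat\phi(p)/p$ factor is still on the near-diagonal region $\sigma\approx\Sigma_0$; here we cannot simply evaluate, and the integral $\int \varphi^\ast(p,\eta,\sigma)\tfrac{\widehat\phi(p)}{p}\,G(\sigma)\,d\sigma$ behaves like a (cutoff) Hilbert-transform-type average of $G$ around $\Sigma_0$, where $G$ packages $\langle\sigma\rangle^{-1}\mu^R(\sigma,\eta',\sigma')\mathbf a(\sigma)/\Phi(\xi,\eta,\sigma)$. Since $\widehat\phi$ is compactly supported, the effective range of $\sigma$ is $|\sigma-\Sigma_0|\lesssim 1$, and we may Taylor-expand $G$ around $\Sigma_0$; the logarithmic term in \eqref{housefinch2} comes exactly from the principal-value integral of $1/p$ against a smooth bump localized at scale $|\Sigma_0|$, i.e. from $\int_{|\Sigma_0|}^{O(1)}dp/p\sim|\log|\Sigma_0||$ when $|\Sigma_0|$ is small, while each derivative hitting $\langle\Sigma_0\rangle^{-1}$ or $1/\Phi$ or the implicit $\Sigma_0$-dependence in the cutoff improves the power by $|\Sigma_0|^{-1}$, giving the $|\Sigma_0|^{-a-b}$ bound for $a+b\geq1$. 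The rapid decay in $\inf_{\mu,\nu}|\Sigma_0+\mu\eta'+\nu\sigma'|$ is inherited from \eqref{muR'} together with the fact that on this region $\sigma=\Sigma_0+O(1)$. One must be slightly careful that derivatives in $\eta$ do not create new singularities — but $\eta$ enters $C_2$ only through $\langle\eta\rangle^{-1}$, the cutoff $\varphi^\ast$, and $\Phi$, all of which are controlled by Lemma \ref{californiacondor}, so those derivatives only produce harmless $\langle\eta\rangle^{-1}$ gains.

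\textbf{Main obstacle.} The delicate point is the bookkeeping near $\Sigma_0=0$: one has to show that the only singularity produced by the $\sigma$-integration is the logarithm (resp. its derivatives), and in particular that the rapid-decay factor in the transverse variable $\inf_{\mu,\nu}|\Sigma_0+\mu\eta'+\nu\sigma'|$ survives the expansion and the differentiation. Concretely the difficulty is that $\mu^R$ depends on $\sigma$ both through its first slot (which becomes $\Sigma_0$) and is only assumed smooth with the stated decay, so one must justify Taylor-expanding it in $\sigma$ around $\Sigma_0$ within the compact support of $\widehat\phi$ while keeping uniform control of the decay constant — this is where \eqref{muR'} with its uniformity in all derivatives is used essentially. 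The analogous, more technical versions of this computation are carried out in the subsequent Lemmas \ref{birdofparadise1} and \ref{birdofparadise2}, and the present Lemma is the cleanest instance of the same mechanism.
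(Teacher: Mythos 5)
Your overall shape is right — separate the $\delta$ contribution (which evaluates cleanly at $\sigma=\Sigma_0$ and produces the prefactor $\mathbf{a}(\Sigma_0)$) from the principal-value contribution — but the rest of your decomposition has a genuine problem. You assign the ``smooth-$p$'' part of the $\pv$ term to $C_1$, yet $C_1$ by hypothesis appears multiplied by $\mathbf{a}(\Sigma_0)$, and one cannot factor $\mathbf{a}(\Sigma_0)$ out of $\int \mathbf{a}(\Sigma_0-\iota_2\nu p)\mu^R(\cdot)\varphi^\ast \widehat\phi(p)/p\,dp$: under the integral $\mathbf{a}$ is evaluated at a point that sweeps through an $O(1/R)$ neighborhood of $\Sigma_0$, and $\mathbf{a}$ is only piecewise smooth (it jumps at $0$ in the generic case; it can even vanish identically on a half-line). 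The resulting object is not of the form $\mathbf{a}(\Sigma_0)\times(\text{symbol satisfying \eqref{housefinch1}})$; its $\Sigma_0$- (hence $\eta$-) derivatives pick up the jump of $\mathbf{a}$ and land you in the $|\Sigma_0|^{-a-b}$ regime of \eqref{housefinch2}. In the paper's proof the entire $\pv$ contribution is placed into $C_2$, with no attempt to salvage any piece for $C_1$.

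The second issue is that you misattribute the source of the log. Your ``Main obstacle'' paragraph focuses on the $\sigma$-dependence of $\mu^R$, but $\mu^R$ is smooth with uniformly controlled derivatives by \eqref{muR'}; Taylor-expanding it on the compact support of $\widehat\phi$ is unproblematic. The $|\log|\Sigma_0||$ in \eqref{housefinch2} comes specifically from the discontinuity of $\mathbf{a}$ at $\sigma=0$: integrating an odd kernel $\widehat\phi(p)/p$ against $\mathbf{a}(\Sigma_0-\alpha p)$ fails to cancel for $|p|\gtrsim|\Sigma_0|$, producing $\sim\log(1/|\Sigma_0|)$. Finally, the paper's own treatment of $C_2$ is structured around a different freezing: it subtracts off $\Lambda(\xi,\eta,0)$ (with $\Lambda(\xi,\eta,q)=1/(\Phi(\xi,\eta,\Sigma_0+q)\langle\Sigma_0+q\rangle)$ as in \eqref{honeycreeper}) rather than splitting in $p$, writing $C_2=C_2'+C_2''$; the derivative bounds \eqref{derivativelambda} for $\Lambda$ then give the $|\Sigma_0|^{-a-b}$ gains in a way your $p$-localization cutoff does not directly deliver. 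If you put all the $\pv$ contribution into $C_2$, identify $\mathbf{a}$'s jump (not $\mu^R$) as the cause of the logarithm, and replace your $p$-partition by the $\Lambda(\xi,\eta,q)-\Lambda(\xi,\eta,0)$ expansion, the proof goes through.
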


\begin{proof}
The term $C_1$ is given by the contribution of the $\delta$ term to the symbol $C$:
$$	
C_1(\xi,\eta,\eta',\sigma') = \frac{1}{\langle \eta \rangle \langle \sigma' \rangle \langle \eta' \rangle \langle \Sigma_0 \rangle} \mu^R(\Sigma_0,\eta',\sigma')\frac{1}{\Phi(\xi,\eta,\Sigma_0)}.
$$
It satisfies the desired estimates by~\eqref{californiacondor1} and \eqref{muR}-\eqref{muR'}.
As for the contribution of the principal value term, it can be written as the sum of $C_2'$ and $C_2''$ defined as follows
\begin{align*}
& C_2'(\xi,\eta,\eta',\sigma') = \frac{1}{\langle \eta \rangle \langle \sigma' \rangle \langle \eta' \rangle} \Lambda(\xi,\eta,0) \int  \mathbf{a}(\sigma)\mu^R(\sigma,\eta',\sigma')  \varphi^*(p,\eta,\sigma)  \frac{\widehat{\phi}(p)}{p}   \,dp \\
& C_2''(\xi,\eta,\eta',\sigma') = \frac{1}{\langle \eta \rangle \langle \sigma' \rangle \langle \eta' \rangle}  \int  \mathbf{a}(\sigma)\mu^R(\sigma,\eta',\sigma') [\Lambda(\xi,\eta,-\iota_2 \nu p) - \Lambda(\xi,\eta,0)] \varphi^*(p,\eta,\sigma)  \frac{\widehat{\phi}(p)}{p}   \,dp;
\end{align*}
here we changed the integration variable to $p$, 
so that $\sigma$ is now considered a function of $p$: $\sigma = \Sigma_0 - \iota_2 \nu p$,
and denoted 
\begin{equation}
\label{honeycreeper}
\Lambda(\xi,\eta,q) := \frac{1}{\Phi(\xi,\eta,\Sigma_0 + q) \langle \Sigma_0 + q \rangle},
\end{equation}
which, by \eqref{californiacondor1}, and provided $|q| \ll \frac{1}{R(\eta,\Sigma_0)}$, satisfies
\begin{equation}
\label{derivativelambda}
\left| \partial_\xi^a \partial_\eta^b \partial_{q}^c \Lambda(\xi,\eta,q) \right| \lesssim R(\eta,\Sigma_0)^c.
\end{equation}
This bound, together with the estimates on $\mu^R$ in \eqref{muR}-\eqref{muR'}, 
and the possible singularity of $\mathbf{a}$ at the origin, 
lead to the estimate \eqref{housefinch2} on $C_2'$. 
In order to bound $C_2''$, observe that
\begin{equation}
\label{purplefinch}
f(\xi,\eta,p) = \varphi^*(p,\eta,\sigma) \left[\Lambda(\xi,\eta,-\iota_2 \nu p) - \Lambda(\xi,\eta,0) \right] \frac{\widehat{\phi}(p)}{p}
\end{equation}
satisfies, by~\eqref{californiacondor1},
$$
\operatorname{Supp} f \subset \{ |p| \lesssim R(\eta,\Sigma_0)^{-1} \} \qquad \mbox{and} \qquad \left|\partial_\xi^a \partial_\eta^b \partial_p^c f(\xi,\eta,p) \right| \lesssim R(\eta,\Sigma_0)^{1+c}.
$$
In other words, we can think of $f(p,\eta,\sigma)$ as a normalized cutoff function (in $p$) at scale $R(\eta,\Sigma_0)^{-1}$, such as $R(\eta,\Sigma_0) \chi(R(\eta,\Sigma_0) p)$. Coming back to $C_2''$, it can be written
$$
 C_2''(\xi,\eta,\eta',\sigma') = \frac{1}{\langle \eta \rangle \langle \sigma' \rangle \langle \eta' \rangle} \int  \mathbf{a}(\sigma)\mu^R(\sigma,\eta',\sigma') f(\xi,\eta,p)\,dp,
$$
from which the desired estimate~\eqref{housefinch2} follows.
\end{proof}

\smallskip
\subsubsection{The remainder from integrating $M^{\frac{1}{x},\delta}$} 
Dropping irrelevant indexes and constants, the integral in $\s$ of $M^{\frac{1}{x},\delta}$ can be written as
$$
\frac{\mathbf{a}(\xi) \mathbf{a}(\eta) \mathbf{a}(\eta') 
\mathbf{a}(\sigma')}{\langle \eta \rangle \langle \sigma' \rangle \langle \eta' \rangle}
\int \frac{A(\sigma)}{\Phi(\xi,\eta,\sigma) \langle \sigma \rangle} \varphi^*(p,\eta,\sigma) 
\frac{\widehat{\phi}(p)}{p} \delta(p') \,d\sigma.
$$
The following lemma extracts the leading order contribution, and bounds the remainder term.

\begin{lem} \label{birdofparadise1} 
Recalling the definitions of $p$, $p'$, $\Sigma_0$, $\Sigma_1$ in \eqref{variousdef}
as well as $p_*$ in \eqref{defpstar},
we have the following decomposition
\begin{align*} 
& \int \frac{A(\sigma)}{\Phi(\xi,\eta,\sigma) \langle \sigma \rangle} \varphi^*(p,\eta,\sigma) \frac{\widehat{\phi}(p)}{p} \delta(p') \,d\sigma =  - \iota_2 \nu \frac{A( \Sigma_0)}{\Phi(\xi,\eta,\Sigma_0) \langle \Sigma_0 \rangle}\frac{\widehat{\phi}(p_*)}{p_*} +  C(\xi,\eta,\eta',p_*)
\end{align*}
where, for any $a,b,c,d \in \mathbb{N}_0$,
\begin{align*}
|\partial_\xi^a \partial_\eta^b \partial_{\eta'}^c \partial_{p_*}^d C(\xi,\eta,\eta',p_*)| 
  \lesssim \frac{1}{(|p_\ast| + \frac{1}{R(\eta,\Sigma_0)})^{1+d}}.
\end{align*}
\end{lem}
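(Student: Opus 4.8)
\textbf{Proof plan for Lemma \ref{birdofparadise1}.}

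The plan is to isolate the leading-order contribution by using the $\delta(p')$ to perform the $\sigma$-integration exactly, and then Taylor-expand the remaining smooth factors around the point where $p' = 0$ forces $\sigma$ to sit. First I would recall from \eqref{variousdef} that $p' = \iota_2 \lambda'(\sigma - \Sigma_1)$, so $\delta(p') = \delta(\sigma - \Sigma_1)$ (the Jacobian $|\iota_2\lambda'| = 1$). Carrying out the $\sigma$-integral therefore replaces $\sigma$ by $\Sigma_1$ everywhere; using $\Sigma_0 - \Sigma_1 = p_*$ (see \eqref{defpstar}) and $p = \iota_2\nu(\Sigma_0 - \sigma)$, this substitution gives $p|_{\sigma = \Sigma_1} = \iota_2 \nu(\Sigma_0 - \Sigma_1) = \iota_2\nu p_*$, so that $\widehat\phi(p)/p$ becomes $\widehat\phi(\iota_2\nu p_*)/(\iota_2\nu p_*) = \iota_2\nu\,\widehat\phi(p_*)/p_*$ since $\widehat\phi$ is even. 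This already produces a term of the shape $-\iota_2\nu\, A(\Sigma_1)\Phi(\xi,\eta,\Sigma_1)^{-1}\langle\Sigma_1\rangle^{-1}\widehat\phi(p_*)/p_*$ times the cutoff $\varphi^*(\iota_2\nu p_*,\eta,\Sigma_1)$.

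The next step is to replace $\Sigma_1$ by $\Sigma_0$ in the smooth coefficient $A(\sigma)\Phi(\xi,\eta,\sigma)^{-1}\langle\sigma\rangle^{-1}$, writing
\begin{align*}
\frac{A(\Sigma_1)}{\Phi(\xi,\eta,\Sigma_1)\langle \Sigma_1\rangle}
= \frac{A(\Sigma_0)}{\Phi(\xi,\eta,\Sigma_0)\langle \Sigma_0\rangle}
+ \Big[\frac{A(\Sigma_1)}{\Phi(\xi,\eta,\Sigma_1)\langle \Sigma_1\rangle}
- \frac{A(\Sigma_0)}{\Phi(\xi,\eta,\Sigma_0)\langle \Sigma_0\rangle}\Big],
\end{align*}
and similarly for the cutoff $\varphi^*(\iota_2\nu p_*,\eta,\Sigma_1)$; the first piece is exactly the claimed main term (note $\varphi^*$ evaluated with $\sigma = \Sigma_0$ is $\approx 1$ on the relevant region, so it can be absorbed), and the difference terms, together with the analogous difference for $\varphi^*$, form the error $C(\xi,\eta,\eta',p_*)$. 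Since $\Sigma_1 = \Sigma_0 - p_*$, this difference is $O(|p_*|)$ times a factor controlled by the derivative bounds \eqref{derivativelambda} (with $q$ replaced by $-p_*$) for the $\Lambda$-type quantity $\Phi(\xi,\eta,\Sigma_0+q)^{-1}\langle\Sigma_0+q\rangle^{-1}$, by the smoothness and decay of $A$ from \eqref{Acoeff}, and by the cutoff structure: $\varphi^*(\cdot,\eta,\sigma)$ restricts $|p| \lesssim R(\eta,\Sigma_0)^{-1}$, hence $|p_*| \lesssim R(\eta,\Sigma_0)^{-1}$ on the support, which is precisely what makes the bound $(|p_*| + R(\eta,\Sigma_0)^{-1})^{-1-d}$ work — the $|p_*|$ gained from the difference cancels one power of the singular $1/p_*$, leaving a quantity of size $R(\eta,\Sigma_0)$, and each additional $\partial_{p_*}$ costs at most another factor of $R(\eta,\Sigma_0)$ by \eqref{derivativelambda}, consistently with the stated $(1+d)$ power.

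The main obstacle I anticipate is bookkeeping the derivative estimates uniformly in all the frequency variables, in particular checking that differentiating in $\xi,\eta,\eta'$ (which enter $\Sigma_0$, $\Sigma_1$, $\Phi$, and $R$) does not destroy the gain: one must verify that $\partial_\xi^a\partial_\eta^b\partial_{\eta'}^c$ applied to the difference quotient still produces the factor $|p_*|/(|p_*| + R(\eta,\Sigma_0)^{-1})$ — i.e. that the $R(\eta,\Sigma_0)$-weights generated by \eqref{derivativelambda} and \eqref{estimatesR} are compatible with the $\langle\eta\rangle^{-a}$, $\langle\Sigma_0\rangle$-type weights, using $R(\eta,\Sigma_0) \lesssim \min(\langle\eta\rangle,\langle\Sigma_0\rangle)$. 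A secondary technical point is handling the region where the cutoff $\varphi^*$ transitions (its derivatives are of size $R(\eta,\Sigma_0)$), but there $|p| \approx R(\eta,\Sigma_0)^{-1}$ so $\widehat\phi(p)/p$ and all its substitutes are smooth and bounded, and the contribution is harmless. Once these derivative bounds are organized, the decomposition and the estimate on $C$ follow by combining Lemma \ref{californiacondor} (for $\Lambda$) with the smoothness of the coefficients \eqref{Acoeff} and the normalized-cutoff heuristic already used in the proof of Lemma \ref{birdofparadise0}.
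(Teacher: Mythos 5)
Your proposal is correct and follows essentially the same route as the paper's proof: evaluate the $\delta(p')$ to set $\sigma = \Sigma_1$, observe that $p$ becomes $\iota_2\nu p_*$ so $\widehat\phi(p)/p \to \iota_2\nu\,\widehat\phi(p_*)/p_*$, then split off the value at $\Sigma_0$ and estimate the remainders — the cutoff piece $\varphi^*(p_*,\eta,\Sigma_1) - 1$ and the coefficient difference $A(\Sigma_1)\Lambda(\xi,\eta,\Sigma_1-\Sigma_0) - A(\Sigma_0)\Lambda(\xi,\eta,0)$ — using the derivative bounds \eqref{derivativelambda} together with the support restriction $|p_*| \lesssim R(\eta,\Sigma_0)^{-1}$. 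The paper organizes the error as $II + III$ in exactly the way you describe, so no further comment is needed.
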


\begin{rem}
In the above lemma, we chose to parameterize $C$ as a function of $p_*,\xi,\eta$, and $\eta'$. Of course, other choices are also possible; the main point is that derivatives across level sets of $p_*$ are more singular (larger) than along them.
\end{rem}

\begin{proof}[Proof of Lemma \ref{birdofparadise1}] 
First note that, due to the fast decay of $\widehat{\phi}$, 
we can assume that $|p_*| = |\Sigma_0 - \Sigma_1| \lesssim 1$, hence $R(\eta,\Sigma_0) \approx R(\eta,\Sigma_1)$. 
By definition of $\Sigma_0$ and $\Sigma_1$ in~\eqref{variousdef}, and recalling the formula for $\Lambda$ in~\eqref{honeycreeper},
$$
 \int \frac{A(\sigma)}{\Phi(\xi,\eta,\sigma) \langle \sigma \rangle} \varphi^*(p,\eta,\sigma) \frac{\widehat{\phi}(p)}{p} \delta(p') \,d\sigma = \iota_2 \nu \varphi^*(p_*,\eta,\Sigma_1) A(\Sigma_1) \Lambda(\xi,\eta,\Sigma_1 - \Sigma_0)  \frac{\widehat{\phi}(p_*)}{p_*}.
$$
We can now decompose
\begin{align*}
\iota_2 \nu \varphi^*(p_*,\eta,\Sigma_1)A(\Sigma_1) & \Lambda(\xi,\eta,\Sigma_1 - \Sigma_0)  \frac{\widehat{\phi}(p_*)}{p_*} \\
&  = \iota_2 \nu A(\Sigma_0) \Lambda(\xi,\eta,0)  \frac{\widehat{\phi}(p_*)}{p_*}\\
&  \qquad +  \iota_2 \nu A(\Sigma_0) \Lambda(\xi,\eta,0)  [\varphi^*(p_*,\eta,\Sigma_1) - 1]  \frac{\widehat{\phi}(p_*)}{p_*} \\
&   \qquad +  \iota_2 \nu   \varphi^*(p_*,\eta,\Sigma_1) [A(\Sigma_1) \Lambda(\xi,\eta,\Sigma_1 - \Sigma_0) - A(\Sigma_0) \Lambda(\xi,\eta,0)] \frac{\widehat{\phi}(p_*)}{p_*} \\
& = I + II + III.
\end{align*}
The term $I$ is the desired leading order term. As for $II$ and $III$, they make up the error term $C(\xi,\eta,\eta',\sigma')$, and it follows from~\eqref{derivativelambda} that they satisfy the desired estimates.
\end{proof}

\smallskip
\subsubsection{The remainder from integrating $M^{\frac{1}{x},\frac{1}{x}}$}
Dropping irrelevant indexes, $\int M^{\frac{1}{x},\frac{1}{x}} \,d\sigma$ can be written as
$$
\frac{\mathbf{a}(\xi) \mathbf{a}(\eta) \mathbf{a}(\eta') \mathbf{a}(\sigma')}{
\langle \eta \rangle \langle \sigma' \rangle \langle \eta' \rangle}\int \frac{A(\sigma)}{\Phi(\xi,\eta,\sigma) 
\langle \sigma \rangle} \varphi^*(p,\eta,\sigma) \frac{\widehat{\phi}(p)}{p} \frac{\widehat{\phi}(p')}{p'} \,d\sigma.
$$
The following lemma extracts the leading order contribution and bounds the remainder term.

\begin{lem} \label{birdofparadise2} Recalling the definitions of $p$, $p'$, $\Sigma_0$,
$\Sigma_1$ in \eqref{variousdef} as well as $p_*$ in \eqref{defpstar},
we have the following decomposition
\begin{align*} 
& \int \frac{A(\sigma)}{\Phi(\xi,\eta,\sigma) \langle \sigma \rangle} \varphi^*(p,\eta,\sigma) 
  \frac{\widehat{\phi}(p)}{p} \frac{\widehat{\phi}(p')}{p'} \,d\sigma =  
  - \nu \lambda' \frac{\pi}{2}\frac{A( \Sigma_0)}{\Phi(\xi,\eta,\Sigma_0) \langle \Sigma_0 \rangle}\delta(p_\ast) 
  +  C(\xi,\eta,\eta',p_*)
\end{align*}
where, for any $a,b,c,d \in \mathbb{N}_0$,
\begin{align*}
|\partial_\xi^a \partial_\eta^b \partial_{\eta'}^c \partial_{p_*}^d C(\xi,\eta,\eta',p_*)| 
  \lesssim \frac{1}{(|p_\ast| + \frac{1}{R(\eta,\Sigma_0)})^{1+d}}.
\end{align*}
\end{lem}

\begin{proof} 
It will be convenient to adopt lighter notations, by setting
\begin{align*}
\alpha = - \iota_2 \nu, \qquad \alpha' = \iota_2 \lambda',
\end{align*}
so that
\begin{align*}
p = \alpha (\sigma - \Sigma_0) \qquad \mbox{and} \qquad  p' = \alpha' (\sigma - \Sigma_1).
\end{align*}
The integral can be decomposed as follows
\begin{align*}
& \int A(\sigma) \Lambda(\xi,\eta,\sigma-\Sigma_0) \varphi^*(p,\eta,\sigma)  \frac{\widehat{\phi}(p)}{p}
\frac{\widehat{\phi}(p')}{p'} \,d\sigma 
\\
&  \quad =A(\Sigma_0)\Lambda(\xi,\eta,0) \int \frac{\widehat{\phi}(p)}{p} \frac{\widehat{\phi}(p')}{p'} \,d\sigma
  + A(\Sigma_0) \Lambda(\xi,\eta,0) \int [ \varphi^*(p,\eta,\sigma) - 1]  \frac{\widehat{\phi}(p)}{p}   \frac{\widehat{\phi}(p')}{p'} \,d\sigma \\
& \qquad \qquad+ \int \varphi^*(p,\eta,\sigma) \left[A(\sigma) \Lambda(\xi,\eta,p) - A(\Sigma_0) \Lambda(\xi,\eta,0) \right] \frac{\widehat{\phi}(p)}{p}   \frac{\widehat{\phi}(p')}{p'} \,d\sigma \\
& \quad = I + II + III.
\end{align*}
Using that $\frac{\widehat{\phi}(\sigma)}{\sigma} = \frac{i}{2} \widehat{\mathcal{F}} [\phi * \operatorname{sign}]$, $\widehat{f} * \widehat{g} = \sqrt{2\pi} \widehat{fg}$ and $\widehat{1} = \sqrt{2\pi} \delta$, we get that
\begin{align*}
 \int  \frac{\widehat{\phi}(p)}{p} \frac{\widehat{\phi}(p')}{p'} \,d\sigma & = \int  \frac{\widehat{\phi}(\alpha( \sigma - \Sigma_0 + \Sigma_1)}{\alpha( \sigma - \Sigma_0 + \Sigma_1)}  \frac{\widehat{\phi}(\alpha' \sigma)}{\alpha' \sigma} \,d\sigma = - \alpha \alpha' \left[ \frac{\widehat{\phi}(\sigma)}{\sigma} * \frac{\widehat{\phi}(\sigma)}{\sigma} \right](\Sigma_0 - \Sigma_1) \\
& = \alpha \alpha' \frac{\sqrt{2\pi}}{4} \widehat{\mathcal{F}}[(\phi * \operatorname{sign})^2](\Sigma_0 - \Sigma_1) =\frac{\sqrt{2\pi}}{4} \alpha \alpha' \widehat{\mathcal{F}}[1 + \widehat{\mathcal{F}}^{-1} G_0]( \Sigma_0 - \Sigma_1) \\
& = \frac{\pi}{2}\alpha \alpha' \delta(\Sigma_0 - \Sigma_1) + \frac{\sqrt{2\pi}}{4} \alpha \alpha' G_0 (\Sigma_0 - \Sigma_1)
\end{align*}
where $G_0$ is a  Schwartz function. Therefore, modifying the definition of $G_0$ to take the constant factor into account,
$$
I = \frac{\pi}{2}  \alpha \alpha'A(\Sigma_0) \Lambda(\xi,\eta,0) \delta(\Sigma_0 - \Sigma_1) 
  + A(\Sigma_0) \Lambda(\xi,\eta,0) G_0( \Sigma_0 -  \Sigma_1).
$$
Turning to $II$, it can be written
\begin{align*}
II & = A(\Sigma_0) \Lambda(\xi,\eta,0) \int [ \varphi^*(p,\eta,\sigma) - 1]  \frac{\widehat{\phi}(p)}{p}   \frac{\widehat{\phi}(p')}{p'} \,d\sigma 
\\
& = - A(\Sigma_0)\Lambda(\xi,\eta,0) \int \varphi_{>-D_0}(R(\eta,\Sigma_0) p) \frac{\widehat{\phi}(p)}{p}   \frac{\widehat{\phi}(p')}{p'} \,d\sigma 
\\
& \qquad \qquad  \qquad - \Lambda(\xi,\eta,0) \int [ \varphi_{>-D_0}(R(\eta,\sigma) p) -\varphi_{>-D_0}(R(\eta,\Sigma_0) p)] \frac{\widehat{\phi}(p)}{p}   \frac{\widehat{\phi}(p')}{p'} \,d\sigma 
\\
& = II' + II''.
\end{align*}
The term $II''$ is an error term which enjoys better bounds than $II'$, 
so we only focus on the latter, which can be written as
$$
II' = -A(\Sigma_0)\Lambda(\xi,\eta,0) \frac{\sqrt{2\pi}i}{2} \alpha \alpha' 
  \widehat{\mathcal{F}} \big[ (\widehat{F_{R(\eta,\Sigma_0)}}*\operatorname{sign})(\phi * \operatorname{sign})\big](p_*)
$$
where $F_R = \varphi_{>-D_0}(R \cdot) \widehat{\phi}$. 
Essentially, $F_R$ can be written as $\sum_{\frac{2^{-D_0}}{R(\eta,\Sigma_0)} < 2^j < 1} \varphi_j$, and therefore we need to bound
$$
\Big| \sum_{\frac{2^{-D_0}}{R(\eta,\Sigma_0)} < 2^j < 1}  \widehat{\mathcal{F}} [ (\widehat{\varphi_j} * 
  \sign)  (\phi * \sign) (p_*)] \Big|.
$$
Since the average of $\widehat{\varphi_j}$ is zero, 
the convolution $\widehat{\varphi_j} *  \operatorname{sign}$ can be written $\chi(2^j \cdot)$, 
for a Schwartz function $\chi$. 
Then, $(\widehat{\varphi_j} *  \operatorname{sign})  (\phi * \operatorname{sign})$ 
enjoys the same bounds as $\chi(2^j \cdot)$, and therefore, the above can be bounded by
$$
\sum_{\frac{2^{-D_0}}{R(\eta,\Sigma_0)} < 2^j < 1} 2^{-j} \widehat{\chi}(2^{-j} p_*) \lesssim \frac{1}{\frac{1}{R(\eta,\Sigma_0)} + |p_*|},
$$
with natural bounds on the derivatives.

We are left with $III$, which can be written (up to the factor $A$, which does not affect the estimates) as
$$
\int f(p,\eta,\sigma)   \frac{\widehat{\phi}(p')}{p'} \,dp,
$$
where $f(p,\eta,\sigma)$ was introduced in \eqref{purplefinch} 
(notice that we changed the integration variable to $p$, 
so that $p'$ and $\sigma$ are now thought of as functions of $p$). 
As we saw earlier, the function $f(\xi,\eta,p)$ can be thought of as normalized smooth function in $p$
on a scale $R(\eta,\Sigma_0)^{-1}$, such as $R(\eta,\Sigma_0) \chi(R(\eta,\Sigma_0) p)$, with $\chi \in \mathcal{C}_0^\infty$. The desired result follows.
\end{proof}

\medskip
\subsection{Final decomposition and renormalized profile}\label{SsecReno} 
Let us summarize here our findings from the previous subsections regarding the decomposition of the nonlinearity. 

We define the {\it renormalized profile} $f$ by
\begin{align}\label{Renof}
f := g - T(g,g), \qquad T(g,g) := \sum_{\iota_1,\iota_2} T_{\iota_1 \iota_2}^+(g,g) + T_{\iota_1 \iota_2}^-(g,g),
\end{align}
where, according to \eqref{Tgg}, we have
\begin{align}\label{RenoT}
\begin{split}
\wt{\mathcal{F}} T_{\iota_1 \iota_2}^\pm(g,g)(t) & = \iint e^{it \Phi_{\iota_1 \iota_2}(\xi,\eta,\sigma)}
  \widetilde{g}(t,\eta) \widetilde{g}(t,\sigma) \mathfrak{m}_{\iota_1 \iota_2}^\pm(\xi,\eta,\sigma)\,d\eta \,d\sigma 
\\
\mathfrak{m}_{\iota_1 \iota_2}^\pm(\xi,\eta,\s)
  & := -\iota_1\iota_2 \sum_{\l,\mu,\nu}
  \frac{Z^{\pm}_{\substack{- \iota_1 \iota_2 \\ \lambda \mu \nu}}(\xi,\eta,\sigma)}{i \Phi_{\iota_1 \iota_2}(\xi,\eta,\sigma)},
\end{split}
\end{align}
where the symbol $Z$ is defined in \eqref{QZ}.
We then see that $f$ satisfies
\begin{align}\label{Renodtf}
\partial_t \wt{f} = \mathcal{Q}^R(g,g) + \mathcal{C}^{S}(g,g,g) + \mathcal{C}^R(g,g,g)
\end{align}
where:

\begin{itemize}

\medskip
\item The regular quadratic term is given by
\begin{align}\label{QR}
\begin{split}
& \mathcal{Q}^R(a,b) = \sum_{\iota_1, \iota_2} \mathcal{Q}_{\iota_1\iota_2}^R(a,b)
\\
& \mathcal{Q}_{\iota_1\iota_2}^R[a,b](t,\xi)
  = \iint e^{it \Phi_{\iota_1\iota_2}(\xi,\eta,\sigma)} \, \mathfrak{q}(\xi,\eta,\sigma) 
  \, \wt{a}_{\iota_1}(t,\eta) \wt{b}_{\iota_2}(t,\sigma) \, d\eta \, d\sigma
\\
& \Phi_{\iota_1 \iota_2}(\xi,\eta,\s) :=  \jxi - \iota_1 \jeta - \iota_2  \jsig,
\end{split}
\end{align}
with \eqref{QR1}-\eqref{remkappasQ}. 

\medskip
\noindent
{\it Notation convention for the parentheses}. 
Note that in \eqref{QR} above we have used both square and round parentheses for the arguments of $\mathcal{Q}^R$.
When only a pair of arguments appears we will mostly use round brackets, 
both when the arguments are time and frequency $(t,\xi)$ 
or a pair of functions (such as $(a,b)$ above, or $(g,g)$ in \eqref{Renodtf}).
In cases where we write both the input functions and the independent variables we will 
often highlight the distinction between them by using square parentheses for the input functions,
as done in the second line of \eqref{QR} above, in \eqref{lemQRpq}, in \eqref{wproofQR2}, and so on.
We will adopt a similar notation for other similar multilinear expressions (see, for example, 
\eqref{wproofnot1} and \eqref{wproofnot1++}).

Also when the arguments of the bilinear form $\mathcal{Q}^R$ are given by other multilinear expressions
we will use square parentheses (throughout the given formula) to provide a clearer distinction; 
see, for example, \eqref{QRexp}.
We will adopt a similar notation for the trilinear terms \eqref{CubicS} and \eqref{CubicR}.

\def\k{\kappa}

\medskip
\item The singular cubic term is given by
\begin{align}
\label{CubicS}
\begin{split}
& \mathcal{C}^{S}(a,b,c) = \sum_{\kappa_1 \kappa_2 \kappa_3} \mathcal{C}^{S}_{\kappa_1 \kappa_2 \kappa_3}(a,b,c) 
\\
& \mathcal{C}^{S}_{\kappa_1 \kappa_2 \kappa_3}[a,b,c](t,\xi)
= \iiint e^{it \Phi_{\kappa_1 \kappa_2 \kappa_3}(\xi,\eta,\sigma,\theta)} \mathfrak{c}^S_{\kappa_1 \kappa_2 \kappa_3}(\xi,\eta,\sigma,\theta) 
\,  \wt{a}_{\k_1}(t,\eta) \wt{b}_{\k_2}(t,\sigma) \wt{c}_{\k_3}(t,\theta) \, d\eta \, d\sigma \,d\theta,
\\
& \Phi_{\kappa_1 \kappa_2 \kappa_3}(\xi,\eta,\eta',\s') := 
  \jxi - \kappa_1 \jeta - \kappa_2  \langle\sigma \rangle - \kappa_3 \langle \theta \rangle,
\end{split}
\end{align}
with the exact formula for the symbol $\mathfrak{c}^S$ appearing in \eqref{formulacubiccoeff}. 
The operator  $\mathcal{C}^{S}_{\kappa_1 \kappa_2 \kappa_3}$ can be further decomposed into
\begin{align}\label{CubicS12}
\mathcal{C}^{S}_{\kappa_1 \kappa_2 \kappa_3}
  = \mathcal{C}^{S,1}_{\kappa_1 \kappa_2 \kappa_3} + \mathcal{C}^{S,2}_{\kappa_1 \kappa_2 \kappa_3},
\end{align}
see 
\eqref{defCS12}.

\medskip
\item The regular cubic term is given by
\begin{align}
\label{CubicR}
\begin{split}
& \mathcal{C}^{R}(a,b,c) = \sum_{\kappa_1 \kappa_2 \kappa_3} \mathcal{C}^{R}_{\kappa_1 \kappa_2 \kappa_3}(a,b,c)
\\
& \mathcal{C}^{R}_{\kappa_1 \kappa_2 \kappa_3}[a,b,c](t,\xi)
= \iiint e^{it \Phi_{\kappa_1 \kappa_2 \kappa_3}(\xi,\eta,\sigma,\theta)}
  \mathfrak{c}^R_{\kappa_1 \kappa_2 \kappa_3}(\xi,\eta,\sigma,\theta) 
  \, \wt{a}_{\iota_1}(t,\eta) \wt{b}_{\iota_2}(t,\sigma)  
  \wt{c}_{\iota_3}(t,\theta) \, d\eta \, d\sigma \,d\theta,
\end{split}
\end{align}
where, in view of the estimates for the symbols 
appearing in Lemmas \ref{birdofparadise0}, \ref{birdofparadise1} and \ref{birdofparadise2}, 
we have that $\mathfrak{c}^R$ enjoys bounds of the form
\begin{align}\label{cregular1}
\big| \partial_\xi^a \partial_\eta^b \partial_\s^c \partial_\theta^d
  \mathfrak{c}^R_{\kappa_1 \kappa_2 \kappa_3}(\xi,\eta,\sigma,\theta) \big| \lesssim
  \frac{1}{\jeta \jsig \langle\theta\rangle} 
  \frac{\med(|\eta|,|\s|,|\theta|)^{1+a+b+c+d}}{\langle\xi-\eta-\s-\theta\rangle^{N}},
\end{align}
up to possible logarithmic losses like those appearing in \eqref{housefinch2};
recall also the notation for $\med$ from the end of \S\ref{secNotation}.
We are again adopting the convention explained at the beginning of Subsection \ref{SecDecS1}
of disregarding singularities at $0$ in the variables of the inputs of \eqref{CubicR}.

We then note that the terms \eqref{CubicR} are essentially a cubic version of the regular quadratic terms 
$\mathcal{Q}^R$ \eqref{QR}. 
A good way to think of them is that they are essentially of the form $T[\wtF^{-1}\mathcal{Q}^{R\sharp}(g,g),g]$.
Therefore, estimating \eqref{CubicR} is much easier than estimating \eqref{QR},
or other cubic terms that appear in our arguments, such as those in Section \ref{secwL};
see also Propositions \ref{proQRLinftyxi} and \ref{proQRwot}
where terms similar to \eqref{CubicR} are treated.
Therefore, in all that follows, we will skip the estimate for the $\mathcal{C}^R$ 
terms from \eqref{Renodtf}. 

\end{itemize}

\medskip
The next Lemma shows that the renormalized profile satisfies the key assumption
about vanishing at the zero frequency like the original profile $g$.

\begin{lem}\label{lemwtf0}
The renormalized profile \eqref{Renof} satisfies $\wt{f}(0)=0$.
Moreover, when $V$ is exceptional and even, 
$f$ has the same parity of $g$ (even/odd in the case of odd/even resonance).
\end{lem}

\begin{proof}
In the generic case $\wt{f}(0)=0$ is automatically satisfied, see Proposition \ref{propFT}.\footnote{Technically, 
one should check $f\in L^1$ (for fixed $t$), 
but this is not hard to do and, in fact, we will prove 
this type of control later on; see for example Proposition \ref{propbootf}.}
Moreover, in the case where $\wt{u}(0)=0$ because of the structure of the equations
as for (KG2), the claimed property for $\wt{f}$ 
is easy to verify because the quadratic symbols under consideration will vanish at $\xi=0$.

We now verify the statement in the exceptional case by distinguishing between the case of odd 
versus even solutions. 
Note that an odd, respectively even, solution $u$ corresponds to an odd, respectively even, 
profile $\wt{g}$ in distorted Fourier space;
see \eqref{vKG}-\eqref{vprof} and Lemma \ref{lemevenodd}.

In the case of odd solutions our assumptions dictate that the zero energy resonance is even, that is, $T(0)=1$,
and the the coefficient $a(x)$ is odd, hence $\ell_{+\infty} = -\ell_{-\infty}$.
In the case of even solutions instead, we have that the zero energy resonance is odd, that is, $T(0)=-1$,
and the the coefficient $a(x)$ is even, hence $\ell_{+\infty} = \ell_{-\infty}$.

In both exceptional cases, 
since $V$ is even, we have $m_+(-x,\xi)=m_-(x,\xi)$ and $R_+(\xi) = R_-(\xi)$; see \eqref{TRformula}.
In particular, the coefficients defined in \eqref{mucoeffexp} satisfy the symmetry
\begin{align}\label{lemwtf0a}
\mathbf{a}^\epsilon_\l(\xi) = \mathbf{a}^{-\epsilon}_\l(-\xi), \quad \l,\epsilon \in \{+,-\},
\end{align}
and 
$R_\pm(0)=0$.

Next, we inspect the formulas \eqref{RenoT} with \eqref{QZ}.
Since $\wt{g}(0)=0$, 
it suffices to prove that, for fixed $\iota_1,\iota_2$ we have that
$\wtF \big(T^+_{\iota_1\iota_2}(g,g) + T^-_{\iota_1\iota_2}(g,g) \big)$ vanishes at $\xi=0$.
The contribution to $T^\epsilon_{\iota_1\iota_2}(g,g)$ at fixed $\lambda,\mu,\nu$ when $\xi\rightarrow 0$ is
\begin{align*}
\lim_{\xi\rightarrow 0} \big( I_\epsilon(\xi) + I_{-\epsilon}(\xi) \big),
\end{align*}
where
\begin{align}\label{Iepsilon}
\begin{split}
I_\epsilon(\xi) := -\iota_1\iota_2 
  & \iint e^{it \Phi_{\iota_1\iota_2}(0,\eta,\sigma)} 
  \widetilde{g}_{\iota_1}(t,\eta) \widetilde{g}_{\iota_2}(t, \sigma) \, 
  \frac{a^\epsilon_{\substack{\iota_0 \iota_1 \iota_2 \\ \lambda\mu\nu}}(\xi,\eta,\sigma)
  }{8\pi i \jeta \jsig \Phi_{\iota_1 \iota_2}(0,\eta,\sigma)} \, 
  \\
  & \times \ell_{\epsilon \infty} \, \left[ \sqrt{\frac{\pi}{2}} \delta(p_0) 
    + \epsilon \varphi^\ast(p_0,\eta,\sigma) \, \pv \frac{\what{\phi}(p_0)}{ip_0} \right]
  \, d\eta \,d\sigma, \qquad p_0=\iota_1\mu\eta-\iota_2\nu\sigma.
\end{split}
\end{align}
Note that the coefficient ${\bf a}^\epsilon_\l(\xi)$ may be discontinuous at $0$, 
and this is why we kept the dependence on $\xi$ for the coefficient `$a$' in \eqref{Iepsilon} and the limit in $\xi$.

Next, we change variables $(\eta,\sigma) \mapsto (-\eta,-\sigma)$ in the expression \eqref{Iepsilon};
note that $p_0\mapsto-p_0$, and recall that $\phi$ is even.
In the case of odd $\wt{g}$, using \eqref{lemwtf0a} and $\lim_{\xi\rightarrow 0}{\bf a}^\epsilon_+(\xi) = 1$,
$\lim_{\xi\rightarrow 0} {\bf a}^\epsilon_-(\xi) = 0$ (here the coefficients are continuous, see \eqref{mucoeffexp}),
we see that
\begin{align*}
a^{\epsilon}_{\substack{\iota_0 \iota_1 \iota_2 \\ \lambda\mu\nu}}(0,-\eta,-\sigma)
= a^{-\epsilon}_{\substack{\iota_0 \iota_1 \iota_2 \\ \lambda\mu\nu}}(0,\eta,\sigma).
\end{align*}
Since $\ell_{+\infty}=-\ell_{-\infty}$, it follows that $I_\epsilon(0) = - I_{-\epsilon}(0)$, 
hence the desired conclusion.

In the case of even $\wt{g}$, we have instead 
$\lim_{\xi\rightarrow 0}({\bf a}^\epsilon_+(\xi) + {\bf a}^{-\epsilon}_+(\xi))=0$ 
and $\lim_{\xi\rightarrow 0} {\bf a}^\epsilon_-(\xi)=0$, which give,
\begin{align*}
\lim_{\xi\rightarrow 0} \big(a^{\epsilon}_{\substack{\iota_0 \iota_1 \iota_2 \\ \lambda\mu\nu}}(\xi,-\eta,-\sigma)
 + a^{-\epsilon}_{\substack{\iota_0 \iota_1 \iota_2 \\ \lambda\mu\nu}}(\xi,\eta,\sigma)\big)=0. 
\end{align*}
Then, changing $(\eta,\sigma) \mapsto (-\eta,-\sigma)$, 
and taking the $\xi\rightarrow 0$ in \eqref{Iepsilon}, using that $\ell_{+\infty}=\ell_{-\infty}$ here,
we see that $\lim_{\xi\rightarrow 0}(I_\epsilon(\xi) + I_{-\epsilon}(\xi)) = 0$.

To show that $f$ has the same parity of $g$ we can use similar arguments.
Let us just look at the case when $g$ is even (which corresponds to an odd resonance), 
as the odd case is analogous.
It suffices to show that for even $\wt{g}$ we have that $\wt{T}(g,g)$ is even.
Looking again at the definition of $T$ in \eqref{RenoT} and of $Z$ in \eqref{QZ} and \eqref{mucoeff},
we see that
\begin{align*}
Z^{\eps} _{\substack{\iota_0 \iota_1 \iota_2 \\ \lambda \mu \nu}}(-\xi,-\eta,-\sigma) 
  & = \ell_{\eps \infty} \frac{a^{-\eps}_{\substack{\iota_0 \iota_1 \iota_2 \\ \lambda\mu\nu}}
  (\xi,\eta,\sigma)}{8\pi\jeta \jsig} 
  \left[ \sqrt{\frac{\pi}{2}} \delta(p) + \eps \varphi^\ast(p,\eta,\sigma) \, \pv \frac{\what{\phi}(p)}{-ip} \right]
  = Z^{-\eps} _{\substack{\iota_0 \iota_1 \iota_2 \\ \lambda \mu \nu}}(\xi,\eta,\sigma) 
\end{align*}
having used \eqref{lemwtf0a} and $\ell_{+\infty} = \ell_{-\infty}$. 
It then follows $\wtF T_{\iota_1 \iota_2}^+(g,g)(-\xi) =  \wtF T_{\iota_1 \iota_2}^-(g,g)(\xi)$
and therefore, see \eqref{Renof}, $\wt{T}(g,g)(\xi)$ is even.
\end{proof}

\medskip
The next lemmas give regularity properties for the symbols of the bilinear operators 
$\mathcal{Q}^R$ (Lemma \ref{lemdxiQR}), and $T$ and $\mathcal{C}^S$ (Lemma \ref{lemdxiT}).
These are based on the results from Section \ref{secspth} and \ref{secmu},
but we chose to place them here (although they were refereed to, and used, before) 
since parts of the proofs are similar to the proof of Lemma \ref{lemwtf0} above.

\begin{lem}\label{lemdxiQR}
Let $\mathcal{Q}^R = \mathcal{Q}_{\iota_1\iota_2}^R$ be the bilinear operator defined in \eqref{QR1},
with symbol $\mathfrak{q}=\mathfrak{q}_{\iota_1,\iota_2}$ as in \eqref{QR2},
where $\mu^R = \mu^R_{\iota_1\iota_2}$ is given by \eqref{muR}-\eqref{muR'}.
Then, under the assumptions of Theorem \ref{maintheo} we have that 
\begin{align}\label{lemdxiQRa}
\begin{split}
\partial_\xi \mathcal{Q}_{\iota_1\iota_2}^R
  = it \frac{\xi}{\jxi} \mathcal{Q}_{\iota_1\iota_2}^R
  -\iota_1\iota_2
  \iint e^{it \Phi_{\iota_1\iota_2}(\xi,\eta,\sigma)} 
  \mathfrak{q}'_{\iota_1\iota_2}(\xi,\eta,\sigma) \,
  \wt{g}_{\iota_1}(t,\eta) \wt{g}_{\iota_2}(t,\sigma) \, d\eta \, d\sigma
\end{split}
\end{align}
where
\begin{align}\nonumber
& \mathfrak{q}'_{\iota_1\iota_2} := \mathfrak{q}_1 + \mathfrak{q}_2 + \mathfrak{q}_3,
\\
\label{lemdximu1}
& \mathfrak{q}_1(\xi,\eta,\sigma) :=  \frac{1}{8\pi\jeta\jsig}\sum_{\epsilon_1,\epsilon_2,\epsilon_3 \in \{+,-\}} 
  \mathbf{1}_{\epsilon_1}(\xi)\mathbf{1}_{\epsilon_2}(\eta)\mathbf{1}_{\epsilon_3}(\s)
  \partial_\xi  \mathfrak{r}_{\epsilon_1\epsilon_2\epsilon_3}(\xi,\eta,\sigma),
  \\
\label{lemdximu2}
& \mathfrak{q}_2(\xi,\eta,\sigma) := \frac{1}{8\pi\jeta\jsig}
  \sum_{\epsilon,\lambda,\mu,\nu} \epsilon \, \overline{\mathbf{b}_\lambda^\epsilon(\xi)} 
  \mathbf{a}_{\mu,\iota_1}^\epsilon( \eta) \mathbf{a}_{\nu,\iota_2}^\epsilon(\sigma)
  \Big[ \big(1 - \varphi^\ast(p,\eta,\sigma) \big) \frac{\what{\phi}(p)}{i p} \Big],
  \\  
\label{lemdximu3}
& \mathfrak{q}_3(\xi,\eta,\sigma) := \frac{1}{8\pi\jeta\jsig}
  \sum_{\lambda, \mu, \nu, \epsilon} \epsilon a^\epsilon_{\substack{- \iota_1 \iota_2 \\ \lambda\mu\nu}} (\xi,\eta,\sigma)
  \, \partial_\xi \Big[ \big(1 - \varphi^\ast(p,\eta,\sigma) \big) \frac{\what{\phi}(p)}{i p} \Big], 
\end{align}
where $\mathbf{b}_\lambda^\epsilon(\xi)$ is the function defined for $\xi \neq 0$ 
by $\mathbf{b}_\lambda^\epsilon(\xi) = \partial_\xi \mathbf{a}_\lambda^\epsilon(\xi)$ (see \eqref{mucoeffexp}).
\end{lem}

\begin{proof}
For notational convenience, let us define the operator
(we will often drop the time variable which is a fixed parameter here,
and omit the $\iota_1\iota_2$ signs since they do not play any role)
\begin{align}\label{lemdxiQRpr1}
\begin{split}
T_{\mathfrak{m}}[F](\xi) & 
:= \iint \mathfrak{m}_{\iota_1\iota_2}(\xi,\eta,\sigma) \, F(\xi,\eta,\sigma) \, d\eta \, d\sigma,
\end{split}
\end{align}
so that 
\begin{align}\label{lemdxiQRpr2}
\begin{split}
\partial_\xi \mathcal{Q}_{\iota_1\iota_2}^R(t,\xi)
 = 
 it \frac{\xi}{\jxi} \mathcal{Q}_{\iota_1\iota_2}^R(t,\xi)[g,g] 
 + T_{\partial_\xi \mathfrak{q}_{\iota_1\iota_2}}[G](\xi).
\end{split}
\end{align}
For the second term on the right-hand side of \eqref{lemdxiQRpr2} we have, recalling the definition of $\mathfrak{q}$
from \eqref{QR2},
\begin{align*}
  T_{\partial_\xi \mathfrak{q}} [G]
  = T_{\mathfrak{m}_1}[G] + T_{\mathfrak{m}_2}[G] + T_{\mathfrak{q}_3}[G],
\end{align*}
where
\begin{align}
\label{lemdxiQR'}
\mathfrak{m}_1(\xi,\eta,\sigma) & :=  \frac{1}{8\pi\jeta\jsig} \partial_\xi \mu^R(\xi,\eta,\sigma),
\\
\label{lemdxiQR2'}
\mathfrak{m}_2(\xi,\eta,\sigma) & :=  \frac{1}{8\pi\jeta\jsig} 
  \sum_{\epsilon,\lambda,\mu,\nu} \epsilon \, \partial_\xi \overline{\mathbf{a}_\lambda^\epsilon(\xi)} 
  \mathbf{a}_{\mu,\iota_1}^\epsilon( \eta) \mathbf{a}_{\nu,\iota_2}^\epsilon(\sigma)
  \Big[ \big(1 - \varphi^\ast(p,\eta,\sigma) \big) \frac{\what{\phi}(p)}{i p} \Big],
\end{align}
with $p = \lambda \xi - \iota_1 \mu \eta - \iota_2 \nu \sigma$, and $\mathfrak{q}_3$ is the symbol in \eqref{lemdximu3}.
To prove the lemma it then suffices to show that
\begin{align}
\label{lemdxiQRpr5}
T_{\mathfrak{m}_1}[G] = T_{\mathfrak{q}_1}[G],
\\
\label{lemdxiQRpr6}
T_{\mathfrak{m}_2}[G] = T_{\mathfrak{q}_2}[G].
\end{align}

\smallskip
\noindent
{\it Proof of \eqref{lemdxiQRpr5}}.
We look back at the definition of $\mu^R$ 
in \eqref{muR}-\eqref{muR'} and see that \eqref{lemdxiQRpr5} amounts to showing that the $\delta$ contribution
which arises from of 
$\partial_\xi \mu^R$ vanishes.
Recall the description of $\mu^R = \mu^{R1} + \mu^{R2}$ in \eqref{muR0}-\eqref{muR02}.
Note that all the integrals under consideration are absolutely convergent because of the fast decay of $\psi^R$.
We first look at $\mu^{R1}$ and distinguish between the generic and exceptional cases.

In the generic case, since $T(0)=0$ and $R_{\pm}(0) = -1$ (see Lemma \ref{lemVgen}) 
we have $\partial_\xi \psi^A(x,\xi=0) = 0$, with $A=S$ or $R$, which suffices.

In the exceptional cases, let us write $\psi(x,\xi) = {\bf 1}_{+}(\xi)\psi^{>}(x,\xi) + {\bf 1}_{-}(\xi)\psi^{<}(x,\xi)$
where $\psi^{>}$ is given in \eqref{psi>} and $\psi^{<}$ in \eqref{psi<},
and similarly
let $\psi^A(x,\xi) = {\bf 1}_{+}(\xi)\psi^{A,>}(x,\xi) + {\bf 1}_{-}(\xi)\psi^{A,<}(x,\xi)$, with $A=S,R$,
according to the formulas in \eqref{psiSdec} and \eqref{psiR+-}.
Differentiating \eqref{muR01} we get the singular contribution
\begin{align}\label{dximuR01}
\mathfrak{m}_{R1,\delta}(\xi,\eta,\sigma) := \delta(\xi) \sum_{(A,B,C) \in X_R} 
  \int a(x) \, \big[ \overline{\psi^{A,>}(x,\xi)} - \overline{\psi^{A,<}(x,\xi)} 
  \big]\psi^B_{\iota_1}(x,\eta) \psi^C_{\iota_2}(x,\sigma) \, dx.
\end{align}
In the case $a:=f_+(-\infty,0)=1$, 
we have $T(0)=1$ and $R_\pm(0)=0$ (see Lemma \ref{LESTR0}) which shows,
looking at the formulas \eqref{psi>}-\eqref{psiR+-}, that \eqref{dximuR01} vanishes.


When instead $a=-1$, we have $T(0)=-1$ and $R_\pm(0)=0$, 
and we need to look at the bilinear operator associated to \eqref{dximuR01},
that is $T_{\mathfrak{m}_{R1,\delta}}[G]$ with $G$ as in \eqref{lemdxiQRpr2}.
Changing variables $(\eta,\sigma) \rightarrow (-\eta,-\sigma)$ 
leaves $G$ unchanged in view of Lemma \ref{lemevenodd}.
At the same time, using that $a(x)$ is even, and that $\psi^{A,>}(x,\xi) = \psi^{A,<}(-x,-\xi)$
(since $R_+=R_-$, and $m_+(x,\xi) = m_-(-x,\xi)$),
changing $x\rightarrow -x$ in \eqref{dximuR01} shows that
\begin{align*}
\mathfrak{m}_{R1,\delta}(\xi,-\eta,-\sigma) = \delta(\xi)\sum_{(A,B,C) \in X_R} 
  \int a(x) \, \big[ \overline{\psi^{A,>}(-x,0)} - \overline{\psi^{A,<}(-x,0)} \big]
  \psi^B_{\iota_1}(x,\eta) \psi^C_{\iota_2}(x,\sigma) \, dx
\\  = - \mathfrak{m}_{R1,\delta}(\xi,\eta,\sigma).
\end{align*}
This gives $T_{\mathfrak{m}_{1,\delta}}[G]=0$, as desired.

For $\mu^{R2}$ we start from formula \eqref{mu0R21} and, upon applying $\partial_\xi$, 
we need to look at the symbol containing a $\delta(\xi)$ contribution, that is,
\begin{align}\label{dximuR02}
\mathfrak{m}_{R2,\delta}(\xi,\eta,\sigma) =
  \sum_{\eps_1,\eps_2,\eps_3, \lambda,\mu,\nu} \what{\chi_{\eps_1\eps_2\eps_3}}(\lambda\xi -\mu\eta-\nu\s) 
  \big[ \partial_\xi \overline{\mathbf{a}_\lambda^{\epsilon_1}(\xi)} - \mathbf{b}_\lambda^{\epsilon_1}(\xi) 
  \big]\mathbf{a}_\mu^{\epsilon_2}(\eta)\mathbf{a}_\nu^{\epsilon_3}(\s),
\end{align}
where, recall, the sum is over triples of signs $(\eps_1,\eps_2,\eps_3) \neq (+,+,+), (-,-,-)$,
and we have $\chi_{\eps_1\eps_2\eps_3}(x) = a(x)\chi_{\eps_1}\chi_{\eps_2}\chi_{\eps_3}(x)$.
To show that this symbol gives a vanishing contribution, we first
recall the definition of the $\mathbf{a}_\lambda^{\epsilon}$ coefficients from \eqref{mucoeffexp},
and see that in the generic case we have
$\partial_\xi \mathbf{a}_\lambda^\epsilon(\xi) = \mathbf{b}_\lambda^\epsilon(\xi) - \epsilon \lambda \delta(\xi).$
Then, the right-hand side of \eqref{dximuR02} is 
\begin{align*}
-\delta(\xi)  \sum_{\eps_1,\eps_2,\eps_3, \lambda,\mu,\nu} \what{\chi_{\eps_1\eps_2\eps_3}}(-\mu\eta-\nu\s) 
  \epsilon_1\lambda\, \mathbf{a}_\mu^{\epsilon_2}(\eta)\mathbf{a}_\nu^{\epsilon_3}(\s)
\end{align*}
which vanishes upon summing over $\lambda = +$ and $-$

In the exceptional cases, using Lemma \ref{LES},
we have
\begin{align}\label{dxia}
\partial_\xi \mathbf{a}_\lambda^\epsilon(\xi) = \mathbf{b}_\lambda^\epsilon(\xi)
- \epsilon \delta(\xi) \cdot \left\{ \begin{array}{ll}
 1 - \dfrac{2a}{1+a^2} & \mbox{if $\lambda = +$\,,}
 \\
 \dfrac{a^2-1}{1+a^2} & \mbox{if $\lambda = -$\,.}
\end{array} \right.
\end{align}
Note that we can then use the formula \eqref{dxia} in all cases, with the convention that $a=0$ in the generic case.
When $a=1$ the vanishing of \eqref{dximuR02} is obvious since the coefficients of the $\delta$ in \eqref{dxia} vanish.
In the case $a=-1$, instead, only the $\lambda=+$ term remains in \eqref{dxia},
and we look at the bilinear operator 
associated to \eqref{dximuR02}, that is, $T_{\mathfrak{m}_{R2,\delta}}[G]$ with $G$ as in \eqref{lemdxiQRpr2}.
More precisely, we can see that changing signs to $(\eta,\sigma)$ 
in $T_{\mathfrak{m}_{R2,\delta}}[G]$ leaves $G$ invariant, while
\begin{align*}
\mathfrak{m}_{R2,\delta}(\xi,-\eta,-\sigma) & = 
	\delta(\xi) \sum_{\eps_1,\eps_2,\eps_3,\mu,\nu} \what{\chi_{\eps_1\eps_2\eps_3}}(\mu\eta+\nu\s) 
 	(-2\epsilon_1) \mathbf{a}_\mu^{\epsilon_2}(-\eta)\mathbf{a}_\nu^{\epsilon_3}(-\s)
	\\
	& = \delta(\xi) \sum_{\epsilon_1,\epsilon_2,\epsilon_3,\mu,\nu} \what{\chi_{\eps_1\eps_2\eps_3}}(-\mu\eta-\nu\s) 
 	(2\epsilon_1) \mathbf{a}_\mu^{\epsilon_2}(\eta)\mathbf{a}_\nu^{\epsilon_3}(\s)
	= - \mathfrak{m}_{R2,\delta}(\xi,\eta,\sigma),
\end{align*}
having used \eqref{lemwtf0a} and changed the signs of the $\epsilon$'s to get the second identity,
using also that $ \chi_{\eps_1\eps_2\eps_3}(-x) = \chi_{-(\eps_1\eps_2\eps_3)}(x)$,
since $\chi_+(-x)=\chi_-(x)$ and $a(x)$ is even.


\medskip
\noindent
{\it Proof of \eqref{lemdxiQRpr6}}.
We can use arguments similar to those used above for \eqref{lemdxiQRpr5}.
As before, it suffices to show that the contribution to \eqref{lemdxiQR2'} that contains the $\delta$ factor from \eqref{dxia} vanishes.
In the generic case (using \eqref{dxia} with $a=0$) this contribution is
\begin{align}\label{lemdxiQRpr7}
\mathfrak{m}_{2,\delta}(\xi,\eta,\sigma) & :=  \frac{1}{8\pi\jeta\jsig} 
  \sum_{\epsilon,\lambda,\mu,\nu}  \eps \, \big[- \epsilon \lambda \delta(\xi) \big]
  \mathbf{a}_{\mu,\iota_1}^\epsilon(\eta) \mathbf{a}_{\nu,\iota_2}^\epsilon(\sigma)
  \Big[ \big(1 - \varphi^\ast(p_0,\eta,\sigma) \big) \frac{\what{\phi}(p_0)}{i p_0} \Big],
\end{align}
with $p_0 := - \iota_1 \mu \eta - \iota_2 \nu \sigma$,
and vanishes upon summing over $\lambda = +,-$. 
For $a=1$ the vanishing is obvious since the coefficient of the $\delta$ in \eqref{dxia} vanish.
To see the cancellation in the case $a=-1$, similarly to what was done in the previous paragraph, 
we look at the bilinear operator with symbol
\begin{align*}
\mathfrak{m}_{2,\delta}(\xi,\eta,\sigma) & :=  \frac{1}{8\pi\jeta\jsig} 
  \sum_{\epsilon,\mu,\nu} \big[ -2 \delta(\xi) \big]
  \mathbf{a}_{\mu,\iota_1}^\epsilon( \eta) \mathbf{a}_{\nu,\iota_2}^\epsilon(\sigma)
  \Big[ \big(1 - \varphi^\ast(p_0,\eta,\sigma) \big) \frac{\what{\phi}(p_0)}{i p_0} \Big]. 
\end{align*}
Notice that the $\eps$ factor from \eqref{dxia} and the one present initially in \eqref{lemdxiQR2'} canceled out.
Using \eqref{lemwtf0a}, the fact that $\what{\phi}$ is even, 
recalling the definition of $\varphi^\ast$ (see \eqref{QZphistar}), 
and then changing the sign of $\epsilon$ in the sum, we have 
\begin{align*}
\mathfrak{m}_{2,\delta} (\xi,-\eta,-\sigma) & = \frac{1}{8\pi\jeta\jsig} 
  \sum_{\epsilon,\mu,\nu} \big[ -2 \delta(\xi) \big]
  \mathbf{a}_{\mu,\iota_1}^{-\epsilon}(\eta) \mathbf{a}_{\nu,\iota_2}^{-\epsilon}(\sigma)
  \Big[ \big(1 - \varphi^\ast(p_0,\eta,\sigma) \big) \frac{\what{\phi}(p_0)}{-i p_0} \Big]
\\
& = - \mathfrak{m}_{2,\delta} (\xi,\eta,\sigma) 
\end{align*}
which completes the proof.
\end{proof}

\medskip
\begin{lem}\label{lemdxiT}
Let $T=T_{\iota_1\iota_2}$ be the bilinear operator defined in \eqref{Tgg} 
with the definitions \eqref{QZ} and \eqref{mu12}.
Then, under the assumptions of Theorem \ref{maintheo} we have that
\begin{align}\label{lemdxiT0}
\begin{split}
\partial_\xi \wtF T_{\iota_1\iota_2}
  = it \frac{\xi}{\jxi} T_{\iota_1\iota_2}
  -\iota_1\iota_2 \iint e^{it \Phi_{\iota_1\iota_2}} 
  \mathfrak{t}'_{\iota_1\iota_2}(\xi,\eta,\sigma) \,
  \wt{g}_{\iota_1}(t,\eta) \wt{g}_{\iota_2}(t,\sigma) \, d\eta \, d\sigma
\end{split}
\end{align}
with
\begin{align}\nonumber
& \mathfrak{t}'_{\iota_1\iota_2} := \mathfrak{t}_1 + \mathfrak{t}_2,
\\
\label{lemdxiT1}
& \mathfrak{t}_1(\xi,\eta,\sigma):=  
  \sum_{\epsilon,\lambda,\mu,\nu} \frac{ \ell_{\epsilon \infty} }{8i\pi\jeta \jsig}
  \frac{\overline{\mathbf{b}_\lambda^\epsilon(\xi)} 
  \mathbf{a}_{\mu,\iota_1}^\epsilon(\eta) \mathbf{a}_{\nu,\iota_2}^\epsilon(\sigma)}{
  \Phi_{\iota_1\iota_2}(\xi,\eta,\sigma)} \left( \sqrt{\frac{\pi}{2}} \delta(p) 
  + \epsilon \varphi^\ast(p,\eta,\sigma) \, \pv \frac{\widehat{\phi}(p)}{ip} \right), 
  \\
\label{lemdxiT2}
& \mathfrak{t}_2(\xi,\eta,\sigma):=  
  \sum_{\epsilon,\lambda,\mu,\nu} \ell_{\epsilon \infty} 
  \frac{a^\epsilon_{\substack{- \iota_1 \iota_2 \\ \lambda\mu\nu}}(\xi,\eta,\sigma)}{8
  i\pi\jeta \jsig} 
  \partial_\xi \left[ \frac{1}{\Phi_{\iota_1\iota_2}(\xi,\eta,\sigma)}
  \left( \sqrt{\frac{\pi}{2}} \delta(p) + \epsilon \varphi^\ast(p,\eta,\sigma) \, \pv \frac{\widehat{\phi}(p)}{ip} \right)
  \right], 
\end{align}
where $\mathbf{b}_\lambda^\epsilon(\xi)$ is the function defined for $\xi \neq 0$ 
by $\mathbf{b}_\lambda^\epsilon(\xi) = \partial_\xi \mathbf{a}_\lambda^\epsilon(\xi)$ (see \eqref{mucoeffexp}).
\end{lem}

\begin{proof}
The proof follows along the same lines of the proof of Lemma \ref{lemdxiQR} above;
compare \eqref{lemdximu2}-\eqref{lemdximu3} with \eqref{lemdxiT1}-\eqref{lemdxiT2}.
Starting from the definition of the coefficient \eqref{QZ} we see that the only thing to prove
is that the $\delta(\xi)$ contribution that arises when differentiating the 
$\mathbf{a}_\lambda^\epsilon(\xi)$ factor in the numerator vanishes.
This can be shown exactly as in the proof of \eqref{lemdxiQRpr6};
see the formulas for the symbols \eqref{lemdxiQR2'} and \eqref{lemdximu2}.
In particular, if we let $\mathfrak{t}_{1,\delta}$ denote the $\delta(\xi)$ contribution,
that is, a symbol as in \eqref{lemdxiT1} with ${\bf b}_\lambda^\epsilon$ replaced by 
$\partial_\xi \mathbf{a}_\lambda^{\epsilon} - \mathbf{b}_\lambda^{\epsilon}$,
and look at the exceptional case with odd resonance, 
we can use $\ell_{+\infty}=-\ell_{-\infty}$ and \eqref{lemwtf0a} to see that
$\mathfrak{t}_{1,\delta}(\xi,-\eta,-\sigma) = -\mathfrak{t}_{1,\delta}(\xi,\eta,\sigma)$.
We can then conclude as before.
\end{proof}

\smallskip
\begin{rem}\label{remdxi}

Here are some remarks that we will often use in what follows:

\medskip
\noindent
(i) Lemmas \ref{lemdxiQR} and \ref{lemdxiT} show that the derivatives of the symbols
of the bilinear operators $\mathcal{Q}^R$ and $T$ are smooth up to
up to (possible) singularities along the axis $\xi,\eta$ or $\sigma =0$.
These latter can then be handled as in Remark \ref{Remkappas}.

\medskip
\noindent
(ii) Note that the statements of Lemmas \ref{lemdxiQR} and \ref{lemdxiT} remain 
valid when the operators are applied to any other two inputs in the generic case. 
In the exceptional cases they remain valid for inputs
with the same parity of $g$ (even/odd in the case of odd/even resonance), such as $(g,f)$ or $(g, T(g,g))$;
see Lemma \ref{lemwtf0}.
In the rest of our analysis it will always be the case that the parity of the inputs
is the proper one.

\medskip
\noindent
(iii)
Also notice that similar formulas to \eqref{lemdxiT0}-\eqref{lemdxiT2} hold 
for the cubic bulks \eqref{bulk1'} and \eqref{bulk2},
as it can be seen from \eqref{SecDecN22} and the fact that $\partial_s g$ has the same parity of $g$.

\end{rem}



\bigskip
\section{Multilinear estimates}\label{secmulti}
In this section we first examine general multilinear estimates
which will be useful in particular in Section \ref{secwL}, 
and then establish multilinear estimates for all the operators appearing in Subsection \ref{SsecReno}. 

\medskip
\subsection{Bilinear operators}
General bilinear operators can be written as
$$
B_{\mathfrak{a}}(f,g)(x) = \widehat{\mathcal{F}}^{-1}_{\xi \to x} \iint  {\mathfrak{a}}(\xi,\eta,\zeta) \widehat{f}(\eta) \widehat{g}(\zeta) \,d\eta\,d\zeta.
$$
As far as the present article is concerned, 
we are mostly interested in two classes of bilinear operators: 
those whose symbol $\mathfrak{a}$ contains a singular factor $\delta(\xi-\eta-\zeta)$, 
and those whose symbol contains a singular factor $\pv \frac{\widehat{\phi}(\xi-\eta-\zeta)}{\xi-\eta-\zeta}$;
for simplicity we will drop the $\pv$ sign in what follows.
We parameterize these operators as
\begin{equation}
\label{defCC}
\begin{split}
C_{\mathfrak{a}}(f,g)(x) & := \widehat{\mathcal{F}}^{-1}_{\xi \to x} \int \mathfrak{a}(\eta,\xi-\eta) \widehat{f}(\eta) \widehat{g}(\xi-\eta) \,d\eta \\
& = \frac{1}{\sqrt{2\pi}} \iint \mathfrak{a}(\eta,\zeta) \widehat{f}(\eta) \widehat{g}(\zeta)  e^{ix(\eta + \zeta)} \,d\eta \,d\zeta
\end{split}
\end{equation}
and
\begin{equation}
\label{defDD}
\begin{split}
D_{\mathfrak{b}}(f,g)(x) & := \widehat{\mathcal{F}}^{-1}_{\xi \to x} \iint \mathfrak{b}(\eta,\zeta,\xi-\eta-\zeta) \widehat{f}(\eta) \widehat{g}(\zeta)   \frac{\widehat{\phi}(\xi - \eta - \zeta)}{\xi - \eta - \zeta} \,d\eta \,d\zeta \\
& =  \frac{1}{\sqrt{2\pi}} \iiint \mathfrak{b}(\eta,\zeta,\theta) \widehat{f}(\eta) \widehat{g}(\zeta)  e^{ix(\eta + \zeta+\theta)} \frac{\widehat{\phi}(\theta)}{\theta} \,d\eta \,d\zeta \, d\theta.
\end{split}
\end{equation}
Notice that $C_{\mathfrak{a}}$ operators fall into the category of pseudo-products. 
As for $D_{\mathfrak{b}}$ operators, they are translation invariant to leading order, 
since their symbol is smooth outside of the set $\{ \theta = 0\}$. 

A short computation shows that one can express these in physical space as 
\begin{equation}
\label{quadraticphysical}
\begin{split}
& C_{\mathfrak{a}}(f,g)(x) = \frac{1}{\sqrt{2\pi}} \iint \widehat{\mathfrak{a}}(y-x,z-x) f(y) g(z)\,dy\,dz,
\\
& D_{\mathfrak{b}}(f,g)(x) = \frac{1}{\sqrt{2\pi}} \iint K(x,y,z) f(y) g(z)\,dy\,dz,
\\
& \qquad \mbox{with} \quad K(x,y,z) := \int \widehat{\mathfrak{b}}(y-x,z-x,w-x) Z(w) \,dw, 
  \quad Z := \widehat{\mathcal{F}}^{-1} \frac{\widehat{\phi}(\theta)}{\theta}.
\end{split}
\end{equation}

For $D_{\mathfrak{b}}$, this can be seen as follows 
(we omit the computation for $C_{\mathfrak{a}}$, which is more elementary):
\begin{align*}
D_{\mathfrak{b}}(f,g)(x) & = \frac{1}{\sqrt{2\pi}} \iiint \mathfrak{b}(\eta,\zeta,\theta) \widehat{f}(\eta) \widehat{g}(\zeta)  e^{ix(\eta + \zeta+\theta)} \frac{\widehat{\phi}(\theta)}{\theta} \,d\eta \,d\zeta \, d\theta \\
 & = \frac{1}{(2\pi)^{2}} \int \dots \int \mathfrak{b}(\eta,\zeta,\theta) f(y) g(z) Z(w) e^{i\eta(x-y)} e^{i\zeta(x-z)} e^{i\theta(x-w)} \,d\theta \, d\eta \, d\zeta \,dy\,dz\,dw \\
 & = \frac{1}{\sqrt{2\pi}} \iiint  \widehat{\mathfrak{b}}(y-x,z-x,w-x) Z(w) f(y) g(z)\,dw\,dy\,dz.
\end{align*}

\begin{lem}[boundedness for the $C_{\mathfrak{a}}$ operators]\label{lemmamultilinquadratic1}
If $1\leq p,q,r \leq \infty$ satisfy $\frac{1}{p} + \frac{1}{q}  = \frac{1}{r} $,
$$
\left\| C_{\mathfrak{a}} \right\|_{L^p \times L^q \to L^{r}} \lesssim \big\| \, \widehat{{\mathfrak{a}}} \, \big\|_{L^1}.
$$ 
\end{lem}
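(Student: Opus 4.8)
The statement is the standard fact that a bilinear Fourier multiplier whose symbol $\mathfrak a(\eta,\zeta)$ (here written via the convolution-type parametrization) has inverse Fourier transform in $L^1$ acts boundedly $L^p\times L^q\to L^r$ for H\"older-conjugate exponents. The plan is to reduce everything to the physical-space representation \eqref{quadraticphysical}, namely
\begin{equation*}
C_{\mathfrak a}(f,g)(x)=\frac{1}{\sqrt{2\pi}}\iint \widehat{\mathfrak a}(y-x,z-x)\,f(y)\,g(z)\,dy\,dz,
\end{equation*}
which is already recorded in the excerpt. From here the bound is purely a matter of Minkowski's and H\"older's inequalities: write the kernel variables as $u=y-x$, $v=z-x$, so that
\begin{equation*}
C_{\mathfrak a}(f,g)(x)=\frac{1}{\sqrt{2\pi}}\iint \widehat{\mathfrak a}(u,v)\,f(x+u)\,g(x+v)\,du\,dv,
\end{equation*}
and then estimate the $L^r_x$ norm by pulling the $L^r_x$ norm inside the $du\,dv$ integral via Minkowski's integral inequality.

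\textbf{Key steps.} First I would fix the normalization: since $\widehat{\mathcal F}$ is the unitary Fourier transform on the line, $\widehat{\mathfrak a}$ denotes the two-dimensional (inverse) Fourier transform of the symbol, and one checks that $C_{\mathfrak a}$ as defined in \eqref{defCC} indeed equals the displayed physical-space expression --- this is exactly the short computation already supplied for $D_{\mathfrak b}$, so I would simply cite it. Second, apply Minkowski's integral inequality:
\begin{equation*}
{\|C_{\mathfrak a}(f,g)\|}_{L^r_x}\lesssim \iint |\widehat{\mathfrak a}(u,v)|\,{\big\| f(\cdot+u)\,g(\cdot+v)\big\|}_{L^r_x}\,du\,dv.
\end{equation*}
Third, for each fixed $(u,v)$ apply H\"older's inequality with $\tfrac1p+\tfrac1q=\tfrac1r$ together with translation invariance of Lebesgue norms:
\begin{equation*}
{\big\| f(\cdot+u)\,g(\cdot+v)\big\|}_{L^r_x}\le {\|f(\cdot+u)\|}_{L^p}{\|g(\cdot+v)\|}_{L^q}={\|f\|}_{L^p}{\|g\|}_{L^q}.
\end{equation*}
Finally, pull the constant factor ${\|f\|}_{L^p}{\|g\|}_{L^q}$ out of the $du\,dv$ integral, leaving $\iint|\widehat{\mathfrak a}(u,v)|\,du\,dv={\|\widehat{\mathfrak a}\|}_{L^1}$; relabeling $\widehat{\mathfrak a}$ as $\widehat{\mathfrak m}$ (the excerpt's notation in the statement) gives the claimed inequality. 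One should also note the harmless density/measurability caveat: the estimate is first proved for Schwartz $f,g$ and then extended by density, which is legitimate since the right-hand side is finite.

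\textbf{Main obstacle.} Frankly there is no serious obstacle here; the only thing requiring a modicum of care is bookkeeping of the $2\pi$ normalization constants and making sure the physical-space kernel identity is invoked correctly (the paper's convention puts $C_{\mathfrak a}$ in terms of $\widehat{\mathfrak a}$, and the statement's ${\|\widehat{\mathfrak m}\|}_{L^1}$ matches this up to an absolute constant absorbed in the $\lesssim$). The proof is one paragraph of Minkowski--H\"older, and I would write it essentially verbatim as above, perhaps remarking that the same argument with $r=\infty$ covers the endpoint and that no cancellation or oscillation of the symbol is used --- only the $L^1$ size of its Fourier transform --- which is why this lemma is the ``trivial'' baseline against which the more delicate $D_{\mathfrak b}$ and singular estimates later in the section are measured.
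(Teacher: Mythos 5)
Your proof is correct and is precisely the standard Minkowski--H\"older argument; the paper does not supply its own proof but simply cites \cite[Lemma 5.2]{IoPu2}, which proceeds the same way. Your observation that the $\mathfrak m$ in the statement is just a typographical slip for $\mathfrak a$ is also right.
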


This is a standard result, see for example \cite[Lemma 5.2]{IoPu2}.


\begin{rem}[Bounds on symbols]\label{marsupilami} 
Given a symbol $\mathfrak{a}$ we will often bound its Fourier transform in $L^1$
using the following
criterion: 
if $\mathfrak{a}$ is supported on $(\eta,\zeta) \in [t_1-r_1,t_1+r_1] \times [t_2-r_2,t_2 + r_2 ]$ with
$$
\left| \partial_\eta^{k_1} \partial_\zeta^{k_2}  \mathfrak{a} \right| \lesssim r_1^{-k_1} r_2^{-k_2},
$$
then
$$
\left| \widehat {\mathfrak{a}}(x,y) \right| \lesssim \frac{r_1}{(1 + r_1 x)^N} \frac{r_2}{(1 + r_2 y)^N} , 
$$
so that in particular $\| \widehat {\mathfrak{a}} \|_{L^1} \lesssim 1$.
Indeed, the assumption on $\mathfrak{a}$ implies that 
$$
\left| \partial_\eta^{k_1} \partial_\zeta^{k_2} \mathfrak{a} (\eta,\zeta)\right| \lesssim r_1^{-k_1} r_2^{-k_2}  \chi\left( \frac{\eta - t_1}{r_1} \right)  \chi\left( \frac{\zeta - t_2}{r_2} \right),
$$
where $\chi$ is a cutoff function.  Taking the Fourier transform, and using that it maps $L^1$ to $L^\infty$, gives
$$
\left| x^{k_1} y^{k_2} \widehat {\mathfrak{a}}(x,y) \right| \lesssim r_1^{-k_1} r_2^{-k_2} r_1 r_2,
$$
which is the desired result.

The criterion mentioned above can be combined with a change of coordinates since, if $L$ is a 
non-degenerate linear transformation, then
$$
\| \widehat{ \mathfrak{a} \circ L } \|_{L^1} = \| \widehat{ \mathfrak{a} } \|_{L^1}.
$$
\end{rem}

\begin{lem}[boundedness for the $D_{\mathfrak{b}}$ operators]\label{lemmamultilinquadratic2}
Assume that there exists $F \in L^1$ such that
$$
\left| \int \widehat {\mathfrak{b}}(x,y,z) \,dz \right| \leq F(x,y).
$$
Then, if $1\leq p,q,r\leq \infty$ satisfy $\frac{1}{p} + \frac{1}{q} = \frac{1}{r} $, 
$$
\| D_{\mathfrak{b}} \|_{L^p \times L^q  \to L^r} \lesssim \| F \|_{L^1}.
$$
\end{lem}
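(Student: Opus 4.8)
The plan is to mimic the proof of the corresponding statement for $C_{\mathfrak{a}}$ (Lemma \ref{lemmamultilinquadratic1}), using the physical-space representation of $D_{\mathfrak{b}}$ derived in \eqref{quadraticphysical}. Recall that
$$
D_{\mathfrak{b}}(f,g)(x) = \frac{1}{\sqrt{2\pi}} \iint K(x,y,z) f(y) g(z) \,dy\,dz, \qquad K(x,y,z) = \int \widehat{\mathfrak{b}}(y-x,z-x,w-x) Z(w)\,dw,
$$
with $Z = \widehat{\mathcal{F}}^{-1}[\widehat{\phi}(\theta)/\theta]$. First I would perform the change of variables $w \mapsto w - x$ inside the $w$-integral, so that $K(x,y,z) = \int \widehat{\mathfrak{b}}(y-x,z-x,w) Z(w+x)\,dw$; this is not quite what we want, so instead I keep the original form and observe that the hypothesis controls precisely the quantity obtained by integrating out the third slot of $\widehat{\mathfrak{b}}$. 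The key point is that $Z$ is a \emph{bounded} function: since $\widehat{\phi}$ is smooth, even, compactly supported with $\widehat{\phi}(0) = 1/\sqrt{2\pi} \neq 0$ in general — but $\widehat{\phi}(\theta)/\theta$ is \emph{not} integrable near $0$ — one has $Z = \widehat{\mathcal{F}}^{-1}[\widehat{\phi}/\theta]$, which up to a constant is $\phi \ast \operatorname{sign}$, hence bounded (indeed $\|Z\|_{L^\infty} \lesssim 1$) because $\operatorname{sign} \in L^\infty$ and $\phi \in L^1$. This is the only structural fact about the singular factor we need.

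With $\|Z\|_{L^\infty}\lesssim 1$ in hand, I would bound the kernel pointwise:
$$
|K(x,y,z)| \leq \|Z\|_{L^\infty} \int |\widehat{\mathfrak{b}}(y-x,z-x,w-x)|\,dw = \|Z\|_{L^\infty} \int |\widehat{\mathfrak{b}}(y-x,z-x,w)|\,dw \lesssim F(y-x,z-x),
$$
using the hypothesis $\big|\int \widehat{\mathfrak{b}}(x,y,z)\,dz\big| \le F(x,y)$ — here one should be slightly careful: the hypothesis as stated bounds the integral of $\widehat{\mathfrak{b}}$, whereas we need the integral of $|\widehat{\mathfrak{b}}|$; in practice the symbols arising in this paper satisfy the stronger statement with $|\widehat{\mathfrak{b}}|$, or one simply states the lemma with that reading, as the criterion in Remark \ref{marsupilami} produces Schwartz-type bounds on $\widehat{\mathfrak{b}}$ in each variable. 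Granting this, $|K(x,y,z)| \lesssim F(y-x, z-x)$, so
$$
|D_{\mathfrak{b}}(f,g)(x)| \lesssim \iint F(y-x,z-x) |f(y)|\,|g(z)|\,dy\,dz = \big(F(-\cdot,-\cdot) \ast_{2} (|f|\otimes|g|)\big)\text{ evaluated on the diagonal},
$$
more precisely $|D_{\mathfrak{b}}(f,g)(x)| \lesssim \int \big(\int F(-a,-b)|f(x+a)|\,da\big)\cdots$ — the cleanest way is to write it as an iterated convolution and apply Young's inequality, or equivalently to invoke exactly the same Minkowski/Young argument used for $C_{\mathfrak{a}}$ with $\widehat{\mathfrak{a}}$ replaced by $F$.

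Concretely, for the $L^p \times L^q \to L^r$ bound with $\tfrac1p + \tfrac1q = \tfrac1r$, I would set $h(x) := \int F(y-x,z-x)|f(y)||g(z)|\,dy\,dz$ and estimate $\|h\|_{L^r}$ by duality: pair against $\psi \in L^{r'}$, apply Fubini and Hölder in $(x)$ after changing variables $y = x+a$, $z = x+b$, landing on $\iint |F(-a,-b)| \, \|f(\cdot+a)\|_{L^p}\|g(\cdot+b)\|_{L^q}\|\psi\|_{L^{r'}}\,da\,db = \|F\|_{L^1}\|f\|_{L^p}\|g\|_{L^q}\|\psi\|_{L^{r'}}$, using translation invariance of Lebesgue norms. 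Taking the supremum over $\|\psi\|_{L^{r'}} \le 1$ gives $\|D_{\mathfrak{b}}(f,g)\|_{L^r} \lesssim \|F\|_{L^1}\|f\|_{L^p}\|g\|_{L^q}$. I do not expect any serious obstacle here: the proof is a routine kernel estimate once the boundedness of $Z$ is noted, and that boundedness is elementary. The only mild subtlety worth flagging in the write-up is the $\int|\widehat{\mathfrak{b}}|$ versus $|\int\widehat{\mathfrak{b}}|$ distinction mentioned above, which I would resolve by simply stating the hypothesis in the form actually used, consistent with how $F$ is produced via Remark \ref{marsupilami}.
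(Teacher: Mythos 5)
Your proof is correct and follows the same route as the paper: use the physical-space representation \eqref{quadraticphysical}, observe that $Z = \widehat{\mathcal F}^{-1}\bigl[\widehat\phi(\theta)/\theta\bigr]$ is (a constant multiple of) $\phi\ast\operatorname{sign}$ and hence in $L^\infty$, pull this factor out of the kernel, and then reduce to the Young/Minkowski argument underlying Lemma~\ref{lemmamultilinquadratic1}. The subtlety you flag — that this argument really needs $\int|\widehat{\mathfrak b}(x,y,z)|\,dz \le F(x,y)$ rather than the stated $\bigl|\int\widehat{\mathfrak b}\,dz\bigr|\le F$ — is a legitimate observation about the phrasing; the stronger bound is what Remark~\ref{marsupilami2} actually produces and is what the paper implicitly uses, so you resolve it exactly as intended.
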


\begin{proof} 
Using the physical space representation \eqref{quadraticphysical}, the proof reduces to that of Lemma
\ref{lemmamultilinquadratic1} after noticing that $Z \in L^\infty$.
\end{proof}

\begin{rem}\label{marsupilami2}
In order for the condition of Lemma \ref{lemmamultilinquadratic2} to be satisfied, 
it suffices that $\mathfrak{b}$ be supported on $(\eta,\zeta,\theta) \in [t_1-r_1,t_1+r_1] 
\times [t_2-r_2,t_2 + r_2 ] \times [t_3-r_3,t_3 + r_3 ]$ with
$$
\left| \partial_\eta^{k_1} \partial_\zeta^{k_2}  \partial_\theta^{k_3} \mathfrak{b} \right| 
  \lesssim r_1^{-k_1} r_2^{-k_2} r_3^{-k_3}.
$$
Indeed, this implies that
$$
\left| \widehat {\mathfrak{b}}(x,y,z) \right| \lesssim \frac{r_1}{(1 + r_1 x)^N} \frac{r_2}{(1 + r_2 y)^N} \frac{r_1}{(1 + r_3 z)^N}.
$$
This observation can be combined with a change of coordinates: it actually suffices that, for a non-degenerate linear transformation $L$,
$$
\left| \partial_\eta^{k_1} \partial_\zeta^{k_2}  \partial_\theta^{k_3} \mathfrak{b}(L(\eta,\zeta),\theta) \right| \lesssim r_1^{-k_1} r_2^{-k_2} r_3^{-k_3}.
$$
\end{rem}

\medskip
\subsection{Trilinear operators}
General trilinear operators can be written as
\begin{equation}
\label{defgeneraltrilin}
T_{\mathfrak{m}}(f,g,h)(x) = \widehat{\mathcal{F}}^{-1}_{\xi \to x} \iint  {\mathfrak{m}}(\xi,\eta,\zeta,\theta) \widehat{f}(\eta) \widehat{g}(\zeta) \widehat{h}(\theta) \,d\eta\,d\zeta \,d\theta.
\end{equation}
Two classes of trilinear operators of particular relevance in the present article are given by
\begin{equation}
\begin{split}
\label{defUUVV}
& U_{\mathfrak{m}}(f,g,h)(x) = \widehat{\mathcal{F}}^{-1}_{\xi \to x} 
  \iint {\mathfrak{m}}(\xi,\eta,\zeta) \widehat{f}(\xi-\eta) \widehat{g}(\xi-\eta-\zeta) 
  \widehat{h}(\xi-\zeta) \,d\eta\,d\zeta, 
  \\
& V_{\mathfrak{n}}(f,g,h)(x) = \widehat{\mathcal{F}}^{-1}_{\xi \to x} 
  \iiint {\mathfrak{n}}(\xi,\eta,\zeta,\theta) \widehat{f}(\xi-\eta) \widehat{g}(\xi-\eta-\zeta-\theta) 
  \widehat{h}(\xi-\zeta) \frac{\widehat{\phi}(\theta)}{\theta} \, d\eta \,d\zeta \,d\theta.
\end{split}
\end{equation}
Of course, other parameterizations of $U_{\mathfrak{m}}$ and $V_{\mathfrak{n}}$ would be possible; 
but the parameterization above will
be particularly relevant since it is the one adopted in Section \ref{secwL}. 

In physical space, these are given by
\begin{equation}
\label{cubicphysical}
\begin{split}
& U_{\mathfrak{m}}(f,g,h)(w) = \frac{1}{\sqrt{2\pi}} 
\int \widehat {\mathfrak{m}}(-w+x+y+z,-x-y,-y-z) f(x) g(y) h(z)\,dx\,dy\,dz,
\\
& V_{\mathfrak{n}}(f,g,h)(w) = \iiint K(w,x,y,z) f(x) g(y) h(z) \,dx \,dy \, dz,
\\
& \quad \mbox{with} \quad
K(w,x,y,z) := \frac{1}{\sqrt{2\pi}} \int \widehat{\mathfrak{n}}(-w+x+y+z,-x-y,-y-z,y'-y) Z(y')\,dy', 
\end{split}
\end{equation}
with $Z$ as in \eqref{quadraticphysical}.

We have the following standard trilinear analogue of Lemma \ref{lemmamultilinquadratic1}.

\begin{lem}[boundedness for the $U_{\mathfrak{m}}$ operators]\label{lemmamultilin1}
If $1\leq p,q,r,s \leq \infty$ satisfy $\frac{1}{p} + \frac{1}{q} + \frac{1}{r} = \frac{1}{s} $,
$$
{\left\| U_{\mathfrak{m}} \right\|}_{L^p \times L^q  \times L^r \to L^{s}} \lesssim \| \widehat{{\mathfrak{m}}} \|_{L^1}.
$$ 
\end{lem}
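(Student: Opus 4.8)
The plan is to reduce the trilinear $L^p \times L^q \times L^r \to L^s$ bound for $U_{\mathfrak{m}}$ to H\"older's inequality by passing to the physical-space representation of the operator, exactly as in the bilinear case of Lemma \ref{lemmamultilinquadratic1}. First I would invoke the physical-space formula \eqref{cubicphysical} for $U_{\mathfrak{m}}$, namely
\begin{align*}
U_{\mathfrak{m}}(f,g,h)(w) = \frac{1}{\sqrt{2\pi}} \iiint \widehat{\mathfrak{m}}(-w+x+y+z,-x-y,-y-z) \, f(x) g(y) h(z) \, dx\,dy\,dz,
\end{align*}
which follows from \eqref{defUUVV} by writing each $\widehat{f},\widehat{g},\widehat{h}$ as an inverse Fourier transform, carrying out the integrals in the frequency variables $\eta,\zeta$ and in $\xi$, and recognizing the resulting kernel as a Fourier transform of $\mathfrak{m}$ evaluated at shifted arguments (this is the cubic analogue of the computation displayed for $D_{\mathfrak{b}}$ right after \eqref{quadraticphysical}). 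This step is routine and I would not grind through it.

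Next, set $K(w,x,y,z) := \widehat{\mathfrak{m}}(-w+x+y+z,-x-y,-y-z)$. The key structural observation is that $K$ depends only on the three linear combinations $w-x-y-z$, $x+y$, $y+z$; writing $a_1 = w-x-y-z$, $a_2 = x+y$, $a_3 = y+z$, these are independent linear coordinates (as functions of, say, $x,y,z$ for fixed $w$, or of $w,x,z$ for fixed $y$, the change of variables has Jacobian $\pm1$). Hence for each fixed value of two of the four variables $w,x,y,z$, the function $K$, viewed as a function of the remaining two, is a shifted copy of $\widehat{\mathfrak{m}}$ composed with a unimodular linear map, so its $L^1$ norm in those two variables is bounded by $\|\widehat{\mathfrak{m}}\|_{L^1}$ uniformly in the frozen variables.

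Then I would estimate directly: by the triangle inequality in the integral and H\"older's inequality, one bounds $\|U_{\mathfrak{m}}(f,g,h)\|_{L^s_w}$ by interpolating/iterating Young- and H\"older-type inequalities against the kernel. Concretely, one writes $|U_{\mathfrak{m}}(f,g,h)(w)| \lesssim \iiint |K(w,x,y,z)|\,|f(x)|\,|g(y)|\,|h(z)|\,dx\,dy\,dz$ and applies the mixed-norm estimate: integrating against $|f(x)|$ in $x$, then $|g(y)|$ in $y$, then $|h(z)|$ in $z$, using at each stage that the relevant one-dimensional slice of $|K|$ lies in $L^1$ with norm $\lesssim \|\widehat{\mathfrak{m}}\|_{L^1}$, combined with the exponent relation $\frac1p+\frac1q+\frac1r = \frac1s$. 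This is the standard Young/H\"older argument for pseudo-products and the cleanest way is to cite \cite[Lemma 5.2]{IoPu2} or reduce to Lemma \ref{lemmamultilinquadratic1} by viewing $U_{\mathfrak{m}}$ as an iterated composition of bilinear pseudo-products. I do not expect any genuine obstacle here; the only mild care needed is bookkeeping the change of variables so that the Jacobians are indeed $\pm1$ and the frozen-variable uniformity is clear, which is precisely the content of Remark \ref{marsupilami} adapted to three variables. Thus the proof is essentially one line: "same as Lemma \ref{lemmamultilinquadratic1}, using \eqref{cubicphysical}," and the main (minor) point to get right is the trilinear physical-space kernel identity.
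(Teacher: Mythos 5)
Your overall plan --- pass to the physical-space representation \eqref{cubicphysical} and reduce to a Young/H\"older bound --- is the right one, and indeed all the paper offers for this lemma is the remark that it is the standard trilinear analogue of Lemma~\ref{lemmamultilinquadratic1}. The gap is in the step where you actually produce the estimate. You assert that with \emph{two} of $w,x,y,z$ frozen, the $L^1$ norm of $K(w,x,y,z)=\widehat{\mathfrak{m}}(-w+x+y+z,-x-y,-y-z)$ in the remaining two is $\lesssim\|\widehat{\mathfrak{m}}\|_{L^1}$, and likewise that ``the relevant one-dimensional slice of $|K|$ lies in $L^1$.'' Neither is true: $\widehat{\mathfrak{m}}$ is a function of three variables, and a generic $L^1(\R^3)$ function does not have $L^1$ two-dimensional or one-dimensional slices, let alone uniformly in the frozen parameters. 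The correct unimodularity statement is that with \emph{one} of the four variables frozen, the affine change of variables from the remaining three to $(a_1,a_2,a_3):=(-w+x+y+z,-x-y,-y-z)$ has Jacobian $\pm1$, so that $\int|K|$ over those three variables equals $\|\widehat{\mathfrak{m}}\|_{L^1(\R^3)}$.

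With this corrected, the estimate should be organized by Minkowski rather than by iterated slicing. In the variables $a=(a_1,a_2,a_3)$ one solves $x=w+a_1+a_3$, $y=-w-a_1-a_2-a_3$, $z=w+a_1+a_2$, so for fixed $a$ each of $x,y,z$ is $\pm w$ plus a constant. Hence
\begin{align*}
\|U_{\mathfrak{m}}(f,g,h)\|_{L^s_w}
&\lesssim \iiint |\widehat{\mathfrak{m}}(a)|\,\big\|f(x(a,\cdot))\,g(y(a,\cdot))\,h(z(a,\cdot))\big\|_{L^s_w}\,da\\
&\leq \iiint |\widehat{\mathfrak{m}}(a)|\,\|f\|_{L^p}\|g\|_{L^q}\|h\|_{L^r}\,da
= \|\widehat{\mathfrak{m}}\|_{L^1}\,\|f\|_{L^p}\|g\|_{L^q}\|h\|_{L^r},
\end{align*}
using Minkowski's integral inequality (this is where $s\geq1$ enters), then H\"older in $w$ with $\tfrac1p+\tfrac1q+\tfrac1r=\tfrac1s$ together with translation and reflection invariance of Lebesgue norms. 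This is exactly the bilinear argument of Lemma~\ref{lemmamultilinquadratic1} carried over, and it replaces the false slicing claims; the rest of your proposal (the derivation of \eqref{cubicphysical}, the reduction to \cite[Lemma~5.2]{IoPu2}) is fine.
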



\begin{rem}\label{multilinrem}
Given a symbol $\mathfrak{m}$, to check in practice that its Fourier transform is in $L^1$
we will use the following principles:

\begin{itemize}
\item If $\mathfrak{m}$ is supported on $(\xi,\eta,\zeta) \in [t_1-r_1,t_1+r_1] \times [t_2-r_2,t_2 + r_2 ] \times [t_3 - r_3 , t_3 + r_3]$ with
$$
\left| \partial_\xi^{k_1} \partial_\eta^{k_2} \partial_\zeta^{k_3} \mathfrak{m} \right| \lesssim r_1^{-k_1} r_2^{-k_2} r_3^{-k_3},
$$
then
$$
\left| \widehat {\mathfrak{m}}(x,y,z) \right| \lesssim \frac{r_1}{(1 + r_1 x)^N} \frac{r_2}{(1 + r_2 x)^N} \frac{r_3}{(1 + r_3 x)^N}, 
$$
so that in particular $\| \widehat {\mathfrak{m}} \|_{L^1} \lesssim 1$.

\item By the algebra property of the space $\mathcal{F} L^1$ (Wiener algebra), there holds
$$
\left\| \mathcal{F} (\mathfrak{m} \mathfrak{n}) \right\|_{L^1} 
  \lesssim \| \widehat {\mathfrak{m}} \|_{L^1} \| \widehat {\mathfrak{n}} \|_{L^1}.
$$

\item The previous point can be generalized to the case where $\mathfrak{n}$ is $L^1$ in a single direction, 
  and constant in the others. For instance, for any $a,b,c$ such that $|a| + |b| + |c| \sim 1$,
$$
\left\| \widehat{\mathcal{F}} [ \mathfrak{m}(\xi,\eta,\zeta) \varphi_{j}(a\xi + b\eta + c\zeta) ] \right\|_{L^1} 
  \lesssim \| \widehat {\mathfrak{m}} \|_{L^1}.
$$
This remains true if $\varphi_j$ is replaced by $\varphi_{<j}$ or $\varphi_{>j}$.
Indeed, for any linear transformation $L$ of $\mathbb{R}^3$ of determinant one, 
$\| \widehat{\mathfrak{m}} \|_{L^1} = \| \widehat{\mathfrak{m} \circ L} \|_{L^1}$. 
Therefore, it suffices to examine the case $a=1$ and $b=c=0$, which
immediately reduces to the fact that $L^1$ is an algebra for convolution.
\end{itemize}

\end{rem}

\begin{lem}[boundedness for the $V_{\mathfrak{n}}$ operators]\label{lemmamultilin2}
Assume that there exists $F \in L^1$ such that
$$
\left| \int \what{\mathfrak{n}}(x,y,z,t) \,dt \right| \leq F(x,y,z).
$$
Then if $1\leq p,q,r,s \leq \infty$ satisfy $\frac{1}{p} + \frac{1}{q} + \frac{1}{r} = \frac{1}{s} $, 
$$
\| V_{\mathfrak{n}} \|_{L^p \times L^q  \times L^r \to L^{s}} \lesssim \| F \|_{L^1}.
$$
\end{lem}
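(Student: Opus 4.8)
The plan is to reduce the estimate for $V_{\mathfrak{n}}$ to the one already established for $U_{\mathfrak{m}}$ in Lemma \ref{lemmamultilin1}, exactly as the proof of Lemma \ref{lemmamultilinquadratic2} reduces $D_{\mathfrak{b}}$ to $C_{\mathfrak{a}}$. The key observation is the physical-space representation \eqref{cubicphysical}: we have
$$
V_{\mathfrak{n}}(f,g,h)(w) = \iiint K(w,x,y,z) f(x) g(y) h(z) \,dx\,dy\,dz,
$$
with
$$
K(w,x,y,z) = \frac{1}{\sqrt{2\pi}} \int \widehat{\mathfrak{n}}(-w+x+y+z,-x-y,-y-z,y'-y) Z(y')\,dy',
$$
where $Z = \widehat{\mathcal{F}}^{-1}\big(\widehat\phi(\theta)/\theta\big)$. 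The only property of $Z$ we need is that $Z \in L^\infty$; indeed, $\widehat\phi(\theta)/\theta = \tfrac{i}{2}\widehat{\mathcal{F}}[\phi * \operatorname{sign}]$ as noted in the proof of Lemma \ref{birdofparadise2}, and $\phi * \operatorname{sign}$ is a bounded function, so $\|Z\|_{L^\infty} \lesssim 1$.

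First I would bound $|K(w,x,y,z)|$ pointwise. Changing variables $y' \mapsto t = y'-y$ inside the integral defining $K$ and using $\|Z\|_{L^\infty}\lesssim 1$,
$$
|K(w,x,y,z)| \lesssim \int \big| \widehat{\mathfrak{n}}(-w+x+y+z,-x-y,-y-z,t) \big| \, dt \le F(-w+x+y+z,-x-y,-y-z),
$$
by the hypothesis of the lemma. Thus $|K|$ is controlled by $F$ composed with exactly the same linear map that appears in the kernel of the $U_{\mathfrak{m}}$ operator in \eqref{cubicphysical}, namely $(w,x,y,z)\mapsto(-w+x+y+z,-x-y,-y-z)$. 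Then I would apply the elementary multilinear Schur-type estimate that underlies Lemma \ref{lemmamultilin1}: if a trilinear kernel $K(w,x,y,z)$ satisfies $|K(w,x,y,z)| \le G(-w+x+y+z,-x-y,-y-z)$ with $G \in L^1(\mathbb{R}^3)$, then the associated operator maps $L^p\times L^q\times L^r\to L^s$ with norm $\lesssim \|G\|_{L^1}$ whenever $\frac1p+\frac1q+\frac1r=\frac1s$. Taking $G=F$ gives $\|V_{\mathfrak{n}}\|_{L^p\times L^q\times L^r\to L^s}\lesssim \|F\|_{L^1}$, which is the claim.

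The one routine point to spell out is the multilinear kernel estimate itself. Setting $a = -w+x+y+z$, $b=-x-y$, $c=-y-z$, one can solve for $(x,y,z)$ in terms of $(w,a,b,c)$ (the change of variables $(x,y,z)\mapsto(a,b,c)$ for fixed $w$ has determinant $\pm1$), write $f(x)g(y)h(z)$ in these coordinates, and estimate
$$
\|V_{\mathfrak{n}}(f,g,h)\|_{L^s} \le \Big\| \iiint F(a,b,c)\, |f(x(w,a,b,c))|\,|g(y(\cdots))|\,|h(z(\cdots))|\, da\,db\,dc \Big\|_{L^s_w},
$$
and then apply Minkowski's integral inequality in $L^s_w$ followed by Hölder in $w$ for each fixed $(a,b,c)$, using that each translation-type map $w\mapsto x(w,a,b,c)$ etc. is measure-preserving; this produces $\int F(a,b,c)\,\|f\|_{L^p}\|g\|_{L^q}\|h\|_{L^r}\,da\,db\,dc = \|F\|_{L^1}\|f\|_{L^p}\|g\|_{L^q}\|h\|_{L^r}$. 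This is entirely standard (cf. \cite[Lemma 5.2]{IoPu2} and Lemma \ref{lemmamultilin1}); I do not expect any genuine obstacle here. If one prefers to avoid redoing this, an alternative is to note directly that $K$ is, up to the harmless bounded factor $Z$, the kernel of a $U_{\mathfrak{m}}$-type operator with symbol whose Fourier transform is dominated by $F$, and quote Lemma \ref{lemmamultilin1} as a black box. In either presentation the proof is short; the content is the pointwise domination $|K| \lesssim F \circ (\text{linear map})$ together with $Z \in L^\infty$.
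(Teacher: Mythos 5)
Your proof is correct and follows the same route as the paper's one-line argument: use the physical-space representation \eqref{cubicphysical}, bound the kernel by $F$ composed with the appropriate linear map using $Z\in L^\infty$, and reduce to Lemma~\ref{lemmamultilin1}. The paper states this reduction in a single sentence; you have simply spelled out the details, which is fine.
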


\begin{proof} 
 Since $Z \in L^\infty$, the proof reduces to that of Lemma \ref{lemmamultilin1}.
\end{proof}

\begin{rem}\label{multilinrem2}
Given a symbol $\mathfrak{n}$, to check in practice that it satisfies the condition of Lemma
\ref{lemmamultilin2} we will mostly rely on the following principles:

\begin{itemize}

\item It suffices that
\begin{equation} \label{condition*}
| \widehat{\mathfrak{n}}(x,y,z,t) | \lesssim F(x,y,z) G(t-L(x,y,z)),
\end{equation}
where $L$ is a linear function, and $F,G$ are rapidly decaying functions, with $L^1$ norm equal to $1$.

\item If the condition \eqref{condition*} holds for $\mathfrak{n}(\xi,\eta,\zeta,\theta)$,
it also does for $\mathfrak{n}(\xi,\eta,\zeta,\theta) \varphi_{j}(a\xi + b\eta + c\zeta + d \theta)$ 
(for a non-degenerate choice of $a,b,c,d$). The same holds if  $\varphi_j$ is replaced by $\varphi_{<j}$ or $\varphi_{>j}$.

\item Finally, if $\mathfrak{n}$ is supported on 
$(\xi,\eta,\zeta,\theta) \in [-r_1,r_1] \times [-r_2,r_2 ] \times [r_3 \times r_3] \times [-r_4,r_4]$ 
with
$$
\left| \partial_\xi^{k_1} \partial_\eta^{k_2} \partial_\zeta^{k_3} \partial_\theta^{k_4} \mathfrak{n} \right| \lesssim r_1^{-k_1} r_2^{-k_2} r_3^{-k_3} r_4^{-k_4},
$$
then the condition~\eqref{condition*} is satisfied.
\end{itemize}
\end{rem}

\medskip
\subsection{The normal form operator $T$}\label{ssecT} 
Recall the definition of $T^{\pm}_{\iota_1 \iota_2}$ in~\eqref{RenoT}.
Before bounding the full operator, we focus on an operator ($B_{\mathfrak{m}_{\iota_1 \iota_2}}^{\pm}$ below), 
which shares the same symbol as $T^{\pm}_{\iota_1 \iota_2}$, 
but where the phase $e^{it\Phi_{\iota_1 \iota_2}}$ is replaced by $1$, 
and the distorted Fourier transform by the flat Fourier transform.

\begin{lem}
\label{lemTbound}
Let $\mathfrak{m}_{\iota_1 \iota_2}^\pm(\xi,\eta,\zeta)$ be the symbol
defined in \eqref{RenoT}. Then, for any $\iota_1,\iota_2 \in \{+,-\}$, the bilinear operator
\begin{align}\label{lembountTop}
B_{\mathfrak{m}_{\iota_1 \iota_2}^\pm}: 
\quad (f,g) \mapsto \widehat{\mathcal{F}}^{-1} \iint \widehat{f}(\eta) \widehat{g}(\zeta) 
  \mathfrak{m}_{\iota_1 \iota_2}^\pm(\xi,\eta,\zeta) \,d\eta \,d\zeta
\end{align}
is bounded from $L^p \times L^q$ to $L^r$, 
where $\frac{1}{p} + \frac{1}{q} = \frac{1}{r}$, and $1 < p,q,r < \infty$,
and almost gains a derivative:
\begin{align}\label{lemTboundmain}
\begin{split}
{\| B_{\mathfrak{m}_{\iota_1 \iota_2}^\pm}(f,g) \|}_{L^r} 
   \lesssim \min\big( {\| \jnab^{-1+}f \|}_{L^p} {\| g \big\|}_{L^q},
   {\| f \|}_{L^p} {\| \jnab^{-1+} g \|}_{L^q} \big) .
\end{split}
\end{align}
Here we are using the notation `$-1+$' from the end of \S\ref{secNotation} 
to denote any number that is strictly larger than $-1$.
\end{lem}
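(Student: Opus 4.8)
The plan is to reduce \eqref{lemTboundmain} to an application of Lemma \ref{lemmamultilinquadratic1} (for the $\delta$-part of the symbol) and Lemma \ref{lemmamultilinquadratic2} (for the principal-value part), after a dyadic localization in frequency that exhibits the almost-gain of one derivative. Recall from \eqref{RenoT} and \eqref{QZ} that, up to harmless smooth coefficients $\mathbf{a}^{\epsilon}_{\lambda}$ in each of the three variables (which by Lemma \ref{remarkcancellation} and the vanishing of the relevant inputs at zero frequency may be treated as smooth, bounded multipliers — these are $\mathcal{F}L^1$ multipliers by \eqref{TRk} and the boundedness of the Hilbert transform, exactly as in the proof of Lemma \ref{lemmuR}), the symbol $\mathfrak{m}^{\pm}_{\iota_1\iota_2}$ is a linear combination of terms of the form
\begin{align*}
\frac{1}{\jeta\jsig}\,\frac{1}{\Phi_{\iota_1\iota_2}(\xi,\eta,\sigma)}\,\Big[\delta(p)\ \ \text{or}\ \ \varphi^{\ast}(p,\eta,\sigma)\,\frac{\widehat\phi(p)}{ip}\Big],\qquad p=\lambda\xi-\iota_1\mu\eta-\iota_2\nu\sigma.
\end{align*}
On the support of either bracket we have $|p|\lesssim R(\eta,\sigma)^{-1}$, so by \eqref{Dsufficientlylarge} the factor $1/\Phi_{\iota_1\iota_2}$ obeys $|1/\Phi_{\iota_1\iota_2}|\lesssim\min(\jxi,\jeta,\jsig)$, together with the derivative bounds \eqref{californiacondor1}. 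This is exactly the source of the almost-gain: the kernel carries a factor $\min(\jeta,\jsig)$ against the $\jeta^{-1}\jsig^{-1}$ weight, which nets one power of $\max(\jeta,\jsig)^{-1}$.

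First I would perform a Littlewood-Paley decomposition, writing $f=\sum_{k_1}P_{k_1}f$, $g=\sum_{k_2}P_{k_2}g$, and correspondingly localizing the output at frequency $2^{k}$; on the $\delta$-piece one has $k\le\max(k_1,k_2)+O(1)$, while on the $\pv$-piece $k$ is essentially free but the factor $\widehat\phi(p)$ forces $|p|\lesssim 1$, hence again $2^{k}\lesssim 2^{\max(k_1,k_2)}+O(1)$. In each dyadic block one rescales the three frequency variables to unit size; by \eqref{californiacondor1} the rescaled symbol (after pulling out the scalar gain $2^{-\max(k_1,k_2)+\min(k_1,k_2)}$ coming from $\min(\jeta,\jsig)/(\jeta\jsig)$) satisfies the hypotheses of Remark \ref{marsupilami}, respectively Remark \ref{marsupilami2}, so its (partial) Fourier transform has $L^1$ norm $\lesssim 1$ uniformly in the dyadic parameters. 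Then Lemma \ref{lemmamultilinquadratic1}, resp. Lemma \ref{lemmamultilinquadratic2}, gives
\begin{align*}
{\big\| P_k B_{\mathfrak{m}^{\pm}_{\iota_1\iota_2}}(P_{k_1}f,P_{k_2}g)\big\|}_{L^r}\lesssim 2^{-\max(k_1,k_2)}\,2^{\min(k_1,k_2)}\,{\| P_{k_1}f\|}_{L^p}{\| P_{k_2}g\|}_{L^q},
\end{align*}
and by symmetry of $p$ versus $\eta,\sigma$ one may place the derivative gain on either input, i.e.\ replace $2^{-\max(k_1,k_2)}2^{\min(k_1,k_2)}$ by $2^{-\max(k_1,k_2)}2^{\min(k_1,k_2)}\le 2^{-k_1}$ or $\le 2^{-k_2}$ as desired. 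Summing this geometric series in $k,k_1,k_2$ — absorbing the $2^{0+}$ loss needed to make the sum over the larger of $k_1,k_2$ converge into the $\jnab^{-1+}$ — yields \eqref{lemTboundmain}. For low frequencies ($k_1$ or $k_2$ negative) there is no issue since the symbol is smooth and bounded there and the summation is trivially convergent; the restriction $1<p,q,r<\infty$ is used only through the boundedness of the Littlewood-Paley square function / the $\mathcal{F}L^1$ multiplier bounds for the coefficient functions, and the endpoints are genuinely excluded because of the Hilbert-transform-type coefficients.

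\textbf{Main obstacle.} The delicate point is not the dyadic summation but verifying uniformly in the dyadic scales that the rescaled symbols — in particular the $\pv$ piece with its cutoff $\varphi^{\ast}(p,\eta,\sigma)=\varphi_{\le -D}(pR(\eta,\sigma))$, whose localization scale in $p$ is itself $\eta,\sigma$-dependent — fall under the hypotheses of Remarks \ref{marsupilami}/\ref{marsupilami2} after a single linear change of variables; this is where the derivative estimates \eqref{californiacondor0}–\eqref{californiacondor1} of Lemma \ref{californiacondor} are essential, since naively differentiating $1/\Phi_{\iota_1\iota_2}$ loses the scaling. One must also be a little careful with the regions where $\eta,\sigma$ have opposite signs and $\Phi_{\iota_1\iota_2}$ is merely $\gtrsim\min(\jeta,\jsig)^{-1}$ rather than better; but there the bound \eqref{hummingbirdz}–\eqref{Dsufficientlylarge} still gives precisely the $\min(\jxi,\jeta,\jsig)$ factor needed, so no gain is lost. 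Everything else is bookkeeping of signs $\lambda,\mu,\nu,\epsilon,\iota_1,\iota_2$, which do not interact with the analytic estimates.
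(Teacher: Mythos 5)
Your overall strategy is the one the paper follows: separate the coefficients $\mathbf{a}^{\epsilon}_{\lambda}$ as $L^p$-bounded Fourier multipliers, reduce to the four model symbols $\mathfrak{p}^{1},\mathfrak{p}^{2},\mathfrak{q}^{1},\mathfrak{q}^{2}$ built from $\frac{1}{\jeta\jsig\,\Phi_{\iota_1\iota_2}}$, dyadically localize, and invoke Remarks \ref{marsupilami}/\ref{marsupilami2} together with Lemmas \ref{lemmamultilinquadratic1}/\ref{lemmamultilinquadratic2}. However, there are two concrete slips that, as written, break the argument.

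First, the dyadic gain is miscomputed. On the block $|\eta|\approx 2^{k_1}$, $|\zeta|\approx 2^{k_2}$ with $k_1,k_2\geq 0$, one has $\min(\jeta,\jsig)/(\jeta\jsig)=1/\max(\jeta,\jsig)\approx 2^{-\max(k_1,k_2)}$, not $2^{-\max(k_1,k_2)+\min(k_1,k_2)}$. With your version the right-hand side is $\approx 1$ when $k_1=k_2$, the claimed inequality $2^{-\max(k_1,k_2)}2^{\min(k_1,k_2)}\le 2^{-k_1}$ is false, and the dyadic sum does not converge even after paying a $0+$ power. The paper's bound is the correct $2^{-\max(j,k)}$, after which multiplying the symbol by $\jeta^{1-\delta}$ produces the summable factor $2^{-\delta\max(j,k)}$.

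Second, \eqref{californiacondor1} alone does not put the rescaled symbol under Remark \ref{marsupilami}/\ref{marsupilami2}. Part (iii) of Lemma \ref{californiacondor} controls $\partial_\eta^a\partial_\sigma^b\partial_p^c(1/\Phi)$ by $\min(\cdots)^{1+c}$ with \emph{no} gain in the indices $a,b$; hence after rescaling $\eta\mapsto 2^{k_1}\eta$, $\sigma\mapsto 2^{k_2}\sigma$ the $a$- and $b$-derivatives do not produce the required $2^{-ak_1}2^{-bk_2}$. What makes the rescaling work are parts (i) and (ii), \eqref{californiacondor0} and \eqref{californiacondor01}, which \emph{do} carry the $\jeta^{-a}\jsig^{-b}$ weights --- but these are only stated for $\eta,\sigma$ of a fixed sign. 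This is exactly why the paper separates Case 1 ($\epsilon_1=\epsilon_2$), where \eqref{californiacondor0} applies directly, from Case 2 ($\epsilon_1\neq\epsilon_2$), where it first changes variables to $\alpha=\eta+\zeta$, $\beta=-\zeta$ (which reduces back to the same-sign setting), adds the extra localization $\varphi_\ell^{(0)}(\alpha)$, and then gets symbol estimates in $(\alpha,\beta)$ with scales $2^\ell$ and $2^k$. You gesture at ``a single linear change of variables'' in the closing paragraph, but the argument in the body of the proof invokes \eqref{californiacondor1} for the rescaling, which is not the correct estimate, and never specifies what the change of variables should be or checks that the derivative bounds in the new variables have the form required by Remark \ref{marsupilami}. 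Both gaps are repairable, and once filled the proof coincides with the paper's, but as it stands the chain of estimates does not close.
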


\begin{proof} 
First observe that the Fourier multipliers $\mathbf{a}^{\epsilon}_{\l}(D)$, $\epsilon,\l \in \{+,-\}$ 
are bounded on $L^p$, $1< p < \infty$, by \eqref{TRk} and Mikhlin's multiplier theorem. 
Three different phase functions have to be considered. 
The case $(\iota_1,\iota_2)=(-,-)$ is clearly the simplest, and will not be examined any further. 
This leaves us with the cases $(+,+)$ and $(+,-)$, in other words it suffices to treat the operators 
$C_{\mathfrak{p}_1}$, $C_{\mathfrak{p}_2}$, $D_{\mathfrak{q}_1}$ and $D_{\mathfrak{q}_2}$ 
(these notations being defined in \eqref{defCC} and \eqref{defDD}) with
\begin{equation}
\begin{split}
\label{p1}
& \mathfrak{p}^1(\eta,\zeta) = \frac{1}{\langle \eta \rangle \langle \zeta \rangle}
  \frac{1}{\langle \eta + \zeta \rangle - \langle \eta \rangle - \langle \zeta \rangle},
\\
& \mathfrak{p}^2(\eta,\zeta) = \frac{1}{\langle \eta \rangle \langle \zeta \rangle} 
  \frac{1}{\langle \eta + \zeta \rangle + \langle \eta \rangle - \langle \zeta \rangle}, 
\end{split}
\end{equation}
and
\begin{equation}
\begin{split}
\label{p2}
& \mathfrak{q}^1(\eta,\zeta,\theta) 
  = \frac{1}{\langle \eta \rangle \langle \zeta \rangle} 
  \frac{1}{\langle \eta + \zeta + \theta \rangle - \langle \eta \rangle - \langle \zeta \rangle} 
  \varphi_{<-D_0} (R(\eta,\zeta) \theta),
  \\
& \mathfrak{q}^2(\eta,\zeta,\theta) 
  = \frac{1}{\langle \eta \rangle \langle \zeta \rangle}
  \frac{1}{\langle \eta + \zeta + \theta \rangle + \langle \eta \rangle - \langle \zeta \rangle} 
  \varphi_{<-D_0} (R(\eta,\zeta) \theta).
\end{split}
\end{equation}
We observe that bounds for the symbols 
$\frac{1}{\langle \eta + \zeta \rangle - \langle \eta \rangle - \langle \zeta \rangle}$ 
and $\frac{1}{\langle \eta + \zeta \rangle + \langle \eta \rangle - \langle \zeta \rangle}$, 
on the one hand, and $ \frac{1}{\langle \eta + \zeta + \theta \rangle - \langle \eta \rangle - \langle \zeta \rangle}$ 
and $ \frac{1}{\langle \eta + \zeta + \theta \rangle + \langle \eta \rangle - \langle \zeta \rangle}$, 
on the other hand, can be deduced one from the other by duality. 
They are not quite equivalent due to the factors $\frac{1}{\langle \eta \rangle \langle \zeta \rangle}$ 
and $ \varphi_{<-D_0} (R(\eta,\zeta) \theta)$, but the required changes in the proofs are superficial, 
and we shall only focus on $\mathfrak{p}_1$ and $\mathfrak{q}_1$.

With the definition of $\chi_\epsilon$ in \eqref{chi+-}, and the definition \eqref{cut1},
we localize the symbols by setting
\begin{align*}
& \mathfrak{p}^1_{\substack{j,k \\ \epsilon_1,\epsilon_2}}(\eta,\zeta) = \mathfrak{p}^1(\eta,\zeta) \chi_{\epsilon_1} (\eta) \varphi_j^{(0)}(\eta) \chi_{\epsilon_2} (\zeta) \varphi_k^{(0)}(\zeta),
\end{align*}
with a similar definition for $\mathfrak{q}^1_{\substack{j,k \\ \epsilon_1,\epsilon_2}}$.

\medskip
\noindent
{\it Case 1: $\epsilon_1=\epsilon_2$}. 
It follows from \eqref{californiacondor0} that
\begin{align}
\label{p1est}
& | \partial_\eta^a \partial_\zeta^b \mathfrak{p}^1_{\substack{j,k \\ \epsilon_1,\epsilon_2}} (\eta,\zeta) | 
\lesssim 2^{-\max(j,k)} 2^{-aj} 2^{-bk} 
\\
\label{q1est}
& | \partial_\eta^a \partial_\zeta^b \partial_\theta^c \mathfrak{q}^1_{\substack{j,k \\ \epsilon_1,\epsilon_2}} 
  (\eta,\zeta,\theta) | \lesssim 2^{-\max(j,k)} 2^{c \, \min(j,k)} 2^{-aj} 2^{-bk} .
\end{align}
By remarks \ref{marsupilami} and \ref{marsupilami2} 
and Lemmas \ref{lemmamultilinquadratic1} and \ref{lemmamultilinquadratic2},
$$
\Big\| C_{\mathfrak{p}^1_{\substack{j,k \\ \epsilon_1,\epsilon_2}}} \Big\|_{L^p \times L^q \to L^r} 
  + \Big\| D_{\mathfrak{q}^1_{\substack{j,k \\ \epsilon_1,\epsilon_2}}} \Big\|_{L^p \times L^q \to L^r} 
  \lesssim 2^{-\max(j,k)}
$$
and therefore, for $\delta > 0$,
$$
\Big\| C_{\langle \eta \rangle^{1-\delta} \mathfrak{p}^1_{\substack{j,k \\ \epsilon_1,\epsilon_2}}} 
  \Big\|_{L^p \times L^q \to L^r} + \left\| D_{\langle \eta \rangle^{1-\delta} \mathfrak{q}^1_{ \substack{j,k \\ \epsilon_1,\epsilon_2}}} \right\|_{L^p \times L^q \to L^r} \lesssim 2^{- \delta \max(j,k)}.
$$
Summing over $k,j \geq 0$ gives the desired result.

\medskip
\noindent
{\it Case 2: $\epsilon_1\neq\epsilon_2$}. 
Adding a localization in $\eta + \zeta$, let 
\begin{align*}
& \mathfrak{p}^1_{\substack{j,k,\ell \\ \epsilon_1,\epsilon_2,\epsilon_3}}(\eta,\zeta) 
  = \mathfrak{p}^1(\eta,\zeta) \chi_{\epsilon_1} (\eta) \varphi_j^{(0)}(\eta) 
  \chi_{\epsilon_2} (\zeta) \varphi_k^{(0)}(\zeta) \chi_{\epsilon_3} (\eta + \zeta) \varphi_\ell^{(0)}(\eta + \zeta),
\\
& \mathfrak{q}^1_{\substack{j,k,\ell \\ \epsilon_1,\epsilon_2,\epsilon_3}}(\eta,\zeta,\theta)
  = \mathfrak{q}^1(\eta,\zeta,\theta) \chi_{\epsilon_1} (\eta) \varphi_j^{(0)}(\eta) \chi_{\epsilon_2} (\zeta) 
  \varphi_k^{(0)}(\zeta) \chi_{\epsilon_3} (\eta + \zeta) \varphi_\ell^{(0)}(\eta + \zeta).
\end{align*}
Without loss of generality, we can assume that $\eta>0$, $\zeta<0$, and $|\eta| > |\zeta|$. 
Changing variables to $\alpha = \eta + \zeta$ and $\beta = - \zeta$, the above symbols become
\begin{align*}
& \mathfrak{P}^1_{\substack{j,k,\ell \\ \epsilon_1,\epsilon_2,\epsilon_3}}(\alpha,\beta) = 
  \mathfrak{p}^1(\alpha+\beta,-\beta) \chi_{+} (\alpha + \beta) \varphi_j^{(0)}(\alpha + \beta) 
  \chi_{+} (\beta) \varphi_k^{(0)}(\beta) \chi_{+} (\alpha) \varphi_\ell^{(0)}(\alpha),
\\
& \mathfrak{Q}^1_{\substack{j,k,\ell \\ \epsilon_1,\epsilon_2,\epsilon_3}}(\alpha,\beta,\theta)
  = \mathfrak{q}^1(\alpha+\beta,-\beta,\theta) \chi_{+} (\alpha + \beta) \varphi_j^{(0)}(\alpha + \beta)
  \chi_{+} (\beta) \varphi_k^{(0)}(\beta) \chi_{+} (\alpha) \varphi_\ell^{(0)}(\alpha),
\end{align*}
where $j \geq \max(k,\ell) +C$. 
By~\eqref{californiacondor0} and~\eqref{californiacondor01},
\begin{align*}
& \Big| \partial_\alpha^a \partial_\beta^b 
  \mathfrak{P}^1_{\substack{j,k,\ell \\ \epsilon_1,\epsilon_2,\epsilon_3}}(\alpha,\beta) \Big|
  \lesssim 2^{-j-k} 2^{-a \ell} 2^{-b k},
  \\
& \Big| \partial_\alpha^a \partial_\beta^b \partial_\theta^c \, \mathfrak{Q}^1_{\substack{j,k,\ell}}(\alpha,\beta,\theta) 
  \Big| \lesssim 2^{-j-k} 2^{-a \ell} 2^{-b k} 2^{ck}.
\end{align*}
The desired estimate follows through Remarks \ref{marsupilami} and~\ref{marsupilami2} 
(in particular the paragraphs on change of coordinates) and 
Lemmas \ref{lemmamultilinquadratic1} and~\ref{lemmamultilinquadratic2}.
\end{proof}

In the previous lemma we derived bounds for the bilinear operator $B_{\mathfrak{m}_{\iota_1 \iota_2}^{\pm}}$.
In order to deduce bounds for $T_{\iota_1 \iota_2}^{\pm}$ itself, 
we need to substitute the distorted Fourier transform to the flat Fourier transform 
(this is achieved through the wave operator $\mathcal{W}$, see \eqref{propWdef}), 
and take into account the phase $e^{i\Phi_{\iota_1 \iota_2}}$. 

\begin{lem}[Estimates for $T$]\label{lemT}
Consider the operators $T^\pm_{\iota_1\iota_2}$ defined in \eqref{Renof}-\eqref{RenoT}.
For all $p,p_1,p_2 \in (1,\infty)$ with $\frac{1}{p_1}+\frac{1}{p_2} = \frac{1}{p}$, we have
\begin{align}\label{bilboundT}
\begin{split}
{\big\| e^{-it\jnab} \mathcal{W}^* T^\pm_{\iota_1\iota_2}(f_1,f_2)(t) \big\|}_{L^p} 
   \lesssim 
   \min\Big( & {\big\| \jnab^{-1+}e^{-\iota_1it\jnab} \mathcal{W}^* f_1 \big\|}_{L^{p_1}}
   {\big\| e^{-\iota_2it\jnab} \mathcal{W}^*f_2 \big\|}_{L^{p_2}},
   \\
   & {\big\| e^{-\iota_1it\jnab} \mathcal{W}^* f_1 \big\|}_{L^{p_1}}
   {\big\| \jnab^{-1+}e^{-\iota_2it\jnab} \mathcal{W}^*f_2 \big\|}_{L^{p_2}}\Big) .
\end{split}
\end{align}
Furthermore, for any $k \geq 0$
\begin{align}\label{bilboundT'rev}
\begin{split}
{\big\|  e^{-it\jnab} \mathcal{W}^* T^{\pm}_{\iota_1 \iota_2} (f_1,f_2)(t) \big\|}_{W^{k,p}} & \lesssim 
   {\| \jnab^{k-1+} e^{-\iota_1it\jnab} \mathcal{W}^* f_1 \|}_{L^{p_1}} 
   {\big\| e^{-\iota_2it\jnab} \mathcal{W}^*f_2 \big\|}_{L^{p_2}}
   \\
   & \qquad + {\big\| e^{-\iota_1it\jnab}\mathcal{W}^*f_1 \big\|}_{L^{p_3}} 
   {\| \jnab^{k-1+} e^{-\iota_2it\jnab} \mathcal{W}^* f_2 \|}_{L^{p_4}},
\end{split}
\end{align}
with $(p_3,p_4)$ satisfying the same constraints as $(p_1,p_2)$ above.

Finally, if $p\in(1,\infty)$, and $f$ is a function that satisfies the
(second and third) assumptions in \eqref{propbootfas},
then, for all $t\in[0,T]$
\begin{align}\label{bilboundTep}
\begin{split}
{\big\| e^{-it\jnab} \mathcal{W}^* T^\pm_{\iota_1\iota_2}
  \big( 
  f, f_2 \big)(t) \big\|}_{L^p} 
   & \lesssim \frac{\e_1}{\sqrt{t}} \cdot 
   {\big\| \jnab^{-1+}e^{-\iota_2it\jnab} \mathcal{W}^*f_2 \big\|}_{L^p}.
\end{split}
\end{align}

\end{lem}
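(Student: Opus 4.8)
The plan is to reduce the statement, exactly as in the derivation of \eqref{bilboundT}, to a flat bilinear estimate applied to linearly evolved profiles, and then to combine a variant of Lemma \ref{lemTbound} (in which the first input is measured in $L^\infty$) with the dispersive estimate of Proposition \ref{propdisp}. First I would record the algebraic reduction. Since $\Phi_{\iota_1\iota_2}(\xi,\eta,\sigma)=\jxi-\iota_1\jeta-\iota_2\jsig$, the factor $e^{it\jxi}$ inside $e^{it\Phi_{\iota_1\iota_2}}$ is cancelled by the $e^{-it\jnab}$ in front of $T^\pm_{\iota_1\iota_2}$, the factors $e^{-\iota_jit\langle\cdot\rangle}$ get absorbed onto the two inputs, and $\mathcal W^\ast=\whF^{-1}\wtF$ (Proposition \ref{propW}) turns the distorted Fourier transforms of the inputs into flat Fourier transforms of $\mathcal W^\ast f$ and $\mathcal W^\ast f_2$. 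This yields the exact identity
\begin{align*}
e^{-it\jnab}\mathcal W^\ast T^\pm_{\iota_1\iota_2}(f,f_2)
  = B_{\mathfrak m^\pm_{\iota_1\iota_2}}\big(e^{-\iota_1it\jnab}\mathcal W^\ast f,\; e^{-\iota_2it\jnab}\mathcal W^\ast f_2\big),
\end{align*}
with $B_{\mathfrak m^\pm_{\iota_1\iota_2}}$ and $\mathfrak m^\pm_{\iota_1\iota_2}$ as in Lemma \ref{lemTbound} and \eqref{RenoT}.

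The next step is to upgrade \eqref{lemTboundmain} to the endpoint bound $\|B_{\mathfrak m^\pm_{\iota_1\iota_2}}(G_1,G_2)\|_{L^p}\lesssim \|\mathbf a(D)G_1\|_{L^\infty}\,\|\jnab^{-1+}G_2\|_{L^p}$, where $\mathbf a$ ranges over the finitely many coefficient multipliers of $\mathfrak m^\pm_{\iota_1\iota_2}$ acting on the first slot and $G_1=e^{-\iota_1it\jnab}\mathcal W^\ast f$. The proof of Lemma \ref{lemTbound} peels off the coefficient multipliers $\mathbf a^\epsilon_\lambda$ as $L^p$-bounded Fourier multipliers ($1<p<\infty$) and reduces to the smooth symbols $\mathfrak p^1,\mathfrak p^2,\mathfrak q^1,\mathfrak q^2$ of \eqref{p1}–\eqref{p2}, which, after dyadic localization, obey the bounds \eqref{p1est}–\eqref{q1est} and hence are controlled by Lemmas \ref{lemmamultilinquadratic1} and \ref{lemmamultilinquadratic2}; these hold at all exponents, so nothing changes if the output and the second slot are taken in $L^p$ ($1<p<\infty$) and the first slot in $L^\infty$, and the $\delta$-gain in $\max(j,k)$ still allows one to keep an extra $\jnab^{1-\delta}$ on the second factor and sum. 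The only delicate point — and the main obstacle — is that some of the first–slot coefficient multipliers $\mathbf a^\epsilon_\mu(\eta)$ contain the (generically discontinuous) pieces $\mathbf 1_\pm(\eta)$, which are not bounded on $L^\infty$. I would circumvent this by \emph{not} estimating $\|\mathbf a^\epsilon_\mu(D)G_1\|_{L^\infty}$ through multiplier theory, but rather by writing $\whF(G_1)(\xi)=e^{-\iota_1it\jxi}\wt f(\xi)$ and applying the stationary phase Lemma \ref{lemoscil} (equivalently Proposition \ref{propdisp}(i)) directly to $\mathbf 1_I(D)e^{-\iota_1it\langle D\rangle}h$ with $\wh h=\wt f$, $I=\R_\pm$, absorbing the smooth factors $T(\xi)$, $R_\pm(\xi)$ of $\mathbf a^\epsilon_\mu$ into the amplitude $a(x,\xi)\equiv a(\xi)$ permitted by hypothesis \eqref{hypoa}. (Only the plain $\jt^{-1/2}$ rate is needed here, so the vanishing $\wt f(0)=0$ from Lemma \ref{lemwtf0} is not even required.) This gives
\begin{align*}
\big\|\mathbf a^\epsilon_\mu(D)\,e^{-\iota_1it\jnab}\mathcal W^\ast f\big\|_{L^\infty}
  \lesssim \jt^{-1/2}\|\jxi^{3/2}\wt f\|_{L^\infty}
  +\jt^{-(3/4-\alpha-\beta\gamma)}\big\|\chi_{\leq 0,\sqrt3}\partial_\xi\wt f\big\|_{W_T}+\cdots .
\end{align*}

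Finally I would combine the two steps. By the bootstrap assumptions on $\wt f$ invoked in the statement (the $W_T$–bound on $\jxi\partial_\xi\wt f$ and the Fourier–$L^\infty$ bound on $\jxi^{3/2}\wt f$), together with the standing Sobolev control and the inequality $\alpha+\beta\gamma<1/4$ recalled in Remark \ref{disprem}, every term on the right-hand side above is $\lesssim\e_1\jt^{-1/2}$, hence $\lesssim \e_1/\sqrt t$ for $t\gtrsim 1$; for $t\lesssim 1$ the claimed inequality is trivial, since its right-hand side is $\gtrsim\e_1$ while $B_{\mathfrak m^\pm_{\iota_1\iota_2}}$ is bounded and the first input is $\e_1$–sized. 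Inserting this into the endpoint bilinear estimate of the previous paragraph yields
\begin{align*}
\big\|e^{-it\jnab}\mathcal W^\ast T^\pm_{\iota_1\iota_2}(f,f_2)(t)\big\|_{L^p}
  \lesssim \frac{\e_1}{\sqrt t}\,\big\|\jnab^{-1+}e^{-\iota_2it\jnab}\mathcal W^\ast f_2\big\|_{L^p},
\end{align*}
which is \eqref{bilboundTep}. As indicated, the single nontrivial ingredient is the $L^\infty$ endpoint in the presence of the discontinuous symbol coefficients, which is handled by folding those coefficients into one application of the stationary phase estimate for the linear evolution of the profile.
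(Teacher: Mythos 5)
For \eqref{bilboundTep} you take essentially the paper's route, correctly identifying the key obstacle (the discontinuous $\mathbf{1}_\pm$ pieces of the coefficient multipliers $\mathbf{a}^\epsilon_\mu(\eta)$ are not bounded Fourier multipliers on $L^\infty$) and the correct cure (do not peel the multiplier off the $L^\infty$ slot; instead fold those coefficients into a single application of the dispersive/stationary-phase estimate for the linear flow). This is precisely what the paper does via \eqref{dispinfty}, phrased there in one line as ``use the linear dispersive estimate \eqref{dispinfty} instead of the Mikhlin multiplier theorem''. Your algebraic reduction to $B_{\mathfrak{m}^\pm_{\iota_1\iota_2}}$ acting on flat linearly-evolved profiles is identity \eqref{lemTpr1}, and combined with Lemma \ref{lemTbound} it also delivers \eqref{bilboundT}, which you take as known.

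What is missing is a proof of the Sobolev variant \eqref{bilboundT'rev}, and that is not a formality. When you push a weight $\jxi^k$ through $B_{\mathfrak{m}^\pm_{\iota_1\iota_2}}$ and want to trade it against the derivative gain on one of the inputs, you need $\jxi \lesssim \max(\jeta,\jsig)$ on the symbol's support. For the $\delta$ piece of $\mathfrak{m}^\pm_{\iota_1\iota_2}$ this is automatic, but the principal-value piece $\what{\phi}(p)/p$ with $p=\lambda\xi-\mu\eta-\nu\sigma$ is not compactly supported in $p$, so $\jxi$ may exceed $\jeta$ on its support. The paper's proof handles this with the decomposition $\mathfrak{m}'=\mathfrak{a}+\mathfrak{b}$ of \eqref{lemTpr4}: $\jxi\lesssim\jeta$ holds on the $\delta$ part, and after inserting a cutoff in $|\xi|/|\eta|$ the complementary regime has $|p|\gtrsim|\xi|$, where the Schwartz decay of $\what{\phi}(p)$ absorbs the factor $\jxi^k$. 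You would need to supply this (or an equivalent) argument to complete the Lemma.
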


\begin{proof}
We can write
\begin{align}\label{lemTpr1}
\begin{split}
e^{-it\jnab} \mathcal{W}^* T_{\iota_1 \iota_2}^\pm(f_1,f_2)(t) & = 
\what{\mathcal{F}}^{-1} e^{-it\jxi} \wt{\mathcal{F}} T^\pm_{\iota_1 \iota_2}(f_1,f_2)(t)
\\
& = \what{\mathcal{F}}^{-1}\iint e^{-it\iota_1\jeta}
  \wt{f_1}(t,\eta) \, e^{-it\iota_2\jsig} \wt{f_2}(t,\sigma) 
  \mathfrak{m}_{\iota_1\iota_2}^\pm(\xi,\eta,\sigma)\,d\eta \,d\sigma 
\\
& = B_{\mathfrak{m}_{\iota_1\iota_2}^\pm} 
  \big( e^{-it\iota_1\jnab} \mathcal{W}^*{f_1}(t), e^{-it\iota_2\jnab} \mathcal{W}^*{f_2}(t) \big),
\end{split}
\end{align}
see the notation of Lemma \ref{lemTbound}. 
Applying the conclusion of Lemma \ref{lemTbound} immediately gives \eqref{bilboundT}.

To prove \eqref{bilboundT'rev} we first write 
\begin{align*}
{\| e^{-it\jnab} \mathcal{W}^* T^{\pm}_{\iota_1 \iota_2}(f_1,f_2)(t) \|}_{W^{k,p}} 
  & \lesssim 
  {\big\| B_{\jxi^k \mathfrak{m}_{\iota_1\iota_2}} \big( e^{-it\iota_1\jnab} \langle D \rangle^k \mathcal{W}^*{f_1}(t), 
  e^{-it\iota_2\jnab} \mathcal{W}^*{f_2}(t) \big) \big\|}_{L^p}
\end{align*}
(we are dropping the irrelevant $\pm$ apex).
Without loss of generality, we may assume that $|\eta| \geq |\s|$ and $|\xi| \geq 1$ on the support of \eqref{lemTpr1}.
We then want to estimate the $L^p$ norm of 
\begin{align}\label{lemTpr3}
\begin{split}
\what{\mathcal{F}}^{-1}\iint e^{-it\iota_1\jeta}
  \jeta^k \wt{f_1}(t,\eta) \, e^{-it\iota_1\jsig} \wt{f_2}(t,\sigma) 
  \big[ \jxi^k \jeta^{-k} \mathfrak{m}_{\iota_1\iota_2}(\xi,\eta,\sigma) \big]\,d\eta \,d\sigma 
\\
= B_{\mathfrak{m}'_{\iota_1\iota_2}} \big( e^{-it\iota_1\jnab} \mathcal{W}^*{f_1}(t), 
  e^{-it\iota_2\jnab} \mathcal{W}^*{f_2}(t) \big)
\end{split}
\end{align}
with the obvious definition of $\mathfrak{m}_{\iota_1\iota_2}'$.
Note that, from the definition \eqref{RenoT} with \eqref{QZ}, 
$\mathfrak{m}_{\iota_1\iota_2}'$ can be written, up to irrelevant constants, as
$\mathfrak{m}_{\iota_1\iota_2}' = \mathfrak{a}_{\iota_1\iota_2} + \mathfrak{b}_{\iota_1\iota_2}$, with
\begin{align}\label{lemTpr4}
\begin{split}
\mathfrak{a}_{\iota_1\iota_2} & := \frac{\jxi^k}{\jeta^k} \sum_{\l,\mu,\nu}
  a^\pm_{\substack{-\iota_1 \iota_2 \\ \lambda\mu\nu}}(\xi, \eta,\sigma) 
  \frac{1}{\Phi_{\iota_1\iota_2}(\xi,\eta,\s)\jeta \jsig} \, \delta(p),
\\
\mathfrak{b}_{\iota_1\iota_2} & := \frac{\jxi^k}{\jeta^k} \sum_{\l,\mu,\nu}
  a^\pm_{\substack{-\iota_1 \iota_2 \\ \lambda\mu\nu}}(\xi,\eta, \sigma)
  \frac{\varphi^\ast(p,\eta,\sigma)}{\Phi_{\iota_1\iota_2}(\xi,\eta,\s)\jeta\jsig}
  \,\pv \frac{\widehat{\phi}(p)}{ip}, 
  \qquad p := \lambda \xi - \mu \eta - \nu \sigma. 
\end{split}
\end{align}

On the support of $\mathfrak{a}_{\iota_1\iota_2}$ 
we automatically must have $\jxi \lesssim \max(\jeta,\jsig) = \jeta$,
so that $\mathfrak{a}_{\iota_1\iota_2}$ is a regular bounded symbol with the same properties as 
$\mathfrak{m}_{\iota_1\iota_2}^{\pm}$;  from the result of Lemma \ref{lemTbound} we deduce
\begin{align*}
\begin{split}
& {\| B_{\mathfrak{a}_{\iota_1\iota_2}} \big( e^{-it\iota_1\jnab} \jnab^k \mathcal{W}^*{f_1}(t),
  e^{-it\iota_2\jnab} \mathcal{W}^*{f_2}(t) \big) \|}_{L^p}
  \\ 
  & \lesssim {\| \jnab^{k-1+} e^{-it\iota_1\jnab} \mathcal{W}^*{f_1}(t) \|}_{L^{p_1}} 
  {\| e^{-it\iota_2\jnab} \mathcal{W}^*{f_2}(t) \|}_{L^{p_2}},
\end{split}
\end{align*}
consistently with the right-hand side of \eqref{bilboundT'rev}.

On the support of the $\pv$ component $\mathfrak{b}_{\iota_1\iota_2}$ we might not have that $\jxi \lesssim \jeta$.
However if $\jxi \gg \jeta$, then $|p| \gtrsim |\xi|$ (in particular the $\pv$ is not singular)
and one can absorb the factor of $\jxi^k$.
More precisely, we can write (dispensing of the $\iota_1\iota_2$ indexes)
\begin{align*}
\mathfrak{b} 
  = \mathfrak{b}_{1} + \mathfrak{b}_{2}, \qquad \mathfrak{b}_{1}:=\varphi_{\leq 10}(|\xi|/|\eta|) \mathfrak{b}
\end{align*}
and observe that $\mathfrak{b}_{1}$ has the same properties as (the $\pv$ part of) $\mathfrak{m}$ so that Lemma 
\ref{lemTbound} applies and 
\begin{align*}
{\big\| B_{\mathfrak{b}_1} \big( e^{-it\iota_1\jnab} \jnab^k \mathcal{W}^*{f_1}(t), 
  e^{-it\iota_2\jnab} \mathcal{W}^*{f_2}(t) \big) \big\|}_{L^p} 
  \\ \lesssim
  {\| \jnab^{k-1+} e^{-it\iota_1\jnab} \mathcal{W}^* f_1 \|}_{L^{p_1}} {\big\| e^{-it\jnab} \mathcal{W}^*f_2 \big\|}_{L^{p_2}}.
\end{align*}
The contribution from the remaining piece $\mathfrak{b}_2$ can be written as
\begin{align*}
B_{\mathfrak{b}_{2}} \big( e^{-it\iota_1\jnab} \jnab^k \mathcal{W}^*{f_1}(t), 
  e^{-it\iota_2\jnab} \mathcal{W}^*{f_2}(t) \big)
  = B_{\mathfrak{b}'} \big( e^{-it\iota_1\jnab} \mathcal{W}^*{f_1}(t), 
  e^{-it\iota_2\jnab} \mathcal{W}^*{f_2}(t) \big)
\end{align*}
where
\begin{align*}
\mathfrak{b}' & := \sum_{\l,\mu,\nu}
  a^\pm_{\substack{-\iota_1 \iota_2 \\ \lambda\mu\nu}}(\xi,\eta, \sigma)
  \frac{\varphi^\ast(\xi,\eta,p)}{\Phi_{\iota_1\iota_2}(\xi,\eta,\s)\jeta\jsig}
  \cdot \frac{\widehat{\phi}(p)}{ip} \cdot \jxi^k \varphi_{>10}(|\xi|/|\eta|) \varphi_{\geq 0}(|p|/|\xi|). 
\end{align*}
Since $|p| \gtrsim |\xi|$ and $\what{\phi}$ is a Schwartz function, the symbol $\mathfrak{b}'$
has the same properties as $\mathfrak{m}$;
using an $L^{p_1}\times L^{p_2}$ estimate from Lemma \ref{lemTbound} gives 
\begin{align*}
{\big\| B_{\mathfrak{b}'} \big( e^{-it\iota_1\jnab} \mathcal{W}^*{f_1}(t), 
  e^{-it\iota_2\jnab} \mathcal{W}^*{f_2}(t) \big) \big\|}_{L^p} 
  \lesssim
  {\| e^{-it\iota_1\jnab} \mathcal{W}^* f_1 \|}_{L^{p_1}} {\big\| e^{-it\iota_2\jnab} \mathcal{W}^*f_2 \big\|}_{L^{p_2}}.
\end{align*}
which is better than the desired conclusion.

Finally, to prove \eqref{bilboundTep}, we use the linear dispersive estimate \eqref{dispinfty}
to take care of the $\mathbf{a}^\epsilon_\l$ multipliers,
instead of the Mikhlin multiplier theorem.
\end{proof}

\medskip
\subsection{The smooth bilinear operator $\mathcal{Q}^R$}
\begin{lem}[Estimates for $\mathcal{Q}^R$]\label{lemQR}
Let $\mathcal{Q}^R$ be the bilinear term defined in \eqref{QR} and \eqref{QR1}-\eqref{QR2}.
Then, for any 
\begin{align*}
p_1,p_2\in[2,\infty), \qquad \frac{1}{p_1}+\frac{1}{p_2} < \frac{1}{2},
\end{align*}
one has the improved H\"older-type inequality
\begin{align}\label{lemQRpq}
\begin{split}
{\big\| 
  \mathcal{Q}^R_{\iota_1\iota_2}[f_1,f_2](t,\xi) \big\|}_{L^2} 
  \lesssim \min\big( & {\| \jnab^{-1+} 
  e^{-i\iota_1t\jnab} \mathcal{W}^*f_1 \|}_{L^{p_1}} 
  {\| 
  e^{-i\iota_2t\jnab} \mathcal{W}^*f_2 \|}_{L^{p_2}},
  \\ 
  & {\| 
  e^{-i\iota_1t\jnab} \mathcal{W}^*f_1 \|}_{L^{p_1}} 
  {\| \jnab^{-1+} 
  e^{-i\iota_2t\jnab} \mathcal{W}^*f_2 \|}_{L^{p_2}} \big).
\end{split}
\end{align}
Moreover, for $k \geq 0$,
\begin{align}\label{lemQRsob}
\begin{split}
{\big\| 
  \jxi^k \mathcal{Q}^R_{\iota_1\iota_2}[f_1,f_2](t,\xi) \big\|}_{L^2} 
  \lesssim \, & \, {\|  \jnab^{k-1+} e^{-i\iota_1t\jnab} \mathcal{W}^*f_1 \|}_{L^{p_1}} 
  {\| e^{-i\iota_2t\jnab} \mathcal{W}^*f_2 \|}_{L^{p_2}}
  \\ 
  + \, & \, {\| e^{-i\iota_1t\jnab} \mathcal{W}^*f_1 \|}_{L^{p_3}}
  {\| \jnab^{k-1+} e^{-i\iota_2t\jnab} \mathcal{W}^*f_2 \|}_{L^{p_4}}
\end{split}
\end{align}
for $(p_3,p_4)$ satisfying the same constraints as $(p_1,p_2)$ above.



\end{lem}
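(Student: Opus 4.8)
The plan is to strip the oscillatory phase, pass from the distorted to the flat Fourier transform, and then split $\mathfrak{q}_{\iota_1\iota_2}$ according to \eqref{QR2} and invoke the already-established Lemma~\ref{lemmuR} together with the elementary boundedness lemmas of this section. Since $\Phi_{\iota_1\iota_2}(\xi,\eta,\sigma)=\jxi-\iota_1\jeta-\iota_2\jsig$, the factor $e^{-i\iota_1 t\jeta}\,e^{-i\iota_2 t\jsig}$ is the distorted transform of $e^{-i\iota_1 t\langle\widetilde D\rangle}\otimes e^{-i\iota_2 t\langle\widetilde D\rangle}$ acting on the inputs, while the leftover $e^{it\jxi}$ is unimodular and hence irrelevant for an $L^2_\xi$ estimate. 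Proceeding exactly as in the proof of Lemma~\ref{lemT} (Plancherel for $\wtF$, and $\W$ intertwining $\langle\widetilde D\rangle$ and $\langle D\rangle$, bounded on every $L^p$, $1<p<\infty$, by Theorem~\ref{thmweder}), we obtain
\[
{\big\| \mathcal{Q}^R_{\iota_1\iota_2}(f_1,f_2)(t) \big\|}_{L^2_\xi}
= {\big\| B_{\mathfrak{q}_{\iota_1\iota_2}}\big( e^{-i\iota_1 t\jnab}\W f_1(t),\, e^{-i\iota_2 t\jnab}\W f_2(t) \big) \big\|}_{L^2_x},
\]
where $B_{\mathfrak{q}_{\iota_1\iota_2}}$ is the flat bilinear operator with symbol $\mathfrak{q}_{\iota_1\iota_2}$. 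By Remark~\ref{Remkappas} we may moreover assume $\mathfrak{q}_{\iota_1\iota_2}$ is one of the \emph{smooth} symbols $\mathfrak{q}_{\substack{\iota_1\iota_2\\ \kappa_0\kappa_1\kappa_2}}$, the cutoff $\mathbf{1}_{\kappa_0}(\xi)$ in the output variable being a harmless $L^2$ and $W^{k,2}$ localization. The structural feature that we will exploit throughout is the prefactor $\jeta^{-1}\jsig^{-1}$ shared by all three pieces of $\mathfrak{q}_{\iota_1\iota_2}$.

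\textbf{The $\mu^R$ piece.} The contribution of $\frac{1}{8\pi\jeta\jsig}\mu^R_{0,\iota_1\iota_2}$ to $B_{\mathfrak{q}_{\iota_1\iota_2}}$ is (up to the flat/distorted swap, absorbed exactly as in Lemma~\ref{lemmuR}) the operator $M_R$ of that lemma applied to $\jnab^{-1}$ of each input; absorbing the single smoothing factor we need onto one input and keeping the other bounded $\jnab^{-1}$ inside ($\jnab^{-1}$ is a Mikhlin multiplier, hence bounded on each $L^{p_j}$, and $\jnab^{-1}=\jnab^{-\delta}\jnab^{-1+}$), the estimate \eqref{lemmuRbound} gives the unweighted bound in \eqref{lemQRpq} for this piece, with the roles of the two inputs interchangeable. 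Likewise \eqref{lemmuRSob} yields the $W^{k,2}$ bound \eqref{lemQRsob} for this piece, since $\mathbf{1}_{\kappa_0}(\xi)$ commutes harmlessly with $\jxi^k$.

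\textbf{The $\mathfrak{q}^{\pm}$ pieces.} On the support of $1-\varphi^\ast(p,\eta,\sigma)=1-\varphi_{\leq -D}(p\,R(\eta,\sigma))$ one has $|p|\gtrsim R(\eta,\sigma)^{-1}\approx\min(\jeta,\jsig)^{-1}>0$, so the principal value is no longer singular and $\widehat\phi(p)/p$ is a genuinely smooth factor which, together with all its derivatives (in $\eta,\sigma$ and across $\{p=\mathrm{const}\}$), is bounded by $\lesssim\min(\jeta,\jsig)\,\langle p\rangle^{-N}$ — the Schwartz decay of $\widehat\phi$ controlling the region $|p|\gtrsim1$. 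Combined with the $\jeta^{-1}\jsig^{-1}$ prefactor and the bounded coefficients $a^{\pm}_{\cdots}$ (their possible jump at $0$ in the input variables being irrelevant, again by Remark~\ref{Remkappas}), $\mathfrak{q}^{\pm}_{\iota_1\iota_2}$ restricted to $\{\xi=\eta+\sigma\}$ becomes, after a Littlewood--Paley decomposition $\eta\sim2^j$, $\sigma\sim2^k$ and a further localization $p\sim2^\ell$ (with $2^\ell\gtrsim\min(\jeta,\jsig)^{-1}$; the sign combinations for which $p$ vanishes identically on $\{\xi=\eta+\sigma\}$ contribute nothing, since there $1-\varphi^\ast\equiv0$), a symbol of size $\lesssim2^{-j-k}2^{-\ell}$ to which the criterion of Remark~\ref{marsupilami} and Lemma~\ref{lemmamultilinquadratic1} apply. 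The output is frequency-localized at $|\xi|\lesssim2^{\max(j,k,\ell)}$, so a Bernstein step converts the resulting $L^{q}$ bound ($\frac1q=\frac1{p_1}+\frac1{p_2}$) into an $L^2$ bound, and the gain $2^{-j-k}$ makes the sum over $j,k,\ell$ converge provided $\frac1{p_1}+\frac1{p_2}<\frac12$; this proves the unweighted bound in \eqref{lemQRpq} for these pieces. The almost-derivative and the Sobolev estimate \eqref{lemQRsob} follow as before: the $\jnab^{-1+}$ is extracted from $\jeta^{-1}$ (or $\jsig^{-1}$), and $\jxi^k\lesssim\jeta^k+\jsig^k$ on the support (from $|\xi|\le|p|+|\eta|+|\sigma|$ and the decay $\langle p\rangle^{-N}$), after which $\jxi^k$ is distributed onto the two inputs.

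\textbf{Main obstacle.} The analytically substantial input — the $L^2$ mapping of the regular part of the quadratic spectral distribution — is already contained in Lemma~\ref{lemmuR}, so the remaining work is bookkeeping. The only point requiring genuine care is the verification that the cutoff $\varphi^\ast$ makes the principal value smooth precisely on the scale $R(\eta,\sigma)^{-1}$ uniformly over all dyadic regions, i.e. that the derivative bounds on $\mathfrak{q}^{\pm}_{\iota_1\iota_2}$ claimed above hold with the stated scalings; this is a direct consequence of the definition of $\varphi^\ast$, of the bound \eqref{estimatesR} on $R$, and of the lower bounds of Lemma~\ref{lemmaetazeta}, and introduces no new difficulty.
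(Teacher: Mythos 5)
Your handling of the $\mu^R_0$ piece via Lemma~\ref{lemmuR} and the wave-operator reduction is fine and matches the paper. The problem is the $\mathfrak{q}^{\pm}$ pieces, where there is a genuine gap.

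First, the operator associated to $\mathfrak{q}^{\pm}$ is not a pseudoproduct $C_{\mathfrak{a}}$. Its output frequency $\xi$ is not the sum of the input frequencies: we have $\xi=\eta+\sigma+p$ with $p$ a free integration variable, and the symbol depends on $p$ through $(1-\varphi^\ast)\widehat\phi(p)/p$. The phrase ``$\mathfrak{q}^{\pm}$ restricted to $\{\xi=\eta+\sigma\}$'' therefore has no operational meaning (on that set $p=0$ and $1-\varphi^\ast$ vanishes for the main sign choice), and invoking Remark~\ref{marsupilami} with Lemma~\ref{lemmamultilinquadratic1} (which concern $C_{\mathfrak{a}}$ operators with the $\delta(\xi-\eta-\zeta)$ constraint) is not applicable. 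Second, the ``Bernstein step'' fails: the output is \emph{not} frequency-localized at $|\xi|\sim 2^{\max(j,k,\ell)}$, only bounded above (e.g.\ $\eta=-\sigma$ makes $\xi$ small), and even if it were, there is no estimate of the form $\|P_m f\|_{L^2}\lesssim\|P_m f\|_{L^q}$ for $q>2$ — Bernstein goes the other direction. So you have no way of converting the $L^q$ bound with $\tfrac1q=\tfrac1{p_1}+\tfrac1{p_2}<\tfrac12$ into the desired $L^2$ bound, and the strict inequality $\tfrac1{p_1}+\tfrac1{p_2}<\tfrac12$ is never actually used.

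The idea you are missing is the paper's: after writing $\widehat\phi(p)/p=\widehat\psi(p)$ with $\psi:=\mathcal{F}^{-1}(\widehat\phi/\cdot)$ (a fixed, explicit function), the operator becomes the trilinear \emph{translation-invariant} pseudoproduct $U_{\mathfrak{m}}(a,b,\psi)$ with smooth symbol $\mathfrak{m}(\eta,\sigma,p)=(1-\varphi^\ast)\jeta^{-1}\jsig^{-1}(\cdots)$, since $\xi=\eta+\sigma+p$ recovers the convolution constraint. Lemma~\ref{lemmamultilin1} then applies, with $\psi$ placed in $L^q$ where $\tfrac1q=\tfrac12-\tfrac1{p_1}-\tfrac1{p_2}>0$; dyadically, $\|P_{k_3}\psi\|_{L^q}\sim 2^{-k_3/q}$, and the sum over $0\geq k_3\geq -k_2^+$ together with the $2^{-k_2}$ gain from the $\jsig^{-1}$ prefactor converges geometrically precisely because $1<q<\infty$. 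This is exactly how the strict H\"older inequality enters, and it has no analogue in your argument.
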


\begin{proof}
Recall the structure of the symbol of $\mathcal{Q}^R$
from \eqref{QR1}-\eqref{QR2} and \eqref{muR}-\eqref{muR'}.
For the piece coming from $\mu^R_{\iota_1\iota_2}$, estimates stronger than the desired
\eqref{lemQRpq}-\eqref{lemQRsob} follow directly from Lemma \ref{lemmuR}.
We then only need to look at operators of the form
\begin{align}\label{lemQRpr1} 
\begin{split}
& \mathcal{Q}[a,b](\xi)
  = \iint \mathfrak{q}(\xi,\eta,\sigma) 
  \, \widehat{a}(\eta) \widehat{b}(\sigma) \, d\eta \, d\sigma
\\
& \mathfrak{q}(\xi,\eta,\sigma) = 
  \frac{a^\pm_{\substack{\iota_0 \iota_1 \iota_2 \\ \lambda\mu\nu}}(\xi,\eta,\sigma)}{\langle \eta \rangle \langle \sigma \rangle } 
  \big(1 - \varphi^\ast(p,\eta,\s) \big)
  \frac{\what{\phi}(p)}{p}, \quad p=\lambda \xi - \iota_1 \mu \eta - \iota_2 \nu \sigma,
\end{split}
\end{align}
and prove that
\begin{align}\label{prlemQRpq}
\begin{split}
{\big\| \whF^{-1} \big( \mathcal{Q}[a,b](\xi) \big) \big\|}_{L^p} 
  \lesssim {\| \jnab^{-1+} a \|}_{L^{p_1}} {\|  b \|}_{L^{p_2}},
\end{split}
\end{align}
since by symmetry between the arguments $a$ and $b$
it follows that the right-hand side above can be replaced by 
$\min\big( {\| \jnab^{-1+} a \|}_{L^{p_1}} {\|  b \|}_{L^{p_2}}, 
{\| a \|}_{L^{p_1}} {\| \jnab^{-1+} b \|}_{L^{p_2}} \big)$, 
and that
\begin{align}\label{prlemQRsob}
\begin{split}
{\big\| 
  \jxi^l \mathcal{Q} [a,b](\xi) \big\|}_{L^2} 
  \lesssim \, & \, {\|  \jnab^{l-1+} a \|}_{L^{p_1}} {\| b \|}_{L^{p_2}} 
  + {\| a \|}_{L^{p_3}} {\|  \jnab^{l-1+} b \|}_{L^{p_4}}.
\end{split}
\end{align}

\medskip
\noindent
{\it Proof of \eqref{prlemQRpq}}. 
As usual, the first step is to observe that the multipliers 
$a^\pm_{\substack{\iota_0 \iota_1 \iota_2 \\ \lambda\mu\nu}}(\xi,\eta,\sigma)$ can be discarded. 
Also, we may assume without loss of generality that $p=\xi-\eta-\sigma$.
Next, we insert Littlewood-Paley cutoffs in each of the variables $\eta,\s$ and $p$
and consider the localized operator $\mathcal{Q}_{\underline{k}}[a,b](\xi)$,
with the same form as $\mathcal{Q}$ in \eqref{lemQRpr1} but a localized symbol
\begin{align}\label{lemQRqk}
\begin{split}
& \mathfrak{q}_{\underline{k}}(\xi,\eta,\s) =  
  \frac{\mathfrak{m}_{\underline{k}} (\xi,\eta,\s)}{\jeta \jsig}
  \frac{\what{\phi}(p)}{p} 
\\
& \mathfrak{m}_{\underline{k}} (\xi,\eta,\s) = \varphi_{k_1}(\eta)\varphi_{k_2}(\s)\varphi_{k_3}(p)
  ( 1 - \varphi^*(p,\eta,\s)).
\end{split}
\end{align}
We then make the following restrictions on the indexes
$$
k_1 \geq k_2 \geq 0 , \qquad \mbox{and} \qquad  0 \geq k_3 \geq -k_2^+ - D.
$$
These can be explained as follows: 
$k_1 \geq k_2$ is the harder case, since it implies that the derivative gain in \eqref{prlemQRpq} 
will be on the larger input frequency; 
$k_2 \geq 0$ amounts to restricting to the case of frequencies $\gtrsim 1$, 
which is also the hardest case;
$k_3 \leq 0$ is a consequence of $\phi$ being Schwartz: 
values of $|p| \gg 1$ are exponentially damped, 
and we will not worry about them here; and finally, $k_3 \geq -k_2^+ - D$ follows from the definition of $\varphi^\ast$.

The idea then is to regard $\mathcal{Q}_{\underline{k}}(a,b)$ as a trilinear operator acting on $a$, $b$
and $\psi = \mathcal{F}^{-1} (\widehat{\phi}/p)$, and note that the symbol
$\mathfrak{n}_{\underline{k}} (\eta,\s,p) := \mathfrak{m}_{\underline{k}}(\xi,\eta,\s)$ satisfies
$$
\left| \partial_\eta^a \partial_\sigma^b \partial_p^c  \mathfrak{m}_{\underline{k}} (\eta,\sigma,p) \right| 
  \lesssim 2^{-ak_1 - b k_2 - ck_3}.
$$
Up to a change of coordinates, Lemma \ref{lemmamultilin1} and Remark \ref{multilinrem} apply, 
leading to the estimate, for any $1 < q,p_1,p_2 < \infty$ 
such that $\frac{1}{q} + \frac{1}{p_1} + \frac{1}{p_2} = \frac{1}{p}$,
\begin{align*}
{\big\| \mathcal{Q}_{\underline{k}} (a,b) \big\|}_{L^p} 
  & \lesssim {\| P_{k_3} \psi \|}_{L^q} {\| P_{k_1} \jnab^{-1} a \|}_{L^{p_1}} {\| P_{k_2} \jnab^{-1}  b \|}_{L^{p_2}} 
\\
& \lesssim 2^{-\frac{k_3}{q}- k_2} 2^{-(0+)k_1} {\| \jnab^{-1+} P_{k_1}  a \|}_{L^{p_1}}  {\| P_{k_2} b \|}_{L^{p_2}}.
\end{align*}
It remains to observe that, provided $1 < q < \infty$,
$$
\sum_{\substack {0 \geq k_3 \geq -k_2^+ \\ k_1 \geq k_2 \geq 0}} 2^{-\frac{k_3}{q} - k_2} 2^{-(0+)k_1} < \infty.
$$

\medskip
\noindent
{\it Proof of \eqref{prlemQRsob}}.
One can proceed as above, modifying the definition of $\mathfrak{m}_{\underline{k}}$ to
$$
\mathfrak{m}_{\underline{k}} (\xi,\eta,\sigma) = \varphi_{k_1}(\eta)\varphi_{k_2}(\s)\varphi_{k_3}(p)( 1 - \varphi^*(\xi,\eta,\sigma)) \frac{\jxi^k}{\jeta^k + \jsig^k}.
$$
and observing that if $|\xi| \geq 3\max(|\eta|,|\sigma|)$, then $|p| \gtrsim |\xi|$,
and any power of $\jxi$ can absorbed by $\what{\phi}(p)$.
\end{proof}

From the proof of Lemma \ref{lemQR} above we can also deduce the following property,
which will be useful in Subsection \ref{Ssecexp}. 

\begin{claim}\label{remdxiQR}
We have the following schematic identity for the operator $\mathcal{Q}^R$ in \eqref{QR}:
\begin{align}\label{dxiQRid}
\jxi \partial_\xi \mathcal{Q}^R[f_1,f_2] \approx t \cdot \jxi \mathcal{Q}^R[f_1,f_2]
  + \jxi \mathcal{Q}^R\big[\wtF^{-1}(\partial_\xi \wt{f_1}),f_2\big].
\end{align}

In particular, \eqref{dxiQRid} and \eqref{lemQRpq} imply 
the following H\"older-type estimate for $\jxi \partial_\xi \mathcal{Q}^R$,
up to lower order terms that can be discarded:
\begin{align}\label{dxiQR2+infty}
\begin{split}
{\| \jxi \partial_\xi \mathcal{Q}^R(f_1,f_2) \|}_{L^2} \lesssim
   \jt {\big\| e^{-it\jnab} \jnab^{0+} \mathcal{W}^* f_1 \big\|}_{L^{\infty-}}
   {\big\| e^{-it\jnab} \jnab^{0+} \mathcal{W}^*f_2 \big\|}_{L^{\infty-}}
   \\
   + {\big\| \jxi^{0+} \partial_\xi \wt{f_1} \big\|}_{L^2}
   {\big\| \jnab^{0+} e^{-it\jnab} \mathcal{W}^*f_2 \big\|}_{L^{\infty-}}.
\end{split}
\end{align}
Here, we are using $\infty-$ to denote any arbitrarily large number
(see the notation at the end of \S\ref{secNotation}).
Note that this last bound is technically a little worse than what one could get,
that is, a bound with only one term at a time carrying a $\jxi^{0+}$ factor in the last product.
\end{claim}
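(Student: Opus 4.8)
The plan is to establish the schematic identity \eqref{dxiQRid} by differentiating the symbol representation of $\mathcal{Q}^R$ and then to combine it with the bilinear bounds of Lemma \ref{lemQR} to deduce \eqref{dxiQR2+infty}. First I would recall that, from \eqref{QR} and \eqref{QR1}--\eqref{QR2}, we have
\begin{align*}
\mathcal{Q}^R_{\iota_1\iota_2}[f_1,f_2](t,\xi) = \iint e^{it\Phi_{\iota_1\iota_2}(\xi,\eta,\sigma)}
 \mathfrak{q}_{\iota_1\iota_2}(\xi,\eta,\sigma)\, \wt{f_1}_{\iota_1}(t,\eta)\, \wt{f_2}_{\iota_2}(t,\sigma)\,d\eta\,d\sigma.
\end{align*}
Applying $\partial_\xi$ produces three kinds of terms: (i) the phase derivative $it\,\partial_\xi\Phi_{\iota_1\iota_2} = it\,\xi/\langle\xi\rangle$, which accounts for the first term on the right of \eqref{dxiQRid} after absorbing the smooth bounded factor $\xi/\langle\xi\rangle$ into the symbol and using $\jxi \cdot (it \xi/\jxi) = it\xi$; (ii) the symbol derivative $\partial_\xi \mathfrak{q}_{\iota_1\iota_2}$, which by the bounds \eqref{muR'} and the estimates on the principal-value part (exactly as in Lemma \ref{lemmamultilinquadratic2} and the proof of Lemma \ref{lemQR}) still defines a bounded bilinear operator of the same type (possibly with one fewer power of $\jxi^{-1}$, which is harmless since we carry $\jxi$ in front), hence can be absorbed into $\jxi\,\mathcal{Q}^R[f_1,f_2]$ up to lower-order errors; and (iii) nothing from the profiles, since $\partial_\xi$ does not hit $\wt{f_1}(\eta)$ or $\wt{f_2}(\sigma)$ --- but, by the standard maneuver of writing $\partial_\xi$ of the whole integrand and integrating by parts, one can trade the $it\,\partial_\xi\Phi$ factor against a derivative landing on one of the profiles; this is the origin of the term $\jxi\,\mathcal{Q}^R[\wtF^{-1}(\partial_\xi\wt{f_1}),f_2]$. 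More precisely, using $\partial_\eta e^{it\Phi} = -it\iota_1(\eta/\jeta)e^{it\Phi}$ and a symbol manipulation, $t\cdot\mathcal{Q}^R[f_1,f_2]$ can be rewritten modulo bounded operators as $\mathcal{Q}^R[\wtF^{-1}(\partial_\xi\wt{f_1}),f_2]$ plus the genuinely $t$-growing piece, which is what \eqref{dxiQRid} records schematically.

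Next I would deduce \eqref{dxiQR2+infty}. For the first term on the right of \eqref{dxiQRid}, namely $t\cdot\jxi\,\mathcal{Q}^R[f_1,f_2]$, I apply \eqref{lemQRsob} with $k=0$ (or \eqref{lemQRpq}), choosing the exponents $p_1=p_2$ slightly below $\infty$ so that $1/p_1+1/p_2<1/2$; this gives
\begin{align*}
\jt\,{\|\jxi\,\mathcal{Q}^R[f_1,f_2]\|}_{L^2} \lesssim \jt\,{\|\jnab^{0+}e^{-i\iota_1 t\jnab}\mathcal{W}^* f_1\|}_{L^{\infty-}}{\|\jnab^{0+}e^{-i\iota_2 t\jnab}\mathcal{W}^*f_2\|}_{L^{\infty-}},
\end{align*}
where the $\jnab^{0+}$ factor comes from the fact that, in \eqref{lemQRsob}, one needs slightly more than $L^{p_i}$ control and the almost-gain of a derivative in \eqref{lemQRpq} lets us convert the Sobolev weight at the cost of an $\epsilon$ loss. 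For the second term, $\jxi\,\mathcal{Q}^R[\wtF^{-1}(\partial_\xi\wt{f_1}),f_2]$, I again invoke \eqref{lemQRsob} but now place the first input in $L^2$ (using Plancherel to turn it into ${\|\jxi^{0+}\partial_\xi\wt{f_1}\|}_{L^2}$) and the second in $L^{\infty-}$; here the $\jxi^{0+}$ on the first factor and the $\jnab^{0+}$ on the second come from the same $\epsilon$-room in the improved Hölder inequality. The lower-order terms generated in passing from \eqref{dxiQRid} to the two displayed products (bounded bilinear operators acting on $f_1,f_2$ without the $t$ weight or the $\partial_\xi$) are dominated by these and can be discarded, exactly as stated.

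The main obstacle, and the step requiring the most care, is item (ii)--(iii) above: making rigorous the schematic identity \eqref{dxiQRid}, in particular tracking how $\partial_\xi$ interacts with the principal-value kernel $\pv\,\widehat\phi(p)/p$ and the cutoff $\varphi^*(p,\eta,\sigma)$ in $\mathfrak{q}^\epsilon_{\iota_1\iota_2}$ (see \eqref{QR2}), since $p=\lambda\xi-\iota_1\mu\eta-\iota_2\nu\sigma$ depends on $\xi$ and naively $\partial_\xi$ of $\pv\,1/p$ is more singular. The resolution is that $\partial_\xi$ of the pv-part can be rewritten, after integrating by parts in $\eta$ or $\sigma$ (whose dependence on $p$ is symmetric up to signs), as a combination of the original pv-type symbol times $t$ and a genuinely regular symbol, so no net loss of singularity occurs --- this is precisely the mechanism already exploited in the proof of Lemma \ref{lemQR} to show boundedness of the pv-part, and in the normal form estimates of Lemma \ref{lemT}. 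Once one is comfortable that these manipulations stay within the class of operators covered by Lemmas \ref{lemmamultilinquadratic2}, \ref{lemmamultilin2} and \ref{lemQR}, the rest is routine bookkeeping of $\epsilon$-losses. I would therefore present \eqref{dxiQRid} as a schematic statement (as the paper does), remark that the honest version follows from the integration-by-parts identities used for $\mathcal{Q}^R$ elsewhere, and then give the two-line derivation of \eqref{dxiQR2+infty} from \eqref{dxiQRid} and \eqref{lemQRpq}--\eqref{lemQRsob}.
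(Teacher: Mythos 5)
Your derivation of the estimate \eqref{dxiQR2+infty} from the schematic identity is fine, and your identification of the first term (from $\partial_\xi$ hitting the phase $e^{it\Phi}$) matches the paper. The problem is item (iii) and, more importantly, the mechanism you describe for producing the second term $\jxi\,\mathcal{Q}^R\big[\wtF^{-1}(\partial_\xi\wt{f_1}),f_2\big]$. You say it arises by ``trading the $it\,\partial_\xi\Phi$ factor against a derivative landing on one of the profiles'' via integration by parts. But if the second term came from an IBP applied to the first term, then the two terms in \eqref{dxiQRid} would be alternative expressions for the same quantity, not additive contributions --- and the identity records them as additive. Moreover, such an IBP on the first term just reshuffles the $t$ and does not create a new contribution; it cannot be the source of $B$.

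The actual source of the second term is $\partial_\xi$ falling on the $\pv$-part of the symbol, $\pv\,\widehat\phi(p)/p$ together with the cutoff $\varphi^*(p,\eta,\sigma)$: since $p=\lambda\xi-\iota_1\mu\eta-\iota_2\nu\sigma$, one writes $\partial_\xi = -\lambda\iota_1\mu\,\partial_\eta$ on these $p$-dependent factors (assuming WLOG $|\eta|\gtrsim|\sigma|$), and then integrates by parts in $\eta$. When $\partial_\eta$ lands on $\wt{f_1}(\eta)$, this produces $\jxi\,\mathcal{Q}^R\big[\wtF^{-1}(\partial_\xi\wt{f_1}),f_2\big]$; when $\partial_\eta$ lands on $e^{it\Phi}$, you recover a term of the first type; the remaining contributions (where $\partial_\eta$ hits the rest of the symbol, or where $\partial_\xi$ hits the smooth parts $\mu^R_0$ and the regular factors of $\mathfrak{q}$) are lower order and are discarded. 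Your ``main obstacle'' paragraph comes close --- you correctly identify that the $\xi$-dependence of the $\pv$ kernel is the delicate point and that an $\eta$-IBP is the fix --- but your listed outcome (``the original pv-type symbol times $t$ and a genuinely regular symbol'') omits exactly the term where $\partial_\eta$ falls on the profile, which is the one that matters. With that correction your argument lines up with the paper's.
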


An analogous claim holds for the operator $T$, see Remark \ref{remdxiT}.
For the case of $T$ the proof is contained in the proof of Lemma \ref{lemdxig};
we refer the reader to that for more details on the type of argument 
that leads to \eqref{dxiQRid}, and provide a more succinct argument below.

\begin{proof}[Proof of Claim \ref{remdxiQR}]
To see the validity of \eqref{dxiQRid} we look at the expression \eqref{QR1}-\eqref{QR2}.
Applying $\jxi \partial_\xi$ gives two contributions: one where $\jxi\partial_\xi$ hits the exponential phase
and one where it hits the symbol $\mathfrak{q}$. 
The first contribution is $t \xi \cdot \mathcal{Q}^R[f_1,f_2]$, which appears on the right-hand side of \eqref{dxiQRid}. 

When $\partial_\xi$ hits the symbol we get a few more contributions.
First, we observe that $\partial_\xi \mu_0^R$ behaves exactly like $\mu_0^R$ so this is a lower order term that we 
can disregard; see Proposition \ref{muprop} and Lemma \ref{lemmuR}. 
When $\partial_\xi$ hits $\mathfrak{q}$ we get similar lower order terms, with the exception
of the contributions coming from $\partial_\xi$ hitting $\pv 1/p$ or $\varphi^\ast$.
Under the assumption that $|\eta|\gtrsim |\s|$, in view of the definition of $p$,
we convert $\partial_\xi$ to $\partial_\eta$ and integrate by parts in $\eta$.
When $\partial_\eta$ hits the profile $\wt{f_1}(\eta)$ we get the second term on the right-hand side of \eqref{dxiQRid}.
When $\partial_\eta$ hits the oscillating phase, we get a term like the first one in \eqref{dxiQRid}.
The other terms where $\partial_\eta$ hits the remaining part of the symbol only contribute lower order
terms which satisfy stronger estimates than the terms in \eqref{dxiQRid}.
\end{proof}

\medskip
\subsection{The singular cubic terms $\mathcal{C}^S$}
The next Lemma is a H\"older-type estimate for the singular cubic terms.

\begin{lem}[Estimates for ``cubic singular'' symbols]\label{lemCS}
With the definition in \eqref{CubicS}-\eqref{CubicS12},
consider $\mathcal{C}^{S} = \mathcal{C}^{Sr}_{\iota_1 \iota_2 \iota_3}$, 
for $r=1$ or $2$, and any combination of signs $\iota$.
Then, for all $p,p_1,p_2,p_3 \in (1,\infty)$ with $\frac{1}{p_1} + \frac{1}{p_2} + \frac{1}{p_3} = \frac{1}{p}$, 
\begin{align}\label{lemCS1}
\begin{split}
& {\|e^{-it\jnab}\whF^{-1}\mathcal{C}^{S}(a,b,c)\|}_{L^p}
 \\
 & \qquad \lesssim
 {\| \jnab^{-1+} e^{-it\jnab} \mathcal{W}^* a \|}_{L^{p_1}}
 {\| \jnab^{-1+} e^{-it\jnab} \mathcal{W}^* b \|}_{L^{p_2}}
 {\| \jnab^{-1+} e^{-it\jnab} \mathcal{W}^* c \|}_{L^{p_3}}.
\end{split}
\end{align}

Furthermore, if $k \geq0$, and with $(p_4,p_5,p_6)$ and $(p_7,p_8,p_9)$ 
satisfying the same conditions as $(p_1,p_2,p_3)$,
\begin{align}\label{lemCS2}
\begin{split}
& {\|e^{-it\jnab}\whF^{-1}\mathcal{C}^{S}(a,b,c)\|}_{W^{k,p}}
 \\
 & \qquad \lesssim 
 {\| \jnab^{k-1+ } e^{-it\jnab} \mathcal{W}^* a \|}_{L^{p_1}}
 {\| \jnab^{-1+} e^{-it\jnab} \mathcal{W}^* b \|}_{L^{p_2}}
 {\| \jnab^{-1+} e^{-it\jnab} \mathcal{W}^* c \|}_{L^{p_3}}  \\
&  \qquad \qquad + 
 {\| \jnab^{-1+ } e^{-it\jnab} \mathcal{W}^* a \|}_{L^{p_4}}
 {\| \jnab^{k-1+} e^{-it\jnab} \mathcal{W}^* b \|}_{L^{p_5}}
 {\| \jnab^{-1+} e^{-it\jnab} \mathcal{W}^* c \|}_{L^{p_6}} \\
& \qquad \qquad +
 {\| \jnab^{-1+ } e^{-it\jnab} \mathcal{W}^* a \|}_{L^{p_7}}
 {\| \jnab^{-1+} e^{-it\jnab} \mathcal{W}^* b \|}_{L^{p_8}}
 {\| \jnab^{k-1+} e^{-it\jnab} \mathcal{W}^* c \|}_{L^{p_9}}.
\end{split}
\end{align}

Finally, if $p_1 = \infty$, and $f$ is a function that satisfies the
(second and third) assumptions in \eqref{propbootfas},
then, for all $t\in[0,T]$ and $ \frac{1}{p_2} + \frac{1}{p_3} = \frac{1}{p}$, we have
\begin{align}\label{lemCSend}
& {\|e^{-it\jnab}\whF^{-1}\mathcal{C}^{S}(f 
  ,b,c)\|}_{L^p} 
  \lesssim \frac{\e_1}{\sqrt{t}} 
  {\| \jnab^{-1+} e^{-it\jnab} \mathcal{W}^* b \|}_{L^{p_2}}  
  {\| \jnab^{-1+} e^{-it\jnab} \mathcal{W}^* c \|}_{L^{p_3}},
\end{align}
with a similar statement if $p_1 = p_2 = \infty$.
\end{lem}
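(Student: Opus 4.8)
\textbf{Proof plan for Lemma \ref{lemCS}.}

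The plan is to prove \eqref{lemCS1} and \eqref{lemCS2} by localizing the cubic symbols dyadically and invoking the multilinear boundedness criteria of the previous subsection, and then to deduce \eqref{lemCSend} from \eqref{lemCS1} by replacing the Mikhlin-multiplier argument applied to one factor with the linear dispersive estimate. First I would recall the structure of the symbol $\mathfrak{c}^S_{\kappa_1 \kappa_2 \kappa_3} = \mathfrak{c}^{S,1}_{\kappa_1 \kappa_2 \kappa_3} + \mathfrak{c}^{S,2}_{\kappa_1 \kappa_2 \kappa_3}$ from \eqref{formulacubiccoeff}: the first piece carries a factor $\delta(p_*)$ and the second a factor $\widehat{\phi}(p_*)/p_*$, with $p_* = \Sigma_0 - \Sigma_1$ a linear combination of the input frequencies; the remaining dependence factorizes through the smooth coefficients $\mathbf{a}^\epsilon_\lambda$ (bounded multipliers by \eqref{TRk}), the factor $1/(\langle\eta\rangle\langle\Sigma_0\rangle\langle\eta'\rangle\langle\sigma'\rangle)$, the smooth coefficient $A^{\epsilon,\epsilon'}_{\nu,\lambda'}$ (smooth and bounded by \eqref{Acoeff}), and the reciprocal phase $1/\Phi_{\kappa_1\iota_2}(\xi,\eta,\Sigma_0)$, which obeys the derivative bounds of Lemma \ref{californiacondor} (the key point being the $\min(\langle\xi\rangle,\langle\eta\rangle,\langle\sigma\rangle)^{1+c}$ growth, matching the setup of Remark \ref{multilinrem}/\ref{multilinrem2}). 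After discarding the bounded multipliers $\mathbf{a}^\epsilon_\lambda(D)$ (which are harmless on $L^p$, $1<p<\infty$, exactly as in the proof of Lemma \ref{lemTbound}), matters reduce to bounding trilinear operators of type $U_{\mathfrak{m}}$ (for $\mathcal{C}^{S,1}$, where the $\delta(p_*)$ just fixes one frequency) and of type $V_{\mathfrak{n}}$ (for $\mathcal{C}^{S,2}$, where $\widehat{\phi}(p_*)/p_*$ plays the role of the $\widehat{\phi}(\theta)/\theta$ factor in \eqref{defUUVV}).

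The execution would then mirror the proof of the $\pv$ part of Lemma \ref{lemQR}: insert Littlewood–Paley cutoffs $\varphi_{k_1}(\eta)\varphi_{k_2}(\sigma)\varphi_{k_3}(\theta)$ in the input frequencies (and, when needed for \eqref{lemCS2}, an output cutoff), and observe that on each dyadic block the reduced symbol satisfies $|\partial^{\underline{a}}\mathfrak{m}_{\underline{k}}| \lesssim 2^{-(\text{weights})}$ with gains consistent with three factors of $\langle\nabla\rangle^{-1}$; the crucial structural observation — as in \eqref{lemQRsob} — is that when the output frequency $\langle\xi\rangle$ dominates, then $|p_*|\gtrsim\langle\xi\rangle$ (from the definitions \eqref{variousdef}--\eqref{defpstar}) so the factor $\widehat{\phi}(p_*)$ is exponentially small and absorbs any power $\langle\xi\rangle^k$, which gives the Sobolev bound \eqref{lemCS2}. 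Lemma \ref{lemmamultilin1} (with Remark \ref{multilinrem}) handles the $U$ pieces and Lemma \ref{lemmamultilin2} (with Remark \ref{multilinrem2}, checking the integrability-in-the-$p_*$-direction condition \eqref{condition*}) handles the $V$ pieces; summing the resulting geometric series in $k_1\ge k_2\ge k_3$ and the constrained range of the $\widehat\phi$-localization closes \eqref{lemCS1} and \eqref{lemCS2}.

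Finally, for \eqref{lemCSend}, rather than using the boundedness of the first factor's contribution via Mikhlin/Hölder with an $L^{p_1}=L^\infty$ norm of $e^{-it\langle D\rangle}\mathcal{W}^* f$, I would estimate that factor directly using the dispersive inequality \eqref{dispinfty}: under the (second and third) bootstrap assumptions in \eqref{propbootfas} on $f$, one has ${\|e^{\pm it\langle D\rangle}\whF^{-1}(\mathbf{a}(\xi)\wt f)\|}_{L^\infty} \lesssim \e_1/\sqrt{t}$ uniformly, so pulling that factor out in $L^\infty$ and applying the remaining bilinear estimate (the trilinear bound with one slot taken in $L^\infty$, i.e. the analogue of the step used in \eqref{bilboundTep}) yields the claimed $\e_1 t^{-1/2}$ factor; the variant with $p_1 = p_2 = \infty$ is identical, pulling out two factors this way. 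The main obstacle is the bookkeeping in the dyadic sum for the Sobolev estimate \eqref{lemCS2}: one must carefully track which of the three inputs carries the high output frequency and verify in each case that the phase-reciprocal derivative bounds from Lemma \ref{californiacondor}, combined with the $\widehat\phi(p_*)$-absorption when $\langle\xi\rangle$ is the largest frequency, really do produce a convergent geometric series with a genuine $0+$ gain on the top frequency; everything else is a routine adaptation of Lemmas \ref{lemTbound} and \ref{lemQR}.
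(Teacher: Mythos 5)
Your plan is correct and follows essentially the same route as the paper: discard the $\mathbf{a}^\epsilon_\lambda$ multipliers via Mikhlin, reduce to $U$-type operators for the $\delta(p_*)$ piece and $V$-type for the $\pv$ piece, apply Lemmas \ref{lemmamultilin1}--\ref{lemmamultilin2} after dyadic localization using the phase-reciprocal bounds of Lemma \ref{californiacondor}, and replace Mikhlin by the dispersive estimate \eqref{dispinfty} for \eqref{lemCSend}. The only cosmetic difference is that the paper first changes coordinates (e.g.\ $\alpha=-\xi+\eta$, $\beta=2\xi-\eta$, $\gamma=\xi-\zeta$) and localizes in those variables, whereas you propose localizing directly in the input frequencies; both are equivalent up to a linear change of variables (which is harmless for the $L^1$-Fourier criteria of Remarks \ref{marsupilami}--\ref{multilinrem2}), and your mechanism for absorbing $\langle\xi\rangle^k$ in \eqref{lemCS2} (either $\widehat\phi(p_*)$-decay when $|\xi|$ dominates, or the support constraint from $\delta(p_*)$) is exactly the one used in the paper for the analogous Sobolev bound \eqref{lemQRsob}.
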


\begin{proof}
Starting from the formulas \eqref{formulacubiccoeff} 
giving  $\mathfrak{c}^{S1}$ and $\mathfrak{c}^{S2}$, we first discard the factors 
$$a^{\epsilon,\epsilon'}_{\substack{\lambda,\mu,\mu',\nu' \\ -,\kappa_1,\kappa_2,\kappa_3}}(\xi,\eta,\eta',\sigma'),$$
which is possible thanks to the Mikhlin multiplier theorem. 
Omitting these factors and irrelevant constants and indexes, 
it suffices to deal with $T_{\mathfrak{e}^1}$ and $T_{\mathfrak{e}^2}$
(recall the definition in~\eqref{defgeneraltrilin}), where
\begin{align*}
& \mathfrak{e}^1(\xi,\eta,\zeta,\theta) = \frac{A(\xi \pm \eta)}{\Phi_{\iota_1\iota_2}(\xi,\eta,\xi\pm \eta)} 
  \frac{1}{\langle \eta \rangle \langle \xi \pm \eta\rangle \langle \zeta \rangle \langle \theta \rangle} 
  \delta(\xi \pm \eta \pm \zeta \pm \theta), 
\\
& \mathfrak{e}^2(\xi,\eta,\zeta,\theta) = \frac{A(\xi \pm \eta)}{\Phi_{\iota_1\iota_2}(\xi,\eta,\xi\pm \eta)} 
  \frac{1}{\langle \eta \rangle \langle \xi \pm \eta\rangle \langle \zeta \rangle \langle \theta \rangle}
\frac{\widehat{\phi}(\xi \pm \eta \pm \zeta \pm \theta)}{\xi \pm \eta \pm \zeta \pm \theta}.
\end{align*}
For the sake of concreteness, we make a choice of signs (which one it is exactly does not matter):
\begin{align*}
& \mathfrak{e}^1(\xi,\eta,\zeta,\theta) = \frac{A(\xi + \eta)}{\Phi_{\iota_1\iota_2}(\xi,\eta,\xi + \eta)} 
  \frac{1}{\langle \eta \rangle \langle \xi + \eta\rangle \langle \zeta \rangle \langle \theta \rangle}
  \delta(\xi - \eta + \zeta - \theta), 
  \\
& \mathfrak{e}^2(\xi,\eta,\zeta,\theta) = \frac{A(\xi + \eta)}{\Phi_{\iota_1\iota_2}(\xi,\eta,\xi + \eta)} 
  \frac{1}{\langle \eta \rangle \langle \xi + \eta\rangle \langle \zeta \rangle \langle \theta \rangle}
  \frac{\widehat{\phi}(\xi - \eta + \zeta - \theta)}{\xi - \eta + \zeta - \theta}.
\end{align*}
With the convention for $U$ and $V$ operators (see~\eqref{defUUVV}), this corresponds respectively to the symbols
\begin{align*}
& \mathfrak{f}^1(\xi,\eta,\zeta)= \frac{A(2\xi-\eta)}{\Phi_{\iota_1\iota_2}(\xi,\xi-\eta,2\xi-\eta)} \frac{1}{\langle \xi-\eta \rangle \langle 2\xi-\eta\rangle \langle \xi-\eta -\zeta \rangle \langle \xi-\zeta \rangle} \\
& \mathfrak{f}^2(\xi,\eta,\zeta,\theta)= \frac{A(2\xi-\eta)}{\Phi_{\iota_1\iota_2}(\xi,\xi-\eta,2\xi-\eta)} \frac{1}{\langle \xi - \eta \rangle \langle 2 \xi - \eta\rangle \langle \xi-\eta-\zeta-\theta \rangle \langle \xi-\zeta \rangle}.
\end{align*}
We will now only focus on $\mathfrak{f}^1$, since $\mathfrak{f}^2$ can be treated nearly identically. Different signs $\iota_1,\iota_2$ cannot be treated identically; for the sake of brevity, we will only treat the most delicate case, namely $(\iota_1,\iota_2) = (+,+)$. Changing coordinates to $\alpha = -\xi +\eta$, $\beta =  2\xi - \eta$, $\gamma = \xi -\zeta$, and localizing dyadically, this becomes
$$
\mathfrak{g}^1(\alpha,\beta,\gamma)_{\underline{k}} = 
  \underbrace{\frac{A(\beta)}{\Phi_{\iota_1\iota_2}(\alpha + \beta,\alpha,\beta)} 
  \frac{1}{\langle \alpha \rangle \langle \beta \rangle \langle \gamma \rangle}
  \varphi_{k_1}^{(0)}(\alpha)\varphi_{k_2}^{(0)}(\beta)\varphi_{k_3}^{(0)}(\gamma)}_{
  \displaystyle := \mathfrak{h}^{1,1}_{\underline{k}}(\alpha,\beta,\gamma)} 
  \underbrace{ \frac{1}{\langle \gamma -2\alpha - \beta\rangle}\varphi_{k_4}^{(0)}(\gamma-2\alpha-\beta) }_{
  \displaystyle := \mathfrak{h}^{1,2}_{k_4}(\gamma -2\alpha - \beta)}.
$$
Finally, we need to distinguish cases depending on the signs of $\alpha$ and $\beta$; 
once again, we only consider the worst case, namely $\alpha, \beta>0$. 
By \eqref{p1est}, there holds, for all $a,b,c$,
$$
\left| \partial_\alpha^a \partial_\beta^b \partial_\gamma^c \mathfrak{h}^{1,1}(\alpha,\beta,\gamma) \right| 
  \lesssim 2^{-(1+a) k_1 - b k_2 - (1+c) k_3},
$$
therefore $\| \widehat{\mathfrak{h}^{1,1}_{\underline{k}}} \|_{L^1} \lesssim 2^{- k_1 - k_3}$. 
Since $\| \widehat{\mathfrak{h}^{1,2}_{\underline{k_4}}} \|_{L^1} \lesssim 2^{-k_4^+}$, 
we obtain 
that $\| \widehat{\mathfrak{h}^{1}_{\underline{k}}} \|_{L^1} \lesssim 2^{- k_1 - k_3-k_4}$. 
Applying Lemma \ref{lemmamultilin1} and summing over dyadic blocks gives the desired result \eqref{lemCS1}.
\eqref{lemCS2} follows in the same way.
Finally, using the linear dispersive estimate \eqref{dispinfty} instead of Mikhlin's
multiplier theorem, we obtain the endpoint estimate \eqref{lemCSend}.
\end{proof}

\smallskip
\begin{rem}[Derivatives of the cubic symbols]\label{lemCSrem}
In the estimates of Sections \ref{secLinfS} and \ref{secw'} we will
perform various integration by parts arguments in frequency space 
and will therefore end up differentiating the cubic symbols appearing in Lemma \ref{lemCS} above.
The estimates satisfied by the trilinear operators associated with these differentiated
symbols might vary from case to case, depending on the variables that are differentiated;
the localizations imposed in each specific case will determine how these estimates 
need to be modified by additional factors.
In any case, in all our arguments, the terms obtained when differentiating the symbols 
$\mathfrak{c}^{S1}$ and $\mathfrak{c}^{S2}$ will always give lower order contributions.
\end{rem}

%
%


\bigskip
\section{Bootstrap and basic a priori bounds}\label{secBoot}

In this section, we first give the details of our bootstrap strategy
as presented in Subsections \ref{Ssecmtpr0} and \ref{Ssecmtpr}, see \eqref{mtpr11}-\eqref{mtpr12}.
In particular, we close the bootstrap for the profile $g$, assuming the bootstrap for the 
renormalized profile $f$.
In Subsection \ref{ssecpre} we give some preliminary bounds on $f$ that will be useful in later sections.
In Subsection \ref{Ssecexp} we expand the nonlinear expressions in terms of $f$ and establish 
several bounds that do not require the analysis of oscillations.
Subsection \ref{Ssecmidsum} recalls the main equation for $\wt{f}$,
and lists all the estimates that are left to be proven in the remainder of the paper.

\subsection{Bootstrap strategy}\label{ssecBoot}
Recall from \eqref{initcond} that we are considering an initial data such that
\begin{align}\label{Bootinitcond0}
{\| (\jnab u_0,u_1) \|}_{H^4} + {\| \jx (\jnab u_0,u_1) \|}_{H^1} \leq \e_0.
\end{align}
From the definition of $v$ and $g$ in \eqref{vKG} and \eqref{vprof}, we see that $g_0 = u_1 - i \sqrt{H+1}u_0$.
Therefore, Proposition \ref{propFT} and Theorem \ref{thmweder} 
imply that
\begin{align}\label{Bootinitcond}
{\| \jxi^{4} \wt{g_0} \|}_{L^2} + {\| \jxi\partial_\xi \wt{g_0} \|}_{L^2} \lesssim \e_0.
\end{align}
From this and the interpolation inequality 
$|\varphi_k(\xi) \wt{h}(\xi)|^2 \lesssim {\| \varphi_k \wt{h} \|}_{L^2} {\| \partial_\xi \varphi_k \wt{h} \|}_{L^2}$,
we see that
\begin{align}\label{Bootinitcondinfty}
{\| \jxi^{3/2} \wt{g_0} \|}_{L^\infty} \lesssim \e_0.
\end{align}
According to the definition \eqref{Renof}-\eqref{RenoT} for the renormalized profile, we have
\begin{align*}
f(t=0) =: f_0 = g_0 - T(g_0,g_0)(t=0), 
\end{align*}
so that using \eqref{bilboundT'rev}, and estimating as in the proof of Lemma \ref{lemdxig} below,
see in particular \eqref{dxig2}, we have
\begin{align}\label{Bootinitcondf}
{\| \jxi^{4} \wt{f_0} \|}_{L^2} + {\| \jxi\partial_\xi \wt{f_0} \|}_{L^2} \lesssim \e_0.
\end{align}
Again by interpolation we obtain
\begin{align}\label{Bootinitcondfinfty}
{\| \jxi^{3/2} \wt{f_0} \|}_{L^\infty} \lesssim \e_0.
\end{align}

In what follows we consider $\e_1,\e_2$ 
satisfying
\begin{align}\label{epsilonboot}
\e_0 \ll \e_1 \ll \e_2, 
  \qquad \e_2 \leq \bar{\e_0} 
\end{align}
with $\bar{\e_0}$ sufficiently small. 
The main bootstrap estimate for $g$ is given by the following:
\begin{prop}\label{propbootg}
Assume that, for all $t\in[0,T]$,
\begin{align}\label{propbootgas}
\jt^{-p_0} {\big\| \jxi^{4} \wt{g}(t) \big\|}_{L^{2}} 
  + \jt^{1/2} {\| e^{-it\jnab} \mathbf{1}_{\pm}(D)\W g(t) \|}_{L^\infty} \leq 2\e_2.
\end{align}
Then, for all $t\in[0,T]$,
\begin{align}\label{propbootgconc}
\jt^{-p_0} {\big\| \jxi^{4} \wt{g}(t)\big\|}_{L^{2}}
  + \jt^{1/2} {\| e^{- it\jnab} \mathbf{1}_{\pm}(D)\W g(t) \|}_{L^\infty} \leq \e_2.
\end{align}
Moreover, we also have
\begin{align}\label{propbootginfty} 
{\| e^{-it\langle \wt D \rangle}g(t) \|}_{L^\infty} \lesssim \e_2 \jt^{-1/2} .
\end{align}
\end{prop}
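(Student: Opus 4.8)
\textbf{Proof strategy for Proposition \ref{propbootg}.}
The plan is to prove Proposition \ref{propbootg} assuming the (stronger) bootstrap Proposition \ref{propbootf} for the renormalized profile $f = g - T(g,g)$. The key observation is that $T(g,g)$ is always a lower-order perturbation: by the bilinear bounds for $T$ from Lemma \ref{lemT} (specifically \eqref{bilboundT'rev} with $k=4$ and \eqref{bilboundTep}), together with the pointwise decay $\|e^{-it\langle\widetilde D\rangle}g\|_{L^\infty}\lesssim\e_2\jt^{-1/2}$ and the Sobolev growth $\|\langle\xi\rangle^4\wt g\|_{L^2}\lesssim\e_2\jt^{p_0}$ coming from the bootstrap hypothesis \eqref{propbootgas}, one gets
\begin{align}\label{pbgpr1}
{\big\| \jxi^4 \wtF T(g,g)(t) \big\|}_{L^2} + {\big\| e^{-it\jnab}\mathbf{1}_\pm(D)\W T(g,g)(t) \big\|}_{L^\infty}
  \lesssim \e_2^2 \jt^{-1/2 + p_0}.
\end{align}
Hence the $H^4$-with-growth norm and the Fourier-$L^\infty$-with-decay norm of $g$ and of $f$ differ only by a quantity controlled by $\e_2^2\jt^{p_0}$, which is absorbed into the right-hand side once $\e_0$ (hence $\e_1,\e_2$) is small. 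So the first task reduces to establishing these same two bounds for $f$, which is precisely (part of) the content of Proposition \ref{propbootf}.

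The second task is to upgrade from the control of the bootstrap norms of $f$ to the pointwise decay statements. First I would observe that the Fourier-$L^\infty$ control of $\wt f$ (at the optimal weight $\jxi^{3/2}$) plus the control of the degenerate weighted norm $\|\jxi\partial_\xi\wt f\|_{W_T}$ and of $\|\jxi^4\wt f\|_{L^2}$ feed directly into the linear dispersive estimate of Proposition \ref{propdisp}, part (ii): by \eqref{disp2} and Remark \ref{disprem} (using $\alpha+\beta\gamma<1/4$ from \eqref{wnormparam0}), one obtains
\begin{align}\label{pbgpr2}
{\big\| e^{-it\langle \wt D\rangle} f(t) \big\|}_{L^\infty_x} \lesssim \e_1 \jt^{-1/2}.
\end{align}
Then $g = f + T(g,g)$ and the $L^\infty$ bound on $\W T(g,g)$ from \eqref{pbgpr1} (using boundedness of $\W$ on $L^\infty$ in all our cases, via Theorem \ref{thmweder} or the generic case) give $\|e^{-it\langle\wt D\rangle}g(t)\|_{L^\infty}\lesssim\e_2\jt^{-1/2}$, which is \eqref{propbootginfty}, and also improves the constant in the Fourier-$L^\infty$/decay part of \eqref{propbootgas} to $\e_2$. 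To close the term $\jt^{1/2}\|e^{-it\jnab}\mathbf{1}_\pm(D)\W g\|_{L^\infty}$ one uses \eqref{disp1} applied to $\W g$ in exactly the same manner, noting $e^{-it\jnab}\mathbf{1}_\pm(D)\W g = \mathbf{1}_\pm(D)e^{-it\jnab}\W g$ and that $\W g$ has the requisite frequency-localized norms controlled by those of $f$ plus the $T(g,g)$ remainder.

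The main obstacle is not in the reduction itself but in having the right bounds on $T(g,g)$ at the level of the \emph{weighted} norm, since $\|\jxi\partial_\xi\wtF T(g,g)\|_{W_T}$ is needed to transfer the $W_T$ control from $f$ to the dispersive estimate applied to $g$ (used implicitly in \eqref{disp1}). Here one must differentiate the symbol $\mathfrak{m}_{\iota_1\iota_2}^\pm$ of $T$: the schematic identity $\jxi\partial_\xi T(g,g)\approx t\cdot\jxi T(g,g) + T(\wtF^{-1}\partial_\xi\wt g,g)$ (the analogue of Claim \ref{remdxiQR}, cf.\ Remark \ref{remdxiT} and Lemma \ref{lemdxig}) produces a factor of $t$, so one genuinely needs the gain of (almost) a derivative in \eqref{bilboundT'rev} combined with the $\jt^{-1/2}$ decay of one input to beat this $t$ and the $\jt^{p_0}$ growth; this is where the smallness $\e_1\ll\e_2$ and the slow growth $p_0<\alpha$ are used. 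Since Proposition \ref{propbootf} is assumed, all of this is bookkeeping: the substantive analytic work — propagating $\|\jxi\partial_\xi\wt f\|_{W_T}\lesssim\e_1\jt^{0+}$ and the Fourier-$L^\infty$ bound on $\wt f$ through the quadratic and cubic nonlinearities — is deferred to Sections \ref{secwR}--\ref{secw'}.
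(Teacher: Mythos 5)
Your high-level strategy — write $g=f+T(g,g)$, treat $T(g,g)$ as a lower-order perturbation via Lemma \ref{lemT}, and invoke Proposition \ref{propbootf} plus the dispersive estimate Proposition \ref{propdisp} for $f$ — is exactly the paper's. However, your derivation of \eqref{propbootginfty} contains a genuine error. You write ``using boundedness of $\W$ on $L^\infty$ in all our cases, via Theorem \ref{thmweder} or the generic case,'' but Theorem \ref{thmweder} gives boundedness of $\W$ and $\W^*$ on $W^{k,p}$ only for $1<p<\infty$; the endpoint $p=\infty$ is available \emph{only} in the exceptional case with $f_+(-\infty,0)=1$, and in particular \emph{not} in the generic case (where $T(0)=0$, the $\mathbf{a}^\epsilon_\lambda$ symbols jump at $\xi=0$, and the wave operator involves Hilbert-transform-type pieces that are unbounded on $L^\infty$). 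The paper explicitly flags this: ``Note how this does not follow at once from \eqref{propbootgconc} since $\W$ is not necessarily bounded on $L^\infty$.'' Their workaround is to interpolate to get $\|e^{-it\jnab}\W g\|_{W^{1,q}}\lesssim\e_2\jt^{-1/3}$ for large finite $q$ (where $\W$ \emph{is} bounded), then bound $\|e^{-it\langle\wt D\rangle}T(g,g)\|_{L^\infty}$ by its $W^{1,\infty-}$ norm via Sobolev embedding, and finally apply \eqref{bilboundT'rev}; this gives the $\e_2^2\jt^{-2/3}$ tail that, added to the $\e_1\jt^{-1/2}$ from \eqref{propbootfdecay} for $f$, closes \eqref{propbootginfty}.

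A smaller point: your concern in the last paragraph about needing $\|\jxi\partial_\xi\wtF T(g,g)\|_{W_T}$ is misdirected. The proposition never feeds $g$ itself into the dispersive lemma; it always decomposes $g=f+T(g,g)$, applies \eqref{disp1}--\eqref{disp2} to $f$ (where the $W_T$ bound is available from Proposition \ref{propbootf}), and handles $T(g,g)$ purely by multilinear $L^p$ estimates. The weak weighted bound \eqref{dxig} on $\jxi\partial_\xi\wt g$ (proved via Lemma \ref{lemdxig} and Remark \ref{remdxiT}) is used elsewhere in the paper to estimate remainder terms, but it plays no role in the proof of this proposition, so the bookkeeping you describe there is unnecessary.
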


Proposition \ref{propbootg} above implies global-in-time bounds on $g$ and 
$v= e^{it\langle \wt{D} \rangle}g(t)$, 
hence on the solution $u$ of \eqref{KG}, see \eqref{vu}; in particular,
together with \eqref{dxig}, it gives the 
the bounds \eqref{mtLinfty}, \eqref{mtbounds} and \eqref{mtwbound} stated in Theorem \ref{maintheo}.
However, since we cannot bootstrap directly bounds on norms of $g$ 
we reduce the proof of Proposition \ref{propbootg} 
to bootstrap estimates on the renormalized profile $f := g -T(g,g)$, 
see \eqref{Renof}-\eqref{RenoT}.
This is our main bootstrap proposition for $f$:
\begin{prop}\label{propbootf}
Assume that for all $t\in[0,T]$ we have
\begin{align}\label{propbootfas}
\jt^{-p_0} {\big\|  \jxi^{4} \wt{f}(t) \big\|}_{L^{2}} + {\| \jxi\partial_\xi \wt{f} \|}_{W_t} 
  + {\| \jxi^{3/2} \wt{f}(t) \|}_{L^\infty} \leq 2\e_1,
\end{align}
and that the bounds \eqref{propbootgas} on $g$ hold with $\e_2=\e_1^{2/3}$.
Then, for all $t \in [0,T]$,
\begin{align}\label{propbootfconc}
\jt^{-p_0} {\big\|  \jxi^{4} \wt{f}(t) \big\|}_{L^{2}} + {\| \jxi \partial_\xi \wt{f} \|}_{W_t} 
  + {\| \jxi^{3/2} \wt{f}(t) \|}_{L^\infty} \leq \e_1.
\end{align}
\end{prop}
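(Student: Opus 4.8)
The proof of Proposition \ref{propbootf} is the heart of the paper: assuming \eqref{propbootfas} (and the bound \eqref{propbootgas} on $g$ with $\e_2 = \e_1^{2/3}$), one must recover the three a priori bounds in \eqref{propbootfconc} for the renormalized profile $f = g - T(g,g)$. The overall plan is to exploit the Duhamel identity \eqref{Renodtf}, namely $\partial_t \wt f = \mathcal{Q}^R(g,g) + \mathcal{C}^{S}(g,g,g)$ (the $\mathcal{C}^R$ term being negligible, as noted after \eqref{cregular1}), expand the right-hand side in terms of $f$ using $g = f + T(g,g)$ up to a sufficiently high order of homogeneity (this is \ref{Ssecexp}), and then estimate each resulting multilinear term in the three norms: the Sobolev norm $\jt^{-p_0}\|\jxi^4\wt f\|_{L^2}$, the degenerate weighted norm $\|\jxi\partial_\xi\wt f\|_{W_t}$ from \eqref{wnorm}, and the Fourier--$L^\infty$ norm $\|\jxi^{3/2}\wt f\|_{L^\infty}$. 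Throughout, the linear dispersive estimate of Proposition \ref{propdisp}, together with Remark \ref{disprem}, converts control of the bootstrap norms into the sharp $\jt^{-1/2}$ pointwise decay needed to close the multilinear bounds (note $\alpha + \beta\gamma < 1/4$ by \eqref{wnormparam0}). The whole argument is a bootstrap within a bootstrap: one first proves Proposition \ref{propbootf}, then deduces Proposition \ref{propbootg} from it (as explained in Section \ref{secBoot}).

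The three norms require quite different treatments. For the Sobolev bound, I would use the product/commutator estimates for $\mathcal{Q}^R$ (Lemma \ref{lemQR}, estimate \eqref{lemQRsob}) and for $\mathcal{C}^S$ (Lemma \ref{lemCS}, estimate \eqref{lemCS2}), combined with the decay \eqref{mtLinfty}/\eqref{propbootginfty}; the slow growth $\jt^{p_0}$ with $p_0 < \alpha$ is affordable because each quadratic term produces at least a $\jt^{-1/2}$ decay factor and the nonlinearity has no truly resonant quadratic piece left after the partial normal form. For the Fourier--$L^\infty$ bound $\|\jxi^{3/2}\wt f\|_{L^\infty} \le \e_1$, the key is the stationary/space-time resonance analysis of Section \ref{secLinfS}: away from the resonant frequencies $\eta=\zeta=0$ (for $\mathcal{Q}^R_{++}$) and $\pm(\sqrt3,\sqrt3,\sqrt3)$ (for the singular cubic $\mathcal{C}^{S}_{+-+}$-type terms), integration by parts in time or frequency gains decay; at the resonances, one extracts the logarithmic phase correction recorded in \eqref{LinfSasyf0}, which is the modified-scattering phenomenon and is compatible with a \emph{uniform} $L^\infty$ bound precisely because the correction is purely a phase. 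The weighted norm is handled by differentiating Duhamel in $\xi$ (as in the schematic identity \eqref{dxiQRid} and the analogue for $\mathcal{C}^S$), which produces a factor of $s$ from $\partial_\xi$ hitting the oscillatory phase; this is exactly the term that, near $\pm\sqrt3$, forces the degeneration $2^{\beta\ell}$ built into $W_t$ — see the heuristic \eqref{introQR1}--\eqref{introQR2}.

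The structure of the argument follows the paper's outline: first reduce, via the expansion of \ref{Ssecexp}, to a finite list of model multilinear terms and dispatch all the ``easy'' remainders using the H\"older-type bounds of Section \ref{secmulti} (Lemmas \ref{lemQR}, \ref{lemCS}, \ref{lemT}) and the dispersive decay; then \ref{Ssecmidsum} collects the genuinely hard terms. These are: (i) the regular quadratic term $\mathcal{Q}^R_{++}$ localized to the coherent interaction $(0,0)\to\pm\sqrt3$, whose $W_t$-norm is estimated in Section \ref{secwR}; and (ii) the singular cubic terms $\mathcal{C}^S$ localized to $\pm(\sqrt3,\sqrt3,\sqrt3)\to\pm\sqrt3$, whose $W_t$-norm is estimated in Section \ref{secwL}. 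In both, one decomposes dyadically in time ($\tau_n$), in the distance $2^\ell$ from $\pm\sqrt3$ via $\chi_{\ell,\sqrt3}$, and in the input frequencies, then balances the volume of the near-resonant region against the oscillation $e^{is\Phi}$ and the available input bounds; the parameter constraints \eqref{wnormparam0} ($\gamma\beta'<\alpha<\beta'/2$, $\beta'\ge 2\alpha$) are exactly what make these sums converge and what ensures the bootstrap closes with constant $1$ once $\e_0$ is small (using $\e_0 \ll \e_1 \ll \e_2 = \e_1^{2/3}$ from \eqref{epsilonboot}, the quadratic-in-$g$ terms contribute $\e_2^2 = \e_1^{4/3} \ll \e_1$ while the purely-$f$ terms contribute $\e_1^2 \ll \e_1$). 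The main obstacle, and the most delicate part of the entire proof, is propagating the degenerate weighted norm $W_t$ through the near-resonant quadratic and cubic interactions (Sections \ref{secwR}--\ref{secwL}): one must track the loss of frequency regularity at $\pm\sqrt3$ quantitatively, show it is exactly absorbed by the weights $2^{\beta\ell}2^{-\alpha n}$, and simultaneously verify that localization and $\partial_\xi$ commute in $W_t$ (which needs $\beta'\gamma\le\alpha$) and that the resulting control still feeds back, via Proposition \ref{propdisp}, into the $\jt^{-1/2}$ decay that all the other estimates rely on. Everything else — the Sobolev growth, the easy remainders, the modified-scattering asymptotics — is comparatively routine given the machinery of Sections \ref{secmu}--\ref{secmulti}.
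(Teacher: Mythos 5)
Your roadmap captures the architecture of the paper's proof — the Duhamel identity \eqref{Renodtf}, the expansion in $f$ via $g = f + T(g,g)$ (Subsection \ref{Ssecexp}), the reduction to a finite list of model multilinear terms, and the identification of the coherent interactions $(0,0)\to\pm\sqrt{3}$ for $\mathcal{Q}^R$ and $\pm(\sqrt{3},\sqrt{3},\sqrt{3})\to\pm\sqrt{3}$ for $\mathcal{C}^S$ as the crux of the $W_T$-estimate (Sections \ref{secwR}, \ref{secwL}). This matches the paper's route, and the role you assign to the parameter constraints \eqref{wnormparam0} is correct.

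There is, however, one step that would fail if carried out literally. You dismiss the Sobolev bound as ``comparatively routine'' because ``each quadratic term produces at least a $\jt^{-1/2}$ decay factor and the nonlinearity has no truly resonant quadratic piece left after the partial normal form.'' Applying \eqref{lemQRsob} naively gives
\begin{align*}
\|\jxi^4\mathcal{Q}^R(f,f)\|_{L^2} \lesssim \|\jxi^{3+}\wt f\|_{L^2}\,\|e^{it\jnab}\W f\|_{L^{\infty-}} \lesssim \e_1^2\,\jt^{(3/4+)p_0-1/2+},
\end{align*}
and integrating in time produces $\jt^{(3/4+)p_0+1/2+}$, which is nowhere near the target $\jt^{p_0}$ (it would require $p_0\geq 2$, while $p_0<\alpha\ll 1$). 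The paper's proof of the Sobolev bound (Subsection \ref{SsecSob}) does not rely on that product estimate alone: it splits according to whether $|\xi|\gg|\eta|$ or $|\xi|\lesssim|\eta|$; in the first case it exploits the off-diagonal decay $|\mu^R|\lesssim\langle\inf_{\mu,\nu}|\xi-\mu\eta-\nu\s|\rangle^{-N}$ of the de-correlated symbol (treating $|\xi|\lesssim\jt^{p_0/10}$ and $|\xi|\gtrsim\jt^{p_0/10}$ separately), and in the second it integrates by parts in the uncorrelated frequencies, with a further threshold at $|\eta|\sim\jt^{1/3}$, trading the output Sobolev weight for derivatives on the high-frequency input. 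Your phrase ``no truly resonant quadratic piece left'' is also misleading: $\mathcal{Q}^R$ \emph{is} precisely the fully resonant, non-removable piece (see Subsection \ref{introQR}); it is only tame for the $H^4$-type norm because that norm is insensitive to the degeneracy at $\xi=\pm\sqrt 3$, not because the resonance was eliminated.
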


The proof of Proposition \ref{propbootf} will occupy the rest of the paper, Sections \ref{secwR}-\ref{secw'}.
For now, we show how Proposition \ref{propbootf} implies Proposition \ref{propbootg}
by using the estimates on the operator $T$ 
from Lemma \ref{lemT}. First let us make the following remarks:

\begin{rem}
Note that the a priori assumptions \eqref{propbootfas}
and the linear dispersive estimates \eqref{disp2} and \eqref{disp1} imply
\begin{align}\label{propbootfdecay}
{\big\| e^{-it\langle \wt{D} \rangle}f(t) \big\|}_{L^{\infty}}
  + {\big\| e^{-it\jnab} \mathbf{1}_{\pm}(D)\mathcal{W}^*f(t) \big\|}_{L^{\infty}} \lesssim \e_1\jt^{-1/2}.
\end{align}

Also note that, in view of the conservation of the energy \eqref{KGVHam}, we have
that, for all times,
\begin{align}\label{L2cons}
{\| g(t) \|}_{L^2} + {\| f(t) \|}_{L^2} \leq \e_1.
\end{align}
The bound for $g$ follows from its definition, and the bound for $f$ can be deduced from $f=g-T(g,g)$,
the bilinear bound for $T$ in \eqref{bilboundT}, and the a priori assumptions \eqref{propbootgas}. 
\end{rem}

\begin{rem}\label{remfki}
For $\iota,\kappa \in \{+,-\}$,
\begin{align}\label{fki}
\widetilde{f}^{\kappa}_{\iota}(\xi) := \widetilde{f}_\iota (\xi) \mathbf{1}_{\kappa}(\xi),
\end{align}
enjoys the same bootstrap assumptions as $\wt{f}$, since $\wt{f}(0) = 0$; see Lemma \ref{lemwtf0}.
\end{rem}

\medskip
\begin{proof}[Proof of Proposition \ref{propbootg} assuming Proposition \ref{propbootf}]
Recall from \eqref{Renof} that
$g = f + T(g,g)$.
From this, using the bounds on the Sobolev-type norms
\begin{align*}
\jt^{-p_0}  {\big\| \jxi^{4} \wt{f}(t) \big\|}_{L^{2}} \leq \e_1, \qquad 
  \jt^{-p_0} {\big\|  \jxi^{4} \wt{g}(t) \big\|}_{L^{2}} \leq 2 \e_2,
\end{align*}
the bilinear bound \eqref{bilboundT'rev}, 
and the decay estimate from \eqref{propbootgas}, we get
\begin{align*}
{\big\|  \jxi^{4} \wt{g}(t) \big\|}_{L^{2}} & \leq  {\big\|  \jxi^{4} \wt{f}(t) \big\|}_{L^{2}} 
  + {\big\| \jxi^{4} \wt{\mathcal{F}}T(g,g)(t) \big\|}_{L^{2}}
  \leq \e_1\jt^{p_0} + {\big\| \mathcal{W}^*T(g,g)(t) \big\|}_{H^{4}}
\\
& \leq \e_1\jt^{p_0} + C {\big\| \mathcal{W}^*g(t) \big\|}_{H^{4}} {\| e^{-it\jnab} \mathbf{1}_{\pm}(D) \mathcal{W}^*g(t) \|}_{L^\infty}
\\
& \leq \e_1\jt^{p_0} + C\e_2 \jt^{p_0} \cdot \e_2 \jt^{-1/2}
\\ & \leq \e_1\jt^{p_0} + C\e_2^2.
\end{align*}
This gives the first bound in \eqref{propbootgconc}.

To estimate the $L^\infty_x$-norm in \eqref{propbootgconc}
we use successively the estimate \eqref{propbootfdecay}, Sobolev's embedding, 
and \eqref{bilboundT'rev} 
to get
\begin{align*}
\begin{split}
{\| e^{-it\jnab} \mathbf{1}_{\pm}(D) \mathcal{W}^*g \|}_{L^\infty}
&  \leq C\e_1\jt^{-1/2} + C {\| e^{-it\jnab} \mathbf{1}_{\pm}(D) \mathcal{W}^* T(g,g) \|}_{L^\infty}
\\
& \leq C\e_1\jt^{-1/2} + C {\| e^{-it\jnab} \mathcal{W}^* T(g,g)\|}_{W^{0+,\infty-}}
\\
& \leq C\e_1\jt^{-1/2} + C {\| e^{-it\jnab} \mathcal{W}^* g \|}_{L^{\infty-}}^2
\\ 
& \leq C\jt^{-1/2} (\e_1 + \e_2^2) \\ & \leq \jt^{-1/2} \e_2
\end{split}
\end{align*}
as desired.
We have used here the notation $\infty-$ to denote an arbitrarily large (but finite) number
(which may be different from line to line)
consistently with the notation introduced in \S\ref{secNotation}.


Finally, we show \eqref{propbootginfty}. Note that this does not follow at once from \eqref{propbootgconc}
since $\W$ is not necessarily bounded on $L^\infty$.
Observe that, 
by interpolation of \eqref{propbootfdecay} and \eqref{L2cons},
we have
\begin{align}\label{bootgpr4q}
{\| e^{-it\langle \wt D \rangle}f \|}_{L^q} 
  + {\| e^{-it\jnab}  \mathbf{1}_{\pm}(\partial_x)\mathcal{W}^*f \|}_{L^q} \leq C\e_1 \jt^{-1/2(1-2/q)}.
\end{align}
Therefore, for finite $q$, we have
\begin{align*}
\begin{split}
{\| e^{-it\jnab} \mathcal{W}^*g \|}_{L^q}
&  \leq C\e_1\jt^{-1/2(1-2/q)} + {\| e^{-it\jnab} \mathcal{W}^* T(g,g) \|}_{L^q}
\\
&  \leq C\e_1\jt^{-1/2(1-2/q)} + C {\| e^{-it\jnab} \mathcal{W}^* g \|}_{L^{2q}}^2
\\ 
& \leq C\jt^{-1/2(1-2/q)} (\e_1 + \e_2^2) \\ & \leq \jt^{-1/2(1-2/q)} \e_2.
\end{split}
\end{align*}
Using Gagliardo-Nirenberg interpolation, with the Sobolev-type norm bound in \eqref{propbootgas}, we obtain,
provided $q$ is large enough,
\begin{align}\label{bootgpr3}
{\| e^{-it\jnab} \mathcal{W}^*g \|}_{W^{1,q}} \lesssim  \jt^{-1/3} \e_2. 
\end{align}
Then we can estimate, using \eqref{propbootfdecay} and Sobolev's embedding,
\begin{align}\label{bootgpr5}
\begin{split}
\jt^{1/2}{\| e^{-it\langle \wt D \rangle} g \|}_{L^\infty} 
  & \leq C\e_1 + \jt^{1/2} \cdot C {\| e^{-it\langle \wt{D} \rangle} T(g,g) \|}_{L^\infty}
\\
& \leq C\e_1 + \jt^{1/2} \cdot C {\| e^{-it\langle \wt{D} \rangle} T(g,g) \|}_{W^{1,\infty-}}.
\end{split}
\end{align}
Using \eqref{bilboundT'rev} we have
\begin{align*}
{\| e^{-it\langle \wt{D} \rangle} T(g,g) \|}_{W^{1,\infty-}} 
& \lesssim {\| e^{-it\jnab} \mathcal{W}^*T(g,g) \|}_{L^{\infty-}}
  + {\| \langle \partial_x \rangle e^{-it\jnab} \mathcal{W}^*T(g,g) \|}_{L^{\infty-}}
  \\
 & \lesssim {\| e^{-it\jnab} \mathcal{W}^* g \|}_{W^{1,\infty-}}^2
  \lesssim \e_2^2 \jt^{-2/3}. 
\end{align*}
Plugging this into \eqref{bootgpr5} 
gives \eqref{propbootginfty} provided $\e_2$ is sufficiently small.
\end{proof}

\medskip
\subsection{Preliminary bounds}\label{ssecpre}
Recall that our main aim from now on is to prove Proposition \ref{propbootf}.
Therefore, we will work under the a priori assumptions \eqref{propbootfas} on $f$,
as well as the a priori assumptions \eqref{propbootgas} on $g$.
We collect below several bounds on $f$ that are immediate consequences of the a priori assumptions.

\begin{lem}\label{lembb}
Under the a priori assumptions \eqref{propbootfas}, for all $t\in[0,T]$ the following hold true:

\begin{itemize}

\medskip
\item[(i)] {\normalfont ({\it Basic bounds for $f$})} We have
\begin{align}
\label{lembb2}
& {\| \wt{f}(t) \|}_{L^2} + {\| \langle \xi \rangle^{3/2} \wt{f}(t) \|}_{L^\infty} \lesssim \e_1,
\\
\label{lembb1}
& {\| \jxi \partial_\xi \wt{f}(t) \|}_{L^2} \lesssim \e_1 \jt^{\alpha + \beta \gamma},
\\
\label{lembb3}
& {\| \chi_{\ell,\sqrt{3}} \partial_\xi \wt{f}(t) \|}_{L^1} \lesssim \e_1 2^{\beta' \ell} \jt^\alpha,
  \qquad \jt^{-\gamma} \leq 2^\ell \leq 1.
\end{align}

\medskip
\item[(ii)] {\normalfont ({\it Improved low frequency bounds})}
For all $k\leq-5$
 \begin{align}
& \label{apriori11bis}
{\| \varphi_{\leq k + 2}\partial_\xi \widetilde{f} \|}_{L^2} \leq \e_1 \jt^\alpha,
\\
\label{apriori11}
&{\| \varphi_k \wt{f}(t) \|}_{L^\infty_\xi} \lesssim \e_1 2^{k/2} \jt^\alpha,
\\ 
& \label{apriori12}
{\| \varphi_k \wt{f}(t) \|}_{L^1_\xi}   \lesssim \e_1 2^{3k/2} \jt^\alpha,
\end{align}
and for all $k\in\Z$
\begin{align}
\label{apriori13}
{\big\| \partial_\xi (\varphi_k \wt{f})(t) \big\|}_{L^1_\xi}
  & \lesssim \e_1 \min(2^{k/2}, 1) \jt^{\alpha}.
\end{align}

\medskip
 \item[(iii)] {\normalfont ({\it Linear dispersive estimates})}
 For all $t\in \R$ we have
\begin{align}
\label{aprioridecay}
{\big\| e^{-it \langle \wt{D} \rangle} f(t) \big\|}_{L^\infty}
  + {\big\| e^{-it \jnab}  \mathbf{1}_{\pm}(\partial_x) \mathcal{W}^*f(t) \big\|}_{L^\infty} 
  & \lesssim \e_1 \langle t \rangle^{-1/2}.
\end{align}
\end{itemize}
\end{lem}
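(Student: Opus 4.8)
The plan is to derive all the bounds in Lemma \ref{lembb} as direct consequences of the a priori assumptions \eqref{propbootfas} together with the structural results already established — chiefly Proposition \ref{propFT}(iv), the dispersive estimates of Proposition \ref{propdisp}, and elementary interpolation. None of these require any of the multilinear machinery; they are purely bookkeeping, so the proof should be short.

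\textbf{Part (i).} The first bound in \eqref{lembb2} is immediate: the $L^2$ bound on $\wt f$ is contained in \eqref{L2cons} (or follows from \eqref{propbootfas} since $\jxi^4 \wt f \in L^2$ with at most $\jt^{p_0}$ growth dominates $\wt f \in L^2$ only after using conservation of energy — so I will quote \eqref{L2cons}), and the $\jxi^{3/2} L^\infty$ bound is exactly the third term in \eqref{propbootfas}. For \eqref{lembb1}, I unfold the definition of the $W_T$ norm \eqref{wnorm}: taking $\ell = -\gamma n$ (the smallest allowed scale) and noting $\bigcup_\ell \chi_{\ell,\sqrt 3}^{[-\gamma n,0]}$ covers all of $\R$, the bound $\|\jxi\partial_\xi\wt f\|_{W_t}\le 2\e_1$ gives $\|\jxi\partial_\xi\wt f(t)\|_{L^2}\lesssim \e_1 2^{-\beta\ell}2^{\alpha n}= \e_1 2^{\beta\gamma n}2^{\alpha n}\approx \e_1\jt^{\alpha+\beta\gamma}$, which is \eqref{lembb1}. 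For \eqref{lembb3}, Cauchy--Schwarz on the support $|\,|\xi|-\sqrt 3|\approx 2^\ell$ (measure $\approx 2^\ell$) turns the $W_t$-$L^2$ bound $\|\chi_{\ell,\sqrt 3}\partial_\xi\wt f\|_{L^2}\lesssim \e_1 2^{-\beta\ell}2^{\alpha n}$ into an $L^1$ bound $\lesssim \e_1 2^{\ell/2}2^{-\beta\ell}2^{\alpha n}= \e_1 2^{\beta'\ell}\jt^\alpha$, using $\beta' = \tfrac12-\beta$.

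\textbf{Part (ii).} These are the low-frequency refinements. For \eqref{apriori11bis}, at frequencies $\lesssim 2^k$ with $k\le -5$ we take the single scale $\ell$ with $2^\ell\approx 2^k$ (or $\ell_0=-\gamma n$ if $2^k$ is below the smallest scale) in \eqref{wnorm}; the penalization $2^{\beta\ell}$ is $\le 1$ so $\|\varphi_{\le k+2}\partial_\xi\wt f\|_{L^2}\lesssim \e_1 2^{\alpha n}=\e_1\jt^\alpha$ (here the key point is that low frequencies are far from $\sqrt3$ so the degeneracy factor only helps). For \eqref{apriori11} I use the interpolation inequality $|\varphi_k\wt f(\xi)|^2\lesssim \|\varphi_k\wt f\|_{L^2}\|\partial_\xi(\varphi_k\wt f)\|_{L^2}$: the first factor is $\lesssim 2^{3k/2}\|\jxi^{3/2}\wt f\|_{L^\infty}\cdot 2^{-k}=2^{k/2}\e_1$ (using $\wt f\in\jxi^{-3/2}L^\infty$ and that the support has measure $2^k$) — wait, more carefully: $\|\varphi_k\wt f\|_{L^2}\lesssim 2^{k/2}\|\varphi_k\wt f\|_{L^\infty}\lesssim 2^{k/2}\e_1$; the second factor is bounded by \eqref{apriori11bis} plus the commutator $\|\varphi_k'\wt f\|_{L^2}\lesssim 2^{-k}\cdot 2^{k/2}\e_1$, giving $\lesssim \e_1\jt^\alpha$; multiplying, $\|\varphi_k\wt f\|_{L^\infty}^2\lesssim 2^{k/2}\e_1\cdot \e_1\jt^\alpha$, hence \eqref{apriori11} after taking square roots and absorbing. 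Then \eqref{apriori12} is $\|\varphi_k\wt f\|_{L^1}\lesssim 2^k\|\varphi_k\wt f\|_{L^\infty}\lesssim \e_1 2^{3k/2}\jt^\alpha$, and \eqref{apriori13} follows from $\|\partial_\xi(\varphi_k\wt f)\|_{L^1}\lesssim 2^{k/2}\|\partial_\xi(\varphi_k\wt f)\|_{L^2}$ when $k\le 0$ (combined with \eqref{lembb1} or \eqref{apriori11bis}), and from $\|\partial_\xi(\varphi_k\wt f)\|_{L^1}\lesssim \|\jxi\partial_\xi\wt f\|_{L^2}\cdot\|\jxi^{-1}\varphi_k\|_{L^2}\lesssim \e_1\jt^{\alpha+\beta\gamma}$ — actually for $k\ge 0$ one needs to be slightly more careful, splitting near $\sqrt3$; I would localize further at scales $2^\ell$ around $\sqrt3$, use \eqref{lembb3} and for $\ell$-far pieces \eqref{lembb1} with the $\jxi^{-1}$ summed over the $O(2^{-k})$-many... in any case the $\min(2^{k/2},1)$ in the statement is exactly what this bookkeeping produces.

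\textbf{Part (iii).} The dispersive estimate \eqref{aprioridecay}: apply Proposition \ref{propdisp}(ii) with $f$ in place of the generic function. The right-hand side of \eqref{disp2} is controlled by $\jt^{-1/2}\|\jxi^{3/2}\wt f\|_{L^\infty}$ (bounded by $\e_1$ from \eqref{propbootfas}), plus $\jt^{-3/4+\alpha+\beta\gamma}\|\chi_{\le0,\sqrt3}\partial_\xi\wt f\|_{W_t}$ and $\jt^{-3/4+\alpha}\|\chi_{>0,\sqrt3}\jxi\partial_\xi\wt f\|_{W_t}$ (both bounded by $\e_1$ using \eqref{propbootfas} and the parameter constraint $\alpha+\beta\gamma<1/4$ from \eqref{wnormparam0}, so these decay faster than $\jt^{-1/2}$), plus $\jt^{-7/12}\|\jxi^4\wt f\|_{L^2}\lesssim \jt^{-7/12}\e_1\jt^{p_0}\lesssim \e_1\jt^{-1/2}$ since $p_0<\alpha$ is small. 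This gives $\|e^{it\langle\wt D\rangle}f\|_{L^\infty}\lesssim \e_1\jt^{-1/2}$. For the $e^{it\jnab}\mathbf 1_\pm(\partial_x)\mathcal W^* f$ piece, one writes $e^{it\jnab}\mathbf1_\pm(D)\mathcal W^* f = e^{it\jnab}\mathbf1_\pm(D)\whF^{-1}\wt f$ and applies Proposition \ref{propdisp}(i) in the same way. The only genuinely substantive input here is the numerology in \eqref{wnormparam0} guaranteeing $3/4-\alpha-\beta\gamma>1/2$; there is no real obstacle, this is the routine passage from the bootstrap norms to pointwise decay. I do not expect any hard step in this lemma — the whole content is organizing the a priori bounds and the dispersive estimate into the listed forms; the only place requiring mild care is the $k\ge0$ case of \eqref{apriori13}, handled by dyadic localization around $\pm\sqrt3$ as above.
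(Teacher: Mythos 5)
Your arguments for (i), (iii), and for \eqref{apriori11bis}, \eqref{apriori12}, \eqref{apriori13} are essentially correct and follow the paper's route. But there is a genuine gap in your proof of \eqref{apriori11}, and the interpolation idea you propose cannot be patched.

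The issue is the commutator term $\|\varphi_k'\widetilde f\|_{L^2}$. Writing $A_k:=\|\varphi_k\widetilde f\|_{L^\infty}$, your interpolation inequality gives $A_k^2\lesssim \|\varphi_k\widetilde f\|_{L^2}\,\|\partial_\xi(\varphi_k\widetilde f)\|_{L^2}$. The first factor is $\lesssim 2^{k/2}A_k$, and the second is $\lesssim \|\varphi_k\partial_\xi\widetilde f\|_{L^2}+2^{-k}\|\varphi_{\sim k}\widetilde f\|_{L^2}\lesssim \e_1\jt^\alpha+2^{-k/2}A_{\sim k}$. Substituting and cancelling one factor of $A_k$ yields $A_k\lesssim 2^{k/2}\e_1\jt^\alpha+A_{\sim k}$, which is a tautology and gives no gain. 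If you try to short-circuit this by bounding $\|\varphi_k'\widetilde f\|_{L^2}\lesssim 2^{-k/2}\e_1$ from the raw $L^\infty$ bound (which is what you do, writing "$2^{-k}\cdot 2^{k/2}\e_1$"), the claim that this is $\lesssim\e_1\jt^\alpha$ is false: for $k$ as small as $-\gamma n$ one has $2^{-k/2}\approx\jt^{\gamma/2}$, and $\gamma\approx 1/2$ far exceeds $2\alpha$ under \eqref{wnormparam0}. Even ignoring this, your final inequality $A_k^2\lesssim 2^{k/2}\e_1\cdot\e_1\jt^\alpha$ only delivers $A_k\lesssim\e_1 2^{k/4}\jt^{\alpha/2}$, which is strictly weaker than the claimed $\e_1 2^{k/2}\jt^\alpha$ for $k\le -5$; "taking square roots and absorbing" does not recover the missing $2^{k/4}\jt^{\alpha/2}$, and this loss is not benign (it propagates into $X_{k,m}$ in \eqref{wproofnot2} and degrades the downstream estimates).

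The missing ingredient is the vanishing of $\widetilde f$ at the origin, $\widetilde f(0)=0$, established in Lemma \ref{lemwtf0}. This is exactly what interpolation cannot see. With it one simply writes, for $|\xi|\approx 2^k$ with $k\le -5$,
\begin{align*}
\big|\varphi_k(\xi)\widetilde f(\xi)\big|
= \varphi_k(\xi)\Big|\int_0^\xi \partial_y\widetilde f(y)\,dy\Big|
\le \varphi_k(\xi)\,|\xi|^{1/2}\,\|\varphi_{\le k+2}\partial_y\widetilde f\|_{L^2_y}
\lesssim 2^{k/2}\e_1\jt^\alpha,
\end{align*}
using \eqref{apriori11bis} for the last step. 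This gives \eqref{apriori11} directly, with no commutator and no loss, and \eqref{apriori12}, \eqref{apriori13} then follow by the elementary steps you outline. So the structure of your proof is right everywhere except at \eqref{apriori11}, where you must replace the Gagliardo–Nirenberg interpolation with the fundamental-theorem-of-calculus argument that exploits $\widetilde f(0)=0$.
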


\begin{proof}
\noindent
Proof of (i):
The first norm in \eqref{lembb2} is bounded in view of the conservation of the Hamiltonian, see \eqref{L2cons},
while the second is part of the a priori assumptions \eqref{propbootfas}.
\eqref{lembb1} follows from \eqref{propbootfas} and the definition of $W_t$ in \eqref{wnorm}
by summation over $\ell$ with $c\jt^{-\gamma} \leq 2^\ell \leq 1$.
For \eqref{lembb3} we apply the Cauchy-Schwarz inequality and the a priori bound on the $W_t$ norm
to estimate
\[ \| \chi_{\ell,\sqrt{3}} \partial_\xi \widetilde{f} \|_{L^1} 
  \lesssim 2^{\ell/2} \| \chi_{\ell,\sqrt{3}} \partial_\xi \widetilde{f} \|_{L^2}
  \lesssim \e_1 2^{\ell/2} 2^{-\beta \ell} \jt^\alpha 
  = \e_1 2^{\beta'\ell} \jt^\alpha.
\]

\medskip
\noindent
Proof of (ii): \eqref{apriori11bis} follows from the definition of the norm \eqref{wnorm}.
Since $\widetilde{f}(0)=0$, we have, for $k\leq -5$,
\begin{align}
| \varphi_{k}(\xi)\wt{f}(\xi) | & = \varphi_{k}(\xi) \Big| \int_0^\xi \partial_y \wt{f}(y)\,dy \Big|
  \leq \varphi_{k}(\xi) |\xi|^{1/2} {\| \varphi_{\leq k+2} \partial_y \wt{f} \|}_{L^2_y} 
  \lesssim 2^{k/2} \e_1 \jt^\alpha,
\end{align}
and
\begin{align}
{\| \varphi_k\wt{f} \|}_{L^1} & \lesssim 2^k {\| \varphi_k \wt{f} \|}_{L^\infty}  
\lesssim \e_1 2^{3k/2} \jt^\alpha.
\end{align}
The estimate \eqref{apriori13} follows from the a priori assumption on the weighted norm in \eqref{propbootfas},
and from \eqref{apriori11}-\eqref{apriori12} above
as long as $||\xi|-\sqrt{3}| \geq 1$, and it follows from \eqref{lembb3} when 
$||\xi|-\sqrt{3}| \leq 1$ (which implies $|k|\leq 5$).

\medskip
\noindent
Proof of (iii). These estimates follow directly from the linear dispersive estimate \eqref{disp1} and \eqref{disp2}
and the a priori bounds \eqref{propbootfas}.
\end{proof}

\medskip
We now prove a weak bound on the basic weighted norm of $g$.
This 
and the a priori bounds \eqref{propbootgas} will help us to estimate various remainders that 
come from expanding the nonlinear expressions in $g$, see the right-hand side of \eqref{Renodtf},
in terms of the renormalized profile $f$; see Subsection \ref{Ssecexp}.

\begin{lem}\label{lemdxig}
Under the a priori assumptions \eqref{propbootfas} and \eqref{propbootgas}, for all $t\in[0,T]$
\begin{align}
\label{dxig}
{\| \jxi \partial_\xi \wt{g} \|}_{L^2_\xi} \leq C\e_1 \jt^{1/2+p_0/2}.
\end{align}
\end{lem}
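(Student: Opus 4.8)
The plan is to exploit the relation $g = f + T(g,g)$ from \eqref{Renof}--\eqref{RenoT}. Writing $\langle\xi\rangle\partial_\xi\widetilde g = \langle\xi\rangle\partial_\xi\widetilde f + \langle\xi\rangle\partial_\xi\widetilde{\mathcal F}T(g,g)$, the first term is already controlled by \eqref{lembb1}: ${\|\langle\xi\rangle\partial_\xi\widetilde f\|}_{L^2}\lesssim\e_1\langle t\rangle^{\alpha+\beta\gamma}\le\e_1\langle t\rangle^{1/2}$, which is stronger than what we need. Everything therefore reduces to estimating $\langle\xi\rangle\partial_\xi\widetilde{\mathcal F}T(g,g)$ in $L^2$.

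For this I would first establish, exactly as in Claim \ref{remdxiQR} (see also Remark \ref{remdxiT}), the schematic identity
\begin{align*}
\langle\xi\rangle\partial_\xi\widetilde{\mathcal F}T(g,g) \approx t\,\xi\,\widetilde{\mathcal F}T(g,g)
  + \langle\xi\rangle\,\widetilde{\mathcal F}T\big(\widetilde{\mathcal F}^{-1}(\partial_\xi\widetilde g),g\big) + \{\mathrm{l.o.t.}\}.
\end{align*}
This is obtained by writing out \eqref{RenoT} and letting $\partial_\xi$ fall either on the phase $e^{it\Phi_{\iota_1\iota_2}}$ --- which produces the first term on the right, since $\partial_\xi\Phi_{\iota_1\iota_2} = \xi/\langle\xi\rangle$ --- or on the symbol $\mathfrak m_{\iota_1\iota_2}^\pm$. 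When $\partial_\xi$ hits the smooth part of $\mathfrak m_{\iota_1\iota_2}^\pm$, including the factor $1/\Phi_{\iota_1\iota_2}$, the bounds \eqref{hummingbirdz} and \eqref{californiacondor1} show that one only picks up harmless factors, bounded in terms of the input frequencies, so that the resulting term is of the same type as $T$ but lower order; together with a couple of spare Sobolev derivatives of $g$ (available from \eqref{propbootgas}) it is estimated by the same procedure as below. When $\partial_\xi$ hits the singular factors $\delta(p)$, $\pv\,\widehat\phi(p)/(ip)$, or $\varphi^\ast$ --- all of which depend on $\xi$ and $\eta$ only through $p = \lambda\xi - \iota_1\mu\eta - \iota_2\nu\sigma$ --- one converts $\partial_\xi$ acting on these factors into (a sign times) $\partial_\eta$, and integrates the resulting $\partial_\eta$ by parts in $\eta$: the derivative falling on $\widetilde g(\eta)$ gives the second term on the right above, the derivative falling on $e^{it\Phi}$ produces another copy of the first term (with $\partial_\eta\Phi$ in place of $\partial_\xi\Phi$), and the derivative falling on the remaining part of the symbol gives lower order terms. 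All the symbol estimates needed here are supplied by Lemmas \ref{lemmaetazeta} and \ref{californiacondor}.

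It then remains to estimate the two main terms. For $t\,\xi\,\widetilde{\mathcal F}T(g,g)$ I would use the Sobolev estimate \eqref{bilboundT'rev} with $k=1$ together with the bootstrap bounds \eqref{propbootgas} --- the Sobolev bound ${\|\langle\xi\rangle^4\widetilde g\|}_{L^2}\lesssim\e_2\langle t\rangle^{p_0}$ and the sharp decay ${\|e^{-it\jnab}\mathcal W^* g\|}_{L^{\infty-}}\lesssim\e_2\langle t\rangle^{-1/2+}$ (the latter as in the proof of Proposition \ref{propbootg}) --- to get ${\|t\,\xi\,\widetilde{\mathcal F}T(g,g)\|}_{L^2}\lesssim t\cdot\e_2\langle t\rangle^{p_0}\cdot\e_2\langle t\rangle^{-1/2+} = \e_2^2\langle t\rangle^{1/2+}$; since $\e_2^2 = \e_1^{4/3}\le\e_1$ and $p_0<\alpha$ is small, this is $\le C\e_1\langle t\rangle^{1/2+}$. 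For $\langle\xi\rangle\widetilde{\mathcal F}T(\widetilde{\mathcal F}^{-1}(\partial_\xi\widetilde g),g)$ I again use \eqref{bilboundT'rev} with $k=1$, placing $\widetilde{\mathcal F}^{-1}(\partial_\xi\widetilde g)$ in $L^{2}$ (up to $L^{2+}$) and $e^{-it\jnab}\mathcal W^* g$ in $L^{\infty-}$; because $T$ almost gains a derivative, this costs only a factor $\langle\xi\rangle^{0+}$ on the first input, and since ${\|\langle\xi\rangle^{0+}\partial_\xi\widetilde g\|}_{L^2}\le{\|\langle\xi\rangle\partial_\xi\widetilde g\|}_{L^2}$ we arrive at
\begin{align*}
{\|\langle\xi\rangle\partial_\xi\widetilde g\|}_{L^2} \le C\e_1\langle t\rangle^{1/2+}
  + C\e_2\langle t\rangle^{-1/2+}\,{\|\langle\xi\rangle\partial_\xi\widetilde g\|}_{L^2}.
\end{align*}
The coefficient $C\e_2\langle t\rangle^{-1/2+}$ is at most $C\e_2 < \tfrac12$ for $\e_1$ (hence $\e_2$) small --- the exponent being negative --- so the last term is absorbed into the left-hand side, yielding \eqref{dxig}; here one uses that ${\|\langle\xi\rangle\partial_\xi\widetilde g(t)\|}_{L^2}$ is finite for each $t\in[0,T]$, which follows from local well-posedness in weighted spaces and \eqref{Bootinitcond}. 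The main obstacle is precisely this last, apparently circular, term: it closes only because $T$ is essentially smoothing and $g$ enjoys the optimal $\langle t\rangle^{-1/2}$ pointwise decay, which makes the feedback coefficient a small constant; the remaining delicate point is the symbol bookkeeping justifying the schematic identity, in particular the conversion of $\partial_\xi$ into $\partial_\eta$ on the principal value and cutoff factors, for which the estimates of Section \ref{secdec} are essential.
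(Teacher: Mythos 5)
Your proof is correct and follows essentially the same route as the paper. You use the same decomposition $g = f + T(g,g)$, the same schematic identity for $\langle\xi\rangle\partial_\xi\widetilde{\mathcal F}T(g,g)$ (splitting into the phase-derivative term and the symbol-derivative term, converting $\partial_\xi$ into $\partial_\eta$ on the singular factors, and integrating by parts), and the same bilinear bound \eqref{bilboundT'rev} coupled with the sharp $\langle t\rangle^{-1/2}$ decay. The only cosmetic difference is that the paper phrases the closing step as a bootstrap (assume \eqref{dxig} with constant $C$, plug it into the feedback term, and recover the bound with constant $C/2$), whereas you phrase it as a direct absorption of the small coefficient $C\e_2\langle t\rangle^{-1/2+}$ into the left-hand side; these are the same argument, and your version makes slightly more explicit the need for a priori finiteness of $\|\langle\xi\rangle\partial_\xi\widetilde g(t)\|_{L^2}$.
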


\begin{proof}
We obtain \eqref{dxig} through a bootstrap argument.
More precisely, assuming that for some $C$ large enough \eqref{dxig} holds,
it suffices to show the same inequality with $C/2$ instead of $C$.
In view of the formula $g=f+T(g,g)$ in \eqref{Renof}-\eqref{RenoT},
the bootstrap assumptions on $f$ (in particular the bound \eqref{lembb1},
with $C$ above chosen much larger than the implicit constant there)  it is enough to prove that
\begin{align}\label{dxig2}
{\| \jxi \partial_\xi \wt{T}(g,g) \|}_{L^2_\xi} & \lesssim \e_2^2 \jt^{1/2+p_0/2}
\end{align}
under the assumptions \eqref{dxig} and \eqref{propbootgas} (recall $\e_2 = \e_1^{2/3}$).

From the explicit formula \eqref{RenoT}, 
we see that
\begin{align}\label{dxig3}
\begin{split}
\jxi \partial_\xi \wt{\mathcal{F}} T^{\pm}_{\iota_1 \iota_2}(g,g) & = T_1(g,g) + T_2(g,g), 
\\
T_1(f_1,f_2) & := it \xi \, \wt{\mathcal{F}} T^{\pm}_{\iota_1 \iota_2}(f_1,f_2),
\\
T_2(f_1,f_2) & := \iint e^{it \Phi_{\iota_1 \iota_2}(\xi,\eta,\sigma)} \wt{f_1}(t,\eta) \wt{f_2}(t,\sigma) 
  \,\jxi \partial_\xi \mathfrak{m}_{\iota_1 \iota_2}^\pm(\xi,\eta,\sigma)\,d\eta \,d\sigma.
\end{split}
\end{align}
We need to analyze the formula for $\mathfrak{m}^\pm_{\iota_1\iota_2}$ from \eqref{RenoT}, \eqref{QZ}, \eqref{mucoeff}.
We can restrict our attention to the more complicated contribution involving the $\pv$,
since the $\delta$ part is easier to estimate.
This main contribution is
(we are dropping all the irrelevant signs, such as $\lambda,\mu,\nu$, and numerical constants from our notation)
\begin{align}\label{dxig5}
\begin{split}
& \mathfrak{m}_{\pv} := \frac{1}{\Phi_{\iota_1 \iota_2}(\xi,\eta,\sigma)} 	
  \cdot \frac{a(\xi, \eta,\sigma)}{\langle \eta \rangle \langle \sigma \rangle}  
  \varphi^\ast(p,\eta,\sigma) \, \pv \frac{\widehat{\phi}(p)}{ip}, \qquad 
  \\
& \varphi^\ast(\xi,\eta,p) = \varphi_{\leq -D_0}\big(p R(\eta,\sigma)) 
\qquad p := \lambda \xi - \iota_1 \mu \eta - \iota_2  \nu \sigma.
\end{split}
\end{align}

For $T_1$ 
we can use \eqref{bilboundT'rev}, the $L^\infty$ decay in \eqref{propbootgas}, 
and the interpolation of \eqref{L2cons} and the Sobolev bound in \eqref{propbootgas}
to obtain
\begin{align*}
{\| T_1[g,g](t) \|}_{L^2} 
&   \lesssim \jt {\| \jnab \mathcal{W}^* T[g,g](t) \|}_{L^2} 
  \lesssim \jt {\big\| e^{-it\jnab} \mathcal{W}^* g \big\|}_{L^{\infty}} {\| \jnab^{0+}g\|}_{L^2}
  \\ & \lesssim \jt \cdot \e_2 \jt^{-1/2} \cdot \e_2 \jt^{0+} \lesssim  \e_2^2 \jt^{1/2+}.
\end{align*}

To handle $T_2$ we need to look more closely at the formulas \eqref{RenoT} and \eqref{QZ} 
for $\mathfrak{m}^\pm_{\iota_1\iota_2}$.We apply $\partial_\xi$ and write the result as
\begin{align}\label{dxig6}
\begin{split}
& \jxi \partial_\xi \mathfrak{m}_{\pv} := \mathfrak{a} + \mathfrak{b},
\\
& \mathfrak{a} := \jxi \partial_\xi \Big[ \frac{1}{\Phi_{\iota_1 \iota_2}(\xi,\eta,\sigma)} 	
  \cdot \frac{a(\xi, \eta,\sigma)}{\langle \eta \rangle \langle \sigma \rangle}  
  \varphi^\ast(p,\eta,\sigma)\Big] \, \pv \frac{\widehat{\phi}(p)}{ip}
\\
& \mathfrak{b} := \jxi \frac{1}{\Phi_{\iota_1 \iota_2}(\xi,\eta,\sigma)}
  \cdot \frac{a(\xi,\eta,\sigma)}{\langle \eta \rangle \langle \sigma \rangle}  
  \varphi^\ast(p,\eta,\sigma)\, \partial_\xi \, \pv \frac{\widehat{\phi}(p)}{ip},
\end{split}
\end{align}
and, according to this, we define $T_\mathfrak{a}$ and $T_\mathfrak{b}$ similarly to $T_2$ in \eqref{dxig3}.

By the estimate \eqref{californiacondor1}, we deduce that $\mathfrak{a}$
is a symbol that behaves like (the $\pv$ contribution to) $\mathfrak{m}^\pm_{\iota_1\iota_2}$ times 
an extra factor of $\jxi \cdot R(\eta,\sigma)$. 
In practice, the factor of $R$ loses one derivative on the input with smaller frequency.
Using the H\"older bound from Lemma \ref{lemT},
estimating in $L^\infty$ the input with higher frequency and in $L^2$ the one with lower frequency,
we obtain
\begin{align*}
{\| T_\mathfrak{a}[g,g](t) \|}_{L^2} & 
  \lesssim {\big\| \jnab^{0+} e^{-it\jnab} \mathcal{W}^* g \big\|}_{L^{\infty}} 
  {\| \jnab \mathcal{W}^* g \|}_{L^2} \lesssim \e_2^2 \jt^{-1/2+p_0/2},
\end{align*}
having used interpolation of the Sobolev a priori bound \eqref{propbootgas} and \eqref{L2cons}
on both norms in the last inequality.

We now estimate the contribution involving $\mathfrak{b}$, assuming without loss of generality that $|\eta| \geq |\s|$. The idea is to use that $p = \lambda \xi - \iota_1 \mu \eta - \iota_2 \nu \sigma$
to convert $\partial_\xi$ into $\partial_\eta$ and integrate by parts in $\eta$;
this gives three types of terms: 
(1) a term where $\partial_\eta$ hits the profile $\wt{f}(\eta)$, 
(2) a term where $\partial_\eta$ hits $e^{it\Phi_{\iota_1\iota_2}}$
and (3) a term where its hits the rest of the symbol.
This last term is essentially the same as $\mathfrak{a}$ in \eqref{dxig6} 
(with $\partial_\eta$ replacing $\partial_\xi$ there) and can be handled identically,
so we skip it.
The contribution from (2) is of the same form as that of $T_1$ in \eqref{dxig3},
with $(\eta/\jeta)\wt{f_1}$ instead of $\wt{f_1}$, and therefore satisfies the same bound.
The remaining term is
\begin{align}\label{dxig10}
\iint e^{it \Phi_{\iota_1 \iota_2}(\xi,\eta,\sigma)}
  \partial_\eta \wt{g}(t,\eta) \wt{g}(t,\sigma) \, \jxi \mathfrak{m}_\pv(\xi,\eta,\sigma)\,d\eta \,d\sigma
\end{align}

This term is of the form $\jxi \wt{T}\big(\wtF^{-1} \partial_\eta \wt{g},g\big)$,
where the symbol is given by the $\pv$ part of the full symbol $\mathfrak{m}^{\pm}_{\iota_1\iota_2}$.
An application of Lemma \ref{lemT}, with the bounds \eqref{dxig} and \eqref{propbootgas}
gives the following upper bound: 
\begin{align*}
{\| \eqref{dxig10} \|}_{L^2} \lesssim {\big\| \whF^{-1} \jxi^{0+} \partial_\xi \wt{g} \big\|}_{L^2} 
   {\big\| \jnab^{0+} e^{-it\jnab} \mathcal{W}^* g \big\|}_{L^{\infty}}
  \lesssim \e_2 \jt^{1/2+p_0/2} \cdot \e_2 \jt^{-1/2+} \lesssim \e_2^2 \jt^{p_0/2+}.
\end{align*}
This concludes the estimate \eqref{dxig2}. 
\end{proof}

\medskip
\begin{rem}\label{remdxiT}
The argument in the proof of Lemma \ref{lemdxig} shows that 
we have the following schematic identity for the operator $\wt{T}$ in \eqref{RenoT}:
\begin{align}\label{dxiTid}
\jxi \partial_\xi \wt{T}(f_1,f_2) \approx t \cdot \jxi \wt{T}(f_1,f_2) 
  + \jxi \wt{T}\big(\wtF^{-1}\partial_\xi \wt{f_1},f_2\big).
\end{align}
This is the analogue of \eqref{dxiQRid} for $\mathcal{Q}^R$.
%
In particular, \eqref{dxiTid} implies, via Lemma \ref{lemT}, that
\begin{align}\label{dxiT2+infty}
\begin{split}
& {\| e^{-it\jnab} \whF^{-1} \jxi \partial_\xi \wt{T}(f_1,f_2) \|}_{L^2+L^{\infty-}}
  \\ 
  &  \lesssim \jt {\| e^{-it\jnab} \whF^{-1} \jxi \wt{T}(f_1,f_2)  \|}_{L^{\infty-}}
  + {\| e^{-it\jnab} \whF^{-1} \jxi \wt{T}\big(\wtF^{-1}\partial_\xi \wt{f_1},f_2\big) \|}_{L^2}
  \\
  &\lesssim \jt {\| e^{-it\jnab} \W T(f_1,f_2)  \|}_{W^{1,\infty-}}
  + {\| e^{-it\jnab} \W T\big(\wtF^{-1}\partial_\xi \wt{f_1},f_2\big) \|}_{H^1}
  \\
  & \lesssim \jt {\big\| e^{-it\jnab} \jnab^{0+}
  \mathcal{W}^* f_1 \big\|}_{L^{\infty-}}  {\big\| e^{-it\jnab} \jnab^{0+}\mathcal{W}^*f_2 \big\|}_{L^{\infty-}}
  \\ & \quad + {\big\| \jxi^{0+} \partial_\xi \wt{f_1} \big\|}_{L^2}
  {\big\| \jnab^{0+} e^{-it\jnab} 
  \mathcal{W}^*f_2 \big\|}_{L^{\infty-}}.
\end{split}
\end{align}


\end{rem}

\medskip
\subsection{Expansions of the nonlinear terms}\label{Ssecexp}
Our starting point to prove Proposition \ref{propbootf} is the equation \eqref{Renodtf}.
To obtain the desired bounds we first need to convert the nonlinear terms on the right-hand side of \eqref{Renodtf} 
into multilinear expressions which depend only on $f$, plus remainders which depend
on both $f$ and $g$ but have a higher degree of homogeneity (they are at least quartic terms)
and, therefore, are easier to bound.
This is done by expanding $g=f + T(g,g)$, see \eqref{Renof}-\eqref{RenoT}.
Thanks to the expansions below we will obtain leading order quadratic and cubic (and some quartic) 
terms that only depend on the renormalized $f$. 
For these leading orders we can use the stronger bootstrap assumptions \eqref{propbootfas}, 
but the analysis is still quite involved, and will occupy Sections \ref{secwR}-\ref{secw'}.
The higher order remainder terms involving both $f$ and $g$ are taken care of in Lemmas 
\ref{lemQRexp} and \ref{lemCSexp} below.

Recall the bracket notation introduced after \eqref{QR}.
The following Lemma gives an expansion for the regular quadratic terms.

\begin{lem}[Expansion of $\mathcal{Q}^R$]\label{lemQRexp}
Consider $\mathcal{Q}^R$ as defined in \eqref{QR}, and $T$ as in \eqref{Renof}-\eqref{RenoT}.
Under the a priori assumptions \eqref{propbootgas} and \eqref{propbootfas} we can write
\begin{align}\label{QRexp}
\begin{split}
\mathcal{Q}^R[g,g] & = \mathcal{Q}^R[f,f] + \mathcal{R}_1(f,g)
\\ & = \mathcal{Q}^R[f,f] + \mathcal{Q}^R[f,T(f,f)] + \mathcal{Q}^R[T(f,f),f] + \mathcal{R}_2(f,g)
\end{split}
\end{align}
with
\begin{align}\label{QRexprem}
\begin{split}
\jt^{-p_0}{\| \jxi^4 \mathcal{R}_1(f,g)(t) \|}_{L^2}
  + \jt^{-\alpha}{\| \jxi \partial_\xi \mathcal{R}_2(f,g)(t) \|}_{L^2}& \lesssim \e_2^2 \jt^{-1}.
\end{split}
\end{align}
\end{lem}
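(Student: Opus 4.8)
The plan is to substitute $g = f + T(g,g)$ into both arguments of the bilinear operator $\mathcal{Q}^R$ and expand, then estimate each resulting term using the H\"older-type bounds for $\mathcal{Q}^R$ (Lemma~\ref{lemQR}, in particular \eqref{lemQRpq}, \eqref{lemQRsob}, and the differentiated version \eqref{dxiQR2+infty}) together with the bilinear bounds on $T$ (Lemma~\ref{lemT}, \eqref{bilboundT}--\eqref{bilboundTep} and \eqref{dxiT2+infty}), and the linear decay and weighted a priori bounds on $f$ and $g$ from \eqref{propbootfas}, \eqref{propbootgas}, \eqref{propbootfdecay}, \eqref{aprioridecay}, \eqref{dxig}. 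Writing $g = f + T(g,g)$ once in each slot gives
\[
\mathcal{Q}^R[g,g] = \mathcal{Q}^R[f,f] + \mathcal{Q}^R[f,T(g,g)] + \mathcal{Q}^R[T(g,g),f] + \mathcal{Q}^R[T(g,g),T(g,g)],
\]
so that $\mathcal{R}_1(f,g)$ is the sum of the last three terms. Then, to get the finer decomposition, I would further replace $g$ by $f + T(g,g)$ inside $T(g,g)$ in the two ``mixed'' terms, i.e.\ write $T(g,g) = T(f,f) + T(f,T(g,g)) + T(T(g,g),f) + T(T(g,g),T(g,g))$, keep $\mathcal{Q}^R[f,T(f,f)] + \mathcal{Q}^R[T(f,f),f]$ as the displayed leading cubic terms, and collect everything else (including the doubly-quadratic term $\mathcal{Q}^R[T(g,g),T(g,g)]$) into $\mathcal{R}_2(f,g)$.

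For the Sobolev (weighted-$L^2$, i.e.\ $H^4$ on the Fourier side) estimate on $\mathcal{R}_1$, the worst terms are those of lowest homogeneity, namely $\mathcal{Q}^R[f,T(g,g)]$ and $\mathcal{Q}^R[T(g,g),f]$, which are effectively cubic. For these I would apply \eqref{lemQRsob} with one factor measured in $L^2$-based Sobolev and the other in $L^\infty$: put the $T(g,g)$ factor in $L^\infty$ via \eqref{bilboundT'rev} and the dispersive decay $\|e^{it\jnab}\mathcal{W}^* g\|_{W^{1,\infty-}} \lesssim \e_2\jt^{-1/2}$ from \eqref{propbootginfty}/\eqref{propbootfdecay}, which yields two factors of $\jt^{-1/2}$, times $\jt^{p_0}$ from the Sobolev norm of the remaining $f$ or $g$; since $p_0 < \alpha < 1/2$ this is $\lesssim \e_2^2\jt^{p_0 - 1}$, as required (after relabelling $\bar\e$). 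The remaining quartic term $\mathcal{Q}^R[T(g,g),T(g,g)]$ is even better: both $T(g,g)$ factors decay and one can carry a Sobolev norm, giving $\lesssim \e_2^2\jt^{-1+p_0}$.

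The genuinely delicate part is the bound on $\jxi\partial_\xi \mathcal{R}_2(f,g)$ in $L^2$. Here I would use the schematic identity \eqref{dxiQRid} for $\jxi\partial_\xi \mathcal{Q}^R$, which turns a derivative into a factor of $t$ on $\mathcal{Q}^R$ plus a term where $\partial_\xi$ has landed on one input profile, and similarly \eqref{dxiTid}/\eqref{dxiT2+infty} for $\jxi\partial_\xi T$. The cost is the extra factor of $t$ (or a $\partial_\xi\wt g$, controlled only by $\e_2\jt^{1/2+}$ via Lemma~\ref{lemdxig}), so I need to extract enough decay from the other factors to beat $\jt^{1+\alpha}$. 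For the cubic-in-$f$-plus-$T$ remainders inside $\mathcal{R}_2$ — e.g.\ $\mathcal{Q}^R[f, T(f,T(g,g))]$, $\mathcal{Q}^R[f,T(T(g,g),f)]$, $\mathcal{Q}^R[T(g,g),\cdot]$, $\mathcal{Q}^R[T(f,f),\cdot]$ with $\cdot$ at least quadratic — after differentiating, the worst case is three ``slow'' factors: $t$ times three $L^\infty$-decaying factors each $\lesssim \jt^{-1/2}$ (using the almost-gain-of-derivative in Lemma~\ref{lemT} and Lemma~\ref{lemQR} to absorb the $\jnab^{0+}$ losses), giving $t\cdot\jt^{-3/2+} = \jt^{-1/2+}$; the remaining term, where $\partial_\xi$ hits a profile, uses $\|\jxi^{0+}\partial_\xi\wt f\|_{L^2}\lesssim\e_1\jt^{\alpha+\beta\gamma}$ from \eqref{lembb1} against two $\jt^{-1/2+}$ decaying factors, giving $\jt^{-1+\alpha+\beta\gamma+}$, which is $\lesssim \jt^{-1+\alpha}$ up to adjusting $\beta'$ small. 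The doubly-quadratic term $\mathcal{Q}^R[T(g,g),T(g,g)]$, after the $\partial_\xi$ expansion, produces at worst $t$ times four factors with total decay $\jt^{-2+}$ (and possibly one $\partial_\xi\wt g$ costing $\jt^{1/2+}$ in place of a $\jt^{-1/2}$), i.e.\ $\jt^{-1+}$, which is acceptable. I expect the main obstacle to be bookkeeping: carefully tracking, for each of the $\approx 10$--$15$ terms produced by the double expansion and the $\partial_\xi$ Leibniz rule, which input carries the weighted norm and which carry $L^\infty$ decay, and checking in each case that the total power of $\jt$ is $< -1 + \alpha$ (or $< p_0 - 1$ for $\mathcal{R}_1$), using throughout that $\e_0 \ll \e_1 \ll \e_2 = \e_1^{2/3}$ so that products like $\e_2^2$ are small relative to $\e_2$ and the bootstrap closes.
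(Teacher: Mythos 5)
Your overall approach matches the paper's: substitute $g=f+T(g,g)$, estimate the remainder terms via the bilinear bounds on $\mathcal{Q}^R$ and $T$, the identities \eqref{dxiQRid}/\eqref{dxiTid}--\eqref{dxiT2+infty}, and the a priori decay. Your algebraic decomposition of $\mathcal{R}_1$ agrees with the paper's (since $\mathcal{Q}^R[T(g,g),g]=\mathcal{Q}^R[T(g,g),f]+\mathcal{Q}^R[T(g,g),T(g,g)]$), and your fuller expansion of $T(g,g)$ producing an extra quintic term is harmless. The Sobolev bound on $\mathcal{R}_1$ is argued essentially as in the paper.

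However, your decay bookkeeping in the weighted estimate for $\mathcal{R}_2$ contains a genuine miscount that would fail to close if taken at face value. Every term in $\mathcal{R}_2$ is at least \emph{quartic} in $(f,g)$ (the minimal one is $\mathcal{Q}^R[T(g,g),T(g,g)]$; terms like $\mathcal{Q}^R[f,T(f,T(g,g))]$ carry $1+3=4$ inputs). When $\jxi\partial_\xi$ lands on the phase (the $t\cdot\jxi\mathcal{Q}^R$ piece), all four inputs still carry $L^{\infty-}$ decay, giving $t\cdot\jt^{-2+}=\jt^{-1+}\lesssim\jt^{-1+\alpha}$. You wrote ``$t$ times three $L^\infty$-decaying factors $\ldots$ giving $\jt^{-1/2+}$,'' but $\jt^{-1/2+}$ is \emph{not} $\lesssim\jt^{-1+\alpha}$ (since $\alpha<\tfrac12$): one factor of decay was dropped. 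Likewise, when $\partial_\xi$ lands on an input, you have $\|\jxi^{0+}\partial_\xi\widetilde f\|_{L^2}\lesssim\e_1\jt^{\alpha+\beta\gamma}$ against \emph{three} (not two) remaining $L^{\infty-}$ factors, hence $\jt^{-3/2+\alpha+\beta\gamma}\lesssim\jt^{-1+\alpha}$ since $\alpha+\beta\gamma<\tfrac14$. Your claimed $\jt^{-1+\alpha+\beta\gamma+}$ does \emph{not} close: $\beta\gamma\approx\tfrac14$ for small $\beta',\gamma'$, so decreasing $\beta'$ makes it worse, not better --- the remark ``up to adjusting $\beta'$ small'' is exactly backwards. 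The fix is simply to note that $\mathcal{R}_2$ is quartic (not ``cubic-in-$f$-plus-$T$'' as you write) and count all four inputs; once that is done, the estimates close with room to spare, exactly as in the paper's proof (which, for the piece where $\partial_\xi$ lands on $T(g,g)$, passes through \eqref{dxiT2+infty} together with \eqref{dxig} rather than \eqref{lembb1} directly, arriving at the same $\jt^{-1+}$).
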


\smallskip
\begin{proof}
For any bilinear form $A$, using $g = f + T(g,g)$ we have $A(g,g)- A(f,f) =  A(f,T(g,g)) + A(T(g,g),g)$.
Thus, we see that the remainders in \eqref{QRexp} are given by
\begin{align}\label{QRexp0H}
\begin{split}
\mathcal{R}_1(f,g) & = \mathcal{Q}^R[f,T(g,g)]  + \mathcal{Q}^R[T(g,g),g],
\end{split}
\end{align}
and
\begin{align}\label{QRexp0}
\begin{split}
\mathcal{R}_2(f,g) & = \mathcal{Q}^R[f,T(g,g)-T(f,f)]  + \mathcal{Q}^R[T(g,g)-T(f,f),f] 
+ \mathcal{Q}^R[T(g,g),g-f]
\\
& = \mathcal{Q}^R[f,T(f,T(g,g))] + \mathcal{Q}^R[f,T(T(g,g),g)] 
\\ & \qquad + \mathcal{Q}^R[T(f,T(g,g)),f] + \mathcal{Q}^R[T(T(g,g),g),f] + \mathcal{Q}^R[T(g,g),T(g,g)].
\end{split}
\end{align}

Let us first show how to obtain the Sobolev type bound in \eqref{QRexprem}.
Since $f$ enjoys better estimates than $g$ it suffices to bound
\begin{align}\label{QRexpaim1}
{\| \jxi^4 \mathcal{Q}^R[T(g,g),g] \|}_{L^2} \lesssim \e_2^2 \jt^{p_0-1}.
\end{align}
From \eqref{lemQRsob}, we get 
\begin{align}\label{lemQRestapp2}
{\big\| \jxi^4 \mathcal{Q}^R[a,b](t) \big\|}_{L^2} 
  \lesssim {\| \jxi^{3+} \wt{a} \|}_{L^2} {\| e^{-i t\jnab} \mathcal{W}^*b \|}_{L^{\infty-}}
  + {\| e^{-i t\jnab} \mathcal{W}^*a \|}_{L^{\infty-}} {\| \jxi^{3+} \wt{b} \|}_{L^2},
\end{align}
Interpolating between \eqref{propbootgas} and \eqref{L2cons}, 
and using the bilinear bounds \eqref{bilboundT} and \eqref{bilboundT'rev}, we have
\begin{align*}
& {\| \jxi^4 \mathcal{Q}^R[T(g,g),g] \|}_{L^2} 
\\
&\qquad \lesssim {\| \jxi^{3+} \wt{g} \|}_{L^2} {\| e^{-it\jnab} \mathcal{W}^*T(g,g) \|}_{L^{\infty-}}
  + {\| e^{-it\jnab} \mathcal{W}^* g \|}_{L^{\infty-}} {\| \jxi^{3+} \wt{T}(g,g) \|}_{L^2}
\\
& \qquad\lesssim \e_2 \jt^{(3/4+)p_0} {\| e^{-it\jnab} \mathcal{W}^*g \|}_{L^{\infty-}}^2 
  + \e_2 \jt^{-1/2+} \cdot {\| e^{-it\jnab} \mathcal{W}^*g \|}_{L^{\infty-}} {\| \jxi^3 \wt{g} \|}_{L^2}
\\
&\qquad \lesssim \e_2^3 \jt^{(3/4+)p_0} \cdot \jt^{-1+}
\end{align*}
which is bounded by $\e_2^3 \jt^{p_0-1}$. 

We now show how to obtain the weighted bound in \eqref{QRexprem} 
for each of the terms on the right-hand side of \eqref{QRexp0}.
We are going to use the identity \eqref{dxiQRid}, which we restate here for ease of reference,
\begin{align}\label{dxiQRid'}
\jxi \partial_\xi \mathcal{Q}^R[f_1,f_2] \approx t \cdot \jxi \mathcal{Q}^R[f_1,f_2] 
  + \jxi \mathcal{Q}^R[\wtF^{-1}\partial_\xi \wt{f_1},f_2],
\end{align}
and the bilinear estimate \eqref{lemQRsob}.
The idea is that applying $\jxi \partial_\xi$ to the quartic 
expressions in \eqref{QRexp0} will cost at most a factor of $t$ as we see from \eqref{dxiQRid'};
then, estimating all the inputs 
in $L^{\infty-}$ will give a decaying factor of $\e_2\jt^{-1/2+}$ for each of them,
for a total gain of $\e_2^4\jt^{-2+}$, and this will suffice to obtain \eqref{QRexprem}.

Let us look more in detail at the term $\mathcal{Q}^R[T(g,g),T(g,g)]$; 
the other terms being similar or better since they contain at least one $f$.
According to \eqref{dxiQRid'}, 
we need to estimate 
\begin{align}\label{QRexp5}
t {\| \mathcal{Q}^R[ \wtF^{-1}\jxi \wt{T}(g,g),T(g,g)] \|}_{L^2}, 
  \qquad \mbox{and} \qquad {\| \jxi \mathcal{Q}^R[ \wtF^{-1}\partial_\xi \wt{T}(g,g),T(g,g)] \|}_{L^2}.
\end{align}
For the first term, we use \eqref{lemQRsob} followed by \eqref{bilboundT}:
\begin{align*}
& t {\| \jxi \mathcal{Q}^R[ T(g,g),T(g,g)](t) \|}_{L^2} 
\\
& \qquad\lesssim 
  \jt {\| \jnab^{0+} e^{-it\jnab} \mathcal{W}^*T(g,g) \|}_{L^{\infty-}} 
  {\| e^{-it\jnab} \mathcal{W}^*T(g,g) \|}_{L^{\infty-}}
\\
& \qquad \lesssim \jt 
  {\| e^{-it\jnab} \mathcal{W}^*g \|}_{L^{\infty-}}^4
  \lesssim \e_2^4 \jt^{-1+}.
\end{align*}
For the second term in \eqref{QRexp5}, we first estimate the first input: 
using \eqref{dxiT2+infty}, the a priori bounds, and \eqref{dxig}, give us
\begin{align}\label{dxiTinfty} 
\begin{split}
& {\big\| \jnab e^{-it\jnab} \mathcal{W}^* (\wtF^{-1} \partial_\xi \wt{T}(g,g))\big\|}_{L^2 + L^{\infty-}} 
  \\
  & \lesssim
  \jt {\big\| \jnab^{0+} e^{-it\jnab}\mathcal{W}^* g \big\|}_{L^{\infty-}}
  {\big\|  \jnab^{0+}e^{-it\jnab} \mathcal{W}^* g \big\|}_{L^{\infty-}}
  \\
  & + {\big\| \jxi^{0+} \partial_\xi \wt{g} \big\|}_{L^2}
  {\big\| \jnab^{0+} e^{-it\jnab} \mathbf{1}_{\pm}(D) \mathcal{W}^* g \big\|}_{L^\infty}
  \\
  & \lesssim \jt \cdot \e_2 \jt^{-1/2+} \cdot \e_2 \jt^{-1/2+} + \e_2 \jt^{1/2+p_0/2} \cdot \e_2 \jt^{-1/2}
  \lesssim \e_2^2 \jt^{p_0/2}.
\end{split}
\end{align}

Then, using \eqref{lemQRsob} with $p_2=p_4 = \infty-$ and $p_1=p_3=2+$ or $\infty-$, 
and \eqref{bilboundT'rev}, we have
\begin{align*}
& {\| \jxi \mathcal{Q}^R[ \wtF^{-1}\partial_\xi \wt{T}(g,g),T(g,g)](t) \|}_{L^2} 
  \\
  & \lesssim {\big\| \jnab e^{-it\jnab} \mathcal{W}^* (\wtF^{-1} \partial_\xi \wt{T}(g,g))\big\|}_{L^2 + L^{\infty-}} 
  \cdot {\| \jnab^{0+} e^{-it\jnab} \mathcal{W}^*T(g,g) \|}_{L^{\infty-}}
  \\
  & \lesssim \e_2^2 \jt^{p_0/2} \cdot \e_2^2 \jt^{-1+},
\end{align*}
which is sufficient since $p_0<\alpha$.

The remaining terms in \eqref{QRexp0} can be treated similarly, using the
estimates of Lemmas \ref{lemT} and \ref{lemQR},
see also the expressions for $\jxi\partial_\xi\mathcal{Q}^R$ and $\jxi\partial_\xi\wt{T}$,
in \eqref{dxiQRid}-\eqref{dxiQR2+infty} and \eqref{dxiTid}-\eqref{dxiT2+infty}, 
and the weighted bound \eqref{dxig} for $\jxi \partial_\xi \wt{g}$.
\end{proof}

\medskip
Here is a similar expansion for the cubic terms.

\begin{lem}[Expansion of $\mathcal{C}^{S}$]\label{lemCSexp}
Consider $\mathcal{C}^S$ defined in~\eqref{CubicS}.
Under the a priori assumptions \eqref{propbootgas} and \eqref{propbootfas} we have
\begin{align}\label{CSexp}
\begin{split}
\mathcal{C}^{S}[g,g,g] & = \mathcal{C}^{S}[f,f,f] \\ & + \mathcal{C}^{S}[T(f,f),f,f] + \mathcal{C}^{S}[f,T(f,f),f] 
	+ \mathcal{C}^{S}[f,f,T(f,f)] + \mathcal{R}_3(f,g)
\end{split}
\end{align}
with
\begin{align}\label{CSexprem}
{\| \jxi \partial_\xi \mathcal{R}_3(f,g)(t) \|}_{L^2} & \lesssim \e_2^3 \jt^{-1+\alpha}.
\end{align}
Moreover
\begin{align}\label{CSSob}
{\| \jxi^4 \mathcal{C}^{S}(g,g,g) \|}_{L^2} \lesssim \e_2^3 \jt^{-1+p_0}.
\end{align}
\end{lem}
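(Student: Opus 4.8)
\textbf{Plan of proof for Lemma \ref{lemCSexp}.}

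The strategy mirrors that of Lemma \ref{lemQRexp}: we substitute $g = f + T(g,g)$ into each of the three slots of the trilinear form $\mathcal{C}^{S}$ and collect terms according to their homogeneity in $f$. Writing $\mathcal{C}^{S}[g,g,g] - \mathcal{C}^{S}[f,f,f]$ by trilinearity gives a sum of terms each containing at least one factor of the form $T(g,g)$; extracting the pieces that are cubic in $f$ (i.e. $T(f,f)$ in one slot and $f$ in the other two) produces exactly the second, third and fourth terms in \eqref{CSexp}, and the remainder $\mathcal{R}_3(f,g)$ collects everything else. Every term in $\mathcal{R}_3$ is therefore a multilinear expression of degree at least $4$ in which each input is either $f$ or $g$, and at least one input has the form $T(g,g)$ (or a higher iterate, $T(f,T(g,g))$, $T(T(g,g),g)$, etc.), so one may always bound it using the a priori bounds \eqref{propbootgas}, \eqref{propbootfas}, the bilinear bounds \eqref{bilboundT}–\eqref{bilboundT'rev} for $T$, and the H\"older-type estimates \eqref{lemCS1}–\eqref{lemCS2} for $\mathcal{C}^{S}$, with the endpoint estimate \eqref{lemCSend} available when an $L^\infty$ input is present.

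For the weighted bound \eqref{CSexprem} I would differentiate using the schematic identity for $\jxi\partial_\xi \mathcal{C}^{S}$ (Remark \ref{lemCSrem}, analogous to \eqref{dxiQRid'} and \eqref{dxiTid}): applying $\jxi\partial_\xi$ costs at most a factor of $t$ (when the derivative hits the oscillating phase $e^{it\Phi}$), or converts $\partial_\xi$ into $\partial_\eta$ on one of the inputs, or hits the symbol, in which case Remark \ref{lemCSrem} guarantees a lower order term. Since $\mathcal{R}_3$ is at least quartic and each of the (at least four) inputs can be placed in $L^{\infty-}$ with decay $\e_2\jt^{-1/2+}$ via \eqref{aprioridecay}, \eqref{propbootfdecay} and the bilinear bounds \eqref{bilboundT'rev}, we gain a total of $\e_2^4\jt^{-2+}$; against the factor of $t$ from the phase this still leaves $\e_2^4\jt^{-1+}$, which is better than \eqref{CSexprem}. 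The one point requiring a little care is the term where $\partial_\xi$ hits an input profile, producing a $\partial_\xi\wt{T}(g,g)$ (or $\partial_\xi\wt{g}$) factor; here I would invoke the bound \eqref{dxiTinfty} for $\jnab e^{it\jnab}\mathcal{W}^*(\wtF^{-1}\partial_\xi\wt{T}(g,g))$ in $L^2+L^{\infty-}$, exactly as in the proof of Lemma \ref{lemQRexp}, together with \eqref{dxig} for $\jxi\partial_\xi\wt{g}$.

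The Sobolev bound \eqref{CSSob} is the most direct part: using \eqref{lemCS2} with three inputs equal to $g$, placing one input in a weighted $L^2$-type norm (controlled by $\|\jxi^{3+}\wt{g}\|_{L^2} \lesssim \e_2\jt^{p_0}$ after absorbing the $0+$ from interpolation of \eqref{propbootgas} and \eqref{L2cons}) and the other two in $L^{\infty-}$ (each decaying like $\e_2\jt^{-1/2+}$ via \eqref{aprioridecay}), gives $\e_2^3\jt^{p_0}\cdot\jt^{-1+}$, which is \eqref{CSSob}. I expect the main obstacle to be purely bookkeeping: keeping track, in $\mathcal{R}_3$, of all the iterated-$T$ terms and verifying that in each one there are enough inputs to absorb the factor of $t$ coming from $\jxi\partial_\xi$ while still closing with a power $\jt^{-1+\alpha}$; since every such term has homogeneity $\geq 4$ and $T$ is bounded (indeed gains almost a derivative, \eqref{bilboundT'rev}), this always works, but it must be checked term by term, exactly as was done for $\mathcal{Q}^R$ in Lemma \ref{lemQRexp}.
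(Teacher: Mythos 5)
Your proposal is essentially correct and tracks the paper's own proof closely: the same expansion of $\mathcal{R}_3$ into iterated-$T$ terms, the same three-way split of $\jxi\partial_\xi$ (phase/symbol/profile), the same use of the trilinear H\"older estimates from Lemma~\ref{lemCS}, the bilinear bounds for $T$, and \eqref{dxiTid}, \eqref{dxig} to control the profile-derivative terms, and the same one-$L^2$-two-$L^{\infty-}$ H\"older scheme for \eqref{CSSob}. Two small imprecisions worth flagging: $\mathcal{R}_3$ is actually quintic (degree $\geq 5$), not merely quartic — your own $\e_2^4\jt^{-2+}$ count (four inputs in $L^\infty$ plus one in $L^2$) implicitly uses this, whereas a genuinely quartic term would only give $\jt^{-1/2+}$ after the phase loss, which is not good enough; and the $\partial_\xi$-on-profile term that actually arises involves the doubly-iterated $T(T(g,g),g)$, for which you need the iterated bound \eqref{CSestdxiT} (obtained by applying \eqref{dxiTid} twice), not just the single-$T$ bound \eqref{dxiTinfty}.
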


\begin{proof}
We have
\begin{align}\label{CSexp0}
\begin{split}
\mathcal{R}_3(f,g) = \mathcal{C}^{S}[T(g,g),g,g] - \mathcal{C}^{S}[T(f,f),f,f] + \mathcal{C}^{S}[f,T(g,g),g] 
	- \mathcal{C}^{S}[f,T(f,f),f] \\ + \mathcal{C}^{S}[f,f,T(g,g)] -  \mathcal{C}^{S}[f,f,T(f,f)].
\end{split}
\end{align}
Let us analyze the first two terms, the others being similar, and write the difference as a sum of $5$-linear terms:
\begin{align}\label{CSexp1}
\begin{split}
\mathcal{C}^{S}[T(g,g),g,g] - \mathcal{C}^{S}[T(f,f),f,f] & = 
	\mathcal{C}^{S}[T(T(g,g),g),g,g] + \mathcal{C}^{S}[T(f,T(g,g)),g,g] 
	\\ & + \mathcal{C}^{S}[T(f,f),T(g,g),g] +  \mathcal{C}^{S}[T(f,f),f,T(g,g)].
\end{split}
\end{align}
The terms on the right-hand side of \eqref{CSexp1} are all $5$-linear convolution terms, with bounded and 
sufficiently regular symbols, 
where each entry, $f$ or $g$, satisfies a linear decay estimate at the rate of $\jt^{-1/2}$, see \eqref{propbootgas} 
and  \eqref{propbootfdecay}, and an $L^2$-weighted bound, see \eqref{dxig} and \eqref{lembb1}.
It suffices to look at the first term on the right-hand side of \eqref{CSexp1} -
the other terms are better since they contain a factor of $f$ which satisfies stronger assumptions -
and show that
\begin{align}\label{CSexpbound1}
{\| \jxi\partial_\xi \mathcal{C}^{S}[T(T(g,g),g),g,g] \|}_{L^2} \lesssim \e_1^3 \jt^{-1}.
\end{align}

Inspecting the formula for $\mathcal{C}^{S}$, we see that applying $\partial_\xi$ gives three types of terms: 
(1) a term where $\partial_\xi$ hits the exponential, which will cost a factor of $t$;
(2) terms where $\partial_\xi$ hits the symbol; 
(3) terms where $\partial_\xi$ hits $\delta$ or $\pv$.
In the terms (3) we can convert $\partial_\xi$ into $\partial_\eta$,
integrate by parts in $\eta$ and obtain terms like (1) and (2) above, 
plus terms where the derivatives hits one of the three inputs;
see the similar argument detailed in the proof of Lemma \ref{lemdxig}.

The terms (2) are lower order so we skip them. 
The main contribution comes from the terms of the type (1).
In the case of \eqref{CSexpbound1}, this gives a term whose $L^2$ norm can be bounded
using the trilinear estimate of Lemma \ref{lemCS} and the bilinear bounds for $T$ in Lemma \ref{lemT} as follows:
\begin{align*}
& \jt {\| \jxi \mathcal{C}^{S}[T(T(g,g),g),g,g] \|}_{L^2} 
\\
& \qquad \lesssim  \jt {\| \jnab^{0+} \mathcal{W}^*T(T(g,g),g) \|}_{L^2} 
  {\| \jnab^{0+} e^{-it\jnab} \mathcal{W}^*g \|}_{L^\infty}^2
\\
& \qquad \lesssim \jt 
  {\| \mathcal{W}^*T(g,g) \|}_{L^2} {\| e^{-it\jnab} \mathcal{W}^*g \|}_{L^\infty}  
  \cdot \e_2^2 \jt^{-1+}
\\
& \qquad \lesssim \jt {\| e^{-it\jnab} \mathcal{W}^*g \|}_{L^\infty}^2 {\| g \|}_{L^2} \cdot \e_2^2 \jt^{-1+} \lesssim 
  \e_2^5 \jt^{-1+}
\end{align*}
having  also used the a priori assumptions on $g$ \eqref{propbootgas} and \eqref{L2cons}.

Terms of the type (3) above are of the form
\begin{align}\label{CSexp5}
{\| \jxi \mathcal{C}^{S}[\wtF^{-1} \partial_\xi \wt{T}(T(g,g),g),g,g] \|}_{L^2} \qquad \mbox{and} \qquad
  {\| \jxi \mathcal{C}^{S}[T(T(g,g),g),\wtF^{-1}(\partial_\xi g),g] \|}_{L^2}
\end{align}
The second one is estimated directly using the weak weighted bound \eqref{dxig} for ${\|\partial_\xi \wt{g}\|}_{L^2}$,
and estimating the other $4$ terms in $L^\infty$ via Lemma \ref{lemCS} followed by Lemma \ref{lemT}: 
this gives a bound of $\e_2^5 \jt^{-3/2+}$.
The first term in \eqref{CSexp5} can handled similarly to the proof of Lemma \ref{lemQRexp} above.
In particular, iterating the identity \eqref{dxiTid} gives
\begin{align}\label{CSestdxiT}
{\| \jxi \partial_\xi \wt{T}(T(g,g),g) \|}_{L^2} \lesssim \e_2^3 \jt^{p_0/2}.
\end{align}
Then, up to faster decaying terms, we can 
use Lemma \ref{lemCS} to bound the $L^2$-norm of the first term in \eqref{CSexp5} by 
\begin{align*}
& C {\| \jxi^{0+} \partial_\xi \wt{T}(T(g,g),g) \|}_{L^2} {\| \jnab^{0+} e^{-it\jnab} \mathcal{W}^*g \|}_{L^\infty}^2
\lesssim  \e_2^3 \cdot \e_2^2 \jt^{-1+p_0/2+}
\end{align*}
which suffices for \eqref{CSexprem}.

The last estimate \eqref{CSSob} follows from a direct application of Lemma \ref{lemCS}
and the a priori bounds \eqref{propbootgas}.
\end{proof}

\medskip
Since we will need to look at iterations of Duhamel's formula,
it is also useful to establish some bounds for $\partial_t f$.

\begin{lem}[Estimates for $\partial_t f$]\label{lemdtf}
Let $f$ be the renormalized profile defined in \eqref{RenoT}-\eqref{Renof}.
Following the notation \eqref{Renodtf}-\eqref{CubicS}, we can write, under the a priori assumptions \eqref{propbootgas} and \eqref{propbootfas},
\begin{align}
\label{lemdtfdec}
\partial_t \widetilde{f} = \mathcal{C}^{S}(f,f,f) + \mathcal{R}(f,g),
\end{align}
where
\begin{align}\label{lemdtfR}
{\| \mathcal{R}(t) \|}_{L^2_\xi} \lesssim \e_2^3 \jt^{-3/2 + 2\alpha}.
\end{align}
In particular, we have
\begin{align}\label{dtfsimple} 
{\| e^{-it\jnab} \W \partial_t f \|}_{L^2+L^{\infty-}} \lesssim \e_2^3 \jt^{-3/2 + 2\alpha}
\end{align}
and
\begin{align}\label{dtfL2}
{\| \partial_t f \|}_{L^2} \lesssim \e_1^2 \jt^{-1}
\end{align}
\end{lem}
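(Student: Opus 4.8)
\textbf{Proof plan for Lemma \ref{lemdtf}.}

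The plan is to start from the equation \eqref{Renodtf}, which reads $\partial_t \wt{f} = \mathcal{Q}^R(g,g) + \mathcal{C}^{S}(g,g,g)$ (recalling that we have agreed to discard $\mathcal{C}^R$). The first step is to replace $g$ by $f$ everywhere, using the expansions already proved in Lemmas \ref{lemQRexp} and \ref{lemCSexp}. From \eqref{QRexp}--\eqref{QRexprem} we have $\mathcal{Q}^R(g,g) = \mathcal{Q}^R(f,f) + \mathcal{R}_1(f,g)$, and from \eqref{CSexp}--\eqref{CSexprem} we have $\mathcal{C}^{S}(g,g,g) = \mathcal{C}^{S}(f,f,f) + (\text{quartic terms of the form }\mathcal{C}^{S}(T(f,f),f,f)) + \mathcal{R}_3(f,g)$. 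So the content of \eqref{lemdtfdec} is: first, that $\mathcal{C}^S(f,f,f)$ is the leading cubic piece; and second, that the genuinely quadratic term $\mathcal{Q}^R(f,f)$, together with the quartic $\mathcal{C}^S$-terms and the remainders $\mathcal{R}_1,\mathcal{R}_3$, all fit into a remainder $\mathcal{R}(f,g)$ obeying the $L^2_\xi$ bound \eqref{lemdtfR}.

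The main point to check is therefore the bound ${\| \mathcal{Q}^R(f,f)(t) \|}_{L^2} \lesssim \e_2^3 \jt^{-3/2+2\alpha}$. For this I would use the H\"older-type estimate \eqref{lemQRpq} of Lemma \ref{lemQR} with $p_1 = p_2$ slightly larger than $4$ (so that $1/p_1 + 1/p_2 < 1/2$), estimating each factor $e^{-i\iota t \jnab}\mathcal{W}^* f$ in $L^{p_i}$. By interpolating the sharp decay estimate \eqref{aprioridecay} (which gives $L^\infty$ decay at rate $\jt^{-1/2}$) with the uniform $L^2$ bound \eqref{lembb2} (or \eqref{L2cons}), one gets ${\| e^{-i\iota t\jnab}\mathbf{1}_{\pm}(D)\mathcal{W}^* f \|}_{L^{p}} \lesssim \e_1 \jt^{-\frac12(1-2/p)}$, hence for $p$ slightly above $4$ a decay of essentially $\jt^{-1/4}$ per factor, for a total of $\jt^{-1/2}$ — this is not quite enough by itself. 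The gain of almost a derivative in \eqref{lemQRpq} (the factor $\jnab^{-1+}$ on one input) does not directly help with decay, so the improvement must come from a more careful split: localize to low versus high output frequencies, and on the low-frequency part exploit the improved low-frequency bounds \eqref{apriori11}--\eqref{apriori12} together with the vanishing $\wt f(0)=0$ (Lemma \ref{lemwtf0}); on high frequencies use the Sobolev bound \eqref{propbootfas} which beats powers of $\jxi$. Actually, the cleanest route is to recall that $\mathcal{Q}^R(f,f)$ is \emph{regular} (its symbol \eqref{QR2} satisfies \eqref{muR'} with the extra structure of $\pv$ away from $p=0$), so one can afford to put two copies of a $\jt^{-1/2+}$-decaying factor and use the weighted bound \eqref{lembb1} to absorb the derivative loss; the careful bookkeeping of the exponents, using $\alpha$ small and the relations \eqref{wnormparam0}, yields $\jt^{-1+} \lesssim \jt^{-3/2+2\alpha}$ only if one extracts an additional $\jt^{-1/2}$. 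I would obtain this last half-power exactly as in the proof of Lemma \ref{lemQRexp}, namely by the observation that for the regular quadratic term one input can always be taken in $L^{\infty-}$ with decay $\jt^{-1/2+}$ and the other in $L^{\infty-}$ as well (both profiles decay), while the $\jnab^{-1+}$ gain plus the weighted bound \eqref{lembb1} controls the $L^2$-to-$L^{\infty-}$ loss; this produces $\jt^{-1+}$, and the remaining improvement to $\jt^{-3/2+}$ comes from noticing that $\mathcal{Q}^R(f,f)$ with $\wt f(0)=0$ actually has an extra low-frequency vanishing that upgrades one $\jt^{-1/2+}$ to $\jt^{-1+}$ after integration against the regular kernel. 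For the quartic $\mathcal{C}^S$-terms and the remainders $\mathcal{R}_1,\mathcal{R}_3$, the bounds \eqref{QRexprem} and \eqref{CSexprem} already give $\jt^{-1}$ and $\jt^{-1+\alpha}$ in stronger (weighted or Sobolev) norms, so \emph{a fortiori} they satisfy \eqref{lemdtfR} in $L^2_\xi$ (indeed with room to spare).

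Once \eqref{lemdtfdec}--\eqref{lemdtfR} are in hand, the two corollaries are immediate. For \eqref{dtfsimple}: apply the trilinear estimate \eqref{lemCS1} of Lemma \ref{lemCS} to $e^{-it\jnab}\mathcal{W}^*\mathcal{C}^S(f,f,f)$ with all three factors in, say, $L^{6-}$, estimate each by interpolation of \eqref{aprioridecay} and \eqref{lembb2} to get $\e_1\jt^{-1/2+}$ per factor (the $\jnab^{-1+}$ gains being harmless), yielding $\e_2^3\jt^{-3/2+}$, and add the $L^2_\xi$ bound \eqref{lemdtfR} on $\mathcal R$, noting $L^2_\xi = \|\cdot\|_{L^2_x}$ after $\mathcal W^*$ and $e^{-it\jnab}$ are unitary; this gives \eqref{dtfsimple} with the $2\alpha$ exponent absorbing all the ``$+$'' losses. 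For \eqref{dtfL2}: simply take the $L^2_\xi$ norm of \eqref{lemdtfdec}, bound $\|\mathcal{C}^S(f,f,f)\|_{L^2}$ via \eqref{lemCS1} with one factor in $L^\infty$ (using the sharp decay \eqref{aprioridecay}, rate $\jt^{-1/2}$) and two in $L^{4}$ (using \eqref{bootgpr4q}-type interpolation, rate $\jt^{-1/4}$ each), for a total $\e_1^3\jt^{-1}$, then add \eqref{lemdtfR} which is $\lesssim \e_1^2\jt^{-1}$; since $\e_2 = \e_1^{2/3}$ one has $\e_2^3 = \e_1^2$, so everything is $\lesssim \e_1^2\jt^{-1}$. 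The only subtlety anywhere is the sharp $\jt^{-3/2+2\alpha}$ decay of the quadratic remainder $\mathcal{Q}^R(f,f)$ — that is where the regularity of $\mathcal{Q}^R$ and the vanishing $\wt f(0)=0$ must be used together, exactly as in Lemma \ref{lemQRexp}; everything else is a routine application of the multilinear estimates of Section \ref{secmulti} and the a priori bounds of Lemma \ref{lembb}.
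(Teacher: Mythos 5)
There are two genuine gaps here, one of which is the heart of the matter.

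\textbf{The remainders $\mathcal{R}_1,\mathcal{R}_3$.} Your claim that the bounds \eqref{QRexprem} and \eqref{CSexprem} give \eqref{lemdtfR} ``a fortiori'' is wrong on the decay rate: those give $\jt^{-1+p_0}$ and $\jt^{-1+\alpha}$ respectively, but since $\alpha < \beta'/2 \ll 1$ we have $-3/2+2\alpha < -1$, so the target $\jt^{-3/2+2\alpha}$ is \emph{faster} decay than $\jt^{-1}$. The Sobolev/weight prefactors in \eqref{QRexprem}--\eqref{CSexprem} do not buy back the missing half power. The paper instead re-estimates each of the cubic/quartic/quintic pieces of $\mathcal R$ (namely $\mathcal{Q}^R[f,T(g,g)]$, $\mathcal{Q}^R[T(g,g),g]$, $\mathcal{C}^R(g,g,g)$, $\mathcal{C}^S[T(g,g),g,g]$, etc.) directly in plain $L^2_\xi$, using the H\"older estimates \eqref{lemQRpq}, \eqref{bilboundT}, Lemma~\ref{lemCS} and the pointwise decay \eqref{aprioridecay}, \eqref{propbootgas}; each of the three (or more) inputs supplies $\e_2\jt^{-1/2+}$, and that yields $\jt^{-3/2+}$.

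\textbf{The term $\mathcal{Q}^R(f,f)$.} You correctly identify this as the crux and correctly observe that the H\"older estimate \eqref{lemQRpq}, even with the $\jnab^{-1+}$ gain, stalls at $\jt^{-1+}$. But the remedy you propose --- ``an extra low-frequency vanishing that upgrades one $\jt^{-1/2+}$ to $\jt^{-1+}$ after integration against the regular kernel'' --- is not developed into an argument and is not what closes the estimate. The mechanism the paper uses (see the proof of \eqref{QRLinftyxi} in Proposition~\ref{proQRLinftyxi}, which is invoked inside the proof of Lemma~\ref{lemdtf}) is integration by parts in the \emph{uncorrelated} frequency variables $\eta$ and $\sigma$: since the symbol of $\mathcal{Q}^R$ is smooth rather than a $\delta$ or $\pv$, the phase $\Phi_{\iota_1\iota_2}(\xi,\eta,\sigma)=\jxi-\iota_1\jeta-\iota_2\jsig$ has nondegenerate $\eta$- and $\sigma$-gradients, and one can integrate by parts in each of $\eta,\sigma$ to obtain the bound \eqref{wproof6.0} of Lemma~\ref{lemwproof6}, namely $|I^{k_1,k_2}(a,b)|\lesssim X_{k_1,m}\cdot X_{k_2,m}$ with $X_{k,m}\lesssim \e_1\min(2^{3k/2},2^{-m-k/2})2^{\alpha m}$. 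Summing the dyadic pieces gives $\sum_k X_{k,m}\lesssim \e_1 2^{-3m/4+\alpha m}$, and two factors of this produce $\e_1^2 2^{-3m/2+2\alpha m}$, whence \eqref{QRLinftyxi} (in the $\jxi^{3/2}L^\infty_\xi$ norm, which dominates $L^2_\xi$ since $\jxi^{-3/2}\in L^2$). The vanishing $\wt f(0)=0$ does enter --- it is what yields $X_{k,m}\lesssim 2^{3k/2}$ for small $k$, see \eqref{apriori12} --- but it does so inside this frequency-side integration by parts, not through any physical-space integration against the kernel as you suggest.

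The decomposition \eqref{lemdtfdec} itself and the two corollaries \eqref{dtfsimple}, \eqref{dtfL2} are argued correctly once \eqref{lemdtfR} is granted.
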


\begin{proof}
From \eqref{Renodtf}, we can write
\begin{align}\label{dtfpr1}
\begin{split}
\partial_t \widetilde{f} & = \mathcal{Q}^R(g,g) + \mathcal{C}^{S}(g,g,g) + \mathcal{C}^R(g,g,g)
  = \mathcal{C}^{S}(f,f,f) + \mathcal{R}(f,g)
\end{split}
\end{align}
with (recall the notation introduced after \eqref{QR})
\begin{align}\label{dtfpr2}
\begin{split}
\mathcal{R}(f,g) & := \mathcal{Q}^R[f,f]  + \mathcal{Q}^R[f,T(g,g)] +   \mathcal{Q}^R[T(g,g),g]+ \mathcal{C}^R(g,g,g)
  \\ & + \mathcal{C}^{S}[T(g,g),g,g] + \mathcal{C}^{S}[f,T(g,g),g] + \mathcal{C}^{S}[f,f,T(g,g)].
\end{split}
\end{align}

We estimate each of the terms above, with the exception of $\mathcal{Q}^R[f,f]$.
The treatment of this term is postponed to Subsection \ref{SsecLinfR},
where the desired bound is given in \eqref{QRLinftyxi}
(and proven using an argument from Section \ref{secwR}).

Recall the multilinear estimates of Lemmas \ref{lemT}, \ref{lemQR} and \ref{lemCS}.
For the second term on the right-hand side of \eqref{dtfpr2} we use \eqref{lemQRpq} 
followed by \eqref{bilboundT} 
and the a priori decay estimate \eqref{aprioridecay}, to obtain
\begin{align*}
{\| \mathcal{Q}^R[f,T(g,g)] \|}_{L^2} & \lesssim 
  {\big\| e^{-it\jnab} \mathcal{W}^*f \big\|}_{L^{\infty-}}
  {\big\| e^{-it\jnab} \mathcal{W}^*T(g,g) \big\|}_{L^{\infty-}}
  \\ & \lesssim {\big\| e^{-it\jnab} \mathcal{W}^*f \big\|}_{L^{\infty-}} 
  \cdot {\big\| e^{-it\jnab}\mathcal{W}^*g \big\|}_{L^{\infty-}}^2
  \\ & \lesssim \e_1 \jt^{-1/2} (\e_2 \jt^{-1/2+})^2, 
\end{align*}
which suffices for \eqref{lemdtfR}.
The third term on the right-hand side of \eqref{dtfpr2} can be estimated identically. 
For the fifth term we have
\begin{align*}
{\| \mathcal{C}^S[T(g,g),g,g] \|}_{L^2} & \lesssim 
  {\big\| e^{-it\jnab} \mathcal{W}^* T(g,g) \big\|}_{L^2} 
  {\big\| e^{-it\jnab} \mathbf{1}_{\pm}(D) \mathcal{W}^*g \big\|}_{L^\infty}^2
  \\ & \lesssim {\big\| e^{-it\jnab} \mathcal{W}^*g \big\|}_{L^2}
  {\big\| e^{-it\jnab} \mathbf{1}_{\pm}(D)\mathcal{W}^*g \big\|}_{L^\infty}^3 \lesssim \e_2^4 \jt^{-3/2}.
\end{align*}
The remaining two terms involving $\mathcal{C}^S$ can be estimated in the same way.
%
\end{proof}

\medskip
\subsection{Summary and remaining estimates}\label{Ssecmidsum}

Recall the equation \eqref{Renodtf} for the evolution of $\wt{f}$.
According to Lemmas \ref{lemQRexp} and \ref{lemCSexp} the right-hand side of \eqref{Renodtf}
can be expressed in terms of $\wt{f}$ itself, up to remainders of sufficiently high homogeneity 
(in $f$ and $g$), depending on the norms that one wants to bound.

For further reference we recall here, see \eqref{QRexp} and \eqref{CSexp}, that we can write
\begin{align}\label{eqdtf}
\begin{split}
\partial_t \wt{f} & = \mathcal{Q}^R[f,f] + \mathcal{C}^{S}[g,g,g] + \mathcal{R}_1(f,g)
\\ 
& = \mathcal{Q}^R[f,f] + \mathcal{Q}^R[f,T(f,f)] + \mathcal{Q}^R[T(f,f),f] + \mathcal{C}^{S}[f,f,f] 
\\ & + \mathcal{C}^{S}[T(f,f),f,f] + \mathcal{C}^{S}[f,T(f,f),f] 
  + \mathcal{C}^{S}[f,f,T(f,f)] + \mathcal{R}_2(f,g) + \mathcal{R}_3(f,g).
\end{split} 
\end{align}
Notice that, compared to \eqref{Renodtf}, here we are discarding the $\mathcal{C}^R$ terms, 
according to the discussion in Subsection \ref{SsecReno}.
The remainder terms $\mathcal{R}_1$, respectively $\mathcal{R}_2$ and $\mathcal{R}_3$,
decay sufficiently fast in the Sobolev-type, respectively weighted norm, 
so that they can be bounded by simply integrating in time the estimates 
\eqref{QRexprem}, respectively \eqref{QRexprem} and \eqref{CSexprem}.

We now list the terms that we still need to handle in order to conclude the proof of 
the main bootstrap Proposition \ref{propbootf}. 

\bigskip
\noindent
{\it Sobolev estimate}. 
In view of \eqref{eqdtf} and 
\eqref{CSSob},
only $\mathcal{Q}^R(f,f)$ remains to be bounded in the Sobolev type norm;
we do this in Subsection \ref{SsecSob}.

\bigskip
\noindent
{\it Weighted estimate}. So far, we have only taken care of higher order remainder terms,
which did not require any refined multilinear analysis. 
The estimate for the main terms, which are much more delicate, are distributed as follows:

\begin{itemize}
\item $\mathcal{Q}^R(f,f)$ is treated in Section \ref{secwR} for the main
  interacting frequencies, and in Subsection \ref{secQRother} for the rest of the interactions.

\smallskip
\item The terms $\mathcal{Q}^R[f,T(f,f)]$ and $\mathcal{Q}^R[T(f,f),f]$ are estimated in Subsection \ref{SsecLinfR}.

\smallskip
\item For $\mathcal{C}^{S}(f,f,f)$, see Section \ref{secwL} for the main
  interactions, and Subsection \ref{Cubother} for the other interactions.

\smallskip

\item The terms $\mathcal{C}^{S}[T(f,f),f,f]$, $\mathcal{C}^{S}[f,T(f,f),f]$ 
and $\mathcal{C}^{S}[f,f,T(f,f)]$, are estimated in Subsection \ref{SsecLinfR}.
\end{itemize}

\bigskip
\noindent 
{\it Distorted Fourier $L^\infty$-norm}.
We deal with the last piece of the bootstrap norm \eqref{propbootfas} as follows:

\begin{itemize}

\item Section \ref{secLinfS} contains the main part of the argument: we analyze the cubic terms 
of the form $\mathcal{C}^S(f,f,f)$, and derive an asymptotic expression for them as $t\rightarrow \infty$.
We first do this with formal stationary phase arguments in Subsection \ref{secha}.
The expressions obtained will lead to an ODE for $\partial_t\wt{f}$, which we show 
is Hamiltonian at leading order, and preserves $|\wt{f}(k)|^2 + |\wt{f}(-k)|^2$; 
see Subsection \ref{secModScatt}.
From this we derive a long-range scattering correction,
and estimates for the leading order terms in the $L^\infty_\xi$-type norm.
Then, in Subsection \ref{secra} we show how to rigorously justify the above asymptotics
and complete the control over the $L^\infty_\xi$ norm of the ``singular'' cubic terms.

\smallskip
\item The results in Subsection \ref{SsecLinfR} give us integrable-in-time decay for the $L^\infty_\xi$-norm of 
$\mathcal{Q}^R(f,f)$ and of all the other cubic and quartic order terms
on the right-hand side of \eqref{eqdtf}.
\end{itemize}


\bigskip
\section{Weighted estimates part I: the main ``regular'' interaction}\label{secwR}

The weighted estimates for the ``regular'' interactions 
are one of the most technical parts of the paper 
due to the presence of a fully coherent interaction at output frequencies $\pm \sqrt{3}$.
Our main goal is to show the following:

\begin{prop}\label{prowR}
Consider $u$ solution of \eqref{KG} such that 
the a priori assumptions \eqref{propbootfas} on the renormalized profile $f$ hold. 
The ``regular'' quadratic term $\mathcal{Q}^R=\mathcal{Q}^R(f,f)$, see \eqref{QR1}, satisfies
\begin{align}
\label{prowRest}
{\left\| \jxi \partial_\xi \int_{0}^t \mathcal{Q}^R[f,f](s,\xi) \,ds \right\|}_{W_T} \lesssim \e_1^2.
\end{align}
\end{prop}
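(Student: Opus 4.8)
\textbf{Plan of proof of Proposition \ref{prowR}.}
The plan is to bound the $W_T$ norm of $\partial_\xi \int_0^t \mathcal{Q}^R(s,\xi)\,ds$ by working on the time scales $2^n \approx |t|$ and frequency scales $2^\ell \approx ||\xi| - \sqrt{3}|$ that appear in the definition \eqref{wnorm}, and by isolating once and for all the genuinely resonant piece. First I would decompose $\mathcal{Q}^R = \sum_{\iota_1 \iota_2} \mathcal{Q}^R_{\iota_1 \iota_2}$, and observe (as in \ref{introQR}) that for $(\iota_1\iota_2)\neq(++)$ the phase $\Phi_{\iota_1\iota_2}$ has no stationary point, so after writing $\partial_\xi$ of the time integral one may integrate by parts in $s$ using $\Phi_{\iota_1\iota_2}^{-1}\partial_s e^{is\Phi}=e^{is\Phi}$ (the lower bound \eqref{introlb} type estimates guarantee $|\Phi_{\iota_1\iota_2}|\gtrsim \min(\jeta,\jsig)^{-1}$ away from $(++)$), reducing these contributions to trilinear terms controlled via Lemma \ref{lemQR} and Lemma \ref{lemdtf}; these are all lower order. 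Thus the heart of the matter is $\mathcal{Q}^R_{++}$, and within it the region where the coherent resonance \eqref{oscRes}, $(\xi,\eta,\sigma)=(\pm\sqrt 3,0,0)$, is active; away from that region one again has either nonstationary phase or no singularity and closes by the same integration-by-parts-in-$s$ plus multilinear estimates.

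Next I would carry out the Littlewood-Paley and time decomposition: insert $\tau_n(s)$, cutoffs $\varphi_{k_1}(\eta)\varphi_{k_2}(\sigma)$ on the inputs, and $\chi_{\ell,\sqrt 3}(\xi)$ on the output, and use the differentiation identity \eqref{dxiQRid}, which schematically gives $\jxi\partial_\xi \mathcal{Q}^R \approx t\cdot\jxi\mathcal{Q}^R + \jxi\mathcal{Q}^R[\wtF^{-1}\partial_\xi\wt f, f]$; the first (resonant, $s$-weighted) term is the dangerous one, exactly as in the heuristic \eqref{ecureuil}--\eqref{introQR1}. For this term I would split the time integral at $s\approx r^{-1} = 2^{-\ell}$: for $s \gg r^{-1}$ the phase $\Phi_{++}$ is nonstationary on the support (since $\partial_\eta\Phi_{++},\partial_\sigma\Phi_{++}$ vanish only at $\eta=\sigma=0$ and $\Phi_{++}$ itself is $\approx |\xi|-\sqrt3 \approx r$ there, so $|s\Phi_{++}|\gtrsim s r \gg 1$), so integrate by parts in $(\eta,\sigma)$ and/or in $s$ to gain; for $s\lesssim r^{-1}$ one estimates crudely by Cauchy–Schwarz/Hölder in $\eta,\sigma$ using the small-frequency bounds \eqref{apriori11}--\eqref{apriori13} for $\wt f$ (which encode $|\wt f(\xi)|\lesssim |\xi|^{1/2}\jt^\alpha$), together with the bound on the output $L^2_\xi$ norm over $|\xi-\sqrt3|\approx r$, picking up precisely the factor $2^{\beta\ell}2^{-\alpha n}$ one is allowed by \eqref{wnorm}. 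The parameter constraints \eqref{wnormparam0} — in particular $\gamma\beta'<\alpha<\beta'/2$ — are designed so that this crude bound, summed against the penalization $2^{\beta\ell}2^{-\alpha n}$ over $\ell\in[-\gamma n,0]$ and $n\le\log_2 T$, closes with a geometric series yielding $\lesssim \e_1^2$. The secondary term $\jxi\mathcal{Q}^R[\wtF^{-1}\partial_\xi\wt f,f]$ is handled by putting $\partial_\xi\wt f$ in the $W_t$-type weighted norm on the small-frequency input and $f$ in $L^\infty$ via \eqref{aprioridecay}, using Lemma \ref{lemQR}.

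The main obstacle I anticipate is the bookkeeping of the degenerate output norm near $\pm\sqrt 3$: one must ensure that the $s$-integral truncated at $s\approx 2^{-\ell}$ really produces a gain that beats the $2^{-\beta\ell}$ blow-up allowed in $\|\cdot\|_{W_T}$ while simultaneously not losing too much in $n$, which is a tight three-parameter balance and the reason the norm \eqref{wnorm} is shaped the way it is; a secondary difficulty is that the symbol $\mathfrak q$ of $\mathcal{Q}^R$ is not compactly supported in the output variable and only decays like $\langle\inf_{\mu,\nu}|\xi-\mu\eta-\nu\sigma|\rangle^{-N}$, so high-output-frequency and high-input-frequency regions must be treated separately using this decay (as in the $\mathfrak b'$ manipulation in the proof of Lemma \ref{lemT}) and the Sobolev a priori bound \eqref{propbootfas}. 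A further technical point is that the localization and the $\partial_\xi$ derivative only commute in $W_T$ up to errors controlled by the condition $\beta'\gamma\le\alpha$; one invokes the uniform bound $\|\jxi^{3/2}\wt f\|_{L^\infty}\lesssim\e_1$ from \eqref{propbootfas} to control these commutators. Once the resonant region is dispatched, everything else — the non-$(++)$ signs, the off-resonant part of $(++)$, the secondary term from \eqref{dxiQRid}, and all high-frequency pieces — is lower order and follows from Lemmas \ref{lemQR}, \ref{lemdtf} and the a priori bounds of Lemma \ref{lembb}.
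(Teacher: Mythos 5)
Your overall scaffolding (Littlewood--Paley in frequency, dyadic decomposition in time, isolating the fully coherent $(++)$-resonance at $(\xi,\eta,\sigma)=(\pm\sqrt3,0,0)$, the identity \eqref{dxiQRid}, and the balance of $\alpha,\beta,\gamma$ in \eqref{wnormparam0}) matches the paper's strategy, but there are two genuine gaps that would make the argument as stated break.

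First, for $(\iota_1\iota_2)\neq(++)$ you invoke \eqref{introlb} to claim $|\Phi_{\iota_1\iota_2}|\gtrsim \min(\jeta,\jsig)^{-1}$. That bound holds only on the correlated manifold $\xi=\eta+\sigma$; for the regular term $\mathcal{Q}^R$ the output $\xi$ and the inputs $\eta,\sigma$ are \emph{decorrelated}, and e.g.\ $\Phi_{+-}=\jxi-\jeta+\jsig$ vanishes whenever $\jxi+\jsig=\jeta$ (take $\xi=\sigma=0$, $|\eta|=\sqrt3$). So a naive integration by parts in $s$ for the non-$(++)$ signs does not work across the full support; the paper instead needs an additional decomposition in $|\xi|\approx 2^k$ and $\big||\eta|-\sqrt3\big|\approx 2^{j_1}$ (Subsection~\ref{secQRother}) to quantify the degeneracy.

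Second, for the $(++)$ piece the claim that on the support ``$\Phi_{++}$ itself is $\approx |\xi|-\sqrt3\approx r$'' is not correct: writing $\Phi_{++}=\frac{\sqrt3}{2}(|\xi|-\sqrt3)-\tfrac12\eta^2-\tfrac12\sigma^2+\cdots$ (see \eqref{wproof10}), one sees $\Phi_{++}$ vanishes on the variety $\eta^2+\sigma^2\approx|\xi|-\sqrt3$ when $|\xi|>\sqrt3$, so it is \emph{not} bounded below by $r$ once $\eta,\sigma$ range over the full low-frequency regime. Your time-split at $s\approx 2^{-\ell}$ does not, by itself, render the phase uniformly nonstationary on the remaining region. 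The missing ingredient is the decomposition in the size of the modulation, $\varphi_p(\Phi_{++})$ with $|\Phi_{++}|\approx2^p$, which the paper introduces in \eqref{wproofnot1} and on which the whole four-region case analysis of Subsections~\ref{Ssecpsmall}--\ref{Ssecl<2k_1} rests. A related omission: once you do integrate by parts in $s$ (in the regions where $2^p$ is not too small), the integrated terms involve $\partial_s\wt f$, which must be expanded via the renormalized evolution \eqref{lemdtfdec} into cubic $\mathcal{C}^S$ terms plus a remainder $\mathcal{R}$; controlling those quartic-in-$f$ and remainder contributions (as in \eqref{l<p5.0}--\eqref{l<p8} and Steps 2--4 of Subsection~\ref{Ssecl<p}) is the bulk of the work, and a bare Cauchy--Schwarz with $|\wt f(\xi)|\lesssim|\xi|^{1/2}\jt^\alpha$ is not sharp enough to close the $2^{\beta\ell}2^{-\alpha n}$ bookkeeping in that regime.
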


\smallskip
After setting up the framework for the proof of \eqref{prowRest},
in the rest of this section we will focus on the main interactions within $\mathcal{Q}^R$,
which, using the notation from \eqref{QR1}, are those involving frequencies
\begin{align}\label{wRmain}
|\eta| + |\sigma| + |\jxi - 2| \ll 1.
\end{align}
We will leave the rest of the interactions, for example those with $|\eta|\approx 1$ or 
$|\xi| \not\approx \sqrt{3}$ for later; see Subsection \ref{secQRother}.

For ease of reference we recall the definition of the norm we are estimating, see \eqref{timedecomp}-\eqref{wnorm}:
\begin{align}
\label{wnormref}
{\| g \|}_{W_T} := \sup_{n \geq 0 
  } \sup_{\ell \in \mathbb{Z} \cap [\lfloor -\gamma n \rfloor,0]} 
  {\| \chi_{\ell,\sqrt{3}}^{[-\gamma n,0]}(\,\cdot\,) 
  \, \tau_n(t) \mathbf{1}_{[0,T]}(t) \, g(t,\cdot) \|}_{L^\infty_t L^2_\xi} 
  2^{\beta \ell} 2^{-\alpha n},
\end{align}
where our parameters satisfy $0 < \alpha, \beta,\gamma < 1/2 $ with
\begin{equation}
\label{wnormparam} 
\gamma \beta' < \alpha < \frac{1}{2}\beta', 
  \qquad \gamma' := \frac{1}{2} -\gamma < \beta' := \frac{1}{2} - \beta \ll 1.
\end{equation}
We also recall the a priori assumptions \eqref{propbootfas} that we will use throughout the proof:
\begin{align}\label{aprioriw}
\sup_{t \in [0,T]} \left[ {\| \jxi^{3/2} \widetilde{f}(t) \|}_{L^\infty}
  + \langle t \rangle^{-p_0} {\| \jxi^4 \wt{f} \|}_{L^2} 
  \right] 
  + {\| \langle \xi \rangle \partial_\xi \widetilde{f} \|}_{W_T} \leq 2 \e_1.
\end{align}

\medskip
\subsection{Setup and reductions}\label{ssecwR1}
In view of the definitions, 
we aim to show that for any integer $n = 0,1,\dots,
[\log_2(T+2)]+1$ and $\ell \in \Z$ we have, for $t \approx 2^n$,
\begin{align}
\label{wnormQ}
2^{-\alpha n} 2^{\beta \ell} {\Big\| \chi_{\ell,\sqrt{3}}^{[-\gamma n,0]}(\xi) \,
  \partial_\xi \int_{0}^t \mathcal{Q}^R(f,f)(s) \,ds \Big\|}_{L^2} \lesssim  \e_{1}^2.
\end{align}
Recall from \eqref{QR1} and Remarks \ref{Remkappas} and \ref{remfki} 
that we can effectively work with
\begin{align}\label{wproofQR1}
\mathcal{Q}^R(t,\xi) = 
\sum_{\substack{\iota_1,\iota_2 \in \{+,-\} \\ \kappa_0,\kappa_1, \kappa_2 \in \{+,-\}}}
  \mathcal{Q}^{R}_{\substack{\iota_1 \iota_2 \\ \kappa_0 \kappa_1 \kappa_2}}(t,\xi)
\end{align}
where
\begin{align}\label{wproofQR2}
\begin{split}
& \mathcal{Q}^{R}_{\substack{\iota_1 \iota_2 \\ \kappa_0 \kappa_1 \kappa_2}}[f,f](t,\xi)
  = -\iota_1\iota_2 \mathbf{1}_{\kappa_0}(\xi)
  \iint e^{it \Phi_{\iota_1\iota_2}(\xi,\eta,\s)} 
  \, \mathfrak{q}_{\substack{\iota_1 \iota_2 \\ \kappa_0 \kappa_1 \kappa_2}}(\xi,\eta,\s) \,
  \wt{f}_{\iota_1}^{\kappa_1}(t,\eta) \wt{f}_{\iota_2}^{\kappa_2}(t,\s) \, d\eta \, d\s,
\\
& \Phi_{\iota_1\iota_2}(\xi,\eta,\s) := \jxi - \iota_1 \jeta - \iota_2 \jsig,
\end{split}
\end{align}
and the symbols satisfy for any $a,b,c$
\begin{align}\label{symest0}
| \varphi_{k_1}(\eta) \varphi_{k_2}(\s)
  \partial_\xi^a \partial_\eta^b \partial_\s^c \, 
  \mathfrak{q}_{\substack{\iota_1 \iota_2 \\ \kappa_0 \kappa_1 \kappa_2}}(\xi,\eta,\s)| \lesssim 
  2^{-\max(k_1,k_2)} 2^{(a+b+c)\min(k_1,k_2)}.
\end{align}

\medskip
\noindent
{\it Notation convention for the indexes}. 
For notational simplicity, we will drop the superscripts $\kappa_j$ which play no role.
We will also drop the subscripts $\iota_1,\iota_2$ from the profiles $\widetilde{f}$,
since $\widetilde{f}_\iota^\kappa$ enjoys the same bootstrap bounds as $\widetilde{f}$.
We do keep the signs $\iota_1,\iota_2$ for the phases $\Phi_{\iota_1,\iota_2}$ as these do play a role in the estimates.
Also, recall that we are adopting the notation introduced after \eqref{QR}.

\medskip
When applying $\partial_\xi$ to $\mathcal{Q}_{\iota_1\iota_2}^R$, we can,  
by Lemma \ref{lemdxiQR}, omit the prefactor $\mathbf{1}_{\kappa_0}(\xi)$; 
furthermore, we only need to estimate the terms where $\partial_\xi$ hits the phase
as the terms where $\partial_\xi$ hits the symbols $\mathfrak{q}$ are much easier to treat.
In other words, we can consider that 
$$\jxi \partial_\xi \mathcal{Q}_{\iota_1\iota_2}^R(t,\xi) \approx \mathcal{I}_{\iota_1\iota_2}^R(t,\xi)$$
where
\begin{align}
\label{wproof5}
\begin{split}
& \mathcal{I}_{\iota_1\iota_2}^R(t,\xi) := t \xi
  \iint e^{it \Phi_{\iota_1\iota_2}(\xi,\eta,\s)} \, \mathfrak{q}(\xi,\eta,\s) \,
  \wt{f}(t,\eta) \wt{f}(t,\s) \, d\eta \, d\s,
\end{split}
\end{align}
and restrict all our attention to these terms.

In view of \eqref{wnormQ} and the definitions \eqref{timedecomp}-\eqref{chil'} it will suffice to show that
for $n=0,1,\dots, [\log_2(T+2)]+1$ and $t\approx 2^n$ ($t \in [0,T]$), we have
\begin{align}
\label{wproof5.0}
{\Big\| \varphi_{\leq -\gamma n}(|\xi|-\sqrt{3}) 
  \, \int_{0}^t \mathcal{I}_{\iota_1\iota_2}^R(s,\cdot)\,ds \Big\|}_{L^2_\xi} 
  \lesssim  \e_{1}^2 \, 2^{\alpha n}  2^{\beta \gamma n},
\end{align}
and that for all $n=1,\dots, [\log_2(T+2)]+1$, any $\ell \in \Z \cap (-\gamma n,0)$, and 
for any $m =0,1,\dots$, we have, for all $t\approx 2^n$,
\begin{align}\label{wproof5.1}
{\Big\|  
  \chi_{\ell ,\sqrt{3}}(\xi) \, \int_{0}^t \mathcal{I}_{\iota_1\iota_2}^R(s,\cdot)\, \tau_m(s) \,ds \Big\|}_{L^2_\xi} 
  \lesssim  \e_{1}^2 \, 2^{\alpha m}  2^{-\beta \ell},
\end{align}
where the functions $\tau_0,\tau_1,\dots$ in \eqref{wproof5.1}
are a partition of the interval $[0,t]$, with properties as in \eqref{timedecomp}.


We begin with a reduction of the main bounds \eqref{wproof5.0}-\eqref{wproof5.1}
to estimates for each fixed $m$.

\begin{lem}\label{lemred0}
To prove \eqref{wproof5}-\eqref{wproof5.1} it suffices to show the following three inequalities:

\begin{subequations}\label{lemredeqs}

\begin{itemize}
 \item[(1)] For all $m=0,1,\dots$
\begin{align}\label{lemred1}
\begin{split} 
{\Big\| \varphi_{\leq \ell_0}(|\xi|-\sqrt{3}) 
  \, \int_{0}^t \mathcal{I}_{\iota_1\iota_2}^R(s) \, \tau_m(s) \,ds \Big\|}_{L^2_\xi} 
  \lesssim  \e_1^2 \, 2^{\alpha m}  2^{\beta \gamma m}, \qquad \ell_0 := -\frac{1}{2}m - 3\beta'm;
\end{split}
\end{align}

\item[(2)] For all $m=1,2,\dots$, and $\ell \in (\ell_0, -\gamma m] \cap \Z$
\begin{align}\label{lemred2}
\begin{split}
{\Big\| \varphi_{\ell}(|\xi|-\sqrt{3})  
  \, \int_{0}^t \mathcal{I}_{\iota_1\iota_2}^R(s)\, \tau_m(s) \,ds \Big\|}_{L^2_\xi} 
  \lesssim  \e_1^2 \, 2^{\alpha m}  2^{-\beta\ell} \cdot 2^{-2\beta'm};
\end{split}
\end{align}

\item[(3)] For all $m=1,2,\dots$, and $\ell \in (-\gamma m,0] \cap \Z$

\begin{align}\label{lemred3}
{\Big\| \varphi_{\ell}(|\xi|-\sqrt{3}) 
  \, \int_{0}^t \mathcal{I}_{\iota_1\iota_2}^R(s)\, \tau_m(s) \,ds \Big\|}_{L^2_\xi} 
  \lesssim  \e_1^2 \, 2^{\alpha m}  2^{-\beta\ell}.
\end{align}

\end{itemize}

\end{subequations}
 
\end{lem}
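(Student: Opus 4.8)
The plan is to show that Lemma \ref{lemred0} reduces the three estimates \eqref{wproof5.0}--\eqref{wproof5.1} to their dyadic-in-time versions by summing over the time pieces $\tau_m$, and that this summation is possible precisely because of the extra gains $2^{-3\beta' m}$, $2^{-2\beta' m}$ appearing in \eqref{lemred1}, \eqref{lemred2}. First I would observe that since $\sum_{m=0}^{\log_2 t}\tau_m(s)=1$ on $[0,t]$, we have
\begin{align*}
\int_0^t \mathcal{I}^R_{\iota_1\iota_2}(s,\cdot)\,ds = \sum_{m=0}^{\log_2 t} \int_0^t \mathcal{I}^R_{\iota_1\iota_2}(s,\cdot)\,\tau_m(s)\,ds,
\end{align*}
so by the triangle inequality in $L^2_\xi$ it suffices to control each summand. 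The key subtlety is that the frequency localizations do not match up directly: on the left-hand side of \eqref{wproof5.0} we cut off to $|\jxi-2|\lesssim 2^{-\gamma n}$ with $2^n\approx t$, while for a single time slice $2^m\approx s\le t$ the natural smallest scale is $2^{-\gamma m}$, which is \emph{larger}. So I would split $\varphi_{\le -\gamma n}(\jxi-2)$ according to the relative sizes of the $\xi$-localization and the scales $2^{\ell_0(m)}=2^{-m/2-3\beta'm}$, $2^{-\gamma m}$, and $1$.

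Concretely, for \eqref{wproof5.0}: fix $n$ and $t\approx 2^n$. For each $m\le n$, further decompose the region $|\jxi-2|\lesssim 2^{-\gamma n}$ by inserting $\varphi_{\le\ell_0(m)}(\jxi-2)+\sum_{\ell_0(m)<\ell\le -\gamma m}\varphi_\ell(\jxi-2)+\sum_{-\gamma m<\ell\le 0}\varphi_\ell(\jxi-2)$, and note that since $-\gamma n\le -\gamma m$ only the first two groups contribute nontrivially up to the cutoff $\varphi_{\le -\gamma n}$ (the third group is empty once $\ell\le -\gamma n \le -\gamma m$ forces $\ell$ into the first two ranges — I would handle the boundary case $\ell\in(-\gamma n,-\gamma m]$ by noting it is empty since $n\ge m$). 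Applying \eqref{lemred1} to the first piece and \eqref{lemred2} to each $\ell$ in the second range, and using $2^{\beta\ell}\lesssim 1$ and $\sum_{\ell>\ell_0(m)}2^{-\beta\ell}\lesssim 2^{-\beta\ell_0(m)}\approx 2^{\beta(m/2+3\beta'm)}$, gives a bound of the form $\e_1^2\,2^{\alpha m}(2^{\beta\gamma m}2^{-3\beta'm}+2^{\beta m/2}2^{3\beta\beta'm}2^{-2\beta'm})$. I would then check, using $\beta=\tfrac12-\beta'$, that the exponent of $2^m$ in each term is strictly less than $\alpha+\beta\gamma$ — this is where the conditions \eqref{wnormparam} (in particular $\beta'\ll 1$ and $\gamma\beta'<\alpha$) enter — so that the sum over $m\le n$ converges geometrically and is dominated by the $m=n$ term, yielding $\lesssim \e_1^2\,2^{\alpha n}2^{\beta\gamma n}$ as required.

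For \eqref{wproof5.1}, fix $n$, $t\approx 2^n$, and $\ell\in(-\gamma n,0]$. Again write the time integral as $\sum_m$ and note that the constraint $\ell>-\gamma n$ means: for $m$ with $\gamma m\ge |\ell|$ (i.e. $2^m\gtrsim 2^{|\ell|/\gamma}$) we are in the regime $\ell\in(-\gamma m,0]$ and apply \eqref{lemred3}, getting $\lesssim\e_1^2\,2^{\alpha m}2^{-\beta\ell}$; summing these over the finitely many relevant $m\le n$ costs only a logarithmic or geometric factor absorbed into $2^{\alpha n}$. For smaller $m$ with $\gamma m<|\ell|$, we have $\ell<-\gamma m$, and we further compare $\ell$ with $\ell_0(m)$: if $\ell\le\ell_0(m)$ we bound the $\varphi_\ell$ piece by the full $\varphi_{\le\ell_0(m)}$ cutoff and use \eqref{lemred1}, picking up $2^{\beta\ell}2^{\alpha m}2^{\beta\gamma m}2^{-3\beta'm}$; summing the extra $2^{\beta\ell}$ against the final target $2^{-\beta\ell}$ is harmless after checking the $2^m$-exponent is $<\alpha$. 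If $\ell_0(m)<\ell\le -\gamma m$ we apply \eqref{lemred2} directly and the $2^{-2\beta'm}$ gain again makes the $m$-sum converge. The main obstacle — and the place where the bookkeeping must be done carefully — is exactly this matching of frequency scales across different time slices, and verifying in each of the resulting cases that the power of $2^m$ produced is strictly below the threshold ($\alpha+\beta\gamma$ for \eqref{wproof5.0}, and $\alpha$ for \eqref{wproof5.1}) so that summation in $m$ does not destroy the gain; all of this is a routine but delicate consequence of the numerology \eqref{wnormparam}, and I expect no conceptual difficulty beyond organizing the case split cleanly.
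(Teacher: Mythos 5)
Your treatment of \eqref{wproof5.0} follows the same route as the paper: decompose the time integral via the $\tau_m$, enlarge $\varphi_{\leq -\gamma n}$ to $\varphi_{\leq -\gamma m}$ (using $m\leq n$), split into the $\varphi_{\leq\ell_0}$ piece plus the blocks $\ell\in(\ell_0,-\gamma m]$, apply \eqref{lemred1} and \eqref{lemred2}, and verify via \eqref{wnormparam} that the resulting exponents allow summation over $m$. One bookkeeping slip here: you attach a spurious factor $2^{-3\beta'm}$ to the bound from \eqref{lemred1}, which simply isn't there --- the estimate \eqref{lemred1} reads $\e_1^2\,2^{\alpha m}2^{\beta\gamma m}$ with no extra gain. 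That piece still sums over $m\leq n$ to $\lesssim\e_1^2\,2^{(\alpha+\beta\gamma)n}$ because $\alpha+\beta\gamma>0$, not because of any extra decay; the $2^{-2\beta'm}$ gain is only needed in the \eqref{lemred2} piece (to offset the factor $2^{-\beta\ell_0}\approx 2^{\beta(1/2+3\beta')m}$ from summing over $\ell$).

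The more serious issue is that you misread \eqref{wproof5.1}. That inequality \emph{already} contains the time cutoff $\tau_m(s)$ and its target is $\e_1^2\,2^{\alpha m}2^{-\beta\ell}$ with the given $m$: it is a fixed-time-slice estimate with no sum over $m$ to perform. Your plan to ``write the time integral as $\sum_m$'' and ``absorb the sum into $2^{\alpha n}$'' therefore attacks a different statement (with $2^{\alpha n}$ on the right rather than $2^{\alpha m}$), which is not what the lemma requires. What is actually needed is just a case split on $\ell$: for $\ell\in(\ell_0,0]$ the estimates \eqref{lemred2} and \eqref{lemred3} apply verbatim; for $-\gamma n<\ell\leq\ell_0$ you dominate $\varphi_\ell$ by $\varphi_{\leq\ell_0}$, apply \eqref{lemred1}, and observe that $2^{\beta\gamma m}\leq 2^{-\beta\ell}$ since $-\ell\geq -\ell_0 = (1/2+3\beta')m > \gamma m$ (because $\gamma<1/2$). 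The per-$m$ bounds you compute along the way are in fact exactly these, so the underlying computation is sound, but the proof as framed does not establish \eqref{wproof5.1} as stated and introduces unnecessary (and partly incorrect) summation steps.
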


\begin{proof}
Let us first show how \eqref{lemred1}-\eqref{lemred2} imply \eqref{wproof5.0}.
For all $n=0,1,\dots$ we estimate 
\begin{align}
\nonumber
& {\Big\|  \varphi_{\leq -\gamma n}(|\xi|-\sqrt{3}) 
  \, \int_{0}^t \mathcal{I}_{\iota_1\iota_2}^R(s)\,ds \Big\|}_{L^2_\xi} 
  \leq  \sum_{0\leq m \leq n} {\Big\|  \varphi_{\leq -\gamma m}(|\xi|-\sqrt{3}) 
  \, \int_{0}^t \mathcal{I}_{\iota_1\iota_2}^R(s)\, \tau_m(s) \,ds \Big\|}_{L^2_\xi} 
  \\ \label{prlemred1}
  & \qquad \leq \sum_{0\leq m \leq n}
  {\Big\|  \varphi_{\leq \ell_0}(|\xi|-\sqrt{3}) 
  \, \int_{0}^t \mathcal{I}_{\iota_1\iota_2}^R(s)\, \tau_m(s) \,ds \Big\|}_{L^2_\xi} 
  \\ \label{prlemred2}
  & \qquad  + \sum_{0\leq m \leq n} \sum_{\ell_0 < \ell \leq -\gamma m}
  {\Big\|  \varphi_{\ell}(|\xi|-\sqrt{3}) 
  \, \int_{0}^t \mathcal{I}_{\iota_1\iota_2}^R(s)\, \tau_m(s) \,ds \Big\|}_{L^2_\xi}.
\end{align}
The inequality \eqref{lemred1} takes care directly of \eqref{prlemred1}
giving a bound of $\e_{1}^2 \, 2^{\alpha n}  2^{\beta \gamma n}$ as desired.
For \eqref{prlemred2} we use \eqref{lemred2} to obtain
\begin{align*}
  \eqref{prlemred2} \lesssim \e_1^2 \sum_{0\leq m \leq n} 
  \, 2^{\alpha m}  2^{-\beta\ell_0} \cdot 2^{-2\beta'm}  
  = \e_1^2 \, 2^{\alpha n}  2^{\beta(1/2 + 3\beta')n} \cdot 2^{-2\beta'n}  
  \lesssim \e_{1}^2 \, 2^{\alpha n}  2^{\beta \gamma n},
\end{align*}
where the last inequality follows from 
$\beta(1/2 + 3\beta')-2\beta' = \beta/2 + \beta'(3\beta-2)
\leq \beta/2 - \beta\gamma' = \beta \gamma$, 
see \eqref{wnormparam}.

Next, observe that the inequalities \eqref{lemred2} and \eqref{lemred3} 
directly imply \eqref{wproof5.1} when $\ell \in (\ell_0,0] \cap \Z$.
When $-\gamma n < \ell \leq \ell_0$, \eqref{lemred1} gives
\begin{align*}
  {\Big\|  \varphi_{\ell}(|\xi|-\sqrt{3}) 
  \, \int_{0}^t \mathcal{I}_{\iota_1\iota_2}^R(s)\, \tau_m(s) \,ds \Big\|}_{L^2_\xi} 
  \lesssim \e_{1}^2 \, 2^{\alpha m}  2^{\beta \gamma m}
  \lesssim \e_{1}^2 \, 2^{\alpha m}  2^{-\beta \ell},
\end{align*}
since $\gamma < 1/2 < \frac{1}{2} + 3 \beta'$.
\end{proof}

\medskip
\subsection{Proof of \eqref{lemred1}}\label{ssecwR2}
For any function $c$, $m=0,1,\dots$ and $k\leq0$ we define
\begin{align}
\label{wproofnot2.0}
X_{k,m}(c) := \min\Big( {\| \varphi_k \wt{c} \|}_{L^1}, 
  2^{-m-k} \big({\|  \partial_\xi(\varphi_k\wt{c}) \|}_{L^1} 
  + 2^{-k}{\| \varphi_{[k-5,k+5]} \wt{c} \|}_{L^1}\big) \Big). 
\end{align}
A more general variant of this quantity will appear in \eqref{Xvar} when we will 
also include the treatment of input frequencies $\gtrsim 1$.
Note that, in view of the a priori assumptions \eqref{aprioriw},
and the consequent bounds \eqref{apriori12}-\eqref{apriori13}, for the profile $f$ we have, for $k<0$,
\begin{align}\label{wproofnot2}
\begin{split}
& X_{k,m} := X_{k,m}\big( f(t)\tau_m(t) \big) \lesssim \e_1 \min\big(2^{3k/2}, 2^{-m - k/2}\big) 2^{\alpha m}, 
\\
& \sum_{k<0} X_{k,m} \lesssim \e_1 2^{-3m/4} 2^{\alpha m}.
\end{split}
\end{align}

We have the following lemma.

\begin{lem}\label{lemwproof6}
Let $\mathcal{I}^R_{\iota_1\iota_2}$ be the term defined in \eqref{wproof5}.
Then, under the a priori assumptions \eqref{aprioriw}, we have
\begin{align}
\label{wproof6}
{\big\| \mathcal{I}^R_{\iota_1\iota_2}(s,\cdot) \big\|}_{L^\infty_\xi} 
  \lesssim \e_1^2 2^{-m/2 + 2\alpha m}, \qquad s \approx 2^m.
\end{align}
\end{lem}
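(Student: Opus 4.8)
\textbf{Plan for the proof of Lemma \ref{lemwproof6}.} The goal is the pointwise bound \eqref{wproof6} on
$$\mathcal{I}^R_{\iota_1\iota_2}(s,\xi) = s\xi \iint e^{is\Phi_{\iota_1\iota_2}(\xi,\eta,\sigma)} \mathfrak{q}(\xi,\eta,\sigma)\, \wt{f}(s,\eta)\wt{f}(s,\sigma)\,d\eta\,d\sigma$$
for $s\approx 2^m$, under the a priori assumptions \eqref{aprioriw}. Since we are in the main-interaction regime \eqref{wRmain}, we may insert cutoffs $\varphi_{k_1}(\eta)\varphi_{k_2}(\sigma)$ with $k_1,k_2\leq 0$ (in fact $\lesssim 0$), and the symbol obeys \eqref{symest0}, so in particular $|\mathfrak{q}|\lesssim 2^{-\max(k_1,k_2)}$ on this piece; the prefactor $s\xi$ is $\lesssim 2^m$. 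The plan is to estimate each dyadic piece and sum, using the quantity $X_{k,m}(f)$ from \eqref{wproofnot2.0}--\eqref{wproofnot2} which encodes both a crude $L^1$ bound on $\varphi_k\wt f$ and an integration-by-parts-in-time improvement.

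\textbf{Step 1: crude bound without oscillation.} First I would record the trivial estimate obtained by placing absolute values inside the integral: for the $(k_1,k_2)$ piece,
$$\big|\mathcal{I}^{R,k_1,k_2}_{\iota_1\iota_2}(s,\xi)\big| \lesssim 2^m\, 2^{-\max(k_1,k_2)}\,{\|\varphi_{k_1}\wt f\|}_{L^1}\,{\|\varphi_{k_2}\wt f\|}_{L^1}.$$
This is good when $k_1,k_2$ are not too small: using \eqref{wproofnot2}, ${\|\varphi_{k}\wt f\|}_{L^1}\lesssim \e_1 2^{3k/2}2^{\alpha m}$, so the above is $\lesssim 2^m 2^{-\max} \e_1^2 2^{3k_1/2}2^{3k_2/2}2^{2\alpha m}$, which by symmetry is $\lesssim \e_1^2 2^m 2^{\min/2}2^{3\min/2}2^{2\alpha m}=\e_1^2 2^m 2^{2\min}2^{2\alpha m}$ — acceptable only once $2^{\min}\lesssim 2^{-3m/4}$ roughly, i.e. at the very lowest frequencies. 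So Step 1 handles the regime $\min(k_1,k_2)\leq \ell_*$ for a suitable threshold $\ell_*\approx -\tfrac34 m$.

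\textbf{Step 2: integration by parts in $s$ using the lower bound on the phase.} For the complementary regime where both $k_1,k_2$ are above that threshold, I would integrate by parts in time using Lemma \ref{lemmaetazeta}: on the support, $|\Phi_{\iota_1\iota_2}(\xi,\eta,\sigma)|\gtrsim \min(\jxi,\jeta,\jsig)^{-1}$ — but here we must be careful, because the whole point of Section \ref{secwR} is that for $(\iota_1\iota_2)=(++)$ and $\xi$ near $\sqrt3$, $\eta,\sigma$ near $0$, the phase $\Phi_{++}$ vanishes. So for the $(++)$ interaction with $\xi$ near $\sqrt 3$, IBP in $s$ is \emph{not} available and one must instead use the crude bound together with the smallness of the frequency localization (the region $|\jxi-2|\lesssim s^{-1}$, $|\eta|^2+|\sigma|^2\lesssim s^{-1}$ is where the trouble concentrates, and there the $L^\infty_\xi$ bound \eqref{wproof6} with its loss $2^{-m/2+2\alpha m}$ is exactly designed to absorb this). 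For all other sign combinations, or for $\xi$ bounded away from $\sqrt 3$, the phase satisfies $|\Phi|\gtrsim 1$ on the relevant support (since $\jeta,\jsig\approx 1$ here), and integrating by parts once in $s$ converts $2^m\cdot\mathfrak{q}$ into $\mathfrak{q}/\Phi$ plus boundary terms plus a term with $\partial_s\wt f$; the $\partial_s\wt f$ term is handled by the $\partial_t f$ bounds of Lemma \ref{lemdtf}, and the remaining terms lose the factor $2^m$, leaving $\lesssim 2^{-\max}\e_1^2 2^{3k_1/2}2^{3k_2/2}2^{2\alpha m}\lesssim \e_1^2 2^{2\alpha m}$, which is even better than \eqref{wproof6}. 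Alternatively — and this is the cleaner route matching the definition \eqref{wproofnot2.0} — one integrates by parts in a frequency variable on the $2^{-m}|\Phi|^{-1}\gtrsim\min(\jxi,\jeta,\jsig)$ set, which moves a $\partial_\eta$ onto $\varphi_{k_1}\wt f$ and produces the second term in the $\min$ defining $X_{k_1,m}(f)$, giving a gain of $2^{-m}2^{-k_1}$; this is precisely why $X_{k,m}$ is defined the way it is, and it yields $2^m 2^{-\max} X_{k_1,m}X_{k_2,m}$, which by \eqref{wproofnot2} and the summation bound $\sum_k X_{k,m}\lesssim \e_1 2^{-3m/4}2^{\alpha m}$ sums to $\lesssim \e_1^2 2^m 2^{-3m/2}2^{2\alpha m}\lesssim \e_1^2 2^{-m/2+2\alpha m}$ — exactly \eqref{wproof6}.

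\textbf{Step 3: assembling and the main obstacle.} Summing Step 1 over the low-frequency regime and Step 2 over the rest (a geometric sum in $k_1,k_2$ in both cases, harmless because each term carries a genuine power gain controlled by $X_{k,m}$) gives \eqref{wproof6}. The main obstacle, and the genuinely delicate point, is the coherent $(++)\to\pm\sqrt3$ interaction near $\eta=\sigma=0$: there IBP in $s$ fails because $\Phi_{++}$ degenerates, so one is forced to use the crude $L^1\times L^1$ estimate, and the $2^{-m/2}$ in \eqref{wproof6} (rather than the better $2^{-m}$ one would naively want) is the price; one has to check carefully that, restricted to $|\eta|^2+|\sigma|^2\lesssim 2^{-m}$ and $||\xi|-\sqrt3|\lesssim 2^{-m}$, the bound $2^m\cdot 2^{-\max(k_1,k_2)}\cdot\prod_i{\|\varphi_{k_i}\wt f\|}_{L^1}\lesssim 2^m\cdot 2^{0}\cdot \e_1^2\,2^{3k_1/2}2^{3k_2/2}2^{2\alpha m}$ does sum, over $k_1,k_2$ with $2^{k_i}\lesssim 2^{-m/2}$, to $\e_1^2 2^m 2^{-3m/2}2^{2\alpha m}\ll \e_1^2 2^{-m/2+2\alpha m}$ — and then note that for $k_1$ or $k_2$ of size $\gg 2^{-m/2}$ the phase $\Phi_{++}$ is already $\gtrsim 2^{-m}$ so IBP in $s$ (or in frequency, as in Step 2) is again available. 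Thus the two regimes $\min(k_1,k_2)\leq -m/2$ (crude) and $\min(k_1,k_2)>-m/2$ (IBP, where $|\Phi_{++}|\gtrsim 2^{2\min(k_1,k_2)}\gtrsim 2^{-m}$ by \eqref{equivalent}) together cover everything, and in the second regime one picks up the $X_{k,m}$ improvement. This case split on the size of $\min(k_1,k_2)$ relative to $2^{-m/2}$ is the heart of the argument.
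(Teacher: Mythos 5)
Your endpoint is right — $2^m\cdot\big(\sum_{k}X_{k,m}\big)^2\lesssim\e_1^2\,2^{-m/2+2\alpha m}$ with $\sum_k X_{k,m}\lesssim\e_1 2^{-3m/4+\alpha m}$ from \eqref{wproofnot2} — and you correctly recognize that $X_{k,m}$ is the quantity encoding the min of a trivial $L^1$ estimate and an integration-by-parts improvement. But the route you describe is considerably more tangled than the paper's, and it contains a step that is genuinely false. The paper never integrates by parts in $s$ here. It integrates by parts only in the frequency variables $\eta$ and $\sigma$, using $\partial_\eta\Phi_{\iota_1\iota_2}=-\iota_1\,\eta/\jeta$, so $|\partial_\eta\Phi|\approx|\eta|\approx 2^{k_1}$. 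That bound holds uniformly in $\xi$ and in the sign pair $(\iota_1,\iota_2)$, and it degenerates only at $\eta=0$. A single integration by parts in $\eta$ gains $2^{-m}|\partial_\eta\Phi|^{-1}\approx 2^{-m-k_1}$ and produces $\partial_\eta(\varphi_{k_1}\wt f)$ together with a term carrying an extra $2^{-k_1}$ from $\partial_\eta(\jeta/\eta)$, which is precisely the second option in the $\min$ defining $X_{k_1,m}$; doing the same in $\sigma$ yields $X_{k_2,m}$. So $|I^{k_1,k_2}|\lesssim X_{k_1,m}X_{k_2,m}$ holds for every $(k_1,k_2)$, with no case split on $\min(k_1,k_2)$, no special treatment of $(++)$, and no condition on $|\Phi|$ itself.

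The specific false claim is your statement in Step 3 that for $k_1$ or $k_2\gg 2^{-m/2}$ "the phase $\Phi_{++}$ is already $\gtrsim 2^{-m}$ so IBP in $s$ \dots is again available." It is not: by \eqref{wproof10} the phase $\Phi_{++}$ vanishes on the codimension-one surface $\sqrt3(|\xi|-\sqrt3)\approx\eta^2+\sigma^2$ no matter how large $\eta,\sigma$ are, so there is no lower bound on $|\Phi_{++}|$ at fixed $(\eta,\sigma)$ unless you also restrict $\xi$. What makes the argument work is $|\partial_\eta\Phi|\approx|\eta|$, not a lower bound on $|\Phi|$. Your parenthetical "(or in frequency)" is the correct mechanism; you should discard everything about time integration by parts and lower bounds on $\Phi$. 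The coherent $(++)\to(\pm\sqrt3,0,0)$ resonance does obstruct the weighted $L^2$ estimate near $\xi=\pm\sqrt3$ — that is the content of the rest of Section~\ref{secwR} — but it is simply not in play for the $L^\infty_\xi$ bound of this lemma, because $\partial_\eta\Phi$ does not see how close $\xi$ is to $\sqrt3$.
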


\begin{proof}
The signs $(\iota_1\iota_2)$ are not relevant for this bound, 
so we drop them from our notation, and denote
$\mathcal{I}_{\iota_1\iota_2}^R$ simply as $\mathcal{I}$.
We look at the expression \eqref{wproof5} and decompose  dyadically
the frequencies $\eta$ and $\sigma$, estimating
\begin{align}
\label{wproof7}
\begin{split}
& \sup_{s\approx 2^m}| \mathcal{I}(s,\xi) | \lesssim  2^m \sum_{k_1,k_2} \sup_{s\approx 2^m} |I^{k_1,k_2}(s,\xi)|,
\\
& I^{k_1,k_2}(s,\xi) := I^{k_1,k_2}[f,f](s,\xi),
\\
& I^{k_1,k_2}[a,b](s,\xi) := \iint e^{is \Phi_{\iota_1\iota_2}(\xi,\eta,\s)} \, \mathfrak{q}(\xi,\eta,\s) \, 
  \varphi_{k_1}(\eta) \varphi_{k_2}(\sigma)\,\wt{a}(t,\eta) \wt{b}(t,\s) \, d\eta \, d\s.
\end{split}
\end{align}
Note that we are adopting the same notation used for $\mathcal{Q}^R$ (see below \eqref{QR})
for the above bilinear terms.
We claim that for any two functions $a,b$ 
we have
\begin{align}
\label{wproof6.0}
|I^{k_1,k_2}[a,b](s,\xi)| \lesssim X_{k_1,m}(a) \cdot X_{k_2,m}(b), \qquad s\approx 2^{m}.
\end{align}
Then, \eqref{wproof7}, \eqref{wproof6.0} and \eqref{wproofnot2} give the desired conclusion \eqref{wproof6}.

Let us prove \eqref{wproof6.0}. 
A first estimate is obtained by using $|\mathfrak{q}|\lesssim 1$:
\begin{align}\label{wproof6.0a}
\begin{split}
| I^{k_1,k_2}[a,b](s,\xi)| 
  & \lesssim {\| \varphi_{k_1}\wt{a} \|}_{L^1} {\| \varphi_{k_2}\wt{b} \|}_{L^1}. 
\end{split}
\end{align}
For our second estimate, we integrate by parts in $\eta$ to obtain
\begin{align}
\nonumber
& |I^{k_1,k_2}[a,b](s,\xi)| 
\\
\label{wproof6.0ibp} 
& \qquad \lesssim \frac{1}{s} \Big| \iint e^{is \Phi_{\iota_1\iota_2}(\xi,\eta,\s)} \,
  \partial_\eta \Big[ \frac{\jeta}{\eta} \mathfrak{q}(\xi,\eta,\s) \, 
  \varphi_{k_1}(\eta) \,\wt{a}(s,\eta) \Big] \varphi_{k_2}(\sigma) \wt{b}(s,\s) \, d\eta \, d\s \Big|.
\end{align}
We then have a few different contributions depending on the term upon which $\partial_\eta$ falls.
The term when $ \partial_\eta$ hits $\varphi_{k_1} \wt{a}$ 
is bounded by 
\begin{align}\label{wproof6.02}
\begin{split}
C 2^{-m} 2^{-k_1} {\| \partial_\eta( \varphi_{k_1} \wt{a}(s)) \|}_{L^1}
  {\| \varphi_{k_2}\, \wt{b}(s) \|}_{L^1},
\end{split}
\end{align}
and the term when $\partial_\eta$ hits the factor $1/\eta$ is upper bounded by 
\begin{align}\label{wproof6.03}
\begin{split}
C 2^{-m} 2^{-2k_1} {\|\varphi_{k_1}\wt{a}(s) \|}_{L^1}
  {\| \varphi_{k_2}\, \wt{b}(s) \|}_{L^1} .
  \end{split}
\end{align}
The term where $\partial_\eta$ hits $\mathfrak{q}$ is a lower order term
since $|\partial_\eta \mathfrak{q}|\lesssim 1$, so we can disregard it.

Finally, for our last estimate, we can integrate by parts in \eqref{wproof6.0ibp} also in the $\s$ variable. 
Arguing as above we obtain
\begin{align}\label{wproof6.0c}
\begin{split}
| I^{k_1,k_2}[a,b](s,\xi)| & \lesssim X_{k_1,m}(a)
  \\ & \times
  \big( 2^{-m} 2^{-k_2} {\| \partial_\eta( \varphi_{k_2} \wt{b}(s)) \|}_{L^1} 
  + 2^{-m} 2^{-2k_2} {\| \varphi_{k_2} \wt{b}(s)) \|}_{L^1} \big).
\end{split}
\end{align}
Putting together \eqref{wproof6.0a}, \eqref{wproof6.03} and \eqref{wproof6.0c} gives \eqref{wproof6.0}
and completes the proof.
\end{proof}

\medskip
As an immediate application of Lemma \ref{lemwproof6} we
complete the proof of \eqref{lemred1}.
Using  H\"older's and \eqref{wproof6}, recalling that $\ell_0 := -m/2 - 3\beta'm$, we have
\begin{align*}
2^{-\alpha m} 2^{-\beta\gamma m}
  {\Big\| \varphi_{\leq \ell_0}(|\xi|-\sqrt{3}) \,\int_{0}^t \mathcal{I}_{\iota_1\iota_2}^R\,\tau_m(s) ds \Big\|}_{L^2} 
  & \lesssim 2^{-\alpha m}  2^{-\beta\gamma m} 2^{\ell_0/2} \cdot 2^m \sup_{s\approx2^m} 
  {\big\| \mathcal{I}_{\iota_1\iota_2}^R(s)\big\|}_{L^\infty}
  \\
  & \lesssim 2^{-\beta\gamma m} \cdot 2^{-m/4 - (3/2)\beta'm} \cdot \e_1^2 2^{m/2} 2^{\alpha m} 
  \lesssim \e_1^2,
\end{align*}
since, by \eqref{wnormparam},
\[ -\beta \gamma + 1/4 -(3/2)\beta' + \alpha = \beta'/2+\gamma'/2 - \beta'\gamma' -(3/2)\beta' + \alpha 
\leq -\beta' /2+ \alpha \leq 0.\]
In view of the above estimate, and Lemma \ref{lemred0},
to show the desired bounds \eqref{wproof5.0}-\eqref{wproof5.1}, it remains to prove \eqref{lemred2}-\eqref{lemred3}.

\medskip
\subsection{Proof of \eqref{lemred2}-\eqref{lemred3}: Preliminary Decompositions}\label{secdecred}
We proceed with the proofs of \eqref{lemred2} and \eqref{lemred3} 
by looking at various sub-cases depending on the sizes of the modulation 
and frequencies relative to time. 
For the remaining of the section we assume furthermore that
\begin{align}\label{red1}
\ell \leq -7\beta' m.
\end{align}
We will deal with $\ell > -7\beta' m$ in Subsection \ref{secQRother}.

For notational convenience, we slightly redefine the time-cutoff function 
appearing in the expression \eqref{wproof5} for $\mathcal{I}_{\iota_1\iota_2}^R(t)$
to be $2^{-m} t \tau_m(t)$ (but still denote it with the same letter $\tau_m$)
so that we can estimate
\begin{align}
\label{wproofnot1.0}
& \Big| \chi_{\ell,\sqrt{3}} \int_{0}^t \mathcal{I}_{\iota_1\iota_2}^R(s)\,\tau_m(s) \, ds \Big| 
  \lesssim 2^m \sum_{p\geq p_0, \, k_1,k_2} \Big| \chi_{\ell,\sqrt{3}} \int_{0}^t I^{p,k_1,k_2}[f,f](s,\xi)\,\tau_m(s) \, ds \Big|
\end{align}
where, for any two functions $a,b$ we denote
\begin{align}
\label{wproofnot1}
\begin{split}
I^{p,k_1,k_2}_{\iota_1\iota_2}[a,b](t,\xi) & := \iint e^{it \Phi_{\iota_1\iota_2}(\xi,\eta,\s)} 
  \, \varphi_p^{(p_0)}\big(\Phi_{\iota_1\iota_2}(\xi,\eta,\s)\big) 
  \, \mathfrak{q}_{\iota_1\iota_2}(\xi,\eta,\s) \, 
  \\ & \qquad \times \varphi_{k_1}(\eta) \wt{a}_{\iota_1}(\eta) \, 
  \varphi_{k_2}(\sigma) \wt{b}_{\iota_2}(\s) \, d\eta \, d\s,
  \qquad p_0 := -m + \d m,
\\
\Phi_{\iota_1\iota_2}(\xi,\eta,\s) & := \jxi - \iota_1 \jeta - \iota_2 \jsig,
\end{split}
\end{align}
for some fixed $\delta \in (0,10^{-3})$. 
Note that we have inserted a localization $\varphi_p^{(p_0)}(\Phi_{\iota_1\iota_2})$ 
in the size of the phase; see the notation \eqref{cut0}-\eqref{cut1}.
Also note that the parameter $p_0$ here is not the same as the one appearing in the 
a priori estimates, e.g. in \eqref{propbootfas}; however this should not cause any confusion 
here, since the one in \eqref{wproofnot1} is the only $p_0$ that will appear in this section.

To better focus on the main interactions, for the remaining of this section we will assume in addition that 
\begin{align}\label{red2}
k_1,k_2 \leq -10,
\end{align}
see \eqref{wRmain}, and we will deal with the complementary case in Subsection \ref{secQRother}.
Note that \eqref{red2} and \eqref{red1} imply that $p\leq 10$.
Without loss of generality, we can also assume that
$$k_1 \geq k_2.$$
The a priori bound \eqref{apriori12} gives
\begin{align}\label{wproofdecest00}
\begin{split}
\Big| \chi_{\ell,\sqrt{3}} \int_{0}^t \mathcal{I}_{\iota_1\iota_2}^R(s)\,\tau_m(s) \, ds \Big| 
  & \lesssim 2^{2m}  \sum_{p,k_1,k_2} \sup_{s\approx 2^m} {\| \varphi_{k_1}\wt{f}(s) \|}_{L^1}  
  {\| \varphi_{k_2}\wt{f}(s) \|}_{L^1}
\\
  & \lesssim 2^{2m} \sum_{p,k_1,k_2} 2^{3k_1/2} 2^{\alpha m} \e_1 \cdot 2^{3k_2/2} 2^{\alpha m} \e_1.
\end{split}
\end{align}
Since there are at most $O(m)$ indexes $p$ (because $p_0\leq p \leq 10$),
if we take the sum in \eqref{wproofdecest00} over $k_2 < -2m$ or $k_1 < -2m/3$,
we obtain an upperbound of $C \e_1^2 2^{2\alpha m} m$,
which, also in view of \eqref{red1} and $\alpha<\beta'/2$, gives \eqref{lemred2}-\eqref{lemred3}.
We can then assume $k_2 \geq -2m$ and $k_1 \geq -2m/3$.

At this point we also restrict our estimates to the case 
\begin{align}\label{red3}
(\iota_1\iota_2)=(++) 
\end{align}
in \eqref{wproofnot1} and will deal with the other relatively simpler cases in Subsection \ref{secQRother}.
We drop the signs from the expression in \eqref{wproofnot1} by denoting
\begin{align}\label{wproofnot1++}
\begin{split}
I^{p,k_1,k_2}(t,\xi) := I^{p,k_1,k_2}_{++}[f,f](t,\xi) 
  & := \iint e^{it \Phi(\xi,\eta,\s)} \, \varphi_p^{(p_0)}\big(\Phi(\xi,\eta,\s)\big) 
  \, \mathfrak{q}(\xi,\eta,\s) \, 
  \\ & \qquad \times \varphi_{k_1}(\eta)\wt{f}(t,\eta) \, \varphi_{k_2}(\sigma) \wt{f}(t,\s) \, d\eta \, d\s,
\\
\Phi(\xi,\eta,\s) := \jxi - \jeta - \jsig. &
\end{split}
\end{align}
Note that, since $|\eta|,|\sigma| \leq 1/100$, we have
\begin{align}
 \label{wproof10}
& \Phi(\xi,\eta,\s) = \frac{\sqrt 3}{2} (|\xi| - \sqrt 3) 
  - \frac{1}{2} \eta^2 - \frac{1}{2} \sigma^2 + O( (|\xi|-\sqrt 3)^2 + \eta^4 + \sigma^4).
\end{align}
Moreover, on the support of the integrals \eqref{wproofnot1++} we have, when $p > p_0$,
\begin{align*}
|\Phi(\xi,\eta,\s)| \approx 2^p, \qquad |\eta| \approx 2^{k_1}, \qquad |\xi^2-3| \approx 2^\ell.
\end{align*}
Then, in particular,
\begin{align}
\label{wproofRel}
\left\{
\begin{array}{ll}
2^p \approx 2^\ell & \mbox{if $2^\ell \gg 2^{2k_1}$},
\\
2^p \approx 2^{2k_1} & \mbox{if $2^\ell \ll 2^{2k_1}$},
\\
2^p \lesssim 2^\ell & \mbox{if $2^\ell \approx 2^{2k_1}$}.
\end{array}
\right.
\end{align}
In the case $p=p_0$, we have $|\Phi| \lesssim 2^{-m+\delta m} \ll 2^\ell$ since 
$\ell > \ell_0 = -m/2 - 3\beta' m$ (see \eqref{lemred1}).

Summarizing the reductions above we have the following lemma:

\begin{lem}\label{lemred}
Let $I^{p,k_1,k_2}$ be as in \eqref{wproofnot1++}. To prove \eqref{lemred2}-\eqref{lemred3} 
for $(\iota_1\iota_2) = (++)$ 
it will suffice to show that for all $m=1,2,\dots$ 
\begin{align}\label{wproofdecest}
\begin{split}
2^m \left\| \chi_{\ell,\sqrt{3}}(\cdot)  \int_{0}^t I^{p,k_1,k_2}(\cdot,s) \, \tau_m(s)\, ds \right\|_{L^2_\xi} \lesssim 
  \e_1^2 \, 2^{-\beta\ell} 2^{-2\beta'm} 
\end{split}
\end{align}
for all
\begin{align}
\label{wproofdecpar}
\begin{split}
& -(1/2+3\beta') m =: \ell_0 < \ell \leq -7\beta' m,
\\ & -m+\delta m =: p_0\leq p \leq 0, 
\\ & -2m \leq k_2 \leq k_1 \leq -10, \qquad k_1\geq -2m/3. 
\end{split}
\end{align}
\end{lem}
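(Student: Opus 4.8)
\textbf{Plan of proof of Lemma \ref{lemred}.}
The lemma is purely a bookkeeping reduction: it states that, \emph{given} the estimates \eqref{wproofdecest} over the restricted parameter ranges \eqref{wproofdecpar}, one recovers the bounds \eqref{lemred2}--\eqref{lemred3} for the $(++)$ interaction. So the plan is simply to assemble, in order, all the decompositions and pigeonhole reductions that were carried out in the preceding text (between \eqref{red1} and Lemma \ref{lemred}), verify that each discarded regime has already been bounded acceptably, and observe that what remains is exactly a sum of $O(m)$ pieces each of which is controlled by \eqref{wproofdecest}.

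First I would recall that, by \eqref{wproofnot1.0}, the quantity $\chi_{\ell,\sqrt3}\int_0^t \mathcal{I}^R_{\iota_1\iota_2}(s)\tau_m(s)\,ds$ is bounded by $2^m\sum_{p,k_1,k_2}|\chi_{\ell,\sqrt3}\int_0^t I^{p,k_1,k_2}[f,f](s,\xi)\tau_m(s)\,ds|$, where $p$ ranges over $p_0\le p\le 10$ with $p_0=-m+\delta m$, so there are at most $O(m)$ values of $p$. Next, using the crude $L^1$-bound \eqref{wproofdecest00} that followed from the a priori estimate \eqref{apriori12}, the contribution of indices with $k_2<-2m$ or $k_1<-2m/3$ is $\lesssim m\,\e_1^2\,2^{2\alpha m}$; since $\ell\ge\ell_0\ge-(1/2+3\beta')m$ and $\alpha<\beta'/2$, this is $\lesssim\e_1^2 2^{-\beta\ell}2^{-2\beta'm}$ — a gain of a small power of $2^{m}$ absorbs the harmless factor $m$ — hence consistent with both \eqref{lemred2} and \eqref{lemred3}. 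This lets me restrict to $-2m\le k_2\le k_1\le-10$ (the restriction $k_1,k_2\le-10$ being the main-interaction hypothesis \eqref{red2} under discussion) and $k_1\ge-2m/3$, matching \eqref{wproofdecpar}. Similarly, the restriction $\ell\le-7\beta' m$ is \eqref{red1}, and, after summing over the $O(m)$ values of $p$ with the constraint $p_0\le p\le 0$ coming from \eqref{wproofRel} (values $p>0$ being excluded since on the support $|\Phi|\lesssim\max(2^\ell,2^{2k_1})\lesssim1$), what remains is precisely
\[
2^m\sum_{\ell_0<\ell\le-7\beta'm}\ \sum_{p_0\le p\le0}\ \sum_{\substack{-2m\le k_2\le k_1\le-10\\ k_1\ge-2m/3}}
\Big\| \chi_{\ell,\sqrt3}\int_0^t I^{p,k_1,k_2}\tau_m(s)\,ds\Big\|_{L^2_\xi}.
\]
Applying \eqref{wproofdecest} to each summand bounds each term by $\e_1^2 2^{-\beta\ell}2^{-2\beta'm}$; the number of $(p,k_1,k_2)$ triples is $O(m^3)$, and the extra $2^{-2\beta'm}$ factor (as opposed to the target $2^{-\beta'm}$ or no gain at all in \eqref{lemred3}) more than absorbs this polynomial loss, yielding the required $\e_1^2\,2^{\alpha m}2^{-\beta\ell}$ (for \eqref{lemred3}, where $\ell>-\gamma m$) and $\e_1^2\,2^{\alpha m}2^{-\beta\ell}2^{-2\beta'm}$ (for \eqref{lemred2}, where $\ell_0<\ell\le-\gamma m$).

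The one point requiring a little care — and the closest thing to an obstacle here — is checking the exponent arithmetic in the two cases: for \eqref{lemred2} one needs the full surplus $2^{-2\beta'm}$ preserved after summing the geometric-type series in $p$ and $k_i$, whereas for \eqref{lemred3} one only needs $2^{\alpha m}2^{-\beta\ell}$, which is weaker, so the same estimate suffices a fortiori. I would also note that the reduction to $(\iota_1\iota_2)=(++)$ is legitimate because the statement of Lemma \ref{lemred} explicitly concerns only that sign combination (the others being deferred to Subsection \ref{secQRother}), and that replacing $\mathcal{Q}^R_{\iota_1\iota_2}$ by $\mathcal{I}^R_{\iota_1\iota_2}$, and dropping the prefactor $\mathbf{1}_{\kappa_0}(\xi)$ and the superscripts $\kappa_i$, is justified by Lemma \ref{remarkcancellation} and Remarks \ref{Remkappas}, \ref{remfki} exactly as recorded just before \eqref{wproof5}. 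Modulo these verifications the lemma follows, since it is nothing more than the assertion that the finitely-many-times-$O(m^3)$ decomposition of the integral reduces the desired bounds to the single inequality \eqref{wproofdecest}.
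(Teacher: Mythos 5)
Your proposal takes the same route as the paper: there is no separate proof of Lemma \ref{lemred} in the text, because the lemma simply packages the reductions made between \eqref{red1} and \eqref{wproofnot1++} — the decomposition \eqref{wproofnot1.0} into $O(m)$ values of $p$, the crude bound \eqref{wproofdecest00} to discard $k_2<-2m$ or $k_1<-2m/3$, the standing deferrals \eqref{red1}--\eqref{red3} to Subsection \ref{secQRother}, and the observation (recorded right after the lemma) that the absence of $2^{\alpha m}$ and the surplus $2^{-2\beta'm}$ in \eqref{wproofdecest} absorb the $O(m^3)$ (or $O(m^4)$) remaining indices. Your assembly of these pieces, including the exponent check $m^3\lesssim 2^{\alpha m}$, is correct and matches the paper.

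One step, however, is misjustified as written. To dispose of the regime $k_2<-2m$ or $k_1<-2m/3$ you claim $m\,\e_1^2\,2^{2\alpha m}\lesssim \e_1^2\,2^{-\beta\ell}2^{-2\beta'm}$ ``since $\ell\ge\ell_0$''; but $\ell\ge\ell_0$ only bounds $2^{-\beta\ell}$ from \emph{above}, which is the wrong direction. What is needed is a lower bound on $2^{-\beta\ell}$, and this comes from the upper bound $\ell\le-7\beta'm$ of \eqref{red1}: then $\beta\ell\le-\beta\cdot 7\beta'm$ with $\beta=1/2-\beta'$, so $m\,2^{\alpha m+2\beta'm+\beta\ell}\lesssim 1$ precisely because $\alpha<\beta'/2$ and $\beta'$ is small — this is exactly the paper's ``in view of \eqref{red1} and $\alpha<\beta'/2$''. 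Since $\ell\le-7\beta'm$ is anyway part of your standing assumptions (and of \eqref{wproofdecpar}), the conclusion stands once the correct constraint is invoked. A purely cosmetic point: the displayed sum in your argument should not include a sum over $\ell$, since \eqref{lemred2}--\eqref{lemred3} are stated for a fixed $\ell$; only the sums over $p,k_1,k_2$ are relevant.
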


Note that the quantity on the right-hand side of \eqref{wproofdecest}, with no $2^{\alpha m}$ factor,
also takes into consideration the summation over $k_1,k_2$ and $p$,
which is made of at most $O(m^3)$ terms. 
In several cases we will not need to use cancellations coming from the time integration,
and will prove the following stronger version of the bound \eqref{wproofdecest}-\eqref{wproofdecpar}:
\begin{align}\label{wproofdecest'}
2^m {\big\| \chi_{\ell,\sqrt{3}}(\xi) I^{p,k_1,k_2}(s,\xi) \big\|}_{L^2_\xi} 
  \lesssim \e_1^2 \, 2^{-m}  2^{-\beta \ell} 
  \cdot 2^{-2\beta'm}, \qquad \forall \, s \approx 2^m.
\end{align}

\smallskip
Let us now prove a general lemma which improves on Lemma \ref{lemwproof6}
and will help deal with several basic cases.

\begin{lem}\label{Lemma1}
With the definition \eqref{wproofnot1} (but omitting the signs $\iota_1,\iota_2$ for lighter notation)
and \eqref{wproofnot2.0}, 
we have, for all $s \approx 2^m$,
\begin{align}
\label{wproofclaim0}
\big| \chi_{\ell,\sqrt{3}} \, I^{p,k_1,k_2}[f,f](s,\xi) \big| \lesssim X_{k_1,m}(f) \cdot X_{k_2,m}(f),
\end{align}
and, in particular,
\begin{align}
\label{wproofclaim1}
{\big\| \chi_{\ell,\sqrt{3}} \, I^{p,k_1,k_2}[f,f](s) \big\|}_{L^2} 
  \lesssim 2^{\ell/2} \cdot X_{k_1,m}(f) \cdot X_{k_2,m}(f).
\end{align}
Furthermore,
\begin{align}
\label{wproofclaim2}
{\big\| \chi_{\ell,\sqrt{3}} \, I^{p,k_1,k_2}[f,f](s) \big\|}_{L^2} 
  \lesssim 2^{p-k_1/2} \cdot 2^{-m-k_1} \left[ {\| \partial_\xi [\varphi_{k_1} \wt{f} ] \|}_{L^2} 
  + 2^{-k_1} \| \varphi_{[k_1 - 5,k_1 + 5]} \widetilde{f} \|_{L^2} \right] X_{k_2,m}(f).
\end{align}
\end{lem}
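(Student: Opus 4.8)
\textbf{Plan for the proof of Lemma \ref{Lemma1}.}

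The three estimates are all ``fixed-time'' bounds on the pseudo-product $I^{p,k_1,k_2}$ defined in \eqref{wproofnot1++}, so no time integration is used; the proof amounts to extracting the right amount of decay in $s\approx 2^m$ from integration by parts in the two frequency variables, exactly as in the proof of Lemma \ref{lemwproof6}, but now carrying the extra modulation cutoff $\varphi_p^{(p_0)}(\Phi)$ and the output localization $\chi_{\ell,\sqrt 3}$. First I would establish \eqref{wproofclaim0}: the point is simply that the extra factor $\varphi_p^{(p_0)}(\Phi)$ and $\chi_{\ell,\sqrt 3}$ are uniformly bounded and their $\xi,\eta,\sigma$-derivatives cost at most $2^{-p}\cdot 2^{2\min(k_1,k_2)}$ and $2^{-\ell}\cdot 2^{2\min(k_1,k_2)}$ respectively (using \eqref{wproofRel}, so $2^{-p}\lesssim 2^{-2k_2}$ in the worst case, which is the scale of $\partial_\eta^2$ acting on the symbol anyway); thus integrating by parts once or twice in $\eta$ and once or twice in $\sigma$, exactly as in \eqref{wproof6.0a}--\eqref{wproof6.0c}, one never gains from these cutoffs but never loses beyond the scales already present in \eqref{symest0}. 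Hence each integration by parts in $\eta$ (respectively $\sigma$) produces the gain $2^{-m-k_1}$ (resp. $2^{-m-k_2}$) together with a derivative or a $2^{-k}$ factor on the profile, which after taking the minimum over ``integrate or don't integrate'' in each variable is exactly the definition of $X_{k_1,m}(f)\cdot X_{k_2,m}(f)$ in \eqref{wproofnot2.0}. One subtlety to flag is that the modulation cutoff restricts $\Phi$ to an interval of size $2^p$, which by \eqref{wproof10} restricts $|\xi|-\sqrt3$, $\eta^2$, $\sigma^2$ to a region; this only shrinks the domain of integration and so can only help, but one should note that the support in $\eta$ (resp. $\sigma$) is still of full size $2^{k_1}$ (resp. $2^{k_2}$) so the $L^1$-type bounds $\|\varphi_{k_i}\wt f\|_{L^1}$ and $\|\partial_\xi(\varphi_{k_i}\wt f)\|_{L^1}$ remain the correct ones.

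For \eqref{wproofclaim1}: by \eqref{wproof10} the support in $\xi$ of $\chi_{\ell,\sqrt3}(\xi)I^{p,k_1,k_2}$ has measure $\lesssim 2^\ell$ (it is contained in $\{\,|\,|\xi|-\sqrt3\,|\lesssim 2^\ell\,\}$ up to the $O((|\xi|-\sqrt3)^2)$ correction which is lower order), so \eqref{wproofclaim1} follows at once from \eqref{wproofclaim0} by H\"older in $\xi$, with the factor $2^{\ell/2}$ coming from $\|\chi_{\ell,\sqrt3}\|_{L^2_\xi}\lesssim 2^{\ell/2}$.

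For \eqref{wproofclaim2}, which is the one I expect to require the most care: here one wants to integrate by parts once in $\eta$ (gaining $2^{-m}$) and use the modulation cutoff to convert the resulting oscillatory integral into an $L^2_\xi$ estimate rather than an $L^\infty_\xi$ one. The key is that, after integrating by parts in $\eta$ using $(is\Phi')^{-1}\partial_\eta e^{is\Phi}=e^{is\Phi}$ (with $\partial_\eta\Phi\sim\eta\sim 2^{k_1}$ away from $\eta=0$, the worst case), one picks up the factor $\frac{1}{s\,\partial_\eta\Phi}\sim 2^{-m-k_1}$, and the remaining integral is a pseudo-product whose $\sigma$-input is handled by the $L^1$-bound $X_{k_2,m}(f)$ and whose $\eta$-input now carries $\partial_\xi[\varphi_{k_1}\wt f]$ or $2^{-k_1}\varphi_{[k_1-5,k_1+5]}\wt f$ (the latter from $\partial_\eta$ hitting the $1/\partial_\eta\Phi\sim 1/\eta$ factor) --- but crucially measured in $L^2$, because we will apply Plancherel in $\xi$ after noting (again via \eqref{wproof10}) that on the support of $\varphi_p^{(p_0)}(\Phi)\chi_{\ell,\sqrt3}(\xi)$ the map $\eta\mapsto\xi$ is, at fixed $\sigma$, a change of variables whose Jacobian is $\sim\eta\sim 2^{k_1}$. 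This produces the factor $2^{k_1}$ under the square root, i.e. $2^{k_1/2}$, but it is offset against the restriction $2^p\approx\max(2^\ell,2^{2k_1})\gtrsim 2^{2k_1}$ which gives the claimed $2^{p-k_1/2}$ after bookkeeping: writing $\|\chi_{\ell,\sqrt3}\,(\text{integral})\|_{L^2_\xi}$ and substituting $\xi\leftrightarrow\eta$ turns the $L^2_\xi$ norm into an $L^2_\eta$ norm times $(\text{Jacobian})^{1/2}\sim 2^{k_1/2}$, while the support constraint from $\varphi_p^{(p_0)}$ forces the relevant $\xi$-interval to have length $\lesssim 2^p$, whence an extra gain. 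I would carry this out carefully, tracking the $2^p$ versus $2^{2k_1}$ dichotomy of \eqref{wproofRel}; the main obstacle is precisely getting this change-of-variables/Plancherel bookkeeping exactly right so that the powers of $2^{k_1}$, $2^p$, $2^{-m}$ combine into the stated $2^{p-k_1/2}\cdot 2^{-m-k_1}$, and making sure the error term from $\partial_\eta$ hitting the $1/\eta$ factor is genuinely controlled by the $2^{-k_1}\|\varphi_{[k_1-5,k_1+5]}\wt f\|_{L^2}$ term rather than something worse.
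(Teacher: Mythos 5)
Your treatment of \eqref{wproofclaim0} and \eqref{wproofclaim1} is essentially the paper's: the modulation cutoff $\varphi_p^{(p_0)}(\Phi)$ costs only a harmless factor per integration by parts (the paper notes $\bigl|\tfrac{1}{s\partial_\eta\Phi}\partial_\eta\varphi_p^{(p_0)}(\Phi)\bigr|\lesssim 2^{-m-p}\leq 2^{-\delta m}$, so one can iterate), and \eqref{wproofclaim1} is then Cauchy--Schwarz in $\xi$. So far so good.

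For \eqref{wproofclaim2}, however, your sketch has a genuine gap. First, the reduction you invoke goes the wrong way: it is \emph{not} true that $2^p\gtrsim 2^{2k_1}$ on the relevant support; indeed the paper first observes that one may assume $p\leq 2k_1-10$, since in the complementary regime \eqref{wproofclaim1} already gives a bound at least as strong as \eqref{wproofclaim2}. Second, the ``change of variables $\eta\mapsto\xi$ with Jacobian $\sim\eta$'' is not well defined: $\xi$ is the output variable, free in the $L^2_\xi$-norm, while $\eta$ is an integration variable, so there is no such map; what does have derivative $\sim\eta\approx 2^{k_1}$ is the map $\eta\mapsto\Phi(\xi,\eta,\sigma)$ at fixed $(\xi,\sigma)$. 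The correct mechanism, and the one the paper uses, is Schur's test for the positive kernel $K(\xi,\eta)=\varphi_{[k_1-2,k_1+2]}(\eta)\int\varphi_p^{(p_0)}(\Phi)\varphi_{k_2}(\sigma)|\wt f(\sigma)|\,d\sigma$ that remains after one integration by parts in $\eta$: at fixed $(\xi,\sigma)$ the fiber $E_{k_1,p}=\{\eta:|\eta|\approx 2^{k_1},\,|\Phi|\approx 2^p\}$ is contained in at most two intervals of length $\approx 2^{p-k_1}$ (since $|\partial_\eta\Phi|\approx 2^{k_1}$), giving $\sup_\xi\int K\,d\eta\lesssim 2^{p-k_1}\|\varphi_{k_2}\wt f\|_{L^1}$; dually, at fixed $(\eta,\sigma)$ the set of admissible $\xi$ with $|\xi|\approx\sqrt3$ and $|\Phi|\approx 2^p$ has measure $\lesssim 2^p$ (since $|\partial_\xi\Phi|\approx 1$), giving $\sup_\eta\int K\,d\xi\lesssim 2^p\|\varphi_{k_2}\wt f\|_{L^1}$. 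Schur's test then yields the geometric mean $2^{p-k_1/2}\|\varphi_{k_2}\wt f\|_{L^1}$, and combining with the $2^{-m-k_1}$ from the integration by parts gives exactly the stated bound (the term $2^{-k_1}\|\varphi_{[k_1-5,k_1+5]}\wt f\|_{L^2}$ comes from $\partial_\eta$ hitting the $\jeta/\eta$ factor, as you correctly anticipate). Finally, to recover the full $X_{k_2,m}(f)$ (which is a minimum, not just the $L^1$-norm) one must integrate by parts in $\sigma$ as well and repeat the Schur argument; your sketch only covers the $\|\varphi_{k_2}\wt f\|_{L^1}$ branch.
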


As the proof below will show,
the estimates of Lemma \ref{Lemma1} hold for general expressions as in \eqref{wproofnot1}
with any combination of signs $(\iota_1,\iota_2)$ and not only for the expression in \eqref{wproofnot1++}.
In particular, we can use this result in Subsection \ref{secQRother} for the proof of \eqref{othestsigns}.

\smallskip
\begin{proof}[Proof of Lemma \ref{Lemma1}]

The bound \eqref{wproofclaim0} follows similarly to the bound \eqref{wproof6.0}, 
the only difference being the presence of the cutoff $\varphi_p^{(p_0)}(\Phi)$ 
in the definition of $I^{p,k_1,k_2}$, see \eqref{wproofnot1}, versus that of $ I^{k_1,k_2}$, see \eqref{wproof7}.
However, this is easily dealt with by observing that, for $|\eta|\approx 2^{k_1}$,
\begin{equation*}
\Big| \frac{1}{s \partial_{\eta}\Phi} \partial_{\eta} \varphi_p^{(p_0)}(\Phi) \Big| 
= \Big| \frac{1}{s \partial_{\eta}\Phi}  { \varphi_p^{(p_0)}}' (\Phi) \partial_\eta \Phi \Big| 
  \lesssim 2^{-m-p} \leq 2^{-m-p_0} = 2^{-\delta m},
\end{equation*}
so that hitting this additional cutoff gives lower order contributions,
and one can iterate the integration by parts in $\eta$ again.

\begin{rem}\label{rem1/Phi}
We will apply the above argument several times in what follows, and
treat as lower order remainders all those terms where derivatives in $\eta$ and $\s$ 
fall on an expression of the form $\chi(2^{-p}\Phi(\xi,\eta,\s))$ for some smooth $\chi$.
\end{rem}

\eqref{wproofclaim1} follows directly from Cauchy-Schwarz in $\xi$. 
Let us now prove \eqref{wproofclaim2}. Notice that we may assume $p \leq 2k_1-10$
for otherwise \eqref{wproofclaim1} already gives the desired inequality.
Indeed, if $p > 2k_1-10$ then we must have $2^\ell \lesssim 2^p$
and $2^{\ell/2} \lesssim 2^{p - k_1}$ so that using
\begin{align*}
X_{k_1,m} \lesssim 2^{-m-k_1/2} \left[ {\| \partial_\xi [\varphi_{k_1} \wt{f} ] \|}_{L^2}
  + 2^{-k_1} \| \varphi_{[k_1 - 5,k_1 + 5]} \widetilde{f} \|_{L^2} \right],
\end{align*} 
recall \eqref{wproofnot2.0}-\eqref{wproofnot2},
we get \eqref{wproofclaim2} from \eqref{wproofclaim1}.

We look at the integral \eqref{wproofnot1}
and begin with an integration by parts in $\eta$ obtaining a main contribution of 
\begin{align}\label{S-1}
\frac{1}{s} \iint e^{is \Phi(\xi,\eta,\s)} 
  \, \varphi_p^{(p_0)}\big(\Phi(\xi,\eta,\s)\big) \frac{\jeta}{\eta}
  \, \mathfrak{q}(\xi,\eta,\s) \, \partial_\eta\big[ \varphi_{k_1}(\eta) \wt{f}(s,\eta) \big] \, 
  \varphi_{k_2}(\sigma) \wt{f}(s,\s) \, d\eta \, d\s.
\end{align}
A lower order contribution comes from $\partial_\eta $ hitting the symbol $\mathfrak{q}$.
We can bound \eqref{S-1} by
\begin{align}
\label{S0}
\begin{split}
& C 2^{-m} 2^{-k_1} \int \,K(\xi,\eta) \, \big| \partial_\eta [\varphi_{k_1}(\eta) \wt{f}(s,\eta)] \big| \, d\eta,
\\
& K(\xi,\eta) := \varphi_{[k_1-2,k_1+2]}(\eta) \int \varphi_p^{(p_0)} 
  \big(\Phi(\xi,\eta,\s)\big) \varphi_{k_2}(\sigma) |\wt{f}(s,\s)| \, d\s.
\end{split}
\end{align}
We have
\begin{align*}
\begin{split}
\int K(\xi,\eta) \, d\eta 
\lesssim \int \Big( \int_{E_{k_1,p}} 
     \,d\eta \Big) 
    \varphi_{k_2}(\sigma) |\wt{f}(s,\s)| \, d\s
\end{split}
\end{align*}
where
\begin{align}\label{S1.0}
E_{k_1,p} := \{ \eta \in \R \,:\, |\eta| \approx 2^{k_1}, \, |-\jxi + \jeta +  \jsig| \approx 2^p\}.
\end{align}
Notice that for fixed $\xi$ and $\s$, the set $E_{k_1,p}$ is contained in at most two intervals of length $\approx 2^{p - k_1}$.
We can then estimate 
\begin{align}\label{S1}
& \sup_\xi \int K(\xi,\eta) \, d\eta \lesssim 2^{p-k_1} \| \varphi_{k_2} \widetilde{f} \|_{L^1}.
\end{align}
Similarly, we also have
\begin{align}
\label{S2}
& \sup_\eta \int K(\xi,\eta) \, d\xi \lesssim 2^p \| \varphi_{k_2} \widetilde{f} \|_{L^1}.
\end{align}
The first bound needed for \eqref{wproofclaim2} then follows from the definition \eqref{wproofnot2.0},
\eqref{S0}, \eqref{S1}-\eqref{S2} and Schur's test:
\begin{align*}
{\big\| \chi_{\ell,\sqrt{3}} \, I^{p,k_1,k_2}(s) \big\|}_{L^2} & \lesssim 
  2^{-m-k_1} {\Big\| 
  \int \,K(\xi,\eta) \, \big| \partial_\eta [\varphi_{k_1}(\eta) \wt{f}(s,\eta)] \big| \, d\eta \Big\|}_{L^2}
\\
& \lesssim 2^{-m-k_1}  \cdot 2^{p-k_1/2} 
\cdot {\| \partial_\eta [\varphi_{k_1} \wt{f}] \|}_{L^2} {\| \varphi_{k_2} \widetilde{f} \|}_{L^1}.
\end{align*}
To complete the proof of \eqref{wproofclaim2}, we integrate by parts also in $\sigma$ in \eqref{S-1} 
and then use Schur's test as above. 
\end{proof}

%
%
%
%

\medskip
Before proceeding, let us note that, as a corollary of Lemma \ref{Lemma1}, 
we may assume the two following inequalities on our parameters:
\begin{align}\label{l<ppar'}
\begin{split}
(\frac{1}{2}+\beta) \ell + \min(-m - k_1/2, 3k_1/2) + \min(-m - k_2/2, 3k_2/2) \geq -2m -(2\alpha+2\beta')m
\end{split}
\end{align}
and
\begin{align}\label{l<ppar''}
\begin{split}
\beta \ell + p-3k_1/2 + \min(-m - k_2/2, 3k_2/2) \geq -m -(2\alpha+2\beta')m.
\end{split}
\end{align}
Indeed, if \eqref{l<ppar'} does not hold, 
the bound \eqref{wproofdecest'} follows using \eqref{wproofclaim1}.
Similarly, if \eqref{l<ppar''} does not hold,
then we can use \eqref{wproofclaim2} to obtain \eqref{wproofdecest'}.

\medskip
We now proceed with the proof of \eqref{wproofdecest}-\eqref{wproofdecpar}, 
or the stronger \eqref{wproofdecest'} when possible.
We will analyze separately the following regions:
\begin{align}\label{regions}
\begin{split}
\\
& \mbox{Region 1 (Subsection \ref{Ssecpsmall}):} \qquad p \leq  -m/2-3\beta'm -10,
\\
\\
& \mbox{Region 2 (Subsection \ref{Ssecl>p}):} \qquad -m/2-3\beta'm - 10 \leq p \leq -m/3-10\beta'm, 
  \,\quad \ell \geq p+10,
\\
\\
& \mbox{Region 3 (Subsection \ref{Ssecl<p}):} \qquad -m/2-3\beta'm -10\leq p, 
  \qquad \ell \leq p+10,
\\
\\
& \mbox{Region 4 (Subsection \ref{Ssecl<2k_1}):} \qquad p \geq -m/3-10\beta'm, 
  \qquad \ell \geq p+10.
\\
\end{split}
\end{align}

\medskip
\subsection{Case $p \leq -m/2 - 3\beta' m-10$}\label{Ssecpsmall} 
In this region there is almost no oscillation in time $s$ and we prove \eqref{wproofdecest'}.
Since we are working under the assumptions $-m/2 - 3\beta' m \leq \ell \leq -10$ 
we have
\begin{align}\label{wproof12}
|\ell - 2k_1 | \leq 5. 
\end{align}
Applying \eqref{wproofclaim1} and \eqref{wproofclaim2}, we see that to obtain a bound consistent 
with \eqref{wproofdecest'} it suffices to show that
\begin{align*}
\min \big( 2^{\ell/2} 2^{-k_1/2}, \, 2^{p-3k_1/2} \big) \cdot 2^{(-3/4 + 2\alpha)m}
  \lesssim 2^{-m} 2^{-\beta\ell} 2^{-2\beta'm}.
\end{align*}
In view of \eqref{wproof12} it then suffices that 
\begin{align}
\label{wproof14}
\mbox{either} \qquad 2^{3k_1/2} \lesssim 2^{-m/4} 2^{-2\alpha m} 2^{-3\beta'm}
	\qquad \mbox{or} \qquad 2^{p -k_1/2} \lesssim 2^{-m/4} 2^{-2\alpha m} 2^{-3\beta'm}.
\end{align}
The verification of \eqref{wproof14} follows from $p \leq -m/2$.

\medskip
\subsection{Case $-m/2-3\beta' m -10 \leq p \leq -m/3-10\beta'm $, and $\ell \geq p+10$}\label{Ssecl>p} 
In this case, we also have $|\ell-2k_1|\leq 10$. 
Relying again on \eqref{wproofclaim2}, for \eqref{wproofdecest}
%
it suffices to prove that
\begin{equation}
\label{abeille}
2^{p-3k_1/2} 2^{2\alpha m} 2^{-3m/4} \lesssim 2^{-m} 2^{-\beta\ell} 2^{-2\beta' m}.
\end{equation}
We then consider two possibilities:
\begin{itemize}
\item[-] If we use that $p \leq 2k_1 +20$ and $|\ell-2k_1|\leq 10$,~\eqref{abeille} is implied by
\begin{equation}
\label{abeille1}
2^{3k_1/2} \lesssim 2^{-(1/4+2\alpha+3\beta')m}
  .
\end{equation}
\item[-] If we use that $p \leq - \frac{m}{3} - 10\beta' m$ and $|\ell-2k_1|\leq 10$,~\eqref{abeille} is implied by
\begin{equation}
\label{abeille2}
2^{-k_1/2} \lesssim 2^{m(1/12- 2\alpha + 7\beta')}  
  .
\end{equation}
\end{itemize}
Then, we observe that \eqref{abeille1} is satisfied if 
$k_1 \leq - \frac{m}{6} - \frac{2}{3}[2\alpha + 3\beta']m$,
while \eqref{abeille2} is satisfied if $k_1 \geq - \frac{m}{6} + (4\alpha - 14\beta')m$;
finally, we notice that these latter two inequalities cover all possible values of $k_1$ since $\alpha < \beta'/2$.

%

%
%

\medskip
\subsection{Case $p \geq -m/2 - 3\beta'm-10$, and $\ell \leq p+10$}\label{Ssecl<p}  
This case is more delicate than the previous ones. Moreover, many of the arguments 
that we will perform here will also be relevant in the last case in Subsection \ref{Ssecl<2k_1}.
In order to obtain a bound consistent with \eqref{wproofdecest} for this case,
it suffices to show
\begin{align}
\label{l<p1}
2^{\ell/2} \, {\Big\| \chi_{\ell,\sqrt{3}}(\xi) \int_{0}^t I^{p,k_1,k_2}(s,\xi) \, \tau_m(s)\, ds \Big\|}_{L^2} 
  \lesssim \e_1^2 \, 2^{-m} 2^{-3\beta'm},
\end{align}
for all
\begin{align}\label{l<ppar}
\begin{split}
& -m/2 - 2\beta'm -10\leq p, \quad -(1/2+3\beta')m \leq \ell \leq p + 10, 
\quad 
k_2 \leq k_1 \leq -10,  \quad k_1 \geq -2m/3.
\end{split}
\end{align}


\smallskip
\subsubsection*{{\bf Step 1}: Integration by parts in time}
The first step is to resort to integration by parts in $s$, using that $|\Phi| \approx 2^p \gtrsim 2^\ell$.
Let us denote
\begin{align}
\label{l<pulI}
\begin{split}
\underline{I}^{p,k_1,k_2}[g,h](s,\xi) := \iint e^{is \Phi} \, \frac{\varphi_p\big(\Phi\big)}{\Phi}
  \, \mathfrak{q}(\xi,\eta,\s) \, \varphi_{k_1}(\eta)\wt{g}(\eta) \, \varphi_{k_2}(\sigma) \wt{h}(\s) \, d\eta \, d\s,
\end{split} 
\end{align}
where we have dropped some of the dependence on the time $s$ and on the frequencies for ease of notation.
Note that we are writing $\underline{I}^{p,k_1,k_2}$ for a bilinear term similar to $I^{p,k_1,k_2}$
but there the symbol has an additional division by $\Phi$.

Integrating by parts in $s$,
\begin{align}
\label{l<pibp}
\Big| &  \chi_{\ell,\sqrt{3}}(\xi)\int_{0}^t I^{p,k_1,k_2}(s,\xi) \, \tau_m(s)\, ds \Big| 
	\lesssim  | J(t,\xi) | + | K(t,\xi) | + | L(t,\xi) |,
\end{align}
where
\begin{align}
\label{l<pibp1}
\begin{split}
J(t,\xi) := \chi_{\ell,\sqrt{3}}(\xi) \underline{I}^{p,k_1,k_2}[f,f](t,\xi)
	-  \chi_{\ell,\sqrt{3}}(\xi)  \underline{I}^{p,k_1,k_2}[f,f](0,\xi)
	\\ - \, \chi_{\ell,\sqrt{3}}(\xi) \int_{0}^t \underline{I}^{p,k_1,k_2}[f,f](s,\xi) \, \tfrac{d}{ds} \tau_m(s) \, ds,
\end{split}
\\
\label{l<pibp2}
& K(t,\xi) := \chi_{\ell,\sqrt{3}}(\xi) \int_{0}^t \underline{I}^{p,k_1,k_2}[\partial_sf,f](s,\xi) \, \tau_m(s) \, ds,
\\
\label{l<pibp3}
& L(t,\xi) := \chi_{\ell,\sqrt{3}}(\xi) \int_{0}^t \underline{I}^{p,k_1,k_2}[f,\partial_sf](s,\xi) \, \tau_m(s) \, ds. 
\end{align}
For \eqref{l<p1} it then suffices to prove
\begin{align}
\label{l<p2L2}
2^{\ell/2} \, {\big\| A(t,\cdot) \big\|}_{L^2} \lesssim \e_1^2 2^{-m} 2^{-3\beta'm}, \qquad A = J,K,L,
\end{align}
or the stronger
\begin{align}
\label{l<p2}
2^\ell \, \big| A(t,\xi) \big| \lesssim \e_1^2 2^{-m} 2^{-3\beta'm}, \qquad A = J,K,L.
\end{align}
In the proof we will look at $K$ and $L$ in various scenarios (while $J$ is easier and directly estimated)
depending on the size of $\ell$, $p$, $k_1$ and so on\dots 
We will also split them in various pieces along the argument. In most cases we are going to show that 
the contributions we obtain are bounded as in \eqref{l<p2},
while we are going to bound the $L^2$ norms as in \eqref{l<p2L2} only in Subsection \ref{Ssecl<2k_1}.

In view of \eqref{wproofclaim0} in Lemma \ref{Lemma1} and Remark \ref{rem1/Phi}, we see that
the operator defined in \eqref{l<pulI} satisfies the estimates
\begin{align}\label{lemulI1}
|\underline{I}^{p,k_1,k_2}[f,f](s,\xi)| \lesssim 2^{-p} \cdot X_{k_1,m} \cdot X_{k_2,m}, \qquad s\approx 2^m.
\end{align}



\smallskip
\subsubsection*{Estimate of \eqref{l<pibp1}}
$J$ is a boundary term and it is easy to deal with. It suffices to show
\begin{align}
\label{l<p3}
2^{\ell} \, \big| \chi_{\ell,\sqrt{3}}(\xi) \, \underline{I}^{p,k_1,k_2}[f,f](s,\xi) \big| 
  \lesssim \e_1^2 2^{-m} 2^{-3\beta'm},
\end{align}
for all $s\approx 2^m$.  From \eqref{lemulI1} we obtain the bound
\begin{align}
\label{l<p4}
2^{\ell} \, \big| \underline{I}^{p,k_1,k_2}[f,f](s,\xi) \big| 
  \lesssim 2^{\ell} \cdot 2^{-p} \cdot X_{k_1,m} \cdot X_{k_2,m} \lesssim \e_1^2 2^{(-3/2 + 2\alpha)m},
\end{align}
which is more than sufficient.

\smallskip
\subsubsection*{Estimate of \eqref{l<pibp2}}
For the other terms in \eqref{l<pibp} we need to expand $\partial_s \wt{f}$ and analyze the 
resulting quartic terms in more detail. 
We use the identity \eqref{lemdtfdec} from Lemma \ref{lemdtf} 
and write

\begin{align}
\label{l<p5.0}
K + L & = \sum_{\iota_1\iota_2\iota_3} K^{S1}_{\iota_1\iota_2\iota_3} + K^{S2}_{\iota_1\iota_2\iota_3}
  + L^{S1}_{\iota_1\iota_2\iota_3} + L^{S2}_{\iota_1\iota_2\iota_3}  + D^R,
\\
\label{l<p5}
K^{S1,2}_{\iota_1\iota_2\iota_3}(t,\xi) & := \chi_{\ell,\sqrt{3}}(\xi)\int_{0}^t  \underline{I}^{p,k_1,k_2}
  \big[ \wtF^{-1}\mathcal{C}^{S1,2}_{\iota_1\iota_2\iota_3}(f,f,f),f\big](s,\xi) \, \tau_m(s) \, ds,
\\
\label{l<p6}
L^{S1,2}_{\iota_1\iota_2\iota_3}(t,\xi) & := \chi_{\ell,\sqrt{3}}(\xi)\int_{0}^t  \underline{I}^{p,k_1,k_2}
  \big[f,\wtF^{-1}\mathcal{C}_{\iota_1\iota_2\iota_3}^{S1,2}(f,f,f) \big](s,\xi) \, \tau_m(s) \, ds,
\\
\label{l<p8}
D^R(t,\xi) & := \chi_{\ell,\sqrt{3}}(\xi) 
  \int_{0}^t  \Big( \underline{I}^{p,k_1,k_2}\big[\wtF^{-1}\mathcal{R},f\big[(s,\xi) 
  + \underline{I}^{p,k_1,k_2} \big[f,\wtF^{-1}\mathcal{R} \big] (s,\xi)\Big) \, \tau_m(s) \, ds.	
\end{align}
Notice that since we assume $k_2\leq k_1$, the expressions in \eqref{l<p5} and \eqref{l<p6} are not symmetric. 
We proceed to estimate \eqref{l<p5}-\eqref{l<p8}.

\smallskip
\subsubsection*{{\bf Step 2.1}: Estimate of $K^{S1}$ in \eqref{l<p5}}
In the formulas \eqref{formulacubiccoeff} and \eqref{defCS12} for $\mathcal{C}^{S1}$, 
observe that the signs $\lambda,\nu,\dots$ do not play any relevant role, so that we can omit them and 
write $K^{S1}_{\iota_1\iota_2\iota_3}$ as a term of the form
\begin{align}
\label{l<p10}
\begin{split}
K^{S1}_{\iota_1\iota_2\iota_3} = \int_{0}^t
  \iiiint e^{is \Psi_{\iota_1\iota_2\iota_3}(\xi,\rho,\zeta,\eta,\s) } 
  \, \frac{\varphi_p\big(\Phi(\xi,\eta,\s)\big)}{\Phi(\xi,\eta,\s)}
  \, \mathfrak{q}(\xi,\eta,\s,\rho,\zeta) \, \varphi_{k_1}(\eta)\varphi_{k_2}(\sigma) 
  \\ \times \wt{f_{\iota_1}}(\rho) \wt{f_{\iota_2}}(\zeta) \wt{f_{\iota_3}}(\eta-\rho-\zeta)
    \wt{f}(\s) \, d\eta \, d\zeta \, d\rho \, d\s \, \tau_m(s)ds,
\end{split}
\end{align}
where
\begin{align}\label{l<p10'}
\Psi_{\iota_1\iota_2\iota_3}(\xi,\rho,\zeta,\eta,\s) := 
  \jxi - \iota_1\langle\rho\rangle - \iota_2\langle\zeta\rangle - \iota_3\langle\eta-\rho-\zeta\rangle 
  - \jsig, \qquad \iota_1,\iota_2,\iota_3 \in\{+,-\},
\end{align}
and we slightly abuse notation by still denoting $\mathfrak{q}$ for the quartic symbol above,
obtained by `composing' the quadratic and cubic one.

We will sometimes denote the oscillating phase \eqref{l<p10'} just by $\Psi$ and omit the dependence on the 
signs $\iota_i$ of the profiles $f$, since these play no important role.

\begin{rem}
Note that $\Psi$ involves four input frequencies $(\rho,\zeta,\eta-\rho-\zeta,\s)$:
the first three of them 
are ``{\it correlated}'' while $\s$ is ``{\it uncorrelated}''.
In the following arguments we will always keep in mind this distinction and perform different estimates 
for the ``correlated'' frequencies and the ``uncorrelated'' ones. 
\end{rem}

We further decompose the integral over the frequencies in \eqref{l<p10} 
according to the sizes of $\rho,\zeta$ and $\eta-\rho-\zeta$ by defining
\begin{align}\label{l<p11}
\begin{split}
& 
  \underline{I}^{p,\underline{k}}(s,\xi) := \iiiint e^{is \Psi} \, \frac{\varphi_p\big(\Phi\big)}{\Phi}
  \, \mathfrak{q} \, \varphi_{\underline{k}}(\eta,\sigma,\rho,\zeta)
  \, \wt{f}(\rho)\wt{f}(\zeta) \wt{f}(\eta-\rho-\zeta)  \, \wt{f}(\s) \, d\eta \, d\zeta \, d\rho \, d\s, 
\\
& \varphi_{\underline{k}}(\eta,\sigma,\rho,\zeta) = 
  \varphi_{k_1}(\eta)\varphi_{k_2}(\sigma)\varphi_{k_3}(\rho)\varphi_{k_4}(\zeta)\varphi_{k_5}(\eta-\rho-\zeta).
\end{split}
\end{align}

Recall that we are aiming to obtain the bound \eqref{l<p2}.
Without loss of generality, we may assume that, on the support of \eqref{l<p11}, we have 
\begin{align}\label{l<porder0}
k_5 \leq k_4\leq k_3; 
\end{align}
for the moment, we also assume that $k_3 \leq -5$; see Remark \ref{remnewfreq} below for more on this.

We first dispose of all interactions with $k_5 \leq -3m$. 
In this case we can estimate all profiles $\wt{f}$ in $L^\infty$ and gain $2^{-3m}$ from integration
(recall also the notation for `$\med$' in \S\ref{secNotation}):
\begin{align}\label{-3m}
\begin{split}
& 2^{\ell} \big| \chi_{\ell,\sqrt{3}}(\xi)\underline{I}^{p,\underline{k}}(s,\xi) \big| 
  \\
  & \lesssim 2^{\ell} \, {\| \wt{f} \|}_{L^\infty}^4 
   \iiiint \frac{\varphi_p(\Phi)}{|\Phi|} 
   \varphi_{k_1}(\eta)\varphi_{k_2}(\sigma)\varphi_{k_3}(\rho)\varphi_{k_4}(\zeta)\varphi_{k_5}(\eta-\rho-\zeta)
   \, d\eta d\sigma d\rho d\zeta
\\
& \lesssim 2^{\ell} \cdot \e_1^4 \cdot 2^{-p} \cdot
  2^{k_2} 2^{\min(k_1,k_3,k_4)} 2^{\med(k_1,k_3,k_4)} 2^{k_5}
  \\ 
  & \lesssim \e_1^4 2^{-3m}, 
\end{split}
\end{align}
which is more than enough.

After treating these very small frequencies,
we are left with $O(m^3)$ choices for $k_3,k_4$ and $k_5$ in \eqref{l<p11}, 
and it suffices to show the slightly stronger bound
\begin{align}\label{l<p12}
2^{\ell} \big| \chi_{\ell,\sqrt{3}}(\xi)\underline{I}^{p,\underline{k}}(s,\xi) \big| 
\lesssim \e_1^2 2^{-2m} 2^{-5\beta'm},
\end{align}
for all $s \approx 2^m$, and for each $5$-tuple of frequencies $(k_1,k_2,k_3,k_4,k_5)$ with 
\begin{align}\label{l<porder}
|\max(k_1,k_3) - \med(k_1,k_3,k_4)|\leq 5, \quad k_5 \leq k_4\leq k_3 \leq -5.
\end{align}
The first restriction above comes from the fact that $\eta = \rho + \zeta + (\eta-\rho-\zeta)$
which forces $\max(|\eta|,|\rho|,|\zeta|,|\eta-\rho-\zeta|) \approx \max_2(|\eta|,|\rho|,|\zeta|,|\eta-\rho-\zeta|)$
(recall the notation for `$\max_2$' given towards the end of \S\ref{secNotation}),
so that, in view of \eqref{l<porder0}, i.e., $|\rho|\gtrsim |\zeta| \gtrsim |\eta-\rho-\zeta|$, 
we must have $\max(|\eta|,|\rho|) \approx \med(|\eta|,|\rho|,|\zeta|)$.

\begin{rem}\label{remnewfreq}
Concerning the restrictions \eqref{l<porder},
note that we can assume \eqref{l<porder0} 
without loss of generality, but that we are imposing the additional restriction $k_3 \leq -5$.
In particular, this means that we are not considering here 
the cases when the sizes of ``new input frequencies''
$(\rho,\zeta,\eta-\rho-\zeta)$ are (a) close to the bad frequency $\sqrt{3}$ or (b) going to infinity.
Both of these cases are actually easier to treat than the case of small frequencies
that we will concentrate on.

We will deal with the scenarios (a) and (b)
at the level of the (more complicated) quadratic and cubic interactions in Section \ref{secw'};
see in particular Subsection \ref{secQRother}, the discussion at the end of \S\ref{ssecothHF} about high frequencies,
and the estimates in \S\ref{ssecoth++l} where we deal with the bad frequencies
by relying on \eqref{Xest} to bound  the quantity $X_{k,m}(f)$.

Finally, recall that, under the assumed frequencies localization, the symbol $\mathfrak{q}$ in \eqref{l<p11}
is smooth, with $O(1)$ bounds on derivatives;
see \eqref{symest0} and \eqref{CubicS} with \eqref{formulacubiccoeff}-\eqref{defCS12}.
\end{rem}

Before proceeding with the proof of \eqref{l<p12} we discuss how to treat the 
oscillations in the ``uncorrelated'' variable $\sigma$.

\smallskip
\subsubsection*{Treatment of the uncorrelated variable $\sigma$ and a first basic bound}
Examining the definitions \eqref{l<p10}--\eqref{l<p11}, 
we see that the only oscillation involving the variable $\sigma$ is $e^{is\jsig}$.
To exploit these oscillations we integrate by parts 
in $\s$ when $k_2 \geq -m/2$ using \eqref{apriori13}, 
and otherwise estimate the profile $\varphi_{k_2}\wt{f}$ in $L^1_\s$ using \eqref{apriori12}.

More precisely, we first estimate
\begin{align}\label{l<p12.5'}
\begin{split}
2^{\ell} \big| \underline{I}^{p,\underline{k}}(s,\xi) \big| & 
  \lesssim 2^{\ell} \cdot 2^{-p}
  \cdot {\|\varphi_{k_3} \wt{f}\|}_{L^\infty} {\|\varphi_{k_4} \wt{f}\|}_{L^\infty} {\|\varphi_{k_5} \wt{f}\|}_{L^\infty}
  \\
  & 
  \times \iiint \Big( \int |\varphi_{k_2} \wt{f}(\sigma)| \,d\sigma \Big)
  \, \varphi_{k_1}(\eta) \varphi_{k_3}(\rho)\varphi_{k_4}(\zeta)\varphi_{k_5}(\eta-\rho-\zeta)
  \, d\eta d\rho d\zeta
  \\
  & \lesssim 
  {\|\varphi_{k_3} \wt{f}\|}_{L^\infty} {\|\varphi_{k_4} \wt{f}\|}_{L^\infty} {\|\varphi_{k_5} \wt{f}\|}_{L^\infty} 
  \\
& \times 2^{k_5} 2^{\min(k_1,k_3,k_4)} 2^{\med(k_1,k_3,k_4)} 
  \cdot {\|\varphi_{k_2} \wt{f}\|}_{L^1}.
\end{split}
\end{align}
Using \eqref{apriori11}-\eqref{apriori12}, and in view of 
$k_5\leq k_4 \leq k_3$ (see \eqref{l<porder}), we obtain
\begin{align}
\label{l<p13.1}
2^{\ell} \big| \underline{I}^{p,\underline{k}}(s,\xi) \big| \lesssim \e_1^3 2^{3\alpha m} 
  \cdot 2^{k_5+k_4 + \min(k_1,k_3)} \cdot 2^{(1/2)(k_3+k_4+k_5)} \cdot \| \varphi_{k_2} \widetilde{f} \|_{L^1}.
\end{align}

When $k_2 \geq -m/2$ we integrate by parts in $\sigma$ and write
\begin{align}\label{l<p12.5}
\begin{split}
\underline{I}^{p,\underline{k}} & = K_1 + K_2,
\\
K_1 & = 
  \iiiint e^{is \Psi} 
  \partial_\s \mathfrak{k}(\xi,\eta,\sigma,\rho,\zeta) \, \wt{f}(\rho)\wt{f}(\zeta) \wt{f}(\eta-\rho-\zeta)  \,
    \, \varphi_{\sim k_2}(\s)\wt{f}(\s)\, d\eta \, d\rho \, d\zeta \, d\s, 
\\
K_2 & = 
  \iiiint e^{is \Psi} 
  \mathfrak{k}(\xi,\eta,\s,\rho,\zeta) \, \wt{f}(\rho)\wt{f}(\zeta) \wt{f}(\eta-\rho-\zeta) \, 
  \partial_\s \big(\varphi_{\sim k_2}(\s)\wt{f}(\s) \big)\, d\eta \, d\rho \, d\zeta \,  d\s,
\end{split}
\end{align}
where we denoted $\varphi_{\sim k_2}(\s)=\varphi_{k_2}(\s) 2^{k_2} \jsig/\s$ 
a cutoff function with the same properties as $\varphi_{k_2}$ ($k_2 \leq 0)$,
and defined the symbol
\begin{align}\label{l<p12.6}
\begin{split}
& \mathfrak{k}(\xi,\eta,\s,\rho,\zeta) 
  := 2^{-k_2} s^{-1} \,\frac{\varphi_p\big(\Phi\big)}{\Phi} 
  \, \mathfrak{q}(\xi,\eta,\s,\rho,\zeta) \,\varphi_{k_1}(\eta)\varphi_{k_3}(\rho)\varphi_{k_4}(\zeta)\varphi_{k_5}(\eta-\rho-\zeta),
  \\
& |\mathfrak{k}(\xi,\eta,\s,\rho,\zeta)| \lesssim 2^{-m-p-k_2}
\end{split}
\end{align}  
for $s\approx 2^m$.
We then claim that $K_2$ is the main contribution in \eqref{l<p12.5}, 
while $K_1$ gives a term of the same form of $\underline{I}^{p,\underline{k}}$
but with a better symbol, that we can treat as a lower order term and disregard. 
To see this, notice that
\begin{align*}
|\partial_\s \mathfrak{k}| & = s^{-1} 2^{-k_2}
  \Big|\partial_\s \Big[ \frac{\varphi_p\big(\Phi\big)}{\Phi}
  \,\mathfrak{q}  \Big] \varphi_{k_1}(\eta)\varphi_{k_3}(\rho)\varphi_{k_4}(\zeta) \varphi_{k_5}(\eta-\rho-\zeta)\Big|
\\ & \lesssim 2^{-m} 2^{-k_2} \big[ 2^{-2p + k_2} + 2^{-p} \big]
  \lesssim 2^{-m/2 + 3\beta' m} \cdot 2^{-p},
\end{align*}
having used $p \geq -m/2 - 3\beta' m - 10$ and $k_2 \geq -m/2$.
In particular, we see that this bound is better than $O(2^{-p})$, 
which is the trivial bound for the symbol of \eqref{l<p11} used in \eqref{l<p12.5'}.

For the term $K_2$ in \eqref{l<p12.5} we can use \eqref{l<p12.6} and \eqref{apriori11}-\eqref{apriori13} to obtain
\begin{align}
\nonumber
& 2^{\ell} \big| K_2(s,\xi) \big| 
\\
\nonumber
& \lesssim 2^{\ell} 
  \cdot 2^{-p-m-k_2}
  \cdot {\|\varphi_{k_3} \wt{f}\|}_{L^\infty} {\|\varphi_{k_4}\wt{f}\|}_{L^\infty} {\|\varphi_{k_5}\wt{f}\|}_{L^\infty}
  2^{k_5+k_4 + \min(k_1,k_3)}
  \cdot {\|\partial_\s (\varphi_{\sim k_2} \wt{f}) \|}_{L^1}
\\
\label{l<p13.0}
& \lesssim \e_1^3 2^{3\alpha m} \cdot 2^{k_5+k_4 + \min(k_1,k_3)}
	\cdot 2^{(1/2)(k_3+k_4+k_5)} \cdot 2^{-m-k_2} \| \partial_\sigma(\varphi_{\sim k_2} 
	\widetilde{f} ) \|_{L^1}.
\end{align}

Putting together \eqref{l<p13.1} and \eqref{l<p13.0} we obtain the following bound:
\begin{align}
2^{\ell} \big| \underline{I}^{p,\underline{k}}(s,\xi) \big| 
\label{l<p13}
\lesssim \e_1^3 \cdot 2^{3\alpha m} 
  \cdot 2^{k_5+k_4 + \min(k_1,k_3)} \cdot 2^{(1/2)(k_3+k_4+k_5)} \cdot X_{k_2,m}.
\end{align}
With \eqref{l<p13} in hand we now proceed with the proof of \eqref{l<p12} subdividing it into two main cases. 
In what follows we fix $\delta \in (0,\alpha)$.

\smallskip
\noindent
{\it Case 1: $k_1 + k_4 \leq -m + \delta m$}.
This case corresponds to a scenario where integration by parts in the new ``correlated variables'',
that is in the directions $\partial_{\eta} + \partial_{\rho}$ and $\partial_\eta + \partial_\zeta$, is forbidden,
see also \eqref{l<p14.0}.
In this case, inequality \eqref{l<p13} suffices to get the desired bound by the right-hand side of \eqref{l<p12}.
Indeed,
using $X_{k_2,m} \lesssim \e_1 2^{-3m/4+\alpha m}$,
\eqref{l<p13} implies
\begin{equation}
\label{l<p13.5}
2^{\ell} \big| \underline{I}^{p,\underline{k}}(s,\xi) \big| 
  \lesssim \e_1^3 2^{(-3/4+4\alpha) m} \cdot 2^{k_1+3k_4},
\end{equation}
recall \eqref{l<porder}.
Since we are assuming $k_1+k_4 \leq -m+\delta m$, we must also have $k_4 \leq -m/3+\delta m$
as a consequence of the lower bound on $k_1$ in \eqref{l<ppar}.
Then $k_1+ 3k_4 \leq -3m/2$ and \eqref{l<p13.5} suffices for \eqref{l<p12}.


\smallskip
\noindent
{\it Case 2: $k_1 + k_4 \geq -m+\delta m$}.
In this case we can integrate
by parts in both the $\partial_\eta+\partial_\rho$ and $\partial_\eta+\partial_\zeta$ directions,
using that 
\begin{equation}
\label{l<p14.0}
\begin{array}{ll}
(\partial_\eta + \partial_\rho) \Psi_{\iota_1\iota_2} = -\iota_1 \dfrac{\rho}{\langle \rho \rangle},
  \qquad & |(\partial_\eta + \partial_\rho) \Psi_{\iota_1\iota_2}| \approx 2^{k_3},
\\
(\partial_\eta + \partial_\zeta) \Psi_{\iota_1\iota_2} = -\iota_2\dfrac{\zeta}{\langle \zeta \rangle}, 
  \qquad & |(\partial_\eta + \partial_\zeta) \Psi_{\iota_1\iota_2}|\approx 2^{k_4}.
\end{array}
\end{equation}
To properly implement this strategy we first need to pay attention to the cases when $k_4$ is small.


\smallskip \noindent
{\it Subcase 2.1: $k_4 \leq -m/2+\delta m$}.
In this case $k_5\leq k_4 \leq -m/2+\delta m$ and we can estimate directly using \eqref{l<p13}:
\begin{align*}
\begin{split}
2^{\ell} \big| \underline{I}^{p,\underline{k}}(s,\xi) \big| 
  & \lesssim \e_1^4 2^{(-3/4+4\alpha) m} \cdot 2^{k_4+k_5} \cdot 2^{(1/2)(k_4+k_5)}
  \lesssim \e_1^4 2^{(-3/4+4\alpha+3\delta) m} 2^{-3m/2}
\end{split}
\end{align*}
which is sufficient for \eqref{l<p12}.


\smallskip 
\noindent
{\it Subcase 2.2: $k_4 \geq -m/2 + \delta m$}. 
In this case we have $k_3,k_4 \geq -m/2+\delta m$
and, see \eqref{l<p14.0}, we can integrate by parts in both $\partial_\eta+\partial_\rho$ and $\partial_\eta+\partial_\zeta$,
using also that $k_1 +k_4 \geq -m+\delta m$.
Performing these integrations by parts, we see that
\begin{equation}
\label{l<p15}
\begin{split}
\big| \underline{I}^{p,\underline{k}}(s,\xi) \big| \lesssim 2^{-2m} 
  \sup_{s\approx 2^m} \Big| \iiiint e^{is \Psi} \, 
  (\partial_\eta + \partial_\zeta) \Big[ \frac{1}{(\partial_\eta+\partial_\zeta) \Psi} 
  (\partial_\eta + \partial_\rho) \Big( \frac{1}{(\partial_\eta + \partial_\rho) \Psi}
\\
  \times \frac{\varphi_p\big(\Phi\big)}{\Phi}  \mathfrak{q} 
  \varphi_{\underline{k}} 
  \wt{f}(\rho) \wt{f}(\zeta) \wt{f}(\eta-\rho-\zeta)\Big) \Big] 
  \, d\eta \, d\rho \, d\zeta\,  \wt{f}(\s) \, d\s \Big|.
\end{split}
\end{equation}
The expression in \eqref{l<p15} gives many different contributions, 
depending on which terms are hit by the derivatives $\partial_\eta +\partial_\rho$ and $\partial_\eta +\partial_\zeta$.
By distributing these derivatives we see that
\begin{equation}
\label{l<p15.0}
\begin{split}
\big| \underline{I}^{p,\underline{k}}(s,\xi) \big| 
  & \lesssim 2^{-2m} \sup_{s\approx 2^m} \big[ A(s,\xi) + B(s,\xi) + C(s,\xi) + D(s,\xi) \big]
\end{split}
\end{equation}
where
\begin{align}
\label{l<p15.1}
\begin{split}
A & := \Big| \iiiint e^{is \Psi} \mathfrak{a}(\xi,\eta,\s,\rho,\zeta)
	\wt{f}(\rho) \wt{f}(\zeta) \wt{f}(\eta-\rho-\zeta) \, d\eta \, d\rho \, d\zeta\,  \wt{f}(\s)  d\s \Big|
\\
& \mathfrak{a} :=   (\partial_\eta + \partial_\zeta) \Big[ \frac{1}{(\partial_\eta+\partial_\zeta) \Psi} 
  (\partial_\eta + \partial_\rho) \Big( \frac{1}{(\partial_\eta + \partial_\rho) \Psi}
 \frac{\varphi_p\big(\Phi\big)}{\Phi}  \mathfrak{q} \varphi_{\underline{k}} \Big) \Big] ,
\end{split}
\\ \nonumber
\\
\label{l<p15.2}
\begin{split}
B & := \Big| \iiiint e^{is \Psi} \mathfrak{b}(\xi,\eta,\s,\rho,\zeta)
	\big[ \partial_\rho \wt{f}(\rho) \big] \wt{f}(\zeta) \, \wt{f}(\eta-\rho-\zeta) \, d\eta \, d\rho \, d\zeta\,  \wt{f}(\s)  d\s \Big|
\\
& \mathfrak{b} :=  (\partial_\eta + \partial_\zeta) \Big[ \frac{1}{(\partial_\eta+\partial_\zeta) \Psi} 
  \frac{1}{(\partial_\eta + \partial_\rho) \Psi}
  \frac{\varphi_p\big(\Phi\big)}{\Phi}  \mathfrak{q} \varphi_{\underline{k}} \Big] ,
\end{split}
\\ \nonumber
\\
\label{l<p15.3}
\begin{split}
C & := \Big| \iiiint e^{is \Psi} \mathfrak{c}(\xi,\eta,\s,\rho,\zeta)
	\, \big[\partial_\rho \wt{f}(\rho) \big] \, \big[ \partial_\zeta \wt{f}(\zeta) \big] 
	\, \wt{f}(\eta-\rho-\zeta) \, d\eta \, d\rho \, d\zeta\,  \wt{f}(\s)  d\s \Big|
\\
& \mathfrak{c} := \frac{1}{(\partial_\eta+\partial_\zeta) \Psi} \frac{1}{(\partial_\eta + \partial_\rho) \Psi}
 \frac{\varphi_p\big(\Phi\big)}{\Phi}  \mathfrak{q} \varphi_{\underline{k}},
\end{split}
\\ \nonumber
\\
\label{l<p15.4}
\begin{split}
D & := \Big| \iiiint e^{is \Psi} \mathfrak{d}(\xi,\eta,\s,\rho,\zeta)
	 \wt{f}(\rho) \big[ \partial_\zeta \wt{f}(\zeta) \big] \, \wt{f}(\eta-\rho-\zeta) \, 
	 d\eta \, d\rho \, d\zeta\,  \wt{f}(\s)  d\s \Big|
\\
& \mathfrak{d} :=  \frac{1}{(\partial_\eta+\partial_\zeta) \Psi} 
  (\partial_\eta + \partial_\rho) \Big[ \frac{1}{(\partial_\eta + \partial_\rho) \Psi}
  \frac{\varphi_p\big(\Phi\big)}{\Phi} \mathfrak{q} \varphi_{\underline{k}} \Big].
\end{split}
\end{align}
To obtain the desired bound \eqref{l<p12} it suffices to show
\begin{align}\label{l<p15bound}
2^{\ell} \sup_{s\approx 2^m} \big( |A(s,\xi)| + |B(s,\xi)| + |C(s,\xi)| 
  + |D(s,\xi)| \big) \lesssim \e_1^4 2^{-5\beta'm}.
\end{align}

To estimate \eqref{l<p15.1} we first observe that, in view of \eqref{l<p14.0}, the symbol satisfies
\begin{align}\label{l<p15.1sym}
\begin{split}
|\mathfrak{a}| & \lesssim 2^{-k_3-k_4-p} \cdot \max\big( 2^{-k_3}, 2^{-p+k_1}, 2^{-k_1} \big) 
	\cdot \max\big( 2^{-k_4}, 2^{-p+k_1}, 2^{-k_1} \big)
	\\ 
	& \lesssim 2^{-k_3-k_4-p} \cdot \max\big( 2^{-k_3}, 2^{-k_1} \big) 
	\cdot \max\big( 2^{-k_4}, 2^{-k_1}\big)
\end{split}
\end{align}
having used that $ 2k_1 \leq p + 20$.
If we iterate the above integration by parts procedure, 
and only keep the terms where the derivatives never hit the $\widetilde{f}$, the gain at each step is
$$
2^{-2m} 2^{-k_3-k_4} \max(2^{-k_3},2^{-k_1}) \max(2^{-k_4},2^{-k_1}) \lesssim 2^{-2\delta m},
$$
since $k_4 \geq - \frac{m}{2} + \delta m$ and $k_1 + k_4 \geq -m + \delta m$. 
Thus, one obtains an arbitrarily large gain in powers of $2^m$, leading to the desired estimates.
There remain the terms where one of the $\wt{f}$ is hit, 
but they are all better behaved than $B$ and $C$, to which we now turn.

To estimate \eqref{l<p15.2} we first bound, similarly to \eqref{l<p15.1sym},
\begin{align}\label{l<p15.2sym}
|\mathfrak{b}| \lesssim 2^{-k_3-k_4-p} \cdot 2^{-\min(k_4,k_1)}.
\end{align}
Using this, integration by parts in the ``decorrelated'' variable $\s$, and the a priori bounds
placing $\partial_\rho \wt{f} \in L^2$ and the other two profiles in $L^\infty_\xi$, we get
\begin{align*}
2^{\ell} |B| & \lesssim \e_1^3 2^{3\alpha m} \cdot 2^{-k_3-k_4}
  \cdot 2^{-\min(k_4,k_1)} 
  \cdot 2^{\min(k_5,k_1) + \med(k_1,k_4,k_5)} \cdot 2^{(1/2)(k_3+k_4+k_5)} \cdot X_{k_2,m}
  \\
  & \lesssim \e_1^4 2^{-3m/4 + 4\alpha m}.
\end{align*}

Finally, \eqref{l<p15.3} can be dealt with in a similar way by using $|\mathfrak{c}| \lesssim 2^{-k_3-k_4-p}$,
the usual argument for the ``decorrelated'' variable giving a factor of $X_{k_2,m}$, 
estimating in $L^2$ the two differentiated profiles, and using the a priori bounds \eqref{aprioriw}:
\begin{align*}
2^{\ell} |C| \lesssim \e_1^3 2^{3\alpha m} \cdot 2^{-k_3-k_4}
	\cdot  2^{\min(k_5,k_1)} \cdot 2^{(1/2)(k_3+k_4+k_5)} \cdot X_{k_2,m}
	\lesssim \e_1^4 2^{-3m/4 + 4\alpha m}.
\end{align*}

Finally, we have \eqref{l<p15.4} which is similar to \eqref{l<p15.2}, with one profile differentiated
and the other derivative hitting the symbol. 
The symbol satisfies $|\mathfrak{d}| \lesssim 2^{-k_3-k_4-p} \cdot 2^{-\min(k_3,k_1)}$ and we can estimate
\begin{align*}
2^{\ell} |D| & \lesssim \e_1^3 2^{3\alpha m} \cdot 2^{-k_3-k_4}
	\cdot 2^{-\min(k_3,k_1)} 
	\cdot 2^{\min(k_5,k_1) + \med(k_1,k_3,k_5)} \cdot 2^{(1/2)(k_3+k_4+k_5)} \cdot X_{k_2,m}
	\\
	& \lesssim \e_1^4 2^{-3m/4 + 4\alpha m}.
\end{align*}

The bound \eqref{l<p15bound} is proven and \eqref{l<p12} follows, thereby completing the estimate for 
the term $K^{S1}$ in \eqref{l<p5}.


\smallskip
\subsubsection*{{\bf Step 2.2}: Estimate of $K^{S2}$ in \eqref{l<p5}}
Recall the definition of $\mathcal{C}^{S2}$ from \eqref{formulacubiccoeff} and \eqref{defCS12}.
We can see that $K^{S2}_{\iota_1\iota_2\iota_3}$ has the form
\begin{align}
\label{l<p16}
\begin{split}
K^{S2}_{\iota_1\iota_2\iota_3} = \int_{0}^t \tau_m(s) \iiiint e^{is \Gamma_{\iota_1\iota_2\iota_3}} 
  \, \frac{\varphi_p\big(\Phi\big)}{\Phi}
  \, \mathfrak{q}_2(\xi,\eta,\s,\rho,\zeta,\omega) \, \varphi_{k_1}(\eta)\varphi_{k_2}(\sigma) 
  \\ \wt{f_{\iota_1}}(\rho) \wt{f_{\iota_2}}(\zeta) \wt{f_{\iota_3}}(\eta-\rho-\zeta-\omega) \wt{f}(\s) \, d\eta \, d\rho \, d\zeta \,d\s \, 
  \pv \frac{\widehat{\phi}(\omega)}{\omega}d\omega \, ds,
\end{split}
\end{align}
with
\begin{align}\label{l<p16.1}
\Gamma_{\iota_1\iota_2\iota_3}(\xi,\rho,\eta,\omega,\s) 
  := \jxi - \iota_1\langle\rho\rangle - \iota_2\langle\zeta\rangle - \iota_3\langle\eta-\zeta-\rho-\omega\rangle 
  - \jsig, \quad \iota_1,\iota_2,\iota_3 \in \{+,-\}.
\end{align}
As before, we may assume that the symbol $\mathfrak{q}_2$ is sufficiently regular with bounded derivatives.
For lighter notation we will often omit the $\iota_i$ indexes and some of the arguments when this causes no confusion.
Recall that we aim to prove, see \eqref{l<p2} and \eqref{l<p5.0},
\begin{align}
\label{l<p17}
2^{\ell} \big| \chi_{\ell,\sqrt{3}}(\xi) \, K^{S2}(t,\xi) \big| \lesssim \e_1^2 2^{-m} 2^{-3\beta'm}.
\end{align}

We start by splitting
\begin{align}\label{l<p18}
K^{S2}(t,\xi) = \int_{0}^t  \big[ A_1(s,\xi) + A_2(s,\xi) \big]  \, \tau_m(s) ds,
\end{align}
where
\begin{align}
& A_1(s,\xi) = \int F(s,\xi,\omega) \, \pv \frac{\widehat{\phi}(\omega) \varphi_{\leq -5m}(\omega)}{\omega} d\omega,
\quad 
A_2(s,\xi) = \int F(s,\xi,\omega) \, \frac{\widehat{\phi}(\omega) \varphi_{> -5m}(\omega)}{\omega} d\omega,
\end{align}
with
\begin{align}\label{l<p18.3}
\begin{split}
F(s,\xi,\omega) := \iiiint e^{is \Gamma} 
  \, \frac{\varphi_p\big(\Phi\big)}{\Phi}
  \, \mathfrak{q}_2\, \varphi_{k_1}(\eta)\varphi_{k_2}(\sigma) 
  \wt{f}(\rho) \wt{f}(\zeta) \wt{f}(\eta-\rho-\zeta-\omega) \wt{f}(\s) \, d\eta\, d\rho \,d\zeta \, d\s.
\end{split}
\end{align}

\smallskip
\subsubsection*{Estimate of $A_1$} 
In this case, $\omega$ is very small and we need to use the principal value. 
We estimate, for $s\approx 2^m$,
\begin{align}\label{l<p18.4}
\begin{split}
 |A_1(s,\xi)| & \lesssim
  \int \Big| F(s,\xi,\omega) - F(s,\xi,0) \Big| \frac{\varphi_{\leq -5m}(\omega)}{|\omega|} \, d\omega \, 
  \\
 & \lesssim  2^{-5m} \cdot \sup_{s\approx 2^m} \sup_{|\omega|\lesssim 2^{-5m}} 
  \big|\partial_\omega F(s,\xi,\omega)\big|.
\end{split}
\end{align}
Inspecting the formula \eqref{l<p18.3} we see that $\partial_\omega F$ has three contributions corresponding to 
the derivative hitting the phase $\G$, the symbol $\mathfrak{q}_2$, 
or the profile $\wt{f}(\eta-\rho-\zeta-\omega)$.
The main term is the first one so that, up to lower order (faster decaying) terms, 
we have
\begin{align*}
\begin{split}
\partial_\omega F(s,\xi,\omega) & \approx \iiint is ( \partial_\omega \Gamma)\, e^{is \Gamma} 
  \, \frac{\varphi_p\big(\Phi\big)}{\Phi}
  \, \mathfrak{q}_2 \, \varphi_{k_1}(\eta)\varphi_{k_2}(\sigma) 
  \, \wt{f}(\rho) \wt{f}(\zeta) \wt{f}(\eta-\rho-\zeta-\omega) \wt{f}(\s) \, d\eta \, d\rho \,d\zeta \, d\s,
\end{split}
\end{align*}
from which we deduce that, for $s\approx 2^m$,
\begin{align}\label{l<p18.5}
\begin{split}
|\partial_\omega F(s,\xi,\omega)| \lesssim 2^m \cdot 2^{-p} \e_1^3.
\end{split}
\end{align}
From this and \eqref{l<p18.4} we obtain the desired bound \eqref{l<p17} for $A_1$.

\smallskip
\subsubsection*{Estimate of $A_2$} 
We decompose the support of the integral
according to the size of the input frequencies 
$\rho$,$\zeta$,$\eta-\rho-\zeta-\omega$ and $\omega$ by defining 
\begin{align}\label{l<p19}
\begin{split}
& A_{\underline{k},q}(t,\xi) := 
  \int F_{\underline{k}}(s,\xi,\omega) 
  \, \frac{\varphi_{q}(\omega)}{\omega} d\omega 
\end{split}
\end{align}
where
\begin{align}\label{l<p19.1}
\begin{split}
& F_{\underline{k}}(s,\xi,\omega) := \iiiint e^{is \Gamma} 
  \, \frac{\varphi_p\big(\Phi\big)}{\Phi} \, \mathfrak{q}_2 \, \varphi_{\underline{k}}(\eta,\sigma,\rho,\zeta,\omega)\, 
  \wt{f}(\rho) \wt{f}(\zeta) \wt{f}(\eta-\rho-\zeta-\omega) \wt{f}(\s) \, d\eta \, d\rho \, d\zeta \,d\s,
  \\
& \varphi_{\underline{k}}(\eta,\sigma,\rho,\zeta,\omega) := 
  \varphi_{k_1}(\eta)\varphi_{k_2}(\sigma)\varphi_{k_3}(\rho)\varphi_{k_4}(\zeta)\varphi_{k_5}(\eta-\rho-\zeta-\omega).  
\end{split}
\end{align}
Since we can easily dispose of the cases with $\min(k_1,\dots,k_5) \leq -5m$ 
(see for example the estimate \eqref{-3m})
or $\max(k_1,\dots,k_5) \geq m$ (using the Sobolev-type bound in \eqref{propbootfas}),
we are only left with $O(m^5)$ terms like $A_{\underline{k},q}$.
We can then reduce the proof of \eqref{l<p17} to showing the slightly stronger bound
\begin{align}\label{l<p19.5}
2^{\ell} \big| \chi_{\ell,\sqrt{3}}(\xi)A_{\underline{k},q}(t,\xi) \big| 
\lesssim \e_1^2 2^{-2m} 2^{-4\beta' m},
\end{align}
for each fixed set of frequencies with
\[|\med(k_1,k_3,k_4) - \max(k_1,k_3) |\leq 5, \qquad k_5 \leq k_4 \leq k_3\leq -5, \qquad -5m \leq q \leq -D_0,\]
where $D_0$ is a sufficiently large absolute constant,
with the main constraints \eqref{l<ppar} holding as well.
See Remark \ref{remnewfreq} for a justification of the second restriction above,
and notice that the case $q \geq -D_0$ is much easier to deal with since the $\pv$ is not singular.

Notice that on the support of \eqref{l<p19.1} we have, see \eqref{l<p16.1},
\begin{equation}
  \big|(\partial_\eta + \partial_\rho) \Gamma \big| = \big|\frac{\rho}{\jrho}\big| \approx 2^{k_3},
  \qquad 
  |(\partial_\eta + \partial_\zeta) \Gamma|  = \big|\frac{\zeta}{\langle \zeta \rangle}\big| \approx 2^{k_4},
\end{equation}
This is identical to \eqref{l<p14.0} in the case of 
the terms $K^{S1}_{\iota_1\iota_2}$ in \eqref{l<p10} treated before.
We can then proceed in the same way as we did in Step 2.1 above and estimate \eqref{l<p19.1}
for each fixed $\omega$ (note that the terms \eqref{l<p11} are basically the same, up to the 
smooth $\mathfrak{q}$ and $\mathfrak{q}_2$ symbols, as the expression \eqref{l<p19.1} evaluated at $\omega=0$).
This procedure will give a bound by the right-hand side of \eqref{l<p12} for $F_{\underline{k}}(s,\xi,\omega)$,
and integrating over $\omega$ in \eqref{l<p19}, one arrives at \eqref{l<p19.5}.


\smallskip
\subsubsection*{{\bf Step 3}: Estimate of $L^{S1,2}$ in \eqref{l<p6}}
As already pointed out after the formulas \eqref{l<p5.0},
the terms $L^{S1,2}$ are not exactly the same as the terms $K^{S1,2}$, since $k_1$ and $k_2$ do not play the same role,
and we can deduce a little less information on the smaller frequency $|\s| \approx 2^{k_2}$
from information on $|\eta| \approx 2^{k_1}$.
Nevertheless, we can apply the same arguments as in Step 2.1 and Step 2.2 above.
In particular, all the proofs based on integration by parts (see Case 2 starting on page \pageref{l<p14.0}) apply verbatim, 
just by exchanging $k_1$ and $k_2$.
The only exception is the argument in Case 1 on page \pageref{l<p13.5}
where the constraint $k_1 \geq -2m/3$ from \eqref{l<ppar} was used.
Since such a lower bound might not hold for $k_2$ we need some modification of the argument, which we give below.

First, analogously to \eqref{l<p10}, we use the formulas \eqref{CubicS}
and write $L^{S1}$ as a term of the form
\begin{align}\label{l<p21.5}
\begin{split}
& L^{S1}_{\iota_1\iota_2\iota_3} = \int_{0}^t \iiiint e^{is \Psi'_{\iota_1\iota_2\iota_3}} 
  \, \frac{\varphi_p\big(\Phi\big)}{\Phi}
  \, \mathfrak{q}' \, \varphi_{k_1}(\eta)\varphi_{k_2}(\sigma) 
  \\ & \hskip50pt \times \wt{f}(\eta) \wt{f}(\rho) \wt{f}(\zeta) \wt{f}(\s-\rho-\zeta) 
  \, d\eta \, d\s \, d\zeta \, d\rho \, \tau_m(s)  ds,
\\
& \Psi'_{\iota_1\iota_2\iota_3} := \jxi - \jeta - \iota_1\jrho - \iota_2\langle\zeta\rangle 
  - \iota_2\langle\s-\rho-\zeta\rangle, \qquad \iota_1,\iota_2,\iota_3 \in \{+,-\},
\end{split}
\end{align}
where, abusing notation, we still denote by $\mathfrak{q}$ the quartic symbol.
Introducing frequency cutoffs for the new correlated variables we can reduce matters to estimating
\begin{align}
\label{l<pL}
\begin{split}
& (\underline{I}^{p,\underline{k}})'(s,\xi) := 
  \iiiint e^{is \Psi'_{\iota_1\iota_2}} \, \frac{\varphi_p\big(\Phi\big)}{\Phi}
  \, \mathfrak{q}' \, \varphi_{\underline{k}}'(\eta,\sigma,\rho,\zeta)
  \, \wt{f}(\eta) \wt{f}(\rho) \wt{f}(\zeta) \wt{f}(\s-\rho-\zeta) \, d\eta \, d\s \, d\zeta \, d\rho, 
\\
& \varphi_{\underline{k}}'(\eta,\sigma,\rho,\zeta) := \varphi_{k_1}(\eta)\varphi_{k_2}(\sigma)
  \varphi_{k_3}(\rho)\varphi_{k_4}(\zeta)\varphi_{k_5}(\sigma-\rho-\zeta),
\end{split}
\end{align}
as follows: for all $s\approx 2^m$
\begin{align}
\label{l<p20}
& 2^{\ell} \big|\chi_{\ell,\sqrt{3}} (\underline{I}^{p,\underline{k}})'(s,\xi) \big| 
  \lesssim \e_1^3 2^{-2m} 2^{-5\beta'm},
\\
\label{l<p20'}
& |\max(k_2,k_3) - \med(k_2,k_3,k_4)|\leq 5, \qquad k_5 \leq k_4 \leq k_3 \leq -5,
\end{align} 
under the constraints \eqref{l<ppar}. Compare with \eqref{l<p12}-\eqref{l<porder}.

Applying the same exact reasoning as in pages \pageref{l<p13.1}-\pageref{l<p13} we can obtain the 
analogue of \eqref{l<p13} for this term, that is
\begin{align}\label{l<p21}
2^{\ell} \big| (\underline{I}^{p,\underline{k}})'(s,\xi) \big| 
  \lesssim \e_1^3 \cdot 2^{3\alpha m} \cdot 2^{k_5+k_4 + \min(k_2,k_3)} 
  \cdot 2^{(1/2)(k_3+k_4+k_5)} \cdot X_{k_1,m}.
\end{align}
As in Step 2.1 above we distinguish two main scenarios: in the first one (Case 1 below) integration by parts
in the new correlated variables $\partial_{\s+\zeta}$ and $\partial_{\s+\rho}$ is forbidden
and we need an argument based on \eqref{l<p21};
in the second case, integration by parts is possible and we can proceed as in Case 2 of Step 2.1.

\smallskip
\noindent
{\it Case 1: $k_2 + k_4 \leq -m + \delta m$}.
\eqref{l<p21} with \eqref{wproofnot2} yields
\begin{equation*}
2^{\ell} \big| (\underline{I}^{p,\underline{k}})'(s,\xi) \big| 
  \lesssim \e_1^4 2^{(-3/4+4\alpha) m} \cdot 2^{k_2+3k_4}.
\end{equation*}
Since we are assuming $k_2+k_4 \leq -m +\delta m$,
the above bound suffices to obtain \eqref{l<p20} if $k_4 \leq -m/7$, since in this case
\begin{equation*}
2^{(-3/4+4\alpha) m} \cdot 2^{k_2+3k_4} \lesssim 2^{(-3/4+5\alpha)m} \cdot 2^{2k_4} 
  \lesssim 2^{(-57/28+5\alpha)m}.
\end{equation*}
Notice that we indeed must have $k_4 \leq -m/7$,
for otherwise we would have $k_2 \leq -6m/7 +\delta m$, 
which implies 
\[ -3m/4 + 3k_2/2 \leq -57m/28 + 3 \delta m / 2.
\]
contradicting the constraint \eqref{l<ppar'} for $\delta,\alpha,\beta'$ small enough. 



\smallskip
\noindent
{\it Case 2: $k_2 + k_4 \geq -m + \delta m$}.
This case can be treated by integration by parts as in Case 2 on page \pageref{l<p14.0}, so we skip the details.

\smallskip
\noindent
Finally, notice that $L^{S2}_{\iota_1\iota_2\iota_3}$ can treated similarly to
$L^{S1}_{\iota_1\iota_2\iota_3}$, in the same way that 
$K^{S2}_{\iota_1\iota_2\iota_3}$ was treated similarly to $K^{S1}_{\iota_1\iota_2\iota_3}$,
after taking care of the $\pv$ as in \eqref{l<p18}-\eqref{l<p18.5};
we omit the details.

\smallskip
\subsubsection*{{\bf Step 4}: Estimate of $D^R$ in \eqref{l<p8}}
These terms are relatively easy to estimate under the current assumption $\ell \leq p+10$,
relying on the estimate \eqref{lemdtfR} for the remainder term $\mathcal{R}$.
From \eqref{l<p8} we see that
\begin{align}\label{l<p50}
2^{\ell} |D^R(t,\xi)| & \lesssim 2^\ell \cdot 2^m \sup_{s\approx 2^m}
  \Big( \big| \underline{I}^{p,k_1,k_2}\big[\wtF^{-1}\mathcal{R}, f\big](s,\xi) \big|
  + \big| \underline{I}^{p,k_1,k_2}\big[f, \wtF^{-1}\mathcal{R} \big](s,\xi) \big| \Big)
\end{align}
where $\underline{I}^{p,k_1,k_2}$ is the bilinear operator defined in \eqref{l<pulI}.
Let us look at the first of the two terms on the right-hand side of \eqref{l<p50}; the other one can be treated identically.
Using the integration by parts argument on the profile $f$ (whose frequency is uncorrelated to that of $\mathcal{R}$)
we can see that 
\begin{align*}
2^\ell \big| \underline{I}^{p,k_1,k_2}\big[\mathcal{R},f\big](s,\xi) \big| & \lesssim
  2^\ell  \cdot 2^{-p} \cdot 2^{k_1/2}{\| \mathcal{R}(s) \|}_{L^2} \cdot X_{k_2,m} 
  \\
& \lesssim \e_1^4 2^{-3m/2 + 2\alpha m} \cdot 2^{-3m/4 + \alpha m}
\end{align*}
consistently with \eqref{l<p2} and \eqref{l<p5.0}. 
This completes the proof of the bound \eqref{l<p1}-\eqref{l<ppar}.

\medskip
\subsection{Case $p \geq -m/3 - 10\beta'm$ and $p\leq \ell-10$}\label{Ssecl<2k_1}
First notice that we must have 
\begin{align}\label{p<l0}
|\ell-2k_1|\leq 10, \qquad k_1 \geq p/2 + 10 \geq -m/6 - 5\beta'm + 10.
\end{align}
The analysis in this case is similar to the one in Subsection \ref{Ssecl<p}, but we have decided to separate it 
for better clarity, and to better highlight the difficulties of the case treated in Subsection \ref{Ssecl<p}.
Since $\Phi$ has a strong lower bound, our starting point is again the integration by parts in $s$ 
giving the terms \eqref{l<pibp}-\eqref{l<pibp3} and we aim to prove the bound \eqref{l<p2L2} (or \eqref{l<p2}).

Estimating as in \eqref{l<p4} suffices to deal with the boundary term $J$, 
\begin{align*}
2^{\ell} \, \big| \underline{I}^{p,k_1,k_2}(f,f)(s,\xi) \big| 
&   \lesssim 2^{\ell} \cdot 2^{-p} \cdot X_{k_1,m} \cdot X_{k_2,m} 
  \\ & \lesssim \e_1^2 2^{-p} 2^{(-3/2 + 2\alpha)m} \lesssim \e_1^2 2^{-m} 2^{-3\beta'm},
\end{align*}
since $p \geq -m/3-10\beta' m$, and $2\alpha<\beta'$ sufficiently small.

Next, we write out the terms $K$ and $L$ in \eqref{l<pibp2}-\eqref{l<pibp3} as in \eqref{l<p5.0}--\eqref{l<p8},
and aim to show (as usual we dispense of the $\iota$'s)
\begin{align}\label{p<lbound}
2^{\ell}\big(|K^{S1}| + |K^{S2}| + |L^{S1}| + |L^{S2}|) + 2^{\ell/2}{\| D^R \|}_{L^2}
  \lesssim \e_1^3 2^{-m} 2^{-3\beta'm}
\end{align}
which will imply the main conclusion \eqref{wproofdecest}.

The terms $K^{S2}$ and $L^{S2}$ can be treated in the same way that we will treat the terms $K^{S1}$ and $L^{S1}$ 
below, in analogy to how the terms $K^{S2}$ and $L^{S2}$ were treated in Step 2.2 on page \pageref{l<p16}
in the previous case $\ell \leq p+10$.
Recalling the definitions of $K^{S1}$ and $D^R$ in \eqref{l<p5} and \eqref{l<p8},
we may then reduce the bound \eqref{p<lbound} to showing the following:
\begin{align}\label{p<lbound1}
2^\ell \sup_{s\approx2^m}|\underline{I}^{p,k_1,k_2}[\wtF^{-1}\mathcal{C}^{S1},f](s,\xi) | 
  & \lesssim \e_1^3 2^{-2m} 2^{-3\beta'm},
\\
\label{p<lbound2}
2^\ell \sup_{s\approx2^m}|\underline{I}^{p,k_1,k_2}[f, \wtF^{-1}\mathcal{C}^{S1}](s,\xi) | 
  & \lesssim \e_1^3 2^{-2m} 2^{-3\beta'm},
\end{align}
and
\begin{align}\label{p<lbound3}
2^{\ell/2} & \sup_{s\approx2^m} {\big\|\underline{I}^{p,k_1,k_2}[\wtF^{-1}\mathcal{R},f](s,\xi) \big\|}_{L^2} 
  \lesssim \e_1^3 2^{-2m} 2^{-3\beta'm},
\\
\label{p<lbound4}
2^\ell & \sup_{s\approx2^m} \big|\underline{I}^{p,k_1,k_2}[f, \wtF^{-1}\mathcal{R}](s,\xi) \big| 
  \lesssim \e_1^3 2^{-2m} 2^{-3\beta'm}.
\end{align}

\smallskip
\subsubsection{Proof of \eqref{p<lbound1}}\label{ssecp<l1}
We proceed in a similar way to Step 2.1 on page \pageref{l<p10}.
Many of the initial computations are the same so we will not repeat them.
The way that some terms are eventually estimated differs, and this we will detail.

We write out the term $\mathcal{C}^{S1}$ (with the usual notation simplifications)
and further localize the expression by considering
\begin{align}\label{p<lulI}
\begin{split}
& \underline{I}^{p,\underline{k}}(s,\xi) := \iiiint e^{is \Psi_{\iota_1\iota_2\iota_3}} \, \frac{\varphi_p\big(\Phi\big)}{\Phi}
  \, \mathfrak{q} \, \varphi_{\underline{k}}(\eta,\sigma,\rho,\zeta)
  \, \wt{f}(\rho)\wt{f}(\zeta) \wt{f}(\eta-\rho-\zeta)  \, \wt{f}(\s) \, d\eta\, d\zeta\, d\rho \, d\s,
\\
& \varphi_{\underline{k}}(\eta,\sigma,\rho,\zeta) := 
  \varphi_{k_1}(\eta)\varphi_{k_2}(\sigma)\varphi_{k_3}(\rho)\varphi_{k_4}(\zeta)\varphi_{k_5}(\eta-\rho-\zeta),
\\
& \Psi_{\iota_1\iota_2\iota_3}(\xi,\rho,\zeta,\eta,\s) := 
  \jxi - \iota_1\langle\rho\rangle - \iota_2\langle\zeta\rangle - \iota_3\langle\eta-\rho-\zeta\rangle 
  - \jsig, \qquad \iota_1,\iota_2,\iota_3 = \pm,
\\
& |\max(k_1,k_3) - \med(k_1,k_3,k_4)|\leq 5, \qquad -3m \leq k_5 \leq k_4\leq k_3\leq -5; 
\end{split}
\end{align}
compare with \eqref{l<p10'}-\eqref{l<p11}, and notice that we are using the same
notation $\underline{I}^{p,\underline{k}}$ although the terms are slightly different.
Our aim then is to obtain for this term 
a slightly stronger bound than \eqref{p<lbound1}, with an extra factor of $2^{-\beta'm}$. 

The estimates \eqref{l<p13.1} and \eqref{l<p13.0} apply here verbatim, and lead to inequality \eqref{l<p13};
the only difference being the $2^{\ell - p}$ factor which was dropped there, and must be kept here. This gives
\begin{align}\label{p<l13}
\begin{split}
2^{\ell} \big| \underline{I}^{p,\underline{k}} \big| 
\lesssim \e_1^4 \cdot 2^{3\alpha m}  \cdot 2^\ell \cdot 2^{-p} \cdot 2^{k_5+k_4 + \min(k_1,k_3)}
	\cdot 2^{(1/2)(k_3+k_4+k_5)} \cdot X_{k_2,m}.
\end{split}
\end{align}
We fix $\delta \in (0,\alpha)$ and look at three different cases.

\medskip 
\noindent
{\it Case 1: $k_1 + k_4 \leq -m + \delta m$}. 
Inequality \eqref{p<l13} and $\ell \leq 2k_1 +10$ imply
\begin{equation}
\label{p<l13.5}
2^{\ell} \big| \underline{I}^{p,\underline{k}}  \big| 
  \lesssim \e_1^4 \cdot 2^{-p} \cdot 2^{(-3/4+4\alpha) m} \cdot 2^{3(k_1+k_4)} \lesssim \e_1^4 2^{-p} 2^{-7m/2} 
\end{equation}
which is easily bounded by the right-hand side of \eqref{p<lbound1}.

\medskip \noindent
{\it Case 2: $k_4 \leq -m/2 + \delta m$}.
We can integrate by parts in the formula \eqref{p<lulI} in the direction $\partial_\eta+\partial_\rho$,
using $|(\partial_\eta+\partial_\rho)\Psi| = | \rho/\jrho| \approx 2^{k_3}$.
Up to faster decaying remainders, this gives a term of the form
\begin{align}\label{p<l15}
\begin{split}
I_1 & := \iiiint e^{is \Psi} \mathfrak{i}_1(\xi,\eta,\s,\rho,\zeta)
	\big[ \partial_\rho \wt{f}(\rho) \big] \wt{f}(\zeta) \, \wt{f}(\eta-\rho-\zeta) 
	\, d\eta \, d\rho \, d\zeta\,  \wt{f}(\s)  d\s,
\\
\mathfrak{i}_1 & :=  
  \frac{1}{s (\partial_\eta + \partial_\rho) \Psi}
  \frac{\varphi_p\big(\Phi\big)}{\Phi}  \mathfrak{q} \varphi_{\underline{k}}. 
\end{split}
\end{align}
Estimating $|\mathfrak{i}_1| \lesssim 2^{-m-k_3-p}$,
applying the usual argument to treat the uncorrelated variable $\s$, and using the a priori bounds \eqref{aprioriw}, 
we obtain
\begin{align*}
2^{\ell} \big| I_1 \big| 
&  \lesssim \e_1^4 \cdot 2^{\ell} \cdot 2^{-m-p-k_3} \cdot 2^{(-3/4+\alpha) m} 
  \cdot 2^{k_3/2} 2^{\alpha m} \cdot 2^{(3/2)(k_4+k_5)} 2^{2\alpha m}
  \\ & \lesssim \e_1^4 2^{4\alpha m} \cdot  2^{-7m/4} \cdot 2^{3k_4/2} \cdot 2^{-p};
\end{align*}
using $k_4 \leq -m/2+\delta m$ and 
$p\geq -m/3-10\beta'm$
we can comfortably bound this by the right-hand side of \eqref{p<lbound1} as desired.

\medskip \noindent
{\it Case 3: $k_4 \geq -m/2+\delta m$ and $k_1+k_4 \geq -m +\delta m$.} 
In this case we can integrate by parts both in $\partial_\eta + \partial_\rho$ and $\partial_\eta + \partial_\zeta$
using \eqref{l<p14.0}.
This case corresponds to the Subcase 2.2 on page \pageref{l<p14.0},
and the integration by parts produces the terms \eqref{l<p15.1}-\eqref{l<p15.4}.
As before, the main contribution is the one where the derivatives hit the profiles, that is,
\begin{align}\label{p<l16}
\begin{split}
I_2 & := \Big| \iiiint e^{is \Psi} \mathfrak{i}_2(\xi,\eta,\s,\rho,\zeta)
	\, \big[\partial_\rho \wt{f}(\rho) \big] \, \big[ \partial_\zeta \wt{f}(\zeta) \big] 
	\, \wt{f}(\eta-\rho-\zeta) \, d\eta \, d\rho \, d\zeta\,  \wt{f}(\s)  d\s \Big|
\\
\mathfrak{i}_2 & := \frac{1}{s(\partial_\eta+\partial_\zeta) \Psi} \frac{1}{s(\partial_\eta + \partial_\rho) \Psi}
 \frac{\varphi_p\big(\Phi\big)}{\Phi}  \mathfrak{q} \varphi_{\underline{k}}, 
 \qquad |\mathfrak{i}_2| \lesssim 2^{-2m-k_3-k_4 - p},
\end{split}
\end{align}
see \eqref{l<p15.3}.
The usual integration by parts argument in $\s$, and the a priori bounds, give
\begin{align*}
2^{\ell} \big| I_2 \big| 
 & \lesssim \e_1^4 \cdot 2^{\ell} \cdot 2^{-2m-p-k_3-k_4} \cdot  2^{(-3/4+\alpha) m} 
  \cdot 2^{k_3/2} 2^{\alpha m} \cdot  2^{k_4/2} 2^{\alpha m} \cdot  2^{k_5}
  \\ & \lesssim \e_1^4 2^{3\alpha m} \cdot  2^{-11m/4} \cdot 2^{-p}
\end{align*}
which is enough. 
This concludes the proof of \eqref{p<lbound1}.

\smallskip
\subsubsection{Proof of \eqref{p<lbound2}}\label{ssecp<l2}
The proof of this estimate is not too dissimilar from the previous one,
but we need to pay some more attention to a few additional frequency configurations. 
Again, the issue is that the expressions $\underline{I}^{p,k_1,k_2}[\wtF^{-1}\mathcal{C}^{S1},f](s,\xi)$
and $\underline{I}^{p,k_1,k_2}[f,\wtF^{-1}\mathcal{C}^{S1}](s,\xi)$
are not symmetric, and that we have fewer restrictions on $k_2$ than on $k_1$, see \eqref{p<l0}.
We detail below all the terms that need different treatment than before 
and only sketch the estimates for the others ones.

Writing out $\mathcal{C}^{S1}$, we further localize the expression and consider
\begin{align}\label{p<lulIbis}
\begin{split}
& (\underline{I}^{p,\underline{k}})' := \iiiint e^{is \Psi_{\iota_1\iota_2\iota_3}'} \, \frac{\varphi_p\big(\Phi\big)}{\Phi}
  \, \mathfrak{q}' \, \varphi_{\underline{k}}'(\eta,\sigma,\rho,\zeta)
  \, \wt{f}(\rho)\wt{f}(\zeta) \wt{f}(\sigma-\rho-\zeta)  \, \wt{f}(\eta) \, d\s \, d\zeta \, d\rho \, d\eta,
\\
& \varphi_{\underline{k}}'(\eta,\sigma,\rho,\zeta) := 
  \varphi_{k_1}(\eta)\varphi_{k_2}(\sigma)\varphi_{k_3}(\rho)\varphi_{k_4}(\zeta)\varphi_{k_5}(\s-\rho-\zeta),
\\
& \Psi_{\iota_1\iota_2\iota_3}'(\xi,\rho,\zeta,\eta,\s) := 
  \jxi - \iota_1\langle\rho\rangle - \iota_2\langle\zeta\rangle - \iota_3\langle\s-\rho-\zeta\rangle 
  - \jeta, \qquad \iota_1,\iota_2,\iota_3 = \pm,
\\
& |\max(k_2,k_3) - \med(k_2,k_3,k_4)|\leq 5, \qquad -3m \leq k_5 \leq k_4 \leq k_3 \leq -5; 
\end{split}
\end{align}
compare with \eqref{l<pL} and \eqref{l<p20'}.
For \eqref{p<lbound2} it suffices to show 
\[ 2^{\ell} \sup_{s\approx 2^m} |(\underline{I}^{p,\underline{k}})'| \lesssim 2^{-2m} 2^{-4\beta'm}. \]

Recall the inequality \eqref{l<p21} proved earlier; 
it applies here with an additional $2^{\ell - p}$ factor which was discarded there
\begin{align}\label{p<l21}
\begin{split}
2^{\ell} \big| (\underline{I}^{p,\underline{k}})' \big| 
\lesssim \e_1^3 \cdot 2^{3\alpha m} \cdot 2^\ell \cdot X_{k_1,m} \cdot 2^{-p} \cdot 2^{k_5+k_4 + \min(k_2,k_3)}
	\cdot 2^{(1/2)(k_3+k_4+k_5)}.
\end{split}
\end{align}
Note that, using $\ell \leq 2k_1  + 10$ we have $2^{\ell} \cdot X_{k_1,m} \leq 2^{-m+\alpha m}$. 
Then inequality \eqref{p<l21}, and $k_5\leq k_4$, give
\begin{align}\label{p<l21'}
2^{\ell} \big| (\underline{I}^{p,\underline{k}})' \big| 
  & \lesssim \e_1^4 \cdot 2^{4\alpha m} \cdot 2^{-m} \cdot 2^{-p} \cdot 2^{\min(k_2,k_3) + 3k_4}
\end{align}
As in the proof of \eqref{p<lbound1} we fix $\delta \in (0,\alpha)$ and look at three cases.

\smallskip 
\noindent
{\it Case 1: $k_2 + k_4 \leq -m + \delta m$}.
In this case \eqref{p<l21'} gives
\begin{align}\label{p<l25}
\begin{split}
2^{\ell} \big| (\underline{I}^{p,\underline{k}})' \big| 
  & \lesssim \e_1^4 \cdot 2^{4\alpha m} \cdot 2^{-m} \cdot 2^{-p} \cdot 2^{k_2 + 3k_4}
  \\
  & \lesssim \e_1^4 \cdot 2^{5\alpha m} \cdot 2^{-2m} \cdot 2^{-p} \cdot 2^{2k_4}.
\end{split}
\end{align}
Since $p \geq -m/3 - 10\beta'm$ we see that \eqref{p<l25} would suffices if,
for example, $2k_4 \leq -4m/9 + 2\delta m$.
To see that this condition is satisfied, assume by contradiction that 
instead $k_4 \geq -2m/9 + \delta m$. Then we must have $k_2 \leq -7m/9$
which implies $(1/2+\beta)\ell -m-k_1/2 + 3k_2/2 \leq -2m - m/6$ 
violating the constraint on the parameters \eqref{l<ppar'}.

\smallskip 
\noindent
{\it Case 2: $k_4 \leq -m/2 +\delta m$}. 
Using again \eqref{p<l21'} we see that
$2^{\ell} \big| (\underline{I}^{p,\underline{k}})' \big| 
  \lesssim \e_1^4 \cdot 2^{7\alpha m} \cdot 2^{-5m/2} \cdot 2^{-p}$,
which suffices.

\smallskip
\noindent
{\it Case 3: $k_2 +k_4 \geq -m+\delta m$ and $k_4 \geq -m/2 +\delta m$.} 
In this case, which is analogous to Subcase 2 on page \pageref{l<p15} and Case 3 on page \pageref{p<l16} above,
we have $k_2 +k_4 \geq -m+\delta m$ (and thus $k_2+k_3 \geq -m+\delta m$ as well)
and have the possibility of integrating by parts in $\partial_\s+\partial_\zeta$ and $\partial_\s + \partial_\rho$.
Once again, the main term is the one where derivatives hit the profiles, all the other contributions being of lower order.
We then want to estimate
\begin{align}\label{p<l26}
\begin{split}
H & := \Big| \iiiint e^{is \Psi'} \mathfrak{h}(\xi,\eta,\s,\rho,\zeta)
	\, \big[\partial_\rho \wt{f}(\rho) \big] \, \big[ \partial_\zeta \wt{f}(\zeta) \big] 
	\, \wt{f}(\s-\rho-\zeta) \, d\s \, d\rho \, d\zeta\,  \wt{f}(\eta)  d\eta \Big|,
\\
\mathfrak{h} & := \frac{1}{s(\partial_\eta+\partial_\zeta) \Psi} \frac{1}{s(\partial_\eta + \partial_\rho) \Psi}
 \frac{\varphi_p\big(\Phi\big)}{\Phi}  \mathfrak{q}' \varphi_{\underline{k}}', 
 \qquad |\mathfrak{h}| \lesssim 2^{-2m-k_3-k_4-p},
\end{split}
\end{align}
see the analogous term \eqref{p<l16}.
Applying the usual treatment to the uncorrelated variable $\eta$
together with $2^{\ell} X_{k_1,m} \leq 2^{-m+\alpha m}$,
and using the a priori bounds \eqref{aprioriw}, we obtain
\begin{align*}
2^{\ell} \big| H \big| 
 & \lesssim \e_1^3 \cdot 2^{-2m-p-k_3-k_4} \cdot 2^{\ell} X_{k_1,m} \cdot 
  2^{k_3/2} 2^{\alpha m} \cdot  2^{k_4/2} 2^{\alpha m} \cdot 2^{k_5}
  \\ & \lesssim \e_1^4 2^{3\alpha m} \cdot  2^{-3m} \cdot 2^{-p}
\end{align*}
which is more than enough. 
This concludes the proof of \eqref{p<lbound2}.

\medskip
\subsubsection{Proof of \eqref{p<lbound3} and \eqref{p<lbound4}}\label{ssecp<l3-4}
To estimate these terms we rely on the fast decay of $\mathcal{R}$ from \eqref{lemdtfR}. 
%
%
%
%
Arguing as in the proof of Lemma \ref{Lemma1} (without integrating by parts in $\eta$ in \eqref{S-1})
we can see that the following variant of \eqref{wproofclaim2} holds: 
\begin{align}\label{wproofclaim2add}
{\|\underline{I}^{p,k_1,k_2}[\wtF^{-1}\mathcal{R},f](s,\xi)\|}_{L^2} 
  & \lesssim 2^{-k_1/2} \cdot {\|\mathcal{R}(s)\|}_{L^2} \cdot X_{k_2,m}
\end{align}
Using \eqref{lemdtfR}, $2^{\ell/2} \lesssim 2^{k_1}$ and \eqref{wproofclaim2add},
we see that for $s\approx 2^m$
\begin{align*}
2^{\ell/2} {\|\underline{I}^{p,k_1,k_2}[\wtF^{-1}\mathcal{R},f](s,\xi)\|}_{L^2} 
  & \lesssim 2^{\ell/2} \cdot 2^{-k_1/2} \cdot {\|\mathcal{R}(s)\|}_{L^2} \cdot X_{k_2,m}
  \\
  & \lesssim \e_1^3 2^{-3m/2 + 2\alpha m} \cdot 2^{-3m/4 + \alpha m},
\end{align*}
which implies \eqref{p<lbound3}.


For \eqref{p<lbound4} we use 
another simple variant of \eqref{wproofclaim0} in Lemma \ref{Lemma1} to estimate
\begin{align*}
2^\ell \big|\underline{I}^{p,k_1,k_2}[f,\wtF^{-1}\mathcal{R}](s,\xi) \big|
  & \lesssim 2^{\ell} \cdot 2^{-p} \cdot X_{k_1,m} \cdot {\| \varphi_{k_2} \,\mathcal{R}(s)\|}_{L^1}
  \\
  & \lesssim \e_1^3 \cdot 2^{-p} \cdot 2^{-m} \cdot 2^{k_2/2} 2^{-3m/2 + 2\alpha m}.
\end{align*}
This is enough since $p \geq -m/3 - 10\beta'm$.

\medskip
We have concluded the proof of \eqref{p<lbound}
and obtained the bound \eqref{wproofdecest} in Lemma \ref{lemred}. 
This gives the proof of the main bound \eqref{prowRest} for the main interactions with \eqref{wRmain}.


\bigskip
\section{Weighted estimates part II: the main ``singular'' interaction}\label{secwL}

\subsection{Setup}
The aim of this section is to prove the weighted bound on the norm \eqref{wnorm}
of the singular cubic terms $\mathcal{C}^{S1}_{+-+}(f,f,f)$ and $\mathcal{C}^{S2}_{+-+}(f,f,f)$
defined in \eqref{CubicS}-\eqref{CubicS12},
with a restriction to interacting frequencies close to $\sqrt{3}$. 
Interactions of other frequencies and other singular cubic contributions 
(namely $\mathcal{C}^{S1,2}_{\iota_1 \iota_2 \iota_3}$, with $\{\iota_1,\iota_2,\iota_3\} \neq \{+,+,-\}$) 
will be dealt with in Section \ref{secw'}, together with the higher order terms coming from 
$\mathcal{C}^{S1,2}_{+-+}(g,g,g) - \mathcal{C}^{S1,2}_{+-+}(f,f,f)$ (see \eqref{eqdtf}).
In particular, this section contains the first and main step in the proof of the following:

\begin{prop}\label{propCSbound}
Let $W_T$ be the space defined by the norm \eqref{wnorm}, and consider $u$, solution of \eqref{KG} such that 
the a priori assumptions \eqref{propbootfas} hold for the renormalized profile $f$. 
Then
$$
\left\| \int_0^t \mathcal{C}^{S1}_{+-+}(f,f,f) \,ds \right\|_{W_T} 
  + \left\| \int_0^t \mathcal{C}^{S2}_{+-+}(f,f,f) \,ds\right\|_{W_T}  \lesssim \e_1^3.
$$
\end{prop}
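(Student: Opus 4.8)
The plan is to follow the overall strategy of the proof of Proposition \ref{prowR} in Section \ref{secwR}, adapted to the cubic (and, after expansion of $\partial_s\wt f$, quartic and quintilinear) structure. By definition of the $W_T$ norm \eqref{wnorm}, it suffices to show that for every $n=0,1,\dots,[\log_2 T]$, every $\ell\in\Z\cap[-\gamma n,0]$, and $t\approx 2^n$,
\[
2^{-\alpha n}\,2^{\beta\ell}\,\Big\|\chi^{[-\gamma n,0]}_{\ell,\sqrt{3}}(\xi)\,\partial_\xi\int_0^t\mathcal{C}^{Sr}_{+-+}(f,f,f)(s)\,ds\Big\|_{L^2_\xi}\lesssim\e_1^3,\qquad r=1,2.
\]
First I would apply $\partial_\xi$. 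Since $\xi$ is localized at $\approx\sqrt 3$, the term in which $\partial_\xi$ hits the oscillating phase produces the factor $s\,\xi/\langle\xi\rangle\approx s$ and is the main one; the term where $\partial_\xi$ hits the symbol $\mathfrak{c}^{Sr}$ is lower order, and the term where it hits the $\delta$ (for $r=1$) or $\pv$ (for $r=2$) factor is converted into a $\partial_\eta$ and integrated by parts as in Section \ref{secwR} (cf. the discussion after \eqref{wproof5}). One is left to analyze, in the parameterization of \eqref{CSintro}--\eqref{defUUVV}, an expression of the schematic form
\[
s\xi\iint e^{is\Phi^S_{+-+}(\xi,\eta,\zeta)}\,\mathfrak{c}^{Sr}(\xi,\eta,\zeta)\,\wt f(\xi-\eta)\,\wt f(\xi-\eta-\zeta)\,\wt f(\xi-\zeta)\,d\eta\,d\zeta,
\]
where $\mathfrak{c}^{S1}$ carries a $\delta$ and $\mathfrak{c}^{S2}$ a $\widehat\phi(\cdot)/(\cdot)$ in a linear combination $p_*$ of $(\xi,\eta,\zeta)$ (and of the extra integration variable of $\mathcal{C}^{S2}$).

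Next, exactly as in Lemma \ref{lemred0}, I would reduce to fixed dyadic time pieces $\tau_m(s)$ and to a frequency decomposition: localize $|\eta|\approx 2^{k_1}$, $|\zeta|\approx 2^{k_2}$, the modulation $|\Phi^S_{+-+}|\approx 2^p$, the distance $||\xi|-\sqrt3|\approx 2^\ell$, and $|p_*|\approx 2^q$. Near the coherent point $\eta=\zeta=0$, $|\xi|=\sqrt3$, the phase $\Phi^S_{+-+}$ expands (as in \eqref{wproof10}) into a nondegenerate quadratic form in $(\eta,\zeta)$ plus a linear term in $(|\xi|-\sqrt3)$, which ties together $2^p$, $2^\ell$, $2^{2k_1}$, $2^{2k_2}$ through relations of the type \eqref{wproofRel}. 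The case analysis on $2^p$ then runs in parallel with Subsections \ref{Ssecpsmall}--\ref{Ssecl<2k_1}. In the small-modulation regime ($2^p$ below a threshold of order $2^{-m/2}$) there are no useful oscillations in $s$; here the scale relations confine the output to a set of $\xi$-measure $\lesssim 2^\ell$, and the integral is estimated directly, first in $L^\infty_\xi$ and then in $L^2_\xi$, using the $L^1$ and $L^\infty$ bounds for $\wt f$ near $\sqrt3$ (which follow from $\|\langle\xi\rangle^{3/2}\wt f\|_{L^\infty}\lesssim\e_1$ and from \eqref{lembb3}). In the large-modulation regime one integrates by parts in $s$ using $|\Phi^S_{+-+}|\approx 2^p$: the boundary terms are dealt with as the term $J$ in Section \ref{secwL}, while in the interior terms $\partial_s\wt f$ is replaced by $\mathcal{C}^{S}(f,f,f)+\mathcal{R}$ via Lemma \ref{lemdtf}, producing quintilinear expressions plus a fast-decaying remainder controlled by \eqref{lemdtfR}. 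The quintilinear pieces are then estimated by integration by parts in the \emph{correlated} frequency directions (those for which $|\partial\Psi|$ is comparable to the size of the corresponding new input frequency), while the \emph{uncorrelated} variable is treated by integration by parts against its own linear oscillation using \eqref{apriori13}, exactly as in Steps~2--3 of Section \ref{secwR}. The bookkeeping of the $O(m^3)$ (resp. $O(m^5)$ after the $\partial_s\wt f$ expansion) dyadic sums is handled, as in Lemma \ref{lemred}, by proving slightly stronger pointwise or $L^2$ bounds carrying extra powers $2^{-\beta'm}$.

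The main obstacle is the genuinely coherent, non-oscillatory regime in which $||\xi|-\sqrt3|$, $|\eta|$, $|\zeta|$ and $|p_*|$ are all $\lesssim 2^{-m/2}$ (i.e. $\ll t^{-1/2}$) and $|\Phi^S_{+-+}|\ll t^{-1}$: here no integration by parts in $s$ or in the frequencies is available, and the $\partial_\xi$ has already cost a full factor $t\approx 2^m$. In this regime the decay must be extracted from the degeneration of the $W_T$ norm itself. One combines the weight $2^{\beta\ell}$ with the a priori bound $\|\langle\xi\rangle\partial_\xi\wt f\|_{W_T}\lesssim\e_1$ (which permits $\|\chi_{\ell,\sqrt3}\partial_\xi\wt f\|_{L^2}\lesssim\e_1 2^{-\beta\ell}2^{\alpha m}$), the smallness $2^\ell$ of the $\xi$-support, the smallness of the $(\eta,\zeta)$-region, and — crucially — the fact that the $\delta(p_*)$ in $\mathfrak{c}^{S1}$ collapses the double integral to a single one (and that the $\widehat\phi(p_*)/p_*$ in $\mathfrak{c}^{S2}$ reduces it to a single integral with at most a logarithmic loss). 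This is the exact cubic counterpart of the heuristic lower bound \eqref{introQR1} and of the asymptotic analysis carried out in Section \ref{secLinfS}: the same resonance $\eta=\zeta=0$ is responsible for the logarithmic phase correction \eqref{LinfSasyf0}, so the contribution is not integrable in time and the loss $2^{\alpha n}$ in \eqref{wnorm} is essential — closing the estimate amounts precisely to using up exactly this room. Finally, the interactions of frequencies away from $\sqrt3$, and the sign combinations $\{\iota_1,\iota_2,\iota_3\}\neq\{+,+,-\}$ (which are non-resonant), are deferred to Section \ref{secw'}, together with the higher-order terms $\mathcal{C}^{Sr}_{+-+}(g,g,g)-\mathcal{C}^{Sr}_{+-+}(f,f,f)$.
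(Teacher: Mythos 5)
There is a genuine structural error in your characterization of the cubic phase, which makes the proposed analogy with Section \ref{secwR} break down. You write that near the coherent point, $\Phi^S_{+-+}$ expands ``into a nondegenerate quadratic form in $(\eta,\zeta)$ plus a linear term in $(|\xi|-\sqrt3)$, which ties together $2^p$, $2^\ell$, $2^{2k_1}$, $2^{2k_2}$ through relations of the type \eqref{wproofRel}''. This is false: from \eqref{CubicPsiexp},
\[
\Psi(\xi,\eta,\zeta)=\frac{2\eta\zeta}{\langle\xi\rangle^3}+O(|\eta,\zeta|^3),
\]
with \emph{no} linear term in $|\xi|-\sqrt3$ (indeed $\Psi(\xi,0,0)\equiv 0$ for every $\xi$). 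Unlike the quadratic phase \eqref{wproof10}, the cubic modulation $|\Psi|\approx|\eta\zeta|$ contains no information about $\ell$, so the scale relations \eqref{wproofRel} simply do not hold. Consequently the modulation decomposition and the case analysis you propose to import from Subsections \ref{Ssecpsmall}--\ref{Ssecl<2k_1} lose their leverage: in your ``small-modulation regime'' nothing confines $\xi$, the only $\ell$-gain available is the trivial support factor $2^{\ell/2}$ from Cauchy--Schwarz, and the target $2^{-\beta\ell}$ (a loss, since $\beta>0$) cannot be produced without tying $\ell$ to the differentiated profile — which your scheme never does.

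Relatedly, your premise that the main term after $\partial_\xi$ is the one where the derivative hits the phase is also off for the cubic case. Since $\partial_\xi\Psi=-\tfrac{3\xi}{\langle\xi\rangle^5}\eta\zeta+O(|\eta,\zeta|^3)$ is \emph{small}, the ratio $\partial_\xi\Psi/\partial_\eta\Psi$ is smooth (of order $\eta$) and the corresponding term $\mathcal{J}^1$ can be eliminated by an integration by parts in $\eta$, as the paper observes at the start of Section \ref{subsecCS1}. The genuine difficulty is entirely in the terms $\mathcal{J}^2,\mathcal{J}^3$ where $\partial_\xi$ lands on a profile whose argument may be near $\sqrt3$, bringing in the degenerate weight. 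The paper's actual proof in Section \ref{secwL} therefore organizes the analysis not by the modulation size but by the dyadic distances $2^{j_1},2^{j_2},2^{j_3},2^{j_4}$ of the arguments $\xi-\eta$, $\xi-\eta-\zeta$, $\xi-\zeta$ and $\eta-\zeta$ to $\sqrt3$, and integrates by parts in the correlated frequency directions using $\partial_\zeta\Psi\approx\eta$, $\partial_\eta\Psi\approx\zeta$, $(\partial_\eta-\partial_\zeta)\Psi\approx\eta-\zeta$; integration by parts in $s$ enters only once (Case 4.2 of $\mathcal{J}^3$, triggered by $|\Psi|\approx 2^{2\ell}$ in a specific configuration), not as an organizing principle. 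Your proposal would need to be reorganized along these lines; as written the small-modulation/coherent step does not close.
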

The proof of Proposition \ref{propCSbound} will be completed in Subsection \ref{Cubother}.

The terms $\mathcal{C}^{S1}_{+-+}$ and $\mathcal{C}^{S2}_{+-+}$ 
are as the sum over $\lambda, \mu, \nu, \lambda',\mu',\nu',\iota_2$ 
of more elementary terms, see \eqref{formulacubiccoeff}.
In the present section, we will simply focus on one of them, 
since all the corresponding estimates are identical, up to flipping the sign of various frequencies. 
Furthermore, we discard the complex conjugation signs over $\widetilde{f}$, 
since they do not play any role in the estimates.
More precisely, we consider
\begin{align}\label{Cubic11}
\begin{split}
\mathcal{C}^{S1}_{+-+}(f,f,f)  = &\iint e^{is \Psi(\xi,\eta,\zeta)} \mathfrak{p}(\xi,\eta,\zeta)  
\widetilde{f}(\xi-\eta) \widetilde{f}(\xi-\eta-\zeta) \widetilde{f}(\xi-\zeta) \, d\eta \,d\zeta 
\\ \mbox{where} \qquad 
& \Psi(\xi,\eta,\zeta)  = \Phi_{+-+}(\xi,\xi-\eta,\xi-\eta-\zeta,\xi-\zeta) 
\\&\qquad  \qquad = \langle \xi \rangle - \langle \xi - \eta \rangle + \langle \xi - \eta -\zeta  \rangle - \langle \xi - \zeta\rangle
\end{split}
\end{align}
and
\begin{align*}
\mathcal{C}^{S2}_{+-+}(f,f,f) = & \iiint e^{is \Psi(\xi,\eta,\zeta,\theta)} \mathfrak{p}(\xi,\eta,\zeta,\theta) 
\widetilde{f}(\xi-\eta) \widetilde{f}(\xi-\eta-\zeta-\theta) 
  \widetilde{f}(\xi-\zeta) \frac{\widehat{\phi}(\theta)}{\theta} \, d\eta \,d\zeta \,d\theta 
  \\
\qquad \mbox{where} \quad & \Psi(\xi,\eta,\zeta,\theta) = \Phi_{+ - +}(\xi,\xi-\eta,\xi-\eta-\zeta-\theta,\xi-\zeta) 
\\
& \qquad \qquad = \langle \xi \rangle - \langle \xi - \eta \rangle + \langle \xi - \eta -\zeta -\theta  \rangle - \langle \xi - \zeta\rangle.
\end{align*}
We omit the $\pv$ sign for lighter notation,
and slightly abuse notation in denoting the symbols $\mathfrak{p}$ 
and the phases $\Psi$ with the same letter in the two different expressions above;
the presence of the extra variable $\theta$ should resolve any confusion.

Let us say a word about the parameterization of the frequencies which was chosen above. 
If we were dealing with the nonlinear Schr\"odinger equation, the phase resulting 
from the above parameterization would be (say, for $\mathcal{C}^{S1}_{+-+}$)
$$
\xi^2 - (\xi-\eta)^2 + (\xi - \eta -\zeta)^2 - (\xi - \zeta)^2 = 2 \eta \zeta,
$$
which does not depend on $\xi$, and is thus very favorable to deriving estimates. 
Of course, we are not dealing with the nonlinear Schr\"odinger equation, 
but close to interactions of the type $(\xi,\xi,\xi) \to \xi$, 
the above identity holds to leading order; this should be kept in mind in the estimates which follow.

Finally, we will assume in this section that the symbols $\mathfrak{p}$ satisfy
\begin{align}\label{Cubic12}
\begin{split}
& \mathfrak{p}(\xi,\eta,\zeta) = \mathfrak{p}(\xi,\eta,\zeta) \varphi_{\leq -10}(|\xi-\sqrt{3}|+|\eta|+|\zeta|),
\\
& \mathfrak{p}(\xi,\eta,\zeta,\theta) = \mathfrak{p}(\xi,\eta,\zeta,\theta)
  \varphi_{\leq -10}(|\xi-\sqrt{3}|+|\eta|+|\zeta|+|\theta|), 
\end{split}
\end{align}
i.e., the frequencies are localized to $|\xi-\sqrt{3}| \ll 1$ and $|\eta|, |\zeta|, |\theta| \ll 1$,
and that they are in $\mathcal{C}_0^\infty$ with $O(1)$ bounds on their derivatives. 
This latter assumption is justified (in the current frequency configuration) 
in view of the explicit formula \eqref{formulacubiccoeff} and
and the smoothness of the coefficients involved in it, and the estimate of Lemma \ref{californiacondor}.

\subsection{The bound for $\mathcal{C}^{S1}_{+-+}$}\label{subsecCS1}
First note that for $\tau(\xi) = \langle \xi \rangle$, 
one has $\tau'(\xi) = \frac{\xi}{ \langle \xi \rangle}$, $\tau''(\xi) = \frac{1}{\langle \xi \rangle^3}$, 
and $\tau'''(\xi) = - \frac{3\xi}{\langle \xi \rangle^5}$. 
Therefore, in the regime which interests us here ($|\xi - \sqrt{3}| \ll 1$ and $|\eta|, |\zeta| \ll 1$),
we have the expansions
\begin{align}\label{CubicPsiexp}
\begin{split}
& \Psi(\xi,\eta,\zeta) = \frac{1}{\langle \xi \rangle^3} 2 \eta \zeta + O(|\eta,\zeta|^3), 
\\
& \partial_\xi \Psi(\xi,\eta,\zeta) = - \frac{3\xi}{\langle \xi \rangle^5} \eta \zeta + O(|\eta,\zeta|^3),
\\
& \partial_\eta \Psi(\xi,\eta,\zeta) = \frac{1}{\langle \xi -\eta \rangle^3} \zeta + O(|\zeta|^2),
\\
& (\partial_\eta - \partial_\zeta) \Psi(\xi,\eta,\zeta) = \frac{1}{\langle \xi -\eta \rangle^3} (\zeta - \eta) 
  + O( |\eta-\zeta|^2), 
\\
& \partial_\eta^2 \Psi(\xi,\eta,\zeta) =\frac{3(\xi-\eta)}{\langle \xi - \eta \rangle^5} \zeta + O(|\zeta|^2),
\\
& \partial_\eta \partial_\zeta \Psi(\xi,\eta,\zeta) = \frac{1}{\langle \xi - \eta - \zeta \rangle^3}.
\end{split}
\end{align}

Applying $\partial_\xi$ to~\eqref{Cubic11} gives
$$
\mathcal{J}^1 + \mathcal{J}^2 + \mathcal{J}^3 + \{ \mbox{symmetrical or easier terms} \}
$$
where
\begin{align*}
&\mathcal{J}^1 = \iint e^{is \Psi(\xi,\eta,\zeta)} \mathfrak{p}(\xi,\eta,\zeta) i s \partial_\xi \Psi(\xi,\eta,\zeta) \widetilde{f}(\xi-\eta) \widetilde{f}(\xi-\eta-\zeta) \widetilde{f}(\xi-\zeta) \, d\eta \,d\zeta, \\
&\mathcal{J}^2 = \iint e^{is \Psi(\xi,\eta,\zeta)} \mathfrak{p}(\xi,\eta,\zeta)  \partial_\xi \widetilde{f}(\xi-\eta) \widetilde{f}(\xi-\eta-\zeta) \widetilde{f}(\xi-\zeta) \, d\eta \,d\zeta, \\
&\mathcal{J}^3 = \iint e^{is \Psi(\xi,\eta,\zeta)} \mathfrak{p}(\xi,\eta,\zeta)  \widetilde{f}(\xi-\eta) \partial_\xi \widetilde{f}(\xi-\eta-\zeta) \widetilde{f}(\xi-\zeta) \, d\eta \,d\zeta .
\end{align*}
First observe that $\mathcal{J}^1$ can be reduced to the other cases. 
Indeed, $\partial_\xi \Psi/\partial_\eta \Psi$ is a smooth function, 
and therefore, it is possible to integrate by parts in $\eta$ in $\mathcal{J}^1$ 
via the identity $\frac{1}{i s \partial_\eta \Psi} \partial_\eta e^{is \Psi} =  e^{is \Psi}$,
obtaining terms similar to $\mathcal{J}^2$ and $\mathcal{J}^3$. 
Therefore, it will be sufficient to treat $\mathcal{J}^2$ and $\mathcal{J}^3$. 

In what follows we will localize the variables 
$\xi$, $\xi-\eta$, $\xi-\eta-\zeta$, $\xi - \zeta$ and $s$, on the dyadic scales
\begin{equation} \label{defljm}
\begin{split}
& s \approx 2^m, \quad |\xi - \sqrt{3}| \approx 2^\ell, 
\\
& |\xi-\eta - \sqrt{3}| \approx 2^{j_1},
\quad |\xi-\eta-\zeta - \sqrt{3}| \approx 2^{j_2},
\quad |\xi-\zeta-\sqrt{3}| \approx 2^{j_3}.
\end{split}
\end{equation}
Consistently with \eqref{wnorm}, our aim will be to show that
under the a priori assumptions \eqref{propbootfas}, we have 
\begin{align}\label{Cubic1main}
\sup_{\ell \in \mathbb{Z} \cap [\lfloor -\gamma n \rfloor,0]} 
\left\|\tau_n(t) \varphi_{\ell}^{[-\gamma n,0]}(\xi-\sqrt{3}) 
  \int_0^t \mathcal{J}^{2,3}(s,\xi) \, ds \right\|_{L^\infty_t([0,T]) L^2_\xi(\R)} 
  \lesssim 2^{\alpha n} 2^{-\beta \ell} \e_1^3.
\end{align}

\subsubsection{Bound for $\mathcal{J}^2$}\label{sectionJ2}
We add a localization in time in the integrand, and consider
$$
\mathcal{J}_m^2 = \iint e^{is \Psi(\xi,\eta,\zeta)} \tau_m(s) \mathfrak{p}(\xi,\eta,\zeta)
  \partial_\xi \widetilde{f}(\xi-\eta) \widetilde{f}(\xi-\eta-\zeta) \widetilde{f}(\xi-\zeta) \, d\eta \,d\zeta.
$$

\medskip
\noindent 
{\bf Case 1: $|\ell + \gamma n| \leq 5$.} 
By taking the inverse Fourier transform of this expression, using Plancherel's equality,
the a priori bounds~\eqref{lembb1} and~\eqref{aprioridecay} and Lemma~\ref{lemmamultilin1},
\begin{align*}
& \left\| \int_0^t  \mathcal{J}_{m}^2(s,\xi) \, ds \right\|_{L^2}
  \lesssim \int_0^t \tau_m(s) \| \partial_\xi \widetilde{f} \|_{L^2} 
  \| e^{is\langle D \rangle} \mathcal{W}^* f \|_{L^\infty}^2 \,ds \lesssim 2^{(\alpha + \beta \gamma)m} 
   \e_1^3.
\end{align*}
Therefore,
$$
\sum_{m \leq n} \left\| \int_0^t  \mathcal{J}_{m}^2(s,\xi) \, ds \right\|_{L^2}
  \lesssim 2^{(\alpha + \beta \gamma)n}\e_1^3.
$$

\medskip
\noindent 
{\bf Case 2: $-\gamma n \leq \ell \leq -\gamma m$.}
Similarly to the previous case,
\begin{align*}
& \left\| \int_0^t  \mathcal{J}_{m}^2(s,\xi) \, ds \right\|_{L^2}
  \lesssim \int_0^t \tau_m(s) \| \partial_\xi \widetilde{f} \|_{L^2} 
  \| e^{is\langle D \rangle}  \mathcal{W}^* f \|_{L^\infty}^2 \,ds \lesssim 2^{(\alpha + \beta \gamma)m} 
   \e_1^3.
\end{align*}
This suffices since if $-\gamma n \leq \ell \leq -\gamma m$,
\begin{align*}
\sum_{\substack{m<-\ell/\gamma \\ m\leq n}} \left\|  \int_0^t \mathcal{J}_{m}^2(s,\xi) \, ds \right\|_{L^2}
  \lesssim \sum_{\substack{m \leq - {\ell}/{\gamma} \\ m \leq n}}  2^{(\alpha + \beta \gamma) m} \e_1^3 
 \lesssim 2^{\alpha n - \beta \ell} \e_1^3.
\end{align*}

\medskip
\noindent 
{\bf Case 3: $j_1 > \ell - 100$.} 
We now localize in $\xi-\eta$, by defining
$$
\mathcal{J}_{m,\ell}^{2,(3)} = \iint e^{is \Psi(\xi,\eta,\zeta)} \mathfrak{m}(\xi,\eta,\zeta)  \partial_\xi \widetilde{f}(\xi-\eta) \widetilde{f}(\xi-\eta-\zeta) \widetilde{f}(\xi-\zeta) \, d\eta \,d\zeta
$$
where
$$
\mathfrak{m}(\xi,\eta,\zeta) 
  = \mathfrak{m}_{m,\ell}^{\mathcal{J}^{2,(3)}} (\xi,\eta,\zeta) = \mathfrak{p}(\xi,\eta,\zeta) \varphi_\ell(\xi-\sqrt{3})
  \varphi_{>\ell - 100}(\xi-\eta - \sqrt{3}) \tau_m(s).
$$
Estimating as above we have
$$
\left\| \int_0^t \mathcal{J}_{m,\ell}^{2,(3)}(s,\xi) \,ds \right\|_{L^2}  \lesssim \int_0^t \tau_m(s) 
  \|\varphi_{> \ell - 100}(\cdot - \sqrt 3) \partial_\xi \widetilde{f}(\xi) \|_{L^2} 
  \| e^{is\langle D \rangle} \mathcal{W}^* f \|_{L^\infty}^2\, ds \lesssim 2^{\alpha m} 2^{-\beta \ell} \e_1^3.
$$
Therefore,
$$
\sum_{m\leq n}\left\|  \int_0^t \mathcal{J}_{m,\ell}^{2,(3)}(s,\xi) \,ds \right\|_{L^2} 
  \lesssim 2^{\alpha n - \beta \ell} \e_1^3.
$$

\medskip
\noindent 
{\bf Case 4: $\ell > -\gamma m$ and $j_1 \leq \ell - 100$.}
Let us now consider
\begin{align*}
& \mathcal{J}_{m,\ell}^{2,(4)} = \iint e^{is \Psi(\xi,\eta,\zeta)} \mathfrak{m}(\xi,\eta,\zeta)  \partial_\xi \widetilde{f}(\xi-\eta) \widetilde{f}(\xi-\eta-\zeta) \widetilde{f}(\xi-\zeta) \, d\eta \,d\zeta
\end{align*}
where
\begin{align*}
& \mathfrak{m}(\xi,\eta,\zeta) = \mathfrak{m}_{m,\ell}^{\mathcal{J}^{2,(4)}} (\xi,\eta,\zeta) =  \mathfrak{p}(\xi,\eta,\zeta)\varphi_\ell(\xi-\sqrt{3}) \varphi_{<\ell-100}(\xi-\eta - \sqrt{3})  \tau_m(s).
\end{align*}
Observe that, on the support of the integrand, $|\eta| \sim 2^{\ell}$, which implies, see \eqref{CubicPsiexp},
$$
\left| \frac{1}{\partial_\zeta \Psi} \right| \sim 2^{-\ell}, 
  \qquad \left| \frac{\partial_\zeta^2 \Psi}{(\partial_\zeta \Psi)^2} \right| \sim 2^{-\ell}.
$$
Integrating by parts in $\zeta$ we obtain (we are omitting irrelevant numerical constants)
\begin{align*}
\mathcal{J}^{2,(4)}_{m,\ell}(s,\xi) 
  & = \iint  e^{is \Psi} \partial_\zeta \left[  \frac{\mathfrak{m}}{s \partial_\zeta \Psi} \right] \partial_\xi \widetilde{f}(\xi-\eta)  \widetilde{f}(\xi-\eta-\zeta) \widetilde{f}(\xi-\zeta) \, d\eta \,d\zeta 
  \\
  & \qquad + \iint  e^{is \Psi} \frac{\mathfrak{m}}{s \partial_\zeta \Psi} \partial_\xi \widetilde{f}(\xi-\eta)  \partial_\xi \widetilde{f}(\xi-\eta-\zeta) \widetilde{f}(\xi-\zeta) \, d\eta \,d\zeta 
  \\
& \qquad + \{ \mbox{symmetrical term} \}
\\
& = \mathcal{J}^{2,(4)\flat}_{m,\ell} + \mathcal{J}^{2,(4)\sharp}_{m,\ell} +  \{ \mbox{symmetrical term} \}.
\end{align*}

We notice first that the term $\mathcal{J}^{2,(4)\flat}_{m,\ell}$ is much simpler to estimate than 
$\mathcal{J}^{2,(4)\sharp}_{m,\ell}$. Indeed, both symbols enjoy the same estimates, 
but two functions $\widetilde{f}$ are differentiated in the latter, and only one in the former.
Therefore, we only concentrate on $\mathcal{J}^{2,(4)\sharp}_{m,\ell}$, 
for which we would like to apply Lemma \ref{lemmamultilin1}, using that 
\begin{equation}
\label{hummingbird}
\left\| \mathcal{F}\left( \frac{\mathfrak{m}}{ \partial_\zeta \Psi} \right) \right\|_{L^1} \lesssim 2^{-\ell}.
\end{equation}
To see why this is true, notice that, on the support of $\mathfrak{m}$, 
the variables $\xi$ and $\eta$ enjoy the localization $|\xi-\sqrt{3}| + |\eta| \lesssim 2^\ell$ 
while for any $a,b,c$, 
\begin{align*}
\left|  \partial_\xi^a \partial_\eta^b \partial_\zeta^c \frac{1}{\partial_\zeta \Psi} \right| \lesssim 2^{-(b+1)\ell} \quad \mbox{and} \quad \left|  \partial_\xi^a \partial_\eta^b \partial_\zeta^c \mathfrak{m} \right| \lesssim 2^{-(a+b)\ell}.
\end{align*}
By Remark \ref{multilinrem} following Lemma \ref{lemmamultilin1}, we obtain \eqref{hummingbird}.

Thus, we can apply Lemma \ref{lemmamultilin1}, 
using the bounds~\eqref{lembb1}, \eqref{lembb3} and~\eqref{aprioridecay}, to obtain
\begin{equation}
\label{parrot}
\begin{split}
 \left\| \int_0^t\mathcal{J}^{2,(4)\flat}_{m,\ell} \,ds \right\|_{L^2} 
  & \lesssim \int_0^t \left\| \mathcal{F}\left( \frac{\mathfrak{m}}{ \partial_\zeta \Psi} \right) \right\|_{L^1} \| \varphi_{<\ell}(\cdot - \sqrt 3) \partial_\xi \widetilde{f} \|_{L^1} \| \partial_\xi \widetilde{f} \|_{L^2} \| e^{is\langle D \rangle} \mathcal{W}^* f \|_{L^\infty} \frac{ds}{s} \\
  & \lesssim  2^{-\ell} \| \varphi_{<\ell}(\cdot - \sqrt 3) \partial_\xi \widetilde{f} \|_{L^1} \|  \partial_\xi \widetilde{f} \|_{L^2} \| e^{is\langle D \rangle} \mathcal{W}^* f \|_{L^\infty} \\
  & \lesssim 2^{-\ell} \cdot 2^{\beta' \ell + \alpha m}\e_1
  \cdot 2^{(\alpha + \beta \gamma)m} \e_1 \cdot 2^{-m/2} \e_1
  \\
  & \lesssim  2^{-\beta \ell + \alpha m} \e_1^3,
\end{split}
\end{equation}
where we used that $\ell > -\gamma m$ and $\alpha + \beta \gamma < \frac{1}{4}$. 
We abused notations slightly by simply denoting $L^\infty$ instead of $L^\infty_{s \approx 2^m} L^\infty_x$; 
we will use this shorthand repeatedly in the following.

\subsubsection{Bound for $\mathcal{J}^3$}\label{boundJ3}

\noindent 
{\bf Cases 1,2,3: $\ell < -\gamma m$ or $j_2 > \ell - 100$.} 
These cases are identical to cases 1,2, and 3 of the estimate for $\mathcal{J}^2$, 
except that the roles of $j_1$ and $j_2$ are exchanged.

\medskip
\noindent 
{\bf Case 4: $\ell > -\gamma m$ and $j_2 < \ell - 100$.} 
Without loss of generality, we can assume that $j_1 \geq j_3$. 
In the following we will add  an index $j_4$ to track the localization of $\eta - \zeta$: 
$$|\eta - \zeta|\approx 2^{j_4}.$$
Due to the definitions of $\ell, j_1, j_2, j_3$ 
(see \eqref{defljm}), it suffices to consider (up to the symmetry between $j_1$ and $j_3$)
the following cases:

\begin{itemize}
\item Case 4.1: $2^\ell \approx 2^{j_1} \gtrsim 2^{j_3}$ and ${j_4} > {\ell - 100}$; 
\item Case 4.2: $2^\ell \approx 2^{j_1} \approx 2^{j_3}$ and ${j_4} < {\ell - 100}$;
\item Case 4.3: $2^{j_1} \approx 2^{j_3} \approx 2^{j_4}\gg 2^\ell$.
\end{itemize}

\bigskip

\noindent {\bf Case 4.1: $\ell > -\gamma m$, $j_2 < \ell - 100$, 
  $2^\ell \approx 2^{j_1} \gtrsim 2^{j_3}$ and $j_4 > \ell - 100$.} 
In other words, we are considering here
$$
\mathcal{J}_{m,\ell}^{3,(1)} = \iint e^{is \Psi(\xi,\eta,\zeta)} \mathfrak{m}(\xi,\eta,\zeta)\widetilde{f}(\xi-\eta)   \partial_\xi \widetilde{f}(\xi-\eta-\zeta) \widetilde{f}(\xi-\zeta) \, d\eta \,d\zeta 
$$
where
\begin{align*}
\mathfrak{m}(\xi,\eta,\zeta) & = \mathfrak{m}_{m,\ell}^{\mathcal{J}^{3,(1)}}(\xi,\eta,\zeta) \\
&  =  \mathfrak{p}(\xi,\eta,\zeta) \varphi_\ell(\xi-\sqrt{3}) \varphi_{\sim \ell}(\xi-\eta - \sqrt{3}) 
  \varphi_{<\ell-100}(\xi-\eta -\zeta- \sqrt{3}) 
  \\
& \qquad \qquad \qquad \varphi_{\leq\ell}(\xi-\zeta-\sqrt{3}) \varphi_{>\ell - 100}(\eta-\zeta) \tau_m(s).
\end{align*}
On the support of the symbol,
$$
| (\partial_\eta - \partial_\zeta) \Psi | \sim |\eta - \zeta| \approx 2^{\ell},
  \qquad | (\partial_\eta - \partial_\zeta)^2 \Psi | \approx 1.
$$
Integrating by parts using the identity 
$\frac{1}{is(\partial_\eta - \partial_\zeta) \Psi} (\partial_\eta - \partial_\zeta) e^{is\Psi} = e^{is\Psi}$ gives
\begin{align*}
 \mathcal{J}^{3,(1)}_{m, \ell} & = \iint e^{is \Psi(\xi,\eta,\zeta)} (\partial_\eta - \partial_\zeta) \left[ \frac{\mathfrak{m}(\xi,\eta,\zeta)}{is(\partial_\eta - \partial_\zeta) \Psi)} \right]  \widetilde{f}(\xi-\eta)   \partial_\xi \widetilde{f}(\xi-\eta-\zeta) \widetilde{f}(\xi-\zeta) \, d\eta \,d\zeta \\
&  \qquad +  \iint e^{is \Psi(\xi,\eta,\zeta)}\frac{1}{is(\partial_\eta - \partial_\zeta) \Psi} \mathfrak{m}(\xi,\eta,\zeta) \widetilde{f}(\xi-\eta)   \partial_\xi \widetilde{f}(\xi-\eta-\zeta) \partial_\xi \widetilde{f}(\xi-\zeta) \, d\eta \,d\zeta \\
& \qquad + \{ \mbox{symmetrical terms} \} \\
& = \mathcal{J}^{3,(1)\flat}_{m, \ell}+ \mathcal{J}^{3,(1)\sharp}_{m, \ell} + \{ \mbox{symmetrical terms} \}.
\end{align*}
In order to estimate $\mathcal{J}^{3,(1)\flat}_{m, \ell}$, we claim that 
$$
\left\| \mathcal{F} \left(  (\partial_\eta - \partial_\zeta) \left[ \frac{\mathfrak{m}(\xi,\eta,\zeta)}{i(\partial_\eta - \partial_\zeta) \Psi)} \right] \right) \right\|_{L^1} \lesssim 2^{- 2 \ell}.
$$
This follows from the remark after Lemma~\ref{lemmamultilin1} since on the support of $\mathfrak{m}$, 
the variables $\xi$, $\eta$ and $\zeta$ are such that $|\xi|, |\eta|, |\zeta| \lesssim 2^{\ell}$, and for any $a,b,c$, 
$$
\left| \partial_\xi^a \partial_\eta^b \partial_\zeta^c \frac{1}{(\partial_\eta - \partial_\zeta) \Psi} \right| \lesssim 2^{- \ell (1 + b + c)} \qquad \mbox{while} \qquad \left| \partial_\xi^a \partial_\eta^b \partial_\zeta^c \mathfrak{m} \right| \lesssim 2^{- \ell (a + b + c)}.
$$
Thus we can apply Lemma~\ref{lemmamultilin1} together with~\eqref{lembb2} and~\eqref{lembb3} to obtain
\begin{align*}
\left\|  \int_0^t  \mathcal{J}_{m,\ell}^{3,(1)\flat} \,ds \right\|_{L^2} & \lesssim 2^{-2 \ell}  \| \varphi_{<\ell}(\cdot - \sqrt 3)  \widetilde{f} \|_{L^1} \| \varphi_{<\ell}(\cdot - \sqrt 3) \partial_\xi \widetilde{f} \|_{L^1} \| \varphi_{<\ell}(\cdot - \sqrt 3)  \widetilde{f} \|_{L^2} \\
& \lesssim 2^{-2\ell} \cdot 2^{\ell}\e_1 \cdot 2^{\beta' \ell + \alpha m}\e_1 \cdot 2^{\ell/2} \e_1
= 2^{-\beta \ell + \alpha m} \e_1^3.
\end{align*}

Turning to $\mathcal{J}^{3,(1)\sharp}_{m, \ell}$, by the arguments given above,
$$
\left\| \mathcal{F} \left( \frac{\mathfrak{m}(\xi,\eta,\zeta) }{(\partial_\eta - \partial_\zeta) \Psi}  \right) 
  \right\|_{L^1} \lesssim 2^{-\ell}.
$$
Thus we can apply Lemma~\ref{lemmamultilin1} to obtain
$$
\left\|  \int_0^t  \mathcal{J}_{m,\ell}^{3,(1)\sharp} \,ds \right\|_{L^2} \lesssim 2^{-\ell} 
\| e^{is\langle D \rangle} \mathcal{W}^* f \|_{L^\infty}
\| \varphi_{<\ell}(\cdot - \sqrt 3) \partial_\xi \widetilde{f} \|_{L^1} \| \partial_\xi \widetilde{f}\|_{L^2} .
$$
From here, the estimate proceeds just like for~\eqref{parrot} above.

\bigskip
\noindent {\bf Case 4.2: $\ell > -\gamma m$, $j_2 < \ell - 100$, 
  $2^\ell \approx 2^{j_1} \approx 2^{j_3}$, and $j_4 < \ell - 100$.} 
In other words, we are considering here
$$
\mathcal{J}_{m,\ell}^{3,(2)} = \iint e^{is \Psi(\xi,\eta,\zeta)} 
\mathfrak{m}(\xi,\eta,\zeta)\widetilde{f}(\xi-\eta) 
  \partial_\xi \widetilde{f}(\xi-\eta-\zeta) \widetilde{f}(\xi-\zeta) \, d\eta \,d\zeta 
$$
where
\begin{align*}
& \mathfrak{m}(\xi,\eta,\zeta) = \mathfrak{m}_{m,\ell}^{\mathcal{J}^{3,(2)}}(\xi,\eta,\zeta) \\
& \qquad = \mathfrak{p}(\xi,\eta,\zeta) \varphi_\ell(\xi-\sqrt{3}) \varphi_{\sim \ell}(\xi-\eta - \sqrt{3})  \\
& \qquad \qquad \qquad \varphi_{<\ell-100}(\xi-\eta -\zeta- \sqrt{3})\varphi_{\sim \ell}(\xi-\zeta- \sqrt{3}) 
  \varphi_{<\ell - 100}(\eta-\zeta) \tau_m(s).
\end{align*}
Notice that, on the support of $\mathfrak{m}$,
$$
|\xi - \eta - \sqrt 3| \approx |\xi - \zeta - \sqrt 3| \approx |\eta| \approx |\zeta| \sim 2^\ell,
$$
so that
$$
| \Psi| \approx |\partial_\xi \Psi| \approx 2^{2 \ell} \qquad \mbox{and} 
  \qquad |\partial_\eta \Psi| \approx |\partial_\zeta \Psi| \approx 2^\ell.
$$
In the expression above giving $\mathcal{J}_{m,\ell}^{3,(2)}$, we write $\partial_\xi \widetilde{f}(\xi-\eta-\zeta) = -\partial_\eta \widetilde{f}(\xi-\eta-\zeta)$, and integrate by parts in $\eta$. This results into
\begin{align*}
\mathcal{J}_{m,\ell}^{3,(2)} & = \iint e^{is \Psi(\xi,\eta,\zeta)} \partial_\eta \mathfrak{m}(\xi,\eta,\zeta) \widetilde{f}(\xi-\eta) \widetilde{f}(\xi-\eta-\zeta) \widetilde{f}(\xi-\zeta) \, d\eta \,d\zeta \\
& \qquad -  \iint e^{is \Psi(\xi,\eta,\zeta)} \mathfrak{m}(\xi,\eta,\zeta) \partial_\xi \widetilde{f}(\xi-\eta) \widetilde{f}(\xi-\eta-\zeta) \widetilde{f}(\xi-\zeta) \, d\eta \,d\zeta \\
& \qquad +  \iint e^{is \Psi(\xi,\eta,\zeta)} is\partial_\eta \Psi \mathfrak{m}(\xi,\eta,\zeta)  \widetilde{f}(\xi-\eta) \widetilde{f}(\xi-\eta-\zeta) \widetilde{f}(\xi-\zeta) \, d\eta \,d\zeta \\
& = \mathcal{J}_{m,\ell}^{3,(2)\flat} + \mathcal{J}_{m,\ell}^{3,(2)\sharp} + \mathcal{J}_{m,\ell}^{3,(2)\natural}.
\end{align*}
We claim that the first term in the above right-hand side, namely $\mathcal{J}_{m,\ell}^{3,(2)\flat}$, 
is easier to treat than the third, $\mathcal{J}_{m,\ell}^{3,(2)\natural}$,
because $|\partial_\eta \mathfrak m | \approx 2^{-\ell} \lesssim 2^{m+\ell} \approx |s \partial_\eta \Psi|$ 
with corresponding bounds for the $L^1$ norm of their Fourier transform. 
The second term, $\mathcal{J}_{m,\ell}^{3,(2)\sharp}$, 
can be treated like Case 3 for $\mathcal{J}^2$;
thus we are left with analyzing the third one. 
In order to bound it, we integrate by parts in $s$:
\begin{align*}
\int_0^t \mathcal{J}_{m,\ell}^{3,(2)\natural} \,ds & = 
\int_0^t \iint e^{is \Psi(\xi,\eta,\zeta)} \frac{\partial_\eta \Psi}{\Psi} s \mathfrak{m}(\xi,\eta,\zeta) \partial_s \widetilde{f}(\xi-\eta) \widetilde{f}(\xi-\eta-\zeta) \widetilde{f}(\xi-\zeta) \, d\eta \,d\zeta\,ds \\
& \qquad +  \int_0^t \iint e^{is \Psi(\xi,\eta,\zeta)}  \frac{\partial_\eta \Psi}{\Psi}  
  \partial_s [s \mathfrak{m}(\xi,\eta,\zeta)]  \widetilde{f}(\xi-\eta) \widetilde{f}(\xi-\eta-\zeta) \widetilde{f}(\xi-\zeta) \, d\eta \,d\zeta \,ds 
  \\
& \qquad + \{ \mbox{similar or easier terms} \}.
\end{align*}
The ``similar or easier'' terms here include also the boundary terms coming from the integration by parts,
which can be estimated like the other two terms.

We show how to bound the first term in the right-hand side above, 
since the second one can be bounded in the same way. 
First observe that Lemma \ref{lemmamultilin1} applies since on the support of $\mathfrak{m}$, 
the variables are such that $|\xi-\sqrt 3| + |\eta| + |\zeta| \lesssim 2^\ell$ and for any $a,b,c$, 
$$
\displaystyle \left| \partial_\xi^a \partial_\eta^b \partial_\zeta^c \frac{\partial_\eta \Psi}{\Psi} \right| 
  \lesssim 2^{-\ell (1  +b +c)}.
$$
We write
\begin{align}\label{similarterms}
\begin{split}
& \int_0^t \iint e^{is \Psi(\xi,\eta,\zeta)} i \frac{\partial_\eta \Psi}{\Psi} s \mathfrak{m}(\xi,\eta,\zeta) \partial_s \widetilde{f}(\xi-\eta) \widetilde{f}(\xi-\eta-\zeta) \widetilde{f}(\xi-\zeta) \, d\eta \,d\zeta \,ds 
\\
& \qquad = \sum_{\iota_1 \iota_2 \iota_3} 
\int_0^t \iiiint e^{is \Lambda_{\iota_1 \iota_2 \iota_3}(\xi,\eta,\zeta,\sigma,\rho)} i \frac{\partial_\eta \Psi}{\Psi}
  s \mathfrak{m}'(\xi,\eta,\zeta,\rho,\sigma) \\
&  \qquad \qquad \qquad \times \widetilde{f}(\xi-\eta-\sigma-\rho) \widetilde{f}(\sigma) \widetilde{f}(\rho) 
  \widetilde{f}(\xi-\eta-\zeta) \widetilde{f}(\xi-\zeta) \, d\eta \,d\zeta \,d\sigma\,d\rho \, ds
  \\
&  + \{ \mbox{similar terms} \}.
\end{split}
\end{align}
In the above equation, we denoted $\mathfrak{m}'$ for the 5-linear symbol arising when one replaces 
$\partial_s f$ by $\mathcal{C}^{S1}$; 
we omitted various indexes and complex conjugate signs to alleviate the notations, and denoted
$$
\Lambda_{\iota_1 \iota_2 \iota_3}(\xi,\eta,\zeta,\sigma,\rho) = \langle \xi \rangle 
  - \iota_1 \langle\xi-\eta-\sigma-\rho \rangle - \iota_2 \langle \sigma \rangle - \iota_3 \langle \rho \rangle 
  - \langle \xi-\eta-\zeta \rangle + \langle \xi-\zeta \rangle.
$$
By Lemma \eqref{lemCS}, the 5-linear term satisfies H\"older estimates, and can be bounded by
$$
\| \dots \|_{L^2} \lesssim 2^{-\ell} 2^{2m} \| e^{is\langle D \rangle} \mathcal{W}^* f \|_{L^\infty}^4 
  \| \varphi_{\sim \ell}(\cdot - \sqrt 3) \widetilde f \|_{L^2}
  \lesssim 2^{-\ell/2} \e_1^5 \lesssim 2^{-\beta \ell + \alpha m} \e_1^5,
$$
where the last inequality holds since $\ell > -\gamma m$ and $\alpha > \beta' \gamma$.

The ``similar terms'' in \eqref{similarterms} are of various types:
some involve principal value operators instead of $\delta$, but these can be treated identically;
other contain the regular quadratic term which can be treated similarly
using, see \eqref{QRLinftyxi},
$$
\| \varphi_{\ell}(\cdot - \sqrt 3) \mathcal{Q}^R(f,f) \|_{L^2} \lesssim 2^{\ell/2} 2^{-m}. 
$$

\bigskip
\noindent
{\bf Case 4.3: $\ell > -\gamma m$, $j_2 < \ell - 100$, $2^{j_1} \approx 2^{j_3} \gg 2^\ell$.} 
We would like to estimate here $\sum_{j_1 \gg \ell} \mathcal{J}_{m,\ell,j_1}^{3,(3)}$,
with
$$
\mathcal{J}_{m,\ell,j_1}^{3,(3)} = \iint e^{is \Psi(\xi,\eta,\zeta)} \mathfrak{m}(\xi,\eta,\zeta)\widetilde{f}(\xi-\eta)   \partial_\xi \widetilde{f}(\xi-\eta-\zeta) \widetilde{f}(\xi-\zeta) \, d\eta \,d\zeta 
$$
where
\begin{align*}
\mathfrak{m}(\xi,\eta,\zeta) & = \mathfrak{m}_{m,\ell,j_1}^{\mathcal{J}^{3,(3)}}(\xi,\eta,\zeta) \\
&  =  \mathfrak{p}(\xi,\eta,\zeta) \varphi_{\ell}(\xi-\sqrt{3}) \varphi_{j_1}(\xi-\eta - \sqrt{3})    \varphi_{<\ell-100}(\xi-\eta -\zeta- \sqrt{3}) \\
& \qquad \qquad \qquad \varphi_{\sim j_1}(\xi-\zeta-\sqrt{3}) \varphi_{\sim j_1}(\eta-\zeta) \tau_m(s).
\end{align*}
On the support of this symbol,
$$
| (\partial_\eta - \partial_\zeta) \Psi | \approx |\eta - \zeta| \approx 2^{j_1},
  \qquad | (\partial_\eta - \partial_\zeta)^2 \Psi | \approx 1.
$$
Integrating by parts through the identity
$\frac{1}{is(\partial_\eta - \partial_\zeta) \Psi} (\partial_\eta - \partial_\zeta) e^{is\Psi} = e^{is\Psi}$ gives
\begin{align*}
 \mathcal{J}^{3,(3)}_{m, \ell,j_1} & = \iint e^{is \Psi(\xi,\eta,\zeta)} (\partial_\eta - \partial_\zeta) \left[ \frac{\mathfrak{m}(\xi,\eta,\zeta)}{is(\partial_\eta - \partial_\zeta) \Psi)} \right]  \widetilde{f}(\xi-\eta)   \partial_\xi \widetilde{f}(\xi-\eta-\zeta) \widetilde{f}(\xi-\zeta) \, d\eta \,d\zeta \\
&  \qquad +  \iint e^{is \Psi(\xi,\eta,\zeta)}\frac{1}{is(\partial_\eta - \partial_\zeta) \Psi} \mathfrak{m}(\xi,\eta,\zeta) \widetilde{f}(\xi-\eta)   \partial_\xi \widetilde{f}(\xi-\eta-\zeta) \partial_\xi \widetilde{f}(\xi-\zeta) \, d\eta \,d\zeta \\
& \qquad + \{ \mbox{symmetrical terms} \} \\
& = \mathcal{J}^{3,(3)\flat}_{m,\ell,j_1} + \mathcal{J}^{3,(3)\sharp}_{m,\ell,j_1} + \{ \mbox{symmetrical terms} \}.
\end{align*}
Both of these terms can be estimated very similarly to the corresponding terms in Case 4.1; 
for completeness 
we show how to bound $\mathcal{J}^{3,(3)\sharp}_{m,\ell,j_1}$.

We claim first that 
\begin{equation}
\label{estimatefrakm}
\left\| \mathcal{F} \left[ \frac{\mathfrak{m}(\xi,\eta,\zeta)}{(\partial_\eta - \partial_\zeta) \Psi)} \right] \right\|_{L^1} \lesssim 2^{- j_1}.
\end{equation}
Indeed, 
we can write
$
\mathfrak{m} = \mathfrak{n} \varphi_{< \ell - 100}(\xi-\eta-\zeta- \sqrt 3)
$
with
\begin{align*}
\mathfrak{n}(\xi,\eta,\zeta) & = 
  \mathfrak{p}(\xi,\eta,\zeta) \varphi_{\ell}(\xi-\sqrt{3}) \varphi_{j_1}(\xi-\eta - \sqrt{3})
  \varphi_{\sim j_1}(\xi-\zeta-\sqrt{3}) \varphi_{\sim j_1}(\eta-\zeta) \varphi_{\gtrsim j_1}(\eta-\zeta) \tau_m(s)
\end{align*}
On the support of $\mathfrak{n}$ 
the variables are constrained by $|\xi - \sqrt 3| \sim 2^\ell$, $|\eta| \lesssim 2^{j_1}$, 
and $|\zeta| \lesssim 2^{j_1}$, and 
for any $a,b,c$ we have
$$
\left| \partial_\xi^a \partial_\eta^b \partial_\zeta^c \frac{1}{(\partial_\eta - \partial_\zeta) \Psi} \right| 
  \lesssim 2^{- j_1 (1 + b + c)}, \qquad 
  \left| \partial_\xi^a \partial_\eta^b \partial_\zeta^c \mathfrak{n} \right| \lesssim 2^{- a \ell - (b + c) j_1}.
$$
From Remark \ref{multilinrem} after Lemma \ref{lemmamultilin1}
we deduce
$$
\left\| \mathcal{F} \left[ \frac{\mathfrak{n}(\xi,\eta,\zeta)}{(\partial_\eta - \partial_\zeta) \Psi)} \right] \right\|_{L^1} \lesssim 2^{- j_1}.
$$
hence
\eqref{estimatefrakm}.

Applying Lemma \ref{lemmamultilin1} we obtain
$$
\left\|  \int_0^t  \mathcal{J}_{m,\ell,j_1}^{3,(1)\sharp} \,ds \right\|_{L^2} \lesssim 2^{-j_1} 
\| e^{is\langle D \rangle} \mathcal{W}^* f \|_{L^\infty}
\| \varphi_{<\ell}(\cdot - \sqrt 3) \partial_\xi \widetilde{f} \|_{L^1} \| \partial_\xi \widetilde{f}\|_{L^2} ,
$$
from which, after summing in $j_1 \gg \ell$, one can proceed as in~\eqref{parrot}.


\subsection{The bound for $\mathcal{C}^{S2}_{+-+}$}\label{subsecCS2}
We now look at the `$\pv$' contributions of the form
\begin{align}\label{9CS2}
\begin{split}
\iiint e^{is \Psi(\xi,\eta,\zeta,\theta)} \mathfrak{p}(\xi,\eta,\zeta,\theta) &
  \wt{f}(\xi-\eta) \wt{f}(\xi-\eta-\zeta-\theta) \wt{f}(\xi-\zeta) 
  \frac{\widehat{\phi}(\theta)}{\theta} \, d\eta \,d\zeta \,d\theta 
  \\
\Psi(\xi,\eta,\zeta,\theta) & : = \Phi_{+ - +}(\xi,\xi-\eta,\xi-\eta-\zeta-\theta,\xi-\zeta) 
\\
& = \langle \xi \rangle - \langle \xi - \eta \rangle + \langle \xi - \eta -\zeta -\theta  \rangle 
  - \langle \xi - \zeta\rangle.
\end{split}
\end{align}
In the regime which interests us here ($|\xi - \sqrt{3}| \ll 1$ and $|\eta|, |\zeta|, |\theta| \ll 1$), we have
the expansions
\begin{align*}
& \Psi(\xi,\eta,\zeta,\theta) = - \frac{\xi}{\langle \xi \rangle} \theta + \frac{1}{\langle \xi \rangle^3} 
  \left(\frac{\theta^2}{2} + \eta \zeta +  \eta \theta +  \zeta \theta\right) + O(|\eta,\zeta,\theta|^3) = -\frac{\xi}{\langle \xi \rangle} \theta + O(|\eta,\zeta,\theta|^2),
  \\
& \partial_\xi \Psi (\xi,\eta,\zeta,\theta) = - \frac{1}{\langle \xi \rangle^3} \theta + O(|\eta,\zeta,\theta|^2), \\
& \partial_\eta \Psi (\xi,\eta,\zeta,\theta) = \frac{1}{\langle \xi -\eta \rangle^3} (\zeta + \theta) + O( |\zeta + \theta|^2), \\
& \partial_\eta^2 \Psi (\xi,\eta,\zeta,\theta) = \frac{3(\xi-\eta)}{\langle \xi - \eta \rangle^5} (\zeta + \theta) + O(|\zeta + \theta|^2), \\
& (\partial_\eta - \partial_\zeta) \Psi = \frac{1}{\langle \xi -\eta \rangle^3} (\zeta - \eta) + O( |\eta-\zeta|^2).
\end{align*}

\subsubsection{Commuting $\jxi \partial_\xi$ with~\eqref{9CS2}} 
In order to perform this commutation, it is convenient to adopt a new set of coordinates, namely write 
$$
\eqref{9CS2} = \iiint e^{is \Phi_{+-+}(\xi,\eta,\s,\theta)} \mathfrak{p}(\xi,\eta,\s,\theta) 
  \wt{f}(\eta) \wt{f}(\sigma) \wt{f}(\theta)  \frac{\widehat{\phi}(p)}{p} \, d\eta \,d\sigma \,d\theta, 
$$
with $p = \xi - \eta - \sigma - \theta$. 
In the expression above we have abused notation slightly
by also denoting $\mathfrak{p}$ the symbol in the new coordinates, 
and by omitting irrelevant sign changes.  
To commute $\jxi \partial_\xi$ with \eqref{9CS2}, we will rely on the following elegant identity: 
observe that
\begin{align} \label{niceidentity}
(\jxi \partial_\xi + X_{\eta,\s,\theta}) \Phi_{+-+} = p,
 \qquad X_{\eta,\s,\theta} := \jeta \partial_\eta - \jsig \partial_\sigma
  + \langle \theta \rangle \partial_\theta.
\end{align}
When applying $\jxi \partial_\xi$ to~\eqref{9CS2}, we can use this identity 
to integrate by parts in $\eta,\sigma$ and $\theta$.
Since the adjoint satisfies $X_{\eta,\s,\theta}^\ast = - X_{\eta,\s,\theta}$, 
up to terms which are easier to estimate, we see that estimating $\jxi \partial_\xi \eqref{9CS2}$ reduces to bounding
\begin{subequations}
\begin{align}
\label{elephant1}
& i t \iiint e^{it \Phi_{+-+}} \mathfrak{p}(\xi,\eta,\sigma,\theta) 
\, \wt{f}(\eta) \wt{f}(\sigma)  \wt{f}(\theta) 
  \widehat{\phi}(p) \, d\eta \, d\sigma \,d\theta
\\
\label{elephant2}
& + \iiint e^{it \Phi_{+-+}} \mathfrak{p}(\xi,\eta,\sigma,\theta) 
  X_{\eta,\s,\theta} \big(\wt{f}(\eta) \wt{f}(\sigma)  \wt{f}(\theta) \big)
  \frac{\widehat{\phi}(p)}{p} \, d\eta \, d\sigma \,d\theta
\\
\label{elephant3}
& + \iiint e^{it \Phi_{+-+}} \mathfrak{p}(\xi,\eta,\sigma,\theta) 
\, \wt{f}(\eta) \wt{f}(\sigma)  \wt{f}(\theta) 
  \, \big( \jxi \partial_\xi + X_{\eta,\s,\theta} \big) \Big[ \frac{\widehat{\phi}(p)}{p}\Big] \, d\eta \, d\sigma \,d\theta \\
  \label{elephant4}
  & +  \iiint e^{it \Phi_{+-+}} X_{\eta,\s,\theta} \, \mathfrak{p}(\xi,\eta,\sigma,\theta) 
\, \wt{f}(\eta) \wt{f}(\sigma)  \wt{f}(\theta) 
  \, \frac{\widehat{\phi}(p)}{p} \, d\eta \, d\sigma \,d\theta 
 .
\end{align}
\end{subequations}

\smallskip
\noindent
{\it Estimate of \eqref{elephant1}}.
This term does not have a singularity and can be estimated
integrating by parts in the ``uncorrelated'' variables $\eta,\sigma$ and $\theta$.
Each of the three inputs then would gives a gain of $\jt^{-3/4+\alpha}$ which is sufficient to
absorb the power of $t$ in front and integrate over time.
Similar (in fact, harder) terms have been treated in Section \ref{secwR}, so we can skip the details.

\smallskip
\noindent
{\it Estimate of \eqref{elephant3}}.
For this term we observe, see \eqref{Cubotherid}, that
\begin{align}\label{niceidentity2}
\big( \jxi \partial_\xi + X_{\eta,\s,\theta} \big) \Big[ \frac{\widehat{\phi}(p)}{p}\Big] 
  = \Phi_{+-+}(\xi,\eta,\s,\theta) \partial_p \Big[ \frac{\widehat{\phi}(p)}{p}\Big].
\end{align}
Note that this identity is formal as it is written, since $\partial_p (1/p)$ does not converge (even in the $\pv$ sense);
however, it can be made rigorous by localizing a little away from $p=0$, 
and using the $\pv$ to deal with very small values of $p$.

From \eqref{niceidentity2} we obtain, upon integration by parts in $s$, that
\begin{align}\label{rhinoceros1}
\int_0^t i \eqref{elephant3} \, ds & = 
\iiint e^{is \Phi_{+-+}} \mathfrak{p}(\xi,\eta,\sigma,\theta)
\, \wt{f}(\eta) \wt{f}(\sigma)  \wt{f}(\theta) 
  \, \partial_p \frac{\widehat{\phi}(p)}{p}\, d\eta \, d\sigma \,d\theta \, \Big|_{s=0}^{s=t}
\\
\label{rhinoceros2}
& - \int_0^t \iiint e^{is \Phi_{+-+}} \mathfrak{p}(\xi,\eta,\sigma,\theta)
\, \partial_s \Big[ \wt{f}(\eta) \wt{f}(\sigma)  \wt{f}(\theta) \Big]
  \partial_p \frac{\widehat{\phi}(p)}{p} \, d\eta \, d\sigma \,d\theta \, ds.
\end{align}
To estimate \eqref{rhinoceros1} we convert the $\partial_p$ into $\partial_\eta$ and
integrate by parts in $\eta$. The worst term is when $\partial_\eta$ hits the exponential;
this causes a loss of $t$ but an $L^2\times L^\infty\times L^\infty$ H\"older estimate using Lemma \ref{lemCS}
suffices to recover it.

The term \eqref{rhinoceros2} is similar. We may assume that $\partial_s$ hits $\wt{f}(\sigma)$.
Again we convert $\partial_p$ into $\partial_\eta$ and
integrate by parts in $\eta$.
This causes a loss of $s$ when hitting the exponential phase
which is offset by an $L^\infty\times L^2\times L^\infty$ estimate
with $\partial_s \wt{f}$ placed in $L^2$ and giving $\jt^{-1}$ decay using \eqref{dtfL2}.

\smallskip
\noindent
{\it Estimate of \eqref{elephant4}}. 
This term can be estimated directly using the trilinear estimates from Lemma \ref{lemCS}.

\bigskip
This leaves us with \eqref{elephant2}, which, 
coming back to the original coordinates, and taking into account the symmetry between the $\eta$ and $\zeta$ variables, 
reduces to the two following terms
\begin{align*}
&\mathcal{H}^2 = \iiint e^{is \Psi(\xi,\eta,\zeta,\theta)} \mathfrak{p}(\xi,\eta,\zeta,\theta)  \partial_\xi \widetilde{f}(\xi-\eta) \widetilde{f}(\xi-\eta-\zeta-\theta) \widetilde{f}(\xi-\zeta) \frac{\widehat{\phi}(\theta)}{\theta}\, d\eta \,d\zeta\,d\theta, \\
&\mathcal{H}^3 = \iiint e^{is \Psi(\xi,\eta,\zeta,\theta)} \mathfrak{p}(\xi,\eta,\zeta,\theta)  \widetilde{f}(\xi-\eta) \partial_\xi \widetilde{f}(\xi-\eta-\zeta-\theta) \widetilde{f}(\xi-\zeta) \frac{\widehat{\phi}(\theta)}{\theta}\, d\eta \,d\zeta\,d\theta. \\
\end{align*}
In order to bound these terms, we will localize dyadically  the variables in the problem as follows:
\begin{equation} \label{defljm2}
\begin{split}
& s \approx 2^m, \quad |\xi - \sqrt{3}| \approx 2^\ell, \quad |\theta| \approx 2^h,
\\
& |\xi-\eta - \sqrt{3}| \approx 2^{j_1},
\quad |\xi-\eta-\zeta -\theta- \sqrt{3}| \approx 2^{j_2},
\quad |\xi-\zeta-\sqrt{3}| \approx 2^{j_3}.
\end{split}
\end{equation}

\medskip
\subsubsection{Bound for $\mathcal{H}^2$}\label{secwLpv2} 

\noindent{\bf Cases 1,2,3: $\ell < -\gamma m$ or $j_1 > \ell - 100$.}
These cases can be dealt with as in \S\ref{sectionJ2}, 
relying on Lemma \ref{lemmamultilin2} instead of Lemma~\ref{lemmamultilin1}.

\medskip
\noindent{\bf Case 4.1: $\ell > -\gamma m$, $j_1 < \ell - 100$ and $|\theta+\xi-\sqrt 3| \geq 2^{\ell-10} $}. 
We want to bound here $\sum_{h > \ell - 10} \mathcal{H}^{2,(1)}_{m,\ell,h}$, where
\begin{align*}
\mathcal{H}^{2,(1)}_{m,\ell,h} := 
  \iiint e^{is \Psi(\xi,\eta,\zeta,\theta)} \mathfrak{m}(\xi,\eta,\zeta,\theta)
    \partial_\xi\wt{f}(\xi-\eta) 
    \wt{f}(\xi-\eta-\zeta-\theta) \wt{f}(\xi-\zeta) \frac{\widehat{\phi}(\theta)}{\theta}\, d\eta \,d\zeta \,d\theta
\end{align*}
with
\begin{align*}
 \mathfrak{m}(\xi,\eta,\zeta,\theta) & = \mathfrak{m}^{\mathcal{H}^{2,(1)}}_{m,\ell,h}(\xi,\eta,\zeta,\theta) 
 \\
& =  \mathfrak{p}(\xi,\eta,\zeta) \varphi_\ell(\xi - \sqrt{3}) \varphi_{<\ell-100}(\xi-\eta - \sqrt{3}) \varphi_{h} (\theta + \xi - \sqrt{3}) \tau_m(s).
\end{align*}
On the support of $\mathfrak{m}(\xi,\eta,\zeta)$, $|\partial_\zeta \Psi| \approx |\eta + \theta| \gtrsim |\eta|$,
and $|\xi - \sqrt{3}|, |\eta| \sim 2^\ell$ and $|\theta| \lesssim 2^h$;
moreover, for any $a$, $b$, $c$,
$$
\left| \partial_\xi^a \partial_\eta^b \partial_\zeta^c \partial_\theta^d \frac{1}{\partial_\zeta \Psi} \right| 
  \lesssim 2^{-h(1+b+d)}
$$
Therefore, Lemma \ref{lemmamultilin2} applies and we can proceed exactly as in Case 4 of 
\S\ref{sectionJ2},
since the sum over $h > \ell$ of $2^{-h}$ gives the same factor of $2^{\ell}$ there.

\medskip
\noindent {\bf Case 4.2: $\ell > -\gamma m$, $j_1 < \ell - 100$ and $|\theta+\xi-\sqrt 3| < 2^{\ell - 10}$.} 
We want to bound here
\begin{align*}
\mathcal{H}^{2,(2)}_{m,\ell} = \iiint e^{is \Psi(\xi,\eta,\zeta,\theta)} \mathfrak{m}(\xi,\eta,\zeta)\partial_\xi\widetilde{f}(\xi-\eta)  \widetilde{f}(\xi-\eta-\zeta-\theta) \widetilde{f}(\xi-\zeta) \frac{\widehat{\phi}(\theta)}{\theta}\, d\eta \,d\zeta \,d\theta
\end{align*}
where
\begin{align*}
 \mathfrak{m}(\xi,\eta,\zeta) & = \mathfrak{m}^{\mathcal{H}^{2,(2)}}_{m,\ell}(\xi,\eta,\zeta) \\
& = \mathfrak{p}(\xi,\eta,\zeta) \varphi_\ell(\xi - \sqrt{3}) \varphi_{<\ell-100}(\xi-\eta - \sqrt{3})  \varphi_{<\ell - 10} (\theta + \xi - \sqrt{3}) \tau_m(s) .
\end{align*}
On the support of the integrand, $|\theta| \approx 2^\ell$. 
Therefore, noticing that $\partial_\theta \Psi$ is smooth and bounded away from zero, 
we integrate by parts in $\theta$ to obtain
\begin{align*}
\mathcal{H}^{2,(2)}_{m,\ell} & =  \iiint  e^{is \Psi(\xi,\eta,\zeta,\theta)} \partial_\xi\widetilde{f}(\xi-\eta) \widetilde{f}(\xi-\eta-\zeta-\theta) \widetilde{f}(\xi-\zeta) \partial_\theta \left[ \frac{\mathfrak{m}}{s\partial_\theta \Psi} \frac{\widehat{\phi}(\theta)}{\theta} \right] \, d\eta \,d\zeta \,d\theta \\
& \qquad +  \iiint  e^{is \Psi(\xi,\eta,\zeta,\theta)} \frac{\mathfrak{m}}{s\partial_\theta \Psi} \partial_\xi\widetilde{f}(\xi-\eta)   \partial_\xi \widetilde{f}(\xi-\eta-\zeta-\theta) \widetilde{f}(\xi-\zeta) \frac{\widehat{\phi}(\theta)}{\theta}\, d\eta \,d\zeta \,d\theta \\
& = \mathcal{H}^{2,(2) \flat}_{m,\ell} + \mathcal{H}^{2,(2) \sharp}_{m,\ell}.
\end{align*}
Using that $|\xi|, |\eta|, |\theta| \lesssim 2^\ell$ 
and $| \partial_\xi^a \partial_\eta^b \partial_\zeta^c \partial_\theta^d \mathfrak{m}| \lesssim 2^{-\ell (a+b+d)}$, 
we get by Lemma~\ref{lemmamultilin2},~\eqref{lembb2},~\eqref{lembb3}, and~\eqref{aprioridecay},
\begin{align*}
\int_0^t \left\| \mathcal{H}^{2,(2) \flat}_{m,\ell}  \right\|_{L^2} \,ds 
  & \lesssim 2^{-\ell} \| \varphi_{<\ell}(\cdot - \sqrt{3}) \partial_\xi \widetilde f \|_{L^1} 
  \| \widetilde{f} \|_{L^2} \| e^{-it\langle D \rangle} \mathcal{W}^* f \|_{L^\infty} 
  \\
& \lesssim 2^{-\ell} 2^{\beta' \ell + \alpha m} 2^{-m/2} \e_1^3 
  \lesssim 2^{-\beta \ell + \alpha m} \e_1^3.
\end{align*}
Similarly, by Lemma~\ref{lemmamultilin2},~\eqref{lembb1},~\eqref{lembb3}, and~\eqref{aprioridecay},
\begin{align*}
\int_0^t \left\| \mathcal{H}^{2,(2) \sharp}_{m,\ell}  \right\|_{L^2} \,ds &
  \lesssim \| \varphi_{<\ell}(\cdot - \sqrt{3}) \partial_\xi \widetilde f \|_{L^1} \|  \partial_\xi \widetilde f \|_{L^2}  
  \| e^{-it\langle D \rangle} \mathcal{W}^* f \|_{L^\infty} 
  \\
& \lesssim 2^{\beta' \ell + \alpha m} 2^{(\alpha + \beta \gamma)m} 2^{-m/2} \e_1^3 
  \lesssim 2^{-\beta \ell + \alpha m}  \e_1^3.
\end{align*}

\subsubsection{Bound for $\mathcal{H}^3$} 

\noindent
{\bf Cases 1,2,3: $\ell < -\gamma m$ or $j_2 > \ell - 100$.}
With the help of Lemma \ref{lemmamultilin2} instead of Lemma \ref{lemmamultilin1}, 
these cases are dealt with exactly as for $\mathcal{J}^3$ in \S\ref{boundJ3}.
We introduce one further index to record the size of $|\eta - \zeta|$:
\begin{align*}
& |\eta - \zeta| \approx 2^{j_4}.
\end{align*}

\medskip

From now on, we can assume that $\ell > -\gamma m$ and $j_1 < \ell - 100$. The cases that remain to be distinguished are
\begin{itemize}
\item Case 4.1: $2^{j_1},2^{j_3} \lesssim 2^\ell$, $2^{j_4} \gtrsim 2^\ell$;
\item Case 4.2.1: $2^{j_1},2^{j_3} \lesssim 2^\ell$, $2^{j_4} \ll 2^\ell$, $2^{2\ell} \lesssim 2^h \lesssim 2^\ell$;
\item Case 4.2.2: $2^{j_1},2^{j_3} \lesssim 2^\ell$, $2^{j_4} \ll 2^\ell$, $2^h \ll 2^{2\ell}$;
\item Case 4.3.1: $2^{j_1} \approx 2^{j_3} \gg 2^\ell$, $2^h \ll 2^{j_1}$;
\item Case 4.3.2: $2^{j_1} \gg 2^\ell$, $2^h \gtrsim 2^{j_1}$.
\end{itemize}

\medskip
\noindent {\bf Case 4.1: $\ell > -\gamma m$, $j_2 < \ell - 100$, $2^{j_1}, 2^{j_3} \lesssim 2^\ell$ 
and $j_4 > \ell-100$.} 
This corresponds to the symbol
\begin{align*}
\mathfrak{m}_{\ell,m}^{\mathcal{H}^{3,(1)}} & = \varphi_{\ell}(\xi - \sqrt{3}) 
  \varphi_{< \ell +10}(\xi-\eta - \sqrt{3}) \varphi_{< \ell - 100}(\xi-\eta -\zeta - \theta -\sqrt{3}) \\
& \qquad \qquad \qquad \qquad \varphi_{< \ell+10}(\xi-\zeta- \sqrt{3}) \varphi_{>\ell-100}(\eta-\zeta).
\end{align*}
On the support of this symbol, the frequency variables enjoy the localizations $|\xi-\sqrt{3}|, |\eta|, |\zeta|, |\theta| \lesssim 2^\ell$, and the symbol satisfies the estimates
$$
| \partial_\xi^a \partial_\eta^b \partial_\zeta^c \partial_\theta^d \mathfrak{m} | \lesssim 2^{-(a+b+c+d) \ell}.
$$
Furthermore, $|(\partial_\eta - \partial_\zeta) \Psi| \gtrsim 2^\ell$, 
and therefore one can proceed as in Case 4.1 of \S\ref{boundJ3}. 
Indeed, $(\partial_\eta - \partial_\zeta) \Psi$ is independent of $\theta$
and, on the support of the symbol, 
$\left| \partial_\xi^a \partial_\eta^b \partial_\zeta^c \partial_\theta^d \frac{1}{(\partial_\eta - \partial_\zeta) \Psi} 
\right| \lesssim 2^{-\ell(1+b+c)}$ for any $a,b,c,d$.

\medskip
\noindent
{\bf Case 4.2.1: $\ell >  -\gamma m$, $j_2 < \ell - 100$, $2^{j_1}, 2^{j_3} \lesssim 2^\ell$, 
$j_4 \leq \ell - 100$, and $h > 2\ell -100$.} 
After the change of variables $\eta' = \eta + \theta$, let
$$
\mathcal{H}^{3,(2)}_{h,\ell,m} = \iiint e^{is \Phi(\xi,\eta',\zeta,\theta)} \mathfrak{m}(\xi,\eta',\zeta,\theta)  \widetilde{f}(\xi-\eta'+\theta) \partial_\xi\overline{\widetilde{f}(\xi-\eta'-\zeta)} \widetilde{f}(\xi-\zeta) \frac{\widehat{\phi}(\theta)}{\theta}\, d\eta' \,d\zeta\,d\theta,
$$
where
\begin{align*}
 \Phi(\xi,\eta',\zeta,\theta) = \langle \xi \rangle - \langle \xi - \eta' + \theta \rangle + \langle \xi - \eta' -\zeta \rangle - \langle \xi - \zeta\rangle
\end{align*}
and
\begin{align*}
 \mathfrak{m}(\xi,\eta',\zeta) & = \mathfrak{m}^{\mathcal{H}^{3,(2)}}_{h,\ell,m}(\xi,\eta',\zeta) \\
& = \varphi_{\ell}(\xi - \sqrt{3}) \varphi_{<\ell-100}(\xi-\eta' -\zeta - \sqrt{3}) \varphi_{< \ell + 10}(\xi-\eta' + \theta -\sqrt{3})\varphi_{< \ell + 10}(\xi-\zeta- \sqrt{3}) \\
& \qquad \qquad \qquad \varphi_{<\ell-100} (\eta'-\theta-\zeta) \varphi_{h} (\theta) \tau_m(s) \mathfrak{p}(\xi,\eta' - \theta,\zeta).
\end{align*}
On the support of this symbol, $2^h \lesssim 2^\ell$; therefore, in the following we will bound
$\sum_{ 2\ell - 100 < h < \ell + 100} \mathcal{H}^{3,(2)}_{h,\ell,m}$.
Noticing that $\partial_\theta \Phi = \frac{ \xi - \eta' - \theta }{\langle \xi - \eta' - \theta \rangle}$ 
is smooth and $\gtrsim 1$, we integrate by parts in $\theta$, to obtain
\begin{align*}
\mathcal{H}^{3,(2)}_{h,\ell,m} & = \iiint e^{is \Phi(\xi,\eta',\zeta,\theta)} \frac{\mathfrak{m}}{i s\partial_\theta \Phi} \partial_\xi \widetilde{f}(\xi-\eta'+\theta) \partial_\xi \widetilde{f}(\xi-\eta'-\zeta) \widetilde{f}(\xi-\zeta) \frac{\widehat{\phi}(\theta)}{\theta}\, d\eta' \,d\zeta\,d\theta \\
& \qquad -  \iiint e^{is \Phi(\xi,\eta',\zeta,\theta)} \widetilde{f}(\xi-\eta'+\theta) \partial_\xi \widetilde{f}(\xi-\eta'-\zeta) \widetilde{f}(\xi-\zeta) \partial_\theta \left[ \frac{\mathfrak{m}}{i s\partial_\theta \Phi}    \frac{\widehat{\phi}(\theta)}{\theta}\right]\, d\eta' \,d\zeta\,d\theta \\
& = \mathcal{H}^{3,(2)\flat}_{h,\ell,m} + \mathcal{H}^{3,(2)\sharp}_{h,\ell,m}.
\end{align*}
Estimating $ \mathcal{H}^{3,(2)\flat}_{h,\ell,m}$ is now straightforward, 
using that $|\xi-\sqrt 3|, |\eta'|, |\zeta| \lesssim 2^\ell$, $|\theta| \approx 2^h$ 
and $|\partial_\xi^a \partial_{\eta'}^b \partial_\zeta^c \partial_\theta^d \mathfrak{m}| \lesssim 2^{-(a+b+c) \ell - dh}$:
\begin{align*}
\int_0^t \sum_{ 2\ell-100 < h < \ell+100} 
  \left\|  \mathcal{H}^{3,(2)\flat}_{h,\ell,m} \right\|_{L^2}\,ds & \lesssim 
  |\ell| {\| \varphi_{\sim \ell}(\cdot - \sqrt{3}) \partial_\xi \widetilde{f} \|}_{L^2} 
  {\| \varphi_{<\ell} (\cdot - \sqrt{3}) \partial_\xi \widetilde{f} \|}_{L^1} 
  {\| e^{-it\langle D \rangle} \mathcal{W}^* f \|}_{L^\infty} 
  \\
& \lesssim |\ell|  2^{-\beta \ell + \alpha m} 2^{\beta' \ell + \alpha m} 2^{-m/2} \e_1^3 
  \lesssim 2^{-\beta \ell + \alpha m} \e_1^3.
\end{align*}
To estimate $\mathcal{H}^{3,(2)\sharp}_{h,\ell,m}$, observe that
$$
\left\| \mathcal{F} \left( \partial_\theta \left[ \frac{ \mathfrak{m}}{\partial_\theta \Phi} 
  \frac{\widehat{\phi}(\theta)}{\theta} \right] \right) \right\|_{L^1} \lesssim 2^{-h}
$$
and, therefore,
\begin{align*}
& \int_0^t \left\| \sum_{2\ell-100 < h <\ell+100} \mathcal{H}^{3,(2)\sharp}_{h,\ell,m} \right\|_{L^2}\,ds 
\\
& \qquad \lesssim \sum_{2\ell-100 < h <\ell+100}  2^{- h} 
  \| \varphi_{<\ell} (\cdot - \sqrt 3) \widetilde{f} \|_{L^2} 
  \| \varphi_{<\ell} (\cdot - \sqrt{3}) \partial_\xi \widetilde{f} \|_{L^1} 
  \| e^{-it\langle D \rangle} \mathcal{W}^* f \|_{L^\infty} 
  \\
& \qquad \lesssim \sum_{2\ell-100 < h <\ell+100} 2^{- h} 
  2^{\ell/2} 2^{\beta' \ell + \alpha m} 2^{-m/2} \e_1^3 \lesssim 2^{-\beta \ell + \alpha m} \e_1^3,
\end{align*}
where we used that $\ell > -\gamma m > -m/2$.

\medskip
\noindent {\bf Case 4.2.2: $\ell >  -\gamma m$, $j_2 < \ell - 100$, $2^\ell \sim 2^{j_1} \sim 2^{j_3}$, 
$j_4 < \ell - 100$ and $h < 2\ell-100$.} 
In this case, $|\eta| , |\zeta| \approx \left| \xi - \sqrt 3 \right| \approx 2^\ell$ 
and $|\theta| \ll 2^\ell$, so that $|\Psi| \approx 2^{2 \ell}$, $|\partial_\eta \Psi| \approx 2^\ell$, and,
for any $a,b,c,d$,
$$
\left| \partial_\xi^a \partial_\eta^b \partial_\zeta^c \partial_\theta^d \frac{\partial_\eta \Psi}{\Psi} \right| 
  \lesssim 2^{-\ell(1+b+c)-2\ell d}.
$$
Therefore, this case can be dealt with as in Case 4.2 of \S\ref{boundJ3}.

\medskip
\noindent {\bf Case 4.3.1: $\ell >  -\gamma m$, $j_2 < \ell - 100$, $2^\ell \ll 2^{j_1} \approx 2^{j_3}$, $2^h \ll 2^{j_1}$.}  
This corresponds to the symbol
\begin{align*}
\mathfrak{m}(\xi,\eta,\zeta,\theta) = \mathfrak{m}_{\ell,j_1,m}^{\mathcal{H}^{3,(3)}}(\xi,\eta,\zeta,\theta) 
  & = \varphi_{\ell}(\xi - \sqrt{3}) \varphi_{j_1}(\xi-\eta - \sqrt{3}) 
  \varphi_{<\ell-100}(\xi-\eta -\zeta - \theta -\sqrt{3}) \\
& \qquad \qquad \qquad \qquad \varphi_{\sim j_1}(\xi-\zeta- \sqrt{3}) \varphi_{\sim j_1}(\eta - \zeta) \varphi_{<j_1 - 100} (\theta)\tau_m(s).
\end{align*}
which is such that, on its support, 
$|\eta|,|\zeta|, |\eta - \zeta| \approx 2^{j_1}$ and $|\theta| \ll 2^{j_1}$.

This case can mostly be treated like Case 4.3 in \S\ref{boundJ3} 
since $(\partial_\eta - \partial_\zeta) \Psi$ is independent of $\theta$. 
The only delicate point is that Lemma \ref{lemmamultilin2}, and the remark following it, 
do not directly apply here. For this reason, we let
$
\mathfrak{m}(\xi,\eta,\zeta,\theta) = \mathfrak{n}(\xi,\eta,\zeta,\theta)
  \varphi_{<\ell-100}(\xi-\eta -\zeta - \theta -\sqrt{3}),
$
and consider the symbol 
$$
\mathfrak{m}_1(\xi,\eta,\zeta,\theta) = \frac{\mathfrak{m}(\xi,\eta,\zeta,\theta)}{(\partial_\eta - \partial_\zeta) \Psi}
  = \frac{\mathfrak{n}(\xi,\eta,\zeta,\theta)}{(\partial_\eta - \partial_\zeta) \Psi} 
  \varphi_{<\ell-100}(\xi-\eta -\zeta - \theta -\sqrt{3}),
$$ 
which appears if one follows the proof of Case 4.3 in $\mathcal{J}^3$. 
Following the same argument used to show \eqref{estimatefrakm},
one can see that
$$
\left| \mathcal{F}  \left[\frac{\mathfrak{n}}{(\partial_\eta - \partial_\zeta) \Psi} \right](x,y,z,t) \right| 
  \lesssim 2^{\ell + 2 j_1} F(2^\ell x) F(2^{j_1} (y,z,t)),
$$
where we denote $F$ for a generic rapidly decaying function of size $O(1)$
together with its derivatives.
The Fourier transform of $\mathfrak{m}_1$ is bounded by the convolution of the above right-hand side with 
$$
\left| \mathcal{F}  \varphi_{<\ell-100}(\xi-\eta -\zeta - \theta -\sqrt{3}) \right| 
= 2^{\ell} F(2^\ell x) \delta(-x = y = z = t),
$$
that is,
$$
| \widehat{\mathfrak{m}_1}(x,y,z,t) | \lesssim 2^{2\ell + j_1} F(2^\ell x) F(2^\ell y) F(2^{j_1}(y-z)) F(2^{j_1}(y-t)).
$$
Therefore, 
$$
\left| \int \widehat{\mathfrak{m}_1} (x,y,z,t) \,dt \right|
  \lesssim 2^{2\ell} F(2^\ell x) F(2^\ell y) F(2^{j_1}(y-z))
$$
and Lemma~\ref{lemmamultilin2} applies, giving that the norm of $V_{\mathfrak{m}_1}$ 
between Lebesgue spaces at the H\"older scaling is $\lesssim 2^{-j_1}$.

\medskip
\noindent {\bf Case 4.3.2: $\ell >  -\gamma m$, $j_2 < \ell - 100$, $2^\ell \ll 2^{j_1}$, $2^h \gtrsim2^{j_1}$.}  
This corresponds to the symbol
\begin{align*}
\mathfrak{m}(\xi,\eta,\zeta,\theta) = \mathfrak{m}_{\ell,j_1,m}^{\mathcal{H}^{3,(3)}}(\xi,\eta,\zeta,\theta) 
  & = \varphi_{\ell}(\xi - \sqrt{3}) \varphi_{j_1}(\xi-\eta - \sqrt{3}) 
  \varphi_{<\ell-100}(\xi-\eta -\zeta - \theta -\sqrt{3}) \varphi_h (\theta)\tau_m(s).
\end{align*}
In this case, it is possible to integrate by parts in $\theta$, and argue exactly as in Case 4.2.1. 
The only thing to check is that Lemma~\ref{lemmamultilin2} applies; 
therefore, we need to bound the Fourier transform of $\mathfrak{m}$. A computation reveals that
$$
\left| \widehat{\mathfrak{m}}(x,y,z,t) \right| \lesssim 2^{2\ell + j_1 + h} F(2^\ell z) F(2^\ell(x+z)) F(2^{j_1}(y-z)) F(2^h(t-z)),
$$
so that
$$
\left| \int \widehat{\mathfrak{m}_1} (x,y,z,t) \,dt \right| \lesssim  2^{2\ell + j_1 } F(2^\ell z) F(2^\ell(x+z)) F(2^{j_1}(y-z)).
$$
By Lemma~\ref{lemmamultilin2}, the norm of $V_{\mathfrak{m}}$ between Lebesgue spaces at the H\"older scaling is $\lesssim 1$.



\bigskip
\section{Pointwise estimates for the ``singular'' part}\label{secLinfS}

The aim of this section is to prove the following proposition:

\begin{prop}\label{propLinfS}
Under the assumptions of Theorem \ref{maintheo}, consider $u$
solution of \eqref{KG} 
and assume the a priori bounds \eqref{propbootfas} on the renormalized profile $\wt{f}$.
Let $\mathcal{C}^{S} = \mathcal{C}^{S1} + \mathcal{C}^{S2}$ be the cubic singular terms 
defined in \eqref{CubicS}-\eqref{CubicS12} with \eqref{formulacubiccoeff}.

Denote 
\begin{align}\label{notfxi}
X(\xi) = (X_+(\xi), X_-(\xi)) := (\wt{f}(\xi), \wt{f}(-\xi)), \qquad \xi >0.
\end{align}
Then

\begin{itemize}
\item There exists a real valued Hamiltonian $H = H(X_+,X_-)$ 
such that, for $\xi>0$,
\begin{align}\label{LinfSasy}
\begin{split}
\mathcal{C}^{S}(f,f,f)(t,\xi) & = - \frac{i}{t} \, \frac{d}{d\bar{X_+}} H + R_+(t,\xi),
\\
\mathcal{C}^{S}(f,f,f)(t,-\xi) & = - \frac{i}{t} \, \frac{d}{d\bar{X_-}} H + R_-(t,\xi),
\end{split}
\end{align}
for all $t \geq 1$; see \eqref{asyHam0} for the exact formula for $H$.

\medskip
\item There exists $\delta_0 >0$ such that the remainders satisfy, for all $m=0,1,\dots$,
\begin{align}\label{LinfSrem}
{\Big\| \jxi^{3/2} \int_0^t R_\eps (s,\xi)\, \tau_m(s) ds \Big\|}_{L^\infty_\xi}
  \lesssim \e_1^3 2^{-\delta_0m}, \qquad \eps\in\{+,-\}.
\end{align}


\medskip
\item 
There exists an asymptotic profile $W^\infty = (W^\infty_+, W^\infty_-) \in \big(\jxi^{-3/2}L^\infty_\xi\big)^2$ 
such that, for all $t\geq 0$
\begin{align}\label{LinfSasyf}
\begin{split}
& \jxi^{3/2} \Big| X(t,\xi)
  \\ & - S^{-1}(\xi) \exp\Big( -(5 i/12) \,(\log t) \, 
  \mathrm{diag}\big( \ell_{+ \infty}^2\big| W^{\infty}_+ \big|^2, \ell_{- \infty}^2\big| W^{\infty}_- \big|^2 \big) \Big) 
  \,W^\infty(\xi)  \Big| 
  \\ & \qquad \qquad \qquad \lesssim \e_1^3 \jt^{-\delta_0},
\end{split}
\end{align}
where $S(\xi)$ is the scattering matrix associated to the potential $V$ defined in \eqref{scatmat}.

\end{itemize}
\end{prop}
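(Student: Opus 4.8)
The plan is to argue in three stages, matching the structure of \ref{secha} (formal stationary phase), \ref{secModScatt} (the Hamiltonian and the asymptotic ODE) and \ref{secra} (rigorous justification), and to invoke \ref{SsecLinfR} for the remaining terms in the equation for $\widetilde f$.

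\emph{Stage 1: isolating the resonance and defining $R_\pm$.} Starting from the explicit symbols \eqref{formulacubiccoeff}, I would first note that the cubic phase $\Phi_{\kappa_1\kappa_2\kappa_3}$ in \eqref{CubicS} admits a space--time resonance only when $\{\kappa_1,\kappa_2,\kappa_3\}=\{+,+,-\}$, and that for such signs the resonant set is $\eta=\zeta=0$ (resp.\ $\eta=\zeta=\theta=0$ in the $\mathrm{p.v.}$ piece), for every $\xi$; see \ref{introQS}. Every sign combination with $\{\kappa_1,\kappa_2,\kappa_3\}\neq\{+,+,-\}$, together with the portion of the resonant combination supported in $|\eta|+|\zeta|\gtrsim 2^{-m/2+\delta}$, is absorbed into $R_\pm$: there either the phase is elliptic (integrate by parts in time) or it is non-stationary in $(\eta,\zeta)$ by the lower bounds of Lemma \ref{lemmaetazeta}, and the resulting multilinear expressions are estimated by the H\"older-type bound \eqref{lemCS1} of Lemma \ref{lemCS} together with the pointwise decay \eqref{aprioridecay}. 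Since each factor $e^{is\langle D\rangle}\mathcal W^\ast f$ decays like $\e_1 2^{-m/2}$, one gets an instantaneous bound $O(\e_1^3 2^{-(1+\delta_0)m})$ on the non-resonant part, the losses coming from integration by parts against $\partial_\xi\widetilde f$ (controlled in $L^2$ by \eqref{aprioriw}) being only a small power of $2^m$; near $\xi=\sqrt 3$ the weight $\langle\xi\rangle^{3/2}$ is harmless and the degenerate norm \eqref{wnorm} costs at most $2^{(\alpha+\beta\gamma)m}$. Integrating over a dyadic time block yields \eqref{LinfSrem}.

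\emph{Stage 2: stationary phase and Hamiltonian form.} On the resonant region $|\eta|+|\zeta|\lesssim 2^{-m/2+\delta}$ I would apply two--dimensional stationary phase at $\eta=\zeta=0$ via the expansions \eqref{CubicPsiexp}. The Hessian of $\Psi$ at the critical point is purely off-diagonal with entries $\langle\xi\rangle^{-3}$, hence non-degenerate with determinant $-\langle\xi\rangle^{-6}$ and signature $0$; the leading contribution of $\mathcal C^{S1}_{+-+}(f,f,f)(t,\xi)$ is therefore $\tfrac{2\pi}{t}\langle\xi\rangle^{3}\,\mathfrak c^{S1}_{+-+}(\xi,0,0)\,|\widetilde f(\xi)|^{2}\widetilde f(\xi)$, the factor $e^{i\pi\sigma/4}$ being trivial. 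The first correction carries two extra powers of $|\eta|+|\zeta|$, so after integration it is $O(t^{-1-\delta_0})$ once $\partial_\xi\widetilde f$ is measured in $L^2$ (with its degenerate weight) and $\widetilde f$ in $\langle\xi\rangle^{-3/2}L^\infty$, and it joins $R_\pm$; the $\mathrm{p.v.}$ term $\mathcal C^{S2}_{+-+}$ is handled the same way after commuting $\langle\xi\rangle\partial_\xi$ through the singular kernel via \eqref{niceidentity}--\eqref{niceidentity2}. Summing over $\lambda,\mu,\nu,\lambda',\mu',\nu',\iota_2$ and over the three positions of the $-$ sign, and evaluating the coefficients \eqref{Acoeff} at the critical value $\Sigma_0=\pm\xi$, gives $\mathcal C^{S}(f,f,f)(t,\pm\xi)=\tfrac1t N_\pm(\xi,X_+(\xi),X_-(\xi))+R_\pm$, with $N_\pm$ a cubic polynomial in $X_\pm,\overline{X_\pm}$ whose coefficients are built from $T(\xi),R_\pm(\xi)$. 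The remaining task is algebraic: using $|T|^2+|R_\pm|^2=1$ and $T\overline{R_-}+R_+\overline T=0$ (unitarity of $S(\xi)$) one checks that $N_\pm=-\,i\,\dfrac{d}{d\overline{X_\pm}}H$ for a real-valued $H$, and that $H$ takes, in the variables $S(\xi)X$, the diagonal quartic form \eqref{asyHam0}; this is the computation of \ref{secModScatt} and establishes \eqref{LinfSasy}.

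\emph{Stage 3: integrating the ODE.} By Lemma \ref{lemdtf} and the estimates of \ref{SsecLinfR}, the profile satisfies $\partial_t X_\pm=\tfrac1t N_\pm+\mathcal R_\pm$ with $\int_1^\infty\|\langle\xi\rangle^{3/2}\mathcal R_\pm(s)\|_{L^\infty}\,ds\lesssim\e_1^3$ (sum \eqref{LinfSrem} over $m$). Setting $Y:=S(\xi)X$, the Hamiltonian structure forces the leading system to decouple into the scalar equations $\partial_t Y_\pm=-\dfrac{5i\,\ell_{\pm\infty}^2}{12\,t}\,|Y_\pm|^2 Y_\pm+\widetilde{\mathcal R}_\pm$; then $\tfrac{d}{dt}|Y_\pm|^2=O(|\widetilde{\mathcal R}_\pm|)$, so $|Y_\pm(t)|^2\to|W^\infty_\pm|^2$ with error $O(\e_1^3\langle t\rangle^{-\delta_0})$, and writing $Y_\pm=e^{i\theta_\pm}|Y_\pm|$ gives $\dot\theta_\pm=-\tfrac{5\ell_{\pm\infty}^2}{12t}|W^\infty_\pm|^2+O(t^{-1-\delta_0})$, whence $\theta_\pm(t)=-\tfrac{5\ell_{\pm\infty}^2}{12}|W^\infty_\pm|^2\log t+\theta^\infty_\pm+O(t^{-\delta_0})$. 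Defining $W^\infty_\pm:=e^{i\theta^\infty_\pm}|W^\infty_\pm|$ and undoing $Y=S(\xi)X$, i.e.\ $X=S^{-1}(\xi)Y$, yields \eqref{LinfSasyf}; the weight $\langle\xi\rangle^{3/2}$ persists because every bound above is uniform in $\xi$ in that weighted sense, starting from $\|\langle\xi\rangle^{3/2}\widetilde f\|_{L^\infty}\le 2\e_1$. The crux of the whole argument is the algebra in Stage 2: a large number of sign- and scattering-coefficient-dependent terms must be summed and recognized as the gradient of a real Hamiltonian, which is exactly what makes $Y=S(\xi)X$ diagonalize the leading system and produces the real coefficient $5/12$ preserving $|Y_\pm|$; without this cancellation the logarithmic correction would not close. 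A secondary technical point is that the resonant frequencies $\eta=\zeta=0$ may occur at $\xi$ near $\sqrt3$, where only the degenerate norm \eqref{wnorm} controls $\partial_\xi\widetilde f$, so the stationary-phase error analysis must be organized so as never to require more of $\widetilde f$ near $\sqrt3$ than that norm provides.
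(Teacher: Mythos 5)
Your overall architecture — stationary phase to isolate the fully resonant $\{+,+,-\}$ interaction, the algebraic reduction to a real Hamiltonian diagonalized by $S(\xi)X$, and the integration of the resulting ODE with a $\log t$ phase — is exactly the paper's, and Stages 2 and 3 are essentially correct as sketched. The $\Sigma_0$ value you quote is slightly off (the paper gets $\Sigma_0 = 2\iota_2\nu\lambda\xi$ for $\kappa_1=-$ and $\Sigma_0=0$ otherwise, see \eqref{Sigma_0}, not $\pm\xi$), but this does not change the structure of the computation.

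The genuine gap is in Stage 1, i.e. the proof of \eqref{LinfSrem}. You propose to bound $\|\langle\xi\rangle^{3/2}R_\pm\|_{L^\infty_\xi}$ directly by H\"older/multilinear estimates on the non-stationary pieces plus the linear decay \eqref{aprioridecay}, and to view the degenerate-norm losses as a ``small power of $2^m$''. This does not close. The H\"older estimates of Lemma \ref{lemCS} give $L^2_\xi$ (or physical $L^p$) control, not $L^\infty_\xi$; and the naive passage to $L^\infty_\xi$ via Sobolev embedding in $\xi$ costs a half-power of $\|\jxi\partial_\xi\widetilde f\|$, which grows like $2^{(\alpha+\beta\gamma)m}$. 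Keeping track of powers, a direct instantaneous estimate on a dyadic time block gives something like $\e_1^3\,2^{-m/4}\,2^{(\alpha+\beta\gamma)m}$ \emph{after} time integration — this decays only because of the parameter inequality $\alpha+\beta\gamma<1/4$ from \eqref{wnormparam0}, and extracting this requires the interpolation inequality \eqref{rainter}, $\|\widetilde f\|_{L^\infty}\lesssim\|\widetilde f\|_{L^2}^{1/2}\|\partial_\xi\widetilde f\|_{L^2}^{1/2}$, applied to the \emph{time-integrated} remainder, paired with the un-time-integrated $L^2$ decay \eqref{ra23} and the weighted $L^2$ bound \eqref{ra25} (the latter drawing on Proposition~\ref{propCubother} for the non-$\{+,+,-\}$ signs; see \eqref{raothest1}--\eqref{raothest2}). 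Your sketch never invokes this interpolation and treats the $L^\infty_\xi$ bound as if it followed from pointwise decay alone; as written it would not survive the parameter accounting near $\xi=\pm\sqrt3$, which is precisely the difficulty the remark after \eqref{LinfSasyf0} flags. Your closing paragraph gestures at this issue but does not supply the mechanism that resolves it.
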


\medskip
Here are a few remarks about the statement above and its consequences:

\begin{itemize}

\medskip
\item 
From the inequality \eqref{asyZbound} appearing in the proof of 
\eqref{LinfSasyf} we deduce in particular the bound ${\| \jxi^{3/2}\wt{f} \|}_{L^\infty} \leq \e_1$;
this is one of the bounds needed for the main bootstrap Proposition \ref{propbootf}.

\medskip
\item The behavior at negative times can be obtained by using time-reversal symmetry.
In the context of our distorted Fourier space asymptotics this ends up involving 
a conjugation by the scattering matrix $S$. 
We refer the reader to the explicit calculation in \cite[Remark 1.2]{ChPu}.
When applied to \eqref{LinfSasyf} this conjugation will simplify the formula and give 
asymptotics akin to the flat ones that do not involve $S$;
see \eqref{AsyFlat}.

\medskip
\item It is of course possible to derive asymptotics in physical space from the asymptotics in Fourier space,
both in $L^\infty_x$ and $L^2_x$.
This can be done by putting together the linear asymptotics \eqref{remlinear} with \eqref{LinfSasyf},
passing from $f$ to $g$ using \eqref{Renof} (note that $T(g,g)$ is a fast decaying remainder in $L^\infty_x$)
and eventually passing from $g$ to $u$ using \eqref{mtprofile}.
We refer the reader to \cite{IoPunote} for the details of such an argument in the context of water waves
and to \cite{HN} for the Nonlinear Schr\"odinger equation.

\end{itemize}

 

\medskip
Let us show first how to obtain the final asymptotic formula \eqref{LinfSasyf}
given \eqref{LinfSasy} and \eqref{LinfSrem}.
The argument is fairly standard as it appears in similar forms 
in \cite{HN,KP,IoPu1,GPR2}. We refer the reader to these papers for more detailed presentations.

\begin{proof}[Proof of \eqref{LinfSasyf}]
Recall that the evolution of $\wt{f}$ is given by \eqref{eqdtf}. 
In Subsection \ref{SsecLinfR}, we show that all the terms on the right-hand side of \eqref{eqdtf},
with the exception of the cubic terms $\mathcal{C}^S$, 
satisfy bounds of the same type as the remainders in \eqref{LinfSrem};
see in particular Propositions \ref{proQRLinftyxi} and \ref{proCSLinftyxi}.
Then, with the notation \eqref{notfxi}, the asymptotitcs  \eqref{LinfSasy}-\eqref{LinfSrem} imply, for $\xi>0$,
\begin{align}\label{LinfSasy'}
\begin{split}
\partial_t X_+ & = - \frac{i}{t} \, \frac{\partial}{\partial\bar{X_+}} H + R_1(t,\xi),
\\
\partial_t X_- & = - \frac{i}{t} \, \frac{\partial}{\partial\bar{X_-}} H + R_2(t,\xi),
\end{split}
\end{align}
with $R_i$ satisfying bounds as in \eqref{LinfSrem}.
In the rest of the proof of \eqref{LinfSasyf} 
we will denote just by $R=R(t,\xi)$ any generic remainder terms satisfying \eqref{LinfSrem}.
This should not be confused with $R_+$ and $R_-$, the transmission coefficients.
Note that such a bound implies that $R(t)$ has a well defined anti-derivative which is uniformly
bounded in time in $\jxi^{-3/2}L^\infty_\xi$.


\eqref{LinfSasy'} with \eqref{asyHam0} can be written as
\begin{align*}
\begin{split}
\partial_t X_+ = & - \frac{5i}{12 \, t} \Big[ \ell_{+ \infty}^2 \big| (S X)_1 \big|^2 (S X)_1 \bar{T}(\xi)
	+  \ell_{- \infty}^2 \big| (S X)_2 \big|^2 (S X)_2 \bar{R_-}(\xi) \Big] + R(t,\xi),
\\
\partial_t X_- = & - \frac{5i}{12 \, t} \Big[ \ell_{+ \infty}^2 \big| (S X)_1 \big|^2 (S X)_1 \bar{R_+}(\xi)
 	 +  \ell_{- \infty}^2 \big| (S X)_2 \big|^2 (S X)_2 \bar{T}(\xi) \Big] + R(t,\xi),
\end{split}
\end{align*}
where $S$ is the scattering matrix \eqref{scatmat}.
If we denote $(Z_+(\xi),Z_-(\xi))^t := S(\xi) (X_+(\xi), X_-(\xi))^t$, and use \eqref{TR},
this simplifies to give
\begin{align}\label{asyHamevS}
\begin{split}
\partial_t Z_{\pm} = & - \frac{5i}{12\, t} \ell_{\pm \infty}^2 |Z_{\pm}|^2 Z_{\pm} + R(t,\xi).
\end{split}
\end{align}

Defining the modified profile $W=(W_+(\xi),W_-(\xi))^t$ by
\begin{align}\label{asymodprof}
\begin{split}
& W_{\pm}(t,\xi) := 
  \exp\Big( \frac{5i}{12} \ell_{\pm \infty}^2\int_0^t |Z_{\pm}(s,\xi)|^2 \, \frac{ds}{s+1}  \Big) \, Z_\pm(t,\xi),
\\
& |W_\pm(t,\xi)|=|Z_\pm(t,\xi)|,
\end{split}
\end{align}
we see that 
\begin{align}\label{asydtW}
\partial_t W_{\pm}(t,\xi) = \exp\Big( \frac{5i}{12} \ell_{\pm \infty}^2 
  \int_0^t |Z_{\pm}|^2\, \frac{ds}{s+1}  \Big) \, R(t,\xi).
\end{align}

In particular this implies that 
$\partial_t |W_{\pm}|^2 = 2\mathrm{Re}\big(R(t,\xi) \bar{Z_{\pm}}\big)$ and therefore,
since $S$ is unitary,
\begin{align}\label{asyZbound}
\begin{split}
\jxi^{3}\Big( |\wt{f}(t,\xi)|^2 - |\wt{f}(0,\xi)|^2 \Big) 
  \lesssim \Big| \mathrm{Re} \int_0^t \jxi^{3/2} R(s,\xi) \cdot \jxi^{3/2}\bar{Z_{\pm}}(s,\xi) \, ds \Big| 
  \lesssim \e_1^4.
\end{split}
\end{align}
For this last inequality we have used the bounds \eqref{LinfSrem} (which in particular imply that 
$R$ has a well defined anti-derivative), integration by parts in $s$, the equation \eqref{asyHamevS}, 
and the a priori assumption $|\jxi^{3/2}Z_\pm(t,\xi)| \lesssim \e_1$.

Similarly, from \eqref{asydtW}, using that the remainders satisfy estimates like \eqref{LinfSrem},
integrating by parts in $s$ and using that  the time derivative of the exponential factor is $O(\e_1^2 \jt^{-1})$, 
we can see that, for all $0<t_1<t_2$,
\begin{align*}
\jxi^{3/2} \big| W(t_1,\xi) - W(t_2,\xi) \big| \lesssim 
	\e_1^3 \, t_1^{-\delta_0}.
\end{align*}
By letting $W_\eps^\infty(\xi) := \lim_{t \rightarrow \infty} W_\eps(t,\xi)$ in the space $\jxi^{-3/2}L^\infty$, it follows that
\begin{align}\label{asymodprof2}
\jxi^{3/2} \Big| |Z_\pm(t,\xi)| -|W_\pm^\infty(\xi)| \Big| \lesssim \e_1^3 \, \jt^{-\delta_0}.
\end{align}
The conclusion \eqref{LinfSasyf} follows from \eqref{asymodprof} and \eqref{asymodprof2},
up to possibly redefining the asympotic profile $W_\pm^\infty$ by a constant phase.
\end{proof}

\medskip
The rest of this section is organized as follows.
In Subsection \ref{secha} we provide asymptotic formulas for oscillatory integrals like 
those defining $\mathcal{C}^{S1,2}$. These formulas are first obtained at a formal level
by applying heuristic stationary phase type estimates.
In Subsection \ref{secModScatt} we use these formulas 
to derive the leading order of \eqref{LinfSasy} with the proper Hamiltonian structure.
The precise bounds needed to rigorously justify these formulas, that is,
the error estimates \eqref{LinfSrem}, are proved in Subsection \ref{secra}.

\medskip
\subsection{Heuristic asymptotics}\label{secha}
Our first aim is to compute the asymptotics as $t \to \infty$ for the main model operators
\begin{align}
\label{Idelta}
& I_\delta(t) := \iiint e^{it \Phi_{\kappa_1 \kappa_2 \kappa_3}(\xi,\eta,\eta',\sigma')} F(\xi,\eta,\eta',\sigma') 
  \delta(p_*)\,d\eta\,d\eta' \,d\sigma', 
\\
\label{Ipv}
& I_{\operatorname{p.v.}}(t) := \iiint e^{it \Phi_{\kappa_1 \kappa_2 \kappa_3}(\xi,\eta,\eta',\sigma')} 
  F(\xi,\eta,\eta',\sigma') \frac{\widehat{\phi}(p_*)}{p_*}\,d\eta\,d\eta' \,d\sigma',
\end{align}
where
\begin{align}\label{asypstar}
p_* = \lambda_* \xi - \mu_* \eta - \mu'_* \eta' - \nu'_* \sigma',
\end{align}
and 
\begin{align}\label{asyPhi}
\Phi_{\kappa_1 \kappa_2 \kappa_3}(\xi,\eta,\eta',\lambda_* \xi - \mu_* \eta - \mu'_* \eta'-p_*) 
  = \langle \xi \rangle -\kappa_1 \langle \eta \rangle - \kappa_2 \langle \eta' \rangle 
  - \kappa_3 \langle \lambda_* \xi - \mu_* \eta - \mu'_* \eta'-p_* \rangle.
\end{align}
Note how the operators $\mathcal{C}^{S1,2}$ are of the form \eqref{Idelta}-\eqref{Ipv} above.

We begin by examining the phase, which we sometimes denote just by $\Phi = \Phi(\xi,\eta,\eta',\s')$, 
keeping the dependence on the various parameters $ \kappa_1,\kappa_2, \kappa_3, \lambda_*,\mu_*,\mu'_*,\nu'_*$ implicit.

Observing that $\nabla_{\eta,\eta'} \Phi = 0$ implies that $\eta' = \kappa_1 \kappa_2 \mu'_* \mu_* \eta$,
a small computation shows that the stationary point with respect to $\eta$ and $\eta'$ is given by
\begin{align}\label{asystatpts}
\nabla_{\eta,\eta'} \Phi = 0 \qquad \Longleftrightarrow \qquad
\left\{
\begin{array}{l}
 \eta = \eta_S = \kappa_3 \mu_*(\kappa_1 + \kappa_3 + \kappa_1 \kappa_2 \kappa_3)^{-1} (\lambda_* \xi - p_*).
\\
\\
\eta' = \eta'_S = \kappa_1 \kappa_2 \kappa_3 \mu_*'(\kappa_1 + \kappa_3 + \kappa_1 \kappa_2 \kappa_3)^{-1} 
  (\lambda_* \xi - p_*),
\\
\\
\sigma' = \sigma'_S = \nu'_*  \kappa_1 (\kappa_1 + \kappa_3 + \kappa_1 \kappa_2 \kappa_3)^{-1} (\lambda_* \xi - p_*).
\end{array}
\right.
\end{align}
Furthermore, at the stationary point,
\begin{align}\label{asyHess}
\operatorname{Hess}_{\eta,\eta'} \Phi = 
\left(
\begin{array}{ll} - \kappa_1 \tau''(\eta_S) - \kappa_3 \tau''(\sigma'_S) \quad & - \kappa_3 \mu_* \mu'_* \tau''(\sigma'_S) 
\\
\\ 
- \kappa_3 \mu_* \mu'_*  \tau''(\sigma'_S) & -\kappa_2 \tau''(\eta'_S) - \kappa_3  \tau''(\sigma'_S)
\end{array}
\right), \qquad \tau(x) := \langle x \rangle.
\end{align}

\medskip 
\subsection*{Asymptotics for $I_\delta$}
In this case $p_*=0$, and the stationary point is
\begin{align}\label{asystatptsd}
\begin{array}{l}
 \eta = \eta_{S0} = \kappa_3 \mu_*(\kappa_1 + \kappa_3 + \kappa_1 \kappa_2 \kappa_3)^{-1} \lambda_* \xi,  
 \\
\eta' = \eta'_{S0} = \kappa_1 \kappa_2 \kappa_3 \mu_*'(\kappa_1 + \kappa_3 + \kappa_1 \kappa_2 \kappa_3)^{-1} \lambda_* \xi,
\\
\sigma' = \sigma'_{S0} = \nu'_*  \kappa_1 (\kappa_1 + \kappa_3 + \kappa_1 \kappa_2 \kappa_3)^{-1} \lambda_* \xi.
\end{array}
\end{align}
We distinguish two cases:

\begin{itemize}

\smallskip
\item If $|\kappa_1 + \kappa_3 + \kappa_1 \kappa_2 \kappa_3| = 1$, then $|\eta_{S0}| = |\eta'_{S0}| = |\sigma'_{S0} | = |\xi|$ and
\begin{align}\label{asystatptsd'}
\begin{split}
& \Phi(\xi,\eta_{S0},\eta'_{S0},\sigma'_{S0}) = (1 - \kappa_1 - \kappa_2 - \kappa_3) \langle \xi \rangle,
\\
& \operatorname{det} \operatorname{Hess} \Phi(\xi,\eta_{S0},\eta'_{S0},\sigma'_{S0})
  = [(\kappa_1 + \kappa_3)(\kappa_2 + \kappa_3) - 1 ] \tau''(\xi)^2 = - \tau''(\xi)^2,
\\
& \operatorname{sign} \operatorname{Hess} \Phi(\xi,\eta_{S0},\eta'_{S0},\sigma'_{S0}) = 0,
\end{split}
\end{align}
where we denote $\operatorname{sign}M$ for the number of positive minus the number of negative eigenvalues
of a matrix $M$.

\smallskip
\item If $|\kappa_1 + \kappa_3 + \kappa_1 \kappa_2 \kappa_3| = 3$, 
then $|\eta_{S0}| = |\eta'_{S0}| = |\sigma'_{S0} | = |\xi|/3$ and
\begin{align*}
& \Phi(\xi,\eta_{S0},\eta'_{S0},\sigma'_{S0}) = \langle \xi \rangle -  
  ( \kappa_1 + \kappa_2 + \kappa_3)\langle \xi /3 \rangle, 
  \\
& \operatorname{det} \operatorname{Hess} \Phi(\xi,\eta_{S0},\eta'_{S0},\sigma'_{S0}) 
  = [(\kappa_1 + \kappa_3)(\kappa_2 + \kappa_3) - 1 ] \tau''(\xi/3)^2, 
  \\
& \operatorname{Tr} \operatorname{Hess} \Phi(\xi,\eta_{S0},\eta'_{S0},\sigma'_{S0}) 
  =- (\kappa_1 + \kappa_2 + 2 \kappa_3) \tau''(\xi/3).
\end{align*}
\end{itemize}

In both cases, by the stationary phase lemma,
$$
I_\delta(t) \overset{t \to + \infty}{\sim} \frac{2\pi}{t}
  \frac{e^{i\frac{\pi}{4} \operatorname{sign} \operatorname{Hess} \Phi}}{| \operatorname{det} \operatorname{Hess} \Phi|^{1/2}} 
  e^{it\Phi(\xi,\eta_{S0},\eta'_{S0},\sigma'_{S0})} F(\xi,\eta_{S0},\eta'_{S0},\sigma'_{S0}).
$$
If $\Phi = 0$ (hence $\{ \kappa_1,\kappa_2,\kappa_3 \} = \{+,+,-\}$), 
\begin{align}\label{asydelta}
I_\delta(t) \overset{t \to + \infty}{\sim} \frac{2\pi}{t} \frac{1}{\tau''(\xi)} F(\xi,\eta_{S0},\eta'_{S0},\sigma'_{S0}).
\end{align}

\medskip
\subsection*{Asymptotics for $I_{\pv}$}
Here $p_* \neq 0$ and, to leading order in $p_*$ small, 
$\Phi$ and  $\operatorname{det} \operatorname{Hess} \Phi(\xi,\eta_S,\eta'_S,\sigma'_S)$
agree with their value at $p_*=0$ computed above. We also compute the next order in $p_*$ of $\Phi$:
\begin{itemize}
\item 
If $|\kappa_1 + \kappa_3 + \kappa_1 \kappa_2 \kappa_3| = 1$, then
$$
\Phi(\xi,\eta_S,\eta'_S,\sigma'_S) =  \langle \xi \rangle - (\kappa_1 + \kappa_2 + \kappa_3) \left[ \langle \xi \rangle - \tau'(\lambda_* \xi) p_* \right] + O(p_*^2).
$$
\item 
If $|\kappa_1 + \kappa_3 + \kappa_1 \kappa_2 \kappa_3| = 3$, then
$$
\Phi(\xi,\eta_S,\eta'_S,\sigma'_S) = \langle \xi \rangle - (\kappa_1 + \kappa_2 + \kappa_3) \left[ \langle \xi /3\rangle - \frac{1}{3}\tau'\left( \frac{\lambda_* \xi}{3} \right) p_*\right] + O(p_*^2).
$$
\end{itemize}
In order to give asymptotics, we focus on the former case ($|\kappa_1 + \kappa_3 + \kappa_1 \kappa_2 \kappa_3| = 1$) 
since it is the most relevant one. Applying the stationary phase lemma for $p_*$ fixed,
$$
I_{\operatorname{p.v.}}(t) \overset{t \to + \infty}{\sim} \frac{2\pi}{t} \frac{1}{\tau''(\xi)} 
e^{it \langle \xi \rangle (1 - \kappa_1 - \kappa_2 - \kappa_3)} F(\xi,\eta_{S0},\eta'_{S0},\sigma'_{S0})
\int  e^{it \tau'(\lambda_* \xi) (\kappa_1 + \kappa_2 + \kappa_3) p_*} \frac{\widehat{\phi}(p_*)}{p_*}\,dp_*.
$$
Since $\widehat{\mathcal{F}} \frac{\widehat{\phi}(x)}{x} = \frac{1}{\sqrt{2\pi}} \whF(1/x) \ast \phi =
- \frac{i}{2} \sign \ast \phi$, see \eqref{Fsign},
$$
I_{\operatorname{p.v.}}(t) \overset{t \to + \infty}{\sim} i \frac{\pi \sqrt{2\pi}}{t} \frac{1}{\tau''(\xi)} 
e^{it \langle \xi \rangle (1 - \kappa_1 - \kappa_2 - \kappa_3)} F(\xi,\eta_{S0},\eta'_{S0},\sigma'_{S0}) \sign( \tau'(\lambda_* \xi) (\kappa_1 + \kappa_2 + \kappa_3)).
$$
If $\Phi=0$,
\begin{align}\label{asypv}
I_{\operatorname{p.v.}}(t) \overset{t \to + \infty}{\sim} i \frac{\pi \sqrt{2\pi}}{t} \frac{1}{\tau''(\xi)}  F(\xi,\eta_{S0},\eta'_{S0},\sigma'_{S0}) \sign(\lambda_* \xi ).
\end{align}

\medskip
\subsection*{Asymptotics for $\widetilde{f}$}
We apply the above asymptotics to the situation that interests us to derive (formally, for the moment) 
asymptotics for $\partial_t\wt{f}$, see \eqref{eqdtf}.

The only relevant terms in the expansion are \eqref{asydelta} and \eqref{asypv} for which $\Phi = 0$, 
and that correspond to $\{ \kappa_1,\kappa_2, \kappa_3 \} = \{+,+,-\}$. 
Comparing the definition of $p_\ast$ in \eqref{asypstar} with \eqref{defpstar}, 
we obtain (recall $\kappa_1=\iota_1$)
$$
\lambda_* = \iota_2 \lambda \nu, \qquad \mu_* = \kappa_1 \iota_2 \mu \nu, \qquad \mu'_* = \kappa_2 \iota_2 \lambda' \mu', 
  \qquad \nu'_* = \kappa_3 \iota_2 \lambda' \nu',
$$
and find that
\begin{align*}
& \eta_{S0} = \lambda \mu \xi, \qquad \eta'_{S0} = \lambda \nu \lambda' \mu' \xi,
  \qquad \sigma'_{S0} = \lambda \nu \lambda' \nu' \xi. 
\end{align*}
Comparing with the definition of $\Sigma_0$ in \eqref{variousdef}
we get that the value of $\Sigma_0$ at these points is
\begin{align}\label{Sigma_0}
\Sigma_0 =
\left\{ \begin{array}{ll}
2 \iota_2 \nu \lambda \xi & \,\, \mbox{if \,\, $(\kappa_1,\kappa_2,\kappa_3) = (-,+,+)$,}
\\
\\
0 & \,\, \mbox{if \,\, $(\kappa_1,\kappa_2,\kappa_3) = (+,-,+)$ or $(+,+,-)$.} 
\end{array} \right.
\end{align}

Recall the formula \eqref{formulacubiccoeff} for the leading order symbol in the cubic terms appearing in 
\eqref{CubicS}-\eqref{CubicS12}.
We can compute their asymptotics as $t\rightarrow \infty$
using \eqref{asydelta} and \eqref{asypv}, obtaining
\begin{align*} 
& \mathcal{C}^S_{\kappa_1 \kappa_2 \kappa_3}(f,f,f)(\xi)
\\
& \quad \sim - \frac{1}{32 i t} \langle \xi \rangle^3 \sum_{\substack{\epsilon,\epsilon',\iota_2 
\\ \lambda,\mu,\nu,\lambda',\mu',\nu'}}\frac{1}{ \pi^2 ({(1-\kappa_1)\jxi - \iota_2\langle\Sigma_0\rangle}) 
\langle \xi \rangle^3 \langle \Sigma_0 \rangle  } \big(A^{\epsilon,\epsilon'}_{\nu,\lambda'}(\Sigma_0)\big)_{\iota_2}
\\
&  \quad \quad \times \mathbf{a}_{\lambda,-}^\epsilon(\xi) \mathbf{a}_{\mu,\kappa_1}^\epsilon(\lambda \mu \xi) 
\mathbf{a}_{\mu',\kappa_2}^{\epsilon'}(\lambda \nu \lambda' \mu' \xi) 
\mathbf{a}_{\nu',\kappa_3}^{\epsilon'}(\lambda \nu \lambda' \nu' \xi) 
  \wt{f}_{\kappa_1}(\lambda \mu \xi)  \wt{f}_{\kappa_2}(\lambda \nu \lambda' \mu' \xi) 
  \wt{f}_{\kappa_3}(\lambda \nu \lambda' \nu' \xi) 
\\ 
& \quad \quad \times \ell_{\epsilon \infty} \ell_{\epsilon' \infty} \left[ \pi^2 (1+\eps\eps'\nu \lambda' ) \iota_2 + \pi^2(\eps'\lambda'+\eps\nu) 
  \operatorname{sign}(\iota_2 \lambda \nu \xi)\right].
\end{align*}
Changing $\lambda'$ to $\lambda \nu \lambda'$, this becomes
\begin{align}\label{CShafinal} 
\begin{split}
\mathcal{C}^S_{\kappa_1 \kappa_2 \kappa_3}(f,f,f)(\xi)
& \sim \frac{i}{32 t} \sum_{\substack{\epsilon,\epsilon',\iota_2 
\\ 
\lambda,\mu,\nu, \lambda',\mu',\nu'}}\frac{\iota_2}{ ((1-\kappa_1)\jxi - \iota_2\langle\Sigma_0\rangle) 
 \langle \Sigma_0 \rangle} \big(A^{\epsilon,\epsilon'}_{\nu,\lambda\nu\lambda'}(\Sigma_0)\big)_{\iota_2}
\\
& \qquad \times \mathbf{a}_{\lambda,-}^\epsilon(\xi) \mathbf{a}_{\mu,\kappa_1}^\epsilon(\lambda \mu \xi)  
\mathbf{a}_{\mu',\kappa_2}^{\epsilon'}( \lambda' \mu' \xi) \mathbf{a}_{\nu',\kappa_3}^{\epsilon'}(\lambda' \nu' \xi) \wt{f}_{\kappa_1}(\lambda \mu \xi)  \wt{f}_{\kappa_2}( \lambda' \mu' \xi)  \wt{f}_{\kappa_3}( \lambda' \nu' \xi)
\\
& \qquad \times \ell_{\epsilon \infty} \ell_{\epsilon' \infty} \big[1+\eps\eps'\lambda \lambda' + (\eps'\lambda'+ \eps\l) \operatorname{sign}(\xi)\big].
\end{split}
\end{align}

\def\k{\kappa}

\medskip
\subsection{Structure of modified scattering}\label{secModScatt} 
In this subsection we analyze the leading orders in the (resonant) asymptotic terms.
In view of \eqref{CShafinal}, we are interested in the structure of the term
\begin{align}\label{asy100}
\begin{split}
& N_{\k_1\k_2\k_3}(f,f,f)(t,\xi) := 
  \frac{1}{\langle \Sigma_0 \rangle} 
  \sum_{\substack{\epsilon,\epsilon',\iota_2 
\\ \lambda,\mu,\nu,\lambda',\mu',\nu'}}
  \frac{\iota_2}{(1-\kappa_1)\jxi - \iota_2 \langle\Sigma_0\rangle} 
  \big(A^{\epsilon,\epsilon'}_{\nu,\lambda\nu\lambda'}(\Sigma_0)\big)_{\iota_2}
\\
& \qquad \qquad \times \mathbf{a}_{\lambda,-}^\epsilon(\xi) \, \mathbf{a}_{\mu,\kappa_1}^\epsilon(\lambda \mu \xi) 
  \, \mathbf{a}_{\mu',\kappa_2}^{\epsilon'}( \lambda' \mu' \xi)
  \, \mathbf{a}_{\nu',\kappa_3}^{\epsilon'}(\lambda' \nu' \xi) 
  \\
  & \qquad \qquad \times \widetilde{f}_{\kappa_1}(\lambda \mu \xi) \, \widetilde{f}_{\kappa_2}( \lambda' \mu' \xi)
  \, \widetilde{f}_{\kappa_3}( \lambda' \nu' \xi) \, \ell_{\epsilon \infty} \ell_{\epsilon' \infty}
  \big[1 + \epsilon\epsilon^\prime\lambda \lambda' + [ \epsilon\lambda + \epsilon^\prime\lambda'] 
  \operatorname{sign}(\xi)\big].
\end{split}
\end{align}
For the term $\big[1 + \epsilon\epsilon^\prime\lambda \lambda' + [\epsilon\lambda + \epsilon^\prime\lambda'] 
\operatorname{sign}(\xi)\big]$ to be non zero, 
we need $\epsilon \lambda = \epsilon' \lambda'$. 
In view of the formulas \eqref{Acoeff} for  $A^{\epsilon,\epsilon'}_{\l,\l'}$,
this further imposes that $\epsilon = \epsilon'$ and $\lambda = \lambda'$, 
in which case $A^{\epsilon, \epsilon'}_{\lambda, \lambda'} = 1/2$. 
Finally, one can sum over $\nu$, which does not appear in the expression, and \eqref{asy100} becomes
\begin{align*}
N_{\k_1\k_2\k_3}(f,f,f)(t,\xi) & := 
  \frac{2}{\langle \Sigma_0 \rangle} 
  \sum_{\substack{\epsilon,\iota_2 
\\ \lambda,\mu,\mu',\nu'}}
  \frac{\iota_2}{(1-\kappa_1)\jxi - \iota_2 \langle\Sigma_0\rangle}
  \\
& \times \mathbf{a}_{\lambda,-}^\epsilon(\xi) \, \mathbf{a}_{\mu,\kappa_1}^\epsilon(\lambda \mu \xi) 
  \, \mathbf{a}_{\mu',\kappa_2}^{\epsilon}( \lambda \mu' \xi)
  \, \mathbf{a}_{\nu',\kappa_3}^{\epsilon}(\lambda \nu' \xi)
  \\
  & \times \widetilde{f}_{\kappa_1}(\lambda \mu \xi) \, \widetilde{f}_{\kappa_2}( \lambda \mu' \xi)
  \, \widetilde{f}_{\kappa_3}( \lambda \nu' \xi)
  \,\ell_{\epsilon \infty}^2 \big[1 + \epsilon\lambda  \operatorname{sign}(\xi)\big].
\end{align*}

\medskip
\noindent
{\bf The flat case}. 
For the reader's convenience, we first look at the simpler case $V=0$, 
for which $T\equiv 1$, $R_\pm \equiv 0$, so that \eqref{mucoeffexp} reads
\begin{align*}
\mathbf{a}^-_+(\xi) = \mathbf{a}^+_+(\xi) \equiv 1, \qquad \mathbf{a}^+_-(\xi) = \mathbf{a}^-_-(\xi) \equiv 0,
\end{align*}
and $\ell_{+\infty} = -\ell_{-\infty}$ which we set to $1$ without loss of generality.
Then, in \eqref{asy100} only the sum over $\lambda = \mu = \mu' = \nu'=+$ survives. 
Moreover, summing over $\epsilon$ eliminates the contribution to the summand 
from the $\epsilon \lambda \operatorname{sign}(\xi)$ factor.
Overall, this gives
\begin{align}\label{asy11}
\begin{split}
N_{\k_1\k_2\k_3}(f,f,f)(t,\xi) & = \frac{4}{\langle \Sigma_0 \rangle} 
  \sum_{\iota_2}
  \frac{\iota_2}{(1-\kappa_1)\jxi - \iota_2\langle\Sigma_0\rangle}
  \, \widetilde{f}_{\kappa_1}(\xi) \, \widetilde{f}_{\kappa_2}(\xi)
  \, \widetilde{f}_{\kappa_3}(\xi).
\end{split}
\end{align}

When $\k_1=+$, so that $\k_2\k_3=-$ and $\Sigma_0 = 0$, this is 
\begin{align*}
\begin{split}
- 8 \widetilde{f}(\xi) \, {\big| \wt{f}(\xi) \big|}^2.
\end{split}
\end{align*}
When $\kappa_1=-$, so that $\langle\Sigma_0 \rangle=\langle 2 \xi \rangle$, we get
\begin{align*}
\begin{split}
\frac{4}{\langle 2 \xi \rangle} \sum_{\iota_2} \frac{\iota_2}{2 \jxi -\iota_2\langle 2 \xi \rangle }
  \, \overline{\widetilde{f}}(\xi) \, {\big( \wt{f}(\xi) \big)}^2
  = \frac{8}{3} \, {|\widetilde{f}(\xi)|}^2 \, \wt{f}(\xi).
\end{split}
\end{align*}
Overall, we find that
$$
\sum_{\substack{(\kappa_1,\kappa_2,\kappa_3) = (+,+,-), \\ (+,-,+), (-,+,+)}} N_{\k_1\k_2\k_3}(f,f,f)(t,\xi) 
=-\frac{40}{3} |\widetilde{f}(\xi)|^2 \widetilde{f}(\xi).
$$
In particular, from \eqref{CShafinal}-\eqref{asy100}, and the fact that all the other terms 
in the equation \eqref{eqdtf} for $\partial_t \wt{f}$
are lower orders (in the sense that they satisfy estimates like \eqref{LinfSrem}), we can deduce 
\begin{align}\label{AsyFlat}
\partial_t \wt{f}(t,\xi) \approx -\frac{5i}{12t} |\widetilde{f}(\xi)|^2 \widetilde{f}(\xi). 
\end{align}
This leads to `standard' modified scattering as in \cite{HNKG}.

\medskip 
\noindent
{\bf The general case}.
If $(\k_1,\k_2,\k_3)=(+,+,-)$ (which is identical to $(\k_1,\k_2,\k_3)=(+,-,+)$), $\Sigma_0=0$ and 
\begin{align*}
\begin{split}
N_{+--}(f,f,f)(t,\xi) & =  -4\sum_{\substack{\epsilon, 
  \\ \lambda,\mu,\mu',\nu'}}
  \bar{\mathbf{a}_{\lambda}^\epsilon(\xi)} \, \mathbf{a}_{\mu}^\epsilon(\lambda \mu \xi)
  \, \mathbf{a}_{\mu'}^{\epsilon}(\lambda \mu' \xi)
  \, \bar{\mathbf{a}_{\nu'}^{\epsilon}(\lambda \nu' \xi)}
  \\ & \qquad \qquad \qquad \, \times \wt{f}(\lambda \mu \xi) \, \wt{f}(\lambda \mu' \xi)\,\bar{\wt{f}(\lambda \nu' \xi)}
  \, \ell_{\epsilon \infty}^2 [1 + \epsilon \lambda  \operatorname{sign}(\xi)].
\end{split}
\end{align*}
If $(\k_1,\k_2,\k_3)=(-,+,+)$, $\Sigma_0=2\iota_2 \nu \lambda \xi$ and we get
\begin{align*}
N_{-++}(f,f,f)(t,\xi) & = \frac{4}{3}\sum_{\substack{\epsilon, 
  \\ \lambda,\mu,\mu',\nu'}}
  \bar{\mathbf{a}_{\lambda}^\epsilon(\xi)} \, \mathbf{a}_{\mu}^\epsilon(\lambda \mu \xi)
  \, \mathbf{a}_{\mu'}^{\epsilon}(\lambda \mu' \xi)
  \, \bar{\mathbf{a}_{\nu'}^{\epsilon}(\lambda \nu' \xi)}
  \\
  & \qquad \qquad \qquad \,\times \wt{f}(\lambda \mu \xi) \, \wt{f}(\lambda \mu' \xi) \, \bar{\wt{f}(\lambda \nu' \xi)}
  \,\ell_{\epsilon \infty}^2 [1 + \epsilon \lambda  \operatorname{sign}(\xi)].
\end{align*}
Therefore,
\begin{align}\label{asy20}
\begin{split}
\sum_{\substack{(\kappa_1,\kappa_2,\kappa_3) = (+,+,-), \\ (+,-,+), (-,+,+)}} N_{\k_1\k_2\k_3}(f,f,f)(t,\xi)&
  = - \frac{20}{3} \sum_{\substack{\epsilon, 
  \\ \lambda,\mu,\mu',\nu'}}
  \bar{\mathbf{a}_{\lambda}^\epsilon(\xi)} \, \mathbf{a}_{\mu}^\epsilon(\lambda \mu \xi)
  \, \mathbf{a}_{\mu'}^{\epsilon}(\lambda \mu' \xi)
  \, \bar{\mathbf{a}_{\nu'}^{\epsilon}(\lambda \nu' \xi)}\\
  & \times \wt{f}(\lambda \mu \xi) \, \wt{f}(\lambda \mu' \xi) \, \bar{\wt{f}(\lambda \nu' \xi)}
  \, \ell_{\epsilon \infty}^2 [1 + \epsilon \lambda  \operatorname{sign}(\xi)].
\end{split}
\end{align}

\bigskip
\noindent
{\bf Hamiltonian structure}.
Recall the evolution equation \eqref{eqdtf} for $\wt{f}$.
As we show in Subsection \ref{SsecLinfR}, all the terms on the right-hand side of \eqref{eqdtf}, 
with the exception of $\mathcal{C}^{S1,2}$,
decay at an integrable-in-time rate.
Then, from \eqref{CShafinal}, \eqref{asy100} and \eqref{asy20} ,
and letting $t = \log t'$,
we see that the asymptotic evolution of $\wt{f}$ is governed to leading order by the ODE
\begin{align*}
\partial_{t'} \widetilde{f}(t',\xi) & = -\frac{5i}{24} \sum_{\substack{\epsilon, 
  \\ \lambda,\mu,\mu',\nu'}}
  \bar{\mathbf{a}_{\lambda}^\epsilon(\xi)} \, \mathbf{a}_{\mu}^\epsilon(\lambda \mu \xi)
  \, \mathbf{a}_{\mu'}^{\epsilon}(\lambda \mu' \xi)
  \, \bar{\mathbf{a}_{\nu'}^{\epsilon}(\lambda \nu' \xi)}
  \\ & \times \wt{f}(t',\lambda \mu \xi) \, \wt{f}(t',\lambda \mu' \xi) \, \bar{\wt{f}(t',\lambda \nu' \xi)}
  \, \ell_{\epsilon \infty}^2 [1 + \epsilon \lambda  \operatorname{sign}(\xi) ].
\end{align*}

We now show how to view the joint evolution of $\widetilde{f}(t',\xi)$ and $\widetilde{f}(t',-\xi)$ into the form of an Hamiltonian system.
For $\xi>0$ we let
$$
X_+(t') = \widetilde{f}(t',\xi), \qquad X_-(t') = \widetilde{f}(t',-\xi).
$$
Then the evolution is
\begin{align*}
& \frac{d}{dt'} X_+ = -\frac{5}{12} i \sum_{\substack{\epsilon,\mu,\mu',\nu'}}
  \bar{\mathbf{a}_{\epsilon}^\epsilon(\xi)} \, \mathbf{a}_{\mu}^\epsilon(\epsilon \mu \xi)
  \, \mathbf{a}_{\mu'}^{\epsilon}(\epsilon \mu' \xi)
  \, \bar{\mathbf{a}_{\nu'}^{\epsilon}(\epsilon \nu' \xi)} \, X_{\epsilon \mu} \, X_{\epsilon \mu' } 
  \, \bar{X_{\epsilon \nu'}} \, \ell_{\epsilon \infty}^2,
\\
&   \frac{d}{dt'} X_- =  -\frac{5}{12} i \sum_{\substack{\epsilon,\mu,\mu',\nu'}}
  \bar{\mathbf{a}_{-\epsilon}^\epsilon(-\xi)} \, \mathbf{a}_{\mu}^\epsilon(\epsilon \mu \xi)
  \, \mathbf{a}_{\mu'}^{\epsilon}(\epsilon \mu' \xi)
  \, \bar{\mathbf{a}_{\nu'}^{\epsilon}(\epsilon \nu' \xi)} \, X_{\epsilon \mu} \, X_{\epsilon \mu' } \, \bar{X_{\epsilon \nu'}} \,
  \, \ell_{\epsilon \infty}^2 .
\end{align*}
The main observation is that this derives from the Hamiltonian
\begin{align}\label{asyHam0}
\begin{split}
H(X)  & = \frac{5}{24} \sum_{\epsilon} 
	\left| \sum_{\mu} \mathbf{a}_{\mu}^\epsilon(\epsilon \mu \xi) X_{\epsilon \mu} \right|^4  
	\ell_{\epsilon \infty}^2
	\\ &= \frac{5}{24} \left[ \ell_{+\infty}^2 |(S(\xi)X)_1|^4 + \ell_{-\infty}^2 |(S(\xi) X)_2|^4 \right],
\end{split}
\end{align}
where we regard $(X_+,X_-)$ as conjugate variables of $(\bar{X_+},\bar{X_-})$,
and consider the standard (complex) symplectic form $J=-i$;
in \eqref{asyHam0} $S$ denotes the scattering matrix, and $X$ the vector $(X_+,X_-)$.
The evolution associated to $H$ is
\begin{align}\label{asyHamev}
\begin{split}
\frac{d}{dt'} X = - i \frac{\partial}{\partial \overline{X}} H.
\end{split}
\end{align}
Note that since $H$ is invariant under phase rotations 
the evolution \eqref{asyHamev} conserves $|X_+|^2 + |X_-|^2$.



\medskip
\subsection{Rigorous asymptotics}\label{secra}
Here we give the estimates necessary 
to justify the asymptotic formulas \eqref{asydelta} and \eqref{asypv} for the integrals \eqref{Idelta} and \eqref{Ipv},
thus obtaining a proof of the main asymptotics \eqref{LinfSasy}-\eqref{LinfSrem} in Proposition \ref{propLinfS}
We refer the reader to similar arguments in the literature, such 
as those in \cite{IoPu1,IoPu3} for fractional NLS equations, and in \cite{GPR2,ChPu} for the NLS with a potential;
see also references therein, for other works that use different approaches, as well as 
\cite{LLS20} and the more recent \cite{LSSineG}.

In \S\ref{ssecraI1} and \S\ref{ssecraI2} we look at the cases with $\{\kappa_1,\kappa_2,\kappa_3\} = \{+,+,-\}$
that give the leading order 
terms on the right-hand sides of \eqref{asydelta} and \eqref{asypv} 
and, eventually, combined with the algebraic calculations of Subsection \ref{secModScatt}, the asymptotics in \eqref{LinfSasy}.
In \S\ref{ssecraIoth} we discuss how to handle all the other non-resonant and faster-decaying terms.

\medskip
\subsubsection{Asymptotics for \eqref{Idelta} when $\{\kappa_1,\kappa_2,\kappa_3\} = \{+,+,-\}$}\label{ssecraI1}
For simplicity, and without loss of generality, we may choose a single combination 
of the signs $\lambda,\mu,\dots$, appearing in \eqref{asypstar}, and thus concentrate on the expression
\begin{align}\label{raI1}
\begin{split}
& I_1(t,\xi) := \iint e^{it \Phi_1(\xi,\eta,\sigma)} F_1(t,\xi,\eta,\s) 
  \,d\eta\,d\s,
\\
& \Phi_1(\xi,\eta,\s) := 
\jxi -\jeta + \jsig - \langle \xi - \eta + \s \rangle,
\\
&  F_1(t,\xi,\eta,\s) :=  \mathfrak{m}_1(\xi,\eta,\s) \, \wt{f}(t,\eta) \bar{\wt{f}(t,\s)} \wt{f}(t,\xi-\eta+\sigma).
\end{split}
\end{align}
From the explicit formula \eqref{formulacubiccoeff}, 
and the bounds in Lemma \ref{californiacondor} (see also the proofs of Lemmas \ref{lemTbound} and \ref{lemCS}), 
we can think that the symbol $\mathfrak{m}_1$ is smooth and satisfies
\begin{equation}\label{bluejay}
\left| \partial_\xi^a \partial_\eta^b \partial_\sigma^c \mathfrak{m}_1(\xi,\eta,\s) \right| 
	\lesssim \frac{1}{\langle \eta \rangle \langle \sigma \rangle \langle \xi - \eta + \sigma \rangle} 
	\langle \med(|\eta|,|\s|,|\xi-\eta-\sigma|) \rangle^{a+b+c}.
\end{equation}

As we calculated earlier in the section, see \eqref{asystatptsd},
the only (time-frequency) stationary point of the integral $I_1$ is at $(\eta,\sigma) = (\xi,\xi)$. 
Although one should think that the hardest case is when $|\xi| \approx \sqrt{3}$,
below we do not need to decompose in frequency space with respect to
the distance to $\sqrt{3}$, and it will suffice to use the bounds
\begin{align}\label{radk1}
{\| \varphi_{[-5,5]} \partial_\xi \wt{f} \|}_{L^2} \lesssim \e_1 \jt^{\rho}, 
\qquad {\| \jxi (1-\varphi_{[-5,5]}) \partial_\xi \wt{f} \|}_{L^2} \lesssim \e_1 \jt^\a, 
\end{align}
where $\rho:= \a +\b\g$, see \eqref{lembb1}. 

We change variables so that the stationary point is at $(0,0)$ and
look at
\begin{align}\label{ra10}
\begin{split}
& I_1(t,\xi) = \iint e^{it \Phi_1(\xi,\xi+\eta,\xi+\eta+\s)} \, F_1(t,\xi,\xi+\eta,\xi+\eta+\s) 
  \,d\eta\,d\s,
\\
& F_1(t,\xi,\xi+\eta,\xi+\eta+\s) = \mathfrak{m}_1(\xi,\xi+\eta,\xi+\eta+\s) \wt{f}(\xi+\eta) \bar{\wt{f}(\xi+\eta+\s)} \wt{f}(\xi+\s).
\end{split}
\end{align}
To verify \eqref{asydelta}, we show that
\begin{align}\label{ra1bound}
\begin{split}
\jxi^{3/2} \left| \int_0^t \Big[ I_1(s,\xi) - \frac{2\pi}{s} \, \jxi^3 \, F_1(s,\xi,\xi,\xi) \Big] \, \tau_m(s) ds \right| \lesssim \e_1^3 2^{-\delta_0 m}
\end{split}
\end{align}
where  $\delta_0>0$ small enough is to be chosen below. 
Notice that this is consistent with the estimates for the remainders in \eqref{LinfSasy}-\eqref{LinfSrem}.
Also notice that $$F_1(t,\xi,\xi,\xi) = \mathfrak{m}_1(\xi,\xi,\xi)|\wt{f}(t,\xi)|^2\wt{f}(t,\xi)$$ 
where $\mathfrak{m}_1(\xi,\xi,\xi)$ coincides with the symbol appearing in the trilinear terms of Subsection \ref{secModScatt}.


Without loss of generality, we may assume that $|\eta|\geq |\s|$.
Moreover, we claim that it suffices to deal with 
\begin{align}\label{raxi}
 \jt^{-2\alpha - 2\delta_0} \lesssim |\xi| \lesssim \jt^{(1/2)(\alpha+p_0)+\delta_0}.
\end{align}
To see this, we use the interpolation inequality
\begin{align}\label{rainter}
{\| \wt{f} \|}_{L^\infty} \lesssim {\big\|\wt{f}\big\|}_{L^2}^{1/2} {\big\|\partial_{\xi} \wt{f}\big\|}_{L^2}^{1/2},
\end{align}
which, for $k\geq5$, gives us
\begin{align}\label{Linfinter}
\begin{split}
{\| \varphi_k \jxi^{3/2}\wt{f} \|}_{L^\infty} & \lesssim 
  {\big\| \varphi_k\jxi^3 \wt{f}\big\|}_{L^2}^{1/2} \big( {\big\| \varphi_k' \wt{f}\big\|}_{L^2} + {\big\| \varphi_k \partial_{\xi} \wt{f}\big\|}_{L^2} \big)^{1/2}
  \\
  & \lesssim 2^{-k}{\big\| \varphi_k\jxi^4 \wt{f}\big\|}_{L^2}^{1/2} 
  \cdot (\e_1 \jt^{\alpha})^{1/2}
  \lesssim 2^{-k} \cdot \e_1^2 \jt^{(p_0+\alpha)/2},
\end{split}
\end{align}
having used the a priori bounds \eqref{propbootfas}.
Then, if $2^k \geq \jt^{(\alpha+p_0)/2+\delta_0}$ 
we already control uniformly in $t$ and $\xi$ the quantity $\jxi^{3/2}\wt{f}$.
For $k \leq -5$ instead, we have, see \eqref{apriori11},
\begin{align}\label{Linfinter'}
\begin{split}
{\| \varphi_k \wt{f} \|}_{L^\infty} & 
  \lesssim 2^{k/2} \jt^{\alpha}
\end{split}
\end{align}
and therefore obtain the desired control whenever $2^k \lesssim \jt^{-2\alpha-2\delta_0}$, as claimed.

We let $|\xi|\approx 2^k$, with the constraints \eqref{raxi},
let $t \approx 2^m$, $m=1,\dots$, and split
\begin{align}\label{ra13}
\begin{split}
& I_1(t,\xi) = \sum_{k_1 \geq k_2, \, k_1,k_2\in[k_0\infty)\cap \Z} I^{(1)}_{k_1,k_2}(t,\xi), \qquad k_0:= -m/2 + \delta m,
\\
& I^{(1)}_{k_1,k_2}(t,\xi) := \iint e^{it \Phi_1} \, F_1(t) 
  \,\varphi_{k_1}^{(k_0)}(\eta\jxi^{-3/2})\varphi_{k_2}^{(k_0)}(\s\jxi^{-3/2}) \, d\eta\,d\s,
\end{split}
\end{align}
where $\delta$ will be chosen small enough, 
and we are omitting the arguments $(\xi,\xi+\eta,\xi+\eta+\s)$ in $\Phi_1$ and $F_1$ for brevity.
For simplicity we also restrict our attention to the main case 
when the size of all input frequencies are comparable to $2^k$
by looking at the case $|\eta|,|\s| \ll |\xi|$, that is, $k_1,k_2\leq k - (3/2)k^+ - 10$;
to simplify our notation we omit the cut-offs induced by this restriction, that is,
$\varphi_{\sim k} (\xi+\eta)$,  $\varphi_{\sim k} (\xi+\sigma+\eta)$ and $\varphi_{\sim k} (\xi+\sigma)$ .
All other cases are simpler to handle.

\medskip
\noindent
{\it Case $k_1=k_2=k_0$}.
Note that from \eqref{radk1} under the restriction \eqref{raxi}, we can infer (for $\alpha$ small enough)
\begin{align}\label{radk2}
{\| \varphi_k \partial_\xi \wt{f} \|}_{L^2} \lesssim \e_1 2^{-10k^+} \jt^{\rho}.
\end{align}
Then, since $|\eta|,|\sigma| \ll |\xi|$, from the a priori bounds \eqref{propbootfas} and \eqref{bluejay} we have
\begin{align}\label{ra11}
\begin{split}
& |F_1(t,\xi,\xi+\eta,\xi+\eta+\s)| \lesssim \e_1^3 2^{-9k^+/2},
\\
& {\| 
  \partial_\eta F_1(t,\xi,\xi+\eta,\xi+\eta+\s) \|}_{L^2_\eta} 
  \lesssim \e_1^3 \cdot 2^{-10k^+} \cdot \jt^{\rho},
\\
& {\| 
  \partial_\s F_1(t,\xi,\xi+\eta,\xi+\eta+\s) \|}_{L^2_\s} 
  \lesssim \e_1^3 \cdot 2^{-10k^+} \cdot \jt^{\rho}, 
  \qquad \rho:=\alpha+\beta\gamma.
\end{split}
\end{align}
As a consequence
\begin{align}\label{ra12}
\begin{split}
& |F_1(t,\xi,\xi+\eta,\xi+\eta+\s) - F_1(t,\xi,\xi,\s+\xi)| 
 \\ 
 & + |F_1(t,\xi,\xi,\s+\xi) - F_1(t,\xi,\xi,\xi)| \lesssim \e_1^3 2^{-10k^+}\jt^\rho \cdot (|\eta| + |\sigma|)^{1/2} .
\end{split}
\end{align}

Using \eqref{ra12} we can see that the contribution close to the stationary points gives us the 
leading order term by arguing as follows. First, observe that
\begin{align}\label{ra14.1}
\begin{split}
2^{3k^+/2} & \Big| I^{(1)}_{k_0,k_0}(t,\xi) - F_1(t,\xi,\xi,\xi) \,
  \iint e^{it \Phi(\xi,\xi+\eta,\xi+\eta+\s)} 
  \,\varphi_{\leq k_0}(\eta\jxi^{-3/2})\varphi_{\leq k_0}(\s\jxi^{-3/2}) \, d\eta\,d\s \Big| 
  \\
  & \lesssim 2^{3k^+/2}  \cdot 
  \e_1^3 2^{k_0/2} 2^{-10k^+} 2^{\rho m} \cdot 2^{2k_0 + 3k^+} 
  = \e_1^3 \cdot 2^{(-5/4 + \rho + 5\delta/2)m};
\end{split}
\end{align}
second, Taylor expanding we have
\begin{align}\label{ra1Tay}
\Phi_1(\xi,\xi+\eta,\xi+\eta+\s) = \frac{2\eta\sigma}{\jxi^3} + O\big((|\eta|+|\s|\big)^3 \jxi^{-4})
\end{align}
and therefore
\begin{align}\label{ra14.2}
\begin{split}
2^{3k^+/2} & \Big| F_1(t,\xi,\xi,\xi) \, \iint \Big[ e^{it \Phi(\xi,\xi+\eta,\xi+\eta+\s)} - e^{2it \eta\s \jxi^{-3}} \Big] 
  \,\varphi_{\leq k_0}(\eta\jxi^{-3/2})\varphi_{\leq k_0}(\s\jxi^{-3/2}) \, d\eta\,d\s \Big|
  \\
  & \lesssim \e_1^3 2^{-3k^+} \cdot 2^m 2^{3k_0} 2^{-4k^+} \cdot 2^{2k_0}2^{3k^+} 
  \\
  & \lesssim \e_1^3 2^{-3m/2 + 5\delta m};
\end{split}
\end{align}
third, a calculation shows that
\begin{align}\label{ra14.3}
\iint e^{2it\eta\s \jxi^{-3}} \,\varphi_{\leq k_0}(\eta\jxi^{-3/2})\varphi_{\leq k_0}(\s\jxi^{-3/2}) \, d\eta\,d\s 
  =  \jxi^3 \frac{\pi}{t} + O(|t|^{-5/4}).
\end{align}

Finally, we need to ensure that we can choose $\delta>0$ 
so that \eqref{ra14.1}-\eqref{ra14.3} are consistent with the right-hand side of 
the desired bound \eqref{ra1bound}.
According to \eqref{ra14.1} and \eqref{raxi}, by making $\delta_0$ small enough, 
it suffices to pick $\delta$ such that 
\begin{align}\label{radelta}
(5/2)\delta < 1/4-\rho; 
\end{align}
this is possible since, see 
\eqref{wnormparam0},
\begin{align}\label{radelta'}
1/4-\rho 
= -\alpha +(1/2)(\b'+\g') - \b'\g' > \g'\beta.
\end{align}

\medskip
\noindent
{\it Case $k_1 > k_0$}. 
Since we are assuming $|\eta|\geq |\s|$, we may restrict, without loss of generality, to $k_1\geq k_2$;
for brevity, we will often omit to write this restriction.
In the case $k_1>k_0$, we want to exploit integration by parts in $\sigma$ through the identity 
$e^{it \Phi(\xi,\xi+\eta,\xi+\eta+\s)} = (it\partial_\sigma\Phi)^{-1} \partial_\s e^{it \Phi(\xi,\xi+\eta,\xi+\eta+\s)}$
using that, on the support of the integral $I^{(1)}_{k_1,k_2}$,
\begin{align}\label{ra21}
\begin{split}
|\partial_\sigma\Phi(\xi,\xi+\eta,\xi+\eta+\s)| = \Big| \frac{\xi+\s}{\langle \xi+\sigma \rangle} 
  - \frac{\xi+\s+\eta}{\langle \xi+\sigma+\eta \rangle} \Big|
  & \gtrsim  |\eta|\jxi^{-3} 
  \approx 2^{k_1}2^{-(3/2)k^+}. 
\end{split}
\end{align}

Note that, under our current frequencies restrictions, 
we have a bound on the norm of trilinear operators with symbol $(\partial_\sigma\Phi)^{-1}$
consistent with the (pointwise) bound  from \eqref{ra21}, that is, 
$2^{-k_1}2^{(3/2)k^+}$.
We treated similar terms multiple times in Section \ref{secwL}, see for example \eqref{hummingbird} and \eqref{estimatefrakm}.
%
%
We first use this fact to integrate by parts and estimate the $L^2$ norm of $I^{(1)}_{k_1,k_2}$.
Up to faster decaying lower orders (which include contributions from hitting the symbol $\mathfrak{m}_1$ or the cutoffs,
where one can repeat integration by parts) we have
\begin{align}\label{ra22}
\begin{split}
\big| I^{(1)}_{k_1,k_2}(t,\xi) \big|& \lesssim \frac{1}{t} \,
  \Big| \iint e^{it \Phi(\xi,\xi+\eta,\xi+\eta+\s)} 
  \, \wt{f}(\eta+\xi) \partial_\s\Big( \bar{\wt{f}(t,\xi+\eta+\s)} \wt{f}(t,\xi+\sigma) \Big)
  \\&  \qquad
  \times \frac{1}{\partial_\s\Phi}\, \mathfrak{m}_1(\xi,\xi+\eta,\xi+\eta+\s) 
  \,\varphi_{k_1}^{(k_0)}(\eta\jxi^{-3/2})\varphi_{k_2}^{(k_0)}(\s\jxi^{-3/2}) \, d\eta\,d\s \Big|,
\end{split}
\end{align}
so that
\begin{align}\label{ra23}
\begin{split}\Big\| \jxi^2 \varphi_k(\xi) \sum_{k_1>k_0} I^{(1)}_{k_1,k_2}(t) \Big\|_{L^2} 
& \lesssim 2^{2k^+} \cdot 2^{-m} 2^{-k_1} 2^{(3/2)k^+}
  {\| P_k e^{-it\jnab} \W f(t) \|}_{L^\infty}^2 {\| \varphi_k \partial_\xi \wt{f} \|}_{L^2}
  \\
& \lesssim 2^{7k^+/2} \cdot 2^{-m} \cdot 2^{-k_0} 
  \cdot (\e_1 2^{-m/2})^2 \cdot \e_1 
  2^{-10k^+} 2^{\rho m}   
\\
& \lesssim \e_1^3 2^{-3m/2} \cdot  2^{(\rho +\delta)m},
\end{split}
\end{align}
having used \eqref{radk2} 
the linear decay estimate, 
and $k_0 = -m/2+\delta m$.

Next, we estimate the $L^2$ norm of $\jxi \partial_\xi I^{(1)}_{k_1,k_2}$ and then will  interpolate with the bound \eqref{ra23}.
It is convenient to look back at the original integration variables as they appear in \eqref{raI1}, 
so as to have simpler formulas. Let us write, for $k_1>k_0$,
\begin{align}\label{raI1'}
\begin{split}
& I^{(1)}_{k_1,k_2}(t,\xi) := \iint e^{it \Phi_1(\xi,\eta,\sigma)} F_1(t,\xi,\eta,\s) 
\varphi_{k_1}((\eta-\xi) \jxi^{3/2}) \varphi_{k_2}^{(k_0)}((\s-\eta)\jxi^{3/2})  \,d\eta\,d\s.
\end{split}
\end{align}
When applying $\jxi \partial_\xi$ to $I^{(1)}_{k_1,k_2}$ we obtain one main term, that is,
\begin{align*}
& \iint t \, \jxi \partial_\xi \Phi_1(\xi,\eta,\s) \, e^{it \Phi_1(\xi,\eta,\s)}  
  \, \wt{f}(t,\eta) \bar{\wt{f}(t,\xi-\eta+\s)} \wt{f}(t,\sigma)
 \\ & \qquad  \times \mathfrak{m}_1(\xi,\eta,\s) 
  \,\varphi_{k_1}((\eta-\xi)\jxi^{-3/2})\varphi_{k_2}^{(k_0)}((\s-\eta)\jxi^{-3/2}) \, d\eta\,d\s,
\end{align*}
plus other lower order terms which are easier to estimate.
We then note that the following identity holds
\begin{align}\label{raid0}
\big( \jxi\partial_\xi + \jeta \partial_\eta  + \jsig \partial_\s \big)  \Phi_1 =  - \frac{\xi-\eta+\s}{\langle \xi-\eta + \s \rangle}\Phi_1.
\end{align}
Using \eqref{raid0} we can integrate by parts in $\eta,\s$ and $s$, and obtain
\begin{align}\label{ra24}
\begin{split}
& \Big| \sum_{k_1>k_0} \int_0^t \jxi \partial_\xi I^{(1)}_{k_1,k_2}(s,\xi) \tau_m(s) \, ds \Big| \lesssim 
\Big| \int_0^t A(s,\xi) \tau_m(s) \, ds \Big| + \Big| \int_0^t B(s,\xi) \tau_m(s) \, ds \Big| + \cdots 
\\
& A(t,\xi) :=  \sum_{k_1>k_0} \iint e^{it \Phi_1(\xi,\eta,\s)}
  \, \jeta \partial_\eta \wt{f}(t,\eta)  \bar{\wt{f}(t,\s)} \wt{f}(t,\xi-\eta+\s)
  \\
  & \qquad \qquad \qquad \times \mathfrak{m}_1(\xi,\eta,\s) 
  \,\varphi_{k_1}((\eta-\xi)\jxi^{-3/2})\varphi_{k_2}^{(k_0)}((\s-\eta)\jxi^{-3/2}) \, d\eta\,d\s,
\\
&B(t,\xi) :=  \sum_{k_1>k_0} t \iint e^{it \Phi_1(\xi,\eta,\s)} \frac{\xi-\eta+\s}{\langle \xi-\eta + \s \rangle}
  \, \partial_t \wt{f}(t,\eta)  \bar{\wt{f}(t,\s)} \wt{f}(t,\xi-\eta+\s)
  \\
  & \qquad \qquad \qquad \times \mathfrak{m}_1(\xi,\eta,\s) 
  \,\varphi_{k_1}((\eta-\xi)\jxi^{-3/2})\varphi_{k_2}^{(k_0)}((\s-\eta)\jxi^{-3/2}) \, d\eta\,d\s,
\end{split}
\end{align}
where, as usual, $``\cdots"$ denote similar terms or faster decaying remainders.
Using an $L^2\times L^\infty \times L^\infty$ H\"older estimate for  both terms in \eqref{ra24} gives
\begin{align}\label{ra25}
\begin{split}
\Big\| \jxi \partial_\xi  \sum_{k_1>k_0} \int_0^t I^{(1)}_{k_1,k_2}(s,\xi) \tau_m(s)\, ds \Big\|_{L^2} & \lesssim  
 2^m \sup_{s\approx 2^m }{\| \jxi \partial_\xi \wt{f}(s) \big\|}_{L^2} {\| P_k e^{is\jnab} \W f(s) \|}_{L^\infty}^2 
 \\ & \quad +
 2^{2m} \sup_{s\approx 2^m }{\| \partial_s \wt{f}(s) \big\|}_{L^2} {\| P_k e^{is\jnab} \W f(s) \|}_{L^\infty}^2  
\\
&  \lesssim \e_1^3 2^{m \rho},
\end{split}
\end{align}
having used \eqref{lembb1}, the a priori $L^\infty$ decay and \eqref{dtfL2}.

Interpolating \eqref{ra23} and \eqref{ra25} through \eqref{rainter},
and in view of our choice of parameters \eqref{radelta}-\eqref{radelta'} and \eqref{raxi}, we obtain
\begin{align*}
2^{(3/2)k^+} \Big| \sum_{k_1 > k_0
  } \int_0^t I^{(1)}_{k_1,k_2}(s,\xi) \tau_m(s)\,ds \Big| 
  & \lesssim \e_1^3 2^{-m/4} 2^{(\rho + \delta/2) m},
\end{align*}
which suffices in view of \eqref{radelta}.

\medskip
\subsubsection{Asymptotics for \eqref{Ipv} when $\{\kappa_1,\kappa_2,\kappa_3\} = \{+,+,-\}$}\label{ssecraI2}
Once again, without loss of generality, we may choose a single combination 
of the signs $\lambda,\mu,\dots$, appearing in \eqref{asypstar}, and concentrate on the expression
\begin{align}\label{raI2}
\begin{split}
& I_2(t,\xi) = \iiint e^{it \Phi_2(\xi,\eta,\sigma,p)} 
  F_2(t,\xi,\eta,\eta',\s) \frac{\what{\phi}(p)}{p}\,d\eta\,d\eta'\,d\sigma,
\\
& \Phi_2(\xi,\eta,\eta',\s) = \jxi -\jeta + \langle \eta' \rangle -\jsig, 
  \qquad  p = \xi-\eta+\eta'-\sigma,
\\
& F_2(t,\xi,\eta,\eta',\s) :=  \mathfrak{m}_2(\xi,\eta,\eta',\s) \, \wt{f}(t,\eta) \bar{\wt{f}(t,\eta')} \wt{f}(t,\s).
\end{split}
\end{align}
From the formula \eqref{CubicS}-\eqref{CubicS12} 
with \eqref{formulacubiccoeff} and the bounds \eqref{californiacondor1} 
we may assume that the symbol is smooth and satisfies
\begin{equation}\label{bluejay2}
\left| \partial_\xi^a \partial_\eta^b \partial_{\eta'}^c \partial_\sigma^d \mathfrak{m}_2(\xi,\eta,\eta',\sigma') \right| 
	\lesssim \frac{1}{\langle \eta \rangle \langle \eta' \rangle  \langle \sigma \rangle } 
	\langle \med(|\eta|,|\eta'|,|\s|) \rangle^{a+b+c+d}.
\end{equation}

To obtain asymptotics for \eqref{raI2} and a rigorous proof of \eqref{asypv}, 
we can use ideas similar to those used to treat \eqref{raI1} in \S\ref{ssecraI1} above;
we will then follow similar steps and concentrate on the main differences, in particular on how to treat the $\pv$,
while skipping some of the other details.

Recall from \eqref{asystatpts} that
the stationary points of the integral $I_2$ in \eqref{raI2} are $(\eta,\eta',\sigma) = (\xi-p,\xi-p,\xi-p)$.
As in the treatment of \eqref{raI1}, it is convenient to change variables by letting
$\eta \mapsto \xi+\eta-p$,  $\eta'\mapsto \xi+\eta+\s-p$ and $\s\mapsto \xi+\s-p$;
this centers the stationary points (in $(\eta,\s,p)$) at the origin, and gives the expression
\begin{align}\label{ra30}
\begin{split}
I_2(t,\xi) = \iiint & e^{it \Phi(\xi,\eta,\sigma,p)} 
  F(t,\xi,\eta,\s,p) \frac{\what{\phi}(p)}{p}\,d\eta\,d\sigma \,dp,
\\
\Phi(\xi,\eta,\s,p) & := \Phi_2(\xi,\xi+\eta-p,\xi+\eta+\s-p, \xi+\s-p) 
\\
\qquad \qquad \qquad & = \jxi - \langle \xi+\eta-p\rangle + \langle \xi+\eta+\s - p \rangle - \langle \xi+\s -p \rangle,
\\
\mathfrak{m}(\xi,\eta,\s,p) & := \mathfrak{m}_2(\xi,\xi+\eta-p,\xi+\s+\eta-p,\xi+\s-p),
\\
F(t,\xi,\eta,\s,p) & :=  \mathfrak{m}(\xi,\eta,\s,p) \wt{f}(\xi+\eta-p) \bar{\wt{f}(\xi+\eta+\s-p)} \wt{f}(\xi+\s-p).
\end{split}
\end{align}

To verify \eqref{asypv}, with a remainder estimate consistent with \eqref{LinfSrem}, we need to show that
\begin{align}\label{ra2bound}
\begin{split}
\jxi^{3/2} \left| \int_0^t \Big[ I_2(s,\xi) - \frac{i\pi\sqrt{2\pi}}{t} \, \jxi^3 \, F(s,\xi,0,0,0) \, \sign(\xi) \Big] \tau_m(s) ds \right|
  \lesssim \e_1^3 2^{-\delta_0 m}
\end{split}
\end{align}
for $\delta_0$ small enough. 
As per our usual notation, we let $|\xi| \approx 2^k$ and, in view of \eqref{raxi} 
(and the interpolation argument that follows it)
we may restrict to $-2\alpha-2\delta_0 \leq k \leq (\alpha+p_0)/2 + \delta_0$.
We again restrict to the most difficult case when all input frequencies have sizes comparable to $2^k$
by assuming $|\eta|,|\s| \ll |\xi|$;
we omit the corresponding cutoffs for lighter notation.

\medskip
\noindent
{\it Step 1}.
The first step needed to deal with the $\pv$ singularity in \eqref{ra30} is to remove a neighbourhood of $p=0$ as follows.
For very small $p$, say $|p|\lesssim 2^{-10m}$, we may substitute $F(t,\xi,\eta,\s,p)$ by $F(t,\xi,\eta,\s,0)$,
and $\Phi(\xi,\eta,\s,p)$ by $\Phi(\xi,\eta,\s,0)$ up to very fast decaying remainders. 
The resulting integral vanishes by using the $\pv$ and the fact that $\phi$ is even.

This leaves us with the expression 
\begin{align}\label{ra31}
& 
\iiint e^{it \Phi(\xi,\eta,\sigma,p)} 
	F(t,\xi,\eta,\s,p) \varphi_{\geq 0}(|p|\jt^{10}) \frac{\what{\phi}(p)}{p}\,d\eta\,d\sigma \,dp.
\end{align}
In what follows we will omit the cutoff localizing to $|p| \gtrsim \jt^{-10}$ for simplicity,
since its the presence does not cause any additional difficulty in the arguments.

Similarly to \eqref{ra13}, we define a localized version of $I_2$ by
\begin{align}\label{ra31.5}
I^{(2)}_{k_1,k_2}(t,\xi) := \iiint e^{it \Phi(\xi,\eta,\sigma,p)}
  F(t,\xi,\eta,\s,p) \, \varphi_{k_1}^{(k_0)}(\eta\jxi^{-3/2}) \varphi_{k_2}^{(k_0)}(\s\jxi^{-3/2})
  \, \frac{\what{\phi}(p)}{p}\,d\eta\,d\sigma \,dp
\end{align}
where $k_0 := -m/2 + \delta m$.

\medskip
\noindent
{\it Step 2}.
We first look at the case $k_1=k_2=k_0$ which gives the leading order term in the asymptotitcs.

\medskip
\noindent
{\it Step 2.1}.
This contribution can be analyzed similarly to how we did for the term $I^{(1)}_{k_0,k_0}$ before; 
see the definition in \eqref{ra13}.
The same exact argument used above leads to the following analogue of \eqref{ra14.1} 
(see \eqref{ra11}-\eqref{ra12}):
\begin{align}\label{ra32}
\begin{split}
& 2^{3k^+/2} \left| I^{(2)}_{k_0,k_0}(t,\xi) - \iiint e^{it \Phi(\xi,\eta,\sigma,p)} F(t,\xi,0,0,p) 
\varphi_{\leq k_0}(\eta\jxi^{-3/2}) \varphi_{\leq k_0}(\s\jxi^{-3/2}) \frac{\what{\phi}(p)}{p}\,d\eta\,d\sigma \,dp \right|
\\  
& \qquad \qquad \lesssim \e_1^3 
  \cdot 2^{(-5/4 + \rho + 5\delta/2)m} m;
\end{split}
\end{align}
Note that we have included an additional $\log t$ factor on the right-hand side above 
to take into account the integration of $\what{\phi}(p)/p$ over the region $|p|\gtrsim 2^{-10m}$. 
This still gives an acceptable bound under the conditions \eqref{radelta}.

\medskip
\noindent
{\it Step 2.2}.
To calculate the asymptotics for the integral in \eqref{ra32} we first notice that, 
if $|p| \gtrsim 2^{-3m/5}$, we can use integration by parts in $p$ since $|\partial_p \Phi |\approx 2^{k^-}$.
More precisely, when one of the profiles gets differentiated we estimate it in $L^2$, put $1/p$ in $L^2$ as well, 
and estimate the other two profiles in $L^\infty$; 
when instead $\partial_p$ hits $1/p$ (or the cutoff in $p$) we estimate this in $L^1$, 
and place all the profiles in $L^\infty_\xi$;
we then obtain a contribution bounded by 
\begin{align}\label{ra35}
C 2^{3k^+/2} \cdot 2^{-m} 2^{-k^-} \cdot \e_1 \big(2^{3m/10} 2^{\rho m} + 2^{3m/5} \big) 
	\cdot (\e_1 2^{-3k^+/2})^2\cdot 2^{2k_0}
\end{align}
where $\rho = \alpha +\beta\gamma$.
This a remainder term of the desired $O(2^{-m-\delta_0m})$ size for $\delta_0$ small enough.
From now on we assume $|p| \lesssim 2^{-3m/5}$, and will sometimes omit the cutoff for notational simplicity.

By Taylor expanding the phase,
\begin{align}\label{ra2Tay}
\Phi(\xi,\eta,\s,p) = \jxi - \langle \xi -p \rangle + \frac{2\eta\sigma}{\jxi^3} 
	+ O\big((|\eta|+|\s| + |p| \big)^3 \jxi^{-3}),
\end{align}
and using $|\eta|+|\s| + |p|\lesssim 2^{k_0}$ we obtain, see \eqref{ra1Tay} and the estimate below that,
\begin{align*}
\begin{split}
& 2^{(3/2)k^+} \Big|  \iiint \Big[ e^{it \Phi(\xi,\eta,\sigma,p)} - e^{it(\jxi - \langle \xi-p\rangle +  2\eta\sigma\jxi^{-3} )} \Big]
	\\ &\qquad \qquad  \times F(t,\xi,0,0,p) 
\varphi_{k_0}(\eta\jxi^{-3/2}) \varphi_{k_0}(\s\jxi^{-3/2}) \frac{\what{\phi}(p)}{p}\,d\eta\,d\sigma \,dp \Big|
\lesssim \e_1^3 \cdot 2^{(-3/2 + 5\delta)m} m.
\end{split}
\end{align*}

Performing the integral in $(\eta,\s)$ using \eqref{ra14.3} we see that, for $|t| \approx 2^m$,
\begin{align}\label{ra40}
\begin{split}
& 2^{(3/2)k^+}\Big| I^{(2)}_{k_0,k_0}(t,\xi) - \jxi^3\frac{\pi}{t} L(t,\xi) \Big| \lesssim \e_1^3 2^{(-1-\delta_0)m},
\\ 
& L(t,\xi) := \int e^{it(\jxi - \langle \xi-p\rangle)} F(t,\xi,0,0,p) \varphi_{\leq -3m/5} (p) \frac{\what{\phi}(p)}{p} \,dp.
\end{split}
\end{align}

\medskip
\noindent
{\it Step 2.3}. 
Recall that $F(t,\xi,0,0,p) = \mathfrak{m}_2(\xi,\xi-p,\xi-p,\xi-p) |\wt{f}(\xi-p)|^2\wt{f}(\xi-p)$.
In order to obtain \eqref{asypv}, we want to show that, for $|t| \approx 2^m$, $|\xi|\approx 2^{k^+}$, 
\begin{align}\label{ra41}
\begin{split}
L(t,\xi) & = \int e^{it(\jxi - \langle \xi-p\rangle)}F(t,\xi,0,0,p)  \varphi_{\leq -3m/5} (p) \frac{\what{\phi}(p)}{p} \,dp 
\\
& = i  \sqrt{\frac{\pi}{2}} \, F(t,\xi,0,0,0) 
\, \sign(t\xi)  + O(2^{-9k^+/2} 2^{-\delta_0m}).
\end{split}
\end{align}

From the bounds on $\mathfrak{m}_2$ and the a priori assumptions on $\widetilde{f}$, we have
\begin{align*}
\begin{split}
& |F(t,\xi,0,0,p)-F(t,\xi,0,0,0)| 
\\
& \lesssim \big| \mathfrak{m}_2(\xi,\xi-p,\xi-p,\xi-p) - \mathfrak{m}_2(\xi,\xi,\xi,\xi) \big| |\wt{f}(\xi-p)|^3
\\
& \quad + \big|\mathfrak{m}_2(\xi,\xi,\xi,\xi) \big( |\wt{f}(\xi-p)|^2\wt{f}(\xi-p) - |\wt{f}(\xi)|^2\wt{f}(\xi) \big) \big| 
\\
& \lesssim \e_1^3 \left[  2^{-(13/2) k^+} |p| + 2^{-7 k^+} |p|^{1/2} 2^{\rho m} \right].
\end{split}
\end{align*}
This allows us to replace $F(t,\xi,0,0,p)$ by $F(t,\xi,0,0,0)$ and,
after Taylor expanding $\jxi - \langle \xi-p\rangle = p\xi\jxi^{-1} + O(|p|^2)$ and using that $|p| \lesssim 2^{-3m/5}$,
we obtain
\begin{align}
L(t,\xi) 
\label{ra43}
	& = F(t,\xi,0,0,0) \, \int e^{it p \xi \jxi^{-1}} \,
	\varphi_{\leq -3m/5} (p) \frac{\what{\phi}(p)}{p}  \,dp + O(2^{-9k^+/2} 2^{-\delta_0m}).
\end{align}
In \eqref{ra43} we can further replace $\what{\phi}(p)$ by $\what{\phi}(0)$ 
and eventually dispense of the cutoff in $p$ (again via integration by parts), arriving at
\begin{align}\label{ra47}
\begin{split}
L(t,\xi) & = F(t,\xi,0,0,0) \, \widehat{\phi}(0) \, \int e^{it p \xi \jxi^{-1}} 
	\pv \frac{1}{p} \,dp + O(2^{-9k^+/2}  2^{-\delta_0m})
\\
& = F(t,\xi,0,0,0) \what{(\pv 1/x)} (-t\xi\jxi^{-1}) + O(2^{-9k^+/2} 2^{-\delta_0m}).
\end{split}
\end{align}
Using the last identity in \eqref{Fsign} gives us \eqref{ra41}.

%
%
%
%
%

\medskip
\noindent
{\it Step 3: Case $k_1>k_0$}.
To conclude the rigorous derivation of the asymptotics \eqref{asypv} 
we need to show that the remaining contributions from $I^{(2)}_{k_1,k_2}(s,\xi)$, see \eqref{ra31.5}, 
satisfy $O(2^{-\delta_0m})$ bounds when integrated over $|s|\approx 2^m$ and measured in $\jxi^{-3/2}L^\infty$.
This can be done similarly to the analogous estimate for the integral $I^{(1)}_{k_1,k_2}$; 
see the argument starting from \eqref{ra21}.

First, we observe that we  may restrict to $|p| \ll 2^k $.
Indeed, if $|p| \gtrsim 2^k$ the $\pv$ in \eqref{ra31.5} is not singular and contributes a very small loss in view of \eqref{raxi};
moreover, we can integrate by parts 
both in $p$ 
and, depending on which profile is hit by $\partial_p$ then integrate in one of the variables $\eta$ or $\s$ or $\eta-\s$.

Under our assumption that all input frequencies have size about $2^k$ 
we have the following analogue of \eqref{ra21}:
\begin{align}\label{ra21'}
\begin{split}
|\partial_\sigma\Phi(\xi,\eta,\s,p)| = \Big| \frac{\xi+\s-p}{\langle \xi+\sigma -p\rangle} 
  - \frac{\xi+\s+\eta-p}{\langle \xi+\sigma+\eta-p \rangle} \Big|
  \gtrsim 2^{k_1}2^{-(3/2)k^+}, 
\end{split}
\end{align}
see the definition of $\Phi$ in \eqref{ra30}. 
Integrating by parts in $\s$ to obtain an inequality analogous to \eqref{ra22},
and then using Lemma \ref{lemmamultilin2} to estimate similarly to \eqref{ra23}, we obtain, for $|t|\approx 2^m$
\begin{align}\label{ra23'}
\begin{split}
\Big\| \jxi^2 \sum_{k_1>k_0} I^{(2)}_{k_1,k_2}(t) \Big\|_{L^2}
\lesssim \e_1^3 2^{-3m/2} \cdot  2^{(\rho +\delta)m}.
\end{split}
\end{align}

To obtain the desired pointwise bound in $\jxi^{-3/2}L^\infty$ it is enough to interpolate \eqref{ra23'}
with the weighted $L^2$-bound
\begin{align}\label{ra25'}
\begin{split}
\Big\| \jxi \partial_\xi \int_0^t \sum_{k_1>k_0}  I^{(2)}_{k_1,k_2}(s,\xi) \, \tau_m(s) ds \Big\|_{L^2} & \lesssim  
 \e_1^3 2^{m \rho},
\end{split}
\end{align}
which we now prove.

We first need an analogue of \eqref{raid0} for the phase $\Phi_2=\Phi_2(\xi,\eta,\eta',\s)$, 
see \eqref{ra30}.
By defining $X_{a} := \langle a \rangle \partial_a$ we have
\begin{align}\label{raid2}
(X_\xi + X_\eta + X_{\eta'} + X_\s )\Phi_2 = \xi-\eta+\eta'-\s = p.
\end{align}
Note that this is essentially the same identity appearing in \eqref{Cubotherid}, and that we use
to establish a weighted $L^2$ bound for the singular cubic terms 
when the inputs are away from the degenerate frequencies $\pm\sqrt{3}$.

The identity \eqref{raid2} can be applied to the time integral of $\jxi \partial_\xi I^{(2)}_{k_1,k_2}$
- expressed in the original variables $(\eta,\eta',\s)$, see \eqref{raI2}) - to integrate by parts in $(\eta,\eta',\s)$.
This procedure gives
\begin{align}
\label{ra50}
& \Big| \int_0^t \jxi\partial_\xi I^{(2)}_{k_1,k_2}(s,\xi) \, ds \Big| \lesssim 
	\Big| \int_0^t C(s,\xi) \tau_m(s) \, ds \Big| + \Big| \int_0^t D(s,\xi) \tau_m(s) \, ds \Big| + O(\e_1^3 2^{m\rho}),
\\ \label{ra51}
& C(s,\xi) := \iiint s \, e^{is \Phi_2} F_2(t,\xi,\eta,\eta',\s) 
\varphi_{k_1}(\eta-\xi+p) \varphi_{k_2}^{(k_0)}(\s-\xi+p) \what{\phi}(p) \,d\eta\,d\eta'\,d\s,
\\ \label{ra52}
& D(s,\xi) := \iiint e^{is \Phi_2} F_2(t,\xi,\eta,\eta',\s) 
\varphi_{k_1}(\eta-\xi+p) \varphi_{k_2}^{(k_0)}(\s-\xi+p) \, 
	X \frac{\what{\phi}(p)}{p} \,d\eta\,d\eta'\,d\s,
\end{align}
where $p := \xi-\eta+\eta'-\s$ and $X := ( X_\xi + X_\eta + X_{\eta'} + X_\s )$;
the $O(\e_1^3 2^{m\rho})$ in \eqref{ra50} includes similar and lower order terms,
such as the three terms where the derivatives $X_a$ hit the profiles 
(these are similar to the first term in \eqref{ra24}) or the symbol and the various cutoffs.

The term \eqref{ra51} comes from canceling the $\pv 1/p$ with the $p$ factor in the right-hand side of \eqref{raid2};
note the factor of $s$ coming from differentiating the phase.
Integrating by parts in all the three variables $\eta,\eta'$ and $\s$ this can be estimated in 
$L^2$ by $2^{m} X_{k,m}^3 \lesssim 2^m(\e_1 2^{-(3/4-)m})^3$,
where, recall, $X_{k,m}$ is the quantity defined in \eqref{wproofnot2.0}-\eqref{wproofnot2} for $k\leq0$,
and, more generally, in \eqref{Xvar}-\eqref{Xvarest}.
Upon integration over time this is an acceptable contribution for \eqref{ra25'}.

To estimate the time integral of  \eqref{ra52} 
we note that 
\begin{align}
X \frac{\what{\phi}(p)}{p} = \Phi_2(\xi,\eta,\eta',\s) \, \partial_p \frac{\what{\phi}(p)}{p}.
\end{align}
Using the $\Phi_2$ factor we can then integrate by parts in $s$ through the usual
identity $e^{is\Phi_2} = (i\Phi_2)^{-1}\partial_s e^{is\Phi_2}$
and obtain boundary terms plus additional time integrated terms;
since these can all be treated similarly we just look at the main time-integrated term, that is
(recall the definition of $F$ in \eqref{raI2})
\begin{align}\label{ra52.1}
\begin{split}
\int_0^t \iiint e^{is \Phi_2} \big[ \partial_s \wt{f}(\eta) \big]  \wt{f}(\eta')  \wt{f}(\s) 
	\, \varphi_{k_1}
	\varphi_{k_2}^{(k_0)} \, \mathfrak{m}_2(\xi,\eta,\eta',\s) 
	\, \partial_p \frac{\what{\phi}(p)}{p} \,d\eta\,d\eta'\,d\s \, ds.
\end{split}
\end{align}
The main observation here is that we can
write $\partial_p (1/p) = - \partial_\s (1/p)$ and integrate by parts in $\s$.
The worst terms is the one where $\partial_\s$ hits the exponential factor which 
will cause an additional loss of $s\approx 2^m$.
Applying Lemma \ref{lemmamultilin2} and \eqref{dtfL2} we get
\begin{align}
\begin{split}
{\| \eqref{ra52.1} \|}_{L^2} \lesssim 2^{2m} {\| P_{\sim k} \partial_s f \|}_{L^2} 
	{\| P_{\sim k} e^{-it\jnab} \W f(t) \|}_{L^\infty}^4 \lesssim \e_1^5.
\end{split}
\end{align}
A similar argument is also used and further detailed after \eqref{Cubotherid}.
This concludes the proof of \eqref{ra25'} and of the asymptotic formula \eqref{asypv}
in the main case $\{\kappa_1,\kappa_2,\kappa_3\}=\{+,+.-\}$.

\medskip
\subsubsection{Estimates of \eqref{Idelta}-\eqref{Ipv} for $\{\kappa_1,\kappa_2,\kappa_3\}\neq\{+,+,-\}$}\label{ssecraIoth}
In the non-resonant cases $\{\kappa_1,\kappa_2,\kappa_3\}\neq\{+,+,-\}$,
we show how the integrals \eqref{Idelta} and \eqref{Ipv} 
can be absorbed into the remainder term appearing in \eqref{LinfSasy}-\eqref{LinfSrem}. 
For ease of reference we recall the formulas for these integrals:
\begin{align}
\label{Idelta'}
& I_\delta(t) := \iiint e^{it \Phi_{\kappa_1 \kappa_2 \kappa_3}(\xi,\eta,\eta',\sigma)} F(t,\xi,\eta,\eta',\sigma) 
  \delta(p)\,d\eta\,d\eta' \,dp, 
\\
\label{Ipv'}
& I_{\operatorname{p.v.}}(t) := \iiint e^{it \Phi_{\kappa_1 \kappa_2 \kappa_3}(\xi,\eta,\eta',\sigma)} 
  F(t,\xi,\eta,\eta',\sigma) \frac{\widehat{\phi}(p)}{p}\,d\eta\,d\eta' \,dp,
\end{align}
where
\begin{align}
F(t,\xi,\eta,\eta',\s) =  \mathfrak{m}(\xi,\eta,\eta',\s) \wt{f}(t,\eta)\wt{f}(t,\eta')\wt{f}(t,\s).
\end{align}
and, see  \eqref{asypstar}-\eqref{asyPhi},
\begin{align}\label{asyPhi'}
\begin{split}
& \Phi_{\kappa_1 \kappa_2 \kappa_3}(\xi,\eta,\eta', \s) 
	:= \langle \xi \rangle -\kappa_1 \langle \eta \rangle - \kappa_2 \langle \eta' \rangle - \kappa_3 \langle 
	\s \rangle,
	\qquad \s := \xi - \eta  - \eta' - p,
\end{split}
\end{align}
having chosen, without loss of generality, a fixed combination of the the signs $\l_\ast,\mu_\ast,\dots$


The main idea to estimate in $\jxi^{-3/2}L^\infty$
(the time integrals of) \eqref{Idelta'} and \eqref{Ipv'} is similar to the one used in the two previous paragraphs,
based on interpolating the $\jxi^{-2} L^2$ and $\jxi^{-1}\dot{H}^1$ norms via \eqref{rainter}.

Since the phase \eqref{asyPhi'} does not have stationary points in $(\eta,\eta')$ at which it simultaneously vanishes,
we will show below that can obtain fast decay for the $L^2$ norm:
\begin{align}\label{raothest1}
\Big\| \jxi^2 
	\int_0^t I(s) \, \tau_m(s) ds \Big\|_{L^2} & \lesssim  \e_1^3 2^{-m/2} 2^{(\rho + 2\delta) m }, \qquad I \in \{I_{\delta}, I_\pv\}.
\end{align}
Moreover, thanks to \eqref{propCubother0} in Proposition \ref{propCubother}, 
we have that
\begin{align}\label{raothest2}
\Big\| \jxi \partial_\xi \int_0^t I(s) \, \tau_m(s) ds \Big\|_{L^2} & \lesssim  \e_1^3 2^{\alpha m} 
	, \qquad I \in \{I_{\delta}, I_\pv \}.
\end{align}
Interpolating \eqref{raothest1} and \eqref{raothest2} we arrive at a bound consistent with \eqref{LinfSrem}, for $\delta$ and $\delta_0$ small enough.
We are just left with proving \eqref{raothest1}.

\medskip
\noindent
{\it Proof of \eqref{raothest1}.}
The case $(\kappa_1,\kappa_2,\kappa_3)= (-,-,-)$ is the easiest since $|\Phi_{---}| \gtrsim 1$.
The cases $\{\kappa_1,\kappa_2,\kappa_3\} = \{-,-,+\}$ and $\{+,+,+\}$ are similar, so let us just concentrate on the latter sign combination.
Moreover, it suffices to only look at the more complicated case of $I_\pv$.

We start by recalling that 
\begin{align}\label{raothstatpts}
\begin{split}
& \nabla_{\eta,\eta'}\Phi_{+++} = 0 \quad \Longleftrightarrow \quad \eta=\eta'=\s = (1/3)(\xi-p) =: \xi_0,
\\
& \Phi_{+++}(\xi,\xi_0,\xi_0,\xi_0) = \jxi - 3 \langle\xi_0\rangle,
\end{split}
\end{align}
see \eqref{asystatpts}.
We let $|t|\approx 2^m$,  and dyadically localize the frequencies into
\begin{align}\label{raothloc}
|\xi|\approx 2^k, \quad |\eta| \approx 2^{k_1}, \quad |\eta'| \approx 2^{k_2}, \quad, |\s| \approx 2^{k_3}, \quad |p| \approx 2^q,
\end{align}
denote by $\varphi_{\underline{k}}$ the standard smooth cutoff that localizes to the region where \eqref{raothloc} holds,
and define the localized version of the integral \eqref{Ipv'} by
\begin{align}\label{Ipv'loc}
& I_{\underline{k}}(t) := \iiint e^{it \Phi_{+++}(\xi,\eta,\eta',\sigma)} 
  F(t,\xi,\eta,\eta',\sigma) \frac{\widehat{\phi}(p)}{p}\, \varphi_{\underline{k}}(\xi,\eta,\eta',\s) \, d\eta\,d\eta' \,dp.
\end{align}

By the usual arguments (including dealing with very small values of $|p|$ by using the $\pv$) we may reduce to prove a slightly stronger bound 
than \eqref{raothest1} (with a factor of $\delta$ instead of $2\delta$, say) for $I_{\underline{k}}$.
We split the proof into several cases:

\medskip \noindent
{\it Case $q \geq \min(k, -k)-10$.}
Let us first look at the case $k \leq 0$ for which we have $|p| \gtrsim |\xi|$. 
In this case the $\pv$ is not singular
and \eqref{Ipv'loc} is a `regular' cubic term, up to an additional factor of $2^{-k}$ coming from $1/p$;
however, since $k \geq (-2\alpha-2\delta_0) m$, see \eqref{raxi}, this represent a very small loss.
When $k_3 \geq -m/3$, an integration by parts in $p$ suffices to obtain the desired bound.
When instead $k_3 \leq -m/3$ we can use directly an $L^\infty \times L^\infty \times L^2$ trilinear H\"older estimate (in physical space):
the linear decay estimate applied to the $L^\infty$ norms gives two factors of $\e_12^{-m/2}$,
and \eqref{lembb1} applied to the $L^2$ norm of the profile with frequency $\approx 2^{k_3}$ gives an additional factor of $2^{k_3} 2^{\alpha m}$,
yielding a stronger bound than the right-hand side of \eqref{raothest1}.


When $k\geq 0$ we have $|p| \gtrsim |\xi|^{-1}$, and we can use a similar argument. 
Notice that we do not need to worry about  the loss of a possibly large factor of $|\xi|$ from $1/|p|$ thanks to the upperbound in \eqref{raxi}.
From now on we may assume that $|p| \ll \min(|\xi| , |\xi|^{-1})$.

\medskip \noindent
{\it Case $\max(|k_1-k_2|,|k_1-k_3|,|k_2-k_3|) \geq 5$.}
Without loss of generality we may assume $k_1 \geq k_2 + 5$.
Also, we may assume $k_1 \geq -m/3$ for otherwise an $L^2\times L^\infty\times L^\infty$ 
estimate will give a bound of 
$C \cdot \e_1 2^{k_1} 2^{\alpha m} \cdot (\e_1 2^{-m/2})^2$ for the $L^2$ norm of \eqref{Ipv'loc}
which is better than \eqref{raothest1}.
Let us also assume that $k_1\leq 10$ since the complementary case is easier to treat.

Since $|\eta-\eta'| \gtrsim 2^{k_1}$, we have 
\begin{align}\label{rao11}
|\partial_\eta \Phi_{+++}| \gtrsim 2^{k_1^-} .
\end{align}
Integrating by parts in $\eta$ in \eqref{Ipv'loc} we obtain one main term when the derivative hits the profiles, that is,
\begin{align}\label{rao12}
\iiint e^{it \Phi_{\kappa_1 \kappa_2 \kappa_3}(\xi,\eta,\eta',\sigma)} 
 \frac{\mathfrak{m} \varphi_{\underline{k}}}{t \, \partial_\eta \Phi_{+++}}(\xi,\eta,\eta',\sigma) 
 \, \partial_\eta \big[ \wt{f}(\eta) \wt{f}(\xi-\eta-\eta'-p) \big] 
 \wt{f}(\eta') \, \frac{\widehat{\phi}(p)}{p}\,d\eta\,d\eta' \,dp,
\end{align}
plus other faster decaying 
terms when the derivative hits the symbol or the various cutoffs.

One can check that a bound of $C 2^{-k_1^-}$ 
holds for the norm of the trilinear operator 
associated to the (localization of the) symbol $(\partial_\eta \Phi_{+++})^{-1}$, consistently with \eqref{rao11}. 
Then, an $L^2 \times L^\infty \times L^\infty$ estimate, using \eqref{raxi} and \eqref{radk1}, gives us
\begin{align}\label{rao13}
\begin{split}
{\| \jxi^2 \eqref{rao12} \|}_{L^2} & \lesssim 2^{-m} 2^{2k} \cdot 2^{-k_1^-} 
	\max\big( {\| \partial_\eta \varphi_{k_1} \wt{f} \|}_{L^2}, {\|  \partial_\eta \varphi_{k_3} \wt{f} \|}_{L^2} \big) \cdot \big(\e_1 2^{-m/2} \big)^2
	\\
	& \lesssim \e_1^3 2^{-2m}  2^{2(\alpha+\delta_0)m} 2^{m/3} \cdot 2^{\rho m},
\end{split}
\end{align}
which suffices after time integration.

\medskip 
\noindent
{\it Case $\max(|k_1-k_2|,|k_1-k_3|,|k_2-k_3|) \leq 5$.}
In particular we must also have that $k \leq \max(k_1,k_2,k_3) + 20$.
Motivated by \eqref{raothstatpts}, we further decompose dyadically
\begin{align*}
|\xi-2\eta-\eta'-p| \approx 2^{n_1}, \qquad |\xi-\eta-2\eta'-p| \approx 2^{n_2},
\end{align*}
by inserting smooth cutoffs, that we implicitly include into $\varphi_{\underline{k}}$.

\medskip \noindent
{\it Subcase $\max(n_1,n_2) \geq -m/2 + \delta m$.}
Let us assume, without loss of generality, that $n_1\geq n_2$, so that $|\xi-2\eta-\eta'-p| \gtrsim 2^{-m/2+\delta m}$.
Note that, in view of our restriction we have $n_1 \leq k_1 + 10$.
In this case we can integrate by parts in $\eta$ in the expression \eqref{Ipv'loc} using that
\begin{align*}
|\partial_\eta \Phi_{+++}| \gtrsim 2^{-3k_1^+} 2^{n_1},
\end{align*}
and that an estimate of $2^{-n_1+3k_1}$ holds for the norm of the trilinear operator associated to $(\partial_\eta\Phi_{+++})^{-1}$.
Up to easier and faster decaying terms, this integration by parts gives us the 
same main term \eqref{rao12} above.
Estimating as in \eqref{rao13} gives 
\begin{align}\label{rao13'}
\begin{split}
{\| \jxi^2 \eqref{rao12} \|}_{L^2} \lesssim 2^{-m} \cdot 2^{-n_1} 2^{3k_1^+} \cdot 
	\max\big( {\| \partial_\eta \varphi_{k_1} \wt{f} \|}_{L^2}, {\| \partial_\eta \varphi_{k_3} \wt{f} \|}_{L^2} \big) \cdot \big(\e_1 2^{-m/2} \big)^2
	\\ \lesssim \e_1^3 2^{-2m} \cdot 2^{(1/2-\delta +\rho)m}.
\end{split}
\end{align}

\medskip \noindent
{\it Subcase $\min(n_1,n_2) \leq -m/2 + \delta m$.}
Let us denote by  $I_{\underline{k},0}$ the localization of \eqref{Ipv'loc} to this region,
and note that in this case both $\eta$ and $\eta'$ are very close to $\xi_0:=(1/3)(\xi-p)$, 
and so is $\s=\xi-\eta-\eta'-p$.
More precisely, we can see that 
\begin{align*}
|\Phi_{+++}(\xi,\eta,\eta',\s)| \gtrsim \big| \jxi - 3 \langle \xi_0 \rangle \big| + O(2^{-m/2 + \delta m}) \gtrsim \jxi^{-1},
\end{align*}
since $|p| \ll \jxi^{-1}$, and we have the upperbound \eqref{raxi} for $|\xi|$.
It is then possible to integrate by parts in $s$ in the time integral of $I_{\underline{k},0}$,
incurring in a minimal loss of $2^{k^+}$. 
Then, from the usual trilinear H\"older estimates, using the bound \eqref{dtfL2} for $\partial_t f$ 
the $H^4$-type a priori bound in \eqref{propbootfas} and the $L^\infty_x$ decay, we can see that
\begin{align*}
\Big\| \jxi^2 \int_0^t I_{\underline{k},0}(s,\xi) \, \tau_m(s) ds \Big\|_{L^2}  
  \lesssim \e_1^3 2^{-m+p_0m} + 2^{m} \cdot \e_1^5 2^{-2m+ p_0 m}.
\end{align*} 
This concludes the proof of \eqref{raothest1} and of the main Proposition \ref{propLinfS}.



\bigskip
\section{Estimates of lower order terms}\label{secw'}

This section contains estimates for all the terms that have not been treated
in Sections \ref{secBoot}-\ref{secLinfS}:

\begin{itemize}

\smallskip
\item In Subsection \ref{secQRother} we prove the weighted bound for all the quadratic interactions that 
are not covered in Section \ref{secwR}.

\smallskip
\item In Subsection \ref{SsecSob} we complete the proof of the a priori bounds on the Sobolev-type
component of our norm by estimating the regular quadratic terms $\mathcal{Q}^R(f,f)$,
which were left out from the proofs in Subsection \ref{secBoot};
see Lemma \ref{lemQRexp} and the first line of \eqref{QRexp}.

\smallskip
\item In Subsection \ref{SsecLinfR} we prove the Fourier-$L^\infty$ bound for the regular quadratic terms,
and for all other terms that are not the main ones covered in Section \ref{secLinfS}.

\smallskip
\item Finally, Subsection \ref{Cubother} contains the estimates for the weighted norms 
of all the cubic interactions $\mathcal{C}^{S1,2}_{\k_1\k_2\k_3}$, see \eqref{CubicS}, which were left out from 
the analysis of Section \ref{secwL}; these are of two types:
the 
interactions with $\{\k_1,\k_2,\k_3\}=\{++-\}$ with not all frequencies close to $\sqrt{3}$ (or $-\sqrt{3}$),
and the interactions corresponding to all the other signs combinations.

\end{itemize}

\medskip
\subsection{Other quadratic interactions}\label{secQRother}
Here we estimate the weighted norm of the regular quadratic term $\mathcal{Q}^{R}$, see \eqref{QR1},
for all the interactions that are not the main ones considered in Section \ref{secwR}.
These are:
\begin{itemize}

\item The interactions with $(\iota_1\iota_2)=(++)$ that do not satisfy \eqref{wRmain} 
or, more precisely \eqref{red1}, \eqref{red2}; 
with the notation from 
\eqref{wproofnot1++} and \eqref{wproofdecest}-\eqref{wproofdecpar}
these are interactions where we either have $\ell > -7\beta'm$ or $k_1>-10$.

\item The interactions with $(\iota_1\iota_2)\neq(++)$.

\end{itemize}

\noindent
By estimating these terms we will complete the proof of the main Proposition \ref{prowR}.
Several of the arguments that we are going to use below are along the same lines as in Section \ref{secwR},
and simpler in many cases, so we will omit some details.

\smallskip
\subsubsection{Notation and preliminary reductions}\label{ssecQR}
Let us begin by recalling some definitions. 
Recall the notation \eqref{wproofQR1}-\eqref{wproofQR2}, 
\begin{align}\label{othQR}
\begin{split}
\mathcal{Q}^R_{\iota_1\iota_2}[a,b](t,\xi) & = -\iota_1\iota_2 \iint e^{it \Phi_{\iota_1\iota_2}(\xi,\eta,\s)} 
  \, \mathfrak{q}_{\iota_1\iota_2}(\xi,\eta,\s) \, \wt{a}_{\iota_1}(\eta) \, \wt{b}_{\iota_2}(\s) \, d\eta \, d\s,
\\
\Phi_{\iota_1\iota_2}(\xi,\eta,\s) & := \jxi - \iota_1 \jeta - \iota_2 \jsig,
\end{split}
\end{align}
where we omit the irrelevant signs $\kappa_1,\kappa_2$ and 
the indicator functions according to Remark \ref{Remkappas}; see also Remark \ref{remdxi} and Lemma \ref{lemdxiQR}.
Recall 
the definition of the main localized operator from \eqref{wproofnot1}: 
\begin{align}\label{othImain}
\begin{split}
I^{p,k_1,k_2}_{\iota_1\iota_2}[a,b](t,\xi) & := \iint e^{it \Phi_{\iota_1\iota_2}(\xi,\eta,\s)} 
  \, \varphi_p^{(p_0)}\big(\Phi_{\iota_1\iota_2}(\xi,\eta,\s)\big) 
  \, \mathfrak{q}_{\iota_1\iota_2}(\xi,\eta,\s) \, 
  \\ & \qquad \times \varphi_{k_1}(\eta) \wt{a}_{\iota_1}(\eta) \, \varphi_{k_2}(\sigma) \wt{b}_{\iota_2}(\s) \, d\eta \, d\s,
  \qquad p_0 = -m + \d m,
\\
\Phi_{\iota_1\iota_2}(\xi,\eta,\s) & := \jxi - \iota_1 \jeta - \iota_2 \jsig,
\end{split}
\end{align}
where $t\approx 2^m$, $m=0,1,\dots$;
also recall from the definition of $\mathfrak{q}_{\iota_1\iota_2}$ in \eqref{QR2}, 
with $\mu_R$ as in \eqref{muR}-\eqref{muR'}, 
and Remark \ref{Remkappas}, that we may assume
for all $a,b,c \geq 0$ and arbitrarily large $N$,
\begin{align}\label{HFloss}
|\partial_\xi^a \partial_\eta^b \partial_\sigma^c  \mathfrak{q}_{\iota_1\iota_2}(\xi,\eta,\sigma) |
  \lesssim \frac{1}{\jeta\jsig} \cdot \big[ 1 + \inf_{\mu,\nu}|\xi - \mu\eta-\nu\sigma| \big]^{-N}
  \cdot R(\eta,\s)^{a+b+c+1},
\end{align}
where $R(\eta,\s) \approx \min(\jeta,\jsig)$, see \eqref{Rfactor}. 
Notice in particular that the symbol decays very fast when one of the frequencies $(\xi,\eta,\s)$ 
is much larger than the other two;
this will allow us to concentrate on ``diagonal" interactions where $\max(|\xi|,|\eta|,|\s|) \approx \med(|\xi|,|\eta|,|\s|)$.

In view of the preliminary reductions made in Section \ref{secwR}, see in particular the estimates leading 
to Lemma \ref{lemred}, it suffices to obtain the two following estimates:
\begin{align}\label{othest++l}
\begin{split}
2^m 2^k \left\| \varphi_k(\xi) \chi_{\ell,\sqrt{3}}(\xi)  \int_{0}^t I^{p,k_1,k_2}_{++}(f,f)(\xi,s) 
  \, \tau_m(s)\, ds \right\|_{L^2_\xi} \lesssim \e_1^2 \, 2^{-\beta\ell} 2^{-2\beta'm}, 
  \\ \ell > - 7\beta'm \qquad \mbox{or}
  \qquad  k_1 > - 10,
\end{split}
\end{align}
and 
\begin{align}\label{othestsigns}
\begin{split}
2^m 2^k \left\| \varphi_k(\xi) \chi_{\ell,\sqrt{3}}(\xi)
  \int_{0}^t I^{p,k_1,k_2}_{\iota_1\iota_2}(f,f)(s,\xi) \, \tau_m(s)\, ds \right\|_{L^2_\xi} \lesssim 
  \e_1^2 \, 2^{-\beta\ell} 2^{-2\beta'm},
  \\ \mbox{with} \qquad (\iota_1\iota_2) \in \{ (+-), (-+), (--) \}.
\end{split}
\end{align}
We assume without loss of generality that
$$
k_1 \geq k_2.
$$

Notice that in addition to the localizations already present in Lemma \ref{lemred},
we have included here also a localization in $|\xi|\approx 2^k$,
and a factor of $2^k$ on the left-hand sides of \eqref{othest++l}-\eqref{othestsigns},
which is consistent with the fact that $\jxi\nabla_\xi\Phi_{\iota_1\iota_2} = \xi$;
see the formula \eqref{wproof5}, and recall that this factor 
was disregarded in the estimates of Section \ref{secwR} since there we were only looking at the case $|\xi|\approx \sqrt{3}$.
For small $\xi$ this factor turns out to be helpful in the analysis of the signs combinations other than $(++)$. 

Notice also that in both \eqref{othest++l} and \eqref{othestsigns} 
we have discarded the summations over $(k,k_1,k_2)$ (and $p$) and reduced ourselves to a bound 
for fixed triples $(k,k_1,k_2)$.
To justify this reduction it suffices to show how to bound the sums over $\max(k,k_1,k_2)\geq 10m$ or $\min(k,k_1,k_2) \leq -10m$,
because then the sum over the remaining $O(m^3)$ terms ($O(m^4)$ when we include $p$) 
can be accounted for by the factor of $2^{-2\beta' m}$ (and the lack of the $2^{\alpha m}$ factor) 
as we did for the parameters $k_1,k_2$ and $p$ before (see the paragraph after Lemma \ref{lemred}).

\def\conv{2^{-20| k^+ - k_1^+|}} 

Let us briefly explain how to deal with the cases $\max(k,k_1)\geq 10m$ or $\min(k,k_2) \leq -10m$.
Observe that the pointwise bound on the symbol from \eqref{HFloss} gives us, on the support of the integral,
\begin{align}\label{sest}
|\mathfrak{q}_{\iota_1\iota_2}(\xi,\eta,\sigma) | \lesssim 2^{-k_1^+} \conv \big[ 1 + \inf_{\mu,\nu}|\xi - \mu\eta-\nu\sigma| \big]^{-5};
\end{align}
then, Young's inequality yields
\begin{align}\label{HFksum0}
\begin{split}
2^m 2^k & \left\| \varphi_k(\xi) \chi_{\ell,\sqrt{3}}(\xi)  \int_{0}^t I^{p,k_1,k_2}_{++}(f,f)(\xi,s) 
  \, \tau_m(s)\, ds \right\|_{L^2_\xi} 
  \\ & \lesssim 2^{2m} 2^k \cdot 2^{-k_1^+} \conv \cdot {\| \varphi_{k_1} \wt{f} \|}_{L^2} {\| \varphi_{k_2} \wt{f} \|}_{L^1}
  \\ & \lesssim 2^{2m} 2^k \cdot 2^{-k_1^+} \conv \cdot \min\big(2^{-4k_1},2^{k_1/2}\big) 2^{\alpha m} \e_1 
  \cdot \min \big(2^{k_2}, 2^{-7k_2/2}\big) 2^{\alpha m} \e_1,
\end{split}
\end{align}
having using the a priori bound on the $H^4$ Sobolev norm in the last inequality.
If $\max(k,k_1)\geq 10m$ and $|k_1-k| <10$ we can use the factor of $2^{-5k_1}$ in \eqref{HFksum0}
to sum over $k$ and $k_1$ (the sum over $k_2$ can be done independently) 
and obtain a stronger upper bound than the right-hand sides of \eqref{othest++l}-\eqref{othestsigns};
when instead $|k-k_1| \geq 10$ we can use the decay of the symbol away from the diagonal
which results in the extra power of $2^{-20\max(k,k_1)}$ in \eqref{HFksum0}. 

In the case $k \leq -10m$ 
the factor of $2^k$ in front of the estimate \eqref{HFksum0} already allows to sum over $k$ 
and again obtain stronger bounds than \eqref{othest++l}-\eqref{othestsigns}.
Similarly \eqref{HFksum0} suffices if $k_2 \leq -10m$.

\medskip
Before proving \eqref{othest++l} and \eqref{othestsigns} we show how to deal with relatively large input frequencies,
and, in particular, how to handle the non-standard estimate for the symbol $\mathfrak{q}$ appearing in \eqref{HFloss}.

\smallskip
\subsubsection{High frequencies}\label{ssecothHF}
From the estimates for the symbol $\mathfrak{q}$  in \eqref{HFloss}.
we see that $\mathfrak{q}$ is essentially smooth, and fast decaying in the quantity
$\inf_{\mu,\nu}|\xi - \mu\eta-\nu\sigma|$, but has the non-standard feature
that its derivatives in $\xi,\eta,\sigma$ might grow for frequencies larger than $1$.
Therefore, in each of our integration by parts arguments there is a potential 
loss of a factor of $R(\eta,\s)$ when derivatives hit the symbol. 
However, thanks to the $H^4$ control on our solution we can comfortably handle this, 
using the following lemma:

\begin{lem}[High frequencies]\label{lemHF}
Assume $k_2\leq k_1$. For all $s\approx 2^m$ we have
\begin{equation}\label{HFest}
\begin{split}
& {\| \varphi_k (\xi) I^{p,k_1,k_2}_{\iota_1\iota_2}(f,f)(s,\xi) \|}_{L^2} \lesssim 2^{-k_1^+} \conv \cdot
	 {\| \varphi_{k_1} \wt{f} \|}_{L^2} 
	 \\ &\qquad \qquad \qquad \cdot \min \Big( {\| \varphi_{k_2} \wt{f} \|}_{L^1},  
	2^{-m-k_2^-} \big( (2^{k_2^+}+2^{-k_2^-}) {\| \varphi_{k_2} \widetilde{f} \|}_{L^1} 
	+ {\| \partial_\xi(\varphi_{k_2} \wt{f}) \|}_{L^1} \big) \Big).
 \end{split}
 \end{equation}
\end{lem}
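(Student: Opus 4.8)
\textbf{Plan for the proof of Lemma \ref{lemHF}.}
The plan is to prove \eqref{HFest} by separating the contribution of the symbol from that of the profiles and by exploiting two features of $I^{p,k_1,k_2}_{\iota_1\iota_2}$: the fast spatial decay of $\mathfrak{q}_{\iota_1\iota_2}$ away from the "diagonal" $\xi \approx \mu\eta + \nu\sigma$, and the oscillation $e^{it\jsig}$ in the smaller variable $\sigma$. First I would reduce to the region $\max(|\xi|,|\eta|) \approx \med(|\xi|,|\eta|)$ and $|k-k_1| \lesssim 1$: outside of it, the bound $\big[ 1 + \inf_{\mu,\nu}|\xi-\mu\eta-\nu\sigma|\big]^{-N}$ in \eqref{HFloss} forces $\inf_{\mu,\nu}|\xi-\mu\eta-\nu\sigma| \gtrsim 2^{\max(k,k_1)}$, which produces the factor $\conv$ and a gain of arbitrarily many powers of $2^{-\max(k,k_1)}$, enough to close trivially (this is the computation already sketched in \eqref{HFksum0}). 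So from now on $|k_2| \leq k_1 + O(1)$, and after a harmless change of the $\mu,\nu$ signs I may assume $p = \xi - \eta - \sigma$ (so that the $\varphi_p^{(p_0)}(\Phi)$ cutoff sits on the region where $\Phi_{\iota_1\iota_2}$ is of size $\approx 2^p$).

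The first bound in the minimum, namely $\conv \, 2^{-k_1^+}{\| \varphi_{k_1}\wt f\|}_{L^2}{\| \varphi_{k_2}\wt f\|}_{L^1}$, follows immediately: estimate $|\mathfrak{q}_{\iota_1\iota_2}| \lesssim 2^{-k_1^+}\conv$ on the support (using $\jeta\jsig \gtrsim 2^{k_1^+}$ and the spatial decay), pull out $\sup_\xi \int |\varphi_{k_2}(\sigma)\wt f(\sigma)|\,d\sigma = {\| \varphi_{k_2}\wt f\|}_{L^1}$ inside the $\xi$-$L^2$ norm over $|\xi|\approx 2^k$ (which is comparable to $2^{k_1}$), and then the $\eta$-integral is a convolution-type estimate bounded by ${\| \varphi_{k_1}\wt f\|}_{L^2}$ via Cauchy--Schwarz. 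This is the same mechanism as in \eqref{wproof6.0a}. For the second bound in the minimum one integrates by parts once in $\sigma$, using $\frac{1}{it\partial_\sigma\Phi_{\iota_1\iota_2}} \partial_\sigma e^{it\Phi_{\iota_1\iota_2}} = e^{it\Phi_{\iota_1\iota_2}}$ and $|\partial_\sigma\Phi_{\iota_1\iota_2}| = |\sigma|\jsig^{-1} \approx 2^{k_2}\jsig^{-1}$, hence $|(t\partial_\sigma\Phi_{\iota_1\iota_2})^{-1}| \lesssim 2^{-m-k_2}\jsig \approx 2^{-m-k_2^-}$. The integration by parts distributes the $\sigma$-derivative onto three places: onto $\varphi_{k_2}(\sigma)\wt f(\sigma)$, giving the term with ${\| \partial_\xi(\varphi_{k_2}\wt f)\|}_{L^1}$; onto the factor $\jsig/\sigma$ arising from $(\partial_\sigma\Phi_{\iota_1\iota_2})^{-1}$, which produces an extra $2^{-k_2}$ and hence the $2^{-k_2^-}{\| \varphi_{k_2}\wt f\|}_{L^1}$ contribution after absorbing a harmless $\varphi_{[k_2-5,k_2+5]}$; and onto $\mathfrak{q}_{\iota_1\iota_2}$ together with the cutoff $\varphi_p^{(p_0)}(\Phi_{\iota_1\iota_2})$. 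On this last piece I would invoke \eqref{HFloss} with $c=1$, which costs exactly a factor $R(\eta,\sigma) \approx \min(\jeta,\jsig) \lesssim 2^{k_2^+}$, giving the $2^{k_2^+}{\| \varphi_{k_2}\wt f\|}_{L^1}$ contribution; the derivative landing on $\varphi_p^{(p_0)}(\Phi)$ produces at worst $|\frac{1}{t\partial_\sigma\Phi}(\varphi_p^{(p_0)})'(\Phi)\partial_\sigma\Phi| \lesssim 2^{-m-p} \leq 2^{-\delta m}$, which is a strictly lower-order remainder (this is exactly the device recorded in Remark \ref{rem1/Phi}). Each of the resulting integrals is then estimated by the same Schur/Cauchy--Schwarz bookkeeping as in the first bound, always pulling the $\sigma$-object out in $L^1_\sigma$ and keeping $\varphi_{k_1}\wt f$ in $L^2_\eta$, and carrying the $\conv\,2^{-k_1^+}$ from the symbol throughout.

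The one point that needs a little care — and is the closest thing to an obstacle — is making sure that putting $c=1$ in \eqref{HFloss} (the loss of $R(\eta,\sigma)$) is genuinely controlled by $2^{k_2^+}$ and not by $2^{k_1^+}$: this is where the hypothesis $k_2 \leq k_1$ and the precise form $R(\eta,\sigma) = \jeta\jsig(\jeta+\jsig)^{-1} \approx \min(\jeta,\jsig)$ (see \eqref{estimatesR}) are used, since $\partial_\sigma$ acting on $\mathfrak{q}$ costs $R \approx \min(\jeta,\jsig) \lesssim \jsig \approx 2^{k_2^+}$, not $\jeta$. Once this is in place the lemma assembles routinely from the pieces above; I would not grind through the elementary $L^1$/$L^2$ convolution inequalities, as they are identical in form to those in the proof of Lemma \ref{lemwproof6} and Lemma \ref{Lemma1}.
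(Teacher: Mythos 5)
Your argument matches the paper's: the first bound in the minimum is the crude estimate $|\mathfrak{q}_{\iota_1\iota_2}| \lesssim 2^{-k_1^+}\conv$ (from \eqref{HFloss} with $a=b=c=0$ and $R(\eta,\sigma)\approx 2^{k_2^+}$) followed by Young's inequality, and the second comes from one integration by parts in $\sigma$ using $|\partial_\sigma\Phi_{\iota_1\iota_2}|\approx 2^{k_2^-}$, distributing the derivative exactly as you describe, with the $\varphi_p^{(p_0)}(\Phi)$-cutoff handled by the mechanism of Remark~\ref{rem1/Phi}. You are in fact slightly more explicit than the paper's writeup in tracking the contribution of $\partial_\sigma$ falling on the factor $(\partial_\sigma\Phi_{\iota_1\iota_2})^{-1}\approx \jsig/\sigma$ generated by the integration by parts — this is precisely what produces the $2^{-k_2^-}\|\varphi_{k_2}\wt f\|_{L^1}$ piece of \eqref{HFest}, a case the paper's proof does not single out.
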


\begin{proof}
Bringing the absolute values inside the integral gives us the basic bound
\begin{align*}
|I^{p,k_1,k_2}_{\iota_1\iota_2}(f,f)(t,\xi)| 
  \lesssim \iint
  |\mathfrak{q}_{\iota_1\iota_2}(\xi,\eta,\s)|
  \cdot \varphi_{k_1}(\eta) |\wt{f}_{\iota_1}(\eta)| \cdot \varphi_{k_2}(\sigma) |\wt{f}_{\iota_2}(\s)| \, d\eta \, d\s,
\end{align*}
which, using \eqref{HFloss} (with $a=b=c=0)$,
$R(\eta,\sigma) \approx 2^{k_2^+}$, and Young's inequality, implies 
\begin{align}\label{HFestpr2}
{\| \varphi_k (\xi) I^{p,k_1,k_2}_{\iota_1\iota_2}(f,f)(s,\xi) \|}_{L^2} 
  \lesssim  2^{-k_1^+}  \conv \cdot {\| \varphi_{k_1} \wt{f} \|}_{L^2} \cdot  {\| \varphi_{k_2} \wt{f} \|}_{L^1}.
\end{align}

To prove \eqref{HFest} we need to show, for $k_2 \geq -m/2$, that
\begin{align}\label{HFestpr1}
\begin{split}
{\| \varphi_k (\cdot) I^{p,k_1,k_2}_{\iota_1\iota_2}[f,f](\cdot,s) \|}_{L^2} 
  \lesssim 2^{-k_1^+}  \conv  \cdot {\| \varphi_{k_1} \wt{f} \|}_{L^2} \cdot 2^{-m-k_2^-}
  \\ \cdot \big( {\| \partial_\s(\varphi_{k_2} \wt{f}) \|}_{L^1} + (2^{k_2^+}+2^{-k_2^-}) {\|\varphi_{k_2} \wt{f}\|}_{L^1} \big).
\end{split}
\end{align}
This is done integrating by parts in $\s$ first, and then estimating as in \eqref{HFestpr2} above.
More precisely, we look at the formula \eqref{othImain} 
and note that $|\partial_\s \Phi_{\iota_1\iota_2}| \approx 2^{k_2^-}$.
Using this, and the usual identity 
$(is\partial_\s\Phi_{\iota_1\iota_2})^{-1}\partial_\s e^{is\Phi_{\iota_1\iota_2}} =  e^{is\Phi_{\iota_1\iota_2}}$
we can integrate by parts in $\s$ gaining the factor $2^{-m-k_2^-}$.
When $\partial_\s$ hits $\varphi_p^{(p_0)}$ we use the argument that led to
Remark \ref{rem1/Phi} and repeat the integration by parts as needed.
If $\partial_\s$ hits the symbol $\mathfrak{q}$, we use \eqref{HFloss} to deduce a bound of
\begin{align*}
C \sup_{|\eta|\approx2^{k_1},|\s|\approx2^{k_2}} R(\eta,\s)^2
  \cdot 2^{-k_1^+}\conv \cdot {\| \varphi_{k_1} \wt{f} \|}_{L^2} \cdot 2^{-k_2^+} 2^{-m-k_2^-}{\| \varphi_{k_2} \wt{f} \|}_{L^1}
  \\
\lesssim 2^{-k_1^+} \conv \cdot {\| \varphi_{k_1} \wt{f} \|}_{L^2} \cdot 2^{k_2^+}
	2^{-m-k_2^-}{\| \varphi_{k_2} \wt{f} \|}_{L^1}.
\end{align*}
If instead $\partial_\s$ falls on $\varphi_{k_2} \wt{f}$, we estimate using Young's
and finally obtain \eqref{HFestpr1}.
%
\end{proof}

Let us define, just for the purpose of the estimate in this section, the following variant of $X_{k,m}$, 
see \eqref{wproofnot2.0},
which we still denote in the same way, to take into account also frequencies $k\geq 0$:
\begin{align}\label{Xvar}
X_{k,m}(c) := \min\Big( {\| \varphi_k \wt{c} \|}_{L^1}, 2^{-m -k^-} \big({\|  \partial_\xi(\varphi_k\wt{c}) \|}_{L^1} 
  + (2^{k^+} + 2^{-k^-}) {\| \varphi_{[k-5,k+5]} \wt{c} \|}_{L^1}\big) \Big). 
\end{align}
Note that this coincides with \eqref{wproofnot2.0} when $k \leq 0$.
Also note that this extended definition still satisfies the
upper bound $X_{k,m}(f(t) \tau_m(t)) \lesssim \e_1 2^{-3m/4} 2^{\alpha m}$, see \eqref{wproofnot2},
which we used many times before;
more precisely, using the a priori bounds \eqref{propbootfas} (see also \eqref{apriori11bis}-\eqref{apriori13})
we can estimate:
\begin{align}\label{Xvarest}
X_{k,m}(f(t)\tau_m(t))
  \lesssim \e_1 \min  \big(2^{-7k^+/2}, 2^{3k^-/2}, 2^{-m-k^-/2}, 2^{-m-k^+/2} \big) 2^{\alpha m}.
\end{align}

As a consequence of Lemma \ref{lemHF}, we see that
\begin{align}\label{HFest1}
\begin{split}
{\| \varphi_k (\xi) I^{p,k_1,k_2}_{\iota_1\iota_2}[f,f](s,\xi) \|}_{L^2}
 & \lesssim 2^{-k_1^+} \conv \cdot {\| \varphi_{k_1} \wt{f} \|}_{L^2} \cdot X_{k_2,m}(f) 
 \\ 
 & \lesssim 2^{-k_1^+} \conv \cdot  2^{-4k_1^+} {\| f \|}_{H^4} \cdot X_{k_2,m}(f).
\end{split}
\end{align}
Then, we see that if $\max(k_1,k_2) =k_1 \geq m/3$, by our a priori bounds we get
\begin{align}\label{HFest2}
\begin{split}
2^k {\| \varphi_k (\xi) I^{p,k_1,k_2}_{\iota_1\iota_2}[f,f](s,\xi) \|}_{L^2} 
& \lesssim 2^k 2^{-k_1^+} \conv \cdot \e_1 2^{\alpha m} 2^{-4m/3} \cdot \e_1 2^{-3m/4} 2^{\alpha m} 
 \\ & \lesssim \e_1^2 2^{-2m - 5\beta'm}.
\end{split}
\end{align}
This implies \eqref{othest++l}-\eqref{othestsigns} in the large frequencies regime $\max(k_1,k_2) \geq m/3$.

\begin{rem}[Handling the derivatives of $\mathfrak{q}$ for large frequencies]\label{remHF}
Thanks to the above argument, we are only left with proving the main bounds \eqref{othest++l}-\eqref{othestsigns}, 
for $\max(k,k_1) \leq m/3$ (and $\min(k,k_2) \geq -10m$, say).
In particular this allows us to disregard all the terms in our integration by parts arguments in frequency space
where derivatives fall on the symbol $\mathfrak{q}$,
despite the non-standard growth of its derivatives for frequencies larger than $1$;
see the factor of $R(\eta,\s)$ in \eqref{HFloss}.
Indeed, on the support of \eqref{othImain}, we have $R(\eta,\s) \approx 2^{k_2^+}$  (recall that we also assume $k_2\leq k_1$)
so that each derivative of $\mathfrak{q}$ can cost at most a factor of $2^{m/3}$,
while the gain from any integration by parts argument is always at least $2^{-m/2}$.
Therefore, a term where a derivative hits $\mathfrak{q}$ is always better behaved
than terms where derivatives hit the profiles (or other cutoffs). We will then analyze only these latter types of terms.
\end{rem}

\medskip
\subsubsection{Proof of \eqref{othest++l}}\label{ssecoth++l} 

Recall the relation between the parameters \eqref{wnormparam0},
and that by symmetry we assume $k_1\geq k_2$.
Also, recall that we are assuming that at least one of the two conditions $\ell > -7\beta'm$ or $k_1 > -10$ holds true.
As usual we divide the proof into a few cases.

\medskip 
\noindent
{\it Step 1: $k \leq -5$ or $k_1 \leq -4\beta'm-10$}.
Let us first discuss the case $k\leq -5$ where we have 
$|\xi| \ll 1$ and therefore $|\Phi_{++}| \geq 2 - \jxi \gtrsim 1$. 
In this case we can integrate by parts 
in $s$ without introducing any loss, and then analyze the resulting quartic terms 
(boundary terms are easy to handle) as done in Section \ref{secwR} starting on page \pageref{l<pulI}.
Here we should take some care since
the input frequencies $\eta$ and $\sigma$ could be close to $\pm\sqrt{3}$;
then, integration by parts in one of these frequencies could potentially introduce a loss,
since, for example, the $L^2$-norm of $\partial_\eta\wt{f}(\eta)$ degenerates for $\eta$ close to $\sqrt{3}$.
However, it suffices to observe that in this case
the quantity $X_{k_1,m}$, see \eqref{Xvar}, 
satisfies
for any $j_1 \in [-\gamma m, 0] \cap \Z$ (hence $|k_1|\leq 5$) 
\begin{align}\label{Xest}
\begin{split}
X_{k_1,m}\big(\wtF^{-1} (\chi_{j_1,\sqrt{3}} \wt{f}(t)) \tau_m(t) \big)
  & \approx 
  \min\big( {\| \varphi_{[k_1-5,k_1+5]} \chi_{j_1,\sqrt{3}} \wt{f} \|}_{L^1}, 
  2^{-m} {\|  \partial_\xi(\varphi_{k_1} \chi_{j_1,\sqrt{3}}\wt{f}) \|}_{L^1} \big)
  \\ 
 & \lesssim 2^{-m} {\|  \partial_\xi(\varphi_{k_1} \chi_{j_1,\sqrt{3}}\wt{f}) \|}_{L^1} 
  \lesssim \e_1 2^{-m} 2^{\beta'j_1 + \alpha m},
\end{split}
\end{align}
having used the consequence of the a priori bounds in \eqref{lembb3}.
The estimate \eqref{Xest} is substantially better than the general bound of 
$2^{-3m/4+\alpha m}$ used throughout Section \ref{secwR}, where we considered $k_1 < 0$.

Next, let us discuss the case $k > -5$ and $k_1 \leq -4\beta'm-10$.
In particular, since $k_1 \leq -10$, we are under the assumption that $\ell > -7\beta'm$.
Then, we see that 
$$|\Phi_{++}| \geq |2 - \jxi| - 2(\jeta-1) \approx 2^{\ell}$$ 
since $||\xi|-\sqrt{3}| \approx 2^\ell \gg 2^{2k_1}  \approx \jeta -1$. 
Therefore, we have the strong lower bound $|\Phi_{++}| \gtrsim 2^\ell \geq 2^{-7\beta'm}$ and
integration by parts in time can be used to handle this case as well. 

We can then assume
\begin{align}\label{oth++cond0}
k > -5, \qquad k_1 > -4\beta'm-10, \qquad k_2 \leq k_1, \qquad p < 2k_1-10,
\end{align}
where this last condition is just a consequence of restricting to the case $p < -8\beta'm - 30$
(the complementary case being again easier to deal with by integration by parts in time).

\medskip \noindent
{\it Step 2: Decomposition in $||\eta| -\sqrt{3}| \approx 2^{j_1}$}.
To proceed further we need to decompose the integral \eqref{othest++l} by inserting cutoffs in 
the size of $||\eta|-\sqrt{3}|\approx 2^{j_1}$. 
We first notice that if $j_1 \geq -10$, i.e. $\eta$ is away from $\pm\sqrt{3}$,
then we are in a situation which is similar to the one of Section \ref{secwR},
with the additional advantage that $|\eta|$ cannot be small
(and is in fact almost lower bounded by $1$, see \eqref{oth++cond0}).
An application of \eqref{wproofclaim2} in Lemma \ref{Lemma1}, together with the bound \eqref{sest} for the symbol,
 would then give us 
\begin{align}\label{oth++0.5}
\begin{split}
& 2^k {\big\| I^{p,k_1,k_2}_{++}\big[ \wtF^{-1}(\chi_{j_1,\sqrt{3}}\wt{f}),f \big](\cdot,s) \big\|}_{L^2}
  \\
  & \lesssim 2^k 2^{-k_1^+} \conv 
  \cdot 2^{p-k_1/2} \cdot 2^{-m-k_1} {\| \partial_\xi (\chi_{j_1,\sqrt{3}}\wt{f}) \|}_{L^2} \cdot X_{k_2,m}
  \\
  & \lesssim \e_1^2 2^{p-3k_1/2} \cdot 2^{-m+\alpha m} \cdot 2^{-3m/4+\alpha m}.
\end{split}
\end{align}
This is enough provided, for example, that $p\leq -m/4 - 10\beta' m$.
In the complementary case $p > -m/4 - 10\beta' m$ we can efficiently integrate by parts in time
and argue as in Subsections \ref{Ssecl<p} and \ref{Ssecl<2k_1}.
We may then reduce matters to the harder case $j_1 \leq -10$.

We define
\begin{align}\label{oth++1}
\begin{split}
I^{p,k,j_1,k_2}[a,b](t,\xi) := \varphi_k(\xi) \iint e^{it \Phi_{++}(\xi,\eta,\s)} 
  \, \varphi_p^{(p_0)}\big(\Phi_{\iota_1\iota_2}(\xi,\eta,\s)\big) 
  \, \mathfrak{q}^\prime(\xi,\eta,\s) \\ \times \chi_{j_1,\sqrt{3}}^{[-\gamma m,0]}(\eta) \, 
  \wt{a}_{\iota_1}(\eta) \, \varphi_{k_2}(\sigma) \wt{b}_{\iota_2}(\s) \, d\eta \, d\s,
\end{split}
\end{align}
(note that in the notation we have dispensed with the irrelevant parameter $k_1$ associated to the localization 
in $|\eta|\approx 2^{k_1}\approx 1$)
where
\begin{align}\label{s'}
& \mathfrak{q}^\prime(\xi,\eta,\s) := \mathfrak{q}^\prime_{\iota_1\iota_2;k,k_1,k_2}(\xi,\eta,\s) 
	:= \varphi_{\sim k}(\xi)\varphi_{\sim k_1}(\eta)\varphi_{\sim k_2}(\s)
	\mathfrak{q}_{\iota_1\iota_2}(\xi,\eta,\s) \cdot 2^k. 
\end{align} 
From \eqref{HFloss}, and since we are considering $|k_1| \leq 5$ and $k>-5$, we have
\begin{align}\label{s'est}
& |\partial_\eta^b \partial_\s^c \mathfrak{q}^\prime_{\iota_1\iota_2}(\xi,\eta,\s)| \lesssim 2^{(b+c)k_2^+} 2^{-20k^+}.
\end{align}
To obtain \eqref{othest++l}, it then suffices to prove the following: 


\begin{lem}\label{oth++2lem}
With the definitions \eqref{oth++1}-\eqref{s'est}, under our a priori assumptions,
we have, for all $j_1\leq -10$, 
\begin{align}\label{oth++2}
\begin{split}
2^m \left\| 
  \int_{0}^t I^{p,k,j_1,k_2}[f,f](\cdot,s) \, \tau_m(s)\, ds \right\|_{L^2_\xi} \lesssim \e_1^2 \, 2^{-3\beta'm}.
\end{split}
\end{align}
\end{lem}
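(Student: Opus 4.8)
\textbf{Proof plan for Lemma \ref{oth++2lem}.} The plan is to treat this as another instance of the ``singular interaction at $\pm\sqrt 3$'' analysis of Section \ref{secwR}, but now with the crucial simplification that the \emph{output} frequency $\xi$ is of size $\approx 1$ (indeed $|\xi|$ is essentially bounded below, since $k>-5$), while it is one of the \emph{input} frequencies, namely $|\eta|$, that lies near the bad value $\sqrt 3$ at scale $2^{j_1}$. Because of this I would first bound the modulus of resonance: with $k_1\approx 0$, $k_2\le k_1$, and $|\,|\eta|-\sqrt 3|\approx 2^{j_1}$ one has $|\partial_\eta\Phi_{++}|\approx 2^{j_1}$ and $|\Phi_{++}|\approx 2^{p}$ with the analogue of \eqref{wproofRel} relating $p$, $j_1$ and (the size of) $\xi-\sqrt3$; note in particular that $|\sigma|\approx 2^{k_2}$ is the only genuinely small frequency here. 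The first reduction is that if $p\geq -m/4-10\beta'm$, integration by parts in $s$ using $|\Phi_{++}|\approx 2^p$ gives (after expanding $\partial_s\wt f$ via \eqref{lemdtfdec} and treating the quartic terms exactly as in Steps 2--4 of Subsection \ref{Ssecl<p}, with the uncorrelated variable being $\sigma$) a bound much stronger than \eqref{oth++2}. So one may assume $p\leq -m/4-10\beta'm$, which by \eqref{wproofRel} forces $2^{j_1}\approx 2^{p/2}$ small and $|\,|\xi|-\sqrt3|\lesssim 2^p$.

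In that regime I would run the integration-by-parts-in-$\eta$ argument of Lemma \ref{Lemma1}: integrating by parts once in $\eta$ (legitimate since $|\partial_\eta\Phi_{++}|\approx 2^{j_1}$ and, on the $|\eta|\approx1$ support, $|\eta/\langle\eta\rangle|\approx1$), and using that the resulting $\eta$-measure is supported on a set of measure $\approx 2^{p-j_1}$ in $\eta$ for each fixed $(\xi,\sigma)$, together with Schur's test in the pair $(\xi,\eta)$, yields the analogue of \eqref{wproofclaim2}:
\begin{align}\label{oth++plan1}
\begin{split}
2^m\,{\big\| I^{p,k,j_1,k_2}[f,f](s,\cdot)\big\|}_{L^2}
  \lesssim 2^m\cdot 2^{p-j_1/2}\cdot 2^{-m}\,{\big\| \partial_\xi(\chi_{j_1,\sqrt3}\wt f)\big\|}_{L^2}\cdot X_{k_2,m}(f),
\end{split}
\end{align}
where $X_{k_2,m}$ is as in \eqref{Xvar}. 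Inserting \eqref{lembb3} in the form ${\|\partial_\xi(\chi_{j_1,\sqrt3}\wt f)\|}_{L^2}\lesssim\e_1 2^{-\beta j_1}2^{\alpha m}$ and \eqref{Xvarest} in the form $X_{k_2,m}(f)\lesssim \e_1 2^{-3m/4+\alpha m}$, and using $2^{j_1}\approx 2^{p/2}$ with $p\leq -m/4-10\beta'm$, the right-hand side of \eqref{oth++plan1} is bounded by $\e_1^2\, 2^{p(1-\beta)/2 - 3m/4 + 2\alpha m}\lesssim \e_1^2\,2^{-3m/4+2\alpha m}\lesssim \e_1^2\,2^{-3\beta'm}$, which is \eqref{oth++2} (here I use $\alpha<\beta'/2$ and that $3\beta'$ is small). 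Should the power of $2^p$ not quite close the estimate in a boundary subcase, one falls back on integrating by parts in $\sigma$ as well (gaining an extra $2^{-m-k_2^-}$ factor, exactly as in the passage from \eqref{wproof6.0ibp} to \eqref{wproof6.0c}), or, in the complementary range of $p$, on the time integration by parts described above; between these two mechanisms every value of $(p,j_1,k_2)$ is covered, and the number of triples is $O(m^3)$, absorbed by the gap between $2^{-3\beta'm}$ and the $2^{-2\beta'm}$ demanded in \eqref{othest++l}.

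Throughout, whenever a $\partial_\eta$ or $\partial_\sigma$ lands on the symbol $\mathfrak q'$ one invokes Remark \ref{remHF}: since $|k_1|\le 5$, $k>-5$ and we may assume $\max(k,k_1)\le m/3$ (the high-frequency range $\max(k_1,k_2)\ge m/3$ having been disposed of by \eqref{HFest2}), each symbol derivative costs at most $2^{(k_2)^+}\lesssim 2^{m/3}$, which is always beaten by the $2^{-m/2}$ (or better) gained from any integration by parts, so those terms are strictly lower order; similarly terms where a derivative hits $\varphi_p^{(p_0)}(\Phi_{++})$ are lower order by the computation in Remark \ref{rem1/Phi}. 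The main obstacle is simply the bookkeeping at the interface $2^{j_1}\approx 2^{p/2}\approx(\text{distance of }\xi\text{ to }\sqrt3)$, i.e.\ making sure that the penalization $2^{-\beta j_1}$ coming from the degenerate weighted bound \eqref{lembb3} on $\chi_{j_1,\sqrt3}\wt f$ is exactly compensated by the $2^{p-j_1/2}$ gain from the stationary-phase-in-$\eta$ estimate together with the $X_{k_2,m}$ factor from the uncorrelated variable — this is the same balance that underlies the choice of parameters \eqref{wnormparam0}, and no new inequality on $\alpha,\beta,\gamma$ is needed beyond those already imposed.
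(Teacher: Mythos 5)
Your plan rests on a misidentification of the geometry at the input frequency $|\eta|\approx\sqrt3$, and several of the identities you invoke are false as a result.

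The parameter $j_1$ measures $||\eta|-\sqrt3|$, but the phase $\Phi_{++}(\xi,\eta,\sigma)=\jxi-\jeta-\jsig$ does not degenerate in $\eta$ there: on $|\eta|\approx\sqrt3$ one has $|\partial_\eta\Phi_{++}|=|\eta|/\jeta\approx\sqrt3/2\approx 1$, independent of $j_1$. Your claim ``$|\partial_\eta\Phi_{++}|\approx 2^{j_1}$'' is therefore wrong, and with it the claims that $2^{j_1}\approx 2^{p/2}$ and that $||\xi|-\sqrt3|\lesssim 2^p$ (indeed with $|\eta|\approx\sqrt3$ and $|\sigma|\ll1$ the phase $\Phi_{++}\approx\jxi-3$ vanishes near $|\xi|\approx 2\sqrt2$, not $\sqrt3$, so the cutoff $\chi_{\ell,\sqrt3}(\xi)$ is not what localizes the output here). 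The value $\sqrt3$ is special for $\eta$ only because the $W_T$ weighted norm of $\wt f$ degenerates there; it is not a stationary point of the phase in $\eta$. Consequently your estimate \eqref{oth++plan1}, whose $2^{p-j_1/2}$ factor comes from an $\eta$-measure $\approx 2^{p-j_1}$ derived from the false size of $\partial_\eta\Phi_{++}$, and the numerics built on it (which also implicitly set $j_1\approx p/2$), do not hold.

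The correct Schur/measure estimate for this localization gives a gain of only $2^p$ in $\eta$ (since $|\partial_\eta\Phi_{++}|\approx 1$), together with a genuine \emph{loss} of $2^{-\beta j_1}$ from ${\|\chi_{j_1,\sqrt3}\partial_\xi\wt f\|}_{L^2}$, and there is no relation between $j_1$ and $p$ to cancel it. The paper handles this with two mechanisms: (i) a direct bound using the $L^1$-based quantity $X_{k_1,m}(\wtF^{-1}(\chi_{j_1,\sqrt3}\wt f))\lesssim\e_1 2^{-m}2^{\beta' j_1+\alpha m}$ from \eqref{Xest} (Cauchy--Schwarz at scale $2^{j_1}$ converts the $2^{-\beta j_1}$ loss into a $2^{\beta' j_1}$ gain), combined with $X_{k_2,m}(f)$, which disposes of very small $k_2$; and (ii) the Schur estimate \eqref{oth++3} with the crude bound ${\|\chi_{j_1}^{[-\gamma m,0]}\partial_\xi\wt f\|}_{L^2}\lesssim\e_1 2^{\gamma\beta m+\alpha m}$, which closes when $p\le-m/2-5\beta'm$ precisely because $\gamma\beta+2\alpha<1/4$ from \eqref{wnormparam0}. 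For $p\ge-m/2-5\beta'm$ one integrates by parts in $s$, but the ensuing quartic analysis is \emph{not} a verbatim copy of Subsection \ref{Ssecl<p}: the input near $\sqrt3$ must now be controlled via \eqref{Xest} throughout (yielding $2^{-m+\alpha m}2^{\beta' j_1}$ rather than $2^{-3m/4+\alpha m}$), and the uncorrelated-variable integration in $\sigma$ proceeds under the extra constraint $k_2\gtrsim -2m/3-10\beta'm$ forced by the preliminary bound \eqref{oth++3.0}. Your plan glosses over both the mechanism (i) and the modifications needed in the time-integration-by-parts regime, and the false geometric input invalidates the quantitative bookkeeping in the easy regime.
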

Note that in \eqref{oth++2} we have discarded the factor of $2^{-\beta\ell}$ on the right-hand side,
which is of little help when $\ell$ is close to $0$.


Before proceeding with the proof of Lemma \ref{oth++2lem},
let us observe that the same argument used to prove \eqref{wproofclaim1} in Lemma \ref{Lemma1}, gives us
(we can use the $L^2_\xi$ norm instead of the $L^\infty_\xi$ by H\"older 
and \eqref{s'est})
\begin{align}\label{oth++3.0}
\begin{split}
{\| I^{p,k,j_1,k_2}[f,f](s) \|}_{L^2} 
  & \lesssim X_{k_1,m}\big(\wtF^{-1}( \chi_{j_1,\sqrt{3}}\wt{f}(s)) \tau_m(s) \big) 
  \cdot X_{k_2,m}(f)
  \\
  & \lesssim \e_1^2 \cdot 2^{-m+\alpha m} 2^{\beta' j_1} \cdot \e_1 2^{3k_2/2} 
  2^{\alpha m}.
\end{split}
\end{align}
In particular we have \eqref{oth++2} when $3k_2/2 \leq -m-2\alpha-3\beta'm$.
We may therefore assume from now on that
\begin{align}\label{oth++3.0cons1} 
k_2 \geq - 2m/3 - 10\beta'm.
\end{align}

For later use we record here also the following analogue of the estimate \eqref{wproofclaim2} applied to \eqref{oth++1}:
\begin{align}\label{oth++3}
\begin{split}
{\big\| I^{p,k,j_1,k_2}[f,f](s) \big\|}_{L^2} 
  & \lesssim 2^{p} \cdot 2^{-m} {\big\| \partial_\xi (\chi_{j_1,\sqrt{3}}^{[-\gamma m,0]} \wt{f}) \big\|}_{L^2} 
  \cdot X_{k_2,m}(f)
  \\
  & \lesssim 2^{p} \cdot 2^{-m} \e_1 2^{\gamma \beta m + \alpha m} \cdot 
  \e_1 \min(2^{3k_2/2},2^{-m + k_2/2})2^{\alpha m}.
\end{split}
\end{align}

\medskip \noindent
{\it Step 3: $p \leq -m/2 - 5\beta'm$}.
In this case, integration by parts in time is not efficient.
However, since $\gamma \beta + 2\alpha \leq 1/4$, see \eqref{wnormparam0},
we see that \eqref{oth++3} already suffices to give \eqref{oth++2}.

\medskip \noindent
{\it Step 4: $p \geq -m/2 - 5\beta'm$}.
This is the hardest case in the proof of \eqref{othest++l}.
The basic idea is to integrate by parts as in Subsection \ref{Ssecl<p},
and analyze the resulting terms, which are similar to those in \eqref{l<pibp}-\eqref{l<pibp3},
with the notation \eqref{l<pulI} and the identities \eqref{l<p5.0}-\eqref{l<p8}.
In the present case we have similar formulas, with a different localization in $\eta$
at the scale $||\eta|-\sqrt{3}|\approx 2^{j_1} \ll 1$, instead of $|\eta|\approx 2^{k_1} \ll 1$.

We define, similarly to \eqref{l<pulI},
\begin{align}
\label{oth++10}
\begin{split}
\underline{I}^{p,k,j_1,k_2}[g,h](s,\xi) := \iint e^{is \Phi_{++}} \, \frac{\varphi_p\big(\Phi\big)}{\Phi}
  \, \mathfrak{q}'(\xi,\eta,\s) \, \chi_{j_1,\sqrt{3}}^{[-\gamma m,0]}(\eta)
  \wt{g}(\eta) \, \varphi_{k_2}(\sigma) \wt{h}(\s) \, d\eta \, d\s,
\end{split} 
\end{align}
where $\mathfrak{q}'$ satisifes \eqref{s'est}.
In particular, since $k_2\leq 5$ we may think of this just as a smooth symbol which decays very fast in $\jxi$,
and with $O(1)$ bounds on its derivatives.

Disregarding the boundary terms that come from the integration by parts in time which
can be treated as before, we reduce matters to estimating
\begin{align}
\label{oth++10.1}
K^{S1,2}_{\iota_1\iota_2\iota_3}(t,\xi) & := \underline{I}^{p,k,j_1,k_2}
  \big[ \wtF^{-1} \mathcal{C}^{S1,2}_{\iota_1\iota_2\iota_3}[f,f,f],f\big](s,\xi),
\\
\label{oth++10.2}
L^{S1,2}_{\iota_1\iota_2\iota_3}(t,\xi) & := \underline{I}^{p,k,j_1,k_2}
  \big[f, \wtF^{-1} \mathcal{C}_{\iota_1\iota_2\iota_3}^{S1,2}[f,f,f] \big](s,\xi),
\end{align}
and
\begin{align}
\label{oth++10.3}
D^R(t,\xi) & := \underline{I}^{p,j_1,k_2}\big[ \wtF^{-1} \mathcal{R},f\big](s,\xi) 
  + \underline{I}^{p,j_1,k_2}\big[f, \wtF^{-1} \mathcal{R} \big](s,\xi).
\end{align}
For \eqref{oth++2} it suffices to prove an upper bound of $\e_1^2 2^{-2m - 3\beta'm}$ 
for the $L^2_\xi$-norms of \eqref{oth++10.1}-\eqref{oth++10.3}.

\medskip \noindent
{\it Estimate of \eqref{oth++10.1}-\eqref{oth++10.2}}.
As in Section \ref{secwR}, since we are assuming $k_2 \leq k_1$, the two terms \eqref{oth++10.1} and \eqref{oth++10.2}
are not symmetric; similarly to before, it turns out that the second one is slightly harder to treat,
so it will be the focus of our analysis.
We concentrate on the $\mathcal{C}^{S1}$ contribution since the one with $\mathcal{C}^{S2}$ 
will only differ slightly; see for example the arguments on page \pageref{l<p16}.

Expanding out as in \eqref{l<p21.5} 
and introducing frequency cutoffs for the new correlated variables,
we reduce to estimating quartic terms of the form
\begin{align}\label{l<pLot}
\begin{split}
& L_{\underline{k}} := \iiiint e^{it\Psi_{\iota_1\iota_2\iota_2}} \, \frac{\varphi_p\big(\Phi\big)}{\Phi}
  \, \mathfrak{q}\, \varphi_{\underline{k}}(\eta,\sigma,\rho,\zeta)
  \, \wt{f}(\eta) \wt{f}(\rho) \wt{f}(\zeta) \wt{f}(\s-\rho-\zeta) \, d\eta \, d\s \, d\zeta \, d\rho,
\\
& \Psi_{\iota_1\iota_2\iota_3} := \jxi - \jeta - \iota_1\jrho - \iota_2\langle\zeta\rangle 
  - \iota_2\langle\s-\rho-\zeta\rangle, \qquad \iota_1,\iota_2,\iota_3 \in \{+,-\},
\\
& \varphi_{\underline{k}}(\eta,\sigma,\rho,\zeta) := \chi_{j_1,\sqrt{3}}(\eta)\varphi_{k_2}(\sigma)
  \varphi_{k_3}(\rho)\varphi_{k_4}(\zeta)\varphi_{k_5}(\sigma-\rho-\zeta),
\end{split}
\end{align}
for a smooth symbol $\mathfrak{q}$.
It suffices to show that for $|t| \approx 2^m$
and $|\max(k_2,k_3) - \med(k_2,k_3,k_4)|\leq 5, k_5 \leq k_4 \leq k_3 \leq 0$, we have
\begin{align}\label{l<p20ot}
& \big| L_{\underline{k}}(t,\xi) \big| \lesssim \e_1^3 2^{-2m} 2^{-4\beta'm}.
\end{align} 

Integration by parts in the uncorrelated variable $\eta$, using that $|\eta| \approx 1$ and \eqref{Xest}, 
gives the following analogue of \eqref{l<p21}:
\begin{align}\label{l<p21ot}
\big| L_{\underline{k}}(t,\xi) \big| 
  \lesssim \e_1^4 \cdot 2^{-p} \cdot 2^{-m+4\alpha m} \cdot 2^{k_5+k_4 + \min(k_2,k_3)} 
  \cdot 2^{(1/2)(k_3+k_4+k_5)}.
\end{align}

We want to combine \eqref{l<p21ot} with exploiting the oscillations 
in the integral \eqref{l<pLot} in the directions of $\partial_\s$,
$\partial_\s + \partial_\rho$ and $\partial_\s + \partial_\zeta$ whenever this is convenient,
and proceed similarly to Case 2 on page \pageref{l<p14.0}.
Before doing this, we need to show how to deal with the cases when integration by parts in not possible
(see the analogous Case 1 of Step 3 on page \pageref{l<p21.5}).
We fix $\delta \in (0,\alpha)$.

\smallskip \noindent
{\it Case 1: $\min(k_2,k_4) + k_4 \leq -m + \delta m$}.
This is the case when integration by parts in $\partial_\s + \partial_\zeta$ is not possible,
because $|(\partial_\s + \partial_\zeta) \Psi| \approx |\zeta| \approx 2^{k_4}$
and hitting cutoffs will cost $2^{-k_2} + 2^{-k_4}$. 
From \eqref{l<p21ot} we have
\begin{equation*}
\big| L_{\underline{k}}(t,\xi) \big| \lesssim \e_1^4 2^{-p} \cdot 2^{-m + 4\alpha m} \cdot 2^{k_2+3k_4}.
\end{equation*}
Since we are assuming $k_2+k_4 \leq -m +\delta m$,
the above bound would suffice to obtain \eqref{l<p20ot} if it were the case that  $-p+2k_4 \leq-6\beta'm$.
On the other hand, if $-p+2k_4 \geq-6\beta'm$, we would have $k_4 \geq -m/4 -9\beta'm$
and therefore $k_2 \leq -3m/4 + 10\beta'm$ which is a contradiction to \eqref{oth++3.0cons1}.


\smallskip
\noindent
{\it Case 2: $\min(k_2,k_4) + k_4 \geq -m + \delta m$}.
In this case we also have $k_2 + k_3 \geq -m + \delta m$
and we can integrate by parts both in $\partial_\s + \partial_\rho$ and $\partial_\s + \partial_\zeta$.
We further distinguish between the case $\min(k_2,k_5) \geq -m/2$ and $\min(k_2,k_5) \leq -m/2$.
In the first case we can integrate by parts in $\sigma$ to obtain, up to faster decaying terms,
\begin{align*}
\big| L_{\underline{k}}(t,\xi) \big| \lesssim 2^{-p} \cdot \e_1 2^{-m + \alpha m} \cdot
  X_{k_3,m} \cdot X_{k_4,m} \cdot X_{k_5,m} 
\end{align*}
which is more than sufficient since $ X_{k,m} \lesssim \e_1 2^{-3m/4+\alpha m}$.
In the case $\min(k_2,k_5) \leq -m/2$ we estimate the profile 
$\varphi_{k_5}\wt{f}(\s-\rho-\zeta)$ in $L^\infty$ (this gives $\e_12^{k_5/2}2^{\alpha m}$)
and integrate $\varphi_{k_2}$ in $d\s$ (this gives a $2^{k_2}$ factor), thus obtaining, again up to faster decaying terms,
\begin{align*}
\big| L_{\underline{k}}(t,\xi) \big| \lesssim 2^{-p} \cdot \e_1 2^{-m + \alpha m} \cdot
  X_{k_3,m} \cdot X_{k_4,m} \cdot 2^{k_2} \cdot \e_1 2^{k_5/2} 2^{\alpha m};
\end{align*}
this suffices in view of the lower bound on $p$.

\medskip 
\noindent
{\it Estimate of \eqref{oth++10.3}}.
Finally we estimate the terms \eqref{oth++10.3}.
A bound analogous to \eqref{wproofclaim2add}  (here $|k_1|\leq 5$) directly gives us what we want:
\begin{align*}
{\big\|\underline{I}^{p,k,j_1,k_2} \big[ \wtF^{-1}\mathcal{R},f\big](s,\xi) \big\|}_{L^2} 
  & \lesssim 2^{-k_1/2} {\|\mathcal{R}(s)\|}_{L^2} \cdot X_{k_2,m} \lesssim \e_1^3 2^{-3m/2 + 2\alpha m} 
    \cdot \e_1 2^{-3m/4 + \alpha m},
\end{align*}
having used \eqref{lemdtfR}.
With a similar estimate, using also \eqref{Xest}, we can bound
\begin{align*}
{\big\|\underline{I}^{p,j_1,k_2}\big[f,\wtF^{-1}\mathcal{R} \big](s,\xi)\big\|}_{L^2} 
  \lesssim X_{k_1,m}\big(\wtF^{-1}( \chi_{j_1,\sqrt{3}}\wt{f}(s))\tau_m(s) \big)
  \cdot 2^{-k_2/2} {\|\mathcal{R}(s)\|}_{L^2} 
  \\
  \lesssim \e_1 2^{-m+\alpha m} \cdot 2^{-k_2/2} \e_1^3 2^{-3m/2 + 2\alpha m} 
\end{align*}
which, in view of \eqref{oth++3.0cons1}, suffices.


\medskip
\subsubsection{Proof of \eqref{othestsigns}}
To conclude the proof of Proposition \ref{prowR} 
we show how to treat the other sign combinations.
The main point here is that the phases 
satisfy
\[\Phi_{\iota_1\iota_2}(\xi,0,0):= \jxi - \iota_1-\iota_2\]
and, therefore, are not completely resonant since $(\iota_1\iota_2)\neq(++)$.
On the other hand, we still need to pay some attention to the case when one of the inputs is close to
the degenerate frequencies $\pm\sqrt{3}$.

\medskip \noindent
{\it Step 1: Preliminary reductions}.
First, notice that if $(\iota_1\iota_2)=(--)$ we have $|\Phi|\gtrsim 1$. 
This case is then easily handled integrating by parts in $s$. 
By symmetry we can reduce matters to the case $\iota_1=+=-\iota_2$
(but we do not assume a relation between $k_1$ and $k_2$),
and look at the integral in \eqref{othImain} with phase 
\begin{align}\label{othsigns1}
\Phi_{+-}(\xi,\eta,\s) = \jxi-\jeta+\jsig.
\end{align}
Notice that if 
$|\eta|,|\s|\leq 1$, then $|\Phi_{+-}|\gtrsim 1$,
and the bound \eqref{othestsigns} would be again easy to prove.
We may then assume $\max(k_1,k_2)\geq -5$ and, for a similar reason, $k_1\geq k_2$.


We decompose into the size of $||\eta|-\sqrt{3}| \approx 2^{j_1}$ by letting
\begin{align}\label{othsigns2}
\begin{split}
I^{p,k,k_1,k_2,j_1}(t,\xi) := \iint e^{it \Phi_{+-}(\xi,\eta,\s)} 
  \, \varphi_p^{(p_0)}\big(\Phi_{\iota_1\iota_2}(\xi,\eta,\s)\big) 
  \, \mathfrak{q}_{\iota_1\iota_2}(\xi,\eta,\s) \, \chi_{j_1,\sqrt{3}}^{[-\gamma m,0]}(\eta) \, 
  \\ \times \varphi_k(\xi) \varphi_{k_1}(\eta) \wt{a}_{\iota_1}(\eta) \, \varphi_{k_2}(\sigma) \wt{b}_{\iota_2}(\s) \, d\eta \, d\s,
\end{split}
\end{align}
(recall the definition \eqref{chil'})
and aim to prove
\begin{align}\label{othsigns3}
\begin{split}
2^m 2^{k}
  \left\| \chi_{\ell,\sqrt{3}}(\cdot)  \int_{0}^t I^{p,k,k_1,k_2,j_1}(\cdot,s) \, \tau_m(s)\, ds \right\|_{L^2_\xi} 
  \lesssim \e_1^2 \, 2^{-\beta\ell} 2^{-3\beta'm}.
\end{split}
\end{align}

First, we may only concentrate on the case $j_1 \leq -10$, for otherwise there is no 
degeneracy of $\partial_\eta \wt{f}$, Lemma \ref{Lemma1} applies verbatim, 
and the proof can proceed as in Section \ref{secwR}; see also the argument following \eqref{oth++0.5}.
We can then forget the localization in $|\eta|\approx 1$, eliminating the cutoff $\varphi_{k_1}$ 
from \eqref{othsigns2}, and rename it as $I^{p,k,j_1,k_2}$.

Moreover, when $j_1 \leq -10$ we may also assume that $\ell \geq -5$, for otherwise 
we would have $||\xi|-\sqrt{3}| \leq 2^{\ell+2}$ and therefore 
\begin{align*}
|\Phi_{+-}(\xi,\eta,\s)| = |\jxi-2 - (\jeta-2) + \jsig| \geq  \jsig - 2^{\ell+2} - 2^{j_1+2} \gtrsim 1.
\end{align*}
Similarly, we may assume $k\leq 5$, for otherwise $|\Phi_{-+}| \gtrsim 2^k$. 

We have then reduced \eqref{othsigns3} to showing
\begin{align}\label{othsigns3red}
\begin{split}
2^m 2^{k} \left\| 
  \int_{0}^t I^{p,k,j_1,k_2}(\cdot,s) \, \tau_m(s)\, ds \right\|_{L^2_\xi} 
  \lesssim \e_1^2 \, 2^{-3\beta'm}.
  \\ \mbox{with} \qquad j_1,p \leq -10, \quad k\leq 5.
\end{split}
\end{align}
For later reference we write
\begin{align}\label{othsignsphase}
\Phi_{+-}(\xi,\eta,\s) 
  = \frac{\xi^2}{1+\jxi} + \frac{\sigma^2}{1+\jsig} - \frac{\eta^2 - 3}{\jeta+2}.
\end{align}

\medskip \noindent
{\it Step 2: Preliminary bounds}.
Following a similar approach to that of Section \ref{secwR} we want to treat a few cases
by some basic bilinear estimates like those in Lemma \ref{Lemma1}.
In particular, under the parameter restrictions in \eqref{othsigns3red} we have the following 
two analogues of \eqref{wproofclaim1} and \eqref{wproofclaim2}:
first, by estimating $L^2_\xi \mapsto L^\infty_\xi$ gaining a factor of $2^{k/2}$,
and then integrating by parts in the two uncorrelated variables $\eta$ and $\s$
(as in the proof of \eqref{wproof6.0}), we have
\begin{align}\label{Lemma1var1} 
{\big\| I^{p,k,j_1,k_2}[f,f](s) \big\|}_{L^2} 
  \lesssim 2^{k/2} \cdot \e_1 2^{-m} 2^{\beta'j_1} 2^{\alpha m}
  \cdot \e_1 \min(2^{-m-k_2/2},2^{3k_2/2}) 2^{\alpha m}, 
\end{align}
having used \eqref{Xest}; 
second,
\begin{align}\label{Lemma1var2} 
{\big\| I^{p,k,j_1,k_2}[f,f](s) \big\|}_{L^2} 
  \lesssim 2^{p} \cdot \e_1 2^{-m} 2^{-\beta j_1} 2^{\alpha m} 
  \cdot \e_1 \min(2^{-m-k_2/2},2^{3k_2/2}) 2^{\alpha m},
\end{align}
arguing as in the proof of Lemma \ref{Lemma1} and using the a priori bound on 
${\|\chi_{j_1,\sqrt{3}}\partial_\eta\wt{f}\|}_{L^2}$.

\medskip \noindent
{\it Step 3: Case $k \leq -m/6 - 5\beta'm$}.
Applying \eqref{Lemma1var1} we see that
\begin{align*}
2^k {\big\| I^{p,k,j_1,k_2}[f,f](s) \big\|}_{L^2} 
  \lesssim \e_1^2 2^{3k/2} \cdot 2^{-m+\alpha m} \cdot 2^{-3m/4 +\alpha m}.
\end{align*}
This implies \eqref{othsigns3red} for $k$ in the range under consideration.

\medskip \noindent
{\it Step 4: Case $2k \leq j_1 + 10$}.
Using \eqref{Lemma1var2} and canceling the factor of $2^{-\beta j_1}$ by the factor of $2^k$ in front of the expression, 
we can bound
\begin{align*}
2^k {\big\| I^{p,k,j_1,k_2}[f,f](s) \big\|}_{L^2} 
  \lesssim \e_1^2 2^{p} \cdot 2^{-m+\alpha m} 
  \cdot 2^{-3m/4 +\alpha m}.
\end{align*}
When $p \leq -m/4-4\beta'm$ this already suffices.
For $p \geq -m/4-4\beta'm$ we instead integrate by parts; 
the loss is only about $2^{m/4}$, and is a much smaller loss than what we had in Subsection \ref{Ssecl<2k_1} for example,
so the analysis of the resulting quartic terms performed there suffices here too.

\medskip \noindent
{\it Step 5: Case $2k > j_1 + 10$}.
In this case $\xi^2 \gg |\eta^2-3|$ and from \eqref{othsignsphase} we see that
\begin{align}
|\Phi_{+-}(\xi,\eta,\s)| \gtrsim 2^{2k} 
\end{align}
In particular, integration by parts in time is very efficient, especially by noticing that
we have an extra factor of $2^k$ in front of the expression in \eqref{othsigns3red}.
The loss incurred by diving by $\Phi_{+-}$ is then bounded by $2^{-k} \lesssim 2^{m/6 + 5\beta'm}$,
and the same arguments used in Subsection \ref{Ssecl<2k_1} apply here.



\medskip
\subsection{Sobolev estimates}\label{SsecSob} 
Here we show how to bootstrap the Sobolev bound 
in Proposition \ref{propbootf} and obtain the bound on the first norm in \eqref{propbootfconc}.

\begin{prop}\label{propSob}
Under the bootstrap assumptions \eqref{propbootgas} and \eqref{propbootfas}, for all $t\in[0,T]$ we have
\begin{align}\label{propSobconc}
{\| \jxi^{4} \wt{f}(t) \|}_{L^{2}} \leq C\e_{0} + C \e_1^2\jt^{p_0}.
\end{align}
\end{prop}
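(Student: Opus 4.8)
The starting point is the equation \eqref{eqdtf} for $\partial_t\wt f$ together with Duhamel's formula
$\wt f(t) = \wt f_0 + \int_0^t \partial_s\wt f\,ds$, so that
$$
{\| \jxi^4 \wt f(t) \|}_{L^2} \leq {\| \jxi^4 \wt f_0 \|}_{L^2} + \int_0^t {\| \jxi^4 \partial_s \wt f(s) \|}_{L^2}\,ds.
$$
By \eqref{Bootinitcondf} the first term is $\lesssim \e_0$. For the integral, I would split the right-hand side of \eqref{eqdtf} into the three groups that were already prepared: (i) the remainders $\mathcal{R}_1,\mathcal{R}_2,\mathcal{R}_3$ (and the discarded $\mathcal{C}^R$ terms), which by \eqref{QRexprem}, \eqref{CSexprem} and the discussion in \ref{SsecReno} decay in the Sobolev norm like $\jt^{p_0-1}$ — actually even better — so their time integral contributes $\lesssim \e_2^2\jt^{p_0}\leq \e_1^2\jt^{p_0}$; (ii) the cubic terms $\mathcal{C}^S[f,f,f]$ and the quartic terms $\mathcal{C}^S[T(f,f),f,f]$ etc., for which \eqref{CSSob} in Lemma \ref{lemCSexp} already gives ${\| \jxi^4 \mathcal{C}^S[g,g,g] \|}_{L^2} \lesssim \e_2^3\jt^{p_0-1}$, hence an integrable-in-time bound after integration; and (iii) the genuinely remaining term $\mathcal{Q}^R(f,f)$ — this is the one explicitly singled out in \ref{Ssecmidsum} as "only $\mathcal{Q}^R(f,f)$ remains to be bounded in the Sobolev type norm; we do this in Subsection \ref{SsecSob}."

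So the heart of the proof is the estimate
$$
{\big\| \jxi^4 \mathcal{Q}^R(f,f)(t) \big\|}_{L^2} \lesssim \e_1^2 \jt^{p_0-1},
$$
from which \eqref{propSobconc} follows by integrating in time. To prove this I would invoke the Sobolev bilinear estimate \eqref{lemQRsob} with $k=4$ and an appropriate choice of exponents $p_1,p_2$ (e.g. $p_1$ close to $2$, $p_2$ large), which gives
$$
{\big\| \jxi^4 \mathcal{Q}^R_{\iota_1\iota_2}(f,f)(t) \big\|}_{L^2} \lesssim {\| \jnab^{3+} e^{-it\jnab}\mathcal{W}^* f \|}_{L^{2+}} {\| e^{-it\jnab}\mathcal{W}^* f \|}_{L^{\infty-}} + {\| e^{-it\jnab}\mathcal{W}^* f \|}_{L^{\infty-}} {\| \jnab^{3+} e^{-it\jnab}\mathcal{W}^* f \|}_{L^{2+}}.
$$
Here the factor ${\| e^{-it\jnab}\mathcal{W}^* f \|}_{L^{\infty-}}$ is controlled by $\lesssim \e_1\jt^{-1/2+}$ using the dispersive estimate \eqref{aprioridecay} together with Gagliardo–Nirenberg interpolation against the Sobolev bound, exactly as in the proof of \eqref{propbootginfty}; and the factor ${\| \jnab^{3+} e^{-it\jnab}\mathcal{W}^* f \|}_{L^{2+}}$ is bounded, after interpolating the $H^4$ a priori bound in \eqref{propbootfas} against the $L^2$ conservation \eqref{L2cons} and using boundedness of $\mathcal{W}^*$ on Sobolev spaces (Theorem \ref{thmweder}), by $\lesssim \e_1\jt^{p_0}$ — in fact by something slightly smaller since we only need $3+$ derivatives, not $4$. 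Multiplying these gives the $\jt^{-1/2+p_0}$ rate; to upgrade to the integrable rate $\jt^{-1+p_0}$ one should exploit the extra $\jnab^{-1+}$ gain in \eqref{lemQRsob} more carefully, or (cleaner) simply observe that \eqref{lemQRsob} gains a derivative, so one can afford to place \emph{two} factors at the $L^{\infty-}$-decaying rate: $\lesssim {\| \jnab^{0+}e^{-it\jnab}\mathcal{W}^* f \|}_{L^{\infty-}}^2 {\| \jxi^{3+}\wt f\|}_{L^2}$ — wait, that loses too much regularity. The robust route is: use \eqref{lemQRsob} to reduce to the schematic product $\jnab^{-1+}$ acting on two inputs, put the high-frequency input ($\gtrsim$ time$^{p_0}$) in $L^2$ with its $H^{3+}$ norm ($\lesssim \e_1\jt^{p_0}$), and the other in $L^{\infty-}$ ($\lesssim \e_1\jt^{-1/2+}$), giving total $\e_1^2\jt^{-1/2+p_0}$, which is already summable if one keeps the $\jt^{-1/2+}$ honest as $\jt^{-1/2-\delta'}$; alternatively, and most safely, one bounds ${\big\| \jxi^4 \mathcal{Q}^R(f,f)(t) \big\|}_{L^2}$ directly by the argument of Lemma \ref{lemwproof6}/Lemma \ref{Lemma1} localized in frequency, which produces the genuine $\jt^{-1}$ decay from the two uncorrelated inputs, as was done for $\partial_t f$ in Lemma \ref{lemdtf} (this is in fact the estimate \eqref{QRLinftyxi} referenced there, upgraded from $L^\infty_\xi$ to $L^2_\xi$ using the fast off-diagonal decay \eqref{muR'} of the symbol, precisely as sketched at the end of Lemma \ref{lemdtf}'s proof).

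\textbf{Main obstacle.} The only delicate point is obtaining the \emph{integrable} time decay $\jt^{p_0-1}$ rather than the naive $\jt^{p_0-1/2}$ that a crude H\"older estimate produces; the latter would only give ${\| \jxi^4 \wt f(t)\|}_{L^2}\lesssim \e_0 + \e_1^2\jt^{p_0+1/2}$, which is not good enough. The resolution — and the reason $\mathcal{Q}^R(f,f)$ is treated separately from the purely algebraic remainder terms — is that $\mathcal{Q}^R$ carries the fast-decaying, off-diagonal symbol $\mathfrak{q}$ satisfying \eqref{muR'} (and \eqref{symest0}), so one can integrate by parts in one of the two frequency variables to gain $\jt^{-1}$: this is exactly the mechanism of Lemma \ref{lemwproof6}, and the high-Sobolev weight $\jxi^4$ is harmless because the symbol gains a derivative in each input. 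I would therefore structure Subsection \ref{SsecSob} as: (1) recall \eqref{lemQRsob} and the off-diagonal decay; (2) dyadically decompose $\mathcal{Q}^R(f,f)$ in input frequencies and use the $\partial_\eta$-integration-by-parts exactly as in Lemma \ref{Lemma1} to obtain ${\| \jxi^4 \mathcal{Q}^R(f,f)(t)\|}_{L^2}\lesssim \e_1^2\jt^{p_0-1}$ (the weight $\jxi^4$ moves onto the high-frequency input, which is controlled in $L^2$ by the $H^4$ a priori bound, while the low-frequency input provides the decay via \eqref{apriori12}–\eqref{apriori13}); (3) integrate in $s$ over $[0,t]$ to get the $\e_1^2\jt^{p_0}$ contribution; (4) combine with $\e_0$ from the data and the $\e_2^2\jt^{p_0}$ from the remainders and cubic terms, concluding \eqref{propSobconc}.
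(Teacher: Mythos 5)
Your reduction is correct: after Duhamel, \eqref{eqdtf}, \eqref{QRexprem}, \eqref{CSexprem}, \eqref{CSSob}, and the data bound \eqref{Bootinitcondf}, the Sobolev estimate hinges entirely on showing ${\|\jxi^4\mathcal{Q}^R(f,f)(t)\|}_{L^2}\lesssim \e_1^2\jt^{p_0-1}$, and you are right that the integrable-in-time rate is the obstacle. But none of the routes you offer actually reach $\jt^{p_0-1}$. Your \eqref{lemQRsob}-route gives $\jt^{p_0-1/2+}$, and your fallback remark that "$\jt^{-1/2+}$ honest as $\jt^{-1/2-\delta'}$ is already summable" is false: $\int_0^t \js^{-1/2-\delta'}\,ds\approx\jt^{1/2-\delta'}$, which is not $O(\jt^{p_0})$. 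Your second route ("the weight $\jxi^4$ moves onto the high-frequency input, controlled in $L^2$ by $H^4$, while the low-frequency input provides the decay") only yields $\e_1^2\jt^{p_0-3/4+\alpha}$: the off-diagonal decay \eqref{muR'} disposes of $|\xi|\gg|\eta|+|\sigma|$, but once $|\xi|\lesssim|\eta|$ you trade $\jxi^4\to\jeta^4$ against the symbol's $\jeta^{-1}$ prefactor, leaving $\jeta^3$; pairing $\|\jeta^4\wt f\|_{L^2}\lesssim\e_1\jt^{p_0}$ with the $X_{k_2,m}$-type bound $\lesssim\e_1\jt^{-3/4+\alpha}$ on the other input stops at $\jt^{p_0-3/4+\alpha}$, which after integrating in time gives $\jt^{1/4+p_0+\alpha}\not\lesssim\jt^{p_0}$. (A pure dyadic sum is even worse: with $H^4$ only, $X_{k_1,m}\lesssim 2^{-7k_1/2}\e_1\jt^{\alpha}$, and the prefactor $2^{(7/2)k_1}$ from $\jeta^3\cdot 2^{k/2}$ exactly cancels it, so the sum over $k_1$ does not converge.)

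The missing ingredient is the paper's time-dependent frequency threshold. After disposing of $|\xi|\geq 5|\eta|$ via the off-diagonal decay (splitting at $|\xi|\approx\jt^{p_0/10}$ to handle the $\jxi^4$ weight either by brute force or by the $\langle\cdot\rangle^{-N}$ decay), one restricts to $|\xi|<5|\eta|$ and treats two regimes: when $|\eta|\geq\jt^{1/3}$, \emph{do not} integrate by parts in $\eta$; instead write $\jeta^3=\jeta^{-1}\jeta^4\lesssim\jt^{-1/3}\jeta^4$, apply Young's inequality in $\eta$ with the full $L^2$ norm $\|\jeta^4\wt{f_1}\|_{L^2}\lesssim\e_1\jt^{p_0}$ of the high-frequency projection, and combine with $\e_1\jt^{-3/4+\alpha}$ from the uncorrelated $\sigma$-input to get $\jt^{-1/3+p_0-3/4+\alpha}<\jt^{-1+p_0}$; when $|\eta|\leq\jt^{1/3}$, integration by parts in $\eta$ is now affordable because $\jeta^3\partial_\eta\lesssim\jt^{2/3}\jeta\partial_\eta$, and $\|\jeta\partial_\eta\wt f\|_{L^2}\lesssim\e_1\jt^\alpha$ by \eqref{lembb1}, yielding $\jt^{-1}\cdot\jt^{2/3}\e_1\jt^\alpha\cdot\e_1\jt^{-3/4+\alpha}<\jt^{-1}$. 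This balance between the $\jxi^4$ weight, the symbol's $\jeta^{-1}$ factor, and the cost of integrating by parts in $\eta$ is exactly why $\mathcal{Q}^R(f,f)$ is singled out in \ref{Ssecmidsum} and handled by a dedicated case analysis rather than by the H\"older-type bilinear estimates alone.
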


\begin{proof}
From \eqref{Renodtf} and \eqref{Renof}, and \eqref{QRexp} we have
\begin{align}
\nonumber
\partial_t \wt{f} 
& = \mathcal{Q}^R(g,g) + \mathcal{C}^{S,1}(g,g,g) + \mathcal{C}^{S,2}(g,g,g)
\\
\label{propSob1}
& = \mathcal{Q}^R(f,f) + \mathcal{R}_H(f,g) 
+ \mathcal{C}^{S,1}(g,g,g) + \mathcal{C}^{S,2}(g,g,g)
\end{align}
It then suffices to show that that each term on the right-hand side of \eqref{propSob1}
is bounded in $L^2(\jxi^8 d\xi)$ by $C \e_1^2\jt^{p_0-1}$ so that \eqref{propSobconc} follows
from integration in time, using also 
the bound  \eqref{Bootinitcondf} at time $0$.

The cubic terms can be treated directly using the trilinear estimates of Lemma \ref{lemCS} 
and the a priori Sobolev and decay assumptions \eqref{propbootgas}; 
the term $\mathcal{R}_H(f,g)$ is already estimated as desired in \eqref{QRexprem}.

For the quadratic term in \eqref{propSob1} we need an additional non-trivial argument
that uses integration by parts in frequencies, the structure of the symbol, and Lemma \ref{lemQR}.
For convenience, let us rewrite here the expression for $\mathcal{Q}^R$, see \eqref{QR},
\begin{align}\label{propSobQR}
\mathcal{Q}_{\iota_1\iota_2}^R(a,b)(t,\xi)
  = \iint e^{it \Phi_{\iota_1\iota_2}(\xi,\eta,\sigma)} \, \mathfrak{q}(\xi,\eta,\sigma) 
  \, \wt{a}_{\iota_1}(t,\eta) \wt{b}_{\iota_2}(t,\sigma) \, d\eta \, d\sigma
\end{align}
and recall that the symbol $\mathfrak{q}$ is given as in \eqref{QR1}-\eqref{QR2} and \eqref{muR}-\eqref{muR'},
and that the bilinear estimates of Lemma \ref{lemQR} hold.
Without loss of generality let us assume that the support of \eqref{propSobQR} is restricted to $|\eta|\geq |\s|$.
Also, we may assume that $|\xi|\geq 10$.
We look at three different cases depending on the size of $\xi$ and $\eta$.

\medskip
\noindent
{\it Case 1: $|\xi| \geq 5|\eta|$.}
First, we treat the case of $|\xi| \leq \jt^{p_0/10}$. In this case we use the 
estimate \eqref{lemQRpq} to obtain
\begin{align*}
{\| \jxi^4 \mathcal{Q}_{\iota_1\iota_2}^R(f,f)(t) \|}_{L^2} 
  & \lesssim \jt^{p_0/2} {\| \mathcal{Q}_{\iota_1\iota_2}^R(f,f)(t) \|}_{L^2}
  \\ & \lesssim \jt^{p_0/2} {\| e^{i\iota_1t\jnab} \mathcal{W}^*f \|}_{L^{\infty-}}
  {\| e^{i\iota_2t\jnab} \mathcal{W}^*f \|}_{L^{\infty-}}
  \lesssim \e_1^2 \jt^{-1+p_0}.
\end{align*}
If instead $|\xi| \geq \jt^{p_0/10}$, we use the decay property of $\mu_R$ in \eqref{muR'}
and see that
\begin{align*}
{\big\| \jxi^4 \mathcal{Q}_{\iota_1\iota_2}^R(f,f)(t) \big\|}_{L^2}
&  \lesssim \sup_{\substack{|\xi| \geq 2(|\eta|+|\s|) \\ |\xi| > t^{p_0/10}}} \big| \jxi^6 \mu^R(\xi,\eta,\s) \big| 
   {\| \wt{f} \|}_{L^2_\eta} {\| \wt{f} \|}_{L^2_\s} \\
& \lesssim \jt^{-(N/2-6)p_0/10} \e_1^2
\end{align*}
which suffices since we can take $N$ arbitrarily large.

\medskip
\noindent
{\it Case 2: $|\xi| < 5|\eta|$ and $|\eta| \geq \jt^{1/3}$.}
In this case the first input of $\mathcal{Q}^R$ is projected to (distorted) frequencies greater than $\jt^{1/3}$,
and we denote it by $f_1$ where $\wt{f_1} := \varphi_{\geq0}(\eta\jt^{-1/3})\wt{f}$.

We begin by integrating by parts in the uncorrelated variable $\s$, and notice that the term where the symbol $\mathfrak{q}$ is differentiated is lower order. 
Then, using that 
$$
|\mathfrak{q}(\xi,\eta,\sigma)| \lesssim \frac{1}{\langle \eta \rangle} 
  (\inf_{\l,\mu} \langle \xi \pm \eta \pm \sigma \rangle )^{-N}
$$
together with Young's inequality gives
 \begin{align*}
{\| \jxi^4 \mathcal{Q}_{\iota_1\iota_2}^R\big(f,f)(t) \|}_{L^2} & \lesssim 
  {\|\jeta^{3} \wt{f_1} \|}_{L^2} \cdot \e_1 \jt^{-3/4+\alpha}
  \\
  & \lesssim \jt^{-1/3} {\| \jeta^4 \wt{f_1} \|}_{L^2} \cdot \e_1\jt^{-3/4 + \alpha}
  \lesssim \e_1^2 \jt^{-1}.
\end{align*}

\medskip
\noindent
{\it Case 3: $|\xi| < 5|\eta|$ and $|\eta| \leq \jt^{1/3}$.}
In this last case we integrate by parts in $\eta$ as well. 
We denote the first input of $\mathcal{Q}^R$ 
by $f_2 := \varphi_{\leq 0}(\eta\jt^{-1/3})\wt{f}$.
Integrating by parts in $\eta$ gains a factor of $|t|^{-1}$ and differentiates the profile $\wt{f_2}(\eta)$.
By the same argument as above,
\begin{align*}
{\| \jxi^4 \mathcal{Q}_{\iota_1\iota_2}^R\big[f,f\big](t) \|}_{L^2} & \lesssim 
  \jt^{-1} {\| \jeta^{3} \partial_\eta \wt{f_2} \|}_{L^2} \cdot \e_1 \jt^{-3/4+\alpha}
  \\
  & \lesssim \jt^{-1} \jt^{2/3}  {\| \jeta \partial_\eta \wt{f_2}\|}_{L^2} \cdot \e_1\jt^{-3/4 + \alpha}
  \\
  & \lesssim \jt^{-1/3} \cdot \e_1 \jt^\alpha \cdot \e_1 \jt^{-3/4+\alpha},
\end{align*}
which suffices, and concludes the proof of \eqref{propSobconc}.
\end{proof}


\medskip
\subsection{Pointwise estimates for the regular part and other higher order terms}\label{SsecLinfR}
In this subsection we first show that the regular part $\mathcal{Q}^R$ in \eqref{QR1}-\eqref{QR2} 
does not contribute to the pointwise asymptotic behavior of the solution, or, in other words,
that it is a remainder when measured in $\jxi^{-3/2}L^\infty_\xi$ norm.
Then we control the $\jxi^{-3/2}L^\infty_\xi$-norm of all 
the other terms that are not the singular cubic terms treated in Section \ref{secLinfS}; 
these include cubic terms that arise when passing from the original profile, $g$, to the renormalized profile, $f$,
and quartic and higher order terms.
Along the way we also establish bounds on the weighted norm of some cubic terms
that are not already accounted for in Section \ref{secBoot}.
In particular, these estimates will conclude the proof of the bound on the last norm in \eqref{propbootfconc} 
in the main bootstrap Proposition \ref{propbootf},
and give the bounds on the remainders in \eqref{LinfSasy}-\eqref{LinfSrem} in Proposition \ref{propLinfS}.

\smallskip
\subsubsection{Remainders from the quadratic regular part}
We begin by recalling 
that from Lemma \ref{lemQRexp} we have
\begin{align}\label{QRexp'}
\begin{split}
\mathcal{Q}^R(g,g) 
  & = \mathcal{Q}^R(f,f) + \mathcal{Q}^R(f,T(f,f)) + \mathcal{Q}^R(T(f,f),f) + \mathcal{R}_2(f,g)
\end{split}
\end{align}
where $\mathcal{R}_2(f,g)$ is the quartic term defined in \eqref{QRexp0} and satisfies
\begin{align}\label{QRexprem'}
\begin{split}
{\| \jxi \partial_\xi \mathcal{R}_2(f,g)(t) \|}_{L^2}& \lesssim \e_2^2 \jt^{-1+\alpha}.
\end{split}
\end{align}
We need to control in $\jxi^{-3/2}L^\infty_\xi$ all the terms on the right-hand side of \eqref{QRexp'}
(Proposition \ref{proQRLinftyxi} below)
and the weighted norms of the cubic terms $\mathcal{Q}^R(f,T(f,f)) + \mathcal{Q}^R(T(f,f),f)$
(Proposition \ref{proQRwot} below), since the weighted norm of $\mathcal{Q}^R(f,f)$ was taken care 
of in Section \ref{secwR} and Subsection \ref{secQRother}.

\begin{prop}[$L^\infty_\xi$ control for $\mathcal{Q}^R$ and remainders] 
\label{proQRLinftyxi}
Under the assumptions of Theorem \ref{maintheo},
consider $u$ solution of \eqref{KG} satisfying \eqref{apriori0}-\eqref{bootstrap}, and let 
$f$ be 
the renormalized profile defined in \eqref{Renof}.
We have
\begin{align}
\label{QRLinftyxi}
{\| \jxi^{3/2} \mathcal{Q}^R(f,f)(t) \|}_{L^\infty_\xi} 
	\lesssim \e_{1}^2 \jt^{-3/2+2\alpha}.
\end{align}
Moreover, for any $m=0,1,\dots$ 
we have
\begin{align}\label{QRLinftyxiR1}
{\Big\| \jxi^{3/2} \int_{0}^{t} \mathcal{Q}^R[f,T(f,f)](s,\xi) \, \tau_m(s) \, ds \Big\|}_{L^\infty_\xi}
  \lesssim \e_1^3 2^{-m/20}
\end{align}
Finally,
\begin{align}\label{QRLinftyxiR2}
{\big\| \jxi^{3/2} \mathcal{R}_2(f,g)(t) \|}_{L^\infty_\xi} \lesssim \e_2^2 \jt^{-1-1/20}.
\end{align}
\end{prop}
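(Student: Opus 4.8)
\textbf{Proof plan for Proposition \ref{proQRLinftyxi}.}
The plan is to treat the three estimates \eqref{QRLinftyxi}, \eqref{QRLinftyxiR1}, \eqref{QRLinftyxiR2} separately, in increasing order of (mild) difficulty. For \eqref{QRLinftyxi} I would not re-derive a pointwise bound from scratch but instead invoke the estimate already obtained in Section \ref{secwR}: recall from the proof of Lemma \ref{lemwproof6}, see in particular \eqref{wproof6} and the bilinear bound \eqref{wproof6.0} with \eqref{wproofnot2}, that $\mathcal{Q}^R(f,f) = (1/t\xi)\,\mathcal{I}^R(t,\xi)$ up to the easier terms where $\partial_\xi$ hits the symbol, and that ${\| \mathcal{I}^R(s,\cdot) \|}_{L^\infty_\xi} \lesssim \e_1^2 \, 2^{-m/2+2\alpha m}$ for $s \approx 2^m$. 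This already gives the bound $\jt^{-3/2+2\alpha}$ pointwise for the main output frequencies; for the remaining frequency regimes (in particular $|\xi|$ away from $\sqrt 3$, or large input frequencies) one re-uses the basic bilinear bound $|I^{k_1,k_2}(f,f)(s,\xi)| \lesssim X_{k_1,m}(f)\,X_{k_2,m}(f)$ from \eqref{wproof6.0}-\eqref{Xvar}, together with the fast decay of the symbol away from the diagonal in \eqref{HFloss}, exactly as in Subsection \ref{ssecQR}. A short comment: \eqref{QRLinftyxi} is precisely the bound used (and cited as \eqref{QRLinftyxi}) in the proofs of Lemma \ref{lemdtf} and in Section \ref{secwL}, so no circularity arises since Section \ref{secwR} is logically prior.

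For \eqref{QRLinftyxiR1}, I would expand $T(f,f)$ using the bilinear estimates of Lemma \ref{lemT} and the decay bound \eqref{aprioridecay}, so that $e^{-it\jnab}\W T(f,f)$ decays like $\e_1^2 \jt^{-1+}$ in $L^{\infty-}$. The cubic term $\mathcal{Q}^R(f,T(f,f))$ is then estimated by combining the pointwise-in-time H\"older bound \eqref{lemQRpq} with the interpolation between an $L^2$-type bound and a weighted $L^2$-type bound on the time integral, analogous to what is done for the singular cubic terms in Section \ref{secLinfS}: one controls $\| \jxi^2 \int_0^t \mathcal Q^R(f,T(f,f))\tau_m ds\|_{L^2}$ directly from \eqref{lemQRpq} (this gains a power $\jt^{-1+}$ from the decay of the three inputs, giving $\e_1^3 2^{-m+}$), and $\| \jxi\partial_\xi \int_0^t \mathcal Q^R(f,T(f,f))\tau_m ds\|_{L^2}$ using the schematic identity \eqref{dxiQRid'} together with \eqref{dxiT2+infty}, losing at most one power of $t$; interpolating via ${\| h \|}_{L^\infty} \lesssim {\| h \|}_{L^2}^{1/2}{\| \partial_\xi h \|}_{L^2}^{1/2}$ yields $\e_1^3 2^{-m/20}$ after choosing the numerology with room to spare. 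The term $\mathcal{Q}^R(T(f,f),f)$ is handled identically, exchanging the roles of the two arguments in \eqref{lemQRpq} and \eqref{dxiQRid'}.

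For \eqref{QRLinftyxiR2}, since $\mathcal R_2(f,g)$ is a quartic expression, see \eqref{QRexp0}, with every entry $f$ or $g$ decaying at rate $\jt^{-1/2}$ (by \eqref{aprioridecay} and \eqref{propbootfdecay}), the $L^2$ bound on the time integral is immediate from \eqref{lemQRpq} and \eqref{bilboundT}, giving $\e_2^2 \jt^{-2+}$; the weighted $L^2$ bound is already \eqref{QRexprem'}, namely $\e_2^2 \jt^{-1+\alpha}$. Interpolating these two via the same Gagliardo–Nirenberg inequality gives $\jxi^{3/2}\mathcal R_2$ bounded by $\e_2^2 \jt^{-1-1/20}$ after integrating in time (note this is a pointwise-in-time statement, so one interpolates the instantaneous quantities, not the time integrals — the $\jt^{-2+}$ and $\jt^{-1+\alpha}$ bounds can both be taken instantaneously). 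I expect the only mild obstacle to be bookkeeping: making sure that in \eqref{QRLinftyxiR1} the loss incurred when $\jxi\partial_\xi$ hits the oscillating phase (a factor of $t$) is strictly beaten by the cumulative decay $\jt^{-3/2+}$ of the three inputs after one of them has been replaced by $T(f,f)$; this forces the small exponent $1/20$ rather than something cleaner, but leaves comfortable room given $\alpha,\beta',\gamma',p_0 \ll 1$.
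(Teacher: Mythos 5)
Your plan for \eqref{QRLinftyxi} and \eqref{QRLinftyxiR2} matches the paper's, up to two cosmetic issues: rather than writing $\mathcal{Q}^R = \mathcal{I}^R/(t\xi)$ and dividing out (which would introduce a spurious $|\xi|^{-1}$ at small $\xi$), the clean route is to apply the bilinear estimate $|I^{k_1,k_2}(f,f)|\lesssim X_{k_1,m}X_{k_2,m}$ from \eqref{wproof6.0}, extended to all frequencies via \eqref{Xvar}--\eqref{Xvarest} and the symbol decay \eqref{HFloss}, directly to $\mathcal{Q}^R$ — this is exactly what the paper does, and it sums to $\e_1^2 2^{-3m/2+2\alpha m}$ without any division. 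For \eqref{QRLinftyxiR2} your instantaneous $L^2$ rate $\jt^{-2+}$ is over-optimistic (the $\jxi^2$ weight forces the $H^4$ norm on one factor, and the best one gets is roughly $\jt^{-3/2}$), but the interpolation still closes with margin, so this does not affect the conclusion.

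The genuine gap is in \eqref{QRLinftyxiR1}. Your weighted estimate, "$\jxi\partial_\xi$ loses at most one power of $t$", is not strong enough. Concretely, the schematic identity \eqref{dxiQRid'} gives $\jxi\partial_\xi\mathcal{Q}^R(f,T(f,f))\approx t\xi\,\mathcal{Q}^R(f,T(f,f))+\jxi\,\mathcal{Q}^R(\wtF^{-1}\partial_\xi\wt{f},T(f,f))$, and the first term decays only like $\jt\cdot\jt^{-3/2+}=\jt^{-1/2+}$, so its time integral over $s\approx 2^m$ is $\sim 2^{m/2+}$. Meanwhile the $\jxi^2 L^2$ time-integrated bound is in reality about $\e_1^3 2^{-m/5}$, not $2^{-m+}$ (the $\jxi^2$ weight costs: one has only $\|\jnab^{1+}e^{it\jnab}\W f\|_{L^p}\lesssim\e_1\jt^{-1/4}$ by interpolation with $H^4$, see \eqref{QRLinftyxiR1'}). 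Interpolating $2^{-m/5}$ with $2^{m/2+}$ yields $2^{3m/20+}$, which grows — the proof does not close. The paper instead invokes Proposition \ref{proQRwot}, which establishes the much stronger weighted bound $\|\jxi\partial_\xi\int_0^t\mathcal{Q}^R(T(f,f),f)\tau_m\,ds\|_{L^2}\lesssim\e_1^3 2^{\alpha m}$ by a genuinely different argument: one integrates by parts in time, using the commutation identity $(X_\xi+\iota_1 X_\eta+\cdots)\Phi=p$ (cf.\ \eqref{Cubotherid}/\eqref{owCid}), so that the dangerous $t\Phi$ factor is converted into a $\partial_s$ hitting a profile, after which \eqref{dtfL2} supplies the extra $\jt^{-1}$ needed. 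This integration-by-parts-in-time step is the missing ingredient; without it the numerology of the final interpolation does not give decay in $m$.
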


\begin{proof}
{\it Proof of \eqref{QRLinftyxi}.} 
This bound is essentially already contained in the proof of Lemma \ref{lemwproof6}
where, however, we only dealt with bounded frequencies.
Using the same argument (integration by parts in the uncorrelated variables $\eta$ and $\s$),
decomposing dyadically the input frequencies as usual,
and using the bound \eqref{HFloss} for the symbol $\mathfrak{q}$
we get 
for all $t \approx 2^m$
\begin{align*}
{\| \jxi^{3/2} \varphi_k(\xi) \mathcal{Q}^R(f,f)(t) \|}_{L^\infty_\xi} 
  \lesssim 2^{3k^+/2} \sum_{k_1,k_2} 2^{-k_1^+} \conv \cdot X_{k_1,m} \cdot X_{k_2,m},
\end{align*}
Using the estimate $X_{k,m} \lesssim \e_1 \min(2^{3k^-/2}, 2^{-m-k^-/2}, 2^{-m-k^+/2})2^{\alpha m}$, see \eqref{Xvarest},
we can perform the two sums over $k_1,k_2$ and obtain
\begin{align*}
{\| \jxi^{3/2} \varphi_k(\xi) \mathcal{Q}^R(f,f)(t) \|}_{L^\infty_\xi} \lesssim (\e_1 2^{-3m/4 + \alpha m})^2.
\end{align*}

\medskip \noindent
{\it Proof of \eqref{QRLinftyxiR1} and  \eqref{QRLinftyxiR2}} 
We first claim that a strong bound in $L^2$ holds for the cubic terms, that is, for all $t\approx 2^m$
\begin{align}\label{QRLinftyxiR1'}
{\big\| \jxi^2 \mathcal{Q}^R(f,T(f,f))(t) \|}_{L^2} + 
  {\big\| \jxi^2 \mathcal{Q}^R(T(f,f),f)(t) \|}_{L^2} \lesssim \e_1^3 2^{-6m/5}.
\end{align}
To see this it suffices to use that, for $p$ large enough,
\begin{align*}
{\big\| e^{-it\jnab} \jnab^{1+} \mathcal{W}^*f \big\|}_{L^p} \lesssim \e_1 \jt^{-1/4},
\end{align*}
which follows from interpolating the a priori decay assumptions and the $H^4$ bound,
and then apply \eqref{lemQRsob} with $p_1,p_2$ large enough and \eqref{bilboundT'rev}.

Then, using the inequality ${\| \jxi^{3/2} f \|}_{L^\infty} \lesssim \big( {\| \jxi \partial_\xi f \|}_{L^2} 
+ {\| f \|}_{L^2}\big)^{1/2} {\| \jxi^2 f \|}_{L^\infty}^{1/2}$
to interpolate between \eqref{QRLinftyxiR1'} and the weighted bound \eqref{QRwot} from Proposition \ref{proQRwot} below,
we obtain \eqref{QRLinftyxiR1}.

Using again \eqref{lemQRpq}, \eqref{bilboundT'rev}, 
with the Sobolev norm bound and the decay for the linear evolution of $g$, 
it is easy to see that the quartic term $\mathcal{R}_2(f,g)$ in \eqref{QRexp0} satisfies
\begin{align*}
{\| \jxi^{2} \mathcal{R}_2(f,g)(t) \|}_{L^2} & \lesssim \e_2^3 \jt^{-3/2}.
\end{align*}
Interpolating this and the weighted bound \eqref{QRexprem'} we obtain \eqref{QRLinftyxiR2}.
\end{proof}


\begin{prop}[Weighted estimates for other remainders] 
\label{proQRwot}
For any $m=0,1 \dots$, we have
\begin{align}\label{QRwot}
{\Big\| \jxi \partial_\xi \int_{0}^{t} \mathcal{Q}^R[T(f,f),f](s,\xi) 
  \, \tau_m(s) ds \Big\|}_{L^2_\xi} \lesssim \e_1^3 2^{\alpha m}. 
\end{align}
\end{prop}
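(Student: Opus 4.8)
The goal is to bound $\jxi\partial_\xi$ of the time-integrated cubic term $\mathcal{Q}^R(T(f,f),f)$ in $L^2_\xi$ by $\e_1^3 2^{\alpha m}$ on the dyadic time piece $t\approx 2^m$. The overall strategy mirrors the way $\mathcal{Q}^R(f,f)$ was handled in Section~\ref{secwR}, but here the situation is much easier because we have an extra power of $f$ (hence an extra $\e_1$ and an extra factor of decay), and because we do not need the degenerate $W_T$-norm refinement: a plain $L^2_\xi$ bound with a $2^{\alpha m}$ loss suffices. The plan is to first commute $\jxi\partial_\xi$ through $\mathcal{Q}^R$ using the schematic identity \eqref{dxiQRid} from Claim~\ref{remdxiQR}, namely
\begin{align*}
\jxi\partial_\xi \mathcal{Q}^R[f_1,f_2] \approx t\cdot \jxi \mathcal{Q}^R[f_1,f_2] + \jxi \mathcal{Q}^R[\wtF^{-1}(\partial_\xi\wt{f_1}),f_2],
\end{align*}
applied with $f_1 = T(f,f)$ and $f_2 = f$. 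This produces (up to lower-order terms that are handled by the decay of the symbol $\mu^R$ in \eqref{muR'}, exactly as in the proof of Claim~\ref{remdxiQR}) two main contributions: a term with a factor of $t\approx 2^m$ in front and no derivative landing on the profiles, and a term where the $\xi$-derivative has been converted into a $\partial_\xi$ on the first input $T(f,f)$.

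\textbf{Step 1: the term with the factor of $t$.} Here I would use the bilinear bound \eqref{lemQRsob} for $\mathcal{Q}^R$ with both Lebesgue exponents taken large (so as to make the $\jnab^{0+}$ losses harmless), placing $e^{it\jnab}\W T(f,f)$ and $e^{it\jnab}\W f$ in $L^{\infty-}$, followed by the bilinear estimate \eqref{bilboundT'rev} for $T$ to expand $T(f,f)$, and then the a priori decay estimate \eqref{aprioridecay} for the linear evolution of $f$. Schematically,
\begin{align*}
2^m \| \jxi\mathcal{Q}^R[T(f,f),f](s)\|_{L^2}
&\lesssim 2^m \,\|\jnab^{0+}e^{is\jnab}\W T(f,f)\|_{L^{\infty-}}\,\|\jnab^{0+}e^{is\jnab}\W f\|_{L^{\infty-}}\\
&\lesssim 2^m \,\|e^{is\jnab}\W f\|_{L^{\infty-}}^3 \lesssim 2^m\cdot \e_1^3 2^{-3m/2+} \lesssim \e_1^3 2^{-m/2+},
\end{align*}
which is integrable in $s$ over $s\approx 2^m$ and therefore gives a contribution $\lesssim \e_1^3 2^{-m/2+}$ to the left side of \eqref{QRwot}, far better than needed. (One small point: one must restrict to the diagonal-frequency regime using the rapid decay \eqref{muR'} of $\mu^R$ away from $\{\xi\approx\eta+\sigma\}$, and handle very high frequencies by the Sobolev a priori bound in \eqref{propbootfas}, exactly as in Subsection~\ref{ssecothHF}; this costs at most a $2^{\alpha m}$ loss.)

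\textbf{Step 2: the term with $\partial_\xi$ on $T(f,f)$, and the main obstacle.} For $\jxi \mathcal{Q}^R[\wtF^{-1}(\partial_\xi\wt{T(f,f)}),f]$ I would again use \eqref{lemQRsob}, estimating the second input $f$ in $L^{\infty-}$ via \eqref{aprioridecay} and the first input $\wtF^{-1}(\partial_\xi\wt{T(f,f)})$ in $L^2$. The needed control on this $L^2$ norm is precisely the weighted bilinear bound for $T$: by the identity \eqref{dxiTid} and its consequence \eqref{dxiT2+infty} of Remark~\ref{remdxiT} (which in turn uses the weak weighted bound \eqref{dxig} for $\jxi\partial_\xi\wt g$ and the a priori bounds), one has
\begin{align*}
\|\jxi^{0+}\partial_\xi\wt{T(f,f)}(s)\|_{L^2} \lesssim \jt\,\|\jnab^{0+}e^{is\jnab}\W f\|_{L^{\infty-}}^2 + \|\jxi^{0+}\partial_\xi\wt f\|_{L^2}\,\|\jnab^{0+}e^{is\jnab}\mathbf{1}_\pm(D)\W f\|_{L^\infty} \lesssim \e_1^2 2^{(\alpha+\beta\gamma)m},
\end{align*}
using \eqref{lembb1} for $\|\jxi\partial_\xi\wt f\|_{L^2}\lesssim \e_1 2^{(\alpha+\beta\gamma)m}$. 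Combining,
\begin{align*}
\|\jxi\mathcal{Q}^R[\wtF^{-1}(\partial_\xi\wt{T(f,f)}),f](s)\|_{L^2} \lesssim \|\jxi^{0+}\partial_\xi\wt{T(f,f)}\|_{L^2}\,\|\jnab^{0+}e^{is\jnab}\W f\|_{L^{\infty-}} \lesssim \e_1^3\, 2^{(\alpha+\beta\gamma-1/2)m},
\end{align*}
which, since $\alpha+\beta\gamma<1/4$ by \eqref{wnormparam0}, is integrable in $s\approx 2^m$ and yields a contribution $\lesssim \e_1^3 2^{(\alpha+\beta\gamma-1/2)m}$ to \eqref{QRwot}. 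I expect the main obstacle to be purely bookkeeping: making sure that when $\partial_\xi$ in \eqref{dxiQRid} is converted to $\partial_\eta$ and integrated by parts, the terms where the derivative hits the (non-standard, $\jxi$-growing) symbol $\mathfrak{q}$ are genuinely lower order — this is handled exactly as in Remark~\ref{remHF}, since each symbol derivative costs at most $R(\eta,\sigma)$, which under the diagonal localization is controlled by the Sobolev norm; and making sure the rapid-decay factor $[1+\inf_{\mu,\nu}|\xi-\mu\eta-\nu\sigma|]^{-N}$ of $\mu^R$ (and the corresponding factor of the $\pv$-based piece of $\mathfrak{q}$, away from its singular hypersurface) is used to reduce to diagonal interactions and to absorb summation over dyadic frequency blocks. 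Collecting the two steps proves \eqref{QRwot}.
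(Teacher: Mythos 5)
Your Step 1 contains a genuine gap. The per-slice bound you derive is $2^m\|\jxi\mathcal{Q}^R[T(f,f),f](s)\|_{L^2}\lesssim\e_1^3 2^{-m/2+}$, but the time integral $\int_0^t(\cdots)\tau_m(s)\,ds$ runs over an interval of length $\approx 2^m$, so the actual contribution to \eqref{QRwot} is $\approx 2^m\cdot\e_1^3 2^{-m/2+}=\e_1^3 2^{m/2+}$, far larger than the target $\e_1^3 2^{\alpha m}$; your assertion that the contribution is ``$\lesssim\e_1^3 2^{-m/2+}$'' misses the factor coming from the interval length. Step 2 has the same slip: the per-slice bound $\e_1^3 2^{(\alpha+\beta\gamma-1/2)m}$ integrates to $\e_1^3 2^{(\alpha+\beta\gamma+1/2)m}$, which is also too large.

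The substantive missing idea is that pointwise $L^\infty$-decay of the linear flow is not sharp enough here, because the nonlinearity $\mathcal{Q}^R(T(f,f),f)$ is only cubic in $f$ while the $\jxi\partial_\xi$-weight and the time integration together cost $\approx 2^{2m}$. The crucial structural fact, which your $L^p$-only approach does not exploit, is that all three input frequencies in this trilinear term are mutually decorrelated: the symbol $\mathfrak{q}$ of $\mathcal{Q}^R$ decays rapidly in $\inf_{\mu,\nu}|\xi-\mu\eta-\nu\sigma|$, and the $\delta$/$\pv$-part of $T$'s symbol is integrated out in the inner variable. This allows integration by parts in each of the three input frequency variables separately, replacing the per-input decay $\e_1 2^{-m/2}$ from \eqref{aprioridecay} by the sharper quantity $X_{k,m}\lesssim\e_1 2^{-3m/4+\alpha m}$ of \eqref{wproofnot2.0}, \eqref{wproofnot2}, \eqref{Xvar}. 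The extra $\approx 2^{-m/4}$ per input, cubed, supplies precisely the $\approx 2^{-3m/4}$ needed. Concretely, the paper's proof does not go through the commutation identity \eqref{dxiQRid} at all: it writes out the $T$-symbol to expose the full trilinear structure as $L^R_{\underline{k}}$, localizes dyadically, decomposes in $|\eta|$ to handle the $\pv$-singularity of $T$'s symbol, and reduces to the pointwise bound $2^{3k/2}|L^R_{\underline{k}}(s,\xi)|\lesssim\e_1^3 2^{-2m}$ (note $2^{-2m}$, against the $2^{-3m/2}$ your $L^\infty$ estimates deliver), proved by integrating by parts in all three inputs.
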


\begin{proof}
These cubic terms are much easier to treat than the quadratic terms analyzed in Section \ref{secwR}.
For completeness we briefly discuss how to estimate them.

For simplicity we assume $|\xi|\leq 1$;
this can be done in view of the estimate \eqref{HFloss} (see also \eqref{s'est}) for the symbol $\mathfrak{q}$.
We look at the formulas for $\mathcal{Q}^R$ (see \eqref{othQR}) and $T$ (see \eqref{RenoT})
and write out the term explicitly as a trilinear operator;
after localizing dyadically in the frequencies, and making the usual reductions, 
this leads us to consider a term of the form
\begin{align}\label{prQRwot1}
\begin{split}
& L^R_{\underline{k}}(s,\xi) := 
  \iiint e^{is \Psi_{\iota_1\iota_2\iota_3}} \,
  \, \mathfrak{q}'(\xi,\eta,\s,\rho) \, \varphi_{\underline{k}}'(\xi,\eta,\sigma,\rho)
  \,  \wt{f}(\rho) \wt{f}(\rho-\eta) \wt{f}(\s)\, d\eta \, d\s \, d\rho,
\\
& \Psi_{\iota_1\iota_2\iota_3}(\xi,\eta,\s,\rho) = \jxi - \iota_1 \langle \rho \rangle - \iota_2 \langle \eta-\rho \rangle
  - \iota_3 \jsig,
\\
& \varphi_{\underline{k}}'(\xi,\eta,\sigma,\rho) 
  := \varphi_k(\xi) 
  \varphi_{k_2}(\sigma) \varphi_{k_3}(\rho)\varphi_{k_4}(\eta-\rho),
\qquad k_4\leq k_3 \leq 0. 
\end{split}
\end{align}
Here, we may assume that $\mathfrak{q}'$ is smooth, 
with uniform bounds on its derivatives,
except at the points $\rho=0$, $\rho-\eta=0$, $\s=0$, and $\eta=0$, where it can have $\sign$-type singularities.
Note that the boundedness property holds in view of \eqref{HFloss} and the estimates on the symbol of $T$
from Lemma \ref{lemTbound}, when we assume that all the frequencies involved are $\lesssim 1$;
when frequencies are large the estimate \eqref{HFloss} degenerates but, as discussed before,
this case is not harder to treat than the case of frequencies less than $1$, 
and can be analyzed using the bounds \eqref{Xvar}-\eqref{Xvarest} for the quantity $X_{k,m}$ when $k\geq0$.
Also recall that the lack of smoothness when one of the three input variables is zero is not an issue;
the singularity at $\eta$ is instead a potential issue which we will address below.

As usual we localize time $s \approx 2^m$.
Since applying $\jxi \partial_\xi$ will cost at most a factor of $s \xi \approx 2^m 2^k$, 
we can reduce matters to obtaining the estimate
\begin{align}\label{prQRwotest'2}
2^{k} {\Big\| \int_0^t L^R_{\underline{k}}(s,\xi) \, \tau_m(s) ds \Big\|}_{L^2} 
  \lesssim \e_1^3 2^{-m}. 
\end{align}
This is implied by the stronger bound
\begin{align}\label{prQRwotest}
2^{3k/2} \big| L^R_{\underline{k}}(s,\xi) \big| \lesssim \e_1^3 2^{-2m}, \qquad s\approx 2^m.
\end{align}

The arguments needed to show \eqref{prQRwotest} are similar to those used
in Subsection \ref{Ssecl<p} to estimate the term $K_{\underline{k}}$ in \eqref{l<p11}
(see also \eqref{l<p10}-\eqref{l<p10'}).
Note that $K_{\underline{k}}$ is actually a quartic term while $L^R_{\underline{k}}$ 
is only cubic, but, on the other hand, $L^R_{\underline{k}}$ has a (smooth) bounded symbol,
while the symbol of $K_{\underline{k}}$ has a large $1/\Phi$ factor,
where $\Phi$ is only assumed to be approximately lower bounded by $2^{-m/2}$. 

Examining \eqref{prQRwot1} we see that in fact all the three input frequencies are uncorrelated,
and we have the possibility of integrating by parts in each of them. 
However, we need to account for the singularity of the symbol in $\eta$.
For this, we introduce a decomposition in $|\eta|$ by inserting cutoffs $\varphi_{k_1}(\eta)$, $k_1\in\Z$.
The sum over $|k_1| \geq 10m$ is easily dealt with using the a priori bounds \eqref{propbootfas} on the $L^\infty_\xi$ and $H^4$-type norm.
It then suffices to estimate the contribution at each fixed $k_1$, with $|k_1|\leq 10m$, of the terms 
(we are changing variables $\eta \mapsto \rho-\eta'$)
\begin{align}\label{prQRwot1'}
\begin{split}
L^R_{\underline{k},k_1}(s,\xi) := 
  \iiint e^{is \Psi_{\iota_1\iota_2\iota_3}(\xi,\rho-\eta',\rho,\s)} \,
  \, \mathfrak{q}'(\xi,\rho-\eta',\s,\rho) \, \varphi_{\underline{k}}'(\xi,\rho-\eta',\sigma,\rho) \varphi_{k_1}(\rho-\eta') \,
  \\ \times  \wt{f}(\rho) \wt{f}(\eta') \wt{f}(\s)\, d\eta' \, d\s \, d\rho.
\end{split}
\end{align}
The sum over $k_1$ can be done at the expense of an $O(m)$ loss.

In \eqref{prQRwot1'} we integrate by parts in $\rho$ and/or $\eta'$ and/or $\s$ 
whenever any of these variables have size $\gtrsim 2^{-m/2}$,
and use the a priori bounds \eqref{apriori11} when instead they are $\lesssim 2^{-m/2}$;
this gives us the usual factor of $\e_1 2^{-3m/4 + \alpha m}$ for each of the three inputs.
This suffices provided we do not differentiate the symbol or, better, the cutoff $\varphi_{k_1}$,
when integrating by parts in $\rho$ or $\eta'$.

Let us then consider the case when $|\rho|\gtrsim 2^{-m/2}$ and we hit the symbol with $\partial_\rho$.
This gives the contribution
\begin{align*}
\begin{split}
\iiint e^{is \Psi_{\iota_1\iota_2\iota_3}(\xi,\rho-\eta',\rho,\s)} \, \frac{\langle\rho\rangle}{s\rho}
  \, \mathfrak{q}'(\xi,\rho-\eta',\s,\rho) \, \varphi_{\underline{k}}'(\xi,\rho-\eta',\sigma,\rho) \varphi_{\sim k_1}(\rho-\eta') 2^{-k_1}\,
  \\ \times  \wt{f}(\rho) \wt{f}(\eta') \wt{f}(\s)\, d\eta' \, d\s \, d\rho.
\end{split}
\end{align*}
Integrating by parts in $\s$ and $\eta'$ (again we assume their sizes are $\gtrsim 2^{m/2}$,
the complementary arguments being similar) 
leads to a main term of the form
\begin{align*}
\begin{split}
\iiint e^{is \Psi_{\iota_1\iota_2\iota_3}(\xi,\rho-\eta',\rho,\s)} \, \frac{\langle\rho\rangle \langle\eta'\rangle \jsig}{s^3\rho \,\eta' \s}
  \, \mathfrak{q}'(\xi,\rho-\eta',\s,\rho) \, \varphi_{\underline{k}}'(\xi,\rho-\eta',\sigma,\rho) \varphi_{\sim k_1}(\rho-\eta') 2^{-k_1}\,
  \\ \times  \wt{f}(\rho) \, \partial_{\eta'} \wt{f}(\eta') \, \partial_\s  \wt{f}(\s)\, d\eta' \, d\s \, d\rho.
\end{split}
\end{align*}
This is bounded by 
\begin{align*}
C 2^{-3m} \cdot 2^{-k_2-k_3-k_4} \cdot {\| \varphi_{k_3}\wt{f} \|}_{L^\infty}
	\cdot 2^{k_4/2}  {\| \varphi_{k_4} \partial_{\eta'} \wt{f} \|}_{L^2} \cdot 2^{k_2/2} {\| \varphi_{k_2} \partial_\s \wt{f} \|}_{L^2}
	\\
	\lesssim 2^{-3m} \cdot 2^{-(k_2+k_3+k_4)/2} \cdot 2^{3\alpha m}.
\end{align*}
Since $\min(k_2,k_3,k_4) \geq -m/2$ in our current scenario this gives us the desired \eqref{prQRwotest}.
Similar estimates hold true if $\partial_{\eta'}$ hits the symbol instead of the profile, 
or if $\min(k_2,k_4) \leq -m/2$.
\end{proof}

\smallskip
\subsubsection{Remainders from the cubic singular terms}
From Lemma \ref{lemCSexp} we know that
\begin{align}\label{CSexp'}
\begin{split}
& \mathcal{C}^{S}(g,g,g) - \mathcal{C}^{S}(f,f,f) \\ & = \mathcal{C}^{S}(T(f,f),f,f) + \mathcal{C}^{S}(f,T(f,f),f)
	+ \mathcal{C}^{S}(f,f,T(f,f)) + \mathcal{R}_3(f,g),
\end{split}
\end{align}
where $\mathcal{R}_3(f,g)$ is the quintic term defined in \eqref{CSexp0} and satisfies \eqref{CSexprem}:
\begin{align}\label{CSexprem'}
{\| \jxi \partial_\xi \mathcal{R}_3(f,g)(t) \|}_{L^2} & \lesssim \e_2^3 \jt^{-1+\alpha}.
\end{align}
We prove control of all the terms on the right-hand side of \eqref{CSexp'}.

\begin{prop}[Weighted estimates for remainder terms] 
\label{proCSLinftyxi}
Denote $\mathcal{C} = \mathcal{C}^{S}(T(f,f),f,f)$, or \\ $\mathcal{C}^{S}(f,T(f,f),f)$
or $\mathcal{C}^{S}(f,f,T(f,f))$.
Then we have
\begin{align}\label{otherweightedC}
\begin{split}
{\Big\| \jxi \partial_\xi \int_0^t \mathcal{C}(\xi,s) \, ds \Big\|}_{L^2_\xi} \lesssim \e_1^4.
\end{split}
\end{align}
Moreover, 
for $m=0,1,\dots$,
\begin{align}\label{otherweightedCinfty}
\begin{split}
{\Big\| \jxi^{3/2} \int_0^t \mathcal{C}(\cdot,s) \, \tau_m(s) ds \Big\|}_{L^\infty_\xi} \lesssim \e_1^4 2^{-m/10}.
\end{split}
\end{align}
Finally,
\begin{align}\label{otherLinftyxiC2}
{\| \jxi^{3/2} \mathcal{R}_3(f,g)(t) \|}_{L^\infty_\xi} & \lesssim \e_2^3 \jt^{-1-\alpha}.
\end{align}
\end{prop}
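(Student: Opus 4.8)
\textbf{Plan of proof for Proposition \ref{proCSLinftyxi}.}
The three estimates follow the same pattern used for the analogous quadratic remainders in Proposition \ref{proQRLinftyxi}: prove a strong $L^2$ bound, prove the weighted $\dot H^1$ bound, and interpolate to get the $\jxi^{-3/2}L^\infty_\xi$ statement. The first step is the easy one. By symmetry it suffices to treat, say, $\mathcal{C} = \mathcal{C}^{S1,2}(T(f,f),f,f)$ (the two other placements of $T(f,f)$ are identical). Using the trilinear H\"older estimates \eqref{lemCS1}--\eqref{lemCS2} for $\mathcal{C}^{S1,2}$ with all exponents finite but large, placing one input in $L^2$ and the others in $L^{\infty-}$, together with the bilinear bound \eqref{bilboundT'rev} for $T$, and the interpolated decay ${\| e^{-it\jnab} \jnab^{1+}\mathcal{W}^* f \|}_{L^p} \lesssim \e_1 \jt^{-1/4}$ for $p$ large (obtained by interpolating \eqref{aprioridecay} with the $H^4$ bound from \eqref{propbootfas}), one gets
\begin{align}\label{CLinftyL2}
{\| \jxi^2 \mathcal{C}(t) \|}_{L^2} \lesssim \e_1^4 \jt^{-6/5}, \qquad t\approx 2^m,
\end{align}
by the same computation as in the proof of \eqref{QRLinftyxiR1'}. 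Integrating over $s\approx 2^m$ loses only a factor $2^m$, so this already gives ${\| \jxi^2 \int_0^t \mathcal{C}\,\tau_m\,ds \|}_{L^2} \lesssim \e_1^4 2^{-m/5}$.

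The weighted bound \eqref{otherweightedC} is the main obstacle and the bulk of the work. The strategy is exactly the one used in Lemma \ref{lemCSexp} and in Section \ref{secwL}: commute $\jxi\partial_\xi$ with the multilinear integral. Writing $\mathcal{C}$ out as a $4$-linear (or $5$-linear, after expanding $T(f,f)$) oscillatory integral with phase $\Phi_{\kappa_1\kappa_2\kappa_3}$ and applying $\partial_\xi$ produces three types of terms: (i) $\partial_\xi$ hits the exponential, giving a factor $t\,\partial_\xi\Phi$; (ii) $\partial_\xi$ hits the symbol $\mathfrak{c}^{S1,2}$, which is lower order by Remark \ref{lemCSrem}; (iii) $\partial_\xi$ hits $\delta(p_*)$ or $\pv\,\what\phi(p_*)/p_*$, in which case one converts $\partial_\xi$ into a derivative in one of the integration frequencies, integrates by parts, and recovers terms of type (i), (ii), or a term where the frequency derivative lands on one of the inputs $\widetilde f$ (or on $\widetilde{\mathcal{F}}T(f,f)$). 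For the type (i) terms the factor of $t$ is absorbed by a trilinear H\"older estimate \eqref{lemCS1} with $\partial_s\widetilde f$ or $\jxi\partial_\xi\widetilde{\mathcal{F}}T(f,f)$ placed in $L^2$: one uses \eqref{dtfL2} for $\partial_t f$ and the identity \eqref{dxiTid} together with \eqref{dxiT2+infty} to control $\jxi\partial_\xi\widetilde{\mathcal{F}}T$. For the terms where a frequency derivative hits $\widetilde f$, one uses the weak weighted bound \eqref{lembb1} for $\jxi\partial_\xi\widetilde f$ and estimates the remaining inputs in $L^\infty$; the contribution of $\pm\sqrt 3$-frequencies is handled, as in \ref{ssecoth++l}, by the improved bound \eqref{Xest} on $X_{k,m}$, and large frequencies by the fast decay of the regular part of the symbols together with the $H^4$ control (cf.\ Remark \ref{remHF}). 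The upshot is
\begin{align}\label{CLinftyH1}
{\Big\| \jxi\partial_\xi \int_0^t \mathcal{C}(\cdot,s)\,ds \Big\|}_{L^2_\xi} \lesssim \e_1^4,
\end{align}
with a possible small loss $\jt^{\delta}$ that is harmless for the interpolation step. I would carry this out in essentially the same case division as in the proof of Lemma \ref{lemCSexp}, since $\mathcal{C}$ here is precisely of the form $\mathcal{C}^S[T(f,f),f,f]$ treated there; in fact the weighted bound \eqref{otherweightedC} is a restatement of \eqref{CSexpbound1} type estimates and one could quote those arguments directly.

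Once \eqref{CLinftyL2} (integrated over dyadic time slices) and \eqref{CLinftyH1} are in hand, \eqref{otherweightedCinfty} follows from the interpolation inequality ${\| \jxi^{3/2} h \|}_{L^\infty} \lesssim \big( {\| \jxi\partial_\xi h \|}_{L^2} + {\| h \|}_{L^2} \big)^{1/2}{\| \jxi^2 h \|}_{L^\infty}^{1/2}$ applied to $h = \int_0^t \mathcal{C}\,\tau_m\,ds$: the first factor is $\lesssim \e_1^4$ by \eqref{CLinftyH1} and (the $L^2$ version of) the first step, the second is $\lesssim \e_1^4 2^{-m/5}$ by \eqref{CLinftyL2}, giving the stated $2^{-m/10}$ gain. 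Finally \eqref{otherLinftyxiC2} is obtained the same way: a crude trilinear estimate using \eqref{lemCS1}, \eqref{bilboundT'rev}, the $H^4$-bounds and the $L^{\infty}$-decay from \eqref{propbootgas} gives ${\| \jxi^2 \mathcal{R}_3(f,g)(t) \|}_{L^2} \lesssim \e_2^3\jt^{-3/2}$, and interpolating this with the weighted bound \eqref{CSexprem'} yields ${\| \jxi^{3/2}\mathcal{R}_3(f,g)(t) \|}_{L^\infty_\xi}\lesssim \e_2^3\jt^{-1-\alpha}$, since $\mathcal{R}_3$ is $5$-linear and each input contributes either decay or a controlled weighted norm. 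Summing the dyadic-in-time pieces then completes the proof of Proposition \ref{proCSLinftyxi}.
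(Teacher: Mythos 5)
Your handling of \eqref{otherweightedCinfty} and \eqref{otherLinftyxiC2} by interpolation between the $\jxi^{-2}L^2$ and $\jxi^{-1}\dot H^1$ estimates matches the paper, and the $L^2$ bound \eqref{CLinftyL2} is obtained the same way the paper gets such bounds. The problem is in the core weighted estimate \eqref{otherweightedC}, where your sketch has a real gap.

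Your type~(i) analysis, where $\partial_\xi$ hits the exponential, does not close as stated. After hitting the phase you pick up a factor $t\,\partial_\xi\Phi$, and a $4$-linear H\"older estimate (three inputs in $L^\infty$ at rate $\jt^{-1/2}$, one in $L^2$) gives ${\| \jxi \partial_\xi \mathcal{C}(s) \|}_{L^2} \lesssim \e_1^4 s\cdot \js^{-3/2} = \e_1^4 \js^{-1/2}$, which is not integrable in $s$. Your suggestion to place $\partial_s\widetilde f$ in $L^2$ presupposes an integration by parts in $s$, but that requires a factor of $\Phi$, not $\partial_\xi\Phi$, in front of the exponential. Nothing in your outline produces that factor. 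The argument you quote (the one used for \eqref{CSexpbound1} inside Lemma \ref{lemCSexp}) does not transfer here: there the object is $5$-linear in $(f,g)$, so the decay $\jt^{-5/2}$ comfortably beats the loss of $t$; the terms $\mathcal{C}^S(T(f,f),f,f)$ are only $4$-linear and are genuinely borderline.

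The missing ingredient is the commutation identity used by the paper: writing $\mathcal{C}$ as a $4$-linear oscillatory integral with phase $\Phi_{\iota_1\iota_2\iota_3\iota_4}$ and setting $X_a := \langle a\rangle\partial_a$, one has
$(X_\xi + \iota_1 X_\eta + \iota_2 X_\sigma + \iota_3 X_\rho)\Phi = -\iota_4\,\frac{\xi-\eta-\sigma-\rho}{\langle \xi-\eta-\sigma-\rho\rangle}\,\Phi$
(this is the quartic analogue of \eqref{niceidentity} and \eqref{Cubotherid}). One therefore integrates by parts in $\eta,\sigma,\rho$ to transfer the frequency derivatives off the phase onto the profiles; the part remaining on the phase is, by this identity, a bounded multiple of $\Phi$. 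The resulting $t\,\Phi\,e^{it\Phi}$ factor can then be integrated by parts in $s$, yielding a boundary term (controlled by $t\cdot\jt^{-3/2}=\jt^{-1/2}$, uniformly bounded) plus a time-integrated term in which one profile is replaced by $\partial_s\widetilde f$ and the bound \eqref{dtfL2} gives decay $\js^{-5/4}$. The terms where the frequency derivatives hit the profiles are your $\mathcal{C}_a$-type contributions and are directly estimated via \eqref{lembb1} and the linear decay, giving $\jt^{-5/4}$. Without this identity the argument does not close, so your proposal, while in the right spirit, is missing the step that makes the quartic weighted estimate work.
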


Note that, as in Propositions \ref{proQRLinftyxi} and \eqref{proQRwot}, 
we need to use the time integral in \eqref{otherweightedC} and \eqref{otherweightedCinfty} too. 

\begin{proof}

{\it Proof of \eqref{otherweightedC}.} 
Let us consider the term $\mathcal{C} = \mathcal{C}^{S1}[T(f,f),f,f]$
and restrict our attention to the portion of $T$ corresponding to the $\delta$ contribution of its symbol $Z$;
see the formulas \eqref{CubicS} and \eqref{formulacubiccoeff}, \eqref{Tgg} and \eqref{QZ}.
The slight modifications that are needed to deal with the other terms pf $\pv$-type will be clear to the reader;
compare also with the arguments in \S\ref{threeremoved} (where we deal with a $\pv$ contribution) 
and the algebra following \eqref{Cubotherid}.
Writing out explicitly the quartic term under consideration,
and disregarding the irrelevant signs $\eps,\eps',\l,\mu,\nu\dots$ in the symbols \eqref{formulacubiccoeff}
and \eqref{QZ}, we obtain an expression of the form
\begin{align}\label{oWC1}
\begin{split}
& \mathcal{C}(t,\xi) = \iiint e^{it \Phi_{\iota_1\iota_2\iota_3\iota_4}} 
  \mathfrak{c}
\, \wt{f}_{\iota_1}(\eta) \wt{f}_{\iota_2}(\s)  \wt{f}_{\iota_3}(\rho) \wt{f}_{\iota_4}(\xi-\eta-\s-\rho)  
  \, d\eta \, d\s \,d\rho,
\\
& \Phi_{\iota_1\iota_2\iota_3\iota_4}(\xi,\eta,\sigma,\theta) 
  = - \jxi +\iota_1\jeta +\iota_2\jsig + \iota_3 \jrho + \iota_4 \langle\xi-\eta-\sigma-\rho\rangle.
\\
\end{split} 
\end{align}
Here, as usual, we can think of $\mathfrak{c}$ as a smooth symbol,
so that its associated $4$-linear operator satisfies standard H\"older estimates.

To handle this term, the main idea is to use the following ``commutation identity'' for $\jxi \partial_\xi$ 
and $\Phi := \Phi_{\iota_1\iota_2\iota_3\iota_4}$:
let $X_a := \langle a \rangle \partial_a$, then
\begin{align}\label{owCid}
(X_\xi + \iota_1 X_\eta + \iota_2 X_\sigma + \iota_3X_\rho) \Phi 
  = -\iota_4 \frac{\xi-\eta-\sigma-\rho}{\langle \xi-\eta-\sigma-\rho\rangle} \Phi.
\end{align}
Thanks to this we can write $\jxi \partial_\xi \mathcal{C}$ as a linear combination of
terms of the following two types, up to similar or easier ones:
\begin{align}
\label{oWCa}
& \mathcal{C}_a = \iiint e^{it \Phi} 
  \mathfrak{c}
\,\big( \jeta \partial_\eta \wt{f}_{\iota_1}(\eta) \big) \, \wt{f}_{\iota_2}(\s)
  \wt{f}_{\iota_3}(\rho) \wt{f}_{\iota_4}(\xi-\eta-\s-\rho)  
  \, d\eta \, d\s \,d\rho,
\\
\label{oWCb}
& \mathcal{C}_b = \iiint e^{it \Phi} 
  \, \big( i t\, \Phi \big)
  \mathfrak{c}
\, \wt{f}_{\iota_1}(\eta) \wt{f}_{\iota_2}(\s)  \wt{f}_{\iota_3}(\rho) \wt{f}_{\iota_4}(\xi-\eta-\s-\rho)  
  \, d\eta \, d\s \,d\rho.
\end{align}
Note that the terms where the derivatives $X_a$ hit the symbol 
can be treated easily by an $L^2\times L^\infty\times L^\infty\times L^\infty$-type estimate
using the a priori $H^4$ bound and the linear decay estimate.

$\mathcal{C}_a$ is directly estimated using a $4$-linear H\"older estimate,
the a priori bound \eqref{lembb1}, and the usual linear decay estimate:
\begin{align*}
{\| \mathcal{C}_a \|}_{L^2} \lesssim 
  {\|\jxi \partial_\xi \wt{f}_{\iota_1} \|}_{L^2} \big( \jt^{-1/2} {\| u \|}_{X_T} \big)^3
  \lesssim \e_1^4 \jt^{-5/4}
\end{align*}

The contribution from \eqref{oWCb} is estimated integrating by parts in time:
\begin{align*}
\int_0^t \mathcal{C}_b \,ds = \mathcal{C}_{b_1}(t) + \int_0^t \mathcal{C}_{b_2}(s)\,ds
\end{align*}
where $C_{b_1}$ is like $\mathcal{C}_b$ without the factor of $\Phi$,
and $C_{b_2}$ is like $\mathcal{C}_{b_1}$ with one profile $\wt{f}$ replaced by $\partial_t\wt{f}$.
In particular, we can see that
\begin{align*}
{\| \mathcal{C}_{b_1}(t) \|}_{L^2} \lesssim t {\| \wt{f}  \|}_{L^2} \big( \jt^{-1/2} {\| u \|}_{X_T} \big)^3
  \lesssim \e_1^4 \jt^{-1/2}
\end{align*}
and
\begin{align*}
{\| \mathcal{C}_{b_2}(s) \|}_{L^2} \lesssim {\big\| \partial_s \big(s \wt{f} \, \big)  \big\|}_{L^2} \big( \js^{-1/2} {\| u \|}_{X_T} \big)^3
  \lesssim \e_1^4 \js^{-5/4},
\end{align*}
having used \eqref{dtfL2}. These give us \eqref{otherweightedC}.

\medskip
{\it Proof of \eqref{otherweightedCinfty}.} 
This follows by interpolating the $\jxi^{-3/2}L^\infty_\xi$ norm between the $\jxi^{-2}L^2$ and $\jxi^{-1}\dot{H}^1$,
and using that the $\jxi^{-2}L^2$ norm of the quantity we are estimating is bounded at least by $\e_1^4 2^{-m/4}$.

\medskip
{\it Proof of \eqref{otherLinftyxiC2}.} 
From its definition we see that $\mathcal{R}_3(f,g)$ is a quintic term in $(f,g)$;
see \eqref{CSexp0} and \eqref{CSexp1}.
Then, using the multilinear estimates from Lemmas \ref{lemCS} and \ref{lemT}, 
and the decay for the linear evolution of $f$ and $g$, we can see that
\begin{align}\label{CSexprem'L2}
{\| \jxi^2 \mathcal{R}_3(f,g)(t) \|}_{L^2} & \lesssim \e_2^3 \jt^{-2+\alpha}.
\end{align}
Interpolating this and \eqref{CSexprem'} we obtain the pointwise bound \eqref{otherLinftyxiC2}.
\end{proof}


\medskip
\subsection{Other singular cubic interactions}\label{Cubother}

In this subsection, we complete the analysis of the singular cubic terms 
$\mathcal{C}^{S1}_{\iota_1 \iota_2 \iota_3}$ and $\mathcal{C}^{S2}_{\iota_1 \iota_2 \iota_3}$ 
defined in \eqref{CubicS}-\eqref{CubicS12}.  
Section \ref{secwL} was dedicated to the analysis of these terms when $(\iota_1,\iota_2,\iota_3) = (+,-,+)$, 
in the fully resonant situation
when all input frequencies are $\sqrt{3}$; this also covers the case when they are all $-\sqrt{3}$. 
We now treat all the other interactions, which are, as was to be expected, relatively easier to deal with.

We will focus only on the $\mathcal{C}^{S2}_{\iota_1 \iota_2 \iota_3}$ 
terms for the sake of brevity, 
but the terms $\mathcal{C}^{S1}_{\iota_1 \iota_2 \iota_3}$ are amenable to a similar treatment.
We make a convenient choice of the parameters $\lambda, \mu, \dots$ 
(which do not matter as far as estimates are concerned), 
and drop all irrelevant indexes as well as complex conjugation signs to obtain the following formula 
for $\mathcal{C}^{S2}_{\iota_1 \iota_2 \iota_3}$:
\begin{align}\label{Cubic2ot0}
\begin{split}
& \mathcal{C}^{S2}_{\iota_1 \iota_2 \iota_3}[a,b,c](t,\xi)
= \iiint e^{it \Phi_{\iota_1\iota_2 \iota_3}(\xi,\eta,\s,\theta)} \mathfrak{c}^{S,2}(\xi,\eta,\sigma,\theta) 
\,  \wt{a}(t,\eta) \wt{b}(t,\s)  \wt{c}(t,\theta)   \frac{\widehat{\phi}(p)}{p} \, d\eta \, d\s \,d\theta,
\\
& \Phi_{\iota_1 \iota_2 \iota_3}(\xi,\eta,\s,\theta) := 
  \jxi - \iota_1 \jeta - \iota_2  \jsig - \iota_3 \langle \theta \rangle, \qquad p = \xi - \eta - \sigma -\theta.
\end{split}
\end{align}
Recall that $\mathfrak{c}^{S,2}(\xi,\eta,\sigma,\theta)$ satisfies the bound
$$
\left| \mathfrak{c}^{S,2}(\xi,\eta,\sigma,\theta) \right| \lesssim \frac{1}{\langle \eta \rangle \langle \eta' \rangle \langle \sigma' \rangle},
$$
and the trilinear operator with this symbol enjoys the boundedness properties stated in Lemma \ref{lemCS}.

We will distinguish 
different cases, depending on whether $\eta$, $\sigma$ and $\theta$ are close to, or removed from, $\pm \sqrt{3}$. 
We define cut-off functions
\begin{align}\label{Cubothercuts}
\chi_{c}(\xi) = \chi \left( \frac{\xi - \sqrt 3}{r} \right) +  \chi \left( \frac{\xi + \sqrt 3}{r} \right), 
\qquad \chi_{r}(\xi) = 1 - \chi \left( \frac{\xi - \sqrt 3}{4r} \right)  -  \chi \left( \frac{\xi + \sqrt 3}{4r} \right)
\end{align}
where $r$ is a sufficiently small positive number, and $\chi=\varphi_{\leq 0}$, see the notation in \S\ref{secNotation}.
Notice that $\chi_c$ and $\chi_r$ do not add up to one, 
since it will be convenient in the estimates to have a separation between their supports. 
Since they can be treated with straightforward adaptations, 
we skip the estimates corresponding to $1 - \chi_c - \chi_r$ for the sake of brevity.
According to \eqref{Cubothercuts} we define the frequency projections
\begin{align}\label{Pcrf}
P_\ast f := \wtF^{-1} \big( \chi_\ast \, \wt{f} \, \big), \qquad \ast \in \{c,r\}. 
\end{align}

We will prove the following main proposition:

\begin{prop}[Weighted estimates for the singular cubic interactions]\label{propCubother}
Let $\mathcal{C}^{S} \in \{\mathcal{C}^{S1},\mathcal{C}^{S2}\}$ as defined in \eqref{CubicS}.
With the a priori assumptions \eqref{propbootfas}, we have, for $t\in[0,T]$,
\begin{align}\label{propCubother0}
\Big\| \jxi\partial_\xi \int_0^t \mathcal{C}^{S}_{\iota_1 \iota_2 \iota_3}[a,b,c](s,\xi) \, ds \Big\|_{L^2} \lesssim \e_1^3 \jt^\alpha,
\end{align}
when
\begin{align*}
& \{\iota_1, \iota_2, \iota_3\} =\{+,+,-\} \quad 
  \mbox{and} \quad \{a,b,c\} \in \{P_{p_1} f, P_{p_2}f, P_r f\}, \quad p_1,p_2 \in\{c,r\}
\\
& \mbox{or when} \quad \{\iota_1, \iota_2, \iota_3\} \neq \{+,+,-\}.
\end{align*}
Moreover, if $\{\iota_1, \iota_2, \iota_3\} =\{+,+,-\}$ and 
$a,b,c = P_c f$, but their frequencies are not all equal to either $\sqrt{3}$ or -$\sqrt{3}$,
then
\begin{align}\label{propCubother0'}
\Big\| \int_0^t \mathcal{C}^{S}_{\iota_1 \iota_2 \iota_3}[a,b,c](s,\xi) \, ds \Big\|_{W_T} \lesssim \e_1^3.
\end{align}



\end{prop}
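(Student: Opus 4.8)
\textbf{Proof plan for Proposition \ref{propCubother}.} The strategy is the same one used repeatedly in Sections \ref{secwR}--\ref{secLinfS}: commute $\jxi\partial_\xi$ with the oscillatory integral using an algebraic identity adapted to the phase, dispose of the resulting cubic term where the time factor appears by integration by parts in $s$, and estimate the remaining terms by multilinear H\"older inequalities (Lemmas \ref{lemCS}, \ref{lemmamultilin1}, \ref{lemmamultilin2}), the a priori bounds of Lemma \ref{lembb} (in particular \eqref{lembb1}--\eqref{lembb3}), and the dispersive decay \eqref{aprioridecay}. For the $\pv$-type operator \eqref{Cubic2ot0} the key identity is the analogue of \eqref{niceidentity}: with $X_{\eta,\sigma,\theta} := \iota_1\jeta\partial_\eta + \iota_2\jsig\partial_\sigma + \iota_3\langle\theta\rangle\partial_\theta$ one has $(\jxi\partial_\xi + X_{\eta,\sigma,\theta})\Phi_{\iota_1\iota_2\iota_3} = -\iota_3\frac{\theta}{\langle\theta\rangle}\,\cdot$(something) — more precisely the version already written down in \eqref{owCid} and used in the proof of Proposition \ref{proCSLinftyxi}. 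Applying this and integrating by parts in $\eta,\sigma,\theta$ (so that the adjoint kills the $X$), $\jxi\partial_\xi$ applied to $\mathcal{C}^{S2}_{\iota_1\iota_2\iota_3}$ reduces, up to symmetric or easier terms, to: (i) a term with a differentiated profile $\jeta\partial_\eta\wt{f}(\eta)$ and the $\pv$ kernel intact; (ii) a term carrying a factor $it\,\Phi_{\iota_1\iota_2\iota_3}$ which, combined with the $\pv$, is handled by integration by parts in $s$ exactly as \eqref{oWCb}; (iii) a term where $(\jxi\partial_\xi + X)$ hits $\widehat\phi(p)/p$, which by the identity $(\jxi\partial_\xi+X)[\widehat\phi(p)/p] = \Phi_{\iota_1\iota_2\iota_3}\,\partial_p[\widehat\phi(p)/p]$ (localizing a little away from $p=0$ and using the $\pv$ for very small $p$, as in \eqref{niceidentity2}) again produces a $\Phi$ factor and is integrated by parts in $s$; and (iv) a term where the derivative hits the symbol $\mathfrak{c}^{S2}$, which is lower order and disposed of by an $L^2\times L^\infty\times L^\infty$ H\"older estimate.

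For the first statement \eqref{propCubother0} the crucial point is that the phase $\Phi_{\iota_1\iota_2\iota_3}$ is \emph{non-resonant} in all the regimes under consideration, so no logarithmic growth appears and the $\jt^\alpha$ bound is comfortably attained. When $\{\iota_1,\iota_2,\iota_3\}\neq\{+,+,-\}$, the phase is bounded below: either $|\Phi_{---}|\gtrsim 1$, or in the cases $\{-,-,+\}$ and $\{+,+,+\}$ one has $\nabla_{\eta,\sigma}\Phi=0$ only away from the zeros of $\Phi$ (see \eqref{asystatptsd'} and the surrounding computation), so one can either integrate by parts in $s$ outright or localize dyadically around the stationary point and use that the modulation is $\gtrsim$ a negative power of $t$ only on a small region where the trivial H\"older estimate with \eqref{lembb1} suffices; this is exactly the scheme carried out in \ref{ssecraIoth}, and I would reuse those estimates verbatim. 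When $\{\iota_1,\iota_2,\iota_3\}=\{+,+,-\}$ but at least one of the three inputs is $P_r f$ — i.e. at least one input frequency is bounded away from $\pm\sqrt{3}$ — the full resonance $(\sqrt{3},\sqrt{3},\sqrt{3})\to\sqrt{3}$ is destroyed, so again $\Phi_{\iota_1\iota_2\iota_3}$ together with its $\eta,\sigma$-derivatives enjoys a uniform lower bound on the support, and integration by parts in $s$ (for the $it\Phi$ terms) plus H\"older (for the differentiated-profile terms, using \eqref{lembb1} which gives $\|\jxi\partial_\xi\wt f\|_{L^2}\lesssim \jt^{\alpha+\beta\gamma}$) closes the estimate with room to spare. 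Here one must again invoke the high-frequency bookkeeping of \ref{ssecothHF} — the symbol's derivatives may grow like $R(\eta,\sigma)$, but thanks to the $H^4$ a priori control and the estimate \eqref{HFest1}, large frequencies are harmless and one may restrict to $\max$ of the frequencies $\lesssim m/3$.

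For the second statement \eqref{propCubother0'} — the $W_T$ bound in the $\{+,+,-\}$ case with all inputs $P_c f$ but not all frequencies at a single point $\sqrt{3}$ or $-\sqrt{3}$ — one has to work harder, mimicking Section \ref{secwL}. The point is that with $a=b=c=P_c f$ the phase $\Psi = \Phi_{+-+}(\xi,\xi-\eta,\xi-\eta-\zeta-\theta,\xi-\zeta)$ (after the change of variables of \eqref{Cubic11}) degenerates along the diagonal, but if the inputs are not \emph{simultaneously} at $+\sqrt3$ (resp.\ all at $-\sqrt3$) then one of the relevant differences of frequencies is of a fixed dyadic size $2^{j}$ with $j$ bounded below (essentially, the inputs split between a neighbourhood of $+\sqrt3$ and of $-\sqrt3$, forcing $|\eta-\zeta|$ or an analogous quantity to be $\approx 2r$), and one can integrate by parts in the corresponding direction $(\partial_\eta-\partial_\zeta)$ using $|(\partial_\eta-\partial_\zeta)\Psi|\gtrsim 1$, just as in Case 4.3 of \ref{boundJ3}. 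Concretely I would: commute $\jxi\partial_\xi$ as above to reduce to the analogues of $\mathcal{J}^{2},\mathcal{J}^3$ (resp.\ $\mathcal{H}^2,\mathcal{H}^3$ for the $\pv$ case); localize dyadically in $s\approx 2^m$, in $|\xi-\sqrt3|\approx 2^\ell$, and in the distances of the three inputs to $\pm\sqrt3$ as in \eqref{defljm}--\eqref{defljm2}; split into the cases $\ell<-\gamma m$ (use the crude H\"older estimate with $\|\partial_\xi\wt f\|_{L^2}\lesssim 2^{(\alpha+\beta\gamma)m}$ and sum in $m$), $\ell>-\gamma m$ with a ``large'' input distance (use $\|\varphi_{>\ell}\partial_\xi\wt f\|_{L^2}\lesssim 2^{-\beta\ell}2^{\alpha m}$ from the $W_T$ control), and $\ell>-\gamma m$ with all input distances $\ll 2^\ell$ but the inputs not coincident — this is where integration by parts in $(\partial_\eta-\partial_\zeta)$ or $\partial_\theta$ as in Cases 4.1--4.3.2 of Section \ref{secwL} applies, invoking Lemmas \ref{lemmamultilin1}, \ref{lemmamultilin2} and Remarks \ref{multilinrem}, \ref{multilinrem2} to control the $L^1$ norms of the Fourier transforms of the localized symbols divided by $(\partial_\eta-\partial_\zeta)\Psi$ (as in \eqref{hummingbird}, \eqref{estimatefrakm}). \textbf{The main obstacle} I anticipate is precisely this last sub-case: organizing the dyadic bookkeeping so that the separation ``not all frequencies at the same $\pm\sqrt3$'' is correctly exploited — one must verify that in every configuration compatible with the $P_c$ localization and the negation of the full-coincidence condition there is \emph{some} available non-degenerate direction for integration by parts, and that the resulting symbol estimates feed correctly into Lemmas \ref{lemmamultilin1}--\ref{lemmamultilin2}; the algebra is routine but the case enumeration must be exhaustive, and one also has to carry the remainder term from the $it\Phi$ integration-by-parts-in-$s$ step, where $\partial_s\wt f$ is replaced by $\mathcal{C}^S(f,f,f)+\mathcal{R}$ and one needs \eqref{dtfL2} and the trilinear estimates of Lemma \ref{lemCS} to regain the lost power of $s$, exactly as in \eqref{similarterms}.
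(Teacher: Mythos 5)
Your proposal follows essentially the same approach as the paper's proof: the commutation identity \eqref{Cubotherid} (the $\pv$-analogue of \eqref{owCid}, equivalently \eqref{niceidentity}), integration by parts in $s$ for the $it\Phi$ contributions via \eqref{niceidentity2}, multilinear H\"older estimates from Lemma \ref{lemCS}, a sign-by-sign non-resonance analysis, and the dyadic bookkeeping of Section \ref{secwL} for \eqref{propCubother0'}. Two small corrections worth flagging: first, for \eqref{propCubother0} in the $crr$ case the paper does not ``reuse verbatim'' the estimates of \ref{ssecraIoth} (which control the $\jxi^{-3/2}L^\infty$ and $\jxi^{-2}L^2$ norms, not $\jxi\partial_\xi$ in $L^2$) — instead it re-derives the weighted $L^2$ bound in \ref{tworemoved} by explicitly examining, for each sign combination, the restricted space-time resonances (zeros of $\Phi$ on $\{(\partial_\sigma - \partial_\theta)\Phi = 0\}$), verifying they are absent and then integrating by parts in either $\partial_\sigma - \partial_\theta$ or $s$ according to whether $|\sigma|+|\theta|$ is bounded or large; your ``scheme'' is the right one but that detailed verification of absence of restricted resonances for $+++$, $+--$, $-++$, $-+-$ and $+-+$ is where most of the actual work lies. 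Second, for \eqref{propCubother0'} the relevant separation is not a quantity of size $\approx 2r$ as in Case 4.3 of \ref{boundJ3}: once one input frequency sits near $+\sqrt3$ and another near $-\sqrt3$, the paper's change of variables leads to \eqref{toucan} where $|\partial_\eta\Psi|, |(\partial_\eta - \partial_\zeta)\Psi| \gtrsim 1$ (an $O(1)$ bound, easier than the dyadically small lower bound of Case 4.3), while $|\Psi|, |\partial_\zeta\Psi|, |\partial_\zeta^2\Psi| \approx |\eta|$ — so one directly integrates by parts in $\partial_\eta$ or $\partial_\eta - \partial_\zeta$ for two of the three $\partial_\xi$-distribution terms, and the remaining term \eqref{toucan2} is treated by the scheme of $\mathcal{H}^2$ in \ref{secwLpv2}. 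Your anticipated ``main obstacle'' is real, and the paper resolves it by this explicit phase-derivative computation rather than by abstract case enumeration.
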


The proof of Proposition \ref{propCubother} will complete the proof of the weighted bound 
in \eqref{propbootfconc}. 

For a better organization of our exposition, we are going to prove \eqref{propCubother0}-\eqref{propCubother0'} 
by distinguishing cases relative to whether the frequencies are close or not to $\pm\sqrt{3}$, 
and subcases depending on the $\iota$'s signs combinations.

\medskip
\subsubsection{Three frequencies removed from $\pm \sqrt 3$}\label{threeremoved}
This case is similar to the cubic nonlinear Schr\"odinger equation, where the dispersion relation 
is $\Lambda = \xi^2$, see \cite{GPR2,ChPu}. In \cite{GPR2} weighted estimates are proved under 
the assumption that the potential $V$ is generic;
here we provide a more general (and simpler) argument similar to the one in \cite{ChPu} 
that also applies to the case of exceptional potentials 
and any solution $u$ such that $\wt{u}(0)=0$.

As before, we simplify our notation by dropping some of the irrelevant indexes in our formulas.
We look at the restriction of \eqref{Cubic2ot0} to inputs with frequencies away from $\pm\sqrt{3}$ 
by defining
\begin{align}\label{Cubic2ot1}
\begin{split}
\mathcal{C}_{rrr}(a,b,c)(t,\xi)
&:= \iiint e^{it \Phi_{\iota_1\iota_2 \iota_3}(\xi,\eta,\sigma,\theta)} 
  \mathfrak{c}_{rrr}(\xi,\eta,\sigma,\theta) \wt{a}(t,\eta) \wt{b}(t,\sigma)  
  \wt{c}(t,\theta) \frac{\widehat{\phi}(p)}{p} \, d\eta \, d\sigma \,d\theta,
\\
\mathfrak{c}_{rrr} (\xi,\eta,\sigma,\theta) 
  & := \mathfrak{c}^{S,2} (\xi,\eta,\sigma,\theta) \chi_r(\eta) \chi_r(\sigma) \chi_r(\theta),
\end{split}
\end{align}
and aim to show
\begin{align}\label{Cubic2otest}
{\Big\| \jxi \partial_\xi \int_0^t \mathcal{C}_{rrr}(f, f, f)(s) \, ds \Big\|}_{L^2}
  \lesssim \e_1^3 \jt^{\alpha}.
\end{align}

Observe that
\begin{align}\label{Cubotherid}
(\jxi \partial_\xi + X_{\eta,\s,\theta}) \Phi_{\iota_1, \iota_2 ,\iota_3} = p,
 \qquad X_{\eta,\s,\theta} := \iota_1\jeta \partial_\eta + \iota_2\jsig \partial_\sigma
  + \iota_3\langle \theta \rangle \partial_\theta.
\end{align}
Then, when applying $\jxi \partial_\xi$ to \eqref{Cubic2ot1}, we can use the above identity 
to integrate by parts in $\eta,\sigma$ and $\theta$.
Since the adjoint satisfies $X_{\eta,\s,\theta}^\ast = - X_{\eta,\s,\theta}$ 
we see that
\begin{subequations}\label{Cubic2ot5}
\begin{align}\label{Cubic2ot5.1}
& \jxi \partial_\xi\mathcal{C}_{rrr}[f,f,f](t,\xi)
= i t \iiint e^{it \Phi_{\iota_1\iota_2 \iota_3}} \mathfrak{c}_{rrr}(\xi,\eta,\sigma,\theta) 
\, \wt{f}(\eta) \wt{f}(\sigma)  \wt{f}(\theta) 
  \widehat{\phi}(p) \, d\eta \, d\sigma \,d\theta
\\
\label{Cubic2ot5.2}
& + \iiint e^{it \Phi_{\iota_1\iota_2 \iota_3}} \mathfrak{c}_{rrr}(\xi,\eta,\sigma,\theta) 
  X_{\eta,\s,\theta} \big(\wt{f}(\eta) \wt{f}(\sigma)  \wt{f}(\theta) \big)
  \frac{\widehat{\phi}(p)}{p} \, d\eta \, d\sigma \,d\theta
\\
\label{Cubic2ot5.3}
& + \iiint e^{it \Phi_{\iota_1\iota_2 \iota_3}} \mathfrak{c}_{rrr}(\xi,\eta,\sigma,\theta) 
\, \wt{f}(\eta) \wt{f}(\sigma)  \wt{f}(\theta) 
  \, \big( \jxi \partial_\xi + X_{\eta,\s,\theta} \big) \Big[ \frac{\widehat{\phi}(p)}{p}\Big] \, d\eta \, d\sigma \,d\theta \\
  \label{Cubic2ot5.4}
  & +  \iiint e^{it \Phi_{\iota_1\iota_2 \iota_3}} X_{\eta,\s,\theta} \, \mathfrak{c}_{rrr}(\xi,\eta,\sigma,\theta) 
\, \wt{f}(\eta) \wt{f}(\sigma)  \wt{f}(\theta) 
  \, \frac{\widehat{\phi}(p)}{p} \, d\eta \, d\sigma \,d\theta 
 .
\end{align}
\end{subequations}

\smallskip
\noindent
{\it Estimate of \eqref{Cubic2ot5.1}}.
The first term in \eqref{Cubic2ot5} does not have a singularity and can be estimated
integrating by parts in the ``uncorrelated'' variables $\eta,\sigma$ and $\theta$.
Each of the three inputs then would gives a gain of $\jt^{-3/4+\alpha}$ which is sufficient to
absorb the power of $t$ in front and integrate over time.
Similar (in fact, harder) terms have been treated in Section \ref{secwR}, so we can skip the details.

\smallskip
\noindent
{\it Estimate of \eqref{Cubic2ot5.2}}.
For this term it suffices to use the H\"older-type estimate from Lemma \ref{lemCS},
estimating in $L^2$ the profile that is hit by the derivative, and the other two in $L^\infty_x$.

\smallskip
\noindent
{\it Estimate of \eqref{Cubic2ot5.3}}.
For this term we observe, see \eqref{Cubotherid}, that
\begin{align}\label{Cubic2ot10}
\big( \jxi \partial_\xi + X_{\eta,\s,\theta} \big) \Big[ \frac{\widehat{\phi}(p)}{p}\Big] 
  = \Phi_{\iota_1,\iota_2,\iota_3}(\xi,\eta,\s,\theta) \partial_p \Big[ \frac{\widehat{\phi}(p)}{p}\Big].
\end{align}
Note that this identity is formal as it is written, since $\partial_p (1/p)$ does not converge (even in the $\pv$ sense);
however, it can be made rigorous by localizing a little away from $p=0$, 
and using the $\pv$ to deal with very small values of $p$.


From \eqref{Cubic2ot10} we obtain, upon integration by parts in $s$, that
\begin{align}\label{Cubic2ot11}
\int_0^t i \eqref{Cubic2ot5.3} \, ds & = 
\iiint e^{is \Phi_{\iota_1\iota_2 \iota_3}} \mathfrak{c}_{rrr}(\xi,\eta,\sigma,\theta)
\, \wt{f}(\eta) \wt{f}(\sigma)  \wt{f}(\theta) 
  \, \partial_p \frac{\widehat{\phi}(p)}{p}\, d\eta \, d\sigma \,d\theta \, \Big|_{s=0}^{s=t}
\\
\label{Cubic2ot12}
& - \int_0^t \iiint e^{it \Phi_{\iota_1\iota_2 \iota_3}} \mathfrak{c}_{rrr}(\xi,\eta,\sigma,\theta)
\, \partial_s \Big[ \wt{f}(\eta) \wt{f}(\sigma)  \wt{f}(\theta) \Big]
  \partial_p \frac{\widehat{\phi}(p)}{p} \, d\eta \, d\sigma \,d\theta \, ds.
\end{align}
To estimate \eqref{Cubic2ot11} we convert the $\partial_p$ into $\partial_\eta$ and
integrate by parts in $\eta$. The worst term is when $\partial_\eta$ hits the exponential;
this causes a loss of $t$ but an $L^2\times L^\infty\times L^\infty$ H\"older estimate using Lemma \ref{lemCS}
suffices to recover it.

The term \eqref{Cubic2ot12} is similar. We may assume that $\partial_s$ hits $\wt{f}(\sigma)$.
Again we convert $\partial_p$ into $\partial_\eta$ and
integrate by parts in $\eta$.
This causes a loss of $s$ when hitting the exponential phase
which is offset by an $L^\infty\times L^2\times L^\infty$ estimate
with $\partial_s \wt{f}$ placed in $L^2$ and giving $\jt^{-1}$ decay using \eqref{dtfL2}.

\smallskip
\noindent
{\it Estimate of \eqref{Cubic2ot5.4}} 
This term can be estimated directly using the trilinear 
estimates from Lemma \ref{lemCS}. 
The only difficulty is the loss of one derivatives resulting from the differentiation of the symbol,
but this is easily recovered using the $H^4$ a priori bound from \eqref{propbootfas},
and $p_0 < \alpha$, see \eqref{apriori0}.

\medskip
\subsubsection{One frequency close, two removed from $\pm \sqrt 3$}\label{tworemoved}
Let us now consider
\begin{align*}
\mathcal{C}_{crr}[a,b,c](t,\xi)
& := \iiint e^{it \Phi_{\iota_1\iota_2 \iota_3}(\xi,\eta,\sigma,\theta)} \mathfrak{c}_{crr}(\xi,\eta,\sigma,\theta) 
  \wt{a}(t,\eta) \wt{b}(t,\sigma)  \wt{c}(t,\theta) \frac{\widehat{\phi}(p)}{p} \, d\eta \, d\sigma \,d\theta,
  \\
\mathfrak{c}_{crr} (\xi,\eta,\sigma,\theta) & := \mathfrak{c}^{S,2} (\xi,\eta,\sigma,\theta) 
\chi_c(\eta) \chi_r(\sigma) \chi_r(\theta).
\end{align*}
One can proceed exactly as in the previous subsection, with the exception of the treatment of
\eqref{Cubic2ot5.2}, which must be modified due to the degeneracy of the weighted norm close to $\sqrt{3}$. 
The only problematic term is the one where the first function 
(whose frequency is close to $\pm \sqrt{3}$) is differentiated. Thus, we are looking at
$$ 
\iiint e^{it \Phi_{\iota_1\iota_2 \iota_3}} \mathfrak{c}_{crr}(\xi,\eta,\sigma,\theta) 
  \langle \eta \rangle \partial_\eta \wt{f}(\eta) \wt{f}(\sigma)  \wt{f}(\theta) \frac{\widehat{\phi}(p)}{p} 
  \, d\eta \, d\sigma \,d\theta.
$$
We need to distinguish cases depending on the $\iota$ signs.

\medskip 
\noindent
{\it The $+++$ case}. Since only the first argument $\wt{f}(\eta)$ is differentiated, 
it is natural to try to integrate by parts in $\partial_\s-\partial_\theta$. 
We thus need to look at frequencies for which
$$
(\partial_\sigma - \partial_\theta) \Phi_{+++}(\xi,\eta,\sigma,\theta) = \Phi_{+++}(\xi,\eta,\sigma,\theta) = 0.
$$
We will refer to these as `restricted (space-time) resonances'. 
The vanishing of  $(\partial_\sigma - \partial_\theta) \Phi_{+++}$ imposes that $\sigma = \theta$. 
Therefore, resonances are given by the zeros of
$$
\Phi_{+++}(\xi,\eta,\sigma,\sigma) = \langle \eta + 2\sigma + p \rangle - \langle \eta \rangle 
- 2 \langle \sigma \rangle.
$$
Squaring both sides of $ \langle \eta + 2\sigma + p \rangle = \langle \eta \rangle + 2 \langle \sigma \rangle$ 
results in $p^2 + 2p(\eta + 2\sigma) + 4\eta \sigma = 4 + 4 \langle \eta \rangle \langle \sigma \rangle$,
which has no solutions if $|p| \ll 1$ and $(\xi,\eta,\sigma,\theta) \in \operatorname{Supp} (\mathfrak{c}_{2,crr})$.
Note that we may easily restrict to $|p| \ll 1$, 
since interactions for which $|p| \gtrsim 1$ can be treated like regular cubic terms
integrating by parts in the uncorrelated variables $\s$ and $\theta$.

Without loss of generality, we assume that $|\sigma| \geq |\theta|$; 
we then distinguish between the case where $|\sigma| \lesssim 1$, and $|\sigma| \gg 1$.

\begin{itemize}

\item If $|\sigma| \lesssim 1$, 
we resort either to integration by parts in $\partial_\sigma - \partial_\theta$
or to integration by parts in $s$, 
using that either $(\partial_\sigma - \partial_\theta)\Phi_{+++}$ or $\Phi_{+++}$ can 
be bounded away from zero.

In the former case, one finds (after adding a cutoff that we omit) the expression
$$ 
\iiint e^{it \Phi_{+++}} \frac{\mathfrak{c}_{crr}(\xi,\eta,\sigma,\theta) }{
is(\partial_\theta-\partial_\eta) \Phi_{+++}}
  \langle \eta \rangle \partial_\eta \wt{f}(\eta) \partial_\sigma \wt{f}(\sigma)
  \wt{f}(\theta) \frac{\widehat{\phi}(p)}{p}  \, d\eta \, d\sigma \,d\theta + \{ \mbox{easier terms} \},
$$
whose $L^2$-norm can be bounded by
$$
C t^{-1} {\| \jeta \partial_\eta \wt{f} \|}_{L^2} {\| \partial_\s \wt{f} \|}_{L^1}
  {\| e^{-it\langle D \rangle} \mathcal{W}^* f\|}_{L^\infty} \lesssim \e_1^3 t^{-1}.
$$
In the latter case, one finds
$$
\int_0^t \iiint e^{it \Phi_{+++}} \frac{\mathfrak{c}_{crr}(\xi,\eta,\sigma,\theta) }{i \Phi_{+++}}
  \langle \eta \rangle\partial_s [ \partial_\eta  \wt{f}(\eta) \wt{f}(\sigma)  \wt{f}(\theta) ]\frac{\widehat{\phi}(p)}{p}
  \, d\eta \, d\sigma \,d\theta \,ds.
$$
The control of this expression is easy if the time derivative hits $\wt{f}(\sigma)$ or $\wt{f}(\theta)$,
by using a trilinear estimate and \eqref{dtfL2}.
If $\partial_t$ hits $ \wt{f}(\eta)$, the $\partial_\eta$ derivative might result in an additional $t$ factor.
We use \eqref{eqdtf} and look at the two main contributions on its right-hand side: 
when we substitute $\mathcal{C}^S$ to $\partial_t f$,
we obtain a $5$-linear expression in $f$, and estimating four inputs in $L^\infty$, and one in $L^2$ suffices;
when we substitute $\mathcal{Q}^R$ to $\partial_t \wt{f}$, 
we can use the bound $\| \mathcal{Q}^R(f,f) \|_{L^2} \lesssim t^{-1+}$, 
which follows from Lemma \ref{lemQR}, and estimate the two other inputs in $L^\infty$.

\item If $|\sigma| \gg 1$, we have
$$
|\Phi_{+++}(\xi,\eta,\sigma,\theta)| \gtrsim 
\left\{ \begin{array}{ll}
  1 & \mbox{ if $\sigma,\theta$ have the same sign,} 
\\
\langle \theta \rangle 
  & \mbox{ if $\sigma,\theta$ have opposite signs,}
\end{array} \right.
$$
as long as $|p| \ll 1$.
Indeed, this is obvious if $\sigma$ and $\theta$ have opposite signs; and if they do have the same sign,
\begin{align*}
- \Phi_{+++}(\xi,\eta,\sigma,\theta) & = - \langle \eta + \sigma + \theta + p \rangle +\langle \eta \rangle 
  + \langle \sigma \rangle + \langle \theta \rangle
  \\
& = - | \eta + \sigma + \theta + p| + \langle \eta \rangle + |\sigma| + \langle \theta \rangle 
  + O\left(|\sigma|^{-1} \right) \gtrsim 1.
\end{align*}
Turning to estimates on derivatives, for any $p,\eta,\sigma,\theta$ 
such that $|p|,|\eta| \lesssim 1$, $|\sigma| \gg 1$, and $\sigma$ and $\theta$ have the same sign,
$$
\left| \partial_p^a \partial_\eta^b \partial_\sigma^c \partial_\theta^d \frac{1}{\Phi_{+++}(\eta,\sigma,\theta,p)} \right| 
\lesssim |\sigma|^{-c} \langle \theta \rangle^{-d},
$$
so that Lemma \ref{lemmamultilin2} applies. In the case where $\sigma$ and $\theta$ have opposite signs, 
the above does not hold (think of the case where $\sigma + \theta =0$). 
Assuming for instance 
$\sigma >0$, $\theta<0$ ($|\sigma| \geq |\theta|$), let $\sigma' = \sigma + \theta$.
Then, the above derivative estimate holds for the variables $(p,\eta,\sigma',\theta)$.
In both cases, Lemma \ref{lemmamultilin2} applies, and an integration by parts in time suffices. 
\end{itemize}

\medskip
\noindent
{\it The $+--$ case}. 
This can be dealt with similarly to the $+++$ case.
First, we observe that there are again no restricted resonances
$$
\sigma = \theta, \qquad \mbox{and}  \qquad \langle \eta + 2\sigma + p \rangle - \langle \eta \rangle
  + 2 \langle \sigma \rangle = 0,
$$
with $|p| \ll 1$.
This is clear if $|\sigma|\gg 1$; 
if instead $|\sigma| \lesssim 1$, it suffices to treat the case $p=0$ and argue by continuity. 
In other words, it suffices to show that there are no solutions 
of $\langle \eta + 2\sigma \rangle  = \langle \eta \rangle - 2 \langle \sigma \rangle$. 
Squaring both sides leads to $\langle \eta \rangle \langle \sigma \rangle = 1 - \eta \sigma$,
whose only solution is $\eta = -\sigma$, but this is not allowed on the support of $\mathfrak{c}_{crr}$.

Therefore, as long as $|\sigma| + |\theta| \lesssim 1$, the argument used for the $+++$ case applies. 
On the other hand, when $|\sigma| + |\theta| \gg 1$, we have
$|\Phi_{+--}(\xi,\eta,\sigma,\theta)| 
\gtrsim \langle \sigma \rangle + \langle \theta \rangle$, 
so that the argument used above also applies. 

\medskip 
\noindent
{\it The $-++$ case}. 
Once again we look at possible solutions of 
$$
\langle \eta + 2\sigma + p \rangle + \langle \eta \rangle - 2 \langle \sigma \rangle = 0.
$$
for $|p| \ll 1$, on the support of the integral.
It is easy to verify that this equation does not have solutions for $|\sigma| \gg 1$;
in the complementary case it suffices to consider the case $p=0$
and notice that the only solution to $\langle \eta + 2\sigma  \rangle = - \langle \eta \rangle + 2 \langle \sigma \rangle$
is $\eta = -\sigma$, but this does not belong to $\supp(\mathfrak{c}_{crr})$.

We then distinguish different frequencies configurations:
\begin{itemize}
\item If $|\sigma| + |\theta| \lesssim 1$, the argument given in the previous cases apply.

\item If $|\sigma| \sim |\theta| \gg 1$ and $\sigma$ and $\theta$ have opposite signs, 
then $|\Phi_{-++}| \gtrsim \jeta \approx 1$. 
If they have equal signs,
\begin{align*}
\Phi_{-++}(\xi,\eta,\sigma,\theta) & = \langle p+ \eta + \sigma + \theta \rangle + \langle \eta \rangle 
  - \langle \sigma \rangle - \langle \theta \rangle 
  \\
& = |p + \eta + \sigma + \theta|+ \langle \eta \rangle - |\sigma| - |\theta| 
  + O\left( {|\sigma|}^{-1} \right) \gtrsim 1
\end{align*}
as long as $|p| \ll 1$. The estimates on the derivatives are the natural ones, 
and an integration by parts in $s$ suffices.

\item If $|\sigma| \gg |\theta| + 1 $ we need a different argument. 
Observe that $\left| (\partial_\sigma - \partial_\theta) \Phi_{+--}(\xi,\eta,\sigma,\theta) \right| 
  \gtrsim \langle \theta \rangle^{-2}$, and more precisely
$$
\left| \partial_\sigma^a \partial_\theta^b \frac{1}{(\partial_\sigma - \partial_\theta) 
  \Phi_{-++}(\xi,\eta,\sigma,\theta)} \right| \lesssim \langle \theta \rangle^{2-b} |\sigma|^{-a}.
$$
Therefore, an integration by parts in $\partial_\sigma - \partial_\theta$,
followed by an application of (a small adaptation of) Lemma \ref{lemCS} gives
(using $|\theta|\lesssim |\s|$)
\begin{align*}
{\| \mathcal{C}_{crr}(f,f,f) \|}_{L^2} & \lesssim t^{-1} 
  \cdot {\| \chi_c(\eta) \partial_\eta \wt{f} \|}_{L^{2+}}
  {\| \jnab^{0+} e^{-it\jnab} \whF^{-1} \partial_\s \wt{f} \|}_{L^{\infty-}} \cdot 
  {\| \jnab^{0+} e^{-it\jnab} \W f \|}_{L^{\infty-}} 
  \\
  & \lesssim t^{-1} {\| \chi_c(\eta) \partial_\eta \wt{f} \|}_{L^2} 
  {\| \chi_r(\s) \jsig \partial_\s \wt{f} \|}_{L^2}
  {\| \jnab^{0+} e^{-it\jnab} \W f \|}_{L^{\infty-}} 
  \\
  & \lesssim t^{-1} \cdot \e_1 \jt^{\alpha+\beta\gamma} \cdot \e_1 \jt^{\alpha}
  \cdot \e_1 \jt^{-1/2+}
  \\
  & \lesssim \jt^{-1} \e_1^3, 
\end{align*}
having used Sobolev's embedding theorem for the second inequality (recall $|\eta| \approx \sqrt{3}$, 
$|\s| \not\approx \sqrt{3}$), 
interpolation between the linear decay and the $H^4$-norm,
the a priori bounds (see in particular \eqref{lembb1}), and $\alpha + \beta \gamma < 1/4$.
\end{itemize}

\medskip 
\noindent
{\it The $-+-$ case}. 
For this case it is obvious here that there are no restricted space-time resonances, 
since, when $\sigma = -\theta$, the phase is
$\Phi_{-+-}(\xi,\eta,\sigma,\theta) = \langle \eta + p \rangle + \langle \eta \rangle$.
Once again,
\begin{itemize}

\item If $|\sigma| + |\theta| \lesssim 1$, 
one can resort to integration by parts in $\partial_s$ or $\partial_\sigma - \partial_\theta$.

\item If $|\sigma|  + |\theta| \gg 1$, 
\begin{align*}
\Phi_{-+-}(\xi,\eta,\sigma,\theta) & = \langle p+ \eta + \sigma + \theta \rangle 
  + \langle \eta \rangle - \langle \sigma \rangle + \langle \theta \rangle \gtrsim 1
\end{align*}
as long as $|p| \ll 1$.
\end{itemize}

\medskip \noindent
{\it The $---$ case}. 
This is the easiest case since $\Phi_{---}(\xi,\eta,\sigma,\theta) \gtrsim 1$ for all $\xi,\eta,\sigma,\theta$.

\medskip \noindent
{\it The $+-+$ case}.
This is the hardest case, since restricted space time resonances are present;
the phase vanishes when $\s=-\theta$ and $p=0$.
The case $|\theta| + |\sigma| \lesssim 1$ is essentially treated in Section \ref{secwL}, 
therefore we can assume that $|\sigma| \gg 1$, and $|\sigma| \geq |\theta|$.
If $|\sigma| \gg |\theta|$, or $\sigma \approx \theta$, 
then an integration by parts in $\partial_\sigma - \partial_\theta$ suffices;
therefore, we will only focus on the case where $\sigma \approx - \theta$.

It is convenient to adopt the same parameterization of the frequency variables as in Section \ref{secwL}, 
which, after replacing the second $\wt{f}$ by $\wt{f}(-\cdot)$, leads to the question of bounding
\begin{align}\label{+-+1}
\begin{split}
& \sum_{n\geq 10} \iiint e^{is\Psi(\xi,\eta,\zeta,\theta)} \mathfrak{m}_n(\xi,\eta,\sigma,\theta) 
  \partial_\xi \wt{f}(\xi-\eta) \wt{f}(\xi-\eta-\zeta-\theta) \wt{f}(\xi-\zeta) 
  \frac{\widehat{\phi}(\theta)}{\theta} \, d\eta \,d\zeta \,d\theta,
\\
& \Psi(\xi,\eta,\zeta,\theta) = \jxi - \langle \xi - \eta \rangle 
  + \langle \xi - \eta -\zeta -\theta  \rangle - \langle \xi - \zeta\rangle,
\end{split}
\end{align}
where, slightly abusing notations by letting
$\mathfrak{c}_{crr}$ be the symbol expressed both 
in the $(\eta,\sigma,\theta)$ and $(\xi-\eta,\xi-\eta-\zeta-\theta,\xi-\zeta)$ variables,
we define
$$
 \mathfrak{m}_n(\xi,\eta,\sigma,\theta) = 
 \mathfrak{c}_{crr} (\xi,\eta,\sigma,\theta) \langle \xi -\eta \rangle 
 \varphi_{n}(\xi-\eta-\zeta-\theta) \varphi_{\sim n}(\xi-\zeta).
$$

Using the a priori $H^4$ bound, Cauchy-Schwarz's inequality and Lemma \ref{lemCS}, 
we can estimate the $L^2$ norm of each element in the sum in \eqref{+-+1} by
\begin{align*}
C 2^{-(2-)n} \| \partial_\xi \wt{f} \|_{L^2} \| \varphi_n \wt{f} \|_{L^1} 
  \| \varphi_{\sim n} \wt{f} \|_{L^1} \lesssim \e_1 \js^{\alpha + \beta \gamma} 
  2^{-(9-)n} \| f \|_{H^4}^2.
\end{align*}
This bounds suffices as long as $2^n \gtrsim \js^{1/6}$.

If, on the other hand, $2^n \lesssim \js^{1/6}$, we can now follow the skeleton of the estimate 
of $\mathcal{H}^2$ in \S\ref{secwLpv2}. Cases 1, 2 and 3 are identical, 
simply relying on the easy generalization of Lemma \ref{lemCS} to the symbol $\mathfrak{m}_n$ above.

Let us then consider the analogue of Case 4.1, 
which corresponds to the localizations $|\xi - \sqrt{3}| \approx 2^\ell$, 
$|\xi - \eta - \sqrt 3| \leq 2^{\ell-100}$, $|\theta+\xi-\sqrt{3}| \approx 2^h \geq 2^{\ell-10}$. 
To these we add $|\zeta| \approx 2^n$, in correspondence with the $n$-th 
summand in \eqref{+-+1} 
Under these conditions, 
the absolute value of the $\zeta$ derivative of $\Psi$ is
$$
|\partial_\zeta \Psi(\xi,\eta,\zeta,\theta)| = |- \tau'(\xi-\eta-\zeta-\theta) 
+ \tau'(\xi -\zeta)| \approx | \tau''(\xi-\zeta) (\eta + \theta)| \gtrsim 2^{-3n} 2^h. 
$$
More precisely, we have
$$
\left| \partial_\xi^a \partial_\eta^b \partial_\zeta^c \partial_\theta^d 
  \frac{1}{\partial_\zeta \Psi(\xi,\eta,\zeta,\theta)} \right| \lesssim 2^{3n-h} 2^{-(a+c)n} 2^{-h(b+d)},
$$
so that, recalling the bounds on $\mathfrak{c}^{S,2}$, we get
$$
{\Big\| \whF
  \frac{\mathfrak{m}(\xi,\eta,\zeta,\theta)}{\partial_\zeta \Psi(\xi,\eta,\zeta,\theta) } \Big\|}_{L^1}
  \lesssim 2^{n-h}.
$$
Integrating by parts in $\zeta$ gives several terms; the leading one is given by
$$
 \iiint e^{is\Psi(\xi,\eta,\zeta,\theta)} \frac{\mathfrak{m}_n(\xi,\eta,\sigma,\theta)}{s 
 \partial_\zeta \Psi(\xi,\eta,\zeta,\theta)}
  \partial_\xi \wt{f}(\xi-\eta) \partial_\xi \wt{f}(\xi-\eta-\zeta-\theta) 
  \wt{f}(\xi-\zeta) \frac{\widehat{\phi}(\theta)}{\theta} \, d\eta \,d\zeta \,d\theta.
$$
This can be bounded in $L^2$ by
\begin{align*}
C s^{-1} \cdot 2^{n-h} 
  \| \varphi_{< \ell-100} \partial_\xi \wt{f} \|_{L^1} 
  \|\varphi_n \partial_\xi \wt{f} \|_{L^2} \| e^{-it\langle D \rangle} \mathcal{W}^* f \|_{L^\infty} 
  \\
\lesssim \e_1^3 2^{n-h} 2^{\beta'\ell} \js^{-3/2 + 2\alpha}.
\end{align*}
Summing over $2^n \lesssim \js^{1/6}$ and $h \geq \ell - 10$, 
using that $2^\ell \gtrsim \js^{-\gamma}$, with \eqref{wnormparam0}, gives the desired bound.

Finally, there remains Case 4.2 in \S\ref{secwLpv2}, which corresponds to $|\theta| \approx 2^\ell$.
Here we can integrate by parts in $\theta$ using that,
for all $a,b,c,d$ (not all equal to zero), 
$$
\left| \partial_\xi^a \partial_\eta^b \partial_\zeta^c \partial_\theta^d 
  \frac{1}{\partial_\theta \Psi(\xi,\eta,\zeta,\theta)} \right| \lesssim 2^{-n(1+a+b+c+d)}.
$$

\medskip
\subsubsection{Two frequencies close, one removed from $\pm \sqrt{3}$} 
Defining $\mathcal{C}_{ccr}$ through the symbol
$$
\mathfrak{c}_{ccr}(\xi,\eta,\sigma,\theta) 
  = \mathfrak{c}^{2,S} (\xi,\eta,\sigma,\theta) \chi_c(\eta) \chi_c(\sigma) \chi_r(\theta),
$$
we follow once again the approach of \S\ref{threeremoved}, 
and see that the only problematic term is
$$
\iiint e^{is \Phi_{\iota_1\iota_2 \iota_3}} \mathfrak{c}_{ccr}(\xi,\eta,\sigma,\theta)  
  \jeta \partial_\eta \wt{f}(\eta) \wt{f}(\sigma) \wt{f}(\theta)
  \frac{\widehat{\phi}(p)}{p} \, d\eta \, d\sigma \,d\theta
$$
(and, symmetrically, the term where the derivative hits the second function). 
On the support of $\mathfrak{c}_{2,ccr}$
$$
\left| (\partial_\sigma - \partial_\theta) \Phi_{\iota_1 \iota_2 \iota_3} \right| \gtrsim 1,
$$
so that we can integrate by parts in $\partial_\s - \partial_\theta$. 
The worst term resulting from this is
$$
\iiint \frac{1}{t(\partial_\sigma - \partial_\theta) \Phi_{\iota_1 \iota_2 \iota_3}} 
\\
e^{it \Phi_{\iota_1\iota_2 \iota_3}} \mathfrak{c}_{2,ccr}(\xi,\eta,\sigma,\theta)  \langle \eta \rangle \partial_\eta \wt{f}(\eta) \partial_\sigma \wt{f}(\sigma)  \wt{f}(\theta)  \frac{\varphi_{<0}(p)}{p} \, d\eta \, d\sigma \,d\theta.
$$
Using (a slight adaptation of) Lemma \ref{lemCS} this expression can be bounded in $L^2$ by
$$
C s^{-1} {\| \partial_\xi \wt{f} \|}_{L^2}  
  {\| \partial_\xi \wt{f} \|}_{L^1} {\| e^{-it\langle D \rangle} \mathcal{W}^* f \|}_{L^\infty} 
  \lesssim \js^{-3/2 + 2 \alpha + \beta \gamma} \e_1^3, 
$$
and since $\alpha + \beta \gamma<1/4$, we can integrate in time and close the estimate.

\medskip
\subsubsection{Three frequencies close to $\pm \sqrt{3}$} \label{3freq}
Examining the phase in \eqref{Cubic2ot0},
we see that if $\xi,\eta,\sigma,\theta$ are all close to $\pm \sqrt{3}$, 
then $|\Phi_{\iota_1 \iota_2 \iota_3}| \gtrsim 1$ 
unless $(\iota_1 ,\iota_2 ,\iota_3) = (+,-,+)$ up to a permutation. 
We can thus restrict the discussion to the case $(\iota_1 ,\iota_2 ,\iota_3) = (+,-,+)$. 
This case was already the focus of Section \ref{secwL}, 
where it was furthermore assumed that $(\eta,\theta,\sigma)$ was close to $(\sqrt{3},-\sqrt{3},\sqrt{3})$;
notice the different sign due to the particular choice of $p$ in \eqref{Cubic2ot0}.
While the interaction analyzed in Section \ref{secwL} is the worst one,
we also need to consider another partially resonant scenario
where the phase can vanish but not its gradient, 
namely, $(\eta,\theta,\sigma)$ close to $(\sqrt{3},-\sqrt{3},-\sqrt{3})$,
and prove the corresponding estimate \eqref{propCubother0'}.

Since the approach followed is very close to that introduced in Section \ref{secwL}, 
we adopt a similar parametrization of the integration variables and consider 
the trilinear expression
\begin{align}\label{toucan}
\begin{split}
\iiint e^{is \Psi(\xi,\eta,\zeta,\theta)} \mathfrak{p}(\xi,\eta,\zeta) &
\widetilde{f}(\xi-\eta) \widetilde{f}(\xi-2\sqrt{3}-\eta-\zeta-\theta) \widetilde{f}(\xi-2\sqrt{3}-\zeta) 
\frac{\widehat{\phi}(\theta)}{\theta}  \, d\eta \,d\zeta \, d\theta,
\\ 
\mbox{where} \qquad 
\Psi(\xi,\eta,\zeta,\theta) & = \Phi_{+-+}(\xi,\xi-\eta,\xi-2\sqrt{3}-\eta-\zeta-\theta,\xi-2\sqrt{3}-\zeta) 
\\
& = \jxi- \langle \xi - \eta \rangle + 
  \langle \xi -2\sqrt{3} - \eta -\zeta -\theta \rangle - \langle \xi -2\sqrt{3}- \zeta\rangle,
\end{split}
\end{align}
where it is understood that $\mathfrak{p}$ is smooth and such that, on its support, 
$\xi$ is close to $\sqrt{3}$ and $\eta,\zeta$ (and $\theta$) to zero. 
Denoting $\tau(\xi) = \langle \xi \rangle$, 
we start by recording a few estimates  on the phase function:
\begin{align*}
& \Psi(\xi,\eta,\zeta) = \tau'(\xi)\eta - \tau'(\xi - 2 \sqrt{3} - \zeta) (\eta + \theta) + O(\eta^2 + \theta^2), \\
& \partial_\xi \Psi(\xi,\eta,\zeta) =   \tau''(\xi)\eta - \tau''(\xi - 2 \sqrt{3} - \zeta) (\eta + \theta) + O(\eta^2 + \theta^2), \\
& \partial_\eta \Psi(\xi,\eta,\zeta) = \tau'(\xi - \eta) - \tau'(\xi- 2 \sqrt{3} - \eta - \zeta -\theta), \\
& (\partial_\eta - \partial_\zeta) \Psi(\xi,\eta,\zeta) = \tau'(\xi - \eta) - \tau'(\xi- 2 \sqrt{3} - \zeta), \\
& \partial_\zeta \Psi(\xi,\eta,\zeta) = \tau''(\xi - 2 \sqrt{3} - \zeta)( \eta + \theta) + O(\eta^2 + \theta^2), \\
& \partial_\zeta^2 \Psi(\xi,\eta,\zeta) = - \tau'''(\xi - 2 \sqrt{3} - \zeta)( \eta + \theta)+ O(\eta^2 + \theta^2).
\end{align*}
As a consequence,
$$
|\Psi|, |\partial_\xi \Psi|, |\partial_\zeta \Psi|, |\partial_\zeta^2 \Psi| \approx |\eta| 
\qquad \mbox{and} \qquad | \partial_\eta \Psi|, |(\partial_\eta - \partial_\zeta) \Psi| \gtrsim 1.
$$
Applying $\partial_\xi$ to \eqref{toucan} one obtains several terms, which can be reduced to
the following main ones:
\begin{subequations}
\begin{align}
\label{toucan2}
&  \iiint e^{is \Psi} \mathfrak{p}(\xi,\eta,\zeta) \partial_\xi \widetilde{f}(\xi-\eta) 
`\widetilde{f}(\xi-2\sqrt{3}-\eta-\zeta-\theta) \widetilde{f}(\xi-2\sqrt{3}-\zeta) 
\frac{\widehat{\phi}(\theta)}{\theta}  \, d\eta \,d\zeta \, d\theta, 
\\
\label{toucan3}
& \iiint e^{is \Psi} \mathfrak{p}(\xi,\eta,\zeta) \widetilde{f}(\xi-\eta)  \partial_\xi 
\widetilde{f}(\xi-2\sqrt{3}-\eta-\zeta-\theta) \widetilde{f}(\xi-2\sqrt{3}-\zeta) 
\frac{\widehat{\phi}(\theta)}{\theta}  \, d\eta \,d\zeta \, d\theta, 
\\
\label{toucan4}
&  \iiint e^{is \Psi} \mathfrak{p}(\xi,\eta,\zeta) \widetilde{f}(\xi-\eta) 
\widetilde{f}(\xi-2\sqrt{3}-\eta-\zeta-\theta)\partial_\xi \widetilde{f}(\xi-2\sqrt{3}-\zeta) 
\frac{\widehat{\phi}(\theta)}{\theta}  \, d\eta \,d\zeta \, d\theta.
\end{align}
\end{subequations}
The term \eqref{toucan3} can be estimated in a straightforward
way by integrating by parts using the vector field $\partial_\eta - \partial_\zeta$, 
since $|(\partial_\eta - \partial_\zeta) \Psi| \gtrsim 1$; the same applies to~\eqref{toucan4} 
with the vector field $\partial_\eta$. We illustrate this estimate for~\eqref{toucan3}. 
After integrating by parts, the worst term is of the form
$$
 \iiint e^{is \Psi} \frac{\mathfrak{p}(\xi,\eta,\zeta)}{(\partial_\eta-\partial_\zeta) \Psi} 
 \partial_\xi \widetilde{f}(\xi-\eta) \partial_\xi \widetilde{f}(\xi-2\sqrt{3}-\eta-\zeta-\theta) 
 \widetilde{f}(\xi-2\sqrt{3}-\zeta) \frac{\widehat{\phi}(\theta)}{\theta}  \, d\eta \,d\zeta \, d\theta.
$$
In $L^2$, this can be estimated by
$$
C s^{-1} \| \partial_\xi \widetilde{f} \|_{L^2} \| \partial_\xi \widetilde{f} \|_{L^1}
  \| e^{-it\langle D \rangle} \mathcal{W}^* f \|_{L^\infty} 
  \lesssim \e_1^3 \js^{-3/2+2\alpha + \beta \gamma},
$$
which suffices. 

This leaves us with \eqref{toucan2}, which can be treated as $\mathcal{H}_2$ in \S\ref{secwLpv2}.
Following the notation and the approach taken there, we localize dyadically 
$\xi - \sqrt{3}$, $\xi - \eta - \sqrt{3}$ and $\theta + \xi- \sqrt{3}$ 
to the scales $2^\ell$, $2^{j_1}$, and $2^h$ respectively. 
Since $|\partial_\zeta \Psi| \approx |\partial_\zeta^2 \Psi| \approx |\eta + \theta|$, 
the approach in \S\ref{secwLpv2} can be followed almost verbatim, and we can skip the details.

With this the proof of Proposition \ref{propCSbound} is completed. 
In particular, we have obtained the 
improvement on the weighted a priori bound in \eqref{propbootfconc}.
This in turn completes the proof of Proposition \ref{propbootf} and therefore of Theorem \ref{maintheo}.

\appendix 
\medskip
\section{The linearized operator for the Double Sine-Gordon
}\label{Appendix}

This short appendix is devoted to a proof that the linearized operator 
corresponding to the double-sine Gordon model \eqref{DSG} does not have internal modes or resonances,
when linearized at the kinks $K_1$ and $K_2$ described in \S\ref{ssecDSG}.
This proof is essentially contained in \cite{KowMarMunVDB}, but we chose to present it here for the readers' convenience.

\medskip
\noindent 
{\it Change of variables}.
Recall the notation from \S\ref{ssecDSG},
denote for simplicity $U = U_{DSG}$ and $K=K_1$ or $K_2$, and let
\begin{align*}
L := -\partial_x^2 + U''(K), \qquad 
L_0 := -\partial_x^2 + P, \qquad
\Lambda := Y \partial_x Y^{-1},
\end{align*}
where
$$
P = \frac{U'(K)^2}{U(K)} - U''(K), \qquad Y = K'.
$$
Then
$$
L = \Lambda^* \Lambda \qquad \mbox{and} \qquad L_0 = \Lambda \Lambda^*.
$$
Furthermore, if $L\phi = \lambda \phi$, then $L_0 \Lambda \phi = \lambda \Lambda \phi$. 
Therefore, using the decay theory for (generalized) eigenfunctions of $L$, 
the asymptotics of $K$ and its derivatives, and the formula $\Lambda \phi = \phi' - \frac{Y'}{Y} \phi$, we see that

\begin{itemize}
\item[-] If $\phi$ is an eigenfunction of $L$, then $\Lambda \phi$ is an eigenfunction of $L_0$, unless it is zero.

\item[-] If $\phi$ is a resonance of $L$, then $\Lambda \phi$ is a resonance of $L_0$.
\end{itemize}

\medskip
\noindent 
{\it The sign condition}.
We now claim that
$$
x P'(x) \leq 0 \qquad \mbox{for all $x$}.
$$
By definition of $P$, this is equivalent to
$$
x K'(x) V'(K(x)) \leq 0  \qquad \mbox{for all $x$}, \qquad  \mbox{where} \quad V(x) = -U''(x) + \frac{(U'(x))^2}{U(x)}.
$$
Since $K'(x) \geq 0$, we can dispense with this term. 
Setting $y = K(x)$, and using that $y$ and $x$ have the same sign, since $K$ is odd, 
the above becomes $y V'(y) \leq 0$, which can be checked by an explicit computation as in \cite{KowMarMunVDB}.

\medskip
\noindent 
{\it Excluding eigenvalues}.
Assuming that $\phi$ is an eigenfunction of $L_0$, 
we start from the identity $L_0 \phi = \lambda \phi$. 
Testing it against $\phi$ and $x \phi'$ respectively gives
\begin{align*}
& \int ((\phi')^2 + P \phi^2 - \lambda \phi^2)\,dx  = 0,
\\
& \int ((\phi')^2 - P \phi^2 - xP' \phi^2 + \lambda \phi^2) \,dx = 0,
\end{align*}
and adding these two identities leads to
\begin{equation}\label{KMMVDB}
2  \int (\phi')^2 \, dx = \int (xP') \phi^2 \,dx.
\end{equation}
Since $x P'(x) \leq 0$ for all $x$, this gives a contradiction if $\phi$ is a (nonzero) eigenfunction.

\medskip
\noindent 
{\it Excluding resonances.} 
The argument is parallel to the one for eigenfunctions,
but slight complications arise since a regularization becomes necessary. 
Assuming first that $\phi$ is a resonance of $L_0$, 
it has to be even or odd since $P$ is even and, without loss of generality, 
we can assume that $\phi(\infty) = 1$. 
Also, it must have energy $m^2 = P(\infty) = U''(\infty)$.
Choose a smooth, nonnegative, compactly supported function $\chi$ 
and test the equation $-\partial_x^2 \phi + P \phi = m^2 \phi$ 
against $\chi(x/R)\phi$, and $\chi(x/R) x \partial_x\phi$.
This gives
\begin{align*}
& \int \left[ (\phi')^2 + P \chi \left(\frac{x}{R} \right) \phi^2 - m^2  \chi \left(\frac{x}{R} \right) \phi^2 \right]\,dx = o(1), 
\\
& \int \left[ (\phi')^2 - xP' \phi^2 -  P \chi \left(\frac{x}{R} \right) \phi^2 - \int x P \frac{1}{R} \chi' \left(\frac{x}{R} \right)\phi^2 + m^2  \chi \left(\frac{x}{R} \right) \phi^2  + m^2 \frac{1}{R} \chi'  \left(\frac{x}{R} \right) x \phi^2 \right] \,dx = o(1),
\end{align*}
where $o(1) \to 0$ as $R \to \infty$; we used that $\phi'$ and $P'$ decay quickly. 
Adding these two identities leads to
$$
2 \int \left[ (\phi')^2 - xP'  \phi^2 \right] \,dx + \int \frac{x}{R} 
\chi' \left( \frac{x}{R} \right) \left[- P + m^2 \right] \phi^2 \,dx = o(1).
$$
Letting $R \to \infty$ gives
$$
2  \int \left[ (\phi')^2 - xP'  \phi^2 \right] \,dx \leq 0,
$$
which is the desired contradiction.

\bigskip
\normalsize



\end{document}